\DeclareSymbolFont{cyrletters}{OT2}{wncyr}{m}{n}
\numberwithin{equation}{section} \numberwithin{figure}{section}
\DeclareMathOperator{\Pic}{Pic} 
\DeclareMathOperator{\Gal}{Gal} 
\DeclareMathOperator{\Aut}{Aut} 
\DeclareMathOperator{\AutU}{\underline{Aut}}
\DeclareMathOperator{\Spec}{Spec}
 \DeclareMathOperator{\rank}{rank}
\DeclareMathOperator{\Hom}{Hom} \DeclareMathOperator{\re}{Re}
\DeclareMathOperator{\im}{Im}   
 \DeclareMathOperator{\Val}{Val}
\DeclareMathOperator{\Eff}{Eff}
\DeclareMathOperator{\Br}{Br} \DeclareMathOperator{\Brun}{Br_{un}}
\DeclareMathOperator{\inv}{inv} \DeclareMathOperator{\res}{\partial}
\DeclareMathOperator{\ord}{ord}
 \DeclareMathOperator{\Res}{R}
\DeclareMathOperator{\Norm}{N} \DeclareMathOperator{\Frob}{Frob}
\let\H\undefined
\DeclareMathOperator{\H}{H} 
\let\L\undefined
\DeclareMathOperator{\L}{L}
\DeclareMathOperator{\age}{age} 
\DeclareMathOperator{\rad}{rad} 
\DeclareMathOperator{\Ext}{Ext}
\DeclareMathOperator{\ad}{ad}
\DeclareMathOperator{\ind}{ind}
\newcommand{\PicOrb}{\operatorname{Pic}^{\mathrm{orb}}}
\DeclareSymbolFont{cyrletters}{OT2}{wncyr}{m}{n}
\DeclareMathSymbol{\Sha}{\mathalpha}{cyrletters}{"58}
\DeclareMathSymbol{\Be}{\mathalpha}{cyrletters}{"42}
\renewcommand{\subset}{\subseteq}
\renewcommand{\O}{\mathcal{O}}
\newcommand{\GL}{\mathrm{GL}}
\newcommand{\SL}{\mathrm{SL}}
\newcommand{\dual}[1]{\widehat{#1}}
\newcommand{\sep}{\mathrm{sep}}
\newcommand{\orb}{\mathrm{orb}}
\newcommand\F{\mathbb{F}}
\renewcommand\P{\mathbb{P}}
\renewcommand\P{\mathbb{P}}
\newcommand\Z{\mathbb{Z}}
\newcommand\N{\mathbb{N}}
\newcommand\Q{\mathbb{Q}}
\newcommand\R{\mathbb{R}}
\newcommand\C{\mathbb{C}}
\newcommand\GG{\mathbb{G}}
\newcommand\Gm{\GG_\mathrm{m}}
\newcommand{\Adele}{\mathbf{A}}
\newcommand{\br}{\mathscr{B}}
\newcommand{\cycl}{\chi_{\mathrm{cycl}}}
\newtheorem{lemma}{Lemma}
\newtheorem{conjecture}[lemma]{Conjecture}
\newtheorem*{conjecture*}{Conjecture}
\newtheorem{theorem}[lemma]{Theorem}
\newtheorem{proposition}[lemma]{Proposition}
\newtheorem{corollary}[lemma]{Corollary}
\theoremstyle{definition}
\newtheorem{example}[lemma]{Example}
\newtheorem{definition}[lemma]{Definition}
\newtheorem{remark}[lemma]{Remark}
\numberwithin{lemma}{section}
\newcommand{\dan}[1]{{\color{blue} \sf $\clubsuit\clubsuit\clubsuit$ Dan: [#1]}}
\newcommand{\tim}[1]{{\color{magenta} \sf $\spadesuit\spadesuit\spadesuit$ Tim: [#1]}}
\begin{document}

\title{Malle's conjecture and Brauer groups of stacks}

\author{Daniel Loughran}
\address{
Department of Mathematical Sciences \\
University of Bath \\
Claverton Down \
Bath\\ 
BA2 7AY\\
UK.}
\urladdr{https://sites.google.com/site/danielloughran}

\author{Tim Santens}
\address{
University of Cambridge \\ 
DPMMS \\
Centre for Mathematical Sciences\\
Wilberforce Road \\
Cambridge \\
CB3 0WB \\ UK}
	
\subjclass[2020]
{14G05 (primary), 
14D23, 
14F22 
(secondary).}

\begin{abstract}
	We put forward a conjecture for the leading constant in Malle's conjecture on number fields of bounded discriminant, guided by stacky versions of conjectures of Batyrev--Manin, Batyrev--Tschinkel, and Peyre on rational points of bounded height on Fano varieties. A new framework for Brauer groups of stacks plays a key role in our conjecture, and we define a new notion of the unramified Brauer group of an algebraic stack.
\end{abstract}

\maketitle

\thispagestyle{empty}

\tableofcontents

\section{Introduction} \label{sec:intro}

\subsection{Malle's conjecture}
Let $k$ be a number field. In \cite{Mal02,Mal04}, Malle put forward the following.
\begin{conjecture*}[Malle]
Let $G \subseteq S_n$ be a non-trivial transitive subgroup. Then
$$\#\left\{ K/k :\begin{array}{l}
 [K:k] = n, \Gal(\widetilde{K}/k) \cong G, \\
 |\Norm_{k/\Q} \Delta_{K/k}| \leq B
 \end{array}
 \right\}\sim c_{\mathrm{Malle}}(k,G) B^{a(G)} (\log B)^{b(k,G)-1}$$
for some $c_{\mathrm{Malle}}(k,G) > 0, a(G) > 0, b(k,G) \in \N$.
\end{conjecture*}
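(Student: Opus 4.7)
Since Malle's conjecture is wide open in general, what follows is necessarily a strategy for attacking it rather than a self-contained proof sketch, and it is in fact the strategy that guides the present paper. The plan is to interpret the counting problem as a Batyrev--Manin--Peyre style asymptotic on the classifying stack $BG$: an isomorphism class of degree-$n$ extension $K/k$ with Galois group of normal closure isomorphic to $G$ corresponds, up to the action of $S_n/G$, to a $k$-point of $BG$, and the normalised discriminant $|\Norm_{k/\Q}\Delta_{K/k}|$ is a height function there. The exponents $a(G)$ and $b(k,G)$ should then emerge, respectively, as the reciprocal of an effective-cone-type constant and as the dimension of a ``Picard-type'' invariant, in direct analogy with the Fano case.

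The analytic vehicle is the Dirichlet series
\[
Z(s) = \sum_K |\Norm_{k/\Q}\Delta_{K/k}|^{-s},
\]
summed over the fields $K$ in the statement. The asymptotic follows, by a Tauberian theorem of Landau or Delange--Wirsing type, once one establishes meromorphic continuation of $Z(s)$ slightly past $\re s = a(G)$ with a unique pole of order $b(k,G)$ at $s = a(G)$, together with a computation of the leading Laurent coefficient. The expected structure of this residue is as a product of local densities over places of $k$, multiplied by a global correction coming from a Brauer-type obstruction on $BG$; extracting the correct global factor is a substantial subtlety (witness the counterexamples to the original $b(k,G)$), and is the motivation for the unramified-Brauer framework introduced in this paper.

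For abelian $G$ the programme is carried out by Wright. Class field theory parametrises surjections $\Gal(\bar k/k)\twoheadrightarrow G$ by Hecke characters, and Fourier inversion over the idele class group expresses $Z(s)$ as a finite combination of products of Hecke $L$-functions twisted by local factors encoding the discriminant contribution. The local factors at finite places reduce to elementary sums over characters of $k_v^\times/(k_v^\times)^{|G|}$, and from the resulting pole structure one reads off $a(G) = 1/\min_{g\neq e}\ind(g)$, where $\ind(g)=n-\#\{\text{orbits of }\langle g\rangle\text{ on }\{1,\dots,n\}\}$, together with the predicted value of $b(k,G)$.

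The main obstacle, still largely unresolved, is the non-abelian case: there is no harmonic-analytic parametrisation of non-abelian extensions by an abelian dual object. One must instead invoke geometry-of-numbers arguments in small degree, inductive arguments exploiting a short exact sequence $1\to A\to G\to Q\to 1$ with $A$ abelian and $Q$ already understood, or more recent cohomological and ergodic techniques. Even granting the correct order of magnitude in a given case, pinning down $c_{\mathrm{Malle}}(k,G)$ requires the correct definition of a Tamagawa-type measure on $BG$ together with the right Brauer--Manin correction --- precisely the content of this paper --- so that the residual hard step is analytic rather than formal.
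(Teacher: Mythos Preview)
The statement you are asked to ``prove'' is Malle's \emph{conjecture}, and the paper does not attempt to prove it --- there is no proof to compare against. More than that, the paper explicitly notes that the conjecture as stated is \emph{false}: Kl\"uners's example with $G=C_3\wr C_2$ shows that Malle's predicted exponent $b(k,G)$ can be too small, so no proof strategy for the statement as written can succeed. The paper's purpose is not to establish the asymptotic but to propose a corrected version (removing an explicit thin set of breaking extensions) and to give a precise conjectural formula for the leading constant via the partially unramified Brauer group of $BG$ and a Tamagawa measure.

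Your proposal is a reasonable high-level summary of how one approaches such counting problems analytically, and the stacky Batyrev--Manin--Peyre analogy you sketch is indeed the guiding philosophy of the paper. But it would be clearer to frame this as motivation for the conjectural framework rather than as a proof outline, and you should flag up front that the literal statement requires amendment before any such programme can hope to succeed.
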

In the conjecture $\Delta_{K/k}$ denotes the relative discriminant of $K/k$,  one counts isomorphism classes of fields $K/k$ and $\widetilde{K}$ denotes the Galois closure of $K$ with $\Gal(\widetilde{K}/k) \cong G$ being an isomorphism of permutation groups. Malle gave predictions for the invariants $a(G)$ and $b(k,G)$ as follows.
For an element $g \in G$ we denote its \textit{index} by $\ind(g) = n \,-$~the number of orbits of $g$ on $\{1,\dots,n\}$. We let $\mathcal{C}_G^*$ denote the collection of non-identity conjugacy classes of $G$; this comes equipped with a natural action of $\Gamma_k: = \Gal(\bar{k}/k)$ via the anticyclotomic character (see \S\ref{sec:Galois_action_conjugacy_classes}).  We let $\mathcal{M}(G) \subseteq \mathcal{C}_G^*$ denote the collection of conjugacy classes of minimal index. Then 
\begin{equation} \label{def:a_b_Malle}
	a(G) = \min\{ \ind(c) : c \in \mathcal{C}_G^*\}^{-1}, \quad b(k,G) = \#(\mathcal{M}(G)/\Gamma_k).
\end{equation}

One of the first results on counting number fields was the seminal work of Davenport Heilbronn for $S_3$ \cite{DH71}, with subsequent treatments of the case of abelian extensions \cite{Mak85,Mak93,Wri89, Woo10, FLN18}. Since the formulation of Malle's conjecture, the area has attracted considerable interest; for example \cite{CDO02b,ASVW21, Bha05, Bha07, Bha10,KP21,Wan21,Klu05,ShTh22, EV05, EV10, EVW13, OA21, Alb21, AOWW24, Wan24}. 
Despite this, there are two outstanding issues with Malle's conjecture. Firstly, the exponent of $\log B$ is wrong in general, as first observed by Kl\"{u}ners \cite{Klu05}. Secondly, Malle offered no prediction for the leading constant $c_{\mathrm{Malle}}(k,G)$.  Understanding the leading constant is significant open problem; for example recent work of Shankar and Thorne \cite{ShTh22} notes that ``the leading constants appearing in front of Malle’s heuristics are still shrouded with mystery''.



In our paper we solve this problem by giving a corrected version of Malle's conjecture which gives the expected power of $\log B$ providing one removes an explicit exceptional set of field extensions, and moreover we also give a precise prediction for the leading constant. Bhargava \cite{Bha07} has proposed a formula for $S_n$, where it is given by a product of local densities. But in examples $c_{\mathrm{Malle}}(k,G)$ can exhibit pathological properties; for example it need no longer be a product of local densities and subgroups of $G$ can interfere and appear in the leading constant.

To highlight the aims of the paper, we give explicit versions of our conjecture. Firstly, counting $A_4$-quartics of bounded discriminant is a notorious open problem in the Malle's conjecture literature. We make completely explicit what our conjecture says in this case as a challenge to researchers in the community. 

\begin{conjecture} \label{conj:A_4_intro}
	$$2\#\left\{ [K:\Q] = 4 : 
	|\Delta_K|  \leq B, \Gal(\widetilde{K}/\Q) \cong A_4 \right\} \sim c(\Q,A_4,\Delta) B^{1/2} \log B,$$
	where $\widetilde{K}$ denotes the Galois closure of $K$ and
	$$c(\Q,A_4,\Delta) = \frac{35}{648}\prod_{p > 3} \left(1 - \frac{1}{p}\right)^2\left(1 + \frac{2 + \left(\frac{-3}{p}\right)}{p}\right).$$
\end{conjecture}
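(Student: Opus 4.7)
The plan is to derive $c(\Q,A_4,\Delta)$ by specialising the paper's stacky Peyre-type prediction to the classifying stack $BA_4$ over $\Q$, with height given by the discriminant of the associated degree-$4$ étale algebra. First I would fix the exponents using \eqref{def:a_b_Malle}: every non-identity conjugacy class of $A_4$ (the class of double transpositions and the two classes of $3$-cycles) has index $2$ in the natural degree-$4$ representation, so $a(A_4) = 1/2$; over $\Q$ the two $3$-cycle classes are swapped by the anticyclotomic character while the double-transposition class is $\Q$-rational, so $\#(\mathcal{M}(A_4)/\Gamma_\Q) = 2$, giving $(\log B)^{b-1} = \log B$ in agreement with the statement. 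The factor of $2$ on the left of the conjecture reflects the weighting by $|\Aut(K/\Q)|$ together with the outer automorphism of $A_4$, which must be accounted for when passing between abstract quartic fields and points on $BA_4$.

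Next I would write the leading constant in the standard Peyre-type form
\[
c(\Q,A_4,\Delta) = \alpha \cdot \beta \cdot \tau_\infty \cdot \prod_p \tau_p,
\]
where $\alpha$ is a rational combinatorial factor coming from the stacky effective cone of $BA_4$ (governed by the Galois-orbit structure of $\mathcal{M}(A_4)$), $\beta$ is the Brauer-group correction coming from the unramified Brauer group of $BA_4$ that the paper introduces, and the $\tau_v$ are local Tamagawa factors. The factor $\tau_p$ at $p \neq 2,3$ would be computed by integrating $H_p^{-1}$ against the canonical measure on $BA_4(\Q_p)$, parametrising tame Galois representations $\Gamma_{\Q_p} \to A_4$ by pairs $(t,f)$ with $f t f^{-1} = t^p$. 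The dependence on $\left(\tfrac{-3}{p}\right)$ enters precisely because the two $3$-cycle classes are Frobenius-stable iff $p \equiv 1 \pmod 3$, which changes the number of tame ramified extensions contributing to the local sum. At $p = 2, 3$ one enumerates the relevant wildly ramified $A_4$-fields directly (using Serre's mass formula and Krasner's lemma); at the archimedean place one weights the finitely many real étale $A_4$-algebras by $|\Aut|^{-1}$.

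The sharpest new input is the Brauer-group factor $\beta$. The Schur multiplier $H^2(A_4,\C^\times) = \Z/2$ gives a potentially nontrivial class on $BA_4$ corresponding to the binary tetrahedral double cover $2.A_4 \twoheadrightarrow A_4$, and the key step is to use the paper's notion of the unramified Brauer group of a stack to decide whether this class is obstructive for the discriminant height, where the $\Q$-points parametrise $A_4$-quartics rather than lifted $2.A_4$-covers. Combining $\beta$ with the $\alpha$-factor -- essentially a product of leading residues of the Artin $L$-series capturing the Euler-product pole of order $b-1 = 1$ -- and cross-cancelling with the $(1-p^{-1})^{-2}$ in those residues produces the rational prefactor $35/648$ and the convergent Euler product stated.

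The main obstacle is that this is a conjecture rather than a theorem: the $A_4$-quartic count is wide open, so proving the statement in the strict sense is currently infeasible. Within the derivation itself, the delicate point is pinning down $\beta$, which requires verifying that the Schur-multiplier class is unramified in the stacky sense and tracking its interaction with the Galois action on $\mathcal{M}(A_4)$. A secondary difficulty is the explicit local computation at the wildly ramified primes $2$ and $3$, where the classical mass formulae for $A_4$-extensions must be reconciled with the general stacky Tamagawa measure to produce exactly the contribution implicit in the constant $35/648$.
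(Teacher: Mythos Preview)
Your outline matches the paper's derivation, but there is one substantive misstep concerning the Brauer factor. You write that the delicate point is ``verifying that the Schur-multiplier class is unramified in the stacky sense''; the paper shows exactly the opposite. The class corresponding to $2.A_4 \to A_4$ is \emph{ramified} along the sector of the double-transposition conjugacy class (this is the $A_4$ case of Lemma~\ref{lem:unramified_geometric_Brauer_group_computation}), so the transcendental part of $\Brun BA_4$ vanishes. The algebraic part is then disposed of by passing to the abelianisation $A_4 \to C_3$ (which admits a section) and invoking $\Sha^1_\omega(\Q,\mu_3)=0$. The conclusion is $\Brun BA_4 = \Br\Q$, hence $\beta = 1$; there is no Brauer--Manin obstruction and the Tamagawa measure is a single Euler product. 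The Schur class does play the role you envisage, but only for the \emph{other} height in Conjecture~\ref{conj:A_4_conductor_intro}, where the minimal conjugacy classes are the $3$-cycles alone and the double-transposition sector is no longer available to kill the residue.

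A smaller point: you conflate the effective cone constant with the Artin residues. In the paper's normalisation these are separate: $\alpha^*(BA_4,\Delta) = a(L)^{b-1}/\#\dual{A_4}(\Q) = (1/2)^1/1 = 1/2$ (since $\dual{A_4}=\mu_3$ has only the trivial $\Q$-point), while $(\mathrm{Res}_{s=1}\zeta(s))^2=1$ sits inside the Tamagawa measure with convergence factors $(1-1/p)^2$. The prefactor then factors as
\[
\frac{35}{648} \;=\; \underbrace{\tfrac12}_{\alpha^*} \times \underbrace{\tfrac13}_{\tau_\infty} \times \underbrace{\tfrac{15}{8}}_{\tau_2} \times \underbrace{\tfrac{14}{9}}_{\tau_3} \times \underbrace{(1-\tfrac12)^2(1-\tfrac13)^2}_{\text{conv.\ at }2,3},
\]
with $\tau_\infty = \#A_4[2]/|A_4| = 4/12$ and $\tau_2,\tau_3$ obtained by direct enumeration of quartic \'etale $\Q_p$-algebras with Galois closure inside $A_4$. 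Finally, the paper observes that no thin set needs removing: $A_4$ has no index-$2$ subgroup, so an $A_4$-closure cannot meet $\Q(\mu_{12})$ nontrivially.
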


The factor $2$ on the left has a natural interpretation via a groupoid cardinality. 

\subsection{The Malle--Bhargava heuristics}
Ellenberg and Venkatesh \cite[Ques.~4.3]{EV05} considered analogues of Malle's conjecture where the discriminant is replaced by a different height function. This is quite natural even from the perspective of Malle's conjecture, since for each transitive embedding $G \subset S_n$ one obtains a different counting problem. Moreover, numerous works have considered the problem of counting number fields with local conditions imposed. Bhargava \cite{Bha07} was the first to study this in the case of $S_n$-extensions of degree $n$. The expectation is that the leading constant should change in a simple way exactly corresponding in the local conditions imposed; in the literature this is often informally referred to as the \emph{Malle--Bhargava heuristics}. For Bhargava's original question see \cite[\S8.2]{Bha10}, as well as Wood's \cite[\S 6.1]{Woo19} for a discussion of these heuristics and problems with the existing framework, and \cite[\S 10]{Woo16} which notes that ``An important open question is to even make a good conjecture about when exactly the principle should apply''.

We also give precise predictions for these more general problems. As an example, the next conjecture, despite at first glance looking artificial, is the smallest non-abelian group where the leading constant is given by a finite sum of Euler products. This illustrates the full range of behaviour which we encapsulate in the paper, as it corresponds to a case where there is a transcendental Brauer--Manin obstruction.


\begin{conjecture} \label{conj:A_4_conductor_intro}
	For an $A_4$-quartic field $K$, let $H(K) := |\Delta_{K}|^{\frac{5}{2}}|\Delta_{\tilde{K}}|^{-\frac{1}{2}}.$ Then
	\[
	\begin{split}
		&2\#\left\{ [K:\Q] = 4 :\Gal(\widetilde{K}/\Q) \cong A_4, H(K) \leq B, K \otimes_{\Q} \R \cong \R^4 \right\} \sim c_{\R^4}(\Q,A_4,H) B, \\
		&2\#\left\{ [K:\Q] = 4 :\Gal(\widetilde{K}/\Q) \cong A_4, H(K) \leq B, K \otimes_{\Q} \R \cong \C^2 \right\} \sim c_{\C^2}(\Q,A_4,H) B,
	\end{split}
	\]
	where $\widetilde{K}$ denotes the Galois closure of $K$ and
	\begin{align*}
	c_{\R^4}(\Q,A_4,H) = &\frac{145}{3456}\prod_{p > 3} \left(1 - \frac{1}{p}\right)\left(1 + \frac{1 + \left(\frac{-3}{p}\right)}{p} + \frac{1}{p^2}\right) \\ + \,\, &\frac{319}{10368}\prod_{p > 3} \left(1 - \frac{1}{p}\right)\left(1 + \frac{1 + \left(\frac{-3}{p}\right)}{p}\right) =  0.0594...
	\end{align*}
	\begin{align*}
	c_{\C^2}(\Q,A_4,H) = &\frac{145}{1152}\prod_{p > 3} \left(1 - \frac{1}{p}\right)\left(1 + \frac{1 + \left(\frac{-3}{p}\right)}{p} + \frac{1}{p^2}\right) \\ - \,\, &\frac{319}{3456}\prod_{p > 3} \left(1 - \frac{1}{p}\right)\left(1 + \frac{1 + \left(\frac{-3}{p}\right)}{p}\right) = 0.0347....
	\end{align*}
\end{conjecture}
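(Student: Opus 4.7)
The plan is to derive the formulas in Conjecture~\ref{conj:A_4_conductor_intro} as an explicit instance of the general conjectural framework of the paper, which applies stacky Batyrev--Manin--Peyre heuristics to the classifying stack $BA_4/\Q$ equipped with an adelic height attached to $H$. First I would translate the counting problem: up to the groupoid factor $2 = \#\Aut_\Q(K)$, an isomorphism class of $A_4$-quartic field $K/\Q$ with a prescribed real structure corresponds to a $\Q$-point of $BA_4$ together with a fixed archimedean component. The function $H(K) = |\Delta_K|^{5/2}|\Delta_{\tilde K}|^{-1/2}$ is the adelic height attached to an explicit class in $\PicOrb(BA_4)_\Q$ whose local weights, read off from $v_p(\Delta_K) = \ind(\sigma)$ and $v_p(\Delta_{\tilde K}) = |A_4|(1 - 1/|\sigma|)$ for tame inertia $\sigma$, give $\tfrac{5}{2}\cdot 2 - \tfrac{1}{2}\cdot 6 = 2$ on the double-transposition class and $\tfrac{5}{2}\cdot 2 - \tfrac{1}{2}\cdot 8 = 1$ on each of the two 3-cycle classes.

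Second, I would read off the leading exponents from \eqref{def:a_b_Malle}: the minimum height weight is $1$, attained exactly by the two 3-cycle classes, which form a single $\Gamma_\Q$-orbit under the anticyclotomic action $g \mapsto g^{-1}$. This yields $a = 1$ and $b = 1$, consistent with the predicted shape $\sim c B$. I would then apply the stacky Peyre recipe to express the leading constant as an $\alpha$-constant times an adelic integral over $(BA_4)(\Adele_\Q)$. For $p > 3$ the local factor is a finite sum over conjugacy classes of continuous homomorphisms $\Gal(\overline{\Q}_p/\Q_p) \to A_4$: the unramified classes contribute $1$, each $\Q_p$-rational 3-cycle class contributes $1/p$, and the double-transposition class contributes $1/p^2$. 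The factor $1 + \left(\frac{-3}{p}\right)$ in the numerator records whether the two inverse 3-cycle classes are individually $\Q_p$-rational, equivalently whether $p \equiv 1 \pmod 3$.

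Third, I would incorporate the transcendental Brauer obstruction, which is the key non-abelian feature: by the stacky unramified Brauer group machinery developed in the paper, $\Brun(BA_4)/\Br(\Q)$ should be cyclic of order $2$, generated by a class $\alpha$ pulled back from the Schur multiplier $\H^2(A_4,\Q/\Z) \cong \Z/2$. Splitting the adelic count according to the two characters of this quotient produces two Euler products: the unweighted one (with factors $1 + \tfrac{1+\left(\frac{-3}{p}\right)}{p} + \tfrac{1}{p^2}$, including the double-transposition term) and the $\alpha$-weighted one (with factors $1 + \tfrac{1+\left(\frac{-3}{p}\right)}{p}$, since $\alpha$ evaluates non-trivially on the double-transposition class, cancelling that term). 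The archimedean value $\inv_\infty(\alpha)$ differs between $K\otimes_\Q\R \cong \R^4$ and $K\otimes_\Q\R \cong \C^2$, which is precisely what produces the $+$ sign in $c_{\R^4}$ and the $-$ sign in $c_{\C^2}$.

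The main obstacle is to execute the local computations at the wildly ramified primes $p = 2$ and $p = 3$, where $A_4$-extensions of $\Q_p$ must be enumerated with their precise discriminants and Brauer invariants, along with the archimedean computation, in order to pin down the rational prefactors $145/3456$, $319/10368$, $145/1152$, and $319/3456$. This requires a careful case analysis of the local $A_4$-torsors, evaluation of $\inv_v(\alpha)$ on each, and combination with the global $\alpha$-constant coming from the relevant orbifold Picard group; verifying that the various finite contributions assemble correctly into the stated prefactors is where the bulk of the work lies.
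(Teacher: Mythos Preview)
Your overall plan matches the paper's derivation in \S\ref{sec:A_4-transcendental}: compute the weight function, identify $a=b=1$, write the constant as a sum of two Euler products indexed by a two-element Brauer group, and pin down the small-prime factors by hand. The weight computation and Euler factor calculations for $p>3$ are correct, as is the mechanism producing the $\pm$ signs via $\inv_\infty$.

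There is, however, a genuine conceptual slip. You write that $\Brun(BA_4)/\Br(\Q)$ should be cyclic of order $2$. In fact $\Brun(BA_4)=\Br\Q$, as the paper shows in \S\ref{sec:A_4-quartics} (this is why Conjecture~\ref{conj:A_4_intro} for the discriminant involves a \emph{single} Euler product). The class $\beta$ coming from $\SL(2,\F_3)\to A_4$ is \emph{ramified} along the double-transposition sector; what makes it relevant here is that it lies in the \emph{partially unramified} Brauer group $\Br_{\mathcal{C}}(BA_4)$ for $\mathcal{C}=\{(1,2,3),(1,3,2)\}$, the minimal weight classes for \emph{this particular height} $H$. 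This is precisely the point of the example: the relevant Brauer group depends on the height, and for $H$ it is strictly larger than $\Brun$. Your own computation implicitly uses this (you note that $\beta$ behaves non-trivially on the double-transposition class, which is exactly saying $\beta\notin\Brun$), but the misidentification would lead you astray if you tried to verify the group abstractly.

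A smaller correction: the prefactor $2$ is not $\#\Aut_\Q(K)$, which is trivial for an $A_4$-quartic (the centraliser of $A_4$ in $S_4$ is trivial). It comes instead from $|N_{S_4}(A_4)|/|A_4|=|S_4|/|A_4|=2$ via Lemma~\ref{lem:G_S_n}, i.e.\ from the outer automorphism of $A_4$ identifying two surjections $\Gamma_\Q\to A_4$ with the same quartic field.
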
 
From the given numerical values, an interesting prediction of Conjecture \ref{conj:A_4_conductor_intro} is that approximately $63\%$ of $A_4$-quartics are totally real when ordered by $H$. On the other hand, when ordering by discriminant, the Malle--Bhargava heuristics (and experimental verification) predicts that only $25 \%$ of $A_4$-quartics are totally real. This difference in the local behaviour for a different height function comes from the second Euler factor, which itself comes from a transcendental Brauer group element. We give numerical evidence towards this conjecture in \S \ref{sec:A_4-transcendental}.

\subsection{Malle's conjecture via stacks}
Key to our paper is a viewpoint recently put forward in \cite{ESZB,DYTor,DYBM} to study Malle's conjecture via the classifying stack $BG$ of $G$, and interpret Malle's conjecture as a version of the Batyrev--Manin conjecture \cite{FMT89, BM90} for an algebraic stack (see \S \ref{sec:BG} for background on $BG$). The emphasis in these papers was on finding common generalisations of Manin's and Malle's conjecture, whereas the focus in our paper is using the stack theoretic framework to say something \emph{new} about the original Malle's conjecture. (Kedlaya \cite[\S10]{Ked07} appears to have been the first to suggest that one use $BG$ to study Malle's conjecture.) Manin's conjecture  concerns rational points of bounded height on Fano varieties. Here Peyre \cite{Pey95} has put forward a conjectural leading constant in Manin's conjecture, and we take Peyre's approach as our starting point for the conjecture, though serious modifications are required to make it work.

There are two challenges with formulating a conjecture for the leading constant. Firstly, there may be global obstructions: by the Grunwald--Wang theorem there is no $\Z/8\Z$-extension of $\Q$ which realises the degree $8$ unramified extension of $\Q_2$; the leading constant should reflect this obstruction. Secondly, in some examples the leading constant is given as an infinite sum of Euler products indexed by subfields; this happens for example when counting $D_4$ quartic fields \cite{CDO02}.

We  overcome both issues. We control global obstructions using a new definition of a partially unramified Brauer group $\Br_{\mathcal{M}(G)}BG$ for the stack $BG$, depending on the conjugacy classes $\mathcal{M}(G)$ of minimal index (see Definition \ref{def:partially_ramified_Br}). This group has an explicit interpretation via central extensions of $G$ by $\mu_n$. For simplicity we assume here that this group gives no obstruction; in the terminology we introduce later in the paper this means precisely that $BG(\Adele_k)_{\mathcal{M}(G)}^{\Br} = BG(\Adele_k)_{\mathcal{M}(G)}$. This condition holds for example in the setting of Conjecture \ref{conj:A_4_intro}, but not Conjecture~\ref{conj:A_4_conductor_intro}, and in general a more complicated expression involving a  sum of Euler products occurs (see Conjecture \ref{conj:balanced}). It is also convenient to rephrase the problem in terms of counting continuous homomorphisms $\varphi: \Gamma_k \to G$ from the absolute Galois group, rather than fields (for a version in terms of fields, see Conjecture~\ref{conj:discriminant}).

\begin{conjecture} \label{conj:intro}
	Let $G \subseteq S_n$ be a non-trivial transitive subgroup.
	Assume that $\Br_{\mathcal{M}(G)}BG$ gives no Brauer--Manin obstruction.
	Let $\Omega \subset \Hom(\Gamma_k,G)$ be the subset of homomorphisms which are either not surjective or which correspond to field extensions which are not linearly disjoint to $k(\mu_{|\exp(G)|})$, where $\exp(G)$ denotes the exponent of $G$.
	\begin{enumerate}
		\item If the elements of $\mathcal{M}(G)$ generate $G$ then
	$$\frac{1}{|G|}\#\{ \varphi \in \Hom(\Gamma_k,G) : \varphi \notin \Omega, |\Norm_{k/\Q} \Delta_{\varphi}| \leq B\} \sim c(k,G) B^{a(G)} (\log B)^{b(k,G)-1}$$
	where
	\begin{align*}
	c(k,G) &= \frac{a(G)^{b(k,G)-1} \cdot  |\Br_{\mathcal{M}(G)}(BG) / \Br k| \cdot
	\mathrm{Res}_{s=1} \zeta_k(s)^{b(k,G)} \cdot \tau(k,G)}{\#\dual{G}(k) (b(k,G)-1)!},  \\
	\tau(k,G) &= \prod_{\substack{v \in \Val(k) \\ v \mid \infty}}
	\frac{\#\Hom(\Gamma_{k_v}, G)}{|G|}
	 \prod_{\substack{v \in \Val(k) \\ v \nmid \infty}}\frac{(1-1/q_v)^{b(k,G)}}{|G|}\sum_{\varphi_v \in \Hom(\Gamma_{k_v}, G)} 
	 \frac{1}{q_v^{a(G) v(\Delta_{\varphi_v})}}.
	\end{align*}
	Here $\Delta_\varphi$ denotes the relative discriminant
	of the degree $n$ \'etale algebra corresponding to $\varphi$ and $\dual{G} := \Hom(G,\Gm)$ the 
	group scheme of characters of $G$.
	\item If $\mathcal{M}(G)$ does not necessarily generate $G$, consider the quotient map $q:G \to G/\langle \mathcal{M}(G) \rangle$. Then for any $\psi \in \Hom(\Gamma_k,G/\langle \mathcal{M}(G) \rangle)$ the limit
	$$c(k,G,\psi):=\lim_{B \to \infty}\frac{\#\{ \varphi \in \Hom(\Gamma_k,G) : \varphi \notin \Omega, |\Norm_{k/\Q} \Delta_{K/k}| \leq B, q \circ \varphi = \psi \}/|G|}{B^{a(G)} (\log B)^{b(k,G)-1}}$$
	exists. Moreover we have
	$$\frac{1}{|G|}\#\{ \varphi \in \Hom(\Gamma_k,G) : \varphi \notin \Omega, \Norm_{k/\Q} \Delta_{\varphi} \leq B\}\sim c(k,G) B^{a(G)} (\log B)^{b(k,G)-1}$$
	where $$c(k,G) = \sum_{\psi \in \Hom(\Gamma_k,G/\langle \mathcal{M}(G) \rangle)} c(k,G,\psi)$$
	and the sum converges.
	\end{enumerate}
\end{conjecture}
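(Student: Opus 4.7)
The plan is to derive this conjecture from a stacky analogue of the Batyrev--Manin--Peyre framework, as outlined in \cite{ESZB, DYTor, DYBM}, rather than to prove it as a theorem. The starting point is that $\Hom(\Gamma_k, G)/G$ is the groupoid cardinality of $BG(k)$, and $|\Norm_{k/\Q}\Delta_\varphi|$ is a height function on $BG$; the leading constant should then be read off from the stacky Peyre constant. Concretely, I would introduce the height zeta function $Z(s) = \frac{1}{|G|}\sum_{\varphi \notin \Omega} |\Norm_{k/\Q}\Delta_\varphi|^{-s}$, factor it as an Euler product of local zetas $Z_v(s) = \frac{1}{|G|}\sum_{\varphi_v \in \Hom(\Gamma_{k_v}, G)} q_v^{-s\, v(\Delta_{\varphi_v})}$, and apply a Tauberian theorem once the meromorphic behaviour near $s = a(G)$ is established.

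Next I would identify the factors in $c(k, G)$ one by one from this framework. The regularised local factor at each non-archimedean $v$ has a pole of order equal to the number of $\Gamma_{k_v}$-orbits of minimal-index conjugacy classes; globalising, the correct $\L$-function regulator is $\zeta_k(a(G)^{-1} s)^{b(k,G)}$ because $b(k,G) = \#(\mathcal{M}(G)/\Gamma_k)$. This accounts simultaneously for $\mathrm{Res}_{s=1}\zeta_k(s)^{b(k,G)}$ and for the Tauberian factor $a(G)^{b(k,G)-1}/(b(k,G)-1)!$ produced by a pole of order $b(k,G)$ at $s = a(G)$. The character group $\dual{G}$ plays the role of the N\'eron--Severi lattice of $BG$, so the factor $1/\#\dual{G}(k)$ replaces Peyre's Picard-group contribution. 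The quotient $|\Br_{\mathcal{M}(G)}(BG)/\Br k|$ measures the relevant Brauer--Manin obstruction in Peyre's recipe, collapsing to a cardinality under the standing triviality assumption. The archimedean factors $\#\Hom(\Gamma_{k_v}, G)/|G|$ are groupoid cardinalities of $BG(k_v)$, the stacky analogues of real-analytic Tamagawa volumes. Finally, $\Omega$ must be excised because non-surjective $\varphi$ factor through substacks $BH$ for $H \subsetneq G$ and contribute to lower-dimensional strata, while the Grunwald--Wang condition isolates the thin set (cf.\ Kl\"uners's counterexamples) where local data at places dividing $\exp(G)$ cannot be globalised.

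For part (2), when $\mathcal{M}(G)$ fails to generate $G$, I would stratify $\Hom(\Gamma_k, G)$ by the quotient map $q: G \to G/\langle\mathcal{M}(G)\rangle$. For a fixed $\psi$, every $\varphi \in q^{-1}(\psi)$ corresponds to a twist of $B\langle\mathcal{M}(G)\rangle$, and the relative discriminant is controlled purely by conjugacy classes inside $\langle\mathcal{M}(G)\rangle$, which do generate the subgroup in question; so part (1) applied to this twisted counting problem produces the individual asymptotic with leading constant $c(k, G, \psi)$. Summing over $\psi$ and interchanging limit with sum then produces $c(k, G)$, provided one has a uniform error bound ensuring that the tail contribution from large-conductor $\psi$ is controlled, and provided the convergence of $\sum_\psi c(k, G, \psi)$ is established.

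The main obstacles are the analytic and convergence issues: establishing the meromorphic continuation and precise pole order of the global height zeta function, which is the fundamental difficulty inherited from the Batyrev--Manin setting and which is genuinely subtle in the stacky context; justifying the Tauberian step with a good error term; and, for part (2), proving the uniform asymptotic needed to interchange summation with the limit so that $\sum_\psi c(k, G, \psi)$ converges. Additionally, verifying that $\Br_{\mathcal{M}(G)}(BG)$ really captures the obstruction on the correct strata --- rather than a larger or smaller Brauer group --- requires unwinding the new definition of partially unramified Brauer group of a stack introduced later in the paper.
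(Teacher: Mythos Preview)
This statement is a \emph{conjecture}, not a theorem, so there is no proof in the paper to compare against. What the paper does is \emph{formulate} it as a specialisation of the more general Conjectures~\ref{conj:fair} and~\ref{conj:non_fair}, and then record in Lemma~\ref{lem:equiv_conjectures} that Conjecture~\ref{conj:fair} implies Conjecture~\ref{conj:intro}(1) via the groupoid-cardinality conversion (Lemma~\ref{lem:groupoid_count}) and the alternative convergence factors (Lemma~\ref{lem:convergence_factors_alternative}). Your proposal conflates two distinct tasks: motivating the shape of the leading constant (which is what the paper does) and actually proving the asymptotic (which nobody knows how to do in general). Your ``main obstacles'' paragraph --- meromorphic continuation, Tauberian step, uniform error --- describes obstacles to \emph{proving} the conjecture, not to formulating it; these are precisely the obstacles that make this an open problem, and the paper makes no attempt to resolve them.

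On the motivational side, your identification of the factors is broadly in line with the paper's reasoning, but with a few inaccuracies worth flagging. First, $\dual G$ is not the N\'eron--Severi lattice of $BG$; the relevant object is the \emph{orbifold Picard group} $\PicOrb_{\mathcal{M}(L)} BG$ of Definition~\ref{def:partial_orbifold_line_bundle}, and the factor $1/\#\dual G(k)$ arises from the effective cone computation in Lemma~\ref{lem:effective_cone_calc} via the exact sequence of Lemma~\ref{lem:Picorb(BG,C)}. Second, you attribute the removal of $\Omega$ partly to a ``Grunwald--Wang condition'', but this conflates two separate phenomena: the thin set $\Omega$ is removed because of \emph{breaking cocycles} (\S\ref{sec:breaking}, Theorem~\ref{thm:breaking}), which are accumulating subsets arising from cyclotomic interactions as in Kl\"uners's counterexample; the Grunwald--Wang phenomenon is instead encoded in the Brauer--Manin set $BG(\Adele_k)_{\mathcal{M}(L)}^{\Br}$ and affects the Tamagawa measure, not the exceptional set. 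Third, for part~(2) your fibration idea is exactly the paper's Iitaka-fibration approach (\S\ref{sec:non-fair}), but note that the fibres are $BM_\psi$ for \emph{inner twists} $M_\psi$ of $M=\langle\mathcal{M}(G)\rangle$ (Lemma~\ref{lem:fibration_normal_quotient}), which need not be constant group schemes even when $G$ is --- this is why the paper insists on working with finite \'etale group schemes throughout.
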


Heuristics and principles abound in the Malle's conjecture literature. We could find very few precise conjectures with no counter-examples; the main one being Bhargava's \cite{Bha07} in the case of $S_n$. Here the minimal index conjugacy classes are the transpositions and the corresponding Brauer group $\Br_{\mathcal{M}(S_n)}(BS_n)$ is constant. In this case Conjecture \ref{conj:intro}  agrees with Bhargava's heuristics (see \S\ref{sec:S_n}).

In Case (1) we say that the discriminant is \emph{balanced}. Here the factors which appear have precise analogues with Peyre's constant \cite[Def.~2.5]{Pey95}; for example $\tau(k,G)$ is a Tamagawa volume and $a(G)^{b(k,G)}/\#\dual{G}(k)$ is a version of Peyre's effective cone constant $\alpha$. However, the natural height to use in Manin's conjecture for Fano varieties is the \textit{anticanonical height}, and Peyre's construction applies in this setting. Part of the reason for the pathological nature of $c_{\mathrm{Malle}}(k,G)$ is that the discriminant is \emph{not} the anticanonical height in general; the role of the anticanonial height is played by what we call the \emph{radical discriminant} $\Norm(\rad \Delta_{K/k})$, namely the norm of the ideal given by the product of ramified primes. As such Peyre's formalism is insufficient in general, and we also require the framework of Batyrev and Tschinkel \cite{BT98}.

For Case (2), where the discriminant need not be balanced, we instead consider the subgroup $M(G):=\langle \mathcal{M}(G) \rangle$ generated by the minimial index elements, which is normal as it is generated by conjugacy classes. We then sort homomorphisms according to the quotient map $G \to G/M(G)$; we make clear in our paper that this is a version of the Iitaka fibration from birational geometry, and this viewpoint is inspired by the work of Batyrev and Tschinkel \cite{BT98}. This sorting corresponds to counting rational points in the fibres of the Iitaka fibration $BG \to B(G/M(G))$ then summing over all fibres. The first key observation is that the fibres of the Iitaka fibration can be identified with $BM(G)_{\varphi}$ where $M(G)_{\varphi}$ is a suitable inner twist of $M(G)$ (viewed as a group scheme). Thus one can interpret the count in the fibres as a version of Malle's conjecture for group schemes. Our next key observation is that the restriction of the discriminant to a fibre now becomes a balanced height function. This allows us to in fact obtain a precise prediction for the constants $c(k,G,\psi)$ which appear in Conjecture \ref{conj:intro}, with an analogous formula to the balanced case. Altogether, this clarifies that the correct generality for Malle's conjecture  should allow $G$ to be a finite \'etale group scheme, so that one considers $G$-torsors instead of fields, and allows arbitrary height functions rather than just the discriminant. Our whole paper is written in this generality, including our most general conjectures in \S\ref{sec:conjectures}. This more general perspective to Malle's conjecture is also taken in the papers \cite{Alb21,OA21,DYTor,DYBM}.

Case (2) occurs for example if $G = D_4$, where one is counting $D_4$-quartics of bounded discriminant.  In this case the Iiataka fibration is given by $D_4 \to C_2$, and the induced map on fields associates to the quartic extension its quadratic resolvent. Here Malle's conjecture is known to hold \cite{CDO02}, and we verify in \S \ref{sec:D_4} that our conjectures agree with these results, including the exact leading constant. Very recent work of Alberts, Lemke Oliver, Wang and Wood \cite{AOWW24} proves many new cases of Conjecture \ref{conj:intro}(2) (in their terminology, they say that $G$ is \emph{concentrated} in $\langle \mathcal{M}(G) \rangle$).


A closely related formula to Case (1) of our Conjecture \ref{conj:intro} appears in Alberts' paper \cite[Thm.~1.2]{Alb23}. His approach and formula is very different and  stated in terms of a random group model for number fields. However the Brauer group factor is missing: if there is a Brauer--Manin obstruction, the leading constant needs to be modified to take into account a potential failure of strong approximation (Conjecture \ref{conj:balanced}).

\subsection{Main results}
Our paper introduces a new framework for Malle's conjecture via stacks. The difference with our work and the papers \cite{ESZB,DYTor,DYBM}, is that the aim of these works was seeking common generalisations of Malle's and Manin's conjecture, whereas our paper is about using stacks to say something \textit{new about the original Malle conjecture}. Our framework allows to explain all outstanding phenomena in the Malle's conjecture literature.  Let us explain our main results.

\subsubsection{Mass formula, Hensel's Lemma, and Tamagawa measures}
In \cite[Thm.~1.1]{Bha07} and \cite[Prop.~5.3]{Ked07}, Bhargava and Kedlaya prove formulae for weighted counts of local fields, which they call \emph{mass formulae}. One of our first results concerns a generalisation of this to finite \'etale group schemes. To explain this we first introduce heights. We simplify slightly the exposition in the introduction, so some of the notation and set up appearing later in the paper is slightly different and more general (see \S\ref{sec:heights} for the theory of heights).

Let $G$ be a finite \'etale tame group scheme over a global field $k$. We say that a place $v$ is \textit{good} with respect to $G$ if $v$ is non-archimedean, $G$ has good reduction modulo $v$, and $q_v$ is coprime to $|G|$. Let $\varphi_v \in Z^1(k_v,G)$ be a $1$-cocycle over $k_v$ with values in $G$. The tame inertia group at $v$ is canonically isomorphic to a group of roots of unity. Thus restricting $\varphi_v$ to the tame inertia we obtain an element of the Tate twist $G(-1):=\Hom(\widehat{\Z}(1), G)$ of $G$ by minus $-1$. We consider this up to the conjugacy action of $G$, and this gives rise to a map
$$\rho_{G,v}: Z^1(k_v,G) \to \mathcal{C}_G, \quad \mathcal{C}_G := G(-1)/\text{conj}$$
which we call the \emph{ramification type} (see \S \ref{sec:ramification_type} for more details). Heights are then defined as follows: Let $w: \mathcal{C}_G \to \Z$ be a Galois equivariant function with $w(e)= 0$ (we call such a function a \emph{weight function}). For good $v$ we define the associated local height function to be 
$$H_v: Z^1(k_v,G) \to \Z, \quad \varphi_v \mapsto q_v^{w(\rho_{G,v}(\varphi_v))}.$$
For bad $v$ we allow a local height to be an arbitrary function. A (global) height is then a product of local heights. Our mass formula is now as follows.

\begin{theorem}[Mass formula] \label{thm:mass_formula_intro}
	Let $v$ be a good place of $G$ and $w$ a weight function with height function
	$H$. 
	Let $f: \mathcal{C}_G \to \C$ be any Galois equivariant function.
	$$
	\frac{1}{|G|}\sum_{\varphi_v \in Z^1(k_v,G)} \frac{f(\rho_{G,v}(\varphi_v))}{ H_v(\varphi_v)}
	= \sum_{c \in \mathcal{C}_G^{\Gamma_{k_v}}} \frac{f(c)}{q_v^{w(c)}}.$$
\end{theorem}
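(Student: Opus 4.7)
\emph{Setup and strategy.} The plan is to group the sum on the left-hand side by the ramification type and show that, for each class $c \in \mathcal{C}_G^{\Gamma_{k_v}}$, there are exactly $|G|$ cocycles of that type; since the height and $f$ depend only on $c$, the factor $|G|$ then cancels with the $1/|G|$ outside. Because $v$ is good, $|G|$ is coprime to the residue characteristic, so every cocycle $\varphi \in Z^1(k_v, G)$ factors through the tame quotient $\Gamma_{k_v}^{\mathrm{t}}$. The latter is topologically generated by a Frobenius lift $\sigma$ and a generator $\tau$ of tame inertia subject to $\sigma \tau \sigma^{-1} = \tau^{q_v}$; moreover, good reduction ensures that inertia acts trivially on $G(\bar k_v)$, and we write $F \in \Aut G$ for the Frobenius action. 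A cocycle $\varphi$ is then determined by the pair $(a,b) := (\varphi(\sigma), \varphi(\tau)) \in G \times G$, and the defining relation forces
\[
	a \, F(b) \;=\; b^{q_v} a.
\]

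\emph{The ramification type.} The chosen generator $\tau$ identifies $G(-1) \cong G$ as a $G$-set with $G$ acting by conjugation, and under this identification $\rho_{G,v}(\varphi)$ becomes the $G$-conjugacy class $[b]$. Transferring the Frobenius action from $G(-1)$ to $G$ (the inverse twist appears because $\sigma$ acts on $\hat{\Z}(1)$ by $q_v$) gives $F \cdot b = F(b^{q_v^{-1}})$; using that $\gcd(q_v, |G|) = 1$ to raise to the $q_v$-th power, the class $[b]$ is $\Gamma_{k_v}$-fixed if and only if $F(b) \sim b^{q_v}$ in $G$, i.e.\ exactly when the equation $aF(b) = b^{q_v} a$ admits some solution $a$. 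In particular the ramification type of any $\varphi \in Z^1(k_v, G)$ is automatically $\Gamma_{k_v}$-fixed.

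\emph{Counting and conclusion.} Fix $c \in \mathcal{C}_G^{\Gamma_{k_v}}$ with representative $b_0$. Its $G$-conjugates number $|G|/|Z_G(b_0)|$, and for each conjugate $b$ the set $\{a \in G : aF(b) = b^{q_v}a\}$ is a coset of the centraliser $Z_G(F(b))$, which has order $|Z_G(b_0)|$ since $F$ is an automorphism and centralisers are invariant under conjugation. Hence there are precisely $(|G|/|Z_G(b_0)|) \cdot |Z_G(b_0)| = |G|$ pairs $(a,b)$ whose ramification type equals $c$, and
\[
	\frac{1}{|G|} \sum_{\varphi \in Z^1(k_v,G)} \frac{f(\rho_{G,v}(\varphi))}{H_v(\varphi)}
	\;=\; \frac{1}{|G|}\sum_{c \in \mathcal{C}_G^{\Gamma_{k_v}}} |G|\cdot \frac{f(c)}{q_v^{w(c)}},
\]
which is the claimed formula. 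The main obstacle is purely bookkeeping: carefully tracking the $q_v$-twist entering through the identification $G(-1) \cong G$, and verifying that being $\Gamma_{k_v}$-fixed on $\mathcal{C}_G$ is equivalent to solvability of the cocycle relation. Once this dictionary is in place, the heart of the count is a single orbit--stabiliser argument.
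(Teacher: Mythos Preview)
Your proof is correct. The explicit parametrisation of $Z^1(k_v,G)$ by pairs $(a,b)$ satisfying $aF(b)a^{-1}=b^{q_v}$, the identification of the $\Gamma_{k_v}$-fixed condition on $\mathcal{C}_G$ with solvability of this relation, and the orbit--stabiliser count are all sound.

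However, this is \emph{not} the route the paper takes; the authors explicitly flag their argument as ``stack theoretic and completely different to that of Bhargava and Kedlaya'' (your approach is essentially the Kedlaya one, extended to non-constant $G$). The paper instead invokes its stacky Hensel's Lemma (Theorem~\ref{thm:Hensel}) to identify $BG(k_v)$ with the $\F_v$-points of the cyclotomic inertia stack $[G(-1)/G]$, splits the groupoid count into a sum over sectors $\mathcal{S}_c$ indexed by $c\in\mathcal{C}_G^{\Gamma_{k_v}}$, and then appeals to the Grothendieck--Lefschetz trace formula for gerbes (Lemma~\ref{lem:gerbe_finite_field}) to show each sector contributes groupoid cardinality~$1$. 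Your orbit--stabiliser count is precisely the hands-on verification of $\#\mathcal{S}_c(\F_v)=1$ that this trace-formula argument bypasses. The trade-off: your argument is elementary and self-contained, while the paper's approach slots into the sector formalism used throughout (and in particular reuses the same machinery for the twisted mass formula with Brauer elements, Theorem~\ref{thm:local_invariant_integral}, where Lemma~\ref{lem:gerbe_finite_field} is replaced by an exponential-sum analogue).
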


We  interpret this geometrically as a version for $BG$ of Denef's formula \cite[Thm.~3.1]{Den87} for Igusa zeta functions of varieties; see Theorem \ref{thm:Igusa} for details, as well as an equivalent formulation in terms of groupoid cardinalities.

Our proof of Theorem \ref{thm:mass_formula_intro} is stack theoretic and completely different to that of Bhargava and Kedlaya. It uses a new version of Hensel's Lemma for stacks. The traditional version of Hensel's Lemma says that $\mathcal{X}(\O_v) \to \mathcal{X}(\F_v)$ is surjective for a smooth stack $\mathcal{X}$. Our version is very different and gives a description of $BG(k_v)$. It uses the \emph{cyclotomic inertia stack} of Abramovich--Graber--Vistoli \cite[\S3]{AGV08}, which is defined to be $I_{\mu} \mathcal{X} := \coprod_{n} \Hom_{S, \text{rep}}(B\mu_n, \mathcal{X})$. They introduced this stack in the study of Gromov--Witten theory of stacks; our application is completely different and arithmetic in nature. For a uniformising parameter $\pi_v$, we introduce a reduction modulo $\pi_v$ map which can be seen as a categorical avatar of the ramification type, via the identification $I_{\mu} BG \cong [G(-1)/G]$ (Proposition \ref{prop:cyclotomic_inertia_stack_BG}) .

\begin{theorem}[Stacky Hensel's Lemma] \label{thm:Hensel_intro}
	Let $v$ be a good place of $G$. Then the reduction modulo $\pi_v$ map
	$BG(k_v) \to I_{\mu} BG(\F_v)$ is an equivalence of groupoids.
\end{theorem}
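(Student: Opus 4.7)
The plan is to identify both groupoids explicitly with the same Galois-theoretic datum and match them directly. Since $v$ is good, $|G|$ is coprime to the residue characteristic $p$, so every $G$-torsor on $\Spec k_v$ is tamely ramified and $BG(k_v)$ may be described via continuous cocycles of the tame quotient $\Gamma_{k_v}^{\mathrm{tame}}$. This quotient is topologically generated by the tame inertia $I_v^{\mathrm{tame}} \cong \widehat{\Z}(1)^{(p')}$ and a Frobenius lift $\sigma$ subject to $\sigma t \sigma^{-1} = t^{q_v}$, for $t$ a topological generator of inertia. Good reduction of $G$ makes inertia act trivially on $G(\bar{k}_v) = G(\bar{\F}_v)$, so the restriction of a cocycle $\varphi$ to $I_v^{\mathrm{tame}}$ is a genuine group homomorphism; by the order constraint on $G$ it factors through $I_v^{\mathrm{tame}} \twoheadrightarrow \widehat{\Z}(1)$ and produces a canonical element $x_\varphi \in G(-1)(\bar{\F}_v)$. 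Setting $y_\varphi := \varphi(\sigma)$, this realises $BG(k_v)$ as the groupoid of pairs $(x, y) \in G(-1)(\bar{\F}_v) \times G(\bar{\F}_v)$ satisfying $y x y^{-1} = x^{q_v}$, taken up to simultaneous $G$-conjugation, with automorphism group at $(x, y)$ given by the joint centraliser $Z_G(x) \cap Z_G(y)$.

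The second step is to check that $I_\mu BG(\F_v)$ is described by exactly the same data. Using the identification $I_\mu BG \cong [G(-1)/G]$ from Proposition \ref{prop:cyclotomic_inertia_stack_BG}, an $\F_v$-point is a $G$-torsor on $\Spec \F_v$ (equivalently, a Frobenius-twisted conjugacy class $y \in G(\bar{\F}_v)$) together with a $G$-equivariant map to $G(-1)$ (equivalently, an element $x \in G(-1)(\bar{\F}_v)$ fixed by the $y$-twisted Frobenius). A short computation with the natural Frobenius action on the $-1$ Tate twist (under which $\mathrm{Frob}$ acts on $G(-1)$ by $x \mapsto x^{q_v^{-1}}$) shows that the twisted fixed-point condition is exactly $y x y^{-1} = x^{q_v}$, agreeing with the tame relation above; the automorphism groups match likewise.

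Finally, one should check that this abstract bijection is realised by the geometric reduction map. Concretely, given a $G$-torsor $T$ on $\Spec k_v$, one extends it to the root stack $\mathcal{R} := \sqrt[n]{(\pi_v)/\Spec \O_v}$ for $n = \exp(G)$, using tameness (Abhyankar's lemma) together with the smoothness of $BG$ and Hensel's lemma for the henselian base $\O_v$ to obtain existence and uniqueness, and then restricts to the special fibre $\mathcal{R}_{\F_v} \cong B\mu_n$; this yields a representable morphism $B\mu_n \to BG$ over $\F_v$, i.e., an object of $I_\mu BG(\F_v)$. Unwinding this in terms of cocycles reproduces the map $\varphi \mapsto (x_\varphi, y_\varphi)$ from the first paragraph, so all three descriptions agree.

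The main obstacle is the careful bookkeeping of the $-1$ Tate twist: the factor $q_v$ in the tame relation $\sigma t \sigma^{-1} = t^{q_v}$ must be matched against the $q_v^{-1}$ appearing in the Frobenius action on $\widehat{\Z}(1)^{\otimes -1}$, and it is this cancellation that aligns the two sides. Once this has been checked, essential surjectivity, fullness and faithfulness are all immediate from the explicit bijection on objects together with the identification of automorphism groups as the joint centralisers $Z_G(x) \cap Z_G(y)$.
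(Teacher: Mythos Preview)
Your proposal is correct. The paper organises the same computation differently: it first passes to the strict henselisation $\mathcal{O}_v^{\mathrm{sh}}$, where the residue field is separably closed and Frobenius disappears, so that both sides reduce to the action groupoid $[G(-1)/G]$ and the map $\pmod{\pi}$ is visibly an equivalence by Lemma~\ref{lem:residue_map_BG}; it then descends along the $\Gamma_{\F_v}$-action to conclude. You instead keep Frobenius in play throughout and match the two sides by hand via the tame presentation $\sigma t\sigma^{-1}=t^{q_v}$; this is precisely the explicit description the paper records in the remark immediately following the proof. The paper's route sidesteps the Tate-twist bookkeeping you flag as the main obstacle, at the cost of invoking descent; yours is self-contained but more computational.

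One minor point: in your third paragraph you fix $n=\exp(G)$ for the root stack, but the arithmetic valuative criterion (Lemma~\ref{lem:arithmetic_valuative_criterion}) produces a specific $n$ for each $\varphi$, namely the order of the inertia image, and only for that $n$ is the restriction $(B\mu_n)_{\F_v}\to BG$ \emph{representable}, as required by the definition of $I_\mu BG$. You should let $n$ vary with the torsor rather than fixing it globally.
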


In \S \ref{sec:local_Tamagawa} we define a local Tamagawa measure $\tau_{H,v}$ on $BG(k_v)$ modelled on Peyre's definition for Fano varieties \cite[\S 2.2.1]{Pey95}. To obtain a global Tamagawa measure we define convergence factors $\lambda_v$ coming from the local Artin $L$-function factors of the collection $\mathcal{M}(w) =  \{ c \in \mathcal{C}_G : w(c) \neq 0 \text{ is minimal} \}$ of minimal weight conjugacy classes of $G(-1)$. Our mass formula is used to prove that these indeed form convergence factors.

\begin{theorem}[Convergence of global Tamagawa measure] \label{thm:Tamagawa_products_intro}
	The infinite product measure $\prod_v \lambda_v^{-1} \tau_{H,v}$ 
	converges absolutely on $\prod_v BG(k_v)$.
\end{theorem}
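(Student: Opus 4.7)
The plan is to reduce the claim to an Euler-factor tail estimate obtained directly from the mass formula. Convergence of the infinite product of measures is equivalent to absolute convergence of the product of total masses, i.e.\ to
$$\sum_{v} \Bigl| \lambda_v^{-1} \tau_{H,v}(BG(k_v)) - 1 \Bigr| < \infty.$$
The archimedean places and the finitely many places of bad reduction (or dividing $|G|$) contribute only finitely many bounded factors, so it suffices to control good places. For such a $v$, the Stacky Hensel's Lemma (Theorem~\ref{thm:Hensel_intro}) identifies $BG(k_v)$ with the cyclotomic inertia stack evaluated at $\F_v$, which in turn matches $[G(-1)/G](\F_v)$; under this identification $\tau_{H,v}$ should reduce to the groupoid-weighted counting measure on $Z^1(k_v,G)/\!\sim$ with weights $H_v^{-1}$.

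Granting this, Theorem~\ref{thm:mass_formula_intro} with $f \equiv 1$ yields
$$\tau_{H,v}(BG(k_v)) = \sum_{c \in \mathcal{C}_G^{\Gamma_{k_v}}} q_v^{-w(c)} = 1 + \frac{\#\mathcal{M}(w)^{\Frob_v}}{q_v^{w_{\min}}} + O\bigl(q_v^{-w'}\bigr),$$
where $w_{\min}$ is the minimal positive value of $w$ on $\mathcal{C}_G$ and $w'$ is the next smallest. On the other hand, $\lambda_v$ is by design the local Artin $L$-factor at $s = w_{\min}$ of the permutation representation of $\Gamma_k$ on $\mathcal{M}(w)$, so the geometric series expansion gives
$$\lambda_v = 1 + \frac{\#\mathcal{M}(w)^{\Frob_v}}{q_v^{w_{\min}}} + O\bigl(q_v^{-2 w_{\min}}\bigr).$$
Multiplying the two expansions, the leading correction terms cancel and I obtain
$$\lambda_v^{-1} \tau_{H,v}(BG(k_v)) = 1 + O\bigl(q_v^{-s}\bigr), \qquad s := \min(2 w_{\min}, w').$$
Since $w$ takes non-negative integer values with $w_{\min} \geq 1$ and $w' \geq w_{\min} + 1 \geq 2$, we have $s \geq 2 > 1$. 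Comparison with the convergent series $\sum_v q_v^{-s}$ (PNT for $k$) completes the proof.

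The main obstacle I anticipate is the normalization step in the first paragraph: reconciling the intrinsic definition of $\tau_{H,v}$, built via Peyre-style Haar measures on $BG$ with the appropriate orbifold correction factors, with the combinatorial sum the mass formula evaluates. Once the Stacky Hensel's Lemma is invoked to replace $BG(k_v)$ by $[G(-1)/G](\F_v)$, one must verify that the chosen normalization of the local measure assigns cardinality $1/|\Aut(\varphi_v)|$ to each isomorphism class, so that the total mass genuinely equals $\sum_{c \in \mathcal{C}_G^{\Gamma_{k_v}}} q_v^{-w(c)}$ rather than some twist by $|G|$ or by a local discriminant factor. After this identification is pinned down, the analytic bound above is essentially automatic, and Theorem~\ref{thm:Tamagawa_products_intro} becomes a formal consequence of Theorems~\ref{thm:mass_formula_intro} and~\ref{thm:Hensel_intro} together with the definition of $\lambda_v$.
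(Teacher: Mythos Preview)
Your overall strategy is exactly the paper's: expand $\tau_{H,v}(BG(k_v))$ via the mass formula, expand $\lambda_v^{-1}$ via the Artin $L$-factor, cancel the leading corrections, and check the remainder is summable. The paper's proof is literally three displayed lines to this effect.

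However, you have the normalizations wrong, and this is not the $|G|$ issue you flagged. The local Tamagawa measure is defined with $H_v(\varphi_v)^{a(L)}$ in the denominator (Definition~\ref{def:local_Tamagawa_measure}), not $H_v(\varphi_v)$; consequently the mass formula (Corollary~\ref{cor:mass_formula}) gives
\[
\tau_{H,v}(BG(k_v)) = \sum_{c \in \mathcal{C}_G^{\Gamma_{k_v}}} q_v^{-w(c)a(L)} = 1 + \frac{\#\mathcal{M}(L)^{\Gamma_{k_v}}}{q_v} + O(q_v^{-1-\varepsilon}),
\]
with leading exponent $1$, not $w_{\min}$. Correspondingly $\lambda_v = L_v(\mathcal{M}(L),1)$ is the Artin factor at $s=1$, not at $s=w_{\min}$. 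With these correct definitions the cancellation gives $1 + O(q_v^{-1-\varepsilon})$ for some $\varepsilon>0$, which is summable.

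Your version, as written, relies on the claim that $w$ takes non-negative integer values with $w_{\min}\ge 1$. This is false: weight functions are $\Q$-valued subject to a congruence modulo $\Z$ against the age pairing (Definition~\ref{def:orbifold_line_bundle}); for instance on $B\mu_n$ one has $w_{\min}=1/n$. With your normalizations and $w_{\min}\le 1/2$, the error $O(q_v^{-\min(2w_{\min},w')})$ would not be summable. The $a(L)$ exponent in the definition of $\tau_{H,v}$ is precisely what rescales the minimal weight to $1$ and makes the argument go through uniformly.
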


With a measure in place, we are in a good position to put forward Conjecture~\ref{conj:balanced} on the leading constant in Malle's conjecture, as well as formalise the Malle--Bhargava heuristics through an equidistribution conjecture on the adelic points of $BG$. This is Conjecture \ref{conj:equi}, as well as the more general problem of imposing infinitely many local conditions, which leads to a property we call \textit{strong equidistribution} (Conjecture \ref{conj:equi_strong}). Crucially however in general one should restrict the measure to those adelic points orthogonal to a suitable Brauer group; this extra factor does not appear in the Malle--Bhargava heuristics in the literature. It is necessary to take into account Grunwald--Wang-type phenomenon in the leading constant. We discuss the corresponding Brauer group in the next section.

\begin{remark}
We give a definition of \emph{balanced height functions}, which is modelled on the notion of fair counting function introduced by Wood for abelian $G$ \cite[\S 2.1]{Woo10}  (Definition~\ref{def:balanced_height}). Balancedness  asks that the minimal weight conjugacy classes generate $G$, whereas Wood asks that they generate the $r$-torsion of $G$ for all $r$.  Wood's notion of fairness has strong implications for the corresponding Brauer groups which arise, so her choice can actually be explained through the Brauer group (see \S\ref{sec:fair_wood} for details). This framework allows us to answer Wood's \cite[\S 10]{Woo16} and Bhargava's \cite[\S8.2]{Bha10} question on when the Malle--Bhargava heuristics should hold: according to Conjecture \ref{conj:equi} and \S\ref{sec:equi_unbalanced}, this should be exactly when counting by a balanced height function and when there is no Brauer--Manin obstruction (providing one removes a possible accumulating collection of fields, as in Conjecture \ref{conj:intro}).
\end{remark}

\subsubsection{Brauer groups}
The Brauer group plays a key role in the leading constant in Manin's conjecture for a Fano variety $X$, where it is essential to the definition that $\Br X /\Br k$ is finite. However the Brauer group $\Br BG$ is huge in general, even in simple cases (e.g.~one has $\Br B(\Z/2\Z)/ \Br k = k^{\times }/k^{\times 2})$. A key observation in our paper is that $\Br BG$ is not the correct Brauer group for the leading constant. When the height is the radical discriminant, one should take the \textit{unramified Brauer group} $\Brun BG$ of $BG$. We define this group in \S\ref{sec:Br} for more general algebraic stacks, since we expect the theory to be of independent interest. If $X$ is a smooth proper variety then $\Br X = \Brun X$, however crucially  $\Br BG \neq \Brun BG$ in general despite $BG$ being smooth and proper for $G$ finite \'etale.

Our main result here is a version of Grothendieck's purity theorem  \cite[Thm.~3.7.1]{Col21} for the unramified Brauer group. In the version for varieties $X$ this is stated in terms of divisors on $X$. However $BG$ has no non-zero divisors. Instead our result is stated in terms of \textit{sectors}; these are the connected components of the cyclotomic inertia stack (see \S \ref{def:sector}). They are not divisors in any sense, but a key aspect of our theory is that they play an analogous role to divisors on stacks.

\begin{theorem}[Purity for the unramified Brauer group]	\label{thm:Br_unramified_intro}
Let $\mathcal{X}$ be a smooth proper tame DM stack over a field $k$ and $b \in \Br \mathcal{X}$. Then $b \in \Brun \mathcal{X}$ if and only if for every sector $\mathcal{S} \in \pi_0(I_{\mu} \mathcal{X})$ with universal map $f_{\mathcal{S}}: (B \mu_n)_{\mathcal{S}} \to \mathcal{X}$, we have $f_{\mathcal{S}}^*(b) \in \Br \mathcal{S}$.
\end{theorem}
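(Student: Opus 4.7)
The plan is to reduce the statement to a local analysis at codimension-one points of $\mathcal{X}$, where the classical Grothendieck purity theorem on the coarse moduli space handles the ``schematic'' part and the sector condition is shown to be equivalent to the vanishing of residues at ``gerby'' divisors. Concretely, I would take as definition of $\Brun \mathcal{X}$ the intersection of kernels of residue maps $\partial_R : \Br k(\mathcal{X}) \to H^1(\kappa_R, \Q/\Z)$ over geometric DVRs $R$ of the generic point, and show that the collection of sector pullbacks $\{ f_{\mathcal{S}}^*(b) \pmod{\Br \mathcal{S}}\}_{\mathcal{S} \in \pi_0(I_\mu \mathcal{X})}$ computes precisely these residues. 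The identity sector contributes nothing, so the work is concentrated at non-identity sectors, and one needs only sectors whose image in $\mathcal{X}$ has codimension one (higher codimension sectors are automatic by a codimension argument after purity).

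Next I would analyse the local picture. Because $\mathcal{X}$ is tame DM, at any codimension-one geometric point the stabilizer is a tame cyclic group, so étale-locally one has $\mathcal{X} \cong [\Spec R / \mu_n]$ for $R$ a regular local ring on which $\mu_n$ acts. The non-identity sectors over this chart correspond under Proposition~\ref{prop:cyclotomic_inertia_stack_BG}-type identifications to non-trivial elements of $\mu_n(-1)$, and each universal map $f_{\mathcal{S}} : (B\mu_n)_{\mathcal{S}} \to \mathcal{X}$ factors through the inclusion of the residual gerbe over the reduced gerby divisor $D \subset \mathcal{X}$. A Leray spectral sequence (or direct Kummer computation using $1 \to \mu_n \to \Gm \to \Gm \to 1$) on $B\mu_n \times \mathcal{S}$ gives a canonical splitting
\[
\Br((B\mu_n)_{\mathcal{S}}) = \Br \mathcal{S} \oplus H^1(\mathcal{S}, \Z/n\Z) \oplus (\text{higher-degree pieces}),
\]
so the quotient $\Br((B\mu_n)_{\mathcal{S}})/\Br \mathcal{S}$ is naturally identified with $H^1(\kappa(D), \Z/n\Z) \subseteq H^1(\kappa(D), \Q/\Z)$, exactly the target of the classical residue map $\partial_D$ at the DVR corresponding to $D$.

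The main obstacle, and the heart of the argument, is verifying that the composite $\Br \mathcal{X} \xrightarrow{f_{\mathcal{S}}^*} \Br((B\mu_n)_{\mathcal{S}}) \twoheadrightarrow \Br((B\mu_n)_{\mathcal{S}})/\Br \mathcal{S}$ coincides, up to the canonical isomorphism above, with the classical residue $\partial_D$. I would verify this by testing on local generators: any class in $\Br \mathcal{X}$ is locally (on the chart $[\Spec R / \mu_n]$) represented by a cyclic Azumaya algebra, and both the classical residue and the sector-pullback can be computed explicitly on such a class in terms of Kummer data, with the matching reducing to compatibility of the Kummer sequence on the chart with the one on $B\mu_n$. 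Once this identification is established, the ``only if'' direction follows because unramifiedness of $b$ forces the residue $\partial_D(b) = 0$, hence $f_{\mathcal{S}}^*(b) \in \Br \mathcal{S}$; conversely, if all sector pullbacks lie in $\Br \mathcal{S}$ then all residues at gerby divisors vanish, while residues at non-gerby divisors (valuations whose centre lies in the representable open locus $\mathcal{U} \subseteq \mathcal{X}$) vanish automatically by Grothendieck's purity applied to the smooth scheme $\mathcal{U}$. A small ancillary step, needed to close the loop, is to argue that any geometric DVR on $k(\mathcal{X})$ whose residue can be non-zero on $\Br \mathcal{X}$ is equivalent to a divisorial valuation on $\mathcal{X}$ (so only codimension-one sectors matter), which I would deduce from smoothness and properness of $\mathcal{X}$ together with the usual Zariski--Nagata purity extended to the coarse moduli space.
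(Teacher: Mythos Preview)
Your proposal has a fundamental gap: it is modeled on the classical scheme picture (function field, divisorial valuations, codimension-one local analysis) and therefore only makes sense for DM stacks with trivial generic stabiliser. But the theorem is stated for arbitrary smooth proper tame DM stacks, and the paper's principal application is $\mathcal{X}=BG$, which is $0$-dimensional, has \emph{no} non-zero divisors, has \emph{no} representable open locus, and has stabiliser $G$ (not cyclic) at every point. Your ``local picture'' claim that at codimension-one geometric points the stabiliser is tame cyclic is simply false in this generality, and the notion of ``gerby divisors'' you invoke does not exist for $BG$. The paper is explicit about this (see the paragraph before the theorem in the introduction and Remark~\ref{rem:intuition}): sectors are \emph{not} divisors in any sense; they only play an analogous role.

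Relatedly, you take as your working definition of $\Brun\mathcal{X}$ the kernel of residue maps $\partial_R:\Br k(\mathcal{X})\to \H^1(\kappa_R,\Q/\Z)$ over geometric DVRs of the generic point. The paper's Definition~\ref{def:Br_unramified} is different: $b$ is unramified iff $b(x)\in\Br\mathcal{O}$ for every DVR $\mathcal{O}$ with fraction field $K$ over $k$ and every $x\in\mathcal{X}(K)$. For $BG$ the generic gerbe is $BG$ itself, so your definition degenerates, whereas the paper's definition is perfectly meaningful (and genuinely restrictive, since $\Brun BG\subsetneq\Br BG$ in general). The paper's proof (\S\ref{sec:proof_sector_purity}) avoids function fields entirely: given $x\in\mathcal{X}(K)$, the arithmetic valuative criterion (Lemma~\ref{lem:arithmetic_valuative_criterion}) extends $x$ to a representable map from the root stack $\Spec\mathcal{O}_{\sqrt[n]{v}}$, whose special fibre $(B\mu_n)_{\F}$ lands in a sector; the key computation is then that the root-stack residue $\res_{\mathcal{O},\sqrt[n]{v}}$ agrees with the Witt residue (Lemma~\ref{lem:Witt_residue_equal_root_residue}) and with the sector residue $\res_{\mathcal{S},n}$ from \eqref{eqn:residue_map}. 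This mechanism, via root stacks rather than local charts of the form $[\Spec R/\mu_n]$, is what makes the argument work uniformly and is the missing idea in your proposal.
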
 

In the special case $\mathcal{X} = BG$ and $k = \C$, we use Theorem \ref{thm:Br_unramified_intro} to recover a famous formula of Bogomologov \cite{Bog87} for the unramified Brauer group of $\mathbb{A}^n/G$ (see Remark \ref{rem:Bogomolov}). In particular, Theorem \ref{thm:Br_unramified_intro} can be viewed as vast generalisation of Bogomolov's formula to more general stacks and non-algebraically closed fields.

We use the unramified Brauer group to study the Brauer--Manin obstruction on $BG$. The following theorem is a version for stacks of a famous theorem of Harari \cite[Thm.~2.1.1]{Har94}, which is part of Harari's formal lemma for varieties \cite[\S 13.4]{Col21}. It shows that our definition of the unramified Brauer group has the correct formal properties from an arithmetic perspective.

\begin{theorem}[Harari's formal lemma] \label{thm:Harari's_formal_lemma_intro}
	Let $\mathcal{X}$ be a smooth finite type DM stack over a number field $k$
	and $b \in \Br \mathcal{X}$. Then $b \in \Brun \mathcal{X}$ 
	if and only if $b$ evaluates trivially
	on 	$\mathcal{X}(k_v)$ for all but finitely many places $v$ of $k$.
\end{theorem}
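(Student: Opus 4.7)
The plan is to prove both directions separately: the forward direction is routine via spreading-out and properness, while the reverse direction adapts Harari's Chebotarev-style argument for varieties \cite[Thm.~2.1.1]{Har94} to the stacky setting.

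For the easy direction, suppose $b \in \Brun \mathcal{X}$. I would spread out $\mathcal{X}$ and $b$ to a smooth model $\mathfrak{X}$ over $\Spec \O_{k,S}$ for a suitable finite set of places $S$, and compactify to a smooth proper DM stack $\bar{\mathfrak{X}}$ to which $b$ extends (using stacky resolution together with the definition of $\Brun$ via compactifications). For each nonarchimedean $v \notin S$ and each $x_v \in \mathcal{X}(k_v) \subseteq \bar{\mathfrak{X}}(k_v)$, the valuative criterion for proper DM stacks, after possibly passing to an unramified extension, extends $x_v$ to an $\O_v^{\mathrm{unr}}$-point of $\bar{\mathfrak{X}}$, so $b(x_v)$ factors through $\Br \O_v^{\mathrm{unr}} = 0$.

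For the reverse direction, assume $b \in \Br \mathcal{X} \setminus \Brun \mathcal{X}$. Applying purity (Theorem \ref{thm:Br_unramified_intro}) to a smooth proper compactification $\bar{\mathcal{X}}$ of $\mathcal{X}$, there exists a sector $\mathcal{S}$ with universal map $f_{\mathcal{S}} : (B\mu_n)_{\mathcal{S}} \to \bar{\mathcal{X}}$ for some $n \geq 2$ such that $f_{\mathcal{S}}^*(b) \notin \Br \mathcal{S}$. Using the Leray spectral sequence for the structure morphism $(B\mu_n)_{\mathcal{S}} \to \mathcal{S}$, the class $f_{\mathcal{S}}^*(b)$ projects to a non-trivial ``residue'' $\chi \in H^1(\mathcal{S}, \Z/n\Z)$, corresponding to a non-trivial cyclic \'etale cover of $\mathcal{S}$. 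I would then spread $\mathcal{S}$ and its cover to integral models and apply Chebotarev combined with Lang--Weil estimates: for infinitely many places $v$ there exist residue-field points $s_v \in \mathcal{S}(\F_v)$ at which $\chi$ evaluates non-trivially. A version of Theorem \ref{thm:Hensel_intro} applied to the sector $\mathcal{S}$ of $\bar{\mathcal{X}}$ (equivalently, a stacky Hensel argument along the root stack associated to $\mathcal{S}$) lifts $s_v$ together with a ramified $\mu_n$-twist, given by the image of a uniformiser $\pi_v \in k_v^\times/k_v^{\times n}$, to a point $x_v \in \mathcal{X}(k_v)$. By the compatibility of the Brauer pairing on $B\mu_n$ with the tame symbol, the invariant $\inv_v(b(x_v))$ equals the cup product $(\pi_v, \chi(s_v)) \in \Br k_v$, which is non-zero for these $v$.

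The main obstacle is the synthesis in the reverse direction: isolating the correct residue $\chi$ from $f_{\mathcal{S}}^*(b)$ and then producing local points whose Brauer evaluation picks up that residue. Concretely, one must (i) extend Theorem \ref{thm:Hensel_intro} beyond $BG$ to handle the inertia along a sector of a general smooth DM stack; (ii) verify that the tame-symbol computation on $B\mu_n$-torsors transports correctly through $f_{\mathcal{S}}$ to compute $\inv_v b(x_v)$; and (iii) ensure the cover of $\mathcal{S}$ one feeds into Chebotarev is \emph{geometrically} non-trivial, so that non-split places exist in abundance. Since sectors are not ordinary divisors, the classical Gysin formalism must be replaced systematically by the cyclotomic inertia formalism of Abramovich--Graber--Vistoli throughout.
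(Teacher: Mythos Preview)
Your forward direction has a conceptual gap. You write that you would ``compactify to a smooth proper DM stack $\bar{\mathfrak{X}}$ to which $b$ extends (using \ldots\ the definition of $\Brun$ via compactifications)''. But Definition~\ref{def:Br_unramified} is \emph{not} via compactifications: $b$ is unramified iff $b(x) \in \Br \mathcal{O}$ for every DVR point $x$. Crucially, even for a smooth proper DM stack one can have $\Brun \mathcal{X} \subsetneq \Br \mathcal{X}$---this is precisely the phenomenon the paper is built around---so there is no ``extend to the compactification'' step available. Moreover, the valuative criterion for proper DM stacks does not yield $\O_v^{\mathrm{unr}}$-points: it only gives root-stack points $\Spec(\O_v)_{\sqrt[n]{v}} \to \bar{\mathfrak{X}}$ (Lemma~\ref{lem:arithmetic_valuative_criterion}), and showing that $b$ pulls back trivially to these is essentially the sector-purity statement you are trying to avoid.

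The paper's proof is far simpler and bypasses compactifications and sectors entirely by reducing both directions to the scheme case. Forward: choose a smooth atlas $f:U \to \mathcal{X}$ from a scheme with $U(K) \to \mathcal{X}(K)$ essentially surjective for every field $K$ (Lemma~\ref{lem:image_k_points}); then $f^*b \in \Brun U$ by functoriality (Remark~\ref{rem:functorial}), and the scheme case is classical. Reverse: if $b$ is ramified, the \emph{definition} hands you a DVR $\mathcal{O}$ with fraction field $K$ and $x \in \mathcal{X}(K)$ with $b(x) \notin \Br \mathcal{O}$; spread this out to a map of $k$-varieties $f:V \subset Y \to \mathcal{X}$ with $f^*b \notin \Br Y$, and apply Harari's formal lemma to the \emph{scheme} $V$. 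Your sector/Chebotarev/stacky-Hensel programme is not needed here; that machinery is exactly what the paper deploys later for the finer partially-unramified statement (Theorem~\ref{thm:Harari_formal_partially_unramified}), where one must genuinely track ramification types rather than merely detect non-integrality at some DVR.
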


The unramified Brauer group is the correct object when counting via the radical discriminant. However for general heights it turns out that a different Brauer group is required; this change of Brauer group depending on the choice of height function explains many of the differences observed in the literature regarding counting with different height functions. We call this the \textit{partially unramified Brauer group}. In the case of $BG$ it is defined to be only those elements of $\Br BG$ which are unramified along the sectors determined by a given Galois invariant collection $\mathcal{C} \subset \mathcal{C}_G$ of conjugacy classes, rather than all sectors as in case of the unramified Brauer group.

Calculating this group $\Br_{\mathcal C} BG$ is crucial for calculating the leading constant in our conjecture. Firstly we prove that it is finite modulo $\Br k$ in the balanced case, as required for the leading constant to even be well-defined; given that $\Br BG /\Br k$ is infinite in general, this finiteness is non-obvious. We also obtain an algorithm to calculate it as well as write down elements. We state this informally as a theorem here; see \S \ref{sec:procedure} for a precise description.

\begin{theorem}[Finiteness and computability of the Brauer group] \label{thm:finiteness_partially_unramified_Brauer_group}
	Let $G$ be a finite \'etale tame group scheme scheme over a field $k$ and
	$\mathcal{C} \subset \mathcal{C}_G$ be Galois invariant which generates $G$.
	Then the group $\Br_{\mathcal C} BG/\Br k$ is finite and there exists an effective method
	for calculating it.
\end{theorem}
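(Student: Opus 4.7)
The plan is to analyse $\Br BG$ via the Leray spectral sequence
\[ E_2^{p,q} = H^p(k, H^q(BG_{\bar k}, \Gm)) \Rightarrow H^{p+q}(BG, \Gm), \]
using $H^0(BG_{\bar k}, \Gm) = \bar k^{\times}$, $H^1(BG_{\bar k}, \Gm) = \dual{G}(\bar k)$ and $H^2(BG_{\bar k}, \Gm) = H^2(G(\bar k), \bar k^\times)$. This produces a three-step filtration on $\Br BG$ whose graded quotients are subquotients of $\Br k$, of $H^1(k, \dual{G})$, and of the $\Gamma_k$-invariants of the Schur multiplier $H^2(G(\bar k), \bar k^\times)$. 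The outer pieces cause no trouble: the bottom is $\Br k$ itself, and the top is finite because $G$ is finite, so its Schur multiplier is. The theorem therefore reduces to showing that the partially unramified condition forces the middle piece, a subquotient of the typically infinite group $H^1(k, \dual{G})$, to become finite.

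The second step is to translate the condition $f_{\mathcal{S}}^*(b) \in \Br \mathcal{S}$ of Theorem~\ref{thm:Br_unramified_intro} into a vanishing condition on this middle piece. For a sector $\mathcal{S}_c$ associated to $c \in \mathcal{C}$ of order $n$, the analogous Leray spectral sequence for $(B\mu_n)_{\mathcal{S}_c} \to \mathcal{S}_c$ realises $\Br(B\mu_n)_{\mathcal{S}_c}$ as an extension of $H^1(\mathcal{S}_c, \Z/n)$ by $\Br \mathcal{S}_c$; the fibre $H^2$ vanishes since $\bar k^\times$ is divisible. Functoriality under $f_{\mathcal{S}_c}$ then identifies the projection of $f_{\mathcal{S}_c}^*(b)$ onto $H^1(\mathcal{S}_c, \Z/n)$ with the image of the $H^1(k, \dual G)$-part of $b$ under the map induced by the dual restriction $\dual{G} \twoheadrightarrow \dual{\langle c \rangle}$, after an appropriate Tate twist. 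Partially unramified classes are therefore exactly those whose middle graded part dies in $\bigoplus_{c \in \mathcal{C}} H^1(\mathcal{S}_c, \dual{\langle c \rangle})$.

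The key group-theoretic input comes in next. Since $\mathcal{C}$ generates $G$, the joint dual map
\[ \dual{G} \hookrightarrow \prod_{c \in \mathcal{C}} \dual{\langle c \rangle} \]
is injective, with finite $\Gamma_k$-equivariant cokernel $Q$. The associated long exact sequence of Galois cohomology identifies the kernel of $H^1(k, \dual{G}) \to \prod_c H^1(\mathcal{S}_c, \dual{\langle c \rangle})$ as a quotient of $H^0(k, Q)$, which is finite since $Q$ is. Combined with the already-finite top graded piece, this yields finiteness of $\Br_{\mathcal{C}} BG / \Br k$.

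For the effective procedure, every ingredient above is computable from a finite presentation of $G$ and its $\Gamma_k$-action: the character module $\dual{G}$, the cyclic subgroups $\langle c \rangle$ and the cokernel $Q$, the finite Galois cohomology group $H^0(k,Q)$, the Schur multiplier $H^2(G(\bar k), \bar k^\times)$ with its restrictions to cyclic subgroups, and the differentials $d_2$ of the spectral sequence. Threading these through yields explicit representatives of classes in $\Br_{\mathcal{C}} BG / \Br k$ as central extensions of $G$ by $\mu_n$ with prescribed ramification along the chosen sectors. The step I expect to be most delicate is checking that the abstract cohomological residue on each graded piece genuinely matches the ramification map of Theorem~\ref{thm:Br_unramified_intro} under the Tate twists identifying $\mathcal{S}_c$ with a Galois orbit in $G(-1)$; once this compatibility is in place, the rest reduces to standard manipulations with finite Galois cohomology.
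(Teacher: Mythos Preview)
Your approach is correct and close in spirit to the paper's, but the packaging differs. The paper does not work directly with the short exact sequence $0 \to \dual{G} \to \prod_{c} \dual{\langle c\rangle} \to Q \to 0$; instead it introduces the \emph{orbifold Picard group} $\PicOrb_{\mathcal{C}} BG_{k^{\sep}}$, a finitely generated free $\Gamma_k$-module sitting in $0 \to \Hom(\mathcal{C},\Z) \to \PicOrb_{\mathcal{C}} BG_{k^{\sep}} \to \dual{G}(k^{\sep}) \to 0$, and proves (Theorem~\ref{thm:Br_BG}) that $\Br_{\mathcal{C},1} BG/\Br k \cong \H^1(k,\PicOrb_{\mathcal{C}} BG_{k^{\sep}})$, which is finite since the coefficient module is free of finite rank. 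The transcendental part is bounded by $\Br BG_{k^{\sep}}$ as you say. Your $Q$-argument and the paper's $\PicOrb$-argument are dual ways of expressing the same finiteness: both ultimately compute the kernel of the age residue $\H^1(k,\dual{G}) \to \H^1(k,\Hom(\mathcal{C},\Q/\Z))$, and the compatibility you flag as ``most delicate'' is precisely the content of the paper's Lemma~\ref{lem:residue_maps_equal}, which identifies the stacky, Picard, and age residues.

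Where the paper goes further is in the effective procedure. Rather than appealing to abstract computability of spectral sequence differentials, it develops the machinery of \emph{marked central extensions} (Definition~\ref{def:marked_central_extension}) and an \emph{orbifold Kummer sequence} (Theorem~\ref{thm:orbifold_Kummer}, Corollary~\ref{cor:orbifold_Kummer}) relating $\Br_{\mathcal{C}} BG/\Br k$ to the group $\H^{2,\orb}_{\mathcal{C}}(BG,\Q/\Z(1))$, which in turn is computed via the universal marked central extension (Proposition~\ref{prop:hom_cohom}). This yields not only finiteness but explicit representatives as marked central extensions, and makes the product compatibility (Proposition~\ref{prop:product_Brauer_group}) transparent. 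Your sketch of the computation is correct in principle but would benefit from being organised around these structures rather than raw spectral sequence data; in particular, your phrase ``with finite $\Gamma_k$-equivariant cokernel $Q$'' hides the Tate-twist bookkeeping that the paper's $G(-1)$ formalism handles systematically.
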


We achieve Theorem \ref{thm:finiteness_partially_unramified_Brauer_group} through a systematic study of $\Br_{\mathcal C} BG$, and we compute it in terms of central extensions and Kummer theory (Theorem \ref{thm:orbifold_Kummer}), as well as Galois cohomology (Theorem \ref{thm:Br_BG}). This latter theorem in particular describes the algebraic part of the Brauer group in terms of the Galois cohomology of what we call the \emph{orbifold Picard group} of $BG$; it can be viewed as an enhancement of the well-known isomorphism $\Br_1 \mathcal{X}/\Br k \cong \H^1(k, \Pic \mathcal{X}_{\bar k})$ \cite[Prop.~5.4.2]{Col21} for any algebraic stack $\mathcal{X}$ over $k$ with a rational point. We also obtain a version of Theorem~\ref{thm:Harari's_formal_lemma_intro} for the partially unramified Brauer group; this is Theorem \ref{thm:Harari_formal_partially_unramified} and is crucial to know that the Brauer--Manin pairing is well-defined on what we call the associated \emph{partial adelic space}.

Our description of the transcendental Brauer group of $BG$ involves $\H^2(G,\C)$, which is essentially the Schur multiplier of $G$. Ellenberg and Venkatesh \cite[\S2.4]{EV10} first postulated the appearance of the Schur multiplier in the leading constant in Malle's conjecture, so we explain this via transcendental Brauer groups (Remark \ref{rem:Schur_multiplier}). We also relate the Brauer group to the ``algebraic lifting invariants'' of Ellenberg--Venkatesh--Westerland \cite[\S7.4]{EVW13} and Wood \cite{Woo21}. These lifting invariants are closely related to the components of Hurwitz spaces and feature prominently in the study of Malle's conjecture and the Cohen--Lenstra heuristics over function fields. They fit into our persepective as they can be interpreted via the Brauer group of $BG$.

\begin{remark}
Despite our investigation being based on Manin's conjecture, it also allows us to feedback into Manin's conjecture for Fano varieties. Namely the leading constant in Manin's conjecture for a Fano variety $X$ involves the factor $\#\H^1(k, \Pic \bar{X})$, which equals $|\Br_1 X / \Br k|$ where $\Br_1 X$ denotes the algebraic Brauer group of $X$. However we give evidence in \S\ref{sec:A_4-transcendental} that for Malle's conjecture, it should be the cardinality of the \textit{full} Brauer group which appears, i.e.~one should also include the transcendental Brauer group. This is exactly the counting problem in Conjecture~\ref{conj:A_4_conductor_intro}. This suggests that in the leading constant in Manin's conjecture $\#\H^1(k, \Pic \bar{X})$ should actually be replaced by $|\Br X / \Br k|$.
\end{remark}

\subsubsection{The exceptional set and the total count}
Kl\"{u}ners \cite{Klu05} was the first to come up with a counter-example to Malle's conjecture; this has the property that the exponent of $\log B$ is too large. Koymans and Pagano \cite{KP23} have given counter-examples to Ellenberg and Venkatesh's version  \cite[Ques.~4.3]{EV05} of Malle's conjecture for the radical discriminant; we give new counter-examples ourselves involving dihedral extensions in \S \ref{sec:D_n}. These counter-examples  come from non-trivial interactions with cyclotomic fields, and are formally similar to the requirement in Manin's conjecture to remove an accumulating thin set to obtain the correct asymptotic formula (e.g.~lines in cubic surfaces).

We formalise all these counter-examples in \S \ref{sec:breaking}, including for finite \'etale group schemes $G$, using what we \emph{breaking cocycles}. All known counter-examples come from breaking cocycles, and moreover we show in Lemma \ref{lem:breaking_b} that in order to obtain a Malle-type conjecture for $BG$ which is consistent with respect to change of $G$, it suffices to remove the breaking cocycles. Our main result on these cocycles is that they form a thin set and must come from cyclotomic extensions (see Theorem~\ref{thm:breaking} for a more general version for finite \'etale group schemes).

\begin{theorem}[Breaking cocycles are thin] \label{thm:breaking_intro}
	Let $G$ be a finite group of order coprime to the characteristic of $k$.
	Then the collection of breaking homomorphisms
	$\Gamma_k \to G$ is thin and lies in the set
	$$\left\{ \varphi: \Gamma_k \to G :
	\begin{array}{ll}
	 	\varphi \text{ is not surjective or } \\
	   k_{\varphi} \text{ is not linearly disjoint to } k(\mu_{\exp(G)})
	   \end{array} \right\}.$$
\end{theorem}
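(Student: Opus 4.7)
The plan is to combine the formal definition of breaking cocycles from \S\ref{sec:breaking} --- namely, that a cocycle is breaking when twisting by it produces extra Galois-invariant classes of minimal index in the twisted group scheme --- with an analysis of the joint action of $(\varphi, \chi_{\mathrm{cycl}})$, and then to conclude thinness via Hilbert irreducibility on $BG$.

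First I would isolate the mechanism: for a cocycle $\varphi:\Gamma_k\to G$, whether $\varphi$ is breaking depends on the Galois action on the conjugacy classes of the inner twist $G_\varphi$, which mixes the ambient anticyclotomic action of $\chi_{\mathrm{cycl}}$ with conjugation by $\varphi(\Gamma_k)$. Since conjugation acts trivially on conjugacy classes but nontrivially on individual elements, the interaction producing extra orbits (hence larger $b(k,G_\varphi)$) is controlled by the joint image of $(\varphi, \chi_{\mathrm{cycl}})\colon \Gamma_k \to G \times (\Z/\exp(G)\Z)^\times$. I would show that when this image is the full product --- equivalently when $\varphi$ is surjective \emph{and} $k_\varphi$ is linearly disjoint from $k(\mu_{\exp(G)})$ --- the twist preserves the Galois orbit structure on minimal-index classes, so $\varphi$ is non-breaking. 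Taking the contrapositive yields the claimed set-theoretic containment.

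Next I would prove thinness. The non-surjective cocycles form a finite union $\bigcup_{H \subsetneq G \text{ maximal}} \Hom(\Gamma_k, H)$ inside $\Hom(\Gamma_k, G)$, which is thin because each inclusion corresponds to the \'etale cover $BH \to BG$ of degree $[G:H] > 1$. Surjective $\varphi$ with $k_\varphi \cap k(\mu_{\exp(G)}) \supsetneq k$ are parametrised, for each nontrivial intermediate field $k \subsetneq F \subseteq k(\mu_{\exp(G)})$, by the constraint that the composition of $\varphi$ with some nontrivial quotient $G \to G/N$ factors through $\Gal(F/k)$ and agrees with a cyclotomic character; this is a thin condition on $\Hom(\Gamma_k, G)$, and the union over finitely many $F$ remains thin.

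The main obstacle will be the first step: verifying that breaking is genuinely controlled by the joint image of $(\varphi, \chi_{\mathrm{cycl}})$, in particular that failing surjectivity of either factor can genuinely produce new Galois-invariant minimal-index classes matching the Kl\"uners / Koymans--Pagano counter-examples. In the full group-scheme version of Theorem \ref{thm:breaking} one also has to track the ambient Galois action on $G$ itself, but the analysis is parallel once one incorporates the base action into the twisted semidirect product.
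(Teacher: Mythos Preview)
Your approach is essentially correct and runs on the same engine as the paper's proof. Your observation that the joint image of $(\varphi,\chi_{\mathrm{cycl}})$ in $G\times\Gal(k(\mu_{\exp(G)})/k)$ controls breaking is exactly the contrapositive of the paper's Lemma~\ref{lem:Breaking_implies_reducible}: both amount to noting that over $k(\mu_{\exp(G)})$ the anticyclotomic action trivialises, so $\Gamma_{k(\mu_{\exp(G)})}\subset\Gamma_c$ acts on $c\subset G_\varphi(-1)$ purely by conjugation via $\varphi$, and hence transitively precisely when $\varphi|_{\Gamma_{k(\mu_{\exp(G)})}}$ is surjective. One small imprecision: breaking in the paper's sense is defined relative to \emph{any} conjugacy class, not only minimal-index ones; your argument goes through verbatim once you drop that restriction.

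The thinness step is where the packaging differs. You argue that the larger set $\Omega_{G,k}$ is itself thin, by writing the ``not linearly disjoint'' locus as a finite union of fibres of $BG\to B(G/N)$ over fixed cyclotomic points. The paper instead shows directly that breaking cocycles become non-surjective after base change to $k(\mu_{\exp(G)})$, and then invokes Lemma~\ref{lem:thin_base_change} (a Weil-restriction argument) to conclude thinness. Both are valid for constant $G$; the paper's route has the advantage that it generalises uniformly to the non-constant group-scheme case of Theorem~\ref{thm:breaking}(1), whereas your fibre-over-a-cyclotomic-point argument is tied to having an explicit abelian quotient of $G$ receiving the cyclotomic character. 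Your closing remark that the group-scheme analysis ``is parallel'' understates this: you would need to replace your Step~2 with something like the paper's base-change lemma to make it go through.
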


The thinness of the breaking rational points in Manin's conjecture \cite[Thm.~1.4]{LST22} is a difficult result proved used deep techniques in birational geometry. Our proof is substantially easier, and has the advantage of giving an explicit computable description of the breaking thin set, something which is currently lacking in the setting of Manin's conjecture.

Our conjectures state that after removing the breaking cocycles, one should obtain the correct power of $\log B$ as predicted by Malle (see Remark \ref{rem:breaking}). More than this however, one should remove these to obtain our predicted leading constant and a version of the Malle--Bhargava heuristics (equidistribution in our terminology).

Despite needing to remove a thin set to get the correct leading constant, our general conjectures imply a formula for the total count in Malle's conjecture by strafying the collection of all homomorphisms according to whether they lift suitable cyclotomic fields. See \S \ref{sec:total_count} for details.

\subsection{Structure of the paper}
We have tried to make the paper self-contained where possible for the benefit of readers unfamiliar with the theory of stacks. For example, we begin the paper by recalling all the properties of $BG$ we will need. The exceptions are \S \ref{sec:Hensel} and \S \ref{sec:Br} which require more background in the theory of algebraic stacks. Throughout we include numerous examples to illustrate our results and conjectures, so as to create a new toolkit for other researchers.

In \S\ref{sec:BG} we introduce the key object of study in the paper, namely the stack $BG$. We give various presentations for this stack and discuss its basic properties, including the roles of inner twists and groupoid cardinalities.

In \S \ref{sec:orbifold_Picard} we consider the \emph{orbifold effective cone}. This was introduced in \cite{DYBM} for general algebraic stacks, and replaces the role of the effective cone of divisors in  Manin's conjecture. We specialise the theory to the case of interest, namely $BG$. Here we develop the theory further, and obtain a new definition for the \emph{orbifold Picard group}, which is crucial when we come to defining Peyre's effective cone constant in our setting. We introduce the associated Iitaka fibration and also define breaking cocycles: the exceptional set  which must be removed in Malle's conjecture to obtain the correct asymptotic formula. 

The next three sections form the technical heart of the paper. In \S \ref{sec:Hensel} we introduce some of the key technical tools which we will use in our stacky framework, namely roots stacks, the cyclotomic inertia stack, and sectors. These are used to prove a new stacky version of Hensel's lemma (Theorem \ref{thm:Hensel}). There already exist versions of Hensel's lemma in the literature, which for a smooth stack $\mathcal{X}$ over a complete DVR $\mathcal{O}$ says that the map $\mathcal{X}(\O) \to \mathcal{X}(\F)$ to the residue field $\F$ is surjective, similarly to schemes (see e.g.~\cite[Lem.~4.2]{LS23}). Our new version is completely different and has no analogue in the world of schemes. It gives a description of $\mathcal{X}(K)$ where $K$ is the fraction field of $\O$, and is based upon the modified valuative criterion for properness of stacks from \cite{BV24}.

In \S \ref{sec:Br} we create a new theory of Brauer groups for algebraic stacks, including a new definition of the unramified Brauer group of an algebraic stack and partially unramified Brauer group, which will appear in our conjectures. Our main result (Theorem \ref{thm:equivalent_conditions_unramified}) gives a stacky version of Grothendieck's purity theorem in terms of sectors. We also define the Brauer--Manin obstruction for a stack and prove a version of Harari's formal lemma \cite[Thm.~2.1.1]{Har94} for stacks.

In \S \ref{sec:Br_BG} we specialise the preceding theory to the case of $BG$, and compute the transcendental and algebraic part of the Brauer group of $BG$, via central extensions (Theorem \ref{thm:orbifold_Kummer}) and Galois cohomology (Theorem \ref{thm:Br_BG}). This leads to a computable procedure, explained in \S \ref{sec:procedure}, to calculate the partially unramified Brauer group of $BG$. This group appears in our main conjectures, hence it is crucial for applications that one calculate it.

In \S \ref{sec:BMO_BG} we study the Brauer--Manin obstruction for $BG$. We include the important definition of the partial adelic space (Definition \ref{def:adelic_space}), which is the natural space upon which the partially unramified Brauer--Manin pairing is defined (Theorem \ref{thm:Harari_formal_partially_unramified}). We also give explicit examples of the Brauer--Manin obstruction in this case, through Stickelberger's Theorem  and the Grunwald--Wang Theorem, as well as a gerbe which fails the Hasse principle, and a discussion of differences over global function fields.

In \S \ref{sec:heights_Tamagawa} we introduce heights on $BG$ following \cite{DYBM}. We then proceed to construct a new Tamagawa measure on $BG$. We prove a mass formula for our local measures and use these to obtain convergence factors for a global measure. We also give a more advanced mass formula to  calculate the Tamagawa measure of the Brauer--Manin set (Theorem \ref{thm:local_invariant_integral}).

In \S \ref{sec:conjectures} we state our conjectures. These concern general height functions on $BG$ for a finite \'etale group scheme $G$, and not just the case of the discriminant considered in Conjecture \ref{conj:intro}. We give a conjecture regarding equidistribution, which formalises the Malle--Bhargava heuristics, and also a conjecture we call \textit{strong equidistribution}, which covers for example the problem of counting number fields of squarefree discriminant.

We finish in \S \ref{sec:examples} by studying a range of examples, including providing evidence for our conjectures from existing results in the literature. This includes $S_n$-extensions, $D_4$-extensions, and abelian extensions. We also show how Kl\"{u}ners's counter-example is compatible with our conjecture, and give a new counter-example to Ellenberg and Venkatesh's generalisation of Malle's conjecture for the radical discriminant. We  demonstrate what our conjecture says for counting $A_4$-quartic extensions, and consider its compatibility with an existing conjecture in the literature \cite[\S2.7]{CDO02b}.

\subsection{Shortcut to the Conjecture:} The reader looking for a quick overview to understand the conjecture in the balanced case should study \S \ref{sec:groupoid} and \S \ref{sec:BS_n} to get to grips with the definition of $BG$. One should then look at \S \ref{sec:Galois_action_conjugacy_classes} to understand the Galois action on conjugacy classes and \S\ref{sec:orbifold_effective_cone} for the definition of the Fujita invariant and the minimal weight conjugacy classes.

The (partially) unramified Brauer group is the most subtle notion in our conjecture; indeed defining and calculating this group is one of the main new technical innovations in the paper. It should arguably be skipped on a first reading. If one wants to know the precise definition one should read Definitions \ref{def:sector} and \ref{def:partially_ramified_Br}. Tools for calculating it in the case of $BG$ are contained in \S\ref{sec:Br_BG}.

To understand the definition of the heights one should read Definition \ref{def:orbifold_line_bundle},  \S\ref{sec:ramification_type} and \S \ref{sec:heights}. The corresponding local Tamagawa measures are defined in \S\ref{sec:local_Tamagawa} and computed at all but finitely many places in Corollary \ref{cor:mass_formula}. This computation is used to show that the global Tamagawa measure defined in \S\ref{sec:global_Tamagawa} is well-defined.

One can now understand the conjectures in \S\ref{sec:conjectures} regarding balanced heights. A useful statement is Lemma \ref{lem:sum_Euler_products}, which shows that the leading constant in the conjecture is equal to an explicit finite sum of Euler products.

After this one should look at the examples in \S\ref{sec:examples} to get a feeling for what the conjecture looks like in concrete situations, and to see some explicit Brauer group computations.

\subsection{Notation and conventions} \label{sec:notation}
For a finite \'etale group scheme $G$ over a field $k$, we denote by $\dual{G} = \Hom(G,\Gm)$ its group scheme of $1$-dimensional characters, by $G(-1)$ its Tate twist by $-1$, and by $|G|$ its degree. We call $G$ \emph{tame} if $|G|$ is not divisible by the characteristic of $k$. We denote by $Z^1(k,G)$ the set of $1$-cocycles with values in $G(k^{\sep})$. We recall that the \emph{normaliser} of a subgroup scheme $H \subset G$ is the subgroup scheme given by the collection of $g \in G$ such that $g H g^{-1} = H$.

All algebraic stacks are assumed to be quasi-separated.
Following \cite[Thm.~3.2]{AOV08}, we call an algebraic stack \emph{tame} if all its stabilisers are geometrically reductive. A  finite \'etale group scheme $G$ is tame if and only if $BG$ is tame. (This can fail for finite flat group schemes, e.g.~$\mu_p$ is not tame in characteristic $p$, but $B\mu_p$ is).

We call a groupoid $X$ \emph{finite} if the automorphism group of each object is finite and there are only finitely many isomorphism classes of objects. We denote by $[X]$ the set of isomorphism classes of classes of $X$, and by $\#X = \sum_{x \in [X]} \frac{1}{|\Aut x|}$ the groupoid cardinality of $X$ (provided $X$ is finite). Similarly, for an algebraic stack $\mathcal{X}$ over a ring $R$, we denote by $\mathcal{X}(R)$ the groupoid of $R$-points of $\mathcal{X}$ and by $\mathcal{X}[R]$ the set of isomorphism classes of objects in $\mathcal{X}(R)$.

In the Malle's conjecture literature, functions which assign a value to a number field go by various names, e.g.~``generalised discriminant'', ``counting function'', or ``invariant''. We use the term \emph{height function} to unify existing terminology regarding heights on varieties, and to avoid confusion with the local invariant $\inv_v: \Br k_v \to \Q/\Z$ from class field theory. A counting function for us is the function which gives the cardinality of the number of elements of bounded height.

To avoid possible confusion, the notation $c(k,G,H)$ is always used to refer to the leading constant for $BG$ over $k$ when counting $G$-torsors of bounded height $H$. This has the consequence that various counting functions in the paper may have different normalisations, given by multiplying by a suitable rational number, to make sure that the exact leading constant $c(k,G,H)$ is obtained. Whilst this may look awkward, it is the correct way to normalise counts via the groupoid cardinality and leads to more uniform formulae.

All cohomology is fppf cohomology.

\subsection*{Acknowledgements}
This project started after Arul Shankar asked us if we could explain the factor $1/2$ which appears
in Bhargava's heuristic~\cite[Conj.~1.2]{Bha07}. We thank Brandon Alberts, Tim Browning, Peter Koymans, Gunter Malle, Ross Paterson,
and Julie Tavernier for useful comments.
Daniel Loughran
was supported by UKRI Future Leaders Fellowship
\texttt{MR/V021362/1}. Tim Santens was supported by FWO-Vlaanderen (Research Foundation-Flanders) via grant number 11I0621N.

\section{The stack $BG$} \label{sec:BG}
\subsection{The groupoid $BG(k)$} \label{sec:groupoid}
Let $G$ be an \'etale group scheme over a field $k$. The groupoid $BG(k)$ has multiple equivalent presentations. We follow the conventions for torsors as in \cite[\S 2]{Sko01}.

\begin{lemma}  \label{lem:BG(k)}
The following categories are equivalent.
\begin{enumerate}
	\item The groupoid $BG(k)$ of right $G$-torsors over $\Spec k$ with isomorphisms as $G$-equivariant morphisms.
	
	\item The groupoid of left $G$-\'etale algebras over $k$, i.e.~\'etale algebras $A/k$ equipped with a faithful left action of $G$ such that $A^G = k$.
		
	\item The groupoid which has as objects $1$-cocycles $\varphi: \Gamma_k \to G(k^{\sep})$ and morphisms $\varphi \xrightarrow{g} g \varphi g^{-1}$ for each cocycle $\varphi$ and $g \in G(k^{\sep})$ with the obvious composition law. The cocyle $g \varphi g^{-1}$ sends $\sigma \in \Gamma_k$ to $g \varphi(\sigma) \sigma(g)^{-1}$.
	
	\item [(4)] If $G$ is \emph{constant}, then the groupoid of (continuous) homomorphisms $\Gamma_k \to G$,
	with morphisms given by conjugation in $G$.
\end{enumerate}
\end{lemma}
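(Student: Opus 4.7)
The plan is to establish the equivalences via the triangle $(1) \Leftrightarrow (2)$, $(1) \Leftrightarrow (3)$, and then $(3) \Leftrightarrow (4)$ under the constancy hypothesis. All three are classical reformulations, but with some care needed about left-versus-right conventions, which is the only real obstacle.

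For $(1) \Leftrightarrow (2)$, I would observe that since $G$ is finite \'etale, any $G$-torsor $T$ over $\Spec k$ is affine and finite \'etale, so $T = \Spec A$ for a finite \'etale $k$-algebra $A$. A right action $T \times G \to T$ dualises to a left coaction on $A$, which (since $G$ is \'etale, hence the Hopf algebra $\mathcal{O}(G)$ is a product of field extensions of $k$) is the same data as a left action of the group scheme $G$ on $A$ by $k$-algebra automorphisms. The torsor axiom $T \times G \xrightarrow{\sim} T \times_k T$ translates via Spec into the isomorphism $A \otimes_k A \xrightarrow{\sim} A \otimes_k \mathcal{O}(G)$ of $A$-algebras. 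Taking $\Gamma_k$-invariants of the base change to $k^{\sep}$, together with faithfully flat descent, gives $A^G = k$ (and faithfulness of the action). Conversely, given a $G$-\'etale algebra $A$ with $A^G = k$, the same formula shows that $\Spec A$ is a $G$-torsor. Functoriality in morphisms is automatic since both sides impose $G$-equivariance.

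For $(1) \Leftrightarrow (3)$, I would use Galois descent. Since $G$ is \'etale, any $G$-torsor $T$ over $k$ admits a $k^{\sep}$-point $t_0 \in T(k^{\sep})$, giving a trivialisation $T_{k^{\sep}} \cong G_{k^{\sep}}$. For each $\sigma \in \Gamma_k$ the point $\sigma(t_0)$ differs from $t_0$ by a unique $\varphi(\sigma) \in G(k^{\sep})$ with $\sigma(t_0) = t_0 \cdot \varphi(\sigma)$, and the identity $(\sigma\tau)(t_0) = \sigma(\tau(t_0))$ together with the $\Gamma_k$-equivariance of the $G$-action gives the $1$-cocycle condition. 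A different trivialising section is of the form $t_0 \cdot g^{-1}$ for some $g \in G(k^{\sep})$, and recomputing yields the new cocycle $\sigma \mapsto g \varphi(\sigma) \sigma(g)^{-1}$. In the other direction, given a cocycle $\varphi$ one twists the trivial torsor by $\varphi$ via Galois descent. These constructions are mutually inverse up to the coboundary action, matching exactly the morphisms in the groupoid of~(3); a morphism of torsors $T \to T'$ compatible with trivialising sections corresponds precisely to the element $g$ witnessing the coboundary relation.

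For $(3) \Leftrightarrow (4)$, when $G$ is constant the Galois action on $G(k^{\sep}) = G$ is trivial, so $\sigma(g) = g$ for every $g \in G$. The cocycle condition $\varphi(\sigma\tau) = \varphi(\sigma)\sigma(\varphi(\tau))$ therefore collapses to $\varphi(\sigma\tau) = \varphi(\sigma)\varphi(\tau)$; that is, $\varphi$ is a homomorphism, which is automatically continuous since $G$ is finite. Likewise, the coboundary action $\varphi \mapsto g \varphi g^{-1}$ reduces to pointwise conjugation in $G$, matching morphisms in~(4). This establishes an isomorphism of groupoids, completing the chain of equivalences.
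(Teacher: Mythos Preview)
Your proposal is correct and follows essentially the same triangle of equivalences as the paper's proof. The paper is much terser---for $(1)\Leftrightarrow(3)$ it simply invokes that a cocycle is a descent datum and that descent for $G$-torsors is effective, whereas you unwind this concretely via a choice of trivialising section and the resulting cocycle computation---but the underlying arguments are the same.
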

\begin{proof}

	(1) $\iff$ (2): If $A$ is such an \'etale algebra then $\Spec A \to \Spec k$ is a right $G$-torsor. Conversely if $T \to \Spec k$ is a right $G$-torsor then $\H^0(T, \mathcal{O}_T)$ is a left $G$-\'etale algebra.

	(1) $\iff$ (3):	The equivalence between this groupoid and $BG(k)$ follows from the fact that a cocycle is the same as a descent datum and the fact that descent for $G$-torsors is effective \cite[\href{https://stacks.math.columbia.edu/tag/0245}{Tag 0245}]{stacks-project}.
	
	(3) $\iff$ (4): As $G$ is constant, a $1$-cocyle is the same as a homomorphism.
\end{proof}

In particular the set $BG[k]$ of isomorphism classes of elements of $BG(k)$ is simply $\H^1(k,G)$; but it is crucial throughout to also keep track of the automorphism groups of the elements, which is exactly what $BG$ does. Our standard choice of presentation will usually be in terms of cocycles. We denote by $e \in BG(k)$ the rational point corresponding to the identity cocycle, i.e.~the cocycle with constant value equal to the identity element of $G$.

The category $BG(k)$ contains many elements, some of which are not relevant to Malle's conjecture (e.g.~the identity cocycle). We classify general elements of $BG(k)$ as follows.

\begin{lemma} \label{lem:connected}
	The equivalences in Lemma \ref{lem:BG(k)} induce equivalences between
	the following subgroupoids.
	\begin{enumerate}
	\item The groupoid of connected right $G$-torsors over $\Spec k$.
	
	\item The groupoid of left $G$-fields over $k$.
		
	\item The groupoid of surjective $1$-cocycles $\varphi: \Gamma_k \to G(k^{\sep})$.
	
	\item [(4)] If $G$ is \emph{constant}, then the groupoid of 
	surjective homomorphisms $\Gamma_k \to G$.
	\end{enumerate}
\end{lemma}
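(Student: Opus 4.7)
The plan is to verify, for each pair of equivalences from Lemma~\ref{lem:BG(k)}, that the distinguished subgroupoids listed correspond under the constructed bijection. Since the morphisms in the subgroupoids are simply inherited from the ambient groupoids, the content is entirely at the level of objects.

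For (1) $\iff$ (2), I would start from the equivalence sending a $G$-torsor $T$ to the \'etale algebra $A = \H^0(T,\O_T)$, which gives a bijection between connected components of $T$ and factors of $A$. Since $A$ is \'etale over $k$, it splits as a finite product of finite separable extensions, and $\Spec A$ is connected precisely when this product has one factor, i.e.\ when $A$ is a field. This gives the identification between connected torsors and $G$-fields (the faithfulness of the $G$-action and the invariant condition $A^G = k$ are automatic from the torsor hypothesis).

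For (1) $\iff$ (3), I would use the twisted-form description of the torsor attached to a cocycle $\varphi \in Z^1(k, G)$: at the level of $k^{\sep}$-points one has $T(k^{\sep}) = G(k^{\sep})$ with right $G$-action by multiplication and twisted Galois action $\sigma \cdot_\varphi h = \varphi(\sigma)\,\sigma(h)$. Connected components of $T$ correspond to $\Gamma_k$-orbits on $T(k^{\sep})$, and the orbit of the neutral element $1 \in G(k^{\sep})$ is precisely $\{\varphi(\sigma)\sigma(1) : \sigma \in \Gamma_k\} = \mathrm{Image}(\varphi)$. Since $G(k^{\sep})$ acts simply transitively, every orbit has the same cardinality as this one, so the action is transitive if and only if $\mathrm{Image}(\varphi) = G(k^{\sep})$, i.e.\ $\varphi$ is surjective. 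Two cohomologous cocycles have the same image (conjugate to it), so surjectivity is preserved under the equivalence relation.

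The equivalence (3) $\iff$ (4) in the constant case is immediate: the cocycle condition $\varphi(\sigma\tau) = \varphi(\sigma)\sigma(\varphi(\tau))$ degenerates to the homomorphism condition when the Galois action on $G(k^{\sep}) = G$ is trivial, and in this case surjectivity of the cocycle literally means surjectivity of the associated homomorphism. The only mildly subtle point is handling the non-constant case in step (1) $\iff$ (3), where one must check that the twisted Galois action really is transitive precisely when $\varphi$ surjects onto $G(k^{\sep})$; this is the step I would be most careful with, but it reduces to the simple-transitivity of the $G$-action recorded above. No further ingredients are needed.
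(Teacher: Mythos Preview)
Your proof is correct and follows essentially the same approach as the paper: both identify $T(k^{\sep})$ with $G(k^{\sep})$ under the twisted action $\sigma \cdot_\varphi g = \varphi(\sigma)\sigma(g)$, observe that connectedness of $T$ is transitivity of this action, and compute the orbit of the identity to be the image of $\varphi$. You add a little extra detail (the simple-transitivity argument forcing all orbits to have equal size, and the remark on cohomologous cocycles), but the skeleton is identical.
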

\begin{proof}

	(1) $\iff$ (2): This is clear from the definition.
	
	(1) $\iff$ (3): Let $\varphi: \Gamma_k \to G$ be a cocycle and $T$ the corresponding right $G$-torsor. The $\Gamma_k$-set $T(k^{\sep})$ can be identified with $G(k^{\sep})$ equipped with the twisted action $\sigma \cdot^{\varphi} g := \varphi(\sigma) \cdot \sigma(g)$. The scheme $T$ is connected if and only if this $\Gamma_k$-action is transitive. However the orbit of the identity $e \in G(k)$ under this action is equal to the image of $\varphi$, so the orbit is transitive if and only if the image of $\varphi$ is $G(k^{\sep})$.
	
	(3) $\iff$ (4): Clear.
\end{proof}

We call any element of $BG(k)$ satisfying one of the equivalent conditions in Lemma \ref{lem:connected} \textit{surjective}.

\begin{remark}[Outer automorphisms] \label{rem:outer}
	Assume that $G$ is constant. 
	Let $\psi: G \to G$ be an outer automorphism and $\varphi: \Gamma_k \to G$ a surjective
	homomorphism. Then $\varphi$ and $\psi \circ \varphi$ are non-isomorphic in $BG$,
	despite having the same kernel and so defining isomorphic Galois extensions with Galois
	group $G$.
	
	This convention is important for the theory for non-Galois extensions;
	for example if $G = S_6$ then $\varphi$
	and $\psi \circ \varphi$ determine non-isomorphic degree $6$ extensions of $k$ 
	(see Lemma \ref{lem:S_n} below).
	
	However outer automorphisms do occur in a somewhat subtle way in the theory.
	They arise in the following situation: given a subgroup scheme
	$H \subseteq G$ with normaliser $N \subseteq G$, any element of $N$ induces an automorphism
	of $H$ by conjugation, which may be an outer automorphism of $H$. Galois twists
	of $H$ by such automorphisms	are key to our work (Lemma \ref{lem:fibration_normal_quotient}),
	and moreover keeping track of such outer automorphisms is important to get the 
	correct groupoid cardinalities (see Lemma \ref{lem:outer_automorphism}). This ultimately
	leads to additional factors in Conjecture~\ref{conj:discriminant}.
\end{remark}

The natural way to count in a groupoid is via the groupoid cardinality, i.e.~the number of isomorphism classes of objects in the groupoid weighted by the inverse of the cardinality of the automorphism group of each object (see \S \ref{sec:notation} for our conventions regarding sums over groupoids). To get to more elementary looking statements (as in Conjecture~\ref{conj:intro}), we use the following lemma. We denote by $Z^1(k,G)$ the set of $1$-cocycles with values in $G$.

\begin{lemma} \label{lem:groupoid_count}
	Let $F:Z^1(k,G) \to BG[k]$ be the natural map and $f:BG[k] \to \C$ any function. Then for any
	finite subset $W \subseteq BG[k]$ cwe have
	$$\sum_{\varphi \in W} \frac{f(\varphi)}{|\Aut \varphi|} = \frac{1}{|G|}\sum_{ \psi \in F^{-1}(W)} f(F(\psi)).$$
\end{lemma}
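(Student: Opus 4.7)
The plan is to deduce the identity from the orbit–stabiliser theorem applied to the natural conjugation action of $G(k^{\sep})$ on $Z^1(k,G)$.

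First I would recall from Lemma \ref{lem:BG(k)}(3) that the groupoid $BG(k)$ can be presented with objects being the cocycles in $Z^1(k,G)$ and morphisms $\varphi \to g\varphi g^{-1}$ given by elements $g \in G(k^{\sep})$. In this presentation the map $F: Z^1(k,G) \to BG[k] = \H^1(k,G)$ sends a cocycle to its cohomology class, and the fibres of $F$ are precisely the orbits of the action of $G(k^{\sep})$ on $Z^1(k,G)$ defined by $(g,\varphi) \mapsto g\varphi g^{-1}$. Moreover, by definition of morphisms in this groupoid, the stabiliser of $\varphi$ under this action is exactly $\Aut \varphi$.

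Next, since $G$ is a finite étale group scheme, $G(k^{\sep})$ is a finite group of order $|G|$, so the orbit–stabiliser theorem gives
\[
|F^{-1}(\varphi)| = \frac{|G|}{|\Aut \varphi|}
\]
for every class $\varphi \in BG[k]$. The right-hand side of the desired identity can therefore be rewritten by partitioning $F^{-1}(W)$ over the fibres:
\[
\frac{1}{|G|} \sum_{\psi \in F^{-1}(W)} f(F(\psi))
= \frac{1}{|G|}\sum_{\varphi \in W} f(\varphi) \, |F^{-1}(\varphi)|
= \sum_{\varphi \in W} \frac{f(\varphi)}{|\Aut \varphi|},
\]
which is the claimed identity.

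There is no real obstacle here: the entire content is the orbit–stabiliser theorem, and the only point requiring care is to ensure that the presentation of $BG(k)$ we use is one in which the stabiliser of a cocycle really agrees with the automorphism group of the corresponding object, which is guaranteed by Lemma \ref{lem:BG(k)}(3).
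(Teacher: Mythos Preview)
Your proof is correct and takes the same approach as the paper: the paper's own proof reduces to the case $|W|=1$ and then simply invokes the orbit--stabiliser theorem, which is precisely what you have spelled out in detail.
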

\begin{proof}
	We may assume $|W| =1$. Here it is 
	the 	orbit-stabiliser theorem.
\end{proof}

\subsection{Thin sets} \label{sec:thin}
Thin sets were introduced by Serre \cite[\S 3.1]{Ser08} in his study of Hilbert's irreducibility theorem. Peyre \cite[\S 8]{Pey03} was the first to suggest that one should remove a thin set in Manin's conjecture to avoid counter-examples. This requirement is now part of all modern formulations of Manin's conjecture. We will take a similar approach and remove a thin set in Malle's conjecture to avoid counter-examples. 
Our definition of thin sets on $BG$ is as follows.

\begin{definition}
We say that a subset of $BG[k]$ is \emph{thin} if it is contained in the finite union of images $f(X(k))$ where $f:X \to BG$ is a finite morphism which admits no generic section. 
\end{definition}

We classify these in terms of inner twists (see \S \ref{sec:inner_twists} for terminology).

\begin{lemma} \label{lem:thin_classification}
	Any thin subset of $BG[k]$ is contained in the finite union of the images $f(BH(k))$ where
	$H$ proper subgroup scheme of an inner twist of $G$ and $f:BH \to BG$ 
	denotes the induced map. Conversely such subsets are thin.
\end{lemma}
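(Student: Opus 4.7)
\emph{Plan.} I will establish the two containments separately; the converse is essentially formal, while the forward direction requires the observation that automorphisms of non-trivial torsors are inner twists.

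\emph{Converse direction.} Let $H$ be a proper subgroup scheme of an inner twist $G_\varphi$ of $G$. The inclusion $H \hookrightarrow G_\varphi$ induces a representable morphism $BH \to BG_\varphi$, and composing with the canonical isomorphism $BG_\varphi \cong BG$ yields a representable finite \'etale morphism $f: BH \to BG$ of degree $[G_\varphi : H] \geq 2$. A section of $f$ (generic or otherwise, since $BG$ is zero-dimensional and connected) would force the degree to equal $1$, contradicting the properness of $H$ inside $G_\varphi$; hence $f(BH(k))$ is thin by definition.

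\emph{Forward direction.} A thin subset lies in a finite union of images $f_i(X_i(k))$, so it suffices to handle a single finite morphism $f: X \to BG$ without generic section. Replacing $X$ by $X_{\mathrm{red}}$ does not affect $k$-points, and because $BG$ is smooth of relative dimension zero over $k$, the finite morphism $X \to BG$ is automatically \'etale. Decomposing $X$ into its finitely many connected components, we may further reduce to $X$ connected, since a section of any single component would induce a section of $f$. If $X(k)$ is empty there is nothing to prove, so fix $x_0 \in X(k)$. The existence of such a point forces the coarse moduli space of $X$ to be $\Spec k$ (since a $k$-point of $X$ yields one on the coarse space, but this space is a finite \'etale $k$-scheme since $X \to BG$ is \'etale), and consequently $X \cong BH$, where $H := \Aut_X(x_0)$ is a finite \'etale group scheme over $k$.

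Now set $P := f(x_0) \in BG(k)$, a $G$-torsor over $k$. The automorphism group scheme of $P$ inside the groupoid $BG(k)$ is precisely the inner twist $G_P$ of $G$ (this is the standard fact that a torsor and its inner form of the structure group are canonically paired). The morphism $f$ therefore induces a group scheme homomorphism $\phi: H \to G_P$, which is injective because $f$ is representable (the relative inertia of $f$ at $x_0$ is the kernel of $\phi$, and representability forces this to be trivial). Thus $H \hookrightarrow G_P$ exhibits $H$ as a subgroup scheme of an inner twist of $G$. Finally, the absence of a generic section of $f$ is equivalent to $\phi$ not being an isomorphism, so $H \subsetneq G_P$ is proper, and the map $g: BH \to BG_P \cong BG$ is of the required form, with $f(X(k)) \subseteq g(BH(k))$.

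\emph{Main obstacle.} The only conceptually subtle step is identifying the target of $\phi$ as the inner twist $G_P$ rather than $G$ itself. This reflects the fact that different objects of $BG(k)$ have automorphism groups which are mutually inner twists of $G$, not $G$; this is the precise point at which inner twists of $G$ enter the classification and accounts for the generality of the statement. Once this identification is made, the proof is entirely formal.
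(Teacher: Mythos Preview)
Your proof is correct and follows essentially the same route as the paper's. The paper delegates the key identification step to a separate lemma (Lemma~\ref{lem:subgroup_inner_twists}, which in turn invokes Lemma~\ref{lem:automorphism_group_is_inner_twist} to recognize $\AutU f(e)$ as an inner twist), whereas you have inlined that argument directly; your ``main obstacle'' paragraph is precisely the content of those two lemmas combined.
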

\begin{proof}
	Let $f:X \to BG$ be a finite morphism which admits no generic section. By considering each
	component of $X$ separately, we may assume that $X$ is connected. We may also assume that $X$
	is reduced, hence $f$ is \'etale. Moreover we can assume that $X(k) \neq \emptyset$.
	The first part now follows from Lemma \ref{lem:subgroup_inner_twists}. The converse is 
	immediate from the definition.
\end{proof}

In the notation of Lemma \ref{lem:thin_classification} we have $\deg f = |G|/|H|$. Some examples of thin sets are as follows.

\begin{lemma} \label{lem:thin_base_change}
	Let $K/k$ a finite extension and $H \subset G_K$ a subgroup scheme. Let $\Omega \subset BG[k]$ be the subset consisting of those cocyles $\varphi \in BG(k)$ such that $\varphi_K$ lies in the image of $BH(K) \to BG(K)$. Then $\Omega$ is thin.
	
	In particular, the collection of non-surjective elements of $BG(k)$, 
	in the sense of Lemma \ref{lem:connected}, is thin.
\end{lemma}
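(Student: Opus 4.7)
The plan is to realise $\Omega$ as the image of a natural finite cover of $BG$ built from Weil restriction of stacks. Set $\widetilde{H} := \operatorname{Res}_{K/k}(H) \subset \widetilde{G} := \operatorname{Res}_{K/k}(G_K)$, both finite \'etale $k$-group schemes. Since Weil restriction commutes with the formation of classifying stacks (an $H$-torsor on $T_K$ is the same as an $\widetilde{H}$-torsor on $T$), there are canonical identifications $B\widetilde{H} \simeq \operatorname{Res}_{K/k}(BH)$ and $B\widetilde{G} \simeq \operatorname{Res}_{K/k}(BG_K)$ of $k$-stacks, and the inclusion $\widetilde{H} \hookrightarrow \widetilde{G}$ induces a representable finite \'etale morphism $B\widetilde{H} \to B\widetilde{G}$. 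The unit of the adjunction $(-)_K \dashv \operatorname{Res}_{K/k}$ gives a morphism $u : BG \to B\widetilde{G}$, and I would form the fibered product
\[
Y := B\widetilde{H} \times_{B\widetilde{G}} BG, \qquad f : Y \to BG,
\]
so that $f$ is finite as a base change of $B\widetilde{H} \to B\widetilde{G}$.

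Unpacking the universal property of the fibered product, a $k$-point of $Y$ is a triple $(P, Q, \theta)$ with $P$ an $H$-torsor on $\operatorname{Spec} K$, $Q \in BG(k)$, and $\theta$ an isomorphism $Q_K \cong P \times^H G_K$ in $BG(K)$. Hence the image of $Y[k] \to BG[k]$ is exactly the set of classes $[Q]$ whose base change $Q_K$ admits a reduction of structure group to $H$ -- that is, $\Omega$.

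The heart of the argument, and the main obstacle, is showing that $f$ has no generic section. Since $BG$ is irreducible of dimension zero, a generic section is an actual section. A section $s : BG \to Y$ yields via projection a morphism $s_1 : BG \to B\widetilde{H}$ together with a $2$-isomorphism $j' \circ s_1 \cong u$, where $j' : B\widetilde{H} \to B\widetilde{G}$. The adjunction translates $s_1$ into a $K$-morphism $s_{1,K} : BG_K \to BH$ equipped with a $2$-isomorphism $j \circ s_{1,K} \cong \mathrm{id}_{BG_K}$, i.e.\ a section of the inclusion $j : BH \to BG_K$. Since $j$ is representable finite \'etale and $BH$ is connected, the existence of such a section forces $j$ to be an isomorphism and hence $H = G_K$; under the implicit assumption $H \subsetneq G_K$ (the only case in which $\Omega$ is a proper subset of $BG[k]$) this gives a contradiction, so $\Omega$ is thin.

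Finally, the ``in particular'' statement follows by taking $K = k$: by Lemma~\ref{lem:connected} a cocycle $\varphi \in Z^1(k, G)$ is non-surjective iff the associated $G$-torsor reduces structure group to some proper subgroup scheme $H \subsetneq G$, and finiteness of $G$ leaves only finitely many such $H$ to consider; the resulting finite union of thin sets is thin.
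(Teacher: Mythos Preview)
Your construction via Weil restriction and the fibre product $Y = B\widetilde{H} \times_{B\widetilde{G}} BG$ is exactly the paper's approach, and your identification of $\Omega$ as the image of $Y(k)$ is correct. Your argument for the absence of a generic section is different from the paper's and arguably slicker: you use the adjunction to convert a hypothetical section into a section of $j:BH \to BG_K$, then invoke connectedness of $BH$ to force $H = G_K$. The paper instead base-changes to $k^{\sep}$, where $\Res_{K/k}G_K$ splits as $G^n$, and observes that $Y_{k^{\sep}}$ decomposes into copies of $BH_{k^{\sep}} \to BG_{k^{\sep}}$, each of degree $|G|/|H| \ge 2$. Both arguments are valid; yours avoids the explicit decomposition at the cost of relying on the $2$-categorical adjunction for Weil restriction of stacks.

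There is, however, a gap in your ``in particular'' paragraph. You take $K = k$ and assert that a non-surjective cocycle $\varphi \in Z^1(k,G)$ has its torsor reducing to some proper subgroup scheme $H \subsetneq G$ over $k$, citing Lemma~\ref{lem:connected}. But that lemma only says non-surjective $\Leftrightarrow$ disconnected torsor; it says nothing about reduction of structure group. For non-constant $G$, the image of a cocycle need not be a subgroup of $G(k^{\sep})$, and even the subgroup it generates, while Galois-stable, may equal all of $G(\bar k)$ despite the cocycle being non-surjective. The paper sidesteps this by taking $K$ to be a \emph{splitting field} of $G$: then $G_K$ is constant, $\varphi_K$ is a genuine homomorphism, its image is a proper subgroup of $G(K)$, and the first part of the lemma applies. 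Your argument works verbatim if $G$ is already constant, but for general finite \'etale $G$ you should pass to a splitting field as the paper does.
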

\begin{proof}
	Consider the Cartesian square
		\[
\xymatrix{ X  \ar[r] \ar[d] & B \Res_{K/k} H  \ar[d] \\ 
BG \ar[r] & {B \Res_{K/k} G_K},}
\]
	where $\Res_{K/k}$ denotes the Weil restriction.
	A diagram chase shows that cocycles $\varphi$ with $\varphi_K \in \im(BH(K) \to BG(K))$ are exactly
	those in $\im(X(k) \to BG(k))$.
	 We have $\Res_{K/k} G \times_k k^{\sep} \cong G^n$, where $n := [K: k]$. The map $X_{k^{\sep}} = BG_{k^{\sep}} \times_{B G_{k^{\sep}}^n} BH_{k^{\sep}}^n \to BG_{k^{\sep}}$ is a disjoint union of maps of the form $BH_{k^{\sep}} \to BG_{k^{\sep}}$, each of which is finite and of degree $\geq 2$, so the image of $X$ is thin as desired.
	 
	 For the last part, let $K/k$ be a finite field extension over which $G$ becomes constant.
	 Then any non-surjective cocycle $\varphi \in BG(k)$ becomes a non-surjective homomorphism
	 over $K$. In particular its image is a proper subgroup of $G(K)$. As there are only finitely many
	 subgroups, the result follows from the first part.
\end{proof}


\subsection{Inner twists} \label{sec:inner_twists}
We now study inner twists and the automorphism groups of elements of $BG(k)$. Some of the results in this section are rephrasing in stacky terms properties of non-abelian Galois cohomology, as can be found in \cite[\S5]{Ser02}.

\begin{definition} \label{def:inner_twist}
 Given a cocycle $\varphi \in Z^1(k,G)$ we let $G_{\varphi}$ be the corresponding inner twist, i.e.~the \'etale group scheme whose underlying group is $G(k^{\sep})$ and on which $\Gamma_k$ acts via 
\begin{equation} \label{eqn:inner_twist}
(\sigma, g) \in \Gamma_k \times G(k^{\sep}) \to \varphi(\sigma) \sigma(g) \varphi(\sigma)^{-1}.
\end{equation}
From a cohomological perspective, this is the twist of $G$ corresponding to the cohomology class $\varphi \in \H^1(k,G)$ where $G$ acts on itself by conjugation (see \cite[Examples, \S5.3]{Ser02} for more details).
\end{definition}

In the statement $Z(G)$ denotes the centre of $G$, viewed as a group scheme, and $\AutU(\varphi)$ denotes the automorphism group scheme of $\varphi$.

\begin{lemma}\label{lem:automorphism_group_is_inner_twist}
	Let $\varphi \in BG(k)$ be a cocycle. 
	\begin{enumerate}
		\item We have $\AutU(\varphi) \cong G_{\varphi}$.
		\item If $G$ is constant, then $\AutU(\varphi)(k)$ is isomorphic to the centraliser
		of the image of $\varphi$. In particular, if $\varphi$ is surjective then 
		$\AutU(\varphi)(k) \cong Z(G)$.
		\item In general, there exists a thin subset
		$\Omega \subset BG[k]$ such that 
		for all 	$\varphi \in BG[k] \setminus \Omega$ we have 
		$\AutU(\varphi)(k) \cong Z(G)(k)$.
	\end{enumerate}
\end{lemma}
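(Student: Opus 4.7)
The plan is to prove all three parts using the cocycle description of $BG(k)$ from Lemma~\ref{lem:BG(k)}(3). For part~(1), an automorphism $\varphi \to \varphi$ is by that lemma an element $g \in G(k^{\sep})$ with $g \varphi g^{-1} = \varphi$; unwinding the conjugation rule $(g \varphi g^{-1})(\sigma) = g \varphi(\sigma) \sigma(g)^{-1}$ this becomes $\sigma(g) = \varphi(\sigma)^{-1} g \varphi(\sigma)$ for every $\sigma \in \Gamma_k$, which is precisely the condition that $g$ be fixed by the twisted Galois action of Definition~\ref{def:inner_twist} defining $G_\varphi$. Hence $\AutU(\varphi)(k) \cong G_\varphi(k)$, and as the same calculation runs after any finite separable base change, this upgrades to an isomorphism of group schemes $\AutU(\varphi) \cong G_\varphi$.

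For part~(2), if $G$ is constant then the ordinary Galois action on $G$ is trivial, so the identity $\sigma(g) = \varphi(\sigma)^{-1} g \varphi(\sigma)$ collapses to $g = \varphi(\sigma)^{-1} g \varphi(\sigma)$ for all $\sigma$. Equivalently $g$ centralises the image of $\varphi$, so $\AutU(\varphi)(k) = C_G(\im \varphi)$; when $\varphi$ is surjective this centraliser is $Z(G)$ by definition.

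For part~(3) I would reduce to the constant case by base change. Since $G$ is finite \'etale, we may pick a finite Galois extension $K/k$ over which $G$ becomes constant. Let $\Omega \subset BG[k]$ be the set of those $\varphi$ for which $\varphi_K \in BG_K[K]$ is \emph{not} surjective in the sense of Lemma~\ref{lem:connected}; since $G_K$ has only finitely many maximal proper subgroups, Lemma~\ref{lem:thin_base_change} exhibits $\Omega$ as a finite union of thin sets, hence thin. For $\varphi \notin \Omega$, applying part~(2) over $K$ yields $G_\varphi(K) = Z(G)(K)$. The crucial point is that conjugation by any element of $G$ acts trivially on the centre $Z(G)$, so the twisted $\Gal(K/k)$-action defining $G_\varphi$ agrees with the original Galois action on $Z(G)(K)$. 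Taking $\Gal(K/k)$-invariants therefore gives $G_\varphi(k) = G_\varphi(K)^{\Gal(K/k)} = Z(G)(K)^{\Gal(K/k)} = Z(G)(k)$, as desired.

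The main obstacle is part~(3): one must both exhibit the thin set through a splitting field of $G$ and observe that the twisted and untwisted Galois actions coincide on the centre in order to descend the constant-group computation from $K$ back to $k$. Parts~(1) and (2) are then essentially formal manipulations of the cocycle description.
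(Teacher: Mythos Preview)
Your proof is correct and follows essentially the same route as the paper. All three parts use the cocycle description in the same way, and for part~(3) both you and the paper pass to a splitting field $K$, apply part~(2) there, and exploit that inner twists act trivially on the centre; the only cosmetic difference is that you argue directly by computing $G_\varphi(K)=Z(G)(K)$ and taking Galois invariants, while the paper phrases it as a contrapositive.
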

\begin{proof}
	Let $K/k$ be a field extension and let
	$g: \varphi \to \varphi$ be an automorphism defined over $K$. Then $g \varphi(\sigma) \sigma(g)^{-1} =  \varphi(\sigma)$ for all $\sigma \in \Gamma_K \subset \Gamma_k$. We can rewrite this equality as $g = \varphi(\sigma) \sigma(g) \varphi(\sigma)^{-1}$, which from \eqref{eqn:inner_twist} exactly means that $g \in G_{\varphi}(K)$.
	As $G$ and $\underline{\Aut}(\varphi)$ are both finite \'etale over $k$,
	this proves (1).
	
	For (2), we use the action from 
	\eqref{eqn:inner_twist}. Let $g \in G_\varphi(k^{\sep})$.
	Then $g$ is defined over $k$ if and only
	if
	$$g = \varphi(\sigma) \sigma(g) \varphi(\sigma)^{-1} \quad \text{ for all }
	\sigma \in \Gamma_k.$$
	As $\sigma(g) = g$, this is equivalent to $g$ lying in the centraliser of the image
	of $\varphi$.
	
	For (3), the inclusion $Z(G)(k) \subset G_\varphi(k)$ is clear since the centre is preserved
	under inner twists. So assume that $Z(G)(k) \neq G_\varphi(k)$.
	Take $K/k$ to be a splitting field of $G$. Then we have $Z(G_K) \neq G_{K,\varphi_K}(K)$.
	By $(2)$ we deduce that $\varphi_K$ is not surjective. The collection of such $\varphi$
	is thin by Lemma \ref{lem:thin_base_change}.
\end{proof}

Note that if $G$ is abelian, then one may even take $\Omega = \emptyset$ in Lemma \ref{lem:automorphism_group_is_inner_twist}, since every inner twist is trivial.

We next observe that inner twists do not change $BG$ (compare with \cite[Prop.~35 bis]{Ser02}).

\begin{lemma} \label{lem:inner_twist_BG}
	Let $\varphi \in Z^1(k,G)$. Then $BG_\varphi \cong BG$.
\end{lemma}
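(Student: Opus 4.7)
The plan is to produce an explicit equivalence of stacks using the torsor $T$ corresponding to $\varphi$ and the contracted product construction. By Lemma \ref{lem:BG(k)}(1), the cocycle $\varphi$ corresponds to a right $G$-torsor $T$ over $\Spec k$, and by part (1) of the preceding Lemma \ref{lem:automorphism_group_is_inner_twist} we have $\AutU(T) \cong G_\varphi$. Thus $T$ carries a canonical left $G_\varphi$-action commuting with its right $G$-action, making $T$ into a $(G_\varphi, G)$-bitorsor over $\Spec k$.

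The main step is to define the functor $\Phi: BG \to BG_\varphi$ by the contracted product $\Phi(P) = T^{\mathrm{op}} \times^{G} P$ (or equivalently $P \mapsto \Isom_G(T_S, P)$ viewed as a right $G_\varphi$-torsor via the left $G_\varphi$-action on $T$), for any right $G$-torsor $P$ on a $k$-scheme $S$. This is functorial in $S$, and since contracted products of torsors with bitorsors yield torsors, the result is indeed a right $G_\varphi$-torsor. The inverse functor $\Psi: BG_\varphi \to BG$ is defined symmetrically by contracting with the opposite bitorsor $T$, and the composition $\Phi \circ \Psi$ (resp.\ $\Psi \circ \Phi$) is naturally isomorphic to the identity via the canonical bitorsor isomorphism $T \times^{G_\varphi} T^{\mathrm{op}} \cong G$ (resp.\ $T^{\mathrm{op}} \times^G T \cong G_\varphi$).

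Alternatively, and perhaps more conceptually, one can argue by descent: after base change to $k^{\sep}$, both $G$ and $G_\varphi$ become the same constant group scheme, hence $BG_{k^{\sep}} \cong BG_{\varphi, k^{\sep}}$ canonically, and the cocycle $\varphi$ is designed precisely so as to give compatible descent data on the two sides of this isomorphism; effectivity of descent for stacks then yields $BG \cong BG_\varphi$ over $k$. The only real obstacle is bookkeeping around left/right actions and which bitorsor goes on which side of the contracted product; once fixed, the verification is formal and standard in the theory of non-abelian cohomology (compare \cite[Prop.~35~bis]{Ser02}, cited in the paper).
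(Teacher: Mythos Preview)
Your argument is correct. You build the equivalence by hand using the $(G_\varphi,G)$-bitorsor $T$ attached to $\varphi$ and the contracted product (equivalently, the $\Isom_G(T,-)$ construction), and then invert it with the opposite bitorsor. The paper instead invokes a one-line black-box: for any point $\varphi\in BG(k)$ one has $BG\cong B\AutU(\varphi)$ by \cite[Tag~06QG]{stacks-project}, and then identifies $\AutU(\varphi)\cong G_\varphi$ via Lemma~\ref{lem:automorphism_group_is_inner_twist}. Your construction is essentially an unpacking of that Stacks Project result in this special case: the equivalence $BG\cong B\AutU(\varphi)$ is built there precisely via $P\mapsto\Isom(\varphi,P)$. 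So the two approaches are the same under the hood, but the paper's citation is much terser, while your version has the advantage of being self-contained and making the functor explicit (which is useful elsewhere in the paper, e.g.\ in \S\ref{sec:inner_twists} when tracking how points transform under the equivalence).
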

\begin{proof}
	For all 	$\varphi \in BG(k)$ we have $B \AutU\varphi \cong BG$ 
	\cite[Tag 06QG]{stacks-project}.
	However $\AutU \varphi \cong G_\varphi$ 
	by Lemma \ref{lem:automorphism_group_is_inner_twist}.
\end{proof}

Inner twists of $G$ naturally arise when considering subgroups.

\begin{lemma} \label{lem:subgroup_inner_twists}
	Let $f:X \to BG$ be a finite \'etale morphism with $X$ connected 
	and $X(k) \neq \emptyset$.
	Then $X \cong BH$ for some subgroup scheme $H$ of an inner twist of $G$.
\end{lemma}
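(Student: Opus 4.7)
The plan is to first use a $k$-point to reduce to the case where $f$ sends some $k$-point of $X$ to the trivial cocycle. Pick $x \in X(k)$ and set $\varphi := f(x) \in BG(k)$. By Lemma \ref{lem:inner_twist_BG} there is an equivalence $BG \cong BG_\varphi$ sending $\varphi$ to the basepoint $e \in BG_\varphi(k)$, and $G_\varphi$ is by definition an inner twist of $G$. Since the statement allows $H$ to live in an inner twist of $G$, we may replace $G$ by $G_\varphi$ and assume $f(x) \cong e$.

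Next, form the fibre product $P := X \times_{BG} \Spec k$ along the universal torsor $\Spec k \to BG$. As $f$ is finite \'etale and $\Spec k \to BG$ is a smooth cover, $P$ is an algebraic space finite \'etale over $\Spec k$, hence in fact a scheme. The natural projection $P \to X$ is a right $G$-torsor, giving a presentation $X \cong [P/G]$, while the 2-isomorphism $f(x) \cong e$ supplies a $k$-point $y \in P(k)$ lifting $x$.

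Let $H \subseteq G$ be the scheme-theoretic stabiliser of $y$ under the $G$-action, a closed subgroup scheme of $G$ which is automatically finite \'etale (as the fibre over $y$ of the orbit map $G \to P$ between finite \'etale $k$-schemes). The orbit morphism $G \to P$, $g \mapsto y \cdot g$, is invariant under left translation by $H$ and hence factors as $G \twoheadrightarrow H \backslash G \hookrightarrow P$, where the second arrow is a monomorphism of finite \'etale $k$-schemes. To conclude $X \cong [(H \backslash G)/G] \cong BH$ it suffices to show this monomorphism is an isomorphism. Since $X$ is connected and $x$ determines a $\Gamma_k$-fixed element of $\pi_0(X_{\bar k})$ (on which $\Gamma_k$ acts transitively), $\pi_0(X_{\bar k})$ must be a singleton, so $X_{\bar k}$ is connected. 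This forces the induced $G(\bar k)$-action on $P(\bar k)$ to be transitive, so $H \backslash G \to P$ is surjective on geometric points and hence an isomorphism.

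The main technical subtlety is the passage from connectedness of $X$ to connectedness of $X_{\bar k}$, for which the rational point $x$ is essential; without it one would only obtain an inner form of $BH$ rather than $BH$ itself. Everything else is a formal manipulation using the torsor presentation $X \cong [P/G]$ together with the standard identification $[(H \backslash G)/G] \cong BH$.
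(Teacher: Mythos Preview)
Your proof is correct, but it takes a different route from the paper's.

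The paper's argument is much shorter and more categorical: since $f$ is finite \'etale and $BG \to \Spec k$ is \'etale, $X$ is \'etale over $\Spec k$; being connected with a rational point, $X$ is therefore a neutral gerbe, so $X \cong B\AutU(x)$ by \cite[Tag 06QG]{stacks-project}. Representability of $f$ (from finiteness) then gives an injection $\AutU(x) \hookrightarrow \AutU(f(x))$, and the latter is an inner twist of $G$ by Lemma~\ref{lem:automorphism_group_is_inner_twist}. That's the whole proof.

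Your approach is more constructive: you pull back the universal torsor to obtain the finite \'etale $k$-scheme $P$ with $X \cong [P/G]$, locate a $k$-point $y \in P(k)$, and identify $H$ as its stabiliser. The geometric connectedness argument (rational point forces $\pi_0(X_{\bar k})$ to be a singleton) then shows the $G(\bar k)$-action on $P(\bar k)$ is transitive, whence $P \cong H\backslash G$ and $X \cong BH$. This recovers the same $H$ --- indeed your stabiliser is precisely the image of $\AutU_X(x)$ inside $G$ under $f$, conjugated by the trivialisation $\alpha$ --- but arrives there by explicitly analysing the $G$-set $P$ rather than invoking the gerbe machinery. One small terminological caveat: calling $P \to X$ a ``torsor'' is standard but can mislead, since the induced $G$-action on $P$ as a $k$-scheme is \emph{not} free (freeness would force $X$ to be an algebraic space); the torsor condition $P \times G \cong P \times_X P$ holds over the stack $X$, which is what makes $X \cong [P/G]$ work.
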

\begin{proof}
	As $X$ is connected and $X(k) \neq \emptyset$ we see that $X$ must be a neutral gerbe,
	hence $X \cong BH$ for some finite \'etale group scheme $H$ over $k$. 
	Let $e \in BH(k)$ be the identity cocycle. As $f:BH \to BG$ is finite it is representable, 
	so the map  $H = \AutU(e) \to \AutU f(e)$ is injective. However $\AutU f(e)$ is an inner twist of $G$
	by Lemma \ref{lem:automorphism_group_is_inner_twist}.
\end{proof}

Therefore it useful to understand the subgroup schemes of inner twists of $G$. Here is one way to construct examples.

\begin{definition} \label{def:inner_twist_normaliser}
	Let $H \subseteq G$ be a subgroup scheme. 
	Let $N \subseteq G$ denote the normaliser of $H$ and let $\varphi \in Z^1(k,N)$.
	Denote by $H_\varphi$ the twist of $H$ obtained from $\varphi$ where $N$ acts
	on $H$ by conjugation, i.e.~$H_\varphi$ is the \'etale group scheme with
	underlying group $H(k^{\sep})$ and on which $\Gamma_k$ acts via 
	\[
(\sigma, h) \in \Gamma_k \times H(k^{\sep}) \to \varphi(\sigma) \sigma(h) \varphi(\sigma)^{-1}.
\]
\end{definition}

Definition \ref{def:inner_twist} is a special case of Definition \ref{def:inner_twist_normaliser},
as such we call $H_\varphi$ \textit{the inner twist of $H$ by $\varphi$}. But note that $H_\varphi$ is \textit{not} an inner twist of $H$ in the sense of Definition \ref{def:inner_twist} in general as, despite the action given by conjugation, these come from $N$ hence can induce \emph{outer automorphisms} of $H$. So in general we will have  $BH \not \cong BH_{\varphi}$, despite Lemma \ref{lem:inner_twist_BG}.

We next classify fibres of quotient maps with a rational point
(compare with Corollary 2 in \cite[\S5.5]{Ser02}).

\begin{lemma} \label{lem:fibration_normal_quotient}
	Let $H \subseteq G$ be a normal subgroup scheme and $f:BG \to B(G/H)$ the induced 
	map.
	Let $\varphi \in Z^1(k,G)$ be a cocycle. Then
	$f^{-1}(f(\varphi)) \cong BH_\varphi$ where $H_\varphi$ is the inner twist
	of $H$ by $\varphi$.
\end{lemma}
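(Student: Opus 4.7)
The plan is to reduce to the known case of the fiber over the trivial cocycle by inner-twisting by $\varphi$. Since $H$ is normal in $G$, the conjugation action of $G$ on itself preserves $H$, so the short exact sequence $1 \to H \to G \to G/H \to 1$ inner-twists to a short exact sequence $1 \to H_\varphi \to G_\varphi \to (G/H)_{\bar\varphi} \to 1$, where $\bar\varphi$ denotes the image of $\varphi$ under the natural map $Z^1(k,G) \to Z^1(k,G/H)$ and $H_\varphi$ is the inner twist from Definition~\ref{def:inner_twist_normaliser}.

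I would next invoke Lemma~\ref{lem:inner_twist_BG} to produce equivalences of stacks $BG \simeq BG_\varphi$ and $B(G/H) \simeq B((G/H)_{\bar\varphi})$ fitting into a $2$-commutative square with $f$ and the corresponding map $f_\varphi : BG_\varphi \to B((G/H)_{\bar\varphi})$. Concretely, the equivalence $BG \simeq BG_\varphi$ may be realised by the functor $T \mapsto \underline{\Hom}_G(P_\varphi, T)$, where $P_\varphi$ is the $G$-torsor corresponding to $\varphi$, and the analogous formula (applied to the induced quotient torsor $P_\varphi/H$) gives the equivalence for $G/H$. Under these equivalences, the distinguished points $\varphi$ and $\bar\varphi$ are sent to the respective trivial cocycles.

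It will then suffice to compute the fiber of $f_\varphi$ over the trivial cocycle. This is standard: for any short exact sequence of \'etale group schemes $1 \to K \to L \to M \to 1$, specifying an $L$-torsor together with a trivialization of its induced $M$-torsor is the same data as a $K$-torsor, so the fiber of $BL \to BM$ over the trivial point is $BK$. Applied to the twisted sequence, this yields $f_\varphi^{-1}(e) \cong BH_\varphi$, and combining with the previous step gives $f^{-1}(f(\varphi)) \cong BH_\varphi$ as desired.

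The main obstacle will be verifying the $2$-commutativity of the square relating $BG$, $BG_\varphi$, $B(G/H)$ and $B((G/H)_{\bar\varphi})$; this amounts to the compatibility of the torsion bijection from non-abelian Galois cohomology with quotient maps by normal subgroup schemes. It is essentially a formal diagram chase, but if desired it may be checked directly at the level of cocycles via the substitution $\psi(\sigma) = h(\sigma)\varphi(\sigma)$, which, after a brief computation using the cocycle identity for $\psi$, identifies $1$-cocycles $\psi : \Gamma_k \to G$ lifting $\bar\varphi$ with $1$-cocycles $h$ taking values in the twisted group $H_\varphi$.
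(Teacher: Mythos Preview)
Your proof is correct, but it takes a different route from the paper's. The paper argues in one line: the fiber $f^{-1}(f(\varphi))$ is a gerbe containing the rational point $\varphi'$ (the image of $\varphi$), so by \cite[Tag 06QG]{stacks-project} it is isomorphic to $B\AutU\varphi'$; and the automorphism scheme $\AutU\varphi'$ is computed directly as $\ker(\AutU\varphi \to \AutU f(\varphi)) = \ker(G_\varphi \to G_\varphi/H_\varphi) = H_\varphi$, using Lemma~\ref{lem:automorphism_group_is_inner_twist}. This avoids constructing any global equivalences or checking $2$-commutativity of a square.

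Your approach instead reduces to the trivial fiber via the twisting bijection of non-abelian Galois cohomology, which is precisely the classical perspective the paper alludes to in its parenthetical ``compare with Corollary~2 in \cite[\S5.5]{Ser02}''. The trade-off is that you need the square to $2$-commute, which you rightly flag as the main point to check (and which your cocycle substitution $\psi(\sigma) = h(\sigma)\varphi(\sigma)$ handles). The paper's argument buys brevity by working locally at the single point $\varphi'$ and invoking the characterization of neutral gerbes, whereas yours makes the global twisting explicit and is arguably closer to how one would think about this in terms of sets of cocycles rather than stacks.
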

\begin{proof}
	Let $\varphi'$ denote the image of $\varphi$ in the gerbe
	$f^{-1}(f(\varphi))$. Then 
	$\AutU \varphi' = \ker( \AutU \varphi \to \AutU f(\varphi))
	= (\ker G_\varphi \to G_\varphi/H_\varphi) = H_\varphi$.
	Thus by \cite[Tag 06QG]{stacks-project} we have
	$f^{-1}(f(\varphi)) \cong B \AutU \varphi' \cong B H_\varphi$.	
\end{proof}

Lemma \ref{lem:fibration_normal_quotient} applies to the following explicit problem. 
Let $\psi \in Z^1(k,G/H)$. Then the collection of $\varphi \in Z^1(k,G)$ which lift $\psi$ has the structure of a stack isomorphic to $BH_\varphi$ for some choice of $\varphi$, providing a lift exists (a lift is exactly a solution to the corresponding embedding problem). This in particular allows one to phrase the perspective from \cite{Alb21} in terms of stacks. In this situation we sometimes abuse notation and denote the corresponding stack by $BH_\psi$; this implicitly depends on the choice of lift $\varphi$ and is well-defined up to isomorphism.

It will also be useful to compare the groupoid cardinalities of $BH$ and $BG$, where $H \subseteq G$ is a subgroup scheme. The functor $BH(k) \to BG(k)$ is not full in general; this means that the induced map on isomorphism classes of elements need \emph{not} be injective in general. This has consequences for groupoid cardinalities, which we summarise in the following lemma.

\begin{lemma} \label{lem:outer_automorphism}
	Let $H \subseteq G$ be a subgroup scheme and $F: BH(k) \to BG(k)$ the induced map.
	Let $f:BG[k] \to \C$ be any function.
	Then for any finite collection of objects $W \subseteq B H(k)$ with $F^{-1}(F(W)) = W$
	 we have
	$$\sum_{\varphi \in [W]} \frac{F(f(\varphi))}{|(G_{F(\varphi)}/H_{\varphi})(k)| \cdot |\Aut \varphi|}
	 = 
	\sum_{\psi \in [F(W)]} \frac{f(\psi)}{|\Aut \psi|}.$$
	Moreover:
	\begin{enumerate}
		\item 
		If $H \subseteq G$ is normal, then 
		$G_{F(\varphi)}/H_{\varphi} = G/H$ for all $\varphi \in  BH(k)$.
		\item If $G$ is constant and $\varphi$ is surjective, then 
		$|(G_{F(\varphi)}/H_{\varphi})(k)| = |N/H|$ where $N$ denotes
		the normaliser of $H$ in $G$.
	\end{enumerate}
\end{lemma}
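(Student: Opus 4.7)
The assertion reduces, via the hypothesis $F^{-1}(F(W)) = W$ and Lemma~\ref{lem:automorphism_group_is_inner_twist}(1) (which gives $|\Aut \varphi| = |H_\varphi(k)|$ and $|\Aut \psi| = |G_\psi(k)|$), to proving the per-fibre identity
\[
\sum_{[\varphi] \in BH[k],\ F([\varphi]) = [\psi]} \frac{1}{|(G_\psi/H_\varphi)(k)| \cdot |H_\varphi(k)|} \ = \ \frac{1}{|G_\psi(k)|}
\]
for each $[\psi] \in [F(W)]$. My plan is to obtain this by computing the groupoid cardinality of the $2$-fibre $F^{-1}(\psi)$ in two ways.

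On the one hand, $F^{-1}(\psi)$ is equivalent to the stack of $H$-reductions of the $G$-torsor $T$ corresponding to $\psi$; since $H$ acts freely on $T$, this is just the scheme $T/H$, so $\# F^{-1}(\psi) = |(T/H)(k)|$. On the other hand, fixing any preimage $\varphi$ and a corresponding $k$-rational reduction $P_0 \subseteq T$, the orbit map
\[
G_\psi \ = \ \AutU(T) \ \longrightarrow \ T/H, \qquad \alpha \ \longmapsto \ \alpha(P_0),
\]
is defined over $k$, surjective on $\bar k$-points (since $G_\psi(\bar k) \cong G$ acts transitively on $T/H$), and has stabiliser $\AutU(P_0) = H_\varphi$ by Lemma~\ref{lem:automorphism_group_is_inner_twist}(1). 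This descends to an isomorphism $G_\psi/H_\varphi \xrightarrow{\ \sim\ } T/H$ over $k$, so that $|(G_\psi/H_\varphi)(k)| = |(T/H)(k)|$ independently of the choice of $\varphi$. Combining with the general groupoid-cardinality formula
\[
\sum_{[\varphi] \in BH[k],\ F([\varphi]) = [\psi]} \frac{1}{|\Aut \varphi|} \ = \ \frac{\# F^{-1}(\psi)}{|\Aut \psi|},
\]
which holds because $H_\varphi \hookrightarrow G_\psi$ is injective on automorphism groups, the per-fibre identity follows on dividing through by $\# F^{-1}(\psi)$.

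For part~(1), one takes $\psi = F(\varphi)$ to take values in $H$; normality of $H$ then gives $\psi(\sigma) g \psi(\sigma)^{-1} \in gH$ for every $g \in G$ and $\sigma \in \Gamma_k$, so the twisted $\Gamma$-action on $G_\psi/H_\varphi$ agrees with the canonical action on $G/H$. For part~(2), with $G$ constant and $\varphi$ surjective (so $\psi(\Gamma_k) = H$), a coset $gH$ is $\Gamma$-fixed iff $g^{-1} h g h^{-1} \in H$ for every $h \in H$, which reduces to $g \in N_G(H)$, yielding the cardinality $|N/H|$. The key conceptual point, which I anticipate will be the main obstacle, is the identification $F^{-1}(\psi) \cong G_\psi/H_\varphi$: this is what makes $|(G_\psi/H_\varphi)(k)|$ depend only on $\psi$ rather than on $\varphi$ and causes the sum on the left to collapse cleanly to the right-hand side.
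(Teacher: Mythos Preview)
Your proof is correct and follows essentially the same route as the paper's. The paper reduces to a single fibre and then invokes Serre's Corollary~I.5.5.2 and Proposition~36, which identify the fibre of $\H^1(k,H)\to\H^1(k,G)$ over $[\psi]$ with the $G_\psi(k)$-orbits on $(G_\psi/H_\varphi)(k)$, the stabilisers being the $H_{\varphi'}(k)$; orbit--stabiliser then gives the identity. Your argument reproves exactly this by identifying the $2$-fibre with the scheme $T/H$ and observing $G_\psi/H_\varphi \cong T/H$; the ``groupoid-cardinality formula'' you write down is the orbit--stabiliser step in disguise. In particular, the key point you flag---that $|(G_\psi/H_\varphi)(k)|$ depends only on $\psi$ and not on the choice of $\varphi$ in the fibre---is implicit in the paper's appeal to Serre and is what makes both arguments work. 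Parts~(1) and~(2) are argued the same way in both proofs.
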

\begin{proof}
	Assume without loss of generality that $F(W)$ consists of a single equivalence class. The statement then reduces to the following: if $\varphi \in BH(k)$ then
	\[
	\sum_{\varphi' \in BH[k], F(\varphi') = F(\varphi)}  \frac{1}{|G_{F(\varphi)}/H_{\varphi'}(k)| \cdot |\Aut(\varphi')|} = \frac{1}{\Aut(F(\varphi))}.
	\]
	By \cite[Cor.~5.5:2]{Ser02} the sum in the left-hand side is equivalent to a sum over the orbits of $G_{F(\varphi)}(k)$ acting on $G_{F(\varphi)}/H_{\varphi}(k)$. Moreover, by \cite[Prop.~36]{Ser02}, the stabiliser of this action for the orbit corresponding to $\varphi'$ is $H_{\varphi'}(k)$. Lemma \ref{lem:inner_twist_BG} implies that $\Aut(\varphi') = H_{\varphi'}(k)$ and $\Aut(F(\varphi)) = G_{F(\varphi)}(k)$. The desired statement then follows from the orbit-stabiliser theorem.

For the last parts, note that $(G_{F(\varphi)}/H_{\varphi})$ by construction is equal to the twist of the finite \'etale scheme $G/H$ by the cocycle $\varphi$, where $H$ acts on $G/H$ by conjugation, i.e. multiplication on the left. For (1), if $H \subseteq G$ is normal then $H$ acts trivially on $G/H$ by multiplication on the left. So the twist of $G/H$ by $F(\varphi)$ is trivial, i.e. $G_{F(\varphi)}/H_{\varphi} = G/H$.

For (2), if $\varphi$ is surjective then $(G_{F(\varphi)}/H_{\varphi})(k)$ consists of those elements of $G/H$ on which $H$ acts trivially. If $g H \in G/H$ is such a coset then this means that for all $h \in H$ there exists a $h' \in H$ such that $h g = g h'$, i.e.~$g \in N$. The number of such cosets is thus $|N/H|$.
\end{proof}

\subsection{Extensions with given Galois closure} \label{sec:BS_n}
In the special case of $S_n$, we have the following equivalent descriptions for the groupoid $BS_n(k)$.

\begin{lemma}  \label{lem:S_n} 
The following categories are equivalent.
\begin{enumerate}
\item The groupoid $BS_n(k)$ of right $S_n$-torsors over $\Spec k$ with isomorphisms as $S_n$-equivariant morphisms.
\item Homomorphisms $\Gamma_k \to S_n$ with isomorphisms given by conjugation in $S_n$.
\item $\Gamma_k$-actions on the set $\{1,\cdots, n\}$ with isomorphisms as conjugation by an element of $S_n$ acting on $\{1, \cdots, n\}$.
\item \'Etale $k$-algebras of degree $n$ with isomorphisms given by $k$-algebra isomorphisms.
\end{enumerate}
\end{lemma}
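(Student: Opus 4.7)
The plan is to deduce the four-way equivalence by leveraging the general result Lemma~\ref{lem:BG(k)} and then handling the two new descriptions, (3) and (4), which are specific to the case $G = S_n$ and do \emph{not} require an equivariant structure, unlike part~(2) of Lemma~\ref{lem:BG(k)}.

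First I would observe that since $S_n$ is a constant group scheme, the equivalence $(1) \iff (2)$ is immediate from parts (1) and (4) of Lemma~\ref{lem:BG(k)} applied to $G = S_n$. So no new work is required there.

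Next, for $(2) \iff (3)$, I would use the standard dictionary between permutation representations and group homomorphisms. Given a continuous homomorphism $\varphi: \Gamma_k \to S_n$, one obtains a $\Gamma_k$-action on $\{1,\dots,n\}$ by $\sigma \cdot i := \varphi(\sigma)(i)$; conversely, any continuous $\Gamma_k$-action on $\{1,\dots,n\}$ determines such a homomorphism via the induced map on permutations. Two homomorphisms $\varphi, \varphi'$ are conjugate by $g \in S_n$ precisely when the corresponding actions are intertwined by the permutation $g$ viewed as a bijection of $\{1,\dots,n\}$, so the isomorphism classes match and this extends to a functor of groupoids.

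For $(3) \iff (4)$, I would invoke Grothendieck's Galois theory: the functor $A \mapsto \Hom_{k\mathrm{-alg}}(A, k^{\sep})$ gives an anti-equivalence between \'etale $k$-algebras of degree $n$ and finite continuous $\Gamma_k$-sets of cardinality $n$ (see e.g.~\cite[\href{https://stacks.math.columbia.edu/tag/03QR}{Tag 03QR}]{stacks-project}). Any such $\Gamma_k$-set is non-canonically isomorphic to $\{1,\dots,n\}$ with some $\Gamma_k$-action, and two such actions give isomorphic \'etale algebras if and only if they differ by a relabelling, i.e.~by conjugation by an element of $S_n$. This matches exactly the morphisms in (3).

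The main subtlety — in fact the only thing to be mildly careful about — is that description (2) of Lemma~\ref{lem:BG(k)} uses left $G$-\'etale algebras with a compatible $G$-action of rank $|G| = n!$, whereas here in (4) we are considering \'etale algebras of degree $n$ with \emph{no} extra action. The bridge is the contracted product construction: to a right $S_n$-torsor $T$ one associates the degree $n$ \'etale algebra $\H^0(T \times^{S_n} \{1,\dots,n\}, \mathcal{O})$, where $S_n$ acts on $\{1,\dots,n\}$ via the tautological embedding $S_n \hookrightarrow S_n$. Concretely, if $\varphi: \Gamma_k \to S_n$ is the cocycle attached to $T$, then this algebra corresponds via Grothendieck's equivalence to $\{1,\dots,n\}$ with $\Gamma_k$-action twisted by $\varphi$, which is precisely the $\Gamma_k$-set in (3). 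Conjugation in $S_n$ acts compatibly on both sides, so this is functorial and completes the chain of equivalences.
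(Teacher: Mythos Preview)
Your proof is correct and follows essentially the same route as the paper's proof: both deduce $(1)\iff(2)$ from Lemma~\ref{lem:BG(k)}, treat $(2)\iff(3)$ as the tautological dictionary between homomorphisms into $S_n$ and actions on $\{1,\dots,n\}$, and invoke Grothendieck's Galois theory for the link to \'etale algebras. The paper is terser (it just writes ``Clear'' for $(2)\iff(3)$ and cites \cite[Lem.~3.1, 3.2]{Ked07} for the \'etale algebra step), while you spell out the details and add the contracted product description bridging torsors directly to degree-$n$ algebras; this extra paragraph is correct but not needed for the argument.
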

\begin{proof}
	(1) $\iff$ (2): Immediate from Lemma \ref{lem:BG(k)}. 
	
	(2) $\iff$ (3):  Clear.
	
	(2) $\iff$ (4): This is well-known and follows from the arguments in 
	\cite[Lem.~3.1, 3.2]{Ked07}
	(strictly speaking, the category in (4) is canonically equivalent to the category of 
	$\Gamma_k$-actions on sets of cardinality $n$; this  is equivalent to the category
	in (3)).
\end{proof}

In Malle's conjecture one is traditionally interested in field extensions with given Galois closure $G$. To study this in our setting, let $G \subseteq S_n$ be a transitive subgroup. Then by Lemma \ref{lem:S_n}, the image of the functor $BG(k) \to BS_n(k)$ exactly corresponds to \'etale algebras of degree $n$ with Galois closure $G$ acting via the inclusion into $S_n$. One can calculate the correct groupoid cardinalities in this case using Lemma~\ref{lem:outer_automorphism}. The formulae which appear this way can be quite complicated; this all points to the fact that the most natural counting problem is counting elements of $BG(k)$, which boils down to counting homomorphisms $\Gamma_k \to G$, rather than counting field extensions with Galois group $G$. In any case we record the corresponding groupoid cardinalities here for surjective homomorphisms.

\begin{lemma} \label{lem:G_S_n}
	Let $G \subseteq S_n$ be a subgroup and $F: BG(k) \to BS_n(k)$ the induced map.
	Let $f:BS_n[k] \to \C$ be any function.
	Then for any finite collection of \textbf{surjective} objects $W \subseteq B G[k]$ with $F^{-1}(F(W)) = W$
	 we have
	$$\frac{1}{|Z(G)|}\sum_{\varphi \in W} F(f(\varphi))
	 = \frac{|N|}{|C| \cdot |G|}\sum_{\psi \in [F(W)]} f(\psi),$$
	where $N$ and $C$ denote the normaliser and centraliser of $G$ in $S_n$, respectively.
\end{lemma}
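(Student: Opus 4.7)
The plan is to derive this as a direct specialisation of Lemma \ref{lem:outer_automorphism} applied to the inclusion $G \subseteq S_n$, combined with the computation of automorphism groups from Lemma \ref{lem:automorphism_group_is_inner_twist}(2), with all simplifications enabled by the surjectivity hypothesis on $W$.

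First I would invoke Lemma \ref{lem:outer_automorphism} with $H = G$ inside the ambient group $S_n$, which produces the identity
$$\sum_{\varphi \in [W]} \frac{F(f(\varphi))}{|((S_n)_{F(\varphi)}/G_{\varphi})(k)| \cdot |\Aut \varphi|} = \sum_{\psi \in [F(W)]} \frac{f(\psi)}{|\Aut \psi|}.$$
It then remains to identify each of the three group orders $|\Aut \varphi|$, $|\Aut \psi|$, and $|((S_n)_{F(\varphi)}/G_{\varphi})(k)|$ in purely group-theoretic terms.

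Next I would evaluate these factors using the surjectivity hypothesis. Since $G \subseteq S_n$ and $S_n$ is constant, $G$ is constant, so Lemma \ref{lem:automorphism_group_is_inner_twist}(2) applied to the surjective cocycle $\varphi$ gives $|\Aut \varphi| = |Z(G)|$. The cocycle $F(\varphi) \in BS_n(k)$ has image exactly $G \subseteq S_n$, so Lemma \ref{lem:automorphism_group_is_inner_twist}(2) applied to $F(\varphi)$ identifies $\Aut F(\varphi)$ with the centraliser of $G$ in $S_n$, giving $|\Aut \psi| = |C|$. Finally, Lemma \ref{lem:outer_automorphism}(2) (using that $S_n$ is constant and $\varphi$ surjective onto $G$) yields $|((S_n)_{F(\varphi)}/G_{\varphi})(k)| = |N/G| = |N|/|G|$.

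Substituting these three identifications into the displayed equation and rearranging (multiplying both sides by $|N|/(|G|\cdot|Z(G)|)$ on the left and by $1/|C|$ on the right) produces the desired formula. The only thing to check carefully is that Lemma \ref{lem:automorphism_group_is_inner_twist}(2) applies to $F(\varphi)$ — that is, that the image of $F(\varphi)$ really is $G$ and not a conjugate — but this is immediate from the definition of $F$ as being induced by the inclusion $G \hookrightarrow S_n$. I do not anticipate a genuine obstacle; the step most liable to bookkeeping errors is tracking which factor of $|G|$ cancels with which factor of $|N|$ when converting between $|N/G|$ and $|N|/|G|$.
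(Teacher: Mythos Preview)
Your proposal is correct and follows exactly the same route as the paper's proof: apply Lemma~\ref{lem:outer_automorphism} (in particular part (2)) to the inclusion $G \subseteq S_n$, and compute $|\Aut \varphi| = |Z(G)|$ and $|\Aut \psi| = |C|$ via Lemma~\ref{lem:automorphism_group_is_inner_twist}(2). The only quibble is your parenthetical description of the rearrangement (one cannot multiply the two sides of an equation by different quantities), but the substitution and simplification are entirely routine and your conclusion is correct.
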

\begin{proof}
	First note that $\Aut \varphi = Z(G)$ and $\Aut \psi = C$ by 
	Lemma~\ref{lem:automorphism_group_is_inner_twist}(2). The result then follows from
	Lemma~\ref{lem:outer_automorphism}(2).
\end{proof}

\section{Orbifold Picard group} \label{sec:orbifold_Picard}
In this section we begin by setting-up our notation. Some of our definitions are inspired by those appearing in \cite{DYBM}, though there are some differences (as we explain in Remark \ref{def:DY_diff}).

\subsection{Galois action on conjugacy classes} \label{sec:Galois_action_conjugacy_classes}
Let $k$ be a field of characteristic $p$ (possibly $0$) with absolute Galois group $\Gamma_k := \Gal(k^{\sep}/k)$ and let $G$ be a tame finite \'etale group scheme over $k$, e.g.~$G$ is a finite group of order not divisible by the  characteristic of $k$.

Fundamental to Malle's conjecture is the Tate twist of $G$ by $-1$.

\begin{definition}[Tate twist] \label{def:G(-1)}
	Let $\widehat{\Z}(1) := \varprojlim_n \mu_n$. We define
\begin{equation*} 
	G(-1):=\Hom(\widehat{\Z}(1), G).
\end{equation*}
\end{definition}

This is a finite \'etale scheme. If $G$ is non-abelian then $G(-1)$ has no natural group scheme structure.  Any element of $G(-1)$ may be represented by a homomorphism $\mu_{\exp(G)} \to G$ where $\exp(G)$ denotes the exponent of $G$.
There is a very closely related scheme which appears more commonly in the Malle's conjecture literature. Recall that the set $G(k^{\sep})$ admits an action of $\hat{\Z}^\times$ via exponentiation (if $G$ is non-abelian then this need not preserve the group structure).

\begin{definition}[Anticyclotomic twist]\label{def:G(cycl)}
Let $\cycl: \Gamma_k \to \widehat{\Z}^\times$ be the cyclotomic character. Let $G(\cycl^{-1})$ denote the finite \'etale scheme whose geometric points are $G(k^{\sep})$, but with Galois action twisted by $\cycl^{-1}$:
\begin{equation*} 
\Gamma_k \times G(\cycl^{-1})(k^{\sep}) \to G(\cycl^{-1})(k^{\sep}), \quad (\sigma,g) \mapsto \sigma(g)^{\chi(\sigma)^{-1}}.
\end{equation*}
\end{definition}

Warning: not all authors take the anticyclotomic character $\cycl^{-1}$, which is especially important for non-constant $G$. These two schemes are isomorphic, though non-canonically.

\begin{lemma}\label{lem:Galois_action_on_G(-1)}
Choose a primitive $\exp(G)$-th root of unity $\zeta$. Then the map
$$G(-1) \to G(\cycl^{-1}), \quad (\gamma: \mu_{|\exp(G)|} \to G) \mapsto \gamma(\zeta)$$
is an isomorphism of schemes.
\end{lemma}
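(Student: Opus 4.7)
The plan is to check that the map is a bijection on $k^{\sep}$-points and that it is Galois-equivariant; since both $G(-1)$ and $G(\cycl^{-1})$ are finite étale $k$-schemes, a Galois-equivariant bijection on geometric points automatically upgrades to an isomorphism of schemes.

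First I would unwind the definitions. Because $G$ is tame and has exponent $\exp(G)$, every homomorphism $\widehat{\Z}(1) \to G$ factors through the quotient $\widehat{\Z}(1) \twoheadrightarrow \mu_{\exp(G)}$, so $G(-1)(k^{\sep}) = \Hom(\mu_{\exp(G)}(k^{\sep}), G(k^{\sep}))$. Since $\mu_{\exp(G)}(k^{\sep})$ is cyclic of order $\exp(G)$ (using tameness) and is generated by the chosen primitive root $\zeta$, a homomorphism $\gamma$ is determined by $\gamma(\zeta) \in G(k^{\sep})$, and conversely any element $g \in G(k^{\sep})$ satisfies $g^{\exp(G)} = 1$, so $\zeta \mapsto g$ defines a valid homomorphism. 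This establishes bijectivity on $k^{\sep}$-points.

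Next I would check Galois equivariance. For $\sigma \in \Gamma_k$ and $\gamma \in G(-1)(k^{\sep})$, the natural action is $(\sigma \cdot \gamma)(x) = \sigma(\gamma(\sigma^{-1}(x)))$. Applying this to $\zeta$ and using $\sigma^{-1}(\zeta) = \zeta^{\cycl(\sigma)^{-1}}$ together with the fact that $\gamma$ is a group homomorphism gives
\[
(\sigma \cdot \gamma)(\zeta) = \sigma\bigl(\gamma(\zeta)^{\cycl(\sigma)^{-1}}\bigr) = \sigma(\gamma(\zeta))^{\cycl(\sigma)^{-1}}.
\]
This is exactly the twisted action defining $G(\cycl^{-1})$, so the bijection is $\Gamma_k$-equivariant.

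Finally, since both $G(-1)$ and $G(\cycl^{-1})$ are finite étale $k$-schemes whose $\Gamma_k$-sets of geometric points are identified by the above map, the equivalence of categories between finite étale $k$-schemes and finite $\Gamma_k$-sets yields an isomorphism of $k$-schemes. The only real subtlety is keeping track of conventions, in particular the sign in the exponent of the cyclotomic character; one must verify that the action on $G(-1)$ arising from the contravariance of $\Hom(\widehat{\Z}(1),-)$ produces precisely the anticyclotomic twist rather than the cyclotomic one, which is exactly what the computation above confirms.
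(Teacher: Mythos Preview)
Your proof is correct and follows essentially the same approach as the paper: both establish bijectivity on $k^{\sep}$-points and then verify Galois equivariance via the identical computation $(\sigma\cdot\gamma)(\zeta)=\sigma(\gamma(\sigma^{-1}(\zeta)))=\sigma(\gamma(\zeta))^{\cycl(\sigma)^{-1}}$. You simply spell out more explicitly why bijectivity holds and invoke the Galois correspondence for finite \'etale schemes, whereas the paper declares bijectivity ``clear'' and leaves the scheme-level conclusion implicit.
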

\begin{proof}
	It is clearly bijective. Thus it suffices to show that it preserves the Galois
	action. For this we have
	\[(\sigma \cdot \gamma)(\zeta) := \sigma(\gamma(\sigma^{-1}(\zeta))) = \sigma(\gamma(\zeta^{\chi(\sigma^{-1})})) 
	= \sigma(\gamma(\zeta))^{\chi(\sigma)^{-1}}. \qedhere \]
\end{proof}

\noindent 
We define 
\begin{equation} \label{def:conjugacy_classes}
	\mathcal{C}_G := G(-1)(k^{\sep})/\mathrm{conj}
\end{equation}
where the quotient is by the conjugacy action of $G$ on $G(-1)$. Let $\mathcal{C}_G^* := \mathcal{C}_G \setminus \{e\}$ where $e$ denotes the identity of $G(-1)$ (by which we mean the homomorphism with constant value $e$). We call $\{e\}$ the \emph{identity conjugacy class}.

The scheme $G(-1)$ arises in the following way. Heights will be determined by the restriction of a cocycle $\varphi: \Gamma_k \to G$ to the tame inertia group. The tame inertia is well-defined up to conjugacy, and is canonically isomorphic to a group of roots of unity \cite[Tag 09EE]{stacks-project}. Hence we canonically obtain an element of $G(-1)$, not of $G(\cycl^{-1})$ (see \S\ref{sec:ramification_type} and \S\ref{sec:heights} for details). To help differentiate between $G(-1)$ and $G(\cycl^{-1})$, we will write $\gamma$ for an element of $G(-1)$ and $g$ for an element of $G(\cycl^{-1})$.

Nevertheless, the scheme $G(\cycl^{-1})$ is often easier to work with in examples due it being easier to write down elements. Though we shall only do this when in the given setting, the choice of isomorphism in Lemma \ref{lem:Galois_action_on_G(-1)} is irrelevant (see Definition~\ref{def:special} and Remark~\ref{rem:special} for example). In this case we refer to the situation as $\widehat{\Z}^\times$\emph{-invariant}.

\begin{definition}[Subgroup generated by conjugacy classes] \label{def:subroup_generated}
	Let  $\mathcal{C} \subset \mathcal{C}_G$
	be Galois invariant. We let $\langle \mathcal{C} \rangle \subset G$ be the subgroup
	scheme generated by the image of all $\gamma \in G(-1)$ whose conjugacy class
	lies in $\mathcal{C}$. This is a normal subgroup scheme as it is generated by conjugacy classes.
\end{definition}

\begin{remark}[Cyclotomic twist]
Define the scheme $G(\cycl)$ in an analogous way to Definition \ref{def:G(cycl)}. In general $G(\cycl) \not \cong G(\cycl^{-1})$, nonetheless if $G$ is constant then an element $g \in G(k^{\sep})$ has the same Galois orbit in both $G(\cycl)$ and $G(\cycl^{-1})$, which explains why $G(\cycl)$ also appears in the Malle's conjecture literature. 

For constant $G$ one can  determine the Galois action on $G(\cycl)(k^{\sep})/\mathrm{conj}$ through the character table of $G$, see  \cite[Ch.~12, Thm.~25]{Ser77}. In particular, the Galois action on $\mathcal{C}_G$ is trivial if and only if the character table of $G$ takes values in $k$.
\end{remark}

\subsection{Orbifold Picard group of $BG$}
Let $\dual{G} = \Hom(G,\Gm)$ be the group scheme of $1$-dimensional characters $G$. For any field extension $k \subset L$, we will identify the Picard group $\Pic (BG)_{L}$ of $BG$ over $L$ with $\dual{G}(L)$, the group of $1$-dimensional characters defined over $L$ (see e.g.~Lemma \ref{lem:coh_sep_closed}).

\begin{definition} \label{def:age}
	We define the \emph{age pairing}\footnote{Darda--Yasuda \cite[Def.~2.21]{DYBM} use a slightly different definition where 
	they choose the canonical representative of an element of $\Q/ \Z$ in $\Q \cap [0,1)$.}
	as follows:
	$$\age:  \dual{G}(k^{\sep}) \times G(-1)(k^{\sep})
	\to \Q/\Z.$$
	For $(\gamma: \mu_n \to G) \in G(-1)(k^{\sep})$ we let $\age(\chi,\gamma)$ be
	the unique element of $\Z/n\Z \subset \Q/\Z$ such that the map
	$\chi \circ \gamma: \mu_n \to \Gm$ has the form $
	\zeta \mapsto \zeta^{\age(\chi,\gamma)}.$
\end{definition}


\begin{lemma} \label{lem:age}
\hfill
	\begin{enumerate}
		\item $n\age(\cdot,\gamma) = \age(\cdot,\gamma^n)$ for all $n$ coprime to $|G|$.
		\item The $\age$ pairing is bilinear and non-degenerate on the left. 
		\item $\age(\cdot,\gamma)$ only depends the image of $\gamma$ in $\mathcal{C}_G$.		
		\item $\age(\sigma(\chi), \sigma(\gamma)) = \age(\chi,\gamma)$ for all $\sigma \in \Gamma_k$.
	\end{enumerate}
\end{lemma}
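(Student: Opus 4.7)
The plan is to verify each property by unwinding the definitions. The essential observation underlying all four claims is that any $\gamma \in G(-1)(k^{\sep})$ factors through $\mu_m \to G$ for some $m \mid \exp(G)$, with image contained in the cyclic subgroup generated by $\gamma(\zeta_m)$ for any primitive $m$-th root $\zeta_m$. Consequently the composite $\chi \circ \gamma$ is a well-defined homomorphism $\mu_m \to \Gm$ despite $G$ possibly being non-abelian. Throughout I will write $a := \age(\chi, \gamma) \in \Z/m\Z$, so that $\chi(\gamma(\zeta)) = \zeta^a$ for every $\zeta \in \mu_m$.

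I would dispatch (3) and (4) first. For (3), conjugation $h \gamma h^{-1}$ sends $\zeta$ to $h \gamma(\zeta) h^{-1}$, and since $\Gm$ is abelian we get $\chi(h\gamma(\zeta)h^{-1}) = \chi(h)\chi(\gamma(\zeta))\chi(h)^{-1} = \chi(\gamma(\zeta))$, hence $\age(\chi, h\gamma h^{-1}) = a$. For (4), recall that the Galois actions are $(\sigma\chi)(g) = \sigma(\chi(\sigma^{-1} g))$ and $(\sigma\gamma)(\zeta) = \sigma(\gamma(\sigma^{-1}\zeta))$; chaining these gives
\[
(\sigma\chi)\bigl((\sigma\gamma)(\zeta)\bigr) \;=\; \sigma\bigl(\chi(\gamma(\sigma^{-1}\zeta))\bigr) \;=\; \sigma\bigl((\sigma^{-1}\zeta)^a\bigr) \;=\; \zeta^a,
\]
so $\age(\sigma\chi, \sigma\gamma) = a$.

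Claim (1) follows from $\gamma^n(\zeta) = \gamma(\zeta)^n = \gamma(\zeta^n)$ (the second equality holds because $\gamma$ is a homomorphism and the two sides are powers of a single element), yielding $\chi(\gamma^n(\zeta)) = \zeta^{na}$; the hypothesis $\gcd(n, |G|) = 1$ ensures $\gamma^n$ has the same order as $\gamma$ so the equality takes place in the correct $\Z/m\Z$. Left-additivity in (2) is immediate from $(\chi_1 \chi_2)(\gamma(\zeta)) = \zeta^{\age(\chi_1,\gamma) + \age(\chi_2,\gamma)}$, and combining this with (1) gives bilinearity with respect to both the abelian group structure on $\dual{G}$ and the $\widehat{\Z}^\times$-power action on $G(-1)$. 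For non-degeneracy on the left, given a non-trivial $\chi \in \dual{G}(k^{\sep})$, pick $g \in G(k^{\sep})$ with $\chi(g) \neq 1$, let $m$ be the order of $g$, and let $\gamma: \mu_m \to G$ be the homomorphism sending $\zeta_m$ to $g$; then $\chi \circ \gamma$ is non-trivial, so $a \neq 0$. The only subtle point is notational rather than mathematical: one must be careful about what the Galois action and the power operation $\gamma \mapsto \gamma^n$ mean in the non-abelian setting, but each claim reduces to a straightforward computation inside the cyclic image of $\gamma$.
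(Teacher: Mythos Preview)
Your proof is correct and follows essentially the same approach as the paper. The only cosmetic difference is in part (2): the paper invokes Lemma~\ref{lem:Galois_action_on_G(-1)} to identify the age pairing (non-canonically) with the classical pairing $\Hom(G,\Q/\Z)\times G \to \Q/\Z$, $(\chi,g)\mapsto \chi(g)$, and reads off bilinearity and left non-degeneracy from there, whereas you unwind this identification by hand (your construction of $\gamma$ from a chosen $g$ with $\chi(g)\neq 1$ is exactly the inverse of that identification).
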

\begin{proof}
	Part (1) is clear from the definition.  Therefore 
	applying Lemma \ref{lem:Galois_action_on_G(-1)},
	as abstract groups this is just the usual bilinear pairing
\begin{equation*}
	\Hom(G, \Q/\Z) \times G \to \Q/\Z: (\chi, g) \to \chi(g).
	\label{Properties of age pairing}
\end{equation*}
Part $(2)$ follows immediately. Claim $(3)$ holds because the value of a $1$-dimensional character at an element of $G$ only depends on the conjugacy class of $g$.

The last statement holds because for all $\zeta \in \mu_n$ and $\sigma \in \Gamma_{k}$ we have
\[
\sigma(\zeta)^{\age(\sigma(\chi), \sigma(\gamma))}  = \sigma(\chi)\left(\sigma(\gamma)\left(\sigma(\zeta)\right)\right) = \sigma(\chi)\left(\sigma\left(\gamma(\zeta)\right)\right) = \sigma\left(\chi(\gamma(\zeta))\right) = \sigma(\zeta)^{\age(\chi, \gamma)}.
\]
\end{proof}

Our main innovation over Darda-Yasuda \cite{DYBM}, which will allow us to compute the effective cone constant, is an integral structure on the orbifold Picard group.
\begin{definition} \label{def:orbifold_line_bundle}
A \emph{geometric orbifold line bundle} is a pair $(\chi, w)$ where $\chi \in \dual{G}(k^{\sep})$ and $w: G(-1)(k^{\sep}) \to \Q$ is a conjugacy invariant function such that 
\begin{enumerate}
	\item $w(e) = 0$,
	\item for all $\gamma \in G(-1)(k^{\sep}) $ we have $w(\gamma) \bmod \Z = \age(\chi, \gamma).$
\end{enumerate} 
Here $e$ denotes the identity of $G(-1)$. We call the function $w$ a \emph{weight function}.
\end{definition}

Condition (2) says that $\zeta^{w(\gamma)} = \chi(\gamma(\zeta))$
for all $\gamma \in G(-1)(k^{\sep})$ and all $\zeta \in \mu_n$ with $n$ not divisible by the characteristic of $k$; via a choice of isomorphism from Lemma \ref{lem:Galois_action_on_G(-1)} this becomes $\zeta^{w(g)} = \chi(g)$ for all $g \in G(k^{\sep})$ and all such $n$. In particular $w$ takes integer values if and only if $\chi$ is trivial.

We usually consider $w$ as a function $w: \mathcal{C}^*_G \to \Q$. We can add orbifold line bundles componentwise, i.e.~$(\chi, w) + (\chi', w') = (\chi \chi', w + w')$; this is well defined since $\age$ is b e.g.~ilinear.  We denote the group of geometric orbifold line bundles as $\PicOrb (BG)_{k^{\sep}}$ and call it the \emph{geometric orbifold Picard group}. 

\begin{definition} \label{def:orbifold_line_bundle_over_k}
An \emph{orbifold line bundle} is a geometric orbifold line bundle $(\chi, w)$ such that $\chi \in \dual{G}(k)$ and $w: \mathcal{C}_G \to \Q$ is $\Gamma_k$-equivariant. We denote the group of orbifold line bundles by $\PicOrb BG$ and call it the \emph{orbifold Picard group}. We clearly have $\PicOrb BG = (\PicOrb (BG)_{k^{\sep}})^{\Gamma_k}$.
\end{definition}

\begin{definition} \label{def:special}
	We say that an orbifold line bundle is $\widehat{\Z}^\times$\textit{-invariant}
	if the weight function $w$ satisfies
	$w(\gamma) = w(\gamma^n)$ for all $\gamma \in G(-1)(k^{\sep})$ and all 
	$n$ coprime to $|G|$.
\end{definition}

\begin{remark} \label{rem:special}
If $w$ is $\widehat{\Z}^\times$-invariant, we may view $w$ as function on $G(k^{\sep})$ in a canonical way; namely the value of $w$ via the induced map  $G(-1)(k^{\sep}) \to G(k^{\sep})$ from Lemma~\ref{lem:Galois_action_on_G(-1)} is independent of the choice of primitive root of unity.  This always holds if $k$ contains no non-trivial roots of unity and is imposed in the literature in many cases, for example \cite[\S2.1]{Woo10} and \cite[\S4.2]{EV05}. It gives simpler formulae for height functions, but it is not relevant to our theory in general.

If $w$ is not $\widehat{\Z}^\times$-invariant however, then there is no canonical way to view $w$ as a function on $G(k^{\sep})$. 
\end{remark}

\begin{example}\label{ex: Picorb of B mu_n}
	If $G = \mu_n$ then $\mathcal{C}_G = \Hom(\mu_n, \mu_n) = \Z/n \Z$ and $\Pic B \mu_n = \Z /n \Z$. The age pairing $\Z/ n \Z \times \Z / n \Z \to \Z/ n \Z \subset \Q/\Z$ is given by multiplication.
	
	It follows that $\PicOrb B \mu_n = \PicOrb (B \mu_n)_{k^{\sep}}$ is the group of pairs $(k, r_0, \cdots, r_{n-1}) \in \Z / n \Z \times \Q^n$ such that $r_0 = 0$ and $r_i = \frac{ik}{n} \mod \Z$ for all $i$. This is a free group with $n-1$ generators $(1, 0, \frac{1}{n}, \frac{2}{n}, \cdots, \frac{n-1}{n}), (0,0,0,1,0, \ldots, 0), \ldots, (0,0, \ldots,0, 1)$. Forgetting the first two coordinates embeds this into $\Q^{n-1}$ as a sublattice generated by $(\frac{1}{n}, \frac{2}{n}, \cdots, \frac{n-1}{n}), (0, 1,0, \ldots, 0), \ldots, (0, \ldots, 0,1)$.
\end{example}

In this example, the orbifold Picard group is a sublattice of a $\Q$-vector space. The following shows that this is always the case.

\begin{lemma} \label{lem:Picorb_torsion_free}
The map $\PicOrb (BG)_{k^{\sep}} \to \Hom(\mathcal{C}^*_G, \Q): (\chi, w) \to w$ is injective and its image contains $\Hom(\mathcal{C}^*_G, \Z)$. In particular $\PicOrb (BG)_{k^{\sep}}$ is torsion free.
\end{lemma}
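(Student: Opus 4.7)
The plan is to unpack the definitions and reduce everything to properties of the age pairing already established in Lemma \ref{lem:age}.

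For injectivity, I would start with an element $(\chi,w) \in \PicOrb(BG)_{k^{\sep}}$ mapping to $0$, meaning $w$ vanishes on $\mathcal{C}_G^*$. Since $w(e)=0$ by condition (1) of Definition \ref{def:orbifold_line_bundle}, $w$ is identically zero on $\mathcal{C}_G$. Condition (2) then gives $\age(\chi,\gamma) = w(\gamma)\bmod \Z = 0$ for every $\gamma \in G(-1)(k^{\sep})$. By the non-degeneracy on the left of the age pairing (Lemma \ref{lem:age}(2)), this forces $\chi$ to be trivial, so $(\chi,w)=(1,0)$.

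For the statement about the image, given an arbitrary $w \in \Hom(\mathcal{C}_G^*,\Z)$, I would extend it to a conjugacy-invariant function $\tilde w: G(-1)(k^{\sep})\to \Q$ by setting $\tilde w(e) = 0$ and keeping the given values otherwise. Taking $\chi$ to be the trivial character, condition (1) is automatic, and condition (2) holds because $\age(1,\gamma)=0 \in \Q/\Z$ for every $\gamma$ and $\tilde w$ takes integer values. Hence $(1,\tilde w) \in \PicOrb(BG)_{k^{\sep}}$ maps to $w$.

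Finally, the torsion-freeness is an immediate consequence of the first claim: an injection into $\Hom(\mathcal{C}_G^*,\Q)$, which is a $\Q$-vector space, ensures that the source is torsion-free. I do not anticipate any real obstacle here; the main content is just noticing that condition (1) of Definition \ref{def:orbifold_line_bundle} lets us pass between $\mathcal{C}_G$ and $\mathcal{C}_G^*$ without loss of information, and that the non-degeneracy half of Lemma \ref{lem:age}(2) pins down $\chi$ from its age pairings alone.
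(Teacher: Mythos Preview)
Your proof is correct and follows essentially the same approach as the paper's own proof: both use condition~(2) of Definition~\ref{def:orbifold_line_bundle} together with the left non-degeneracy of the age pairing (Lemma~\ref{lem:age}(2)) for injectivity, take $(1,w)$ as the preimage for integer-valued $w$, and deduce torsion-freeness from the embedding into a $\Q$-vector space. Your write-up is simply more detailed in spelling out why the conditions of Definition~\ref{def:orbifold_line_bundle} are satisfied.
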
 
\begin{proof}
	If $(\chi, w)$ is a geometric orbifold line bundle in the kernel of this map then $w = 0$. This implies that $\text{age}(\chi, c) = 0$ for all $c \in \mathcal{C}^*_G$. Lemma \ref{Properties of age pairing} implies that $\chi = 1$.
	
	If $w \in \Hom(\mathcal{C}^*_G, \Z)$ then $(1, w)$ is a geometric orbifold line bundle.
	Torsion freeness follows from the fact that it embeds in a $\Q$-vector space.
\end{proof}

This lemma yields an isomorphism $\PicOrb (BG)_{k^{\sep}} \otimes_{\Z} \R \cong \Hom(\mathcal{C}^*_G, \R)$, which shows that our definition of $\PicOrb (BG)_{k^{\sep}} \otimes_{\Z} \R$ agrees with that of Darda-Yasuda \cite[Def.~8.1]{DYBM}. Another way to see this is because we have an exact sequence.

\begin{lemma} \label{lem:orb_Pic_exact}
The sequence
\begin{equation*}
	0 \to \Hom(\mathcal{C}^*_G, \Z) \to \PicOrb (BG)_{k^{\sep}} \to \dual{G}(k^{\sep}) \to 0
\end{equation*}
is exact. This induces an exact sequence
\begin{equation*}
	0 \to \Hom(\mathcal{C}^*_G, \Z)^{\Gamma_k} \to \PicOrb (BG) \to \dual{G}(k) \to 0.
\end{equation*}
\end{lemma}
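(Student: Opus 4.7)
The plan is to establish exactness at each of the three positions in turn, then deduce the second sequence by taking Galois invariants with the help of an explicit splitting.

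Define the left-hand map by sending $w : \mathcal{C}_G^* \to \Z$ to the pair $(1,w)$ (extended by $w(e)=0$), which is an orbifold line bundle by Lemma \ref{lem:Picorb_torsion_free}; define the right-hand map by $(\chi,w) \mapsto \chi$. Injectivity on the left and the vanishing of the composite are immediate. For exactness in the middle, suppose $(\chi,w)$ maps to $1 \in \dual{G}(k^{\sep})$, i.e.\ $\chi$ is trivial; then condition (2) of Definition \ref{def:orbifold_line_bundle} forces $w(\gamma) \bmod \Z = \age(1,\gamma) = 0$ for all $\gamma$, so $w$ lies in $\Hom(\mathcal{C}_G^*,\Z)$ and comes from the left.

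For surjectivity on the right, the key point is to build, for each $\chi \in \dual{G}(k^{\sep})$, a weight function $w$ realising $\age(\chi,-)$. Use the canonical set-theoretic section $s : \Q/\Z \to \Q \cap [0,1)$ and put $w(\gamma) := s(\age(\chi,\gamma))$, with $w(e)=0$. This is conjugacy invariant because $\age(\chi,-)$ only depends on the conjugacy class of $\gamma$ by Lemma \ref{lem:age}(3), and satisfies $w(\gamma) \bmod \Z = \age(\chi,\gamma)$ by construction; hence $(\chi,w) \in \PicOrb(BG)_{k^{\sep}}$ lifts $\chi$.

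For the second sequence, take $\Gamma_k$-invariants of the first. The only non-formal point is surjectivity onto $\dual{G}(k)$: given a $\Gamma_k$-fixed $\chi$, the lift $w(\gamma) := s(\age(\chi,\gamma))$ is automatically Galois equivariant, since by Lemma \ref{lem:age}(4) we have $\age(\chi,\sigma(\gamma)) = \age(\sigma^{-1}(\chi),\gamma) = \age(\chi,\gamma)$ for all $\sigma \in \Gamma_k$, so $w(\sigma(\gamma)) = w(\gamma)$. Thus the section constructed above descends, and the induced sequence on $\Gamma_k$-invariants is still short exact, with the left-hand term being $\Hom(\mathcal{C}_G^*,\Z)^{\Gamma_k}$ by definition. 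The only step requiring care is ensuring that the set-theoretic splitting is simultaneously Galois equivariant, which is guaranteed by the triviality of the $\Gamma_k$-action on $\Q$ combined with Lemma \ref{lem:age}(4); this is the mild subtlety one must not overlook.
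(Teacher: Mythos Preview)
Your proof is correct. The treatment of the first exact sequence matches the paper's argument essentially verbatim: both identify the kernel as integer-valued weight functions and produce a lift of any $\chi$ by choosing preimages of $\age(\chi,-)$ in $\Q$.

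For the second sequence, your route differs from the paper's. You exploit the fact that the canonical section $s:\Q/\Z \to \Q\cap[0,1)$, combined with the Galois-invariance of the age pairing from Lemma~\ref{lem:age}(4), furnishes a $\Gamma_k$-equivariant set-theoretic section of $\PicOrb(BG)_{k^{\sep}} \to \dual{G}(k^{\sep})$; this immediately forces the sequence of invariants to remain short exact. The paper instead passes to the long exact sequence in Galois cohomology and observes that $\H^1(k,\Hom(\mathcal{C}_G^*,\Z)) = 0$ by Shapiro's Lemma (the module is a permutation module, so its $\H^1$ reduces to $\H^1$ of $\Z$ over various finite extensions, which vanishes). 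Your argument is more elementary and self-contained; the paper's is more structural and has the advantage that the Shapiro vanishing is invoked again later (e.g.\ in Lemma~\ref{lem:Picorb(BG,C)} and in the construction of the Picard residue), so establishing it here pays dividends downstream.
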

\begin{proof}
	The morphism $\PicOrb (BG)_{k^{\sep}}  \to \dual{G}(k^{\sep})$ sending $(\chi, w)$ to $\chi$ is surjective; indeed for $\chi \in \dual{G}(k^{\sep})$ one may define $w(c)$ to be any lift of $\age(\chi,c)$ from $\Q/\Z$ to $\Q$. Its kernel consists of pairs $(1, w)$ where $w: \mathcal{C}_G \to \Q$ is a weight function such that $w(c) \in \Z$ for all $c \in \mathcal{C}_G$. The group of such pairs is isomorphic to $\Hom(\mathcal{C}^*_G, \Z) \to \PicOrb (BG)_{k^{\sep}}$. This proves the exactness of the first sequence.
	
	For the second part we apply Galois cohomology and note that
	$$\H^1(k,\Hom(\mathcal{C}^*_G, \Z)) = 0$$
	by Shapiro's Lemma. 
\end{proof}

\begin{definition} \label{def:canonical_class}
The role of the canonical divisor  is played by the orbifold line bundle $K_{BG}^{\text{orb}} := -(1, \mathbf{1})$ (see \cite[Def.~9.1]{DYBM}). Here $1$ denotes the trivial character and $\mathbf{1}: \mathcal{C}^*_G \to \{1\} $ the constant function with value $1$.
\end{definition}

\begin{remark} \label{def:DY_diff}
	Our definition of orbifold line bundle is modelled on Darda and Yasuda's
	notion of raised line bundles in \cite[Def~4.2]{DYBM}; there is a crucial difference
	however, in that we impose the condition (2) which requires compatibility
	with the age pairing. It is this which allows us to define the orbifold Picard group, which is a
	finitely generated abelian group, whereas Darda and Yasuda are only able to define an orbifold
	N\'{e}ron--Severi space in \cite[Def~8.1]{DYBM}, which is a real vector space and the tensor
	of our orbifold Picard group with $\R$. This new integral structure is crucial to be able
	to define the correct measure on the orbifold N\'{e}ron--Severi space and hence define the
	effective cone constant.
	
	What we call a \emph{weight function}, Darda and Yasuda refer to as a \emph{raising function}.
	We use the former terminology, since crucial to our framework will be the non-trivial
	conjugacy classes which minimise $w$, which can conveniently be referred to as 
	the \textit{minimal weight conjugacy classes}.
\end{remark}

\begin{remark} \label{rem:intuition}
	The sequences in Lemma \ref{lem:orb_Pic_exact} have a geometric interpretation.
	Let $U$ be a smooth variety over $k$ with $\bar{k}^\times[U]
	= \bar{k}^\times$ and $X$ a smooth compactification of $U$. 
	Write $X \setminus U = D_1 \cup \dots \cup D_r$
	for irreducible divisors $D_i$. Then we have
	\begin{equation} \label{seq:Pic_intuition}
		0 \to \bigoplus_{i=1}^r \Z[D_i] \to \Pic X \to \Pic U \to 0.
	\end{equation}
	Therefore one gains a useful intuition in pretending
	that there is a smooth compactification of $BG$ with boundary divisors given by the disjoint
	union of (the dual of) the non-identity conjugacy classes of $G(-1)$.
	
	This analogy is not perfect however, there are at least two differences.
	Firstly $\Z/2\Z$ has $1$ non-trivial
	conjugacy class, but $\Z/2\Z \times \Z/2\Z$ has $3$ non-identity conjugacy classes,
	whereas the product of two varieties each with a single boundary divisor has $2$ boundary
	divisors. In particular $\PicOrb (BG_1 \times BG_2) \neq \PicOrb BG_1 \times \PicOrb BG_2$
	in general. 
	
	Secondly, for many arithmetic applications one chooses a smooth proper model
	$\mathcal{X}_v$ over $\O_v$ and studies the intersection multiplicity
	of elements of $\mathcal{X}_v(\O_v)$ with the boundary divisors. In our situation
	of $BG$, the intersection multiplicity is always either $0$ or $1$. This will help
	to simplify numerous formulae which appear.
\end{remark}

\subsection{Partial orbifold Picard group}

Motivated by Remark \ref{rem:intuition}, we consider a modified version of the orbifold Picard group, which will appear in our work when dealing with general line bundles. This offers a more flexible approach which is easier to work with (e.g.~it respects products, see Lemma \ref{lem:Picorb_products}).

\begin{definition} \label{def:partial_orbifold_line_bundle}
Let $\mathcal{C} \subseteq \mathcal{C}_G^*$ be Galois invariant. A \emph{partial geometric orbifold line bundle (with respect to $\mathcal{C}$)} is a pair $(\chi, w)$ where $\chi \in \dual{G}(k^{\sep})$ and $w: \mathcal{C} \cup \{e\} \to \Q$ is such that 
\begin{enumerate}
	\item $w(e) = 0$,
	\item for all $c \in \mathcal{C}$ we have $w(c) \bmod \Z = \age(\chi, c).$
\end{enumerate} 
\end{definition}
Taking $\mathcal{C} = \mathcal{C}_G^*$ one recovers Definition \ref{def:orbifold_line_bundle}. Analogously to the above theory, one defines a \emph{partial orbifold line bundle} to be one which is Galois invariant. We denote by $\PicOrb_{\mathcal{C}} BG$ the group of isomorphism classes of such orbifold line bundles. Regarding Remark \ref{rem:intuition}, one should view $\PicOrb_{\mathcal{C}} BG$ as the playing the role of the Picard group of the open subset given by removing the ``divisors'' corresponding to the elements of $\mathcal{C}_G^*\setminus \mathcal{C}$. In particular, in analogy with \eqref{seq:Pic_intuition} one easily sees that we have the exact sequence
\begin{equation} \label{seq:Pic(BG,C)}
	0 \to \Hom(\mathcal{C}_G^*\setminus \mathcal{C}, \Z)^{\Gamma_k} \to \PicOrb (BG) \to \PicOrb_{\mathcal{C}} BG \to 0.
\end{equation}
One is tempted to construct a section of the latter map by simply extending the weight function to take value $0$ outside of $\mathcal{C}$; this cannot be done in general since one will lose compatibility with the age pairing required in (2) of Definition~\ref{def:orbifold_line_bundle}.

\begin{lemma} \label{lem:Picorb_Galois_action}
	If $\mathcal{C}$ generates $G$ then the map 
	$$\PicOrb_{\mathcal{C}} BG_{k^{\sep}} \to \Hom(\mathcal{C}, \Q): (\chi, w) \to w$$ is injective and its image contains $\Hom(\mathcal{C}, \Z)$. In particular $\PicOrb_{\mathcal{C}} BG_{k^{\sep}}$ is torsion free.
\end{lemma}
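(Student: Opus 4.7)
The plan is to mirror the proof of Lemma \ref{lem:Picorb_torsion_free} almost verbatim, with the generation hypothesis on $\mathcal{C}$ taking over the role previously played by the left non-degeneracy of the full age pairing.

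For injectivity, I would start with a pair $(\chi, w) \in \PicOrb_{\mathcal{C}} BG_{k^{\sep}}$ in the kernel, so $w \equiv 0$ on $\mathcal{C}$. By condition (2) of Definition \ref{def:partial_orbifold_line_bundle} this forces $\age(\chi, c) = 0 \in \Q/\Z$ for every $c \in \mathcal{C}$. Unravelling Definition \ref{def:age} (or using the reformulation in the proof of Lemma \ref{lem:age}), this means that for every $\gamma: \mu_n \to G$ whose conjugacy class lies in $\mathcal{C}$, the composition $\chi \circ \gamma$ is the trivial character, i.e.\ $\chi$ vanishes on the image of $\gamma$. By Definition \ref{def:subroup_generated} the images of such $\gamma$ span the subgroup scheme $\langle \mathcal{C} \rangle = G$, so since $\chi$ is a homomorphism it must be trivial. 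Hence $(\chi, w) = 0$.

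For the image statement, given $w \in \Hom(\mathcal{C}, \Z)$ I would simply take the pair $(1, \widetilde{w})$, where $\widetilde{w}$ extends $w$ by $\widetilde{w}(e) = 0$. Condition (1) holds by construction, and condition (2) holds trivially since both sides are $0 \bmod \Z$ when $\chi = 1$ and $\widetilde{w}$ takes integer values. This lies in $\PicOrb_{\mathcal{C}} BG_{k^{\sep}}$ and maps to $w$ under the restriction map. Torsion-freeness is then automatic, since an injection into the $\Q$-vector space $\Hom(\mathcal{C}, \Q)$ rules out torsion.

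There is no real obstacle: the only thing to be careful about is making sure the generation hypothesis is used precisely at the correct step, namely deducing $\chi = 1$ from the vanishing of $\age(\chi,\cdot)$ on $\mathcal{C}$ alone rather than on all of $\mathcal{C}^*_G$. Without this hypothesis, injectivity fails in general — any $\chi$ whose kernel contains $\langle \mathcal{C} \rangle$ would give a non-trivial element of the kernel paired with the zero weight function.
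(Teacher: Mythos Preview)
Your proposal is correct and follows exactly the approach the paper intends: the paper's own proof simply reads ``Analogous to Lemma \ref{lem:Picorb_torsion_free},'' and you have faithfully carried out that analogy, correctly identifying that the generation hypothesis on $\mathcal{C}$ is precisely what replaces the full left non-degeneracy of the age pairing when deducing $\chi = 1$.
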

\begin{proof}
	Analogous to Lemma \ref{lem:Picorb_torsion_free}.
\end{proof}

\begin{lemma} \label{lem:Picorb(BG,C)}
	The sequence
	\begin{equation}\label{eq:exact_sequence_Picorb(BG,L)}
		0 \to \Hom(\mathcal{C}, \Z)^{\Gamma_k} \to \PicOrb_{\mathcal{C}} BG \to \dual{G}(k) \to 0
	\end{equation}
	is exact.
\end{lemma}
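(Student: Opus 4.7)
The proof will closely mirror that of Lemma \ref{lem:orb_Pic_exact}, with $\mathcal{C}_G^*$ replaced by $\mathcal{C}$, so the main work is just verifying that the same strategy applies in this partial setting.

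First I would establish the corresponding short exact sequence over $k^{\sep}$, namely
\[
0 \to \Hom(\mathcal{C}, \Z) \to \PicOrb_{\mathcal{C}} BG_{k^{\sep}} \to \dual{G}(k^{\sep}) \to 0.
\]
The right-hand map sends $(\chi,w) \mapsto \chi$. Surjectivity: given $\chi \in \dual{G}(k^{\sep})$, use Definition \ref{def:age} to obtain $\age(\chi,\cdot) : \mathcal{C} \cup \{e\} \to \Q/\Z$ (with $\age(\chi,e)=0$), then choose any set-theoretic lift to $\Q$ to produce a valid partial weight function satisfying condition (2) of Definition \ref{def:partial_orbifold_line_bundle}. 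The kernel consists of pairs $(1,w)$ with $w : \mathcal{C} \cup \{e\} \to \Q$ taking integer values and $w(e)=0$; this is naturally $\Hom(\mathcal{C},\Z)$.

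Next I would take $\Gamma_k$-invariants of this exact sequence. By the Galois-equivariance clauses in the definition of $\PicOrb_{\mathcal{C}} BG$ (and the fact that $\mathcal{C}$ is $\Gamma_k$-stable), one has $\PicOrb_{\mathcal{C}} BG = (\PicOrb_{\mathcal{C}} BG_{k^{\sep}})^{\Gamma_k}$, so the long exact cohomology sequence directly produces the desired sequence in \eqref{eq:exact_sequence_Picorb(BG,L)}, provided surjectivity on the right survives. Surjectivity reduces to the vanishing
\[
\H^1\bigl(\Gamma_k,\, \Hom(\mathcal{C},\Z)\bigr) = 0.
\]

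The key observation, which is the only point needing care, is that since $\mathcal{C}$ is a finite $\Gamma_k$-set, the module $\Hom(\mathcal{C},\Z)$ is a finite direct sum of $\Z$-valued permutation modules $\Z[\Gamma_k/\Gamma_{c_i}]$, one for each Galois orbit with representative $c_i$ having open stabiliser $\Gamma_{c_i}$. By Shapiro's lemma, each summand has vanishing $\H^1$ because $\H^1(\Gamma_{c_i},\Z) = 0$ (as $\Gamma_{c_i}$ acts trivially on $\Z$ and the profinite group cohomology $\Hom_{\mathrm{cts}}(\Gamma_{c_i}, \Z)$ vanishes). Thus the cohomology group vanishes, and the sequence is exact on the right, completing the proof. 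I expect no substantial obstacle, as this is a direct adaptation of the argument already given for Lemma \ref{lem:orb_Pic_exact}.
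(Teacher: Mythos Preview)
Your proposal is correct and follows essentially the same approach as the paper, which simply says the proof is ``Analogous to Lemma \ref{lem:orb_Pic_exact}.'' You have written out precisely that analogy: the geometric exact sequence, taking $\Gamma_k$-invariants, and the vanishing of $\H^1(k,\Hom(\mathcal{C},\Z))$ via Shapiro's lemma.
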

\begin{proof}
	Analogous to Lemma \ref{lem:orb_Pic_exact}.
\end{proof}

\begin{lemma} \label{lem:Picorb_products}
	Let $G_1,G_2$ be a finite \'etale group schemes over $k$ and $\mathcal{C}_i \subset \mathcal{C}_{G_i}^*$ Galois invariant. Then the natural map
	$$\PicOrb(G_1 \times G_2, \mathcal{C}_1 \times\{e\} \, \cup \, \{e\} \times \mathcal{C}_2) \to \PicOrb(G_1,\mathcal{C}_1) \times \PicOrb(G_2, \mathcal{C}_2)$$
	is an isomorphism.
\end{lemma}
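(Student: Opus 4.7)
The plan is to construct an explicit inverse and verify the defining conditions of a partial orbifold line bundle are compatible with the product decomposition. First I would record the basic product identifications: one has canonical identifications $\widehat{G_1 \times G_2} = \widehat{G_1} \times \widehat{G_2}$ and $(G_1 \times G_2)(-1) = G_1(-1) \times G_2(-1)$, inducing an identification of conjugacy classes $\mathcal{C}_{G_1 \times G_2} = \mathcal{C}_{G_1} \times \mathcal{C}_{G_2}$ compatible with the Galois actions. Moreover, the age pairing is additive for products, i.e.
\[
\age\bigl((\chi_1,\chi_2),(\gamma_1,\gamma_2)\bigr) = \age(\chi_1,\gamma_1)+\age(\chi_2,\gamma_2),
\]
which follows directly from Definition~\ref{def:age} applied to $\chi_1\chi_2\circ(\gamma_1,\gamma_2)$. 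In particular $\age(\chi_i,e)=0$.

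Next I would unpack the domain of the weight function on the left hand side. Writing $\mathcal{C}:= \mathcal{C}_1\times\{e\}\cup\{e\}\times\mathcal{C}_2$, the set $\mathcal{C}\cup\{(e,e)\}$ is the wedge of $\mathcal{C}_1\cup\{e\}$ and $\mathcal{C}_2\cup\{e\}$ glued at the common identity point. Hence giving a $\Gamma_k$-equivariant function $w\colon\mathcal{C}\cup\{(e,e)\}\to\Q$ with $w(e,e)=0$ is the same data as giving two $\Gamma_k$-equivariant functions $w_i\colon\mathcal{C}_i\cup\{e\}\to\Q$ with $w_i(e)=0$, via $w_1(c)=w(c,e)$ and $w_2(c)=w(e,c)$.

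Using these identifications, I would define the candidate inverse: send $((\chi_1,w_1),(\chi_2,w_2))$ to $(\chi,w)$ with $\chi=(\chi_1,\chi_2)$ and $w$ obtained by gluing $w_1,w_2$ as above. The age compatibility
\[
w(c,e)\bmod\Z = \age(\chi,(c,e)) \quad\text{and}\quad w(e,c)\bmod\Z = \age(\chi,(e,c))
\]
then reduces, by additivity of $\age$ and $\age(\chi_j,e)=0$, to the age compatibility for the factors $(\chi_i,w_i)$, so this indeed defines a partial orbifold line bundle. Conversely, given $(\chi,w)$ on the left, the factors $(\chi_i,w_i)$ defined by restriction satisfy the factorwise age compatibility for the same reason. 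The two constructions are manifestly mutually inverse, and both are group homomorphisms, so we obtain the claimed isomorphism. There is no real obstacle here; the only point to watch is the consistency $w_1(e)=0=w_2(e)$ at the common identity, which is forced by the definition of a partial orbifold line bundle.
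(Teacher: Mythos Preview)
Your proof is correct and takes essentially the same approach as the paper: both identify $\widehat{G_1\times G_2}\cong\widehat{G_1}\times\widehat{G_2}$, observe that a weight function on $\mathcal{C}_1\times\{e\}\cup\{e\}\times\mathcal{C}_2$ splits into a pair $(w_1,w_2)$, and check that the age compatibility condition reduces to the factorwise conditions via $\age((\chi_1,\chi_2),(c,e))=\age(\chi_1,c)$. Your write-up is a bit more explicit (constructing the inverse and stating the full additivity of $\age$), but the argument is the same.
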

\begin{proof}
	The map $\dual{(G_1 \times G_2)} \to \dual{G_1} \times \dual{G_2}$ is an isomorphism as $\Hom(\cdot, \Gm)$ commutes with finite products. Note also that for $c \in  \mathcal{C}_1$, resp. $c \in \mathcal{C}_2$ we have $\age((\chi_1, \chi_2),c \times \{e\}) = \age(\chi_1, c)$, resp. $\age((\chi_1, \chi_2),\{e\} \times c) = \age(\chi_2, c)$.

	The group of functions $w: \mathcal{C}_1 \times\{e\} \, \cup \, \{e\} \times \mathcal{C}_2 \to \Q$ can be identified with the group of pairs $(w_1, w_2)$ where $w_i: \mathcal{C}_i \to \Q$. Moreover, this identification is clearly compatible with the Galois action and it is compatible with the age pairing by the previous paragraph. The lemma then immediately follows from the definitions.
\end{proof}

\subsection{Orbifold effective cone}\label{sec:orbifold_effective_cone}

\begin{definition}
We say that an orbifold line bundle $(\chi, w)$ is \emph{effective} if $w \in \Hom(\mathcal{C}_G, \Q_{\geq 0})$. Equivalently, $(\chi, w)$ is contained in the \emph{orbifold effective cone} $\Eff_G := \Hom(\mathcal{C}^*_G, \R_{\geq 0})^{\Gamma_k} \subset \Hom(\mathcal{C}^*_G, \R)^{\Gamma_k} \cong \PicOrb (BG) \otimes_{\Z} \R$. An orbifold line bundle is effective if it is effective as a geometric orbifold line bundle.
\end{definition}

This definition agrees with the conventions of \cite{DYBM} (see \cite[Cor.~9.22]{DYBM}).

\begin{definition}
An orbifold line bundle $(\chi, w)$ is \emph{big} if $(\chi, w) \in \Eff_G^{\circ}$. Equivalently we have $w(c) > 0$ for all $c \in \mathcal{C}^*_G$.
\end{definition}

\begin{definition}
Let $L \in  \Eff_G^{\circ}$ be a big orbifold line bundle. Define the \emph{Fujita invariant} of $L$ to be $a(L) := \inf\{a \in \R: K_{BG}^{\text{orb}} + a L \in \Eff_G \}$. We call $\ad(L) := K_{BG}^{\text{orb}} + a(L) L$ the \emph{adjoint} of $L$, which  lies in a minimal face of the cone $\Eff_G$. We define $b(k,L)$ to be the codimension of the minimal face containing $\ad(L)$.
\end{definition}

\begin{definition} \label{def:C(L)}
Let $L = (\chi,w) \in  \Eff_G$ be an effective orbifold line bundle. We denote by $\mathcal{A}(L) := \{ c \in \mathcal{C}_G^* : w(c) = 0\}$
its non-identity zero locus.
\end{definition}

We call $\mathcal{M}(L):=\mathcal{A}(\ad(L))$ the collection of \textit{minimal (weight) conjugacy classes} of $G(-1)$, as the following lemma clarifies.

\begin{lemma} \label{lem:Fujita_minimal}
	Let $L$ be a big orbifold line bundle. Then
	\begin{align*}
	a(L) & = (\min_{c \in \mathcal{C}_G^*}w(c))^{-1},   \quad
	\mathcal{M}(L) = \{ c \in \mathcal{C}_G^* : w(c) = a(L)^{-1}\}, \\
	 b(k,L) & = \#\mathcal{M}(L)/\Gamma_k.
	 \end{align*}
\end{lemma}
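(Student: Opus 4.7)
The plan is to unwind the definitions carefully and reduce everything to a statement about the weight function, using the identification $\PicOrb(BG) \otimes_\Z \R \cong \Hom(\mathcal{C}^*_G, \R)^{\Gamma_k}$ from Lemma~\ref{lem:Picorb_torsion_free}. Under this identification, $\Eff_G$ is the closed positive orthant, and the canonical class $K_{BG}^{\mathrm{orb}} = -(1,\mathbf{1})$ corresponds to the constant function $-1$ on $\mathcal{C}^*_G$. Thus for $a \in \R_{\geq 0}$, the orbifold line bundle $K_{BG}^{\mathrm{orb}} + aL$ is represented by the weight function $c \mapsto aw(c) - 1$, and this lies in $\Eff_G$ precisely when $aw(c) \geq 1$ for every $c \in \mathcal{C}^*_G$.

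First I would compute $a(L)$. Since $L$ is big we have $w(c) > 0$ for every $c \in \mathcal{C}^*_G$, so $aw(c) \geq 1$ for all $c$ is equivalent to $a \geq 1/w(c)$ for all $c$, i.e.\ $a \geq (\min_{c \in \mathcal{C}^*_G} w(c))^{-1}$. The infimum is attained because $\mathcal{C}^*_G$ is finite, giving the formula $a(L) = (\min_{c \in \mathcal{C}^*_G} w(c))^{-1}$. Feeding this value back into the weight function of $\ad(L) = K_{BG}^{\mathrm{orb}} + a(L) L$, we see that $\ad(L)$ is represented by the non-negative function $c \mapsto a(L) w(c) - 1$, whose zero set on $\mathcal{C}^*_G$ is exactly $\{c : w(c) = a(L)^{-1}\}$; by Definition~\ref{def:C(L)} this is $\mathcal{A}(\ad(L)) = \mathcal{M}(L)$.

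Finally, for the codimension statement, I would use that $\Eff_G = \Hom(\mathcal{C}^*_G, \R_{\geq 0})^{\Gamma_k}$ is a simplicial cone of dimension $\#(\mathcal{C}^*_G/\Gamma_k)$, whose faces are in bijection with $\Gamma_k$-invariant subsets of $\mathcal{C}^*_G$ (the face parameterising those functions which vanish on the given subset). The minimal face containing a point is cut out by the subset on which the point vanishes; since $w$ is $\Gamma_k$-equivariant, the set $\mathcal{M}(L)$ is $\Gamma_k$-stable, and the minimal face containing $\ad(L)$ corresponds to this set. Its codimension in $\Eff_G$ equals the number of $\Gamma_k$-orbits on $\mathcal{M}(L)$, giving $b(k,L) = \#\mathcal{M}(L)/\Gamma_k$.

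None of the three steps presents a real obstacle: the identification of $\ad(L)$ with an explicit weight function reduces the lemma to elementary convex geometry in a finite-dimensional simplicial cone. The only point requiring a small care is confirming that $\mathcal{M}(L)$ is genuinely Galois-stable (so that it determines a face of the $\Gamma_k$-invariant cone), which is immediate from the $\Gamma_k$-equivariance of $w$ built into the definition of an orbifold line bundle.
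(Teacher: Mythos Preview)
Your proof is correct and is exactly the unwinding of definitions that the paper has in mind; the paper's own proof is the single line ``Recalling $K_{BG}^{\mathrm{orb}} := -(1, \mathbf{1})$, this is immediate from the definitions,'' and your argument is simply a careful expansion of that sentence. The only minor imprecision is that Lemma~\ref{lem:Picorb_torsion_free} literally gives the geometric identification $\PicOrb(BG)_{k^{\sep}} \otimes_{\Z} \R \cong \Hom(\mathcal{C}^*_G,\R)$, and one obtains your $\Gamma_k$-invariant version by passing to invariants (which the paper asserts just after the definition of $\Eff_G$).
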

\begin{proof}
	Recalling $K_{BG}^{\text{orb}} := -(1, \mathbf{1})$, 
	this is immediate from the definitions.	
\end{proof}

\subsection{Iitaka fibration} \label{sec:Iitaka}
We now consider the birational geometry of $BG$, and define  a version of the Iitaka fibration in our setting (see \cite[\S 2.1.C]{Lar04} for the case of varieties).
This is  key to our paper for understanding when the leading constant is a sum of terms.

\begin{definition}[Iitaka fibration]
Let $A(L) \subset G$ be the subgroup scheme generated by the conjugacy classes $\mathcal{A}(L)$ (see Definition \ref{def:subroup_generated}). It is normal since it is generated by conjugacy classes. We call $I(L) = G/A(L)$ the \emph{Iitaka group} and the \emph{Iitaka fibration} associated to $L$ is the map $B G \to B I(L)$.

We define the \textit{Iitaka dimension} of $L$ to be $\kappa(L) := \deg I(L) -1$.
\end{definition}

\begin{example}
If $L$ is big then $\mathcal{A}(L)$ is empty since the orbifold effective cone is simplicial generated by the conjugacy classes of $G$. Thus here the Iitaka fibration is simply the identity map $BG \to BG$, in particular it is birational as expected.
\end{example}



\begin{remark}
	It is possible that there is a more convenient normalisation of the Iitaka dimension in general. We will be mostly concerned with whether the Iitaka dimension is $0$.
\end{remark}

\begin{definition} \label{def:rigid}
An effective orbifold line bundle $L$ is called \emph{rigid} if $\kappa(L) = 0$, equivalently the conjugacy classes $\mathcal{A}(L)$ generate the entire group $G$. We call $L$ \emph{adjoint rigid}  (or \emph{balanced}) if $\ad(L)$ is rigid; equivalently if the (non-identity) minimal weight conjugacy classes $\mathcal{M}(L):=\mathcal{A}(\ad(L))$ generate $G$ (see Definition \ref{def:subroup_generated}).
\end{definition}

The terminology \emph{balanced} is used to conjure the image that there are enough minimal weight elements to balance a plate on top of; as opposed to the concentrated case from \cite{AOWW24} where the minimal weight elements are too clumped together to balance a plate on. We use the terminology balanced line bundle in a different way to the paper \cite{LTT18}; however this terminology has subsequently been replaced in the literature by the ``breaking'' terminology from \cite{LST22}.

For example $-K_{BG}^{\text{orb}}$ is adjoint rigid (as expected in analogy with Manin's conjecture), since its adjoint line bundle is trivial.
The following is an analogue of \cite[Thm.~2.1.33]{Lar04} in our setting.
\begin{lemma} \label{lem:Iitaka_rigid_fibres}
	The restriction of $L$ to every fibre of the (adjoint) Iitaka fibration is (adjoint) rigid.
\end{lemma}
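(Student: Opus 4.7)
The plan is to identify the fibres of the Iitaka fibration with classifying stacks $BH_\varphi$ via Lemma \ref{lem:fibration_normal_quotient}, and then unwind the definition of $A(L)$ to see that the zero-weight conjugacy classes of the restricted orbifold line bundle generate $H_\varphi$.

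First I would apply Lemma \ref{lem:fibration_normal_quotient} to the normal subgroup scheme $A(L) \trianglelefteq G$: the fibre of $BG \to B(G/A(L))$ over a point $\psi$ lifting to $\varphi$ is isomorphic to $BA(L)_\varphi$, where $A(L)_\varphi$ is the twist of $A(L)$ by $\varphi$ in the sense of Definition \ref{def:inner_twist_normaliser}. The inclusion $A(L) \hookrightarrow G$ induces a canonical map $\iota: \mathcal{C}_{A(L)_\varphi} \to \mathcal{C}_G$, and the restriction of $L = (\chi,w)$ to this fibre is the orbifold line bundle $(\chi|_{A(L)_\varphi},\, w \circ \iota)$; compatibility with the age pairing is inherited from $L$.

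The key step is then to show $\langle \mathcal{A}(L|_{BA(L)_\varphi}) \rangle = A(L)_\varphi$ as subgroup schemes. By construction, $\mathcal{A}(L|_{BA(L)_\varphi}) = \iota^{-1}(\mathcal{A}(L))$. By Definition \ref{def:subroup_generated}, $A(L)$ is generated over $k^{\sep}$ by the images of those homomorphisms $\gamma: \mu_n \to G$ whose $G$-conjugacy class lies in $\mathcal{A}(L)$, and each such image is a subgroup of $A(L)$ by construction. Hence each such $\gamma$ factors through a homomorphism $\mu_n \to A(L)$ contributing to $\iota^{-1}(\mathcal{A}(L))$, so $\langle \iota^{-1}(\mathcal{A}(L)) \rangle$ contains all of $A(L)$ over $k^{\sep}$. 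Equality of $k$-subgroup schemes then follows, as both are étale $k$-forms with the same geometric points. The adjoint case is identical, with $\mathcal{M}(L)$ and $M(L) := \langle \mathcal{M}(L) \rangle$ replacing $\mathcal{A}(L)$ and $A(L)$ throughout.

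I do not anticipate a serious obstacle. The most delicate point is that $\iota$ need not be injective — a single $G$-conjugacy class in $\mathcal{A}(L)$ may break up into several $A(L)_\varphi$-conjugacy classes in $\mathcal{C}_{A(L)_\varphi}$ — but this only enlarges $\iota^{-1}(\mathcal{A}(L))$, which is to our advantage. One should also verify that the identification of the fibre in Lemma \ref{lem:fibration_normal_quotient} is compatible with the natural restriction of orbifold line bundles along $BA(L)_\varphi \hookrightarrow BG$, but this is a routine functoriality check.
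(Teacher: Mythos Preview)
Your proposal is correct and follows essentially the same logic as the paper's proof: identify the fibres via Lemma~\ref{lem:fibration_normal_quotient} and check that the zero-weight classes generate, which is tautological from the definition of $A(L)$. The paper is more economical in that it base changes to $k^{\sep}$ at the outset, where $BI(L)$ has a single point with fibre $BA(L)$ (no twist), so the bookkeeping with $A(L)_\varphi$ and $\iota$ that you carry out disappears entirely; your descent step ``both are \'etale $k$-forms with the same geometric points'' is exactly the observation that rigidity can be checked after base change.
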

\begin{proof}
	Rigidity can be checked after base change to $k^{\sep}$. 
	Over a separably closed field $B I(L)$ has a single point and the fibre over this point is 
	$B A(L)$ (cf.~Lemma~\ref{lem:fibration_normal_quotient}).
	The result follows from the definition of $A(L)$. The case of adjoint rigidity is similar.
\end{proof}

\begin{remark}
	Wood defined the notion of fair heights in \cite[\S 2.1]{Woo10} in the case that $G$ is abelian. Our definition of balanced (or adjoint rigidity)
	is strictly weaker than Wood's in the abelian case (see \S\ref{sec:fair_wood}), and is a property of the underlying orbifold line bundle and not the choice of height.
	
	Our condition also appears in Alberts' paper \cite[\S7.6]{Alb23}, however Alberts does
	not call such height functions fair since an example is included in the abelian case where
	the leading constant in Malle's conjecture need not satisfy the Malle--Bhargava heuristics.
	However this example is not as pathological as one might first think,
	as it  can be explained by a Brauer--Manin
	obstruction; again see \S\ref{sec:fair_wood} for more details. Thus we believe
	that our definition of balancedness is the correct one to encapsulate a
	measure-theoretic interpretation of the leading constant in Malle's conjecture.
\end{remark}

Throughout the remainder of the paper, we will be primarily interested in a big orbifold line bundle $L$,
and the corresponding adjoint Iitaka fibration induced by the minimal weight conjugacy
classes $\mathcal{M}(L)$.

\subsection{Effective cone constant}
We now have everything in place to define an analogue of Peyre's effective cone constant in our setting.

Let $L$ be a big orbifold line bundle on $BG$ and let $\mathcal{M}(L)$ be the corresponding minimal weight conjugacy classes (see Definition \ref{def:C(L)}). We define
\begin{equation*}
	\PicOrb_{L} BG := \PicOrb_{\mathcal{M}(L)} BG.
\end{equation*}
(This definition is inspired by \cite[Def.~2.3.11]{BT98}.)

From now on to define the effective cone constant, we assume that $L$ is balanced. We let $\Eff_G(L)$ be the image of $\Eff_G$ inside the real vector space $\PicOrb_{L} BG \otimes \R$.

\begin{definition}
Take the Haar measure $\mathrm{d}v$ on the vector space $(\PicOrb_{L} BG \otimes \R)^\wedge$ such that the dual lattice $\Hom(\PicOrb_{L} BG,\Z)$ has covolume $1$.  We define the effective cone constant in our case to be
\begin{equation}
	\alpha^*(BG,L) = \int_{ v \in \Eff_G(L)^\wedge} e^{-\langle L, v\rangle} \mathrm{d}v.
	\label{def:theta}
\end{equation}
\end{definition}
This definition is inspired by \cite[Def.~2.3.14, 2.3.16]{BT98} (one should replace $-K_X$ by $\mathcal{L}$ in \cite[Def.~2.3.16]{BT98}). In the integral we abuse notation and take the inner product with the image of $L$ in $\PicOrb_{L} BG$. Note that what \cite{BT98} calls ``$\mathcal{L}$-primitive'' is now called ``adjoint rigid'' in the Manin's conjecture literature.

We calculate this by relating it to the Lebesgue measure. The exact sequence \eqref{eq:exact_sequence_Picorb(BG,L)} implies that the map
$$\Hom(\mathcal{M}(L), \R)^{\Gamma_k} \to \PicOrb_{L} BG \otimes \R$$
is an isomorphism. We have $\Hom(\mathcal{M}(L), \R)^{\Gamma_k} = \R^{\Gamma_k \backslash \mathcal{M}(L)}$. Taking duals we obtain $\R^{\Gamma_k \backslash \mathcal{M}(L)} \cong (\PicOrb_{L} BG \otimes \R)^{\wedge}$. This isomorphism induces a measure on $(\PicOrb_{L} BG \otimes \R)^{\wedge}$, which we call the \textit{Lebesgue measure} on $(\PicOrb_{L} BG \otimes \R)^{\wedge}$.

\begin{lemma} \label{lem:Lebesgue}
	The measure $\mathrm{d}v$ is $1/\#\dual{G}(k)$ times the Lebesgue measure.
\end{lemma}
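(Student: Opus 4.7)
The plan is to unwind the two different measures on the dual space $V^\wedge := (\PicOrb_{L} BG \otimes \R)^\wedge$ by carefully tracking the lattices that define them, and then use the exact sequence \eqref{eq:exact_sequence_Picorb(BG,L)} to compute the index.

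First I would set up the picture. By Lemma \ref{lem:Picorb(BG,C)} applied with $\mathcal{C} = \mathcal{M}(L)$, we have a short exact sequence
\[
0 \to \Hom(\mathcal{M}(L), \Z)^{\Gamma_k} \to \PicOrb_{L} BG \to \dual{G}(k) \to 0.
\]
Since $\dual{G}(k)$ is finite, this exhibits $\Lambda_1 := \Hom(\mathcal{M}(L), \Z)^{\Gamma_k}$ as a sublattice of $\Lambda_2 := \PicOrb_{L} BG$ of index exactly $\#\dual{G}(k)$ inside the common ambient vector space $V$. Dually, we get an inclusion $\Lambda_2^\wedge \subset \Lambda_1^\wedge$ inside $V^\wedge$, again of index $\#\dual{G}(k)$.

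Next I would identify each of the two measures on $V^\wedge$ in terms of one of these dual lattices. By definition $\mathrm{d}v$ is the Haar measure for which the dual lattice $\Hom(\PicOrb_{L} BG,\Z) = \Lambda_2^\wedge$ has covolume $1$. On the other hand, the Lebesgue measure is transported from the standard measure on $\R^{\Gamma_k \backslash \mathcal{M}(L)}$ via the dual of the isomorphism $\Hom(\mathcal{M}(L), \R)^{\Gamma_k} \cong V$. Under the canonical self-duality of $\R^{\Gamma_k \backslash \mathcal{M}(L)}$ pairing the standard basis with itself, the standard integer lattice corresponds precisely to the dual of $\Lambda_1 = \Hom(\mathcal{M}(L), \Z)^{\Gamma_k}$. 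Thus Lebesgue is the Haar measure on $V^\wedge$ for which $\Lambda_1^\wedge$ has covolume $1$.

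Finally, comparing the two normalisations: since $\Lambda_2^\wedge \subset \Lambda_1^\wedge$ has index $\#\dual{G}(k)$, the Lebesgue covolume of $\Lambda_2^\wedge$ equals $\#\dual{G}(k)$. As $\mathrm{d}v$ is normalised so that this same lattice has covolume $1$, we conclude $\mathrm{d}v = (\#\dual{G}(k))^{-1}\cdot \mathrm{Leb}$, as claimed. The main (minor) point to be careful about is the self-duality convention on $\R^{\Gamma_k\backslash\mathcal{M}(L)}$ used to identify $V^\wedge$ with that space; once this is pinned down the argument is a straightforward lattice-index calculation.
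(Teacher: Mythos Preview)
Your proposal is correct and follows essentially the same approach as the paper: both arguments dualise the exact sequence \eqref{eq:exact_sequence_Picorb(BG,L)} to obtain an inclusion of dual lattices of index $\#\dual{G}(k)$, then read off the covolume. The paper is simply more terse, writing the larger dual lattice directly as $\Z^{\Gamma_k\backslash\mathcal{M}(L)}$ rather than $\Lambda_1^\wedge$, but the content is identical.
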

\begin{proof}
	We have an inclusion of lattices $\Hom(\PicOrb_{L} BG, \Z) \subset \Z^{\Gamma_k \backslash \mathcal{M}(L)}$ of index $\#\dual{G}(k)$ by the exact sequence \eqref{eq:exact_sequence_Picorb(BG,L)}. The lattice $\Hom(\PicOrb_{L} BG, \Z)$ thus has covolume $\#\dual{G}(k)$ with respect to the Lebesgue measure.
\end{proof}

\begin{lemma} \label{lem:effective_cone_calc}
	Let $L$ be a balanced orbifold line bundle. Then 
	$$\alpha^*(BG, L) = a(L)^{b(k,L)}/\#\dual{G}(k).$$
\end{lemma}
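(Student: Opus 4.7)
The plan is to reduce the integral defining $\alpha^*(BG,L)$ to an explicit product of one-dimensional exponential integrals by making all the relevant identifications concrete.

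First, I will use the exact sequence of Lemma~\ref{lem:Picorb(BG,C)} with $\mathcal{C} = \mathcal{M}(L)$. Since $L$ is fair, $\mathcal{M}(L)$ generates $G$, and since $\dual{G}(k)$ is finite, tensoring with $\R$ yields an isomorphism
\[
\PicOrb_L BG \otimes \R \;\cong\; \Hom(\mathcal{M}(L),\R)^{\Gamma_k} \;\cong\; \R^{b(k,L)},
\]
where the second identification is by evaluation on a set of $\Gamma_k$-orbit representatives in $\mathcal{M}(L)$. Under this identification the image of $L=(\chi,w)$ is, by Lemma~\ref{lem:Fujita_minimal}, the constant vector $(a(L)^{-1},\dots,a(L)^{-1})$, since $w$ is constantly equal to $a(L)^{-1}$ on $\mathcal{M}(L)$.

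Next, I will describe the cone of integration. Any $\Gamma_k$-invariant non-negative function on $\mathcal{M}(L)$ extends (for example by $0$) to a $\Gamma_k$-invariant non-negative function on $\mathcal{C}_G^*$, so the image $\Eff_G(L)$ of $\Eff_G=\Hom(\mathcal{C}_G^*,\R_{\geq 0})^{\Gamma_k}$ is the full non-negative orthant $\R_{\geq 0}^{b(k,L)}$. Dualising, $\Eff_G(L)^\wedge$ is likewise the non-negative orthant in $(\R^{b(k,L)})^\wedge\cong \R^{b(k,L)}$ with the standard Euclidean pairing.

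Combining these observations with Lemma~\ref{lem:Lebesgue}, which replaces $\mathrm{d}v$ by $\#\dual{G}(k)^{-1}$ times Lebesgue measure on $\R^{b(k,L)}$, the integral factors as
\[
\alpha^*(BG,L) = \frac{1}{\#\dual{G}(k)}\int_{\R_{\geq 0}^{b(k,L)}} e^{-a(L)^{-1}(v_1+\cdots+v_{b(k,L)})}\,dv_1\cdots dv_{b(k,L)} = \frac{a(L)^{b(k,L)}}{\#\dual{G}(k)},
\]
using $\int_0^\infty e^{-a(L)^{-1}t}\,dt = a(L)$. The only subtlety is checking that the intrinsic pairing appearing in $\langle L,v\rangle$ coincides with the standard Euclidean pairing after our identifications; this is precisely the compatibility already baked into the proof of Lemma~\ref{lem:Lebesgue}, where the same identification of $(\PicOrb_L BG\otimes\R)^\wedge$ with $\R^{\Gamma_k\backslash \mathcal{M}(L)}$ is used to pin down the Lebesgue reference measure.
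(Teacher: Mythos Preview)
Your proof is correct and follows essentially the same approach as the paper's own argument: identify $\PicOrb_L BG \otimes \R$ with $\R^{b(k,L)}$ via Lemma~\ref{lem:Picorb(BG,C)}, observe that the image of $L$ is the constant vector $a(L)^{-1}$ by Lemma~\ref{lem:Fujita_minimal}, apply Lemma~\ref{lem:Lebesgue} to pass to Lebesgue measure, and evaluate the resulting product of one-dimensional exponential integrals. You are slightly more explicit than the paper in identifying $\Eff_G(L)$ and its dual with the non-negative orthant, but this is the same computation.
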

\begin{proof}
	By Lemma \ref{lem:Fujita_minimal} the image of $L$ in $\PicOrb_{L} BG \otimes \R \cong \Hom(\mathcal{M}(L), \R)^{\Gamma_k}$ is the constant function $c \to a(L)^{-1}$.
	If we now apply Lemma~\ref{lem:Lebesgue}, we see that the integral \eqref{def:theta} equals
	$$\frac{1}{\#\dual{G}(k)}\prod_{c \in \mathcal{M}(L)/\Gamma_k}\int_{r \geq 0}e^{-a(L)^{-1}r} \mathrm{d} r.$$
	 Thus it suffices to note that $\int_{r=0}^\infty e^{-a(L)^{-1} r}\mathrm{d} r  = a(L)$ and $b(k,L) = |\mathcal{M}(L)/\Gamma_k|$.
\end{proof}

\subsection{Breaking cocycles} \label{sec:breaking}

In Manin's conjecture, in order to obtain an asymptotic formula with the correct leading constant, one is required to remove some thin subset of rational points. In the literature one often says that these rational points are \textit{accumulating} or \textit{breaking} (or that they come from a \textit{breaking closed subset/cover}). 
In this subsection we will identify the elements of $BG(k)$ that could potentially be contained in a breaking thin set. To do so it suffices to see how the $a$- and $b$-invariants change along finite morphisms. Throughout this section we use the theory of inner twists from \S\ref{sec:inner_twists}.

It is clear from Lemma \ref{lem:Fujita_minimal} that the $a$-invariant is non-decreasing when pulling-back orbifold line bundles along $BH \to BG$ as in Lemma \ref{lem:thin_classification} (since the minimum can only increase when passing to a subgroup scheme). So we may focus on the $b$-invariant, which is Galois theoretic in nature. We will show that it suffices to remove the following elements.

\begin{definition}
	Let $c \subset G(-1)(k^{\sep})$ be a conjugacy invariant subset. Let $\Gamma_c \subset \Gamma_k$ be the subgroup which leaves $c \subset G(-1)(k^{\sep})$ invariant as a subset. Note that $\Gamma_c$ stays the same after any inner twist of $G$.
	
	A cocycle $\varphi \in BG(k)$ \emph{breaks} $c$ if the action of $\Gamma_c$ on $c \subset  G_\varphi(-1)(k^{\sep})$ is not transitive, where $G_\varphi$ denotes the inner 	twist of $G$ by $\varphi$ (Definition \ref{def:inner_twist}).
	
	We say that a cocycle $\varphi \in BG(k)$ is \emph{breaking} if it breaks some conjugacy class.
\end{definition}

Before studying the formal properties of this definition, we give an example. Note that $c$ is breaking if and only if $c^n$ is breaking for any $n$ coprime to $|G|$. Thus being breaking is $\hat{\Z}^\times$-invariant and we can use the more explicit Galois action from $G(\cycl^{-1})$ (see Definition \ref{def:G(cycl)}).

\begin{example} \label{ex:S_3_breaking}
	Consider the case $G = S_3$. Let $\varphi: \Gamma_k \to S_3$ be a morphism. The group $G$ has two non-identity conjugacy classes.
	
	Consider $c = \{(1,2), (1,3), (2,3)\}$. We have $\Gamma_c = \Gamma_k$ and the action of $\Gamma_k$ on $c \subset G(\cycl^{-1}))$ is trivial.
	The group $\Gamma_k$ thus acts on $c \subset G_{\varphi}(\cycl^{-1})$ by factoring $\varphi$ through $S_3$ and letting $S_3$ act on $c$ by conjugation. The morphism $\varphi$ thus breaks $c$ if and only if the image of $\varphi$ is not transitive, in other words if the cubic \'etale algebra corresponding to $\varphi$ is not a field.
	
	Let now $c = \{(1,2,3), (1,3,2)\}$. We have $\Gamma_c = \Gamma_k$ and the action of $\Gamma_k$ on $c \subset G(\cycl^{-1})$ is the action on a $2$-element set which corresponds to the character $\chi: \Gamma_k \to \Z/2\Z$ defining $k(\zeta_3)/k$. The group $\Gamma_k$ thus acts on $c \subset G_{\varphi}(\cycl^{-1})$ via the character $\psi: \Gamma_k \to \Z/2\Z$ given by the sum of $\chi$ and the composition $\Gamma_k \xrightarrow{\varphi} S_3 \to \Z/ 2\Z$. The morphism $\varphi$ breaks $c$ if and only if $\psi$ is trivial. This only happens if $\chi$ is equal to the composition $\Gamma_k  \xrightarrow{\varphi} S_3 \to \Z/ 2\Z$, i.e.~if the quadratic resolvent of the cubic \'etale algebra corresponding to $\varphi$ is $k(\zeta_3)/k$, which for fields means exactly that the extension is a pure cubic extension. The need to consider pure cubic extensions separately when counting cubic extensions has been observed by Shankar and Thorne in \cite[Thm.~4]{ShTh22}.
\end{example}

This example illustrates the two ways of being breaking, at least for constant $G$: either the homomorphism is not surjective, or the corresponding field non-trivially intersects a cyclotomic field. This will be the main result of this section (Theorem~\ref{thm:breaking}).

We first demonstrate the relevance of breaking cocycles to Malle's conjecture, by showing that only breaking cocycles can be contained in accumulating thin subsets. It is stated in terms of the minimal weight conjugacy classes $\mathcal{M}(L)$ from Definition~\ref{def:C(L)}.

\begin{lemma} \label{lem:breaking_b}
	Let $L$ be a big orbifold line bundle on $BG$. Let $f: BH \to BG$ be a finite map with $a(L) = a(f^* L)$. Let $\varphi \in BH(k)$.
	\begin{enumerate}
		\item If $b(k,f^*L) > b(k,L)$ then 
		$f(\varphi)$ breaks a minimal weight conjugacy class.
		\item If $b(k,f^*L) = b(k,L)$ then either 		
		$f(\varphi)$ breaks a minimal weight conjugacy class or
		$\langle \mathcal{M}(L) \rangle \subseteq H$. 
	 \end{enumerate}
\end{lemma}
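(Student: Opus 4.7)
The plan is first to reduce to the case $H \subseteq G$, then to isolate the key observation that the \emph{breaking} of $f(\varphi)$ on a class $c \in \mathcal{M}(L)$ depends only on the pair $H \subseteq G$ and not on the particular $\varphi$, and finally to run a short orbit count. As a preliminary, since $BH$ is connected with rational point $\varphi$, Lemma \ref{lem:subgroup_inner_twists} realises $H$ as a subgroup scheme of some inner twist of $G$; applying Lemma \ref{lem:inner_twist_BG} to absorb the twist, I may assume $H \subseteq G$ and $f$ is induced by the inclusion. Under the hypothesis $a(L) = a(f^*L)$, the induced map $f\colon \mathcal{C}_H^* \to \mathcal{C}_G^*$ restricts to a $\Gamma_k$-equivariant map $\mathcal{M}(f^*L) = f^{-1}(\mathcal{M}(L)) \setminus \{e\} \to \mathcal{M}(L)$, and the standard orbit correspondence gives $|f^{-1}(O)/\Gamma_k| = |f^{-1}(c)/\Gamma_c|$ for each $\Gamma_k$-orbit $O \subseteq \mathcal{M}(L)$ with representative $c$ and stabiliser $\Gamma_c \subseteq \Gamma_k$.

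The main obstacle --- and the heart of the argument --- is the following observation, which I expect to require the most care. For $\sigma \in \Gamma_c$ and $h \in c \cap H(-1)(k^{\sep})$, the twisted action $\sigma \cdot^{f(\varphi)} h = \varphi(\sigma)\sigma(h)\varphi(\sigma)^{-1}$ lies in $H$ because $\varphi(\sigma) \in H$, and is $H$-conjugate to $\sigma(h)$. Consequently the twisted action preserves $c \cap H$, and the permutation it induces on the set of $H$-conjugacy classes contained in $c$ coincides with the untwisted $\Gamma_c$-action on $\mathcal{C}_H$. From this I would deduce that $f(\varphi)$ breaks $c$ in either of the following two cases: \textbf{(a)} $c \cap H$ is a non-empty proper subset of $c$, since then the twisted action preserves the proper non-empty subset $c \cap H$; or \textbf{(b)} $c \subseteq H$ and $|f^{-1}(c)/\Gamma_c| \geq 2$, since then the twisted action preserves the union of $H$-classes in a single $\Gamma_c$-orbit.

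For part (1), I would write
\[
b(k,f^*L) - b(k,L) \;=\; \sum_{O \in \mathcal{M}(L)/\Gamma_k}\bigl(|f^{-1}(c_O)/\Gamma_{c_O}| - 1\bigr),
\]
where $c_O$ is a chosen representative of $O$. If this quantity is positive then some orbit $O$ with representative $c$ satisfies $|f^{-1}(c)/\Gamma_c| \geq 2$; in particular $c \cap H \neq \emptyset$, so either case (a) or case (b) of the previous paragraph applies, and $f(\varphi)$ breaks $c \in \mathcal{M}(L)$.

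For part (2), assume $b(k,f^*L) = b(k,L)$ and that $f(\varphi)$ breaks no class in $\mathcal{M}(L)$. The contrapositives of (a) and (b) force each $c \in \mathcal{M}(L)$ to satisfy $c \cap H = \emptyset$ or else ($c \subseteq H$ with $|f^{-1}(c)/\Gamma_c| = 1$); these orbits contribute $0$ and $1$ respectively to $b(k,f^*L)$. The equality $b(k,f^*L) = b(k,L)$ then excludes the first possibility, so every $c \in \mathcal{M}(L)$ is contained in $H$. Since $\langle \mathcal{M}(L) \rangle$ is by definition generated by the images of those homomorphisms $\mu_n \to G$ whose conjugacy class lies in $\mathcal{M}(L)$, and all such now factor through $H$, we conclude $\langle \mathcal{M}(L) \rangle \subseteq H$.
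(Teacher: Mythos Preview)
Your proof is correct and follows essentially the same approach as the paper's, namely an analysis of the orbit map $f_* \colon \mathcal{M}(f^*L)/\Gamma_k \to \mathcal{M}(L)/\Gamma_k$ together with the key observation that the twisted $\Gamma_c$-action preserves $c \cap H$ and descends to the untwisted action on $H$-conjugacy classes. Your explicit treatment of case~(a), where $c \cap H$ is a nonempty proper subset of $c$, is in fact more careful than the paper's handling of the ``$f_*$ injective'' branch.
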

\begin{proof}
	Assume that $b(k,f^*L) \geq b(k,L)$.
	Consider the map $f_*: \mathcal{M}(f^*(L))/\Gamma_k \to 
	\mathcal{M}(L)/ \Gamma_k$. If $f_*$ is injective, then since 
	$b(k,f^*L) \geq b(k,L)$, we deduce that $f_*$ is a bijection and hence
	$b(k,f^*L) = b(k,L)$. The surjectivity of $f_*$ implies that $\mathcal{M}(L) \subset H(-1)$
	as in the statement and shows we must be in case (2).
	
	Now assume that $f_*$ is not injective.
	Let $\varphi \in BH(k)$ and let $H_{\varphi} \to G_{f(\varphi)}$ be the induced map of inner twists. The map $f$ is representable so $H_{\varphi} \to G_{f(\varphi)}$ is an inclusion. As $f_*$ is not injective, there exists a conjugacy class $c \in \mathcal{M}(L)$ such that $c \cap H_{\varphi}(-1)(k^{\sep})$ breaks into a collection of conjugacy classes upon which $\Gamma_c$ does not transitively. In particular $\Gamma_c$ does not act transitively on $c \subset  G_\varphi(-1)(k^{\sep}))$, so $f(\varphi)$ breaks $c$.
\end{proof}

In the case that $L$ is \textit{balanced}, the condition $\langle \mathcal{M}(L) \rangle \subseteq H$ implies that $BH \cong BG$, which is a trivial case. For unbalanced $L$ however, this condition covers more cases: for example any fibre of the Iitaka fibration will satisfy $b(k,f^*L) \geq b(k,L)$. Not all such cocycles should be removed (indeed, they cover $BG(k)$), but only those which give a strict inequality should be removed.

We next show that the collection of breaking cocycles is indeed thin.
\begin{lemma}\label{lem:Breaking_implies_reducible}
	Let $K/k$ be a finite extension such that $G_K$ is constant. Let $\varphi \in BG(k)$ be breaking. There exists a proper subgroup $H \subsetneq G_K$ such that $\varphi_{K(\mu_{\exp(G)})}$ lies in the image of $B H(K(\mu_{\exp(G)})) \to B G(K(\mu_{\exp(G)}))$, where $\exp(G)$ denotes the exponent of $G$.
	\end{lemma}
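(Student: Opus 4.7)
The plan is to set $L := K(\mu_{\exp(G)})$ and produce the desired proper subgroup directly as $H := \varphi(\Gamma_L) \subseteq G(k^{\sep})$, which will be a subgroup scheme of $G_K$ because $G_K$ is constant. The core observation I want to exploit is that $G$ acts transitively on any single conjugacy class $c \subset G(-1)(k^{\sep})$ by conjugation, so any subgroup of $G$ that fails to act transitively on $c$ must be proper. The rest of the argument is about converting the breaking hypothesis into a statement about non-transitivity of $\varphi(\Gamma_L)$.

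Let $c \subset G(-1)(k^{\sep})$ be a (single) conjugacy class broken by $\varphi$, so that by definition $\Gamma_c$ does not act transitively on $c$ viewed inside $G_\varphi(-1)(k^{\sep})$. First I would show $\Gamma_L \subseteq \Gamma_c$: since $G_K$ is constant, $\Gamma_K$ acts trivially on $G(k^{\sep})$; since $\mu_{\exp(G)} \subseteq L$, the group $\Gamma_L$ acts trivially on $\mu_{\exp(G)}(k^{\sep})$; and every element of $G(-1)(k^{\sep})$ is represented by a homomorphism $\mu_{\exp(G)} \to G$. Combining these, $\Gamma_L$ acts trivially on the set $G(-1)(k^{\sep})$ and in particular fixes $c$ setwise. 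Next, because $\Gamma_L \subseteq \Gamma_c$, the $\Gamma_L$-orbits on $c$ refine the $\Gamma_c$-orbits, so $\Gamma_L$ also fails to be transitive on $c \subset G_\varphi(-1)(k^{\sep})$. Unwinding the twisted action, for $\sigma \in \Gamma_L$ and $\gamma \in G_\varphi(-1)(k^{\sep})$ we have $\sigma \cdot \gamma = \varphi(\sigma)\,\sigma(\gamma)\,\varphi(\sigma)^{-1} = \varphi(\sigma)\,\gamma\,\varphi(\sigma)^{-1}$ by the triviality just established. Hence the $\Gamma_L$-orbits on $c$ coincide with the orbits of $H := \varphi(\Gamma_L)$ acting by conjugation.

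Combining with the transitivity of $G$ on $c$, non-transitivity of $H$ forces $H \subsetneq G$. This gives the required proper subgroup, and by construction $\varphi_L : \Gamma_L \to G(k^{\sep})$ factors through $H$, so its class in $BG(L)$ lies in the image of $BH(L) \to BG(L)$. The main point to take care over is that $\varphi_L$ is genuinely a homomorphism when restricted to $\Gamma_L$ (using that $G$ becomes constant over $K$, so the cocycle condition becomes multiplicativity), but this is routine; there is no serious obstacle once the twisted Galois action on $G_\varphi(-1)(k^{\sep})$ has been unfolded as above.
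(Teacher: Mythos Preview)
Your proof is correct and follows essentially the same route as the paper: both observe that $\Gamma_{K(\mu_{\exp(G)})}$ acts trivially on $G(-1)(k^{\sep})$, so the twisted action on the broken conjugacy class $c$ reduces to conjugation by $\varphi(\sigma)$, and non-transitivity forces $\varphi(\Gamma_{K(\mu_{\exp(G)})})$ to be a proper subgroup. Your exposition is slightly more detailed in spelling out why non-transitivity of $H$ on $c$ implies $H \subsetneq G$, but the argument is the same.
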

\begin{proof}
	Let $c \subset G_{\varphi}(-1)(k^{\sep})$ be the conjugacy class that $\varphi$ breaks and $\gamma \in c$. Note that $\Gamma_{K(\mu_{\exp(G)})} \subset \Gamma_K \cap \Gamma_c$ because $\Gamma_{K(\mu_{\exp(G)})}$ acts trivially on $G(-1)$. The action of $\sigma \in \Gamma_{K(\mu_{\exp(G)})} \subset \Gamma_K \cap \Gamma_c$ on $c \subset G(-1)$ is $\gamma \mapsto \varphi(\sigma) \gamma \varphi(\sigma)^{-1}$. It follows from the definition of breaking that the morphism $\varphi: \Gamma_{K(\mu_{\exp(G)})} \to G(k^{\sep})$ is not surjective, i.e.~factors through a proper subgroup $H \subset G(k^{\sep})$.
\end{proof}


The following summarises the results  of this section and is a generalisation of Theorem \ref{thm:breaking_intro}. Theorem \ref{thm:breaking}(1) should be viewed as an analogue of \cite[Thm.~1.4]{LST22} in the setting of $BG$. More than this however, we are able to give an explicit description of the breaking thin set, something which is currently lacking the setting of Manin's conjecture.

\begin{theorem} \label{thm:breaking}\hfill
\begin{enumerate}
	\item The collection of breaking cocycles is thin.
	\item If $G$ is constant, then the breaking homomorphisms in 
	$\Hom(\Gamma_k,G)$ are contained in the thin subset
	$$\Omega_{G,k}:= \left\{ \varphi: \Gamma_k \to G :
	\begin{array}{ll}
	 	\varphi \text{ is not surjective or } \\
	   k_{\varphi} \text{ is not linearly disjoint to } k(\mu_{\exp(G)})
	   \end{array} \right\}.$$
	\item In general, let $K/k$ be a splitting field of $G$. If $\varphi$ is breaking, 
	then $\varphi_K \in \Omega_{G_K,K}$, as in $(2)$.
\end{enumerate}
\end{theorem}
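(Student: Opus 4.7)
The plan is to derive all three parts from Lemma \ref{lem:Breaking_implies_reducible} combined with Lemma \ref{lem:thin_base_change}. I would prove (3) first, extract (2) as a special case, and then deduce (1) by translating the containment of (3) into a thinness statement. Starting with (3), let $\varphi \in BG(k)$ be breaking and let $K/k$ be a splitting field of $G$. Lemma \ref{lem:Breaking_implies_reducible} yields a proper subgroup $H \subsetneq G_K$ such that $\varphi_{K(\mu_{\exp(G)})}$ factors through the image of $BH(K(\mu_{\exp(G)})) \to BG(K(\mu_{\exp(G)}))$. To conclude $\varphi_K \in \Omega_{G_K, K}$, assume $\varphi_K$ is surjective; I would use the Galois correspondence to identify linear disjointness of the fixed field $K_{\varphi_K}$ and $K(\mu_{\exp(G)})$ over $K$ with the restriction $\varphi_K|_{\Gamma_{K(\mu_{\exp(G)})}} = \varphi_{K(\mu_{\exp(G)})}$ remaining surjective onto $G_K(K^{\sep})$. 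Since this restriction factors through $H \subsetneq G_K$, it cannot be surjective, so linear disjointness fails and $\varphi_K$ lands in $\Omega_{G_K, K}$.

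Part (2) is then immediate by applying (3) with $K = k$, since $G$ is constant and $k$ is itself a splitting field. To close out (2), I would also verify that $\Omega_{G,k}$ is itself thin, splitting it as the union of the non-surjective homomorphisms (thin by the last assertion of Lemma \ref{lem:thin_base_change}) and the surjective homomorphisms whose fixed field meets $k(\mu_{\exp(G)})$ non-trivially. By the Galois-theoretic argument above, the latter set equals the collection of $\varphi$ such that $\varphi_{k(\mu_{\exp(G)})}$ factors through some proper subgroup of $G_{k(\mu_{\exp(G)})}$; since there are only finitely many such subgroups, this is a finite union of thin sets by Lemma \ref{lem:thin_base_change} applied with the extension $k(\mu_{\exp(G)})/k$, hence thin.

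For (1), fix a splitting field $K/k$ of $G$. By (3) the breaking cocycles are contained in $\{\varphi \in BG[k] : \varphi_K \in \Omega_{G_K, K}\}$, and repeating the decomposition of the preceding paragraph now over $K$ exhibits this set as contained in the union, over the finitely many proper subgroups $H \subsetneq G_{K(\mu_{\exp(G)})}$, of the sets $\{\varphi \in BG[k] : \varphi_{K(\mu_{\exp(G)})}$ lies in the image of $BH(K(\mu_{\exp(G)}))\}$. Each of these is thin by Lemma \ref{lem:thin_base_change} applied with the extension $K(\mu_{\exp(G)})/k$, completing the proof. The main technical obstacle is the translation between the inner-twist formulation of breaking and the rigid subgroup formulation of Lemma \ref{lem:thin_base_change}, but this translation is precisely the content of Lemma \ref{lem:Breaking_implies_reducible}, so the remainder of the argument is formal once that input is available.
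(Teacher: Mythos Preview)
Your proposal is correct and follows essentially the same route as the paper: both deduce everything from Lemma~\ref{lem:Breaking_implies_reducible} together with Lemma~\ref{lem:thin_base_change}. The only organisational differences are that you prove (3) first and obtain (2) as the special case $K=k$, whereas the paper proves (2) first and deduces (3) via the observation that breaking is preserved under base change; and you additionally verify that $\Omega_{G,k}$ itself is thin, which the paper's proof leaves implicit.
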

\begin{proof}
	The first statement follows by combining Lemmas \ref{lem:Breaking_implies_reducible} and \ref{lem:thin_base_change}. For the second,
	let $\varphi: \Gamma_k \to G$ be a breaking homomorphism. Then Lemma~\ref{lem:Breaking_implies_reducible} implies that $\varphi$ is no longer surjective when restricted to $\Gamma_{k(\mu_{\exp(G)})}$. So
	either $\varphi$ is not surjective to begin with, or it defines a field extension $k_{\varphi}$ which is not linearly disjoint to $k(\mu_{\exp(G)})$. The last part follows from the (2) and the fact
	that if $\varphi$ is breaking then so is $\varphi_K$.
\end{proof}

The set in part (2) of Theorem \ref{thm:breaking} is wasteful, in the sense that it may contain some non-breaking homomorphisms. Nonetheless, when counting, there is no problem	with removing a larger thin set (see Theorem \ref{thm:thin_negligable}).

\section{Stacky Hensel's Lemma} \label{sec:Hensel}
The main obstruction to extending arithmetic constructions such as heights to stacks is that proper stacks fail the valuative criterion of properness, i.e.~if $\mathcal{X}$ is a proper stack over $\Z_p$ then $\mathcal{X}(\Z_p)$ is not necessarily equal to $\mathcal{X}(\Q_p)$. In \cite[\S2.1]{ESZB} this issue is dealt with via the role of the ``tuning stack'', a certain global construction. We will instead use an (essentially equivalent) local construction, the \emph{arithmetic valuative criterion of properness} \cite[Thm.~3.2]{BV24}. We remark that this valuative criterion only holds for tame stacks, while the tuning stack exists in general. But the non-tame case is pathological in many ways.

\subsection{The stack $\Spec \mathcal{O}_{\sqrt[n]{v}}$} \label{sec:def_O_root_n}
The following construction plays a crucial role in the arithmetic of stacks.
\begin{definition}
	Let $\mathcal{O}$ be a henselian DVR with valuation $v$ and $n \in \N$. Define $\Spec \mathcal{O}_{\sqrt[n]{v}}$ as the root stack given by taking the $n$th root \cite[Def.~2.2.4]{Cad07} of the Cartier divisor defined by the maximal ideal of $\O$.
	
	We will use the following abbreviations: $\mathcal{X}(\mathcal{O}_{\sqrt[n]{v}}) := \mathcal{X}(\Spec \mathcal{O}_{\sqrt[n]{v}})$ for a stack $\mathcal{X}$ and $\H^p( \mathcal{O}_{\sqrt[n]{v}}, \mathcal{F}) := \H^p(\Spec \mathcal{O}_{\sqrt[n]{v}}, \mathcal{F})$ for an \'etale sheaf $\mathcal{F}$.
\end{definition}
Note that $\Spec \mathcal{O}_{\sqrt[n]{v}}$ is not equal to the spectrum of any ring, so the use of $\Spec$ is an abuse of notation. Intuitively one may think of this stack as a ramified extension of $K$ with field of fractions $K$ and ramification degree $n$.

We can now state the stacky arithmetic valuative criterion of properness \cite[Thm.~3.2]{BV24}, we note that this theorem seems to have been discovered independently by Darda and Yasuda \cite[Lem.~2.16]{DYBM}.

\begin{lemma}\label{lem:arithmetic_valuative_criterion}
	Let $\mathcal{X}$ be a tame proper stack over $\mathcal{O}$. For every point $x \in \mathcal{X}(K)$ there exists a unique $n$ and a representable map $\Spec \mathcal{O}_{\sqrt[n]{v}} \to \mathcal{X}$ lifting $x$. This lift is unique up to a unique $2$-isomorphism.
\end{lemma}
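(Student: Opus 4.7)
The plan is to invoke the classical valuative criterion of properness for DM stacks, which under the tameness hypothesis on $\mathcal{X}$ produces a lift of $x$ only after a tamely ramified extension, and then to descend that lift along a Kummer cover to obtain the desired map out of a root stack. First I would apply the standard valuative criterion to the proper tame stack $\mathcal{X}$: there exists a tamely ramified extension $\mathcal{O}'/\mathcal{O}$ with fraction field $K'/K$ and a lift $\tilde{x} \in \mathcal{X}(\mathcal{O}')$ of $x_{K'}$. Since $\mathcal{O}$ is henselian, the maximal unramified subextension of $K'/K$ corresponds to an extension of residue fields, and Galois descent along this piece, using that the generic fibre of $\tilde{x}$ is the given $x$ already defined over $K$, reduces to the totally tamely ramified case $\mathcal{O}' = \mathcal{O}[t]/(t^n - \pi)$ for some $n$ coprime to the residue characteristic.

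I would then exploit the Kummer presentation $\Spec \mathcal{O}_{\sqrt[n]{v}} = [\Spec \mathcal{O}[t]/(t^n-\pi)/\mu_n]$: to construct a morphism out of the root stack it suffices to equip $\tilde{x}$ with $\mu_n$-equivariance descent data. The Galois conjugates of $\tilde{x}$ all restrict to $x$ generically, so separatedness of $\mathcal{X}$ produces canonical isomorphisms between them, and these assemble into a genuine $2$-descent datum; this is the step where tameness enters essentially, since wild inertia would obstruct the relevant cohomological vanishing. To achieve representability of the resulting map $\Spec \mathcal{O}_{\sqrt[n]{v}} \to \mathcal{X}$, I would let $\mu_d \subseteq \mu_n$ be the kernel of the induced homomorphism $\mu_n \to \Aut_{\mathcal{X}}(\tilde{x}_0)$ on stabilisers at the closed point; the descent then factors through the intermediate root stack $\Spec \mathcal{O}_{\sqrt[n/d]{v}}$, and this $n/d$ is the integer asserted in the statement.

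For uniqueness I would compare any two representable lifts $\Spec \mathcal{O}_{\sqrt[n_1]{v}}, \Spec \mathcal{O}_{\sqrt[n_2]{v}} \to \mathcal{X}$ by pulling both back to $\Spec \mathcal{O}_{\sqrt[N]{v}}$ for $N = \lcm(n_1, n_2)$; the two pullbacks agree on the dense open $\Spec K$, so separatedness of $\mathcal{X}$, that is, properness of its diagonal, forces global agreement, and representability of each original map then pins down $n_1 = n_2$ while simultaneously producing the unique $2$-isomorphism. The main obstacle in the whole argument is the descent step in the second paragraph: while the existence of isomorphisms among Galois conjugates is immediate from separatedness, assembling them into a bona fide $2$-categorical descent datum along the Kummer $\mu_n$-torsor requires verifying a cocycle identity that is genuinely non-trivial and fails outside the tame setting.
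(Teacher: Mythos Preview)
The paper does not prove this lemma at all: it is simply stated as the ``stacky arithmetic valuative criterion of properness'' and attributed to \cite[Thm.~3.2]{BV24}, with an independent appearance noted in \cite[Lem.~2.16]{DYBM}. So there is no in-paper proof to compare your proposal against.

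That said, your sketch is a reasonable outline of how the result is actually established in those references: lift after a tame extension via the usual valuative criterion, reduce to a totally tamely ramified Kummer extension using that $\mathcal{O}$ is henselian, and then descend along the $\mu_n$-quotient presentation $[\Spec \mathcal{O}[t]/(t^n-\pi)/\mu_n] \cong \Spec \mathcal{O}_{\sqrt[n]{v}}$. Your identification of the descent step as the delicate point is accurate, and your mechanism for enforcing representability by dividing out the kernel of the stabiliser action is the right one. One small caveat: the paper's notion of \emph{tame} follows \cite{AOV08} and does not a priori assume the stack is Deligne--Mumford, so invoking ``the classical valuative criterion for DM stacks'' at the outset is a slight narrowing of hypotheses; the cited result in \cite{BV24} handles the general tame case.
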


Let $\F$ denote the residue field of $\mathcal{O}$.
By the definition of root stacks, the stack $\mathcal{O}_{\sqrt[n]{v}}$ contains a Cartier divisor which is an $n$-th root of the Cartier divisor defined by the maximal ideal of $\mathcal{O}$. We will call this Cartier divisor the special point and denote it by $\Spec \F_{\sqrt[n]{v}}$. There exists a non-canonical isomorphism $\Spec \F_{\sqrt[n]{v}} \cong (B \mu_n)_{\F}$. Let us be more explicit.

Let $\pi \in \mathcal{O}$ be a uniformiser. By \cite[Ex.~2.4.1]{Cad07} we have $\Spec \mathcal{O}_{\sqrt[n]{v}} \cong [\Spec \mathcal{O}[s^n - \pi]/ \mu_n]$ where $\mu_n$ acts by multiplication on $s$. The stack $\Spec \F_{\sqrt[n]{v}}$ is given by the zero locus of $s$ under this isomorphism, which is equal to $[\Spec \F/ \mu_n] = (B\mu_n)_{\F}$. We denote the composition $(B\mu_n)_{\F} \to  [\Spec \mathcal{O}[s^n - \pi]/ \mu_n] \cong \Spec \mathcal{O}_{\sqrt[n]{v}}$ by $i_{\pi}$.

The following lemma describes the dependence of $i_{\pi}$ on the choice of $\pi$.
\begin{lemma}
	For $u \in \mathcal{O}^{\times}$ consider the class $\overline{u} \in \F^{\times}/\F^{\times n} \cong \H^1(\F, \mu_n)$. Let $\emph{tr}_{\overline{u}}: (B \mu_n)_{\F} \to (B \mu_n)_{\F}$ denote the translation map which sends the identity cocycle $e$ to $\overline{u}$. Then there exists a $2$-commutative triangle
	\[\begin{tikzcd}
		{(B\mu_n)_{\F}} & {\mathcal{O}_{\sqrt[n]{v}}} \\
		{(B\mu_n)_{\F}}
		\arrow["{i_{u\pi}}", from=1-1, to=1-2]
		\arrow["{\emph{tr}_{\overline{u}}}", from=1-1, to=2-1]
		\arrow["{i_{\pi}}"', from=2-1, to=1-2]
	\end{tikzcd}\]
\end{lemma}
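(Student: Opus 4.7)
The plan is to use the two explicit quotient presentations of $\mathcal{Y} := \Spec \mathcal{O}_{\sqrt[n]{v}}$ coming from the uniformisers $\pi$ and $u\pi$, and to trace how the canonical equivalence between these presentations moves the special point. First I would reinterpret a $T$-point of $[\Spec \mathcal{O}[s]/(s^n - \pi)/\mu_n]$ as a triple $(f:T \to \Spec \mathcal{O},\, Q,\, \tau)$, where $Q$ is a $\mu_n$-torsor on $T$ and $\tau$ is a section of the associated line bundle $L_Q$ (for the standard character $\mu_n \hookrightarrow \Gm$) satisfying $\tau^n = f^*\pi$ in $L_Q^{\otimes n} \cong \mathcal{O}_T$. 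Under this dictionary, $i_{\pi}$ sends a $\mu_n$-torsor $R$ on $\F$ to the pair $(R,\,0)$, since $\pi$ reduces to zero in $\F$; the analogous description applies to $i_{u\pi}$ with $u\pi$ in place of $\pi$.

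The next step is to construct the change-of-presentation equivalence. Let $\tilde u := \Spec \mathcal{O}[t]/(t^n - u)$, which is a $\mu_n$-torsor over $\Spec \mathcal{O}$ (\'etale because $n \in \mathcal{O}^{\times}$) whose special fibre is $\overline u$; moreover $t$ defines a tautological section of $L_{\tilde u}$ with $t^n = u$. Pulling these back along $f$, the assignment
\[
(Q,\,\tau) \longmapsto \bigl(Q \wedge^{\mu_n} \tilde u,\ \tau \cdot t\bigr)
\]
is then a functorial equivalence from the $\pi$-presentation of $\mathcal{Y}$ to the $u\pi$-presentation, since $(\tau \cdot t)^n = \pi \cdot u = u\pi$ under the canonical trivialisations of $L_{Q \wedge \tilde u}^{\otimes n}$.

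Finally I would apply this equivalence to the point $i_{\pi}(R) = (R,\,0)$: the image is $(R \wedge \overline u,\,0)$ in the $u\pi$-presentation, which is precisely $i_{u\pi}(R \wedge \overline u)$. This exhibits a $2$-commutative triangle between $i_{\pi}$ and $i_{u\pi}$ precomposed with the translation $R \mapsto R \wedge \overline u$ on $(B\mu_n)_{\F}$; after matching the direction of $\mathrm{tr}_{\overline u}$ specified in the statement (namely $\mathrm{tr}_{\overline u}(e) = \overline u$), this yields the claimed triangle $i_{u\pi} \cong i_{\pi} \circ \mathrm{tr}_{\overline u}$. The main obstacle I expect is the careful bookkeeping of conventions --- the $\mu_n$-action on $s$, the normalisation of the Kummer isomorphism $\F^{\times}/\F^{\times n} \cong \H^1(\F,\mu_n)$, and the direction of composition of $\mu_n$-torsors --- so that all signs agree and the triangle comes out with the correct orientation; and producing not merely a pointwise identification but the $2$-isomorphism of morphisms of stacks witnessing the triangle. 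Once these conventions are fixed, the construction above is a direct unwinding with no further content.
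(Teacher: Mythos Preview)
Your approach is correct and is essentially the same as the paper's: both arguments unwind $i_\pi$ and $\mathrm{tr}_{\overline u}$ via the Kummer-theoretic description of $\mu_n$-torsors as pairs $(\mathcal L,\eta:\mathcal L^n\cong\mathcal O)$, after which the triangle is a one-line check. The only real difference is packaging: the paper works directly with Cadman's functor-of-points for the root stack and writes $i_\pi(\mathcal L,\eta)=(\mathcal L,\pi\cdot\eta)$ and $\mathrm{tr}_{\overline u}(\mathcal L,\eta)=(\mathcal L,\overline u\cdot\eta)$, so the comparison $i_{u\pi}=i_\pi\circ\mathrm{tr}_{\overline u}$ is immediate without introducing the auxiliary torsor $\tilde u$ or a change-of-presentation map. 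Your route via $\tilde u$ and the equivalence $(Q,\tau)\mapsto(Q\wedge\tilde u,\tau\cdot t)$ is the same idea with one extra step you leave implicit, namely that this equivalence between the two quotient presentations really is the identity on the underlying root stack (it is, e.g.\ by checking on the generic fibre); once that is said, your computation and the paper's coincide up to the sign bookkeeping you already flagged.
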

\begin{proof}
	Unfolding the definition of $i_{\pi}$ we can describe it as follows. Let $S \to \Spec \F$ be an $\F$-scheme. A point of $B\mu_n(S)$ corresponds to a $\mu_n$-torsor, which by Kummer theory \cite[\href{https://stacks.math.columbia.edu/tag/03PK}{Tag 03PK}]{stacks-project} is the same as a pair $(\mathcal{L}, \eta)$, where $\mathcal{L}$ is a line bundle on $S$ and $\eta: \mathcal{L}^n \cong \mathcal{O}_S$ is an isomorphism. The map $i_{\pi}$ sends this to the pair $(\mathcal{L}, \pi \cdot \eta)$ where $\pi \cdot \eta: \mathcal{L}^n \cong \mathcal{O}_S \xrightarrow{s \to \pi \cdot s} \pi \cdot \mathcal{O}_S$ is an isomorphism, (that this defines an element of $\Spec  {\mathcal{O}_{\sqrt[n]{v}}}$ follows from the definition.)
	
	On the other hand, the map $\text{tr}_{\overline{u}}$ sends $(\mathcal{L}, \eta)$ to $(\mathcal{L}, \overline{u} \cdot \eta)$. The commutativity is then clear.
\end{proof}
\subsection{Quotients and twisting}
We will encounter quotient stacks in the next section. We will require a different description of quotient stacks in terms of twists torsors; as far as we are aware, this description is new.

\subsubsection{Twisting torsors}
We first recall some facts on  twisting torsors. The following is based upon \cite[p.~20--22]{Sko01}.
Let $S$ be a scheme, $G$ a group scheme over $S$ acting on the left on an $S$-scheme $X$. Let $a:P \to S$ be a right $G$-torsor. It is also a left-torsor under the inner twist $G_a$ \cite[p.~20 Ex.~1]{Sko01} of $G$ by $a$.
	
The \emph{twist} of $X$ by $a$ is the quotient stack $X_a := [P \times X/ G]$ where $g \in G$ acts on the left by $g \cdot (p,x) = (p \cdot g^{-1}, g \cdot x)$. This is an algebraic space as stabilisers are trivial. The left $G_a$-action on $P$ induces a left $G_a$-action on $X_a$. Twisting is functorial. Note also that if $a':P' \to S$ is a $G$-torsor and $P \to P'$ is a a map then we get an induced map $X_a \to X_{a'}$.

The torsor $P$ also has a left $G$-action given by $g \cdot p := p \cdot g^{-1}$ and we can thus consider $P_a$. The diagonal map $P \to P \times P$ induces a section $S \cong [P/G] \to [P \times P/G] = P_a$ which leads to a canonical isomorphism $P_a \cong G_a$. 

The torsor $P$ has a left $G_a$-action and thus a right $G_a$-action given by $p \cdot g = g^{-1} p$. This makes $P \to S$ a right $G_a$-torsor, which we will call the \emph{inverse torsor} and denote by $a^{-1}$. As above we have a canonical isomorphism $P_{a^{-1}} \cong G$ which leads to a canonical isomorphism $(X_a)_{a^{-1}} \cong X$. In particular $(P_a)_{a^{-1}} \cong P$.

\subsubsection{Quotient stacks}
Recall \cite[\href{https://stacks.math.columbia.edu/tag/04UV}{Tag 04UV}]{stacks-project} that for any scheme $T \to S$ the groupoid $[X/G](T)$ consists of a pair $(a,f)$ of a right $G$-torsor $a: P \to T$ equipped with a map $f: P \to X$ such that $f(p \cdot g^{-1}) = g \cdot f(p)$. Morphisms $(a,f) \to (a', f')$ are given by a commutative triangle 
\[\begin{tikzcd}
	P & X \\
	P'
	\arrow["f", from=1-1, to=1-2]
	\arrow[from=1-1, to=2-1]
	\arrow["f'"', from=2-1, to=1-2].
\end{tikzcd}\]
Consider the composition $T \to P_a \xrightarrow{f_a} X_a$. This procedure defines a function of groupoids natural in $T$
\begin{equation}\label{eq:points_quotient_stacks}
	[X/G](T) \to \{(a, x): P \to T \text{ right $G$-torsor} \text{ and } x \in X_a(T) \}
\end{equation}
A morphism $(a, x) \to (a', x')$ in the groupoid on the right is a map of $G$-torsors $p: P \to P'$ such that the image of $x$ via the induced map $X_a \to X_{a'}$ is $x'$.

\begin{lemma}\label{lem:points_quotient_stacks}
	The map \eqref{eq:points_quotient_stacks} is an equivalence of groupoids for all $T \to S$.
\end{lemma}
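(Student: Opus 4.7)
The plan is to construct an explicit quasi-inverse $\Psi$ to the functor in \eqref{eq:points_quotient_stacks}. Given $(a \colon P \to T,\, x \in X_a(T))$, the strategy is to unwind $x$ using the presentation $X_a = [(P\times X)/G]$: such a point is represented, uniquely up to isomorphism in the groupoid, by a right $G$-torsor $Q \to T$ together with a map $\phi = (\phi_1, \phi_2) \colon Q \to P \times X$ which is equivariant for the left $G$-action $g\cdot(p,\xi) = (pg^{-1}, g\xi)$. Unwinding the two components, this equivariance reads $\phi_1(qg^{-1}) = \phi_1(q) g^{-1}$ and $\phi_2(qg^{-1}) = g\,\phi_2(q)$.

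The first identity says that $\phi_1 \colon Q \to P$ is a morphism of right $G$-torsors over $T$, and hence an isomorphism. I will define $f := \phi_2 \circ \phi_1^{-1} \colon P \to X$ and set $\Psi(a, x) := (a, f)$. The identity $\phi_1^{-1}(pg^{-1}) = \phi_1^{-1}(p) g^{-1}$ (obtained by inverting the equivariance of $\phi_1$) combined with the equivariance of $\phi_2$ gives $f(pg^{-1}) = g f(p)$, so $(a,f) \in [X/G](T)$. The assignment is independent of the representative $(Q,\phi)$: any other representative $(Q', \phi')$ is related by a torsor isomorphism $\alpha \colon Q \xrightarrow{\sim} Q'$ with $\phi' \circ \alpha = \phi$, from which $\phi'_2 \circ (\phi'_1)^{-1} = \phi_2 \circ \phi_1^{-1}$. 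Functoriality on morphisms and naturality in $T$ reduce to functoriality of pullback of torsors.

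It then remains to check that $\Psi$ is a quasi-inverse to \eqref{eq:points_quotient_stacks}. Starting from $(a,f) \in [X/G](T)$, the composition $T \to P_a \xrightarrow{f_a} X_a$ is, via the diagonal section $T \cong [P/G] \to [P \times P/G] = P_a$ followed by $\mathrm{id} \times f$, represented in $X_a = [(P \times X)/G]$ by $(Q,\phi) = (P,\ p \mapsto (p, f(p)))$; here $\phi_1 = \mathrm{id}$, so $\Psi$ recovers $f$. Conversely, starting from $(a,x)$ with representative $(Q,\phi)$, applying $\Psi$ and then the forward functor produces the pair $(P,\ p \mapsto (p, f(p)))$ with $f = \phi_2 \circ \phi_1^{-1}$; the isomorphism $\phi_1 \colon Q \xrightarrow{\sim} P$ intertwines this with $(Q,\phi)$, since $(\phi_1(q), f(\phi_1(q))) = (\phi_1(q), \phi_2(q)) = \phi(q)$.

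The main technical obstacle I anticipate is simply the bookkeeping of left versus right $G$-actions on $P$, $Q$ and $P \times X$ --- in particular pinning down the precise equivariance of $\phi_1^{-1}$ that is used to verify $f(pg^{-1}) = gf(p)$, and keeping the conventions in \cite[Tag 04UV]{stacks-project} consistent with those adopted at the start of the subsection. Once the conventions are fixed, every step of the proof is an unambiguous diagram chase.
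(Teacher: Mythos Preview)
Your proof is correct. You construct an explicit quasi-inverse by unpacking the quotient presentation $X_a=[(P\times X)/G]$: a $T$-point of $X_a$ is a right $G$-torsor $Q\to T$ together with $\phi=(\phi_1,\phi_2)\colon Q\to P\times X$ with the stated equivariance, and since $\phi_1$ is a torsor map it is an isomorphism, allowing you to set $f=\phi_2\circ\phi_1^{-1}$. The equivariance checks and the two round-trip verifications are accurate.

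The paper's proof is a one-line appeal to the twisting formalism developed immediately before the lemma: the inverse is ``untwisting, i.e.\ twisting by $a^{-1}$'', using the canonical identification $(X_a)_{a^{-1}}\cong X$. Your argument is essentially what this sentence expands to once one writes everything out via the quotient description of $X_a$; in particular your isomorphism $\phi_1\colon Q\xrightarrow{\sim}P$ is the trivialisation of $P_{a^{-1}}$ evaluated at the given point. The paper's approach buys brevity and makes the functoriality transparent (twisting is a functor, so one gets naturality in $T$ and compatibility with morphisms for free), while your approach is more self-contained and avoids relying on the somewhat delicate conventions about the left $G_a$-action on $P$ and the inverse torsor. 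Either route is fine here; the result is elementary enough that the explicit version you wrote does not cost much.
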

\begin{proof}
		An inverse of this map is given by untwisting, i.e. twisting by $a^{-1}$.
\end{proof}
This alternative description of $[X/G](T)$ will be quite useful but we were surprisingly unable to find it in the literature. 

\subsubsection{Maps of quotient stacks}\label{sec:map_quotient_stack}
We will now consider a construction of maps of quotient stacks. Let $G$ and $G'$ be group schemes over $S$ acting on $S$-schemes $X$ and  $X'$ respectively. Consider a commutative diagram
\[\begin{tikzcd}
	{G \times X} & X \\
	{G' \times X'} & {X'}.
	\arrow[from=1-1, to=1-2]
	\arrow[from=1-1, to=2-1]
	\arrow[from=1-2, to=2-2]
	\arrow[from=2-1, to=2-2]
\end{tikzcd}\]
This defines a map from the groupoid scheme $G \times X \rightrightarrows X$ to $G' \times X' \rightrightarrows X'$ in the sense of \cite[\href{https://stacks.math.columbia.edu/tag/0230}{Tag 0230}]{stacks-project} and thus induces a map on quotient stacks $[X/G] \to [X'/G']$.
\subsection{Sectors and the cyclotomic inertia stack}\label{sec:cylotomic_inertia}
The arithmetic valuative criterion of properness will allow us to consider the ``reduction modulo $\pi$'' of an element $x \in \mathcal{X}(K)$. In this section we will describe the space where this reduction lives. This stack is known as the \emph{cyclotomic inertia stack} and was first described in \cite[\S3]{AGV08}, Darda and Yasuda \cite[Def.~2.3]{DYBM} call this the stack of twisted $0$-jets. Since we will use integral models we extend the definition to general base schemes.

\begin{definition} \label{def:sector}
	Let $S$ be a Noetherian base scheme and $\mathcal{X} \to S$ a finite type tame DM stack over $S$. The \emph{cyclotomic inertia stack} is the disjoint union $I_{\mu} \mathcal{X} := \coprod_{n} \Hom_{S, \text{rep}}(B\mu_n, \mathcal{X})$, where $\Hom_{S, \text{rep}}(B\mu_n, \mathcal{X})$ is the stack parametrizing representable morphisms $(B \mu_n)_S \to \mathcal{X}$ over $S$.
	
	A \emph{sector} $\mathcal{S} \in \pi_0(I_{\mu} \mathcal{X})$ of $\mathcal{X}$ is a connected component of $I_{\mu} \mathcal{X}$. 
	
	The \emph{order} of a sector $\mathcal{S}$ is the number $n$ such that $\mathcal{S} \subset \Hom_{S, \text{rep}}(B\mu_n, \mathcal{X})$.
	
	A \emph{trivial sector} is a sector of order $1$. If $\mathcal{X}$ is connected,
	then there is a unique trivial sector, namely $\mathcal{X} \cong \Hom_{S, \text{rep}}(B\mu_1, \mathcal{X}) \subset I_{\mu} \mathcal{X}$.
	
	If $k$ is a field and $S = \Spec k$ then a \emph{geometric sector} is a sector of $\mathcal{X}_{k^{\sep}} \to \Spec k^{\sep}$.
\end{definition}

If $\mathcal{X}$ is smooth over $S$ then so is $\mathcal{I}_{\mu}(\mathcal{X})$ \cite[Cor.~3.1.4]{AGV08}. Let $G$ be a tame finite \'etale group scheme over $S$. We will now construct a map
\[
I_{\mu}BG \to [G(-1)/G]
\]
where the left action of $G$ on $G(-1)$ is given by conjugation.

For each scheme $T \to S$ an element of $I_{\mu}BG(T)$ is represented by a representable map $f: (B\mu_n)_T \to (BG)_T$ of $T$-stacks. When we compose $f$ with the map $e: T \to (B \mu_n)_T$ we get a map $\varphi := f \circ e: T \to (BG)_T$. 

The map $f$ is representable so defines an injective map on automorphism schemes $f_*: (\mu_n)_T =  \underline{\Aut}_{B\mu_n}(e) \to \underline{\Aut}_{BG}(f) \cong (G_{T})_{\varphi}$, where the last isomorphism is Lemma \ref{lem:automorphism_group_is_inner_twist} (technically Lemma \ref{lem:automorphism_group_is_inner_twist} only applies when $T$ is the spectrum of a field, but the general case is the same).

Twisting commutes with $\Hom$-schemes, as both products and quotients do, so we find that
\[
f_* \in \Hom(\mu_n, G_{\varphi})(T) = \Hom(\mu_n, G)_{{\varphi}}(T) \subset G(-1)_{\varphi}(T).
\]
The pair $(\varphi, f_*)$ defines an element of $[G(-1)/G](T)$ by Lemma \ref{lem:points_quotient_stacks}. The above construction is functorial in $T$ and thus defines a map $I_{\mu} BG \to [G(-1)/G]$.

\begin{proposition}\label{prop:cyclotomic_inertia_stack_BG}
	The map $I_{\mu}(BG) \to [G(-1)/G]$ constructed above is an isomorphism.
\end{proposition}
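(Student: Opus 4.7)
The plan is to construct an explicit quasi-inverse by reinterpreting both sides as groupoids of $\mu_n$-equivariant $G$-torsors. First I would unpack the target. By Lemma \ref{lem:points_quotient_stacks}, a $T$-point of $[G(-1)/G]$ is a pair $(P,\gamma)$ consisting of a right $G$-torsor $a:P\to T$ (equivalently a cocycle $\varphi$) together with a section $\gamma \in G(-1)_a(T) = \Hom(\widehat{\Z}(1),G_\varphi)(T)$, where $G_\varphi = \underline{\Aut}(\varphi)$ is the inner twist (Lemma \ref{lem:automorphism_group_is_inner_twist}). Since $G$ is tame finite \'etale, any such $\gamma$ factors uniquely as $\widehat{\Z}(1) \twoheadrightarrow \mu_n \hookrightarrow G_\varphi$, where $n$ is the order of the image and is locally constant on $T$. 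Consequently $G(-1) \cong \coprod_n \Hom^{\mathrm{inj}}(\mu_n, G)$ as schemes over $S$, and therefore
\[ [G(-1)/G] \cong \coprod_n [\Hom^{\mathrm{inj}}(\mu_n, G)/G], \]
matching the decomposition of $I_\mu BG$ by order.

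Next I would unpack the source in matching terms. By the universal property of $BG$ together with fppf descent along the atlas $\mathrm{pt} \to B\mu_n$, a morphism $(B\mu_n)_T \to (BG)_T$ corresponds to a right $G$-torsor $P$ on $T$ equipped with a left $\mu_n$-action commuting with the right $G$-action. Such equivariant structures are in natural bijection with group scheme homomorphisms $\mu_n \to \underline{\Aut}_G(P) = G_\varphi$. The representability of the corresponding map $B\mu_n \to BG$ is equivalent to this homomorphism being a monomorphism, since representability of a morphism between classifying stacks of finite \'etale group schemes is equivalent to injectivity on stabilisers.

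Assembling the two descriptions yields a natural candidate quasi-inverse: given $(P,\gamma) \in [G(-1)/G](T)$ of order $n$, one uses the resulting injection $\mu_n \hookrightarrow G_\varphi$ to define a faithful $\mu_n$-action on $P$ commuting with $G$, and hence a representable map $(B\mu_n)_T \to (BG)_T$ landing in the $n$-th component of $I_\mu BG$. Unwinding the construction $f \mapsto (\varphi, f_*)$ in the text then shows that the two assignments are mutually inverse on objects, and the analogous unwinding handles morphisms: a $2$-isomorphism of representable maps $B\mu_n \to BG$ is exactly an isomorphism of the underlying $\mu_n$-equivariant $G$-torsors, which via Lemma \ref{lem:points_quotient_stacks} is exactly a morphism of pairs $(P,\gamma) \to (P',\gamma')$ in $[G(-1)/G]$. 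The main technical point, already packaged in Lemmas \ref{lem:automorphism_group_is_inner_twist} and \ref{lem:points_quotient_stacks}, is the natural identification $\underline{\Aut}_G(P) = G_\varphi$ (together with its compatibility with twisting $\Hom$-schemes used implicitly in the construction of the forward map); once this naturality is nailed down the remaining verifications are formal, so the principal obstacle is bookkeeping rather than any genuine geometric content.
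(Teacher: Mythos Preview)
Your proposal is correct and follows essentially the same approach as the paper: both construct an explicit quasi-inverse by using Lemma~\ref{lem:points_quotient_stacks} to unpack a $T$-point of $[G(-1)/G]$ as a pair $(\varphi,\gamma)$, interpret $\gamma$ as an injection $\mu_n \hookrightarrow G_\varphi = \underline{\Aut}(\varphi)$, and then pass to the induced representable map $(B\mu_n)_T \to BG$ via the isomorphism $BG_\varphi \cong BG$ of Lemma~\ref{lem:automorphism_group_is_inner_twist}. Your phrasing in terms of $\mu_n$-equivariant $G$-torsors obtained by descent along $\mathrm{pt}\to B\mu_n$ is a mild repackaging of the paper's composition $(B\mu_n)_T \to BG_{T,\varphi} \cong BG$, but the content is the same.
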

\begin{proof}
	We will construct an explicit inverse.
	
	Let $T \to S$ be a morphism. We may assume that $T$ is connected by descent. By Lemma \ref{lem:points_quotient_stacks}, an element of $[G(-1)/G](T)$ is  given by a pair of a torsor $\varphi: P \to T$ and an element $\gamma: T \to G(-1)_{T, \varphi}$. As twisting commutes with Hom schemes the map $\gamma$ is represented by a map $(\mu_n)_T \to G_{T,\varphi}$; by the connectivity of $T$ we may assume that this map is injective. Let $(B \mu_n)_T \to BG_{T,\varphi}$ be the induced morphism, which representable by injectivity.
	
	Lemma \ref{lem:automorphism_group_is_inner_twist} induced an isomorphism $BG_{T,\varphi} \cong BG$ which sends $e$ to $\varphi$. The composition $(B \mu_n)_T \to BG_{T,\varphi} \cong BG$ defines an element of $I_\mu BG$. As everything was functorial in $T$ this leads to a map $[G(-1)/G] \to I_{\mu} BG$.
	
	That this map is an inverse to the map $I_{\mu} BG \to [G(-1)/G]$ is clear after unfolding all the definitions.
%
%
\end{proof}

The following corollary, which was shown directly by Darda-Yasuda \cite[Ex~2.15]{DYBM}, provides a geometric explanation for the role of $\mathcal{C}_G$ in Malle's conjecture.
\begin{corollary}\label{cor:components_cyclotomic_inertia_stack}
	Let $S = \Spec k$ be the spectrum of a field. The map in Proposition~\ref{prop:cyclotomic_inertia_stack_BG} induces an isomorphism $\pi_0(I_{\mu} BG_{k^{\sep}} ) \cong \mathcal{C}_G$ of $\Gamma_k$-sets.
\end{corollary}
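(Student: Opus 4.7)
The plan is to combine the isomorphism $I_{\mu}BG \cong [G(-1)/G]$ of Proposition \ref{prop:cyclotomic_inertia_stack_BG} with a general description of $\pi_0$ of a quotient stack, and then check that everything is compatible with the $\Gamma_k$-action.

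First, I would observe that the isomorphism constructed in Proposition \ref{prop:cyclotomic_inertia_stack_BG} is functorial in the base, so base changing to $k^{\sep}$ yields an isomorphism
\[
I_{\mu}BG_{k^{\sep}} \cong [G(-1)_{k^{\sep}}/G_{k^{\sep}}]
\]
that is $\Gamma_k$-equivariant, since the Galois action on both sides is induced by the universal property of the base change. Over $k^{\sep}$ the group $G_{k^{\sep}}$ is constant and $G(-1)_{k^{\sep}}$ is the disjoint union of its geometric points, so it suffices to describe $\pi_0$ of a quotient stack of the form $[X/H]$ where $H$ is a finite constant group and $X$ is a finite disjoint union of copies of $\Spec k^{\sep}$ equipped with an $H$-action.

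Next, for such a quotient stack I would note that the connected components of $[X/H]$ are in bijection with the $H$-orbits on $X(k^{\sep})$. Indeed, decomposing $X = \coprod_{O} O$ into $H$-orbits gives $[X/H] = \coprod_O [O/H]$, and each $[O/H]$ is of the form $B\mathrm{Stab}_H(x)$ for a choice of point $x \in O$, hence connected. Applying this with $X = G(-1)_{k^{\sep}}$ and $H = G(k^{\sep})$ acting by conjugation, the orbits are by definition the elements of $\mathcal{C}_G = G(-1)(k^{\sep})/\mathrm{conj}$, giving the bijection $\pi_0(I_{\mu}BG_{k^{\sep}}) \cong \mathcal{C}_G$ as sets.

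Finally, I would check $\Gamma_k$-equivariance. The $\Gamma_k$-action on $\pi_0(I_{\mu}BG_{k^{\sep}})$ is transported across the isomorphism of Proposition \ref{prop:cyclotomic_inertia_stack_BG} to the $\Gamma_k$-action on $\pi_0([G(-1)_{k^{\sep}}/G_{k^{\sep}}])$ induced from the natural Galois action on the group scheme $G(-1)$. Since this Galois action commutes with the conjugation action of $G(k^{\sep})$, it descends to $\mathcal{C}_G$ and agrees with the action described in \S\ref{sec:Galois_action_conjugacy_classes}. The only mild subtlety to be careful about is the identification of $\pi_0$ of the quotient stack with orbits (as opposed to components of the coarse moduli space), but since the stabilisers are finite and the action is on a discrete scheme this is straightforward.
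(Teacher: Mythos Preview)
Your proposal is correct and follows essentially the same approach as the paper: both use the isomorphism of Proposition~\ref{prop:cyclotomic_inertia_stack_BG} to reduce to computing $\pi_0([G(-1)/G]_{k^{\sep}})$, identify this with the set of $G(k^{\sep})$-orbits on $G(-1)(k^{\sep})$, and appeal to naturality for $\Gamma_k$-equivariance. The paper's proof is a terse one-line chain of identifications, whereas you spell out why $\pi_0$ of such a quotient stack is the orbit set and are more explicit about the equivariance; but there is no substantive difference.
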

\begin{proof}
	We have natural identifications of sets \[\pi_0([G(-1)/G]_{k^{\sep}}) = \pi_0(G(-1)_{k^{\sep}})/G(k^{\sep}) = G(-1)(k^{\sep})/G(k^{\sep}) = \mathcal{C}_G.\] These identifications are $\Gamma_k$-equivariant because they are natural.
\end{proof}
\begin{remark}
	The stack $I_{\mu} \mathcal{X}$ should really be called the \emph{anticyclotomic} inertia stack
	since, as we have seen, it gives a Tate twist by $(-1)$ of the inertia stack. We have 
	kept the terminology from \cite[\S3]{AGV08} for brevity and consistency.
\end{remark}

The following lemma gives a complete description of the sectors of $BG$ over a normal base $S$. 
Let $S$ be an integral normal scheme with fraction field $k$, important cases are when $S$ is the spectrum of a field or the spectrum of the ring of integers of a local or global field. Note that there is a surjection $\Gamma_k \to \pi_{1}(S)$ by normality. Let $G$ be a finite \'etale tame group scheme over $S$; the $\Gamma_k$-action on $G(k^{\sep})$ must then factor through $\pi_1(S)$. The same holds on $G(-1)(k^{\sep})$ since $G$ is tame. The following lemma describes the sectors of $BG$ explicitly.
\begin{lemma}\label{lem:sectors_BG}
	\hfill
	\begin{enumerate} 
		\item The $\Gamma_k$-action on $\mathcal{C}_G$ factors through $\pi_{1}(S)$ and the set $\pi_0(I_{\mu} BG)$ is naturally identified with the quotient $\mathcal{C}_G/\pi_1(S)$.
		\item Let $c \in \mathcal{C}_G/\pi_1(S)$. The elements of $G(-1)(k^{\sep})$ which lie in a conjugacy class of the orbit $c$ are $\pi_1(S)$-stable and thus define a closed subscheme $\mathcal{C}_c \subset G(-1)$. 
		
		The sector corresponding to $c$ is $\mathcal{S}_c = [\mathcal{C}_c/G] \subset [G(-1)/G]$. The order $n$ of $\mathcal{S}_c$ is the order of any element of $\mathcal{C}_c(k^{\sep})$.
		\item
		The universal map $(B \mu_n)_{\mathcal{S}_c} \cong [\mathcal{C}_c/G \times \mu_n]\to BG$ is induced by the following commutative diagram, as in \S\ref{sec:map_quotient_stack}
		\begin{equation}\label{eq:universal_map_sector}
			\begin{tikzcd}
				{G \times_S \mathcal{C}_c \times \mu_n} & {\mathcal{C}_c} \\
				G & S
				\arrow[from=1-1, to=1-2]
				\arrow["{(g, h, \zeta) \to g h(\zeta)}"', from=1-1, to=2-1]
				\arrow[from=1-2, to=2-2]
				\arrow[from=2-1, to=2-2].
			\end{tikzcd}
		\end{equation}
	\end{enumerate}
\end{lemma}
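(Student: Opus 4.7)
My plan is to deduce all three parts from Proposition \ref{prop:cyclotomic_inertia_stack_BG}, which identifies $I_\mu BG \cong [G(-1)/G]$, together with standard descent for finite \'etale schemes over the integral normal base $S$.

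For part (1), since $S$ is integral normal with fraction field $k$, a finite \'etale $S$-scheme is the same as a finite $\pi_1(S)$-set, so the $\Gamma_k$-action on $G(k^{\sep})$ and on $G(-1)(k^{\sep})$ factors through $\pi_1(S)$ (using that $G$ is tame for the latter). The connected components of $[G(-1)/G]$ over $S$ correspond to orbits of the conjugation $G(k^{\sep})$-action on $\pi_0(G(-1)_{k^{\sep}}) = G(-1)(k^{\sep})/\pi_1(S)$. Since the $\pi_1(S)$- and $G(k^{\sep})$-actions commute, this set of orbits is naturally $\mathcal{C}_G/\pi_1(S)$.

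For part (2), given an orbit $c \in \mathcal{C}_G/\pi_1(S)$, the preimage $\widetilde{\mathcal{C}}_c \subset G(-1)(k^{\sep})$ under $G(-1)(k^{\sep}) \to \mathcal{C}_G/\pi_1(S)$ is stable under both $\pi_1(S)$ and $G(k^{\sep})$ by construction; \'etale descent then produces a $G$-stable closed subscheme $\mathcal{C}_c \subset G(-1)$, and the corresponding sector is $\mathcal{S}_c = [\mathcal{C}_c/G]$. The order of the sector is constant on $\mathcal{C}_c$ because the order of a homomorphism $\mu_n \to G(k^{\sep})$ is preserved by both the $\pi_1(S)$-action (tameness) and conjugation, and equals the common order $n$ of any element of $\mathcal{C}_c(k^{\sep})$ viewed as such a homomorphism.

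For part (3), I would trace the explicit inverse $[G(-1)/G] \to I_\mu BG$ from the proof of Proposition \ref{prop:cyclotomic_inertia_stack_BG} applied to the canonical inclusion $[\mathcal{C}_c/G] \hookrightarrow [G(-1)/G]$. By Lemma \ref{lem:points_quotient_stacks}, this inclusion is classified by the tautological $G$-torsor $\mathcal{C}_c \to [\mathcal{C}_c/G]$ together with its tautological section of $\underline{\Hom}(\mu_n, G_\varphi)$; the inverse construction then produces the induced representable map $(B\mu_n)_{\mathcal{S}_c} \hookrightarrow BG_\varphi \cong BG$. Rewriting this as a map of quotient stacks via the formalism of \S\ref{sec:map_quotient_stack} gives precisely the diagram \eqref{eq:universal_map_sector}. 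The main obstacle will be the bookkeeping here: one must verify that the successive identifications (the inner-twist isomorphism $BG_\varphi \cong BG$, the twisting/untwisting of $\underline{\Hom}(\mu_n,G)$ from \S\ref{sec:cylotomic_inertia}, and the passage from the tautological presentation back to the standard one) compose to give exactly the evaluation $(g,h,\zeta) \mapsto g\,h(\zeta)$ rather than a conjugated variant.
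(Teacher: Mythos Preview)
Your proposal is correct and follows essentially the same approach as the paper: both deduce everything from Proposition~\ref{prop:cyclotomic_inertia_stack_BG} and the Galois correspondence for finite \'etale schemes over the normal base $S$. For part~(3) the paper carries out precisely the bookkeeping you anticipate, making the intermediate identifications explicit via two auxiliary commutative squares (one for $(B\mu_n)_{\mathcal{S}_c} \to BG_{\mathcal{S}_c,\varphi}$ and one for the inner-twist isomorphism $BG_{\mathcal{S}_c,\varphi} \cong BG_{\mathcal{S}_c}$) whose composition yields~\eqref{eq:universal_map_sector}.
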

\begin{proof}
	We will use the identification in Proposition \ref{prop:cyclotomic_inertia_stack_BG} and the Galois correspondence between finite \'etale schemes over $S$ and finite $\pi_1(S)$-sets freely in this proof.
	
	(1) The connected components of $[G(-1)/G]$ are given by $G$-orbits of the connected components of $G(-1)$, which by the Galois correspondence are given by orbits $G(-1)(k^{\sep})/\pi_1(S)$. The set of $G$-orbits is then equal to $\mathcal{C}_G/\pi_1(\mathcal{S})$, which is (1).
	
	(2) The same argument also shows that the pre-image of the connected component $\mathcal{S}_c$ corresponding to $c \in \mathcal{C}_G/\pi_1(\mathcal{S})$ under the map $G(-1) \to [G(-1)/G]$ is exactly $\mathcal{C}_c$, which implies that $\mathcal{S}_c \cong [\mathcal{C}_c/G]$
	
	Note that by construction the map $I_{\mu} BG \to [G(-1)/G]$ sends representable maps $B \mu_n \to BG$ to the closed substack $[G(-1)_n/G]$, where $G(-1)_n$ corresponds to the elements of order $n$ in $G(-1)(k^{\sep}) = G(k^{\sep})$. This implies (2).
	
	(3) We will show that the map induced by the commutative square is the same as the map given by applying the inverse constructed in Proposition \ref{prop:cyclotomic_inertia_stack_BG} to the element of $[G(-1)/G](\mathcal{S}_c)$ given by $[\mathcal{C}_c/G] \subset [G(-1)/G]$.
	
	The torsor corresponding to this element is $\varphi: \mathcal{C}_c \to[\mathcal{C}_c/G]$. The twist $G_{\mathcal{S}_c(-1), \varphi}$ is thus given by $[\mathcal{C}_c \times_S G(-1)/ G]$ where the action is given by $g \cdot(\gamma, \xi) = (g \gamma g^{-1}, g \xi g^{-1})$. The element $\mathcal{S}_c \to G(-1)_{\mathcal{S}_c, \varphi}$ is then given by the diagonal map $[\mathcal{C}_c/G] \subset [\mathcal{C}_c \times_S G(-1)/ G]$.
	
	This diagonal map corresponds to the injective map $(\mu_n)_{\mathcal{S}_c} \to G_{\mathcal{S}_c, \varphi}$ given by 
	$[\mathcal{C}_c \times \mu_n/G] \subset [\mathcal{C}_c \times G/ G]$ induced by $\mathcal{C}_c \times \mu_n \to \mathcal{C}_c \times_S G: (\gamma, \zeta) \to (\gamma, \gamma(\zeta))$.
	
	This induces the map $(B\mu_n)_{\mathcal{S}_c} \to BG_{\mathcal{S}_c, \varphi}$ given by the commutative square
	\begin{equation}\label{eq:universal_map_sector_proof_1}
		\begin{tikzcd}
			{G \times_S \mathcal{C}_c \times \mu_n} & {\mathcal{C}_c} \\
			{G \times_S  \mathcal{C}_c \times_S G} & {\mathcal{C}_c}
			\arrow[from=1-1, to=1-2]
			\arrow["{(g, \gamma, \zeta) \to (g, \gamma ,\gamma(\zeta))}"', from=1-1, to=2-1]
			\arrow[from=1-2, to=2-2]
			\arrow[from=2-1, to=2-2]
		\end{tikzcd}
	\end{equation}
	On the other hand, the isomorphism $BG_{\mathcal{S}_c, \varphi} \cong BG_{\mathcal{S}_c}$ is induced by the commutative diagram 
	\begin{equation}\label{eq:universal_map_sector_proof_2}
		\begin{tikzcd}
			{G \times_S  \mathcal{C}_c \times_S G} & {\mathcal{C}_c} \\
			G  & S
			\arrow[from=1-1, to=1-2]
			\arrow["{(g, \gamma, h) \to g h}"', from=1-1, to=2-1]
			\arrow[from=1-2, to=2-2]
			\arrow[from=2-1, to=2-2]
		\end{tikzcd}
	\end{equation}
	Composing \eqref{eq:universal_map_sector_proof_1} and \eqref{eq:universal_map_sector_proof_2} gives \eqref{eq:universal_map_sector}, which is what we had to show.
\end{proof}

\subsection{Reduction modulo $\pi$}\label{sec:modulo_pi}
Let $\mathcal{O}$ be a DVR with valuation $v$, uniformiser $\pi$, fraction field $K$, and residue field $\F$ which has characteristic $p$ and $\mathcal{X} \to \Spec \mathcal{O}$ a tame proper stack. We can now describe the operation of taking elements $x \in \mathcal{X}(K)$ modulo $\pi$.

Lemma \ref{lem:arithmetic_valuative_criterion} implies that any point $x \in X(K)$ extends uniquely (up to a unique $2$-isomorphism) to a representable map $x: \Spec \mathcal{O}_{\sqrt[n]{v}} \to \mathcal{X}$ for some $n$. Composing with the map $i_{\pi}$ (see \S\ref{sec:def_O_root_n}) defines a representable map $(B \mu_n)_{\F} \to \mathcal{X}$, i.e.~an $\F$-point of $\Hom_{\F, \text{rep}}(B\mu_n, \mathcal{X}_{\F})(\F) \subset I_{\mu} \mathcal{X}(\F)$. This procedure defines a map of groupoids 
\[
\pmod{\pi}: X(K) \to I_{\mu} \mathcal{X}(\F)
\]
which we call the \emph{reduction modulo $\pi$}. 
\begin{remark}
	If $\mathcal{X}$ is a scheme then $I_{\mu} \mathcal{X} = \mathcal{X}$ and the above map is just the usual reduction modulo $\pi$.
	
	The map $\pmod{\pi}$ in general depends on the choice of uniformiser $\pi$, unlike the case of schemes.
\end{remark}
\subsection{The case of $BG$}
Let $\mathcal{G}$ be a finite \'etale group scheme over $\mathcal{O}$ of order coprime to $p$ and let $G$ be its generic fibre. We can give an explicit description of the reduction modulo $\pi$ map on $BG$.

Let us first describe the image $I_{\mu}BG(\F) \cong [G(-1)/G](\F)$ explicitly.
\begin{lemma}\label{lem:groupoid_twisted_inertia_stack}
	Let $G$ be a finite \'etale tame group scheme over $\F$. The groupoid $[G(-1)/G](\F)$ is equivalent to the following groupoid.
	
	The objects are pairs $(\gamma, \varphi)$ where $\gamma \in G_{\varphi}(-1)(\F)$ and $\varphi: \Gamma_{\F} \to G(\F^{\sep})$ is a cocycle. For each $g \in G$ there is a morphism $(\gamma, \varphi) \to (g \gamma g^{-1}, g \varphi g^{-1})$ and we consider the obvious composition law.
\end{lemma}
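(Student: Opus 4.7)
The plan is to apply Lemma \ref{lem:points_quotient_stacks} to $X = G(-1)$ with the left conjugation action of $G$ over $S = \Spec \F$, and then unwind the resulting data using the cocycle description of $G$-torsors from Lemma \ref{lem:BG(k)}.

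By Lemma \ref{lem:points_quotient_stacks}, the groupoid $[G(-1)/G](\F)$ is equivalent to the groupoid whose objects are pairs $(a,x)$ where $a \colon P \to \Spec \F$ is a right $G$-torsor and $x \in G(-1)_a(\F)$, with a morphism $(a,x) \to (a',x')$ being a map of $G$-torsors $P \to P'$ whose induced map $G(-1)_a \to G(-1)_{a'}$ sends $x$ to $x'$. Next, Lemma \ref{lem:BG(k)}(3) identifies the groupoid of right $G$-torsors over $\F$ with the groupoid of cocycles $\varphi \colon \Gamma_\F \to G(\F^{\sep})$, where the morphisms are $\varphi \xrightarrow{g} g\varphi g^{-1}$ for $g \in G(\F^{\sep})$. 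Since $G(-1) = \Hom(\widehat{\Z}(1), G)$ and twisting commutes with $\Hom$-schemes, functoriality of the twisting construction identifies the twist of $G(-1)$ corresponding to $\varphi$ with $\Hom(\widehat{\Z}(1), G_\varphi) = G_\varphi(-1)$; on $\F$-points this gives $G(-1)_a(\F) = G_\varphi(-1)(\F)$, producing the claimed objects.

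For the morphisms I would verify directly that the induced isomorphism of twists $G_\varphi \to G_{g\varphi g^{-1}}$ is the map $h \mapsto ghg^{-1}$. This amounts to a $\Gamma_\F$-equivariance check for the twisted actions: using $(g\varphi g^{-1})(\sigma) = g \varphi(\sigma) \sigma(g)^{-1}$, the action of $\sigma$ on $ghg^{-1}$ in $G_{g\varphi g^{-1}}$ simplifies to $g\,\varphi(\sigma) \sigma(h) \varphi(\sigma)^{-1} g^{-1}$, which is the image of the action $\varphi(\sigma) \sigma(h) \varphi(\sigma)^{-1}$ of $\sigma$ on $h$ in $G_\varphi$. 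Restricting to $\Hom(\widehat{\Z}(1), -)$ then gives the morphism $\gamma \mapsto g\gamma g^{-1}$ on $G_\varphi(-1)(\F)$, matching the claim. The main (though minor) obstacle is purely bookkeeping: keeping track of twisted Galois actions consistently so that the induced map on twists in Lemma \ref{lem:points_quotient_stacks} is seen to be pointwise conjugation; once this is in place, both the objects and morphisms of the two groupoids agree on the nose.
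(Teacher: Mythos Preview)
Your proposal is correct and takes essentially the same approach as the paper: the paper's proof is the single sentence ``This follows immediately from Lemma~\ref{lem:points_quotient_stacks}'', and what you have written is exactly the unwinding of that citation. Your additional verification of the morphism description is more detailed than anything the paper records, but it is the natural way to make the one-line proof explicit.
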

\begin{proof}
	This follows immediately from Lemma \ref{lem:points_quotient_stacks}.
\end{proof}
Let us now recall the structure of $\Gamma_K$, cf.~\cite[Tag 09E3]{stacks-project}. Let $\Gamma_k^{\text{tame}}$ be Galois group of the maximal tamely ramified extension of $K$. The kernel of $\Gamma_K \to \Gamma_k^{\text{tame}}$ is a pro-$p$-group.
Let $I_K^{\text{tame}} \subset \Gamma_k^{\text{tame}}$ be the tame inertia subgroup. We have a short exact sequence
\[
1 \to I_K^{\text{tame}} \to \Gamma_K^{\text{tame}} \to \Gamma_{\F} \to 1.
\]
Let $\pi$ be a uniformiser. All tamely ramified extensions of the field $\bigcup_{p \nmid n} K[\sqrt[n]{\pi}]$ are unramified so it defines a splitting $s_{\pi}: \Gamma_{\F} \cong \Gamma_{\bigcup_{p \nmid n} K[\sqrt[n]{\pi}]}^{\text{tame}} \subset \Gamma_K^{\text{tame}}$.
 
Moreover, by \cite[Tag 09EE]{stacks-project} there is a canonical isomorphism 
\begin{equation}
I_k^{\text{tame}} \cong \widehat{\Z}(1)(p') := \varprojlim\limits_{n, p \nmid n} \mu_n. \label{eqn:tame_inertia}
\end{equation}

\begin{lemma}\label{lem:residue_map_BG}
	Let $G$ be a finite \'etale group scheme over $\mathcal{O}$ of order coprime to the characteristic $p$ of $\F$. Let $\varphi: \Gamma_k \to G(k^{\sep})$ be a cocycle.
	
	The fact that $G$ is finite \'etale over $\mathcal{O}$ implies that the action of $\Gamma_{K}$ on $G(k^{\sep})$ factors through $\Gamma_{\F}$. The restriction to $I_k^{\emph{tame}} \cong \hat{\Z}(1)(p')$ thus defines an element $I_{\varphi} \in \Hom(\widehat{\Z}(1)(p'), G(k^{\sep})) = G(-1)(k^{\sep})$. 
	
	We then have that $\varphi \pmod{\pi} = (I_{\varphi}, \varphi \circ s_{\pi})$, where we use the description of $I_{\mu} BG(\F)$ from Lemma \ref{lem:groupoid_twisted_inertia_stack}.
\end{lemma}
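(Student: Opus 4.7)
The plan is to exhibit the unique lift $\tilde{\varphi}: \Spec \mathcal{O}_{\sqrt[n]{v}} \to BG$ promised by Lemma \ref{lem:arithmetic_valuative_criterion}, and then compute $\tilde{\varphi} \circ i_\pi$ directly using the presentation $\Spec \mathcal{O}_{\sqrt[n]{v}} \cong [\Spec \mathcal{O}[\sqrt[n]{\pi}]/\mu_n]$ from \S\ref{sec:def_O_root_n}. Under this presentation, a representable map to $BG$ is the same as a $\mu_n$-equivariant $G$-torsor on $\Spec \mathcal{O}[\sqrt[n]{\pi}]$ whose $\mu_n$-action is faithful on automorphism groups, and $i_\pi$ is the closed immersion at $s = 0$ (with trivial $\mu_n$-action on $\Spec \F$).

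First I will take $n$ to be the order of $I_\varphi$, i.e.~the smallest integer such that $I_\varphi$ factors through $\mu_n \subset \widehat{\Z}(1)(p')$. The restriction of $\varphi$ to $\Gamma_{K[\sqrt[n]{\pi}]}$ is then tamely unramified, since the tame inertia of $K[\sqrt[n]{\pi}]$ corresponds (via \eqref{eqn:tame_inertia}) to the subgroup $n \cdot \widehat{\Z}(1)(p') \subseteq \ker I_\varphi$. Therefore the $G$-torsor over $\Spec K[\sqrt[n]{\pi}]$ extends uniquely to an unramified $G$-torsor $\mathcal{T}$ on $\Spec \mathcal{O}[\sqrt[n]{\pi}]$, whose classifying cocycle $\Gamma_\F \to G(\F^{\sep})$ is obtained from $\varphi$ by restricting to $s_\pi(\Gamma_\F) \subseteq \Gamma_K^{\text{tame}}$, giving exactly $\varphi \circ s_\pi$. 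A $\mu_n$-equivariant structure on $\mathcal{T}$ covering the $\mu_n$-action on the base is supplied by $I_\varphi$, viewed as an injective homomorphism $\mu_n \to G(k^{\sep})$ acting on $\mathcal{T}$ by (right) multiplication; minimality of $n$ ensures faithfulness on automorphisms. This assembles into a representable $\tilde{\varphi}: \Spec \mathcal{O}_{\sqrt[n]{v}} \to BG$ whose generic fibre is $\varphi$, so by uniqueness it is the lift of Lemma \ref{lem:arithmetic_valuative_criterion}.

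Composing with $i_\pi$ then amounts to restricting $\mathcal{T}$ to $\Spec \F$ and keeping track of its induced $\mu_n$-action. The underlying $G$-torsor on $\Spec \F$ is classified by the same cocycle $\varphi \circ s_\pi$ (since the classifying map only depends on the residue field), while the $\mu_n$-equivariant structure is $I_\varphi$ by construction. Under the identifications of Proposition \ref{prop:cyclotomic_inertia_stack_BG} and Lemma \ref{lem:groupoid_twisted_inertia_stack}, this is precisely the pair $(I_\varphi, \varphi \circ s_\pi)$.

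The main obstacle is verifying that $I_\varphi$ in fact defines a $\Gamma_\F$-equivariant morphism $\mu_n \to G_{\varphi \circ s_\pi}$; this is what makes the $\mu_n$-equivariant structure on $\mathcal{T}$ descend to $\mathcal{O}_{\sqrt[n]{v}}$, and what is required to land in the groupoid of Lemma \ref{lem:groupoid_twisted_inertia_stack}. Explicitly, we must verify the identity
\[
(\varphi \circ s_\pi)(\sigma) \cdot \sigma(I_\varphi(\zeta)) \cdot (\varphi \circ s_\pi)(\sigma)^{-1} = I_\varphi(\sigma(\zeta))
\]
in $G(k^{\sep})$ for all $\sigma \in \Gamma_\F$ and $\zeta \in \mu_n(\F^{\sep})$. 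This follows from the cocycle relation $\varphi(g h) = \varphi(g) \cdot g(\varphi(h))$ applied to the conjugation identity $s_\pi(\sigma) \cdot \zeta \cdot s_\pi(\sigma)^{-1} = \sigma(\zeta)$ in the semidirect product $\Gamma_K^{\text{tame}} = I_K^{\text{tame}} \rtimes_{s_\pi} \Gamma_\F$, using that the isomorphism \eqref{eqn:tame_inertia} is $\Gamma_\F$-equivariant.
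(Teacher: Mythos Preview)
Your argument is correct and follows essentially the same route as the paper: both exploit the presentation $\Spec \mathcal{O}_{\sqrt[n]{v}} \cong [\Spec \mathcal{O}[\sqrt[n]{\pi}]/\mu_n]$, with the paper identifying $\pi_1(\Spec \mathcal{O}_{\sqrt[n]{v}}) \cong \Gal(K^{\mathrm{unr}}[\sqrt[n]{\pi}]/K)$ and reading the two components of $\varphi \pmod{\pi}$ off a commutative diagram of torsors, while you instead build the lift $\tilde\varphi$ by hand as a $\mu_n$-equivariant $G$-torsor on the atlas and then restrict to the special fibre. One small slip: for a right $G$-torsor the $\mu_n$-equivariant structure must act by \emph{left} multiplication via $I_\varphi$ (so as to commute with the right $G$-action), not right multiplication; your final equivariance check is unaffected by this and is exactly the content the paper's diagram encodes.
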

\begin{proof}
	Let $f: \Spec \O_{\sqrt[n]{v}} \to BG$ be a representable map for some $n \in \N$. Since $G$ has order coprime to $p$ we must have $p \nmid n$.
	
	Let $\mathcal{O}^{\text{unr}}$ be the maximal unramified extension of $\mathcal{O}$ and $K^{\text{unr}}$ its fraction field. The map $\Spec \mathcal{O}^{\text{unr}}[\sqrt[n]{\pi}] \to \Spec \mathcal{O}_{\sqrt[n]{v}}$ is a limit of finite \'etale maps and $\O^{\text{unr}}[\sqrt[n]{\pi}]$ is strictly henselian. This implies that
	 \[
	 \pi_1(\Spec \O_{\sqrt[n]{v}}) \cong \Aut(\Spec \O^{\text{unr}}[\sqrt[n]{\pi}]/ \Spec \O_{\sqrt[n]{v}}) = \Gal(K^{\text{unr}}[\sqrt[n]{\pi}]/K).
	 \]
	 Let $\varphi$ be the cocycle $\pi_1(\Spec \mathcal{O}_{\sqrt[n]{v}}) \to G(\mathcal{O}^{\text{unr}})$ representing $f$. The composition 
	 \[
	 \Gamma_K \to \Gal(K^{\text{unr}}[\sqrt[n]{\pi}]/K) \cong \pi_1(\Spec \mathcal{O}_{\sqrt[n]{v}}) \xrightarrow{\varphi} G(\mathcal{O}^{\text{unr}}) = G(k^{\sep})
	 \]
	 is the cocycle representing the generic fibre $f_K$ of $f$.
	 
	 Consider the following diagram
	 \begin{equation}\label{eq:torsor_special_fibre_diagram}
	 	\begin{tikzcd}
	 		{\Spec \F} & {\Spec \O[\sqrt[n]{\pi}]} & {\Spec \O[\sqrt[n]{\pi}]} \\
	 		{(B\mu_n)_{\F}} & {[\Spec \O[\sqrt[n]{\pi}]/\mu_n]} & {\Spec \O_{\sqrt[n]{v}}}
	 		\arrow[from=1-1, to=1-2]
	 		\arrow["e", from=1-1, to=2-1]
	 		\arrow["{=}"{description}, draw=none, from=1-2, to=1-3]
	 		\arrow[from=1-2, to=2-2]
	 		\arrow[from=1-3, to=2-3]
	 		\arrow[from=2-1, to=2-2].
	 		\arrow["\cong"{description}, draw=none, from=2-2, to=2-3]
	 	\end{tikzcd}
	 \end{equation}
	 We claim that the horizontal maps in this diagram are maps of $\mu_n$-torsors, and particular this means that this diagram commutes. Indeed, this is clear for both the right and left squares.
	 
	 Consider $(g, \overline{\varphi}) := \varphi \pmod{\pi}$. The cocycle  $\overline{\varphi} \in BG(\Spec \F)$ is given by the composition of 
	 \[
	 \Spec \F \xrightarrow{e} (B\mu_n)_{\F} \subset [\Spec \O[\sqrt[n]{\pi}]/\mu_n] \cong \Spec \O_{\sqrt[n]{v}} \xrightarrow{f} BG.
	 \] 
	 The commutativity of the diagram implies that the cocycle $\overline{\varphi}$ is given by the composition
	 \[
	 \Gamma_{\F} \cong \Aut(\O^{\text{unr}}[\sqrt[n]{\pi}]/ \O[\sqrt[n]{\pi}]) \subset \Gal(K^{\text{unr}}[\sqrt[n]{\pi}]/K) \xrightarrow{\varphi} G(k^{\sep}).
	 \]
	 This is equal to $\varphi \circ s_{\pi}$ by definition.
	 
	 The element $g \in G(-1)_{\overline{\varphi}}$ is given by the induced map $\mu_n \to \underline{\Aut}(\overline{\varphi}) \cong G_{\overline{\varphi}}$ on automorphism sheaves. To compute this we may base change diagram \eqref{eq:torsor_special_fibre_diagram} to $\mathcal{O}^{\text{unr}}$. In this case we see by the diagram that $g$ comes from the $\mu_n$-torsor $\Spec \O^{\text{unr}}[\sqrt[n]{\pi}] \to \Spec \O^{\text{unr}}_{\sqrt[n]{v}}$. This $\mu_n$-torsor at the generic point is $K^{\text{unr}}[\sqrt[n]{\pi}]/K^{\text{unr}}$. The map $g$ is thus induced by the composition 
	 \[
	 \mu_n = \Gal(K^{\text{unr}}[\sqrt[n]{\pi}]/K^{\text{unr}}) \cong  \Gal(K^{\text{unr}}[\sqrt[n]{\pi}]/K)  \xrightarrow{\varphi} G(k^{\sep}).
	 \]
	 This is equal to $I_{\varphi}$ by definition.
\end{proof}

\subsection{Stacky Hensel's Lemma}
Let $\mathcal{X}$ be a smooth proper stack over a henselian DVR $\O$. Then the usual version of Hensel's lemma says that $\mathcal{X}(\O) \to \mathcal{X}(\F)$ is surjective (see e.g.~\cite[Lem.~4.2]{LS23} for the case where $\O$ is complete). If $\mathcal{X}$ is a scheme then this automatically applies to $\mathcal{X}(K) = \mathcal{X}(\O)$. However in general we have $\mathcal{X}(K) \neq \mathcal{X}(\O)$ due to the failure of the usual valuative criterion for properness for stacks. The following can be viewed as generalisation of Hensel's lemma for proper $0$-dimensional stacks in light of this failure. It says that $\mathcal{X}(K)$ can still be related to the $\F$-points of a stack, not $\mathcal{X}$ itself, but rather the cyclotomic inertia stack. It shows that the cyclotomic inertia stacks plays the role of a smooth proper compactification of $\mathcal{X}$ (even though $\mathcal{X}$ itself is already smooth and proper!). This result highlights the arithmetic significance of the cyclotomic inertia stack and will be used in the proof of our generalisation of Bhargava's mass formula (Corollary~\ref{cor:mass_formula}). The following is a slightly more general version of Theorem \ref{thm:Hensel_intro}.

\begin{theorem} \label{thm:Hensel}
	Let $\mathcal{X} \to \Spec \mathcal{O}$ be proper, tame, \'etale DM stack. Then 
	the map $\pmod{\pi}: \mathcal{X}(K) \to I_{\mu} \mathcal{X}(\F)$ is an equivalence of groupoids.
\end{theorem}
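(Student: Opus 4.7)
The strategy is to combine the arithmetic valuative criterion of properness (Lemma~\ref{lem:arithmetic_valuative_criterion}) with a Hensel-type equivalence for proper \'etale DM stacks over a henselian local scheme.

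First I would decompose both sides by the integer $n$. By the arithmetic valuative criterion every $K$-point of $\mathcal{X}$ admits a unique $n$ and a representable extension to $\Spec \O_{\sqrt[n]{v}}$, unique up to unique $2$-isomorphism, which yields an equivalence of groupoids $\mathcal{X}(K) \simeq \coprod_{n \geq 1} \Hom^{\text{rep}}(\Spec \O_{\sqrt[n]{v}}, \mathcal{X})$. By definition $I_\mu \mathcal{X}(\F) = \coprod_{n \geq 1} \Hom^{\text{rep}}((B\mu_n)_\F, \mathcal{X}_\F)$, and the reduction map $\pmod{\pi}$, being pullback along $i_\pi$, respects this indexing. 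Hence it suffices to prove, for each fixed $n$, that pullback along $i_\pi$ gives an equivalence
\[
\Hom^{\text{rep}}(\Spec \O_{\sqrt[n]{v}}, \mathcal{X}) \xrightarrow{\ \sim\ } \Hom^{\text{rep}}((B\mu_n)_\F, \mathcal{X}_\F).
\]
Setting $\mathcal{Y} := \mathcal{X} \times_{\Spec \O} \Spec \O_{\sqrt[n]{v}}$, a proper \'etale DM stack over $\Spec \O_{\sqrt[n]{v}}$, these two mapping groupoids become the groupoids of sections of $\mathcal{Y}$ over $\Spec \O_{\sqrt[n]{v}}$ and over its closed substack $(B\mu_n)_\F$ respectively. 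Pulling back along the $\mu_n$-torsor $\Spec A \to \Spec \O_{\sqrt[n]{v}}$ with $A = \O[s]/(s^n-\pi)$, these further translate into $\mu_n$-equivariant sections of the proper \'etale DM stack $\mathcal{Y}_A \to \Spec A$ and of its special fibre. Since $A$ is a henselian local ring with residue field $\F$ (being a finite extension of the henselian $\O$), a Hensel-type equivalence $\mathcal{Y}_A(A) \simeq \mathcal{Y}_A(\F)$, compatible with the $\mu_n$-action, closes this step after passing to $\mu_n$-equivariant objects.

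Second, I would verify that representability is preserved under $i_\pi$-pullback. The stack $\Spec \O_{\sqrt[n]{v}}$ has trivial stabilisers at its generic point and stabiliser $\mu_n$ at the closed point, so a morphism $f: \Spec \O_{\sqrt[n]{v}} \to \mathcal{X}$ is representable if and only if the induced map of stabilisers at the closed point $\mu_n \to \AutU(f \circ i_\pi)$ is a monomorphism. But this is exactly the representability condition on $f \circ i_\pi : (B\mu_n)_\F \to \mathcal{X}_\F$, so representability transports across the equivalence in both directions.

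The main technical hurdle is the Hensel step: showing that for a proper \'etale DM stack $\mathcal{Z}$ over a henselian local ring $A$ with residue field $\F$ one has an equivalence $\mathcal{Z}(A) \simeq \mathcal{Z}_\F(\F)$. The cleanest route passes through the coarse moduli space of $\mathcal{Z}$: since $\mathcal{Z}$ is proper and \'etale, this coarse space is a finite \'etale scheme over $\Spec A$, hence a finite disjoint union of spectra of henselian local rings, over each of which $\mathcal{Z}$ is a residual gerbe $BG_i$ for a finite \'etale $G_i$; the desired equivalence then reduces to the classical invariance $\H^1(A, G_i) \cong \H^1(\F, G_i)$ under specialisation to the residue field for henselian pairs. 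Alternatively, one may choose a finite \'etale scheme atlas for $\mathcal{Z}$ and invoke topological invariance of the \'etale site for the henselian pair $(A, (s))$ at the level of the defining groupoid.
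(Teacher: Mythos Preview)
Your proof is correct and takes a genuinely different route from the paper's. The paper argues by first reducing to the case where $\O$ is strictly henselian; there (after passing to a connected component) one has $\mathcal{X}\cong BG$ for a finite constant tame group $G$, and both sides are identified explicitly with the action groupoid of $G$ on $G(-1)$ using Proposition~\ref{prop:cyclotomic_inertia_stack_BG} for $I_\mu BG(\F)$, the identification of the tame quotient of $\Gamma_K$ with $\widehat{\Z}(1)(p')$ for $BG(K)$, and Lemma~\ref{lem:residue_map_BG} to see that $\pmod{\pi}$ realises this identification. The general case is then obtained by Galois descent from the strict henselisation. Your argument, by contrast, never writes $\mathcal{X}$ as $BG$: after decomposing by $n$ via the arithmetic valuative criterion, you unwind the root stack as $[\Spec A/\mu_n]$ with $A$ henselian local, reduce to a Hensel equivalence $\mathcal{X}_A(A)\simeq\mathcal{X}_A(\F)$ for proper \'etale tame DM stacks, and then pass to $\mu_n$-equivariant objects. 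This is more structural and makes the role of the valuative criterion transparent, while the paper's approach cashes in the explicit computations already set up for $BG$. One point to tighten in your first sketch of the Hensel step: the \'etale gerbe over a henselian local component of the coarse space is not neutral a priori; you should observe that it is neutral over the ring iff it is neutral over the residue field (smooth lifting gives one direction), so both section groupoids are simultaneously empty or simultaneously identified with $\H^1$ of the same finite \'etale group.
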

\begin{proof}
	We may assume without loss of generality that $\mathcal{X}$ is connected. If $\mathcal{O}$ is strictly henselian then $\mathcal{X} \cong BG$ where $G$ a finite constant group. The stack $\mathcal{X}$ is tame so the order of $G$ has to be coprime to the characteristic $p$ of $\F$.
	
	By \eqref{eqn:tame_inertia}, the prime to $p$ part of $\Gamma_F$ is isomorphic to $\widehat{\Z}(1)(p')$ so $BG(K)$ is the groupoid of maps $\varphi: \widehat{\Z}(1) \to G$ up to conjugacy. On the other hand, by Proposition~\ref{prop:cyclotomic_inertia_stack_BG} the stack $I_{\mu} BG(\F)$ is equal to the action groupoid $[G(-1)/G]$, where $G$ acts on $G(-1)$ by conjugation, since $\F$ is algebraically closed. The map $\pmod{\pi}$ is given by the identification $\Hom(\widehat{\Z}(1), G) = G(-1)$ by Lemma \ref{lem:residue_map_BG} and this is clearly an equivalence of groupoids.
	
	In general let $\mathcal{O}^{\text{sh}}$ be the strict henselization with fraction field $\F^{\sep}$ and fraction field $K^{\text{sh}}$. The map of groupoids $\mathcal{X}(K^{\text{sh}}) \to I_{\mu} \mathcal{X}(\F^{\sep})$ is an equivalence and preserves the $\Gamma_{\F} = \Gal(K^{\text{sh}}/K)$-action, so the map $\mathcal{X}(K) \to I_{\mu} \mathcal{X}(\F)$ is an equivalence of groupoids by descent.
\end{proof}
\begin{remark}
	If $G$ is constant and $K$ is a non-archimedean local field with residue field $\F_q$, then Theorem \ref{thm:Hensel} recovers the standard description of $BG(K)$ used in e.g.~\cite[Proof of Prop.~5.3]{Ked07}. Indeed, after making explicit the Galois action on the inner twist in Lemma \ref{lem:groupoid_twisted_inertia_stack} we see that $[G(-1)/G](\F_q)$ is the groupoid of pairs $(\gamma, g)$ with $\gamma^q = g^{-1} \gamma g$ up to conjugation. Recall that $\Gamma_K^{\text{tame}}$ is topologically generated by two elements $x, y$ subject to the relation $x^q = y^{-1} x y$, where $y$ is a lift of Frobenius and $x$ generates the inertia group. Then it follows from Lemma \ref{lem:residue_map_BG} that the map $\pmod{\pi}: BG(K) \to [G(-1)/G](\F_q)$ sends a morphism $\varphi: \Gamma_K \to G$ to the pair $(\varphi(x), \varphi(y))$.
\end{remark}

\section{Unramified Brauer groups of stacks} \label{sec:Br}
The \emph{(cohomological) Brauer group of an algebraic stack} $\mathcal{X}$ is $\Br \mathcal{X} := \H^2(\mathcal{X}, \Gm)$. The aim of this section is to define the unramified Brauer group $\Br_{\text{un}} \mathcal{X} \subset \Br \mathcal{X}$. Even if $\mathcal{X}$ is smooth and proper, this may be different to the Brauer group of $\mathcal{X}$. It is the unramified Brauer group of $\mathcal{X}$ which plays a role in the Brauer--Manin obstruction to weak approximation and the Hasse principle. A version of this appears in the leading constant for Malle's conjecture.

\subsection{Purity for stacks}
We will first prove a version of Grothendieck purity \cite[Thm.~3.7.1]{Col21} for stacks, for which we were unable to find a reference.

\begin{lemma}\label{lem:purity_codimenion_2}
	Let $\mathcal{X}$ be a regular algebraic stack.
	Let $\mathcal{Z} \subset \mathcal{X}$ be a closed substack of codimension $\geq 2$. Then $\H^q(\mathcal{X}, \Gm) = \H^q(\mathcal{X} \setminus \mathcal{Z}, \Gm)$ for $q = 0,1,2$.
\end{lemma}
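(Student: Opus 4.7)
The plan is to reduce to the classical Grothendieck purity theorem for schemes via a smooth atlas and descent. The local cohomology exact sequence on the lisse-\'etale (or smooth) site of $\mathcal{X}$ gives
\[
\cdots \to \H^{q-1}(\mathcal{X} \setminus \mathcal{Z}, \Gm) \to \H^{q}_{\mathcal{Z}}(\mathcal{X}, \Gm) \to \H^{q}(\mathcal{X}, \Gm) \to \H^{q}(\mathcal{X} \setminus \mathcal{Z}, \Gm) \to \cdots
\]
so it suffices to show that $\H^{q}_{\mathcal{Z}}(\mathcal{X}, \Gm) = 0$ for $q \leq 2$.

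To do this I would pick a smooth surjective atlas $\pi: U \to \mathcal{X}$ with $U$ a regular scheme (possible since $\mathcal{X}$ is regular). By flatness of $\pi$ the pullback $Z := U \times_{\mathcal{X}} \mathcal{Z}$ is closed of codimension $\geq 2$ in $U$, and the same holds for the pullback to each term $U^{(n)} := U \times_{\mathcal{X}} \cdots \times_{\mathcal{X}} U$ of the \v{C}ech nerve (these are all regular since $\pi$ is smooth). The \v{C}ech-to-cohomology spectral sequence for the cover $\pi$, applied to the sheaf $\underline{\Gm}$ supported on $\mathcal{Z}$, reads
\[
E_1^{p,q} = \H^{q}_{Z \times_U U^{(p)}}(U^{(p)}, \Gm) \;\Longrightarrow\; \H^{p+q}_{\mathcal{Z}}(\mathcal{X}, \Gm).
\]
Thus it is enough to show the $E_1$-terms vanish for total degree $\leq 2$.

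The vanishing $\H^{q}_{W}(V, \Gm) = 0$ for $q \leq 2$ whenever $W$ is closed of codimension $\geq 2$ in a regular scheme $V$ is exactly the content of Grothendieck's cohomological purity theorem for $\Gm$: the three cases correspond to Hartogs' extension for units ($q=0$), extension of line bundles across codimension $\geq 2$ ($q=1$), and Grothendieck purity for the Brauer group ($q=2$). Applying this on each $U^{(p)}$ kills all $E_1^{p,q}$ with $p+q \leq 2$, and the spectral sequence delivers the required vanishing on $\mathcal{X}$.

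The main thing to be careful about is the spectral sequence set-up: one must check that local cohomology and $\pi$-pullback are compatible on the relevant site, and that the codimension hypothesis is preserved under pulling back along a smooth morphism (which is automatic, as smooth morphisms are flat and equidimensional on fibres). Once the descent spectral sequence is in place, the conclusion follows formally from the scheme-theoretic purity theorem.
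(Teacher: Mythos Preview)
Your approach via local cohomology and descent is sound in spirit, but there is an off-by-one error that leaves a real gap. From the long exact sequence of the pair, vanishing of $\H^q_{\mathcal{Z}}(\mathcal{X},\Gm)$ for $q\leq 2$ gives the isomorphism only for $q=0,1$ and merely \emph{injectivity} of $\H^2(\mathcal{X},\Gm)\to\H^2(\mathcal{X}\setminus\mathcal{Z},\Gm)$; you also need $\H^3_{\mathcal{Z}}=0$. Your labelling of the three cases is accordingly shifted: $\H^0_W=0$ is trivial, Hartogs gives $\H^1_W=0$, extension of line bundles gives $\H^2_W=0$, and it is Gabber--Grothendieck purity for the Brauer group that gives $\H^3_W=0$. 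So as written you have not yet invoked the hard input. The repair is straightforward (run the same descent argument through $q\leq 3$, using that $\mathcal{H}^q_W(\Gm)=0$ for $q\leq 3$ on regular schemes with $\mathrm{codim}\,W\geq 2$), but it should be made explicit.

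The paper takes a different and somewhat cleaner route: instead of descending global cohomology-with-supports, it uses the Leray spectral sequence for the open immersion $i:\mathcal{X}\setminus\mathcal{Z}\hookrightarrow\mathcal{X}$ and shows $i_*\Gm=\Gm$ and $\Res^1 i_*\Gm=\Res^2 i_*\Gm=0$. These are statements about \emph{sheaves} on $\mathcal{X}$, hence smooth-local, so one reduces directly to the spectrum of a strictly henselian ring and applies scheme-theoretic purity there---no \v{C}ech nerve, no base-change compatibility for local cohomology to verify. Your route works once patched, but the paper's avoids the bookkeeping you rightly flag as needing care.
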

\begin{proof}
	Let $i: \mathcal{X} \setminus \mathcal{Z} \to \mathcal{X}$ be the inclusion. We claim that $i_* \Gm = \Gm, \Res^1i_* \Gm = \Res^2 i_* \Gm = 0$. Indeed, this is local on $\mathcal{X}$ so we may assume that $\mathcal{X}$ is the spectrum of a strictly henselian ring where this follows from Grothendieck purity \cite[Thm.~3.7.6]{Col21}. The first part then follows from the Leray spectral sequence.
\end{proof}

\begin{proposition}\label{prop:Grothendieck_purity}
	Let $\mathcal{X}$ be a regular algebraic stack.
	Let $j: \mathcal{D} \to \mathcal{X}$ be a smooth divisor and let $i: \mathcal{X} \setminus  \mathcal{D} \to \mathcal{X}$ be the inclusion of the complement. Consider the map $i_*\Gm \to j_* \Z$ of sheaves on $\mathcal{X}$ which is smooth locally on $f: U \to \mathcal{X}$ given by sending $t \in \Gm(U \setminus f^{-1}(\mathcal{D}))$ to its valuation at the smooth divisor $f^{-1}(\mathcal{D}) \subset U$. We then have the following
	\begin{enumerate}
		\item The sequence of \'etale sheaves 
		\begin{equation}\label{eq:residue_sheaves}
			0 \to \Gm \to i_* \Gm \to j_* \Z \to 0
		\end{equation}
		is exact.
		\item We have $\Res^1 i_* \Gm = 0$. Moreover, the sheaf $\Res^2 i_* \Gm$ is torsion, supported on $\mathcal{D}$ and if a prime $\ell$ is invertible on $\mathcal{D}$ then $\Res^2 i_* \Gm[\ell^{\infty}] = 0$.
		\item The sequence \eqref{eq:residue_sheaves} induces the following long exact sequence on cohomology
		\[
		0 \to \Br \mathcal{X} \to \ker(\Br (\mathcal{X} \setminus \mathcal{D})) \to \H^0(\mathcal{X}, \Res^2i_* \Gm)) \to \H^2(\mathcal{D}, \Z) = \H^1(\mathcal{D}, \Q/\Z).
		\]
		\item Let $\ell$ be a prime invertible on $\mathcal{D}$. Then the $\ell$-primary part of this exact sequence is 
		\begin{equation} \label{eqn:Br_residue}
		0 \to \Br \mathcal{X}[\ell^{\infty}] \to \Br (\mathcal{X} \setminus \mathcal{D}))[\ell^{\infty}] \to \H^1(\mathcal{D}, \Q_{\ell}/\Z_{\ell}).
		\end{equation}
	\end{enumerate} 
\end{proposition}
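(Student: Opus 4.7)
The plan is to reduce all four parts to the classical Grothendieck purity theorem for regular schemes (\cite[Thm.~3.7.1]{Col21}) via smooth descent from an atlas of $\mathcal{X}$. Since $\mathcal{X}$ is an algebraic stack, it admits a smooth surjective atlas $u : U \to \mathcal{X}$ from a regular scheme $U$, and the smooth divisor $\mathcal{D}$ pulls back to a smooth Cartier divisor in $U$. Exactness of sheaf sequences and vanishing of stalks are local properties which can be checked after pulling back along $u$, using flat base change for $i_*$ (open immersion) and for $j_*$ (closed immersion). The relevant cohomology for $\Gm$ is fppf, but for the smooth commutative group sheaf $\Gm$ this agrees with \'etale cohomology in the degrees we need.

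For parts (1) and (2), this reduction is immediate. Part (1) becomes the classical valuation exact sequence for a regular scheme with a smooth divisor, where surjectivity on the right follows from the existence of local uniformizers. For (2), the vanishing $\Res^1 i_* \Gm = 0$ is Grothendieck purity itself. The sheaf $\Res^2 i_* \Gm$ vanishes on $\mathcal{X} \setminus \mathcal{D}$ since $i$ is an open immersion, while its stalk at a geometric point of $\mathcal{D}$ identifies with $\Br K$, where $K$ is the fraction field of the strict Henselization of the local ring at that point. This is torsion, and for $\ell$ invertible on the residue field the $\ell$-primary part vanishes, because the absolute Galois group of such a $K$ is an extension of the tame inertia by a wild pro-$p$ group (with $p$ the residue characteristic), and the prime-to-$p$ tame quotient $\widehat{\Z}^{(p')}(1)$ has $\ell$-cohomological dimension $1$; thus $\H^2(K, \mu_{\ell^n}) = 0$ for all $n$ and $\Br(K)[\ell^\infty] = 0$.

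For part (3), I would splice two long exact sequences. First, applying $\H^*(\mathcal{X}, -)$ to the sequence of (1), together with $\H^p(\mathcal{X}, j_* \Z) = \H^p(\mathcal{D}, \Z)$ (since $j_*$ is exact for closed immersions) and the vanishing $\H^1(\mathcal{D}, \Z) = \Hom_{\mathrm{cont}}(\pi_1(\mathcal{D}), \Z) = 0$, yields an injection $\Br \mathcal{X} \hookrightarrow \H^2(\mathcal{X}, i_* \Gm)$ whose image lies in the kernel of the map to $\H^2(\mathcal{D}, \Z)$. Second, the Leray spectral sequence $\H^p(\mathcal{X}, \Res^q i_* \Gm) \Rightarrow \H^{p+q}(\mathcal{X} \setminus \mathcal{D}, \Gm)$, together with $\Res^1 i_* \Gm = 0$ from (2), identifies $\H^2(\mathcal{X}, i_* \Gm)$ with the kernel of $\Br(\mathcal{X} \setminus \mathcal{D}) \to \H^0(\mathcal{X}, \Res^2 i_* \Gm)$. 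Splicing these gives the claimed four-term sequence; the identification $\H^2(\mathcal{D}, \Z) = \H^1(\mathcal{D}, \Q/\Z)$ follows from $0 \to \Z \to \Q \to \Q/\Z \to 0$ together with $\H^p(\mathcal{D}, \Q) = 0$ for $p > 0$. Part (4) is then immediate by left-exactness of $\ell$-primary torsion: the vanishing $\H^0(\mathcal{X}, \Res^2 i_* \Gm)[\ell^\infty] = 0$ from (2) forces the middle term of (3) to have $\ell$-primary part equal to all of $\Br(\mathcal{X} \setminus \mathcal{D})[\ell^\infty]$.

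The main obstacle will be the careful setup of smooth descent for higher direct images along the open immersion $i$ on the algebraic stack $\mathcal{X}$, together with the identification of stalks of $\Res^q i_* \Gm$ at geometric points. Although the scheme-theoretic inputs (Grothendieck purity and Brauer group computations over strictly Henselian fields) are standard, one must verify that the reduction via an atlas is compatible with flat base change for $i_*$ and $j_*$ and with the Leray spectral sequence, all in the fppf topology used throughout the paper.
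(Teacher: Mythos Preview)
Your overall strategy for parts (1), (3), and (4) matches the paper's proof almost exactly: reduce to the scheme case by smooth localisation, use the Leray spectral sequence for $i$ together with $\Res^1 i_*\Gm=0$ to identify $\H^2(\mathcal{X},i_*\Gm)$, and then splice this with the long exact sequence of \eqref{eq:residue_sheaves} using $\H^1(\mathcal{D},\Z)=0$ and $\H^2(\mathcal{D},\Z)\cong\H^1(\mathcal{D},\Q/\Z)$.

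There is, however, a genuine gap in your argument for (2). You claim that the stalk of $\Res^2 i_*\Gm$ at a geometric point $\bar{x}$ of $\mathcal{D}$ is $\Br K$, where $K$ is the fraction field of the strict Henselisation $R=\mathcal{O}^{\mathrm{sh}}_{\mathcal{X},\bar{x}}$, and then invoke the structure of the absolute Galois group of a henselian discretely valued field to deduce $\Br K[\ell^\infty]=0$. Both steps are only valid when $\dim R=1$. In general $R$ is a regular strictly Henselian local ring of dimension $\geq 1$, and the stalk is $\Br(\Spec R\setminus \mathcal{D}_{\bar{x}})$, not $\Br K$; moreover, for $\dim R\geq 2$ the Galois group of $K$ is far richer than an extension of tame inertia by a pro-$p$ group, and $\Br K[\ell^\infty]$ is typically nonzero. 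So your computation does not establish the vanishing at closed (or any higher-codimension) points of $\mathcal{D}$.

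The fix is what the paper does: once reduced to $\mathcal{X}=\Spec R$ with $R$ regular strictly Henselian and $\mathcal{D}\subset\Spec R$ a smooth divisor, apply Grothendieck purity for schemes \cite[Thm.~3.7.1]{Col21} directly to $\Spec R$. This gives an injection $\Br(\Spec R\setminus \mathcal{D})[\ell^\infty]\hookrightarrow \H^1(\mathcal{D},\Q_\ell/\Z_\ell)$, and the right-hand side vanishes because $\mathcal{D}$ is itself the spectrum of a strictly Henselian local ring. The vanishing of $\Res^1 i_*\Gm$ is likewise obtained not from ``purity itself'' but from the surjection $\Pic(\Spec R)\twoheadrightarrow\Pic(\Spec R\setminus\mathcal{D})$ together with $\Pic(\Spec R)=0$.
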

\begin{proof}
	The exactness of \eqref{eq:residue_sheaves} is local and well-known for schemes. For the computation of $\Res^1i_* \Gm$ and $\Res^2i_* \Gm$ we may assume that $\mathcal{X}$ is the spectrum of a regular strictly henselian ring. Then $\mathcal{D}$ is also the spectrum of a strictly henselian ring. 
	
	Note that $\Pic \mathcal{X} = \Br \mathcal{X} = 0$ since $\mathcal{X}$ is the spectrum of a strictly henselian ring. We thus have that $\Res^1 i_* \Gm = \Pic \mathcal{X} \setminus \mathcal{D} = 0$ because $\Pic \mathcal{X} = 0$ surjects onto it. Moreover, we have $\Res^2 i_* \Gm = \Br(\mathcal{X} \setminus \mathcal{D})$. This group is torsion and the $\ell$-primary part injects into $\H^1(\mathcal{D}, \Q_{\ell}/\Z_{\ell})$ by Grothendieck purity \cite[Thm.~3.7.1]{Col21}. But $\H^1(\mathcal{D}, \Q_{\ell}/\Z_{\ell}) = 0$ because $\mathcal{D}$ is the spectrum of a strictly henselian ring.
	
	We have $\H^2(\mathcal{X}, i_*\Gm) = \ker(\Br (\mathcal{X} \setminus \mathcal{D}) \to \H^0(\mathcal{X}, \Res^2i_* \Gm))$ by the Leray spectral sequence. The map $j$ is a closed immersion so $\H^p(\mathcal{X}, j_* \Z) = \H^p(\mathcal{D}, \Z)$ for all $p$. The exact sequence $0 \to \Z \to \Q \to \Q/\Z \to 0$ and the fact that $ \Q$ has trivial cohomology implies that $\H^1(\mathcal{D}, \Z) = 0$ and that $\H^2(\mathcal{D},\Z) = \H^1(\mathcal{D}, \Q/\Z)$. The exact sequence in (3) is thus the long exact sequence coming from $0 \to \Gm \to i_* \Gm \to j_* \Z \to 0$. The second exact sequence follows by taking $\ell$-primary torsion because $\Res^2 i_* \Gm[\ell^{\infty}] = 0$.
\end{proof}
As a corollary we get the Grothendieck purity theorem.
\begin{theorem}[Purity for the Brauer group]\label{thm:Grothendieck_purity}
\hfill \\ 
	Let $\mathcal{X}$ be a smooth integral finite type algebraic stack over a field $k$ of characteristic $0$ and $\mathcal{Z} \subset \mathcal{X}$ a proper closed substack. Let $\mathcal{D}_1, \cdots, \mathcal{D}_n \subset \mathcal{X}$ be the irreducible divisors contained in $\mathcal{Z}$ and $\mathcal{D}_1^{\emph{sm}}, \cdots, \mathcal{D}_n^{\emph{sm}}$ be the smooth loci of these divisors. Then the following sequence is exact
	\[
	0 \to \Br \mathcal{X} \to \Br (\mathcal{X} \setminus \mathcal{Z}) \to \bigoplus_{i = 1}^n \H^1(\mathcal{D}_i^{\emph{sm}}, \Q/\Z).
	\]
\end{theorem}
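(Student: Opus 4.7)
The plan is to bootstrap from the single smooth divisor case (Proposition~\ref{prop:Grothendieck_purity}) via a codimension-two reduction (Lemma~\ref{lem:purity_codimenion_2}), followed by an induction on the number of irreducible divisors appearing in $\mathcal{Z}$.

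First I would carry out the codimension-two reduction. Let $\mathcal{Z}_0 \subset \mathcal{X}$ be the union of: (i) the components of $\mathcal{Z}$ of codimension $\geq 2$, (ii) the singular loci $\mathcal{D}_i \setminus \mathcal{D}_i^{\mathrm{sm}}$ of each divisorial component, and (iii) the pairwise intersections $\mathcal{D}_i \cap \mathcal{D}_j$ for $i \neq j$. Each piece has codimension $\geq 2$ in $\mathcal{X}$ (for (ii) this uses that $\mathcal{X}$ is smooth, so each $\mathcal{D}_i$ is reduced and its singular locus has codimension $\geq 2$ in $\mathcal{D}_i$, hence $\geq 2$ in $\mathcal{X}$; and for (iii) it follows from the fact that the $\mathcal{D}_i$ are distinct irreducible divisors). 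By Lemma~\ref{lem:purity_codimenion_2} applied to $\mathcal{X}$ and to $\mathcal{X} \setminus \mathcal{Z}$ (after setting $\mathcal{Z}_0' := \mathcal{Z}_0 \cap (\mathcal{X} \setminus \mathcal{Z})$, which is empty, so this step is vacuous on the second factor), we have $\Br \mathcal{X} = \Br(\mathcal{X} \setminus \mathcal{Z}_0)$. So we may replace $\mathcal{X}$ by $\mathcal{X} \setminus \mathcal{Z}_0$; after this, $\mathcal{Z}$ becomes the disjoint union $\bigsqcup_{i=1}^n \mathcal{D}_i^{\mathrm{sm}}$, each piece being a smooth irreducible divisor. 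Note also that $\H^1(\mathcal{D}_i^{\mathrm{sm}}, \Q/\Z)$ is unchanged since we only modified $\mathcal{D}_i$ on a closed subset that was removed.

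Next I would run the induction on $n$. The base case $n = 0$ is trivial, and $n = 1$ is precisely the exact sequence of Proposition~\ref{prop:Grothendieck_purity}(3) combined with part~(4): since we are in characteristic zero, every prime $\ell$ is invertible on $\mathcal{D}_1$, so $\mathrm{R}^2 i_* \Gm$ is entirely $\ell$-primary for varying $\ell$ and embeds into $\H^1(\mathcal{D}_1, \Q/\Z)$. For the inductive step, set $\mathcal{U} := \mathcal{X} \setminus (\mathcal{D}_1 \cup \cdots \cup \mathcal{D}_{n-1})$ and $\mathcal{V} := \mathcal{U} \setminus \mathcal{D}_n$. After the codimension-two reduction the divisor $\mathcal{D}_n$ is closed, smooth and irreducible inside $\mathcal{U}$, so Proposition~\ref{prop:Grothendieck_purity} yields
\[
0 \to \Br \mathcal{U} \to \Br \mathcal{V} \to \H^1(\mathcal{D}_n, \Q/\Z).
\]
Combining this with the induction hypothesis
\[
0 \to \Br \mathcal{X} \to \Br \mathcal{U} \to \bigoplus_{i=1}^{n-1} \H^1(\mathcal{D}_i, \Q/\Z)
\]
and a short diagram chase gives the desired exact sequence with $n$ summands.

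The main obstacle I expect is verifying the codimension-two reduction carefully, in particular ensuring that after deleting $\mathcal{Z}_0$ the remaining divisors $\mathcal{D}_i^{\mathrm{sm}}$ really are closed and smooth in the new ambient stack (which requires the pairwise intersections to have been removed, so that no $\mathcal{D}_j$ meets $\mathcal{D}_i^{\mathrm{sm}}$ after the reduction). A secondary technical point is the justification that the residue sheaf $\mathrm{R}^2 i_* \Gm$ vanishes entirely in characteristic $0$ so that part~(3) of Proposition~\ref{prop:Grothendieck_purity} upgrades to the desired sequence without an intermediate kernel — this uses that the torsion in part~(2) is prime to the residue characteristics, which is automatic here.
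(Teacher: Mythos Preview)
Your proposal is correct and follows essentially the same approach as the paper: a codimension-two reduction via Lemma~\ref{lem:purity_codimenion_2}, followed by iteratively applying Proposition~\ref{prop:Grothendieck_purity} to remove one smooth divisor at a time. Your version is somewhat more explicit than the paper's (you remove pairwise intersections $\mathcal{D}_i \cap \mathcal{D}_j$ in the reduction step and justify the vanishing of $\Res^2 i_* \Gm$ in characteristic $0$ via part~(2), whereas the paper simply sets $\mathcal{W} = \mathcal{Z} \setminus \bigcup_i \mathcal{D}_i^{\mathrm{sm}}$ and cites part~(4)), but the structure is the same.
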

\begin{proof}
	Let $\mathcal{W} = \mathcal{Z} \setminus (\bigcup_{i = 1}^n \mathcal{D}_i^{\text{sm}})$. This has codimension at least $2$ in $\mathcal{X}$ so by Lemma \ref{lem:purity_codimenion_2} we have $ \Br \mathcal{X} \setminus \mathcal{W} = \Br \mathcal{X}$. After replacing $\mathcal{X}$ by $\mathcal{X} \setminus \mathcal{W}$ we may thus assume that the $\mathcal{D}_i$ are smooth and that $\mathcal{Z} = (\bigcup_{i = 1}^n \mathcal{D}_i)$.
	The theorem then follows by applying Proposition \ref{prop:Grothendieck_purity}(4) iteratively to each $\mathcal{D}_i$.
\end{proof}

An important special case of Proposition \ref{prop:Grothendieck_purity} is when $\mathcal{X}$ is the spectrum of a DVR. In this case the map to $\H^1(\mathcal{D},\Q/\Z)$ is known as the \emph{Witt residue}. We refer to \cite[\S 1.4.3]{Col21} for an account of this map.
\begin{definition}\label{def:Witt_residue}
	Let $\mathcal{O}$ be a DVR with fraction field $K$ and residue field $\F$. Let $\mathcal{O}^{\text{sh}}$ be the strict henselization of $\mathcal{O}$ and $K^{\text{sh}}$ its fraction field. Let $j: \Spec K \to \Spec \mathcal{O}$ be the generic point and $i: \Spec \F \to \Spec \O$ the special point.
	
	In this case $\Res^2i_* \Gm$ is concentrated at the special point and takes the value $\Br K^{\text{sh}}$ there, by a computation at the stalks. It follows that the sequence in Proposition~\ref{prop:Grothendieck_purity} takes the form
	\begin{equation}\label{eq:purity_DVR}
		1 \to \Br \mathcal{O} \to \ker(\Br K \to \Br K^{\text{sh}}) \to \H^1(\F, \Q/\Z).
	\end{equation}
	We define the \emph{Witt residue} to be the map $\res_{\mathcal{O}}: \ker(\Br K \to \Br K^{\text{sh}}) \to  \H^1(\F, \Q/ \Z)$ in \eqref{eq:purity_DVR}.
	If $\F$ is perfect then $\Br K^{\text{sh}} = 0$ by Tsen's theorem so the domain of $\res_{\mathcal{O}}$ is $\Br K$.
\end{definition}
\subsection{Points on stacks}

We record some basic lemmas which will be used later.

\begin{lemma}\label{lem:torsor_field_points}
	Let $\mathcal{X}$ be a normal algebraic stack over a field $k$ and $\mathcal{Y} \to \mathcal{X}$ a non-split finite \'etale morphism. There exists a field $K/k$ such that the morphism $\mathcal{Y}(K) \to \mathcal{X}(K)$ is not surjective. In other words, there exists a point $x \in \mathcal{X}(K)$ such that $\mathcal{Y}_x$ is non-split.
\end{lemma}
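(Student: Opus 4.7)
The plan is to reduce to the scheme case via a well-chosen smooth presentation, after first passing to a Galois closure. Assume without loss of generality that $\mathcal{X}$ is connected. Let $\mathcal{Y}' \to \mathcal{X}$ be a connected Galois finite \'etale cover dominating every connected component of $\mathcal{Y}$, with Galois group $G$; each component of $\mathcal{Y}$ then has the form $\mathcal{Y}'/H_i$ for some subgroup $H_i \subsetneq G$ (proper, by the non-splitness hypothesis on $\mathcal{Y} \to \mathcal{X}$). It will suffice to produce a field $K/k$ and a point $x \in \mathcal{X}(K)$ such that the classifying homomorphism $\varphi_x \colon \Gamma_K \to G$ of the pulled back $G$-torsor $\mathcal{Y}'_x$ is surjective: then $\varphi_x(\Gamma_K) = G$ is not contained in any conjugate $gH_ig^{-1}$, so no component of $\mathcal{Y}_x$ admits a $K$-section, and in particular $\mathcal{Y}_x \to \Spec K$ is non-split.

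To construct such a pair $(K,x)$, I would choose a smooth surjection $U \to \mathcal{X}$ from a normal connected scheme $U$ such that the pullback $V := U \times_{\mathcal{X}} \mathcal{Y}'$ is connected, equivalently such that the induced homomorphism $\pi_1(U) \to G$ is surjective. Given such a $U$, set $K := k(U)$ and let $x$ be the composition $\Spec K \to U \to \mathcal{X}$ from the generic point. Because $V \to U$ is a connected \'etale $G$-torsor, its generic fibre is $\Spec L$ for a connected Galois extension $L/K$ with group $G$, which means precisely that $\varphi_x$ is surjective. Were $\mathcal{Y}_x$ to have a $K$-section, one could extend it, by normality of $U$ and Zariski's main theorem applied to the scheme-theoretic closure inside the finite \'etale cover $U \times_{\mathcal{X}} \mathcal{Y} \to U$, to a global section over $U$, contradicting what we have just established.

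The main obstacle will be producing a smooth surjection $U \to \mathcal{X}$ from a normal connected scheme with $V$ connected. Naive presentations can fail: for instance, any $U$ which factors through $\mathcal{Y}'$ completely trivialises the pullback. Since $\mathcal{Y}'$ is connected over $\mathcal{X}$, the homomorphism $\pi_1(\mathcal{X}) \to G$ is surjective, and the existence of a suitable $U$ is essentially the statement that this surjectivity can be witnessed by a single scheme smoothly surjecting onto $\mathcal{X}$. I would argue this by starting with any smooth surjection $U_0 \to \mathcal{X}$ from a normal connected scheme and, if the image of $\pi_1(U_0) \to G$ is a proper subgroup $H_0 \subsetneq G$, refining $U_0$ (for instance by taking a suitable scheme obtained from the classifying data of $\mathcal{Y}'$, or a disjoint union of presentations realising different cosets) so as to enlarge the image to all of $G$. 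The finiteness of $G$ and the profinite structure of $\pi_1(\mathcal{X})$ make this refinement possible in finitely many steps.
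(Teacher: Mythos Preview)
Your overall strategy---reduce to the generic point of a well-chosen smooth atlas---matches the paper's, but the argument in your final paragraph does not actually close. You correctly identify that the whole proof hinges on producing a connected normal scheme $U$ with a smooth surjection $U \to \mathcal{X}$ such that $V = U \times_{\mathcal{X}} \mathcal{Y}'$ remains connected, and you correctly note that naive presentations can fail. But your proposed fix is not a construction: ``refining $U_0$'' is never made precise, and the one concrete suggestion you give (a disjoint union of presentations realising different cosets) destroys the connectedness of $U$, which is exactly the property you need. Appealing to the finiteness of $G$ and the profinite structure of $\pi_1(\mathcal{X})$ does not by itself produce a \emph{single connected} scheme whose fundamental group surjects onto $G$; that is the content of the step, not a consequence of it.

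The paper resolves this with a specific structural input: after shrinking $\mathcal{X}$ to a dense open substack (harmless by normality), one invokes \cite[Thm.~6.5]{Lau00} to obtain a smooth atlas $U \to \mathcal{X}$ from an algebraic space with \emph{geometrically connected fibres}. This property is stable under base change along $\mathcal{Y}' \to \mathcal{X}$, so $U_{\mathcal{Y}'} \to \mathcal{Y}'$ also has geometrically connected fibres, and since $\mathcal{Y}'$ is connected so is $U_{\mathcal{Y}'}$. Taking $K$ to be the function field of $U$ then works immediately. Your Galois-closure framing and the $\varphi_x$-surjectivity reformulation are fine (and the Zariski's main theorem paragraph is redundant once you have surjectivity of $\varphi_x$), but without the geometrically-connected-fibres atlas or an equivalent device, the proof is incomplete.
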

\begin{proof}
	We may assume that $\mathcal{X}$ and $\mathcal{Y}$ are connected since we can glue sections on the components. They are then both integral since they are normal. We may also always replace $\mathcal{X}$ by an open substack by normality.
	
	If $\mathcal{X}$ is an algebraic space then it has a dense open subscheme \cite[\href{https://stacks.math.columbia.edu/tag/06NH}{Tag 06NH}]{stacks-project} so we take $K$ the function field of $\mathcal{X}$.
	
	In general apply \cite[Thm.~6.5]{Lau00} to choose, after replacing $\mathcal{X}$ by a dense open substack, a smooth atlas $U \to \mathcal{X}$ with geometrically connected fibres, where $U$ is an algebraic space.
	
	The stack $\mathcal{X}$, resp. $\mathcal{Y}$, is connected and $U \to \mathcal{X}$, resp. $U_{\mathcal{Y}} \to \mathcal{Y}$, has geometrically connected fibres so the algebraic space $U$, resp. $U_{\mathcal{Y}}$, is connected. If $K$ is the fraction field of $U$ then $\mathcal{Y}_K$ is the spectrum of the fraction field of $U_{\mathcal{Y}}$ which is non-split.
\end{proof}

\begin{lemma} \label{lem:Chebotarev}
	Let $k$ be a global field and $S$ a finite set of places containing the archimedean places.
	Let $\mathcal{X}$ be a connected normal finite type stack over $\Spec \mathcal{O}_{k, S}$ and 
	$\chi \neq 0 \in \H^1(\mathcal{X}, \Z/n \Z)$. 
	There then exists a place $v \not \in S$ and a point $x \in \mathcal{X}(\F_v)$ such that 
	$\chi_v(x) \neq 0 \in \H^1(\F_v, \Z/n\Z)$.
\end{lemma}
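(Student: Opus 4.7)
The plan is to reduce to the classical Chebotarev density theorem by passing to a smooth atlas of $\mathcal{X}$ and then apply a Lang--Weil estimate to control residue fields. Since $\chi \neq 0$, the corresponding $\Z/n\Z$-torsor $\mathcal{Y} \to \mathcal{X}$ has a connected component that is a non-split finite \'etale cover of $\mathcal{X}$. Following the method of the proof of Lemma~\ref{lem:torsor_field_points}, after replacing $\mathcal{X}$ by a dense open substack I choose a smooth atlas $f : U \to \mathcal{X}$ with geometrically connected fibres, where $U$ is a connected integral finite type scheme over $\mathcal{O}_{k,S}$. Because $f$ has geometrically connected fibres, $\mathcal{Y}_U \to U$ is still connected, hence non-split, so $f^{*}\chi \in \H^1(U, \Z/n\Z)$ is non-zero.

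It therefore suffices to prove the statement with $\mathcal{X}$ replaced by $U$, i.e.\ for a non-trivial character $\pi_1(U) \to \Z/n\Z$. If $U$ does not dominate $\Spec \mathcal{O}_{k,S}$, its image is a finite union of closed points, so $U$ is a finite type scheme over the corresponding residue fields and the claim reduces to Chebotarev density for varieties over finite fields. If $U$ is dominant, then after enlarging $S$ I may assume that $U \to \Spec \mathcal{O}_{k,S}$ is smooth with geometrically integral fibres of some dimension $d \geq 0$. Chebotarev density applied to the non-trivial homomorphism $\pi_1(U) \to \Z/n\Z$ shows that a positive density of Frobenius conjugacy classes lie outside its kernel, and Lang--Weil ensures that for all sufficiently large $v \notin S$ the variety $U_{\F_v}$ has $\sim q_v^{d}$ points over $\F_v$ and a comparable number of closed points with residue field exactly $\F_v$. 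Combining these, for a positive density of $v \notin S$ one finds $u \in U(\F_v)$ such that the Frobenius at $u$ has non-zero image in $\Z/n\Z$, i.e.\ $(f^{*}\chi)_v(u) \neq 0 \in \H^1(\F_v, \Z/n\Z)$. The image $x$ of $u$ under $U(\F_v) \to \mathcal{X}(\F_v)$ is then the required point.

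The main technical subtlety is arranging closed points of $U$ with residue field \emph{equal to} $\F_v$ and not some proper extension $\F_{v^d}$; this is what forces the Lang--Weil estimate into the argument. When the generic fibre of $U$ is not geometrically irreducible, one must first separate off its geometric components by passing to a finite extension $k'/k$: only places $v$ of $k$ that split completely in $k'$ contribute $\F_v$-points of $U$ lying on a fixed component, but the set of such $v$ still has positive density, so the argument goes through unchanged.
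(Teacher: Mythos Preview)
Your proposal is essentially correct and follows the same overall strategy as the paper: reduce to a non-trivial class on a finite type scheme $U$ over $\mathcal{O}_{k,S}$, then invoke Chebotarev density for schemes together with a control on residue fields of closed points. The differences are cosmetic. First, the paper reduces to a scheme by applying Lemma~\ref{lem:torsor_field_points} to obtain a single field-valued point on which $\chi$ is non-zero and then spreading out, whereas you pull back along a Laumon--Moret-Bailly atlas with geometrically connected fibres; both routes yield an integral finite type $\mathcal{O}_{k,S}$-scheme $U$ with $f^*\chi\neq 0$. Second, for the endgame the paper simply cites Pink's Chebotarev theorem for schemes \cite[Thm.~B.9]{Pink97} together with \cite[Prop.~B.8]{Pink97}, the latter being precisely the statement that closed points with residue field equal to $\F_v$ have Dirichlet density~$1$. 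This is exactly the ``technical subtlety'' you flag and handle via Lang--Weil; Pink's results package both steps cleanly and avoid the separate case analysis on geometric irreducibility of the generic fibre.
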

\begin{proof}
	By Lemma \ref{lem:torsor_field_points} and the correspondence between cohomology and torsors there exists a field $K/k$ and a point $x \in X(K)$ such that $\chi(x) \neq 0 \in \H^1(K, \Z/n\Z)$. By spreading out we may replace $K$ by an integral finite type $\mathcal{O}_{k, S}$ scheme $U$ and $x$ by a map $f: U \to \mathcal{X}$ such that $f^*(\chi) \neq 0 \in \H^1(U, \Z/n \Z)$.
	
	Let $m \mid n$ be minimal such that $f^*(\chi) \in \H^1(U, \Z/m \Z) \subset \H^1(U, \Z/n \Z)$, in particular $m \neq 1$. This implies that the $\Z/m\Z$-torsor $\pi: V \to U$ defined by $f^*(\chi)$ has the property that $V$ is connected.
	
	It now suffices to find a prime $v \not \in S$ and a point $u \in U(\F_v)$ such that $\pi^{-1}(u)$ is connected. Indeed in this case we have  that $f^*(\chi)(u) \neq 0 \in \H^1(\F_v, \Z/m \Z)$. We may then choose $x = f(u)$ since $\chi(x) = f^*(\chi)(u)  \neq 0 \in \H^1(\F_v, \Z/m \Z) \subset \H^1(\F_v, \Z/n\Z)$
	
	By Chebotarev's density theorem for schemes \cite[Thm.~B.9.]{Pink97}, the set of closed points $u \in U$ such that $\pi^{-1}(u)$ is connected has positive Dirichlet density \cite[Def.~B.6]{Pink97}. Moreover, the set of closed points $u \in U$ whose image $v \in \Spec \mathcal{O}_{k, S}$ is such that $\F(u) = \F_v$ has Dirichlet density $1$ by \cite[Prop.~B.8]{Pink97}. It follows that we may choose $u$ satisfying both properties. The second property implies that $u \in U(\F_v)$ and the first property is exactly what we needed to conclude the proof.
\end{proof}

We require suitably nice atlases of our stacks.

\begin{lemma} \label{lem:image_k_points}
	Let $\mathcal{X}$ be a finite type algebraic stack over a noetherian scheme. Then there exists
	a sequence of smooth morphisms $U_n \to \mathcal{X}$ from affine schemes such that $\coprod_n U_n(K) \to \mathcal{X}(K)$
	is essentially surjective for any field $K$.	
	
	Moreover, if $X$ is DM then we may choose $U_1 \to \mathcal{X}$ such that $U_1(K) \to \mathcal{X}(K)$ is already essentially surjective. 
\end{lemma}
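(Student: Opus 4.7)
The plan is to construct smooth affine atlases in two steps. First, since $\mathcal{X}$ is algebraic and of finite type over a noetherian scheme, there exists a smooth surjective morphism $V \to \mathcal{X}$ with $V$ a scheme of finite type; covering $V$ by affine opens yields an initial pool of smooth morphisms from affines to $\mathcal{X}$. The subtlety is that a single smooth atlas need not lift an arbitrary $K$-point on the nose, since the fiber $V \times_{\mathcal{X}} \Spec K$ is a smooth $K$-algebraic space which may lack $K$-rational points (for example $\Spec \Z \to B\mu_2$ only hits the trivial class in $K^\times/K^{\times 2}$).

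To handle this I would let $\{U_n\}_{n \in \N}$ enumerate representatives of all isomorphism classes of smooth morphisms from affine finite-type $S$-schemes to $\mathcal{X}$; this collection forms a set, and admits a countable cofinal subfamily, by the finite-type and noetherian hypotheses. For essential surjectivity on $K$-points it suffices to show that for each $x \in \mathcal{X}(K)$ there exists some $(U, u) \to (\mathcal{X}, x)$ in this family with $u$ a $K$-rational point. One argues by spreading out $x$ (using local finite presentation of $\mathcal{X}$ over $S$) to a morphism $\Spec A \to \mathcal{X}$ with $A \subset K$ a finitely generated $S$-subalgebra, and then applying the local structure theory of algebraic stacks to produce a smooth affine neighborhood $(U, u) \to (\mathcal{X}, x)$ at $x$, with $u \in U(K)$ coming from the induced embedding into $K$.

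For the DM case I would produce a single atlas by exploiting a global quotient presentation. For finite-type DM stacks in sufficient generality (e.g.~with quasi-projective coarse moduli space), the theorems of Totaro and Edidin--Hassett--Kresch--Vistoli yield $\mathcal{X} \cong [Y/\GL_n]$ for some affine scheme $Y$, and the map $Y \to \mathcal{X}$ is a smooth $\GL_n$-torsor. Hilbert's Theorem 90 makes every $\GL_n$-torsor over a field $K$ trivial, so $Y(K) \to \mathcal{X}(K)$ is essentially surjective for all fields $K$; taking $U_1 = Y$ completes this case. For the stacks of principal interest in this paper, namely $BG$ with $G$ a finite \'etale group scheme, the construction is completely explicit: choose a faithful linear representation $G \hookrightarrow \GL_n$ and take $U_1 = \GL_n/G$, which is a smooth affine scheme with $\GL_n/G \to BG$ a $G$-torsor, and Hilbert 90 applied to $\GL_n$ directly yields the lifting property.

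The main obstacle is the first case: ensuring that the local structure of $\mathcal{X}$ at an arbitrary $K$-point supports a smooth affine neighborhood with a $K$-rational point (rather than only over a finite separable extension). A naive \'etale smoothification of the spread-out $\Spec A \to \mathcal{X}$ may destroy rationality over $K$, so one has to choose the smooth neighborhood carefully using the fact that $K$-points of $\mathcal{X}$ come equipped with a tangent/obstruction formal model over which smooth neighborhoods can be produced while retaining the $K$-rational lift. In the DM case the corresponding obstacle is verifying a global quotient presentation in the required generality; this is standard for quasi-projective coarse spaces, and transparent for all examples arising in our applications.
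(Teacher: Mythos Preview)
The paper does not prove this lemma; it defers entirely to the literature, citing \cite[Thm.~6.3,~6.5]{Lau00} and \cite[Prop.~7.0.8]{Chr}. So the comparison is between your proposal and those references.

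Your general-case argument has a genuine gap at the countability step. You assert that the family of all smooth morphisms from finite-type affine $S$-schemes to $\mathcal{X}$ ``admits a countable cofinal subfamily, by the finite-type and noetherian hypotheses,'' but this is not justified and is false as stated: over an uncountable base such as $\Spec\C$ there are uncountably many isomorphism classes of smooth finite-type affine schemes, and you give no mechanism to cut down to a countable subfamily that still hits every $K$-point. The actual construction in Laumon--Moret-Bailly is quite different and does not enumerate atlases at all: one fixes a single smooth presentation $X_0\to\mathcal{X}$ and then, for each $n$, forms the algebraic space $X_n$ parametrising degree-$n$ finite \'etale subschemes of the fibres of $X_0\to\mathcal{X}$. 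Each $X_n\to\mathcal{X}$ is smooth, and a $K$-point $\xi$ lifts to $X_n$ once $n$ equals the degree of a separable splitting field of a closed point of the smooth $K$-space $(X_0)_\xi$. This is what makes the sequence countable: it is indexed by the integer $n$, not by isomorphism classes of schemes. Your spreading-out idea essentially reproves the pointwise statement \cite[Thm.~6.3]{Lau00}, but that alone does not give the uniform countable family.

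For the DM case, your $[Y/\GL_n]$ approach via Hilbert~90 is clean but, as you note, requires the resolution property or a quasi-projective coarse space; these are not hypotheses of the lemma. The argument does cover $BG$ and the applications in the paper, but it does not establish the lemma as stated. In the Laumon--Moret-Bailly framework, the DM case reduces to a single $U_1$ because an \'etale atlas has fibres of uniformly bounded degree (by constructibility over a noetherian base), so a single $X_n$ with $n$ equal to that bound already suffices.
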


Many versions of Lemma \ref{lem:image_k_points} can be found in the literature. In \cite[Thm.~6.3]{Lau00} one finds the weaker claim that a given $k$-point of $\mathcal{X}$ is in the image of some smooth morphism; this  is often sufficient in proofs. In \cite[Thm.~6.5]{Lau00} one finds the first statement where the $U_n$ are algebraic spaces. A more general statement for certain classes of rings is \cite[Prop.~7.0.8]{Chr}, which implies Lemma \ref{lem:image_k_points}. For an example which demonstrates the need for the sequence $U_n$ for non-DM stacks in general, see \cite[Rem. 6.5.1]{Lau00}.

\subsubsection{Topologies of points on stacks}\label{sec:topologies_points_stacks}
Let $k$ be a topological field which is \emph{essentially analytic} \cite[Def.~5.0.8]{Chr} (this essentially means that the inverse function theorem holds). Let $\mathcal{X}$ be a finite type algebraic stack over $k$. Christensen has defined in \cite[\S5]{Chr} a topology on the set $\mathcal{X}[k]$ of isomorphism classes of $k$-points, extending the usual topology on the set of $k$-points for schemes. It has the
following properties:
\begin{enumerate}
	\item Any morphism of stacks over $k$ induces a continuous
	map on $k$-points \cite[Thm.~9.0.3]{Chr}.
	\item Any smooth morphism of stacks over $k$ induces an open
	map on $k$-points \cite[Thm.~11.0.4]{Chr}.
\end{enumerate}
These properties uniquely determine the topology, by Lemma \ref{lem:image_k_points}.

We will consider the following examples of essentially analytic fields: $\R$, $\C$ or $K$ which is the fraction field of a henselian DVR $\mathcal{O}$ equipped with its standard topology, i.e.~the coarsest topology for which $\mathcal{O} \subset K$ is open.

We will need the following two lemmas.
\begin{lemma}\label{lem:dense_implies_dense_points}
	Let $K$ be either the field of fractions of a henselian DVR, the field $\R$ or the field $\C$. Let $\mathcal{X}$ be a finite type algebraic stack over $K$ and $\mathcal{V} \subset \mathcal{X}$ a dense open. The inclusion $\mathcal{V}[K] \subset \mathcal{X}[K]$ is then also dense.
\end{lemma}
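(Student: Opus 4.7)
The plan is to reduce the statement to the analogous density result for schemes using a smooth atlas. By Lemma~\ref{lem:image_k_points}, choose a sequence of smooth morphisms $f_n \colon U_n \to \mathcal{X}$ from affine $K$-schemes such that $\coprod_n U_n(K) \to \mathcal{X}[K]$ is essentially surjective. Since smooth morphisms are flat and dominant onto the components they meet, each preimage $V_n := f_n^{-1}(\mathcal{V}) \subset U_n$ is a dense open subscheme of $U_n$. Moreover, $f_n$ induces a continuous map $U_n(K) \to \mathcal{X}[K]$ by property $(1)$ of Christensen's topology recalled in \S\ref{sec:topologies_points_stacks}.

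To show density, let $W \subset \mathcal{X}[K]$ be a non-empty open subset, pick $x \in W$, and use essential surjectivity to lift $x$ to a point $\tilde{x} \in U_n(K)$ for some $n$. By continuity, $\tilde{W} := f_n^{-1}(W) \cap U_n(K)$ is an open neighbourhood of $\tilde{x}$ in $U_n(K)$. Granting the scheme-case density, $V_n(K)$ is dense in $U_n(K)$ in the $K$-analytic topology, so $V_n(K) \cap \tilde{W}$ is non-empty. Any point $\tilde{y}$ in the intersection satisfies $f_n(\tilde{y}) \in \mathcal{V}[K] \cap W$, establishing the density of $\mathcal{V}[K]$ in $\mathcal{X}[K]$.

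The main obstacle is therefore the scheme case: for a finite type $K$-scheme $U$ and a dense open $V \subset U$, one must show $V(K)$ is dense in $U(K)$ in the $K$-analytic topology. The essential idea is that, at a smooth $K$-point $\tilde{x} \in U^{\mathrm{sm}}(K)$, the implicit function theorem (which holds because $K$ is essentially analytic) identifies a neighbourhood of $\tilde{x}$ in $U(K)$ with an open ball in $K^d$, where $d = \dim_{\tilde{x}} U$; the closed complement $U \setminus V$ is cut out locally by a non-zero polynomial, whose $K$-zero set in $K^d$ is nowhere dense for the fields $K$ in the statement. One then reduces to the smooth locus using that $U^{\mathrm{sm}}(K)$ is dense in $U(K)$, which is the subtlest point and is where one uses essential analyticity of $K$ together with finiteness of type (the relevant case for us is when $\mathcal{X}$, and hence $U_n$, is smooth, in which case $U_n^{\mathrm{sm}} = U_n$ and this subtlety disappears).
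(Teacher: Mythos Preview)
Your argument follows the same path as the paper's: take the atlas $(f_n \colon U_n \to \mathcal{X})_n$ from Lemma~\ref{lem:image_k_points}, form $V_n = f_n^{-1}(\mathcal{V})$, and reduce to the scheme-level density of $V_n(K)$ in $U_n(K)$; the paper dispatches this last step by citing \cite[Thm~10.5.1]{Col21} rather than sketching it. One remark on your final paragraph: the reduction via ``$U^{\mathrm{sm}}(K)$ dense in $U(K)$'' genuinely fails without smoothness (e.g.\ $U = V(x^2+y^2) \subset \mathbb{A}^2_{\mathbb{R}}$ has $U(\mathbb{R})=\{0\}$ but $U^{\mathrm{sm}}(\mathbb{R})=\emptyset$), and the same example shows the lemma as stated needs $\mathcal{X}$ smooth; as you correctly observe, this is the only case used later, and your argument is complete there.
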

\begin{proof}
	Let $f_n:U_n \to \mathcal{X}$ be a sequence as in Lemma \ref{lem:image_k_points}. The maps $V_n := f_n^{-1}(\mathcal{V}) \to \mathcal{V}$ also satisfy Lemma \ref{lem:image_k_points} as the functor $\mathcal{X} \to \mathcal{X}(K)$ commutes with pullbacks. The inclusion $V_n \subset U_n$ is dense so the inclusion $V_n(K) \subset U_n(K)$ is dense by \cite[Thm~10.5.1]{Col21}. This implies the lemma.
\end{proof}
\begin{lemma}\label{lem:evaluation_continuous}
	Let $K$ be either the field of fractions of a henselian DVR, the field $\R$ or the field $\C$. Let $\mathcal{X}$ be a finite type algebraic stack over $K$. The following pairing
	\[
	\mathcal{X}[K] \times \Br \mathcal{X} \to \Br K: (x, b) \to b(x).
	\]
	is continuous, where $\Br \mathcal{X}$ and $\Br K$ are equipped with the discrete topologies.
\end{lemma}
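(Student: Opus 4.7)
The plan is to reduce via Lemma \ref{lem:image_k_points} to the case of smooth schemes, where the result is classical.

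Since $\Br \mathcal{X}$ carries the discrete topology, continuity of the pairing is equivalent to the requirement that for each fixed $b \in \Br \mathcal{X}$ the evaluation map $\ev_b \colon \mathcal{X}[K] \to \Br K$ is continuous; and since $\Br K$ is also discrete, this is in turn equivalent to $\ev_b$ being locally constant on $\mathcal{X}[K]$. So the job is to show local constancy for each fixed $b$.

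By Lemma \ref{lem:image_k_points}, choose smooth morphisms $f_n \colon U_n \to \mathcal{X}$ from affine schemes so that $\coprod_n U_n(K) \to \mathcal{X}[K]$ is essentially surjective. By the defining properties of the topology on $\mathcal{X}[K]$ recalled in \S\ref{sec:topologies_points_stacks}, each $f_n$ induces an \emph{open} map $U_n(K) \to \mathcal{X}[K]$. Given $x \in \mathcal{X}[K]$, lift it to some $u \in U_n(K)$. If one can produce an open neighbourhood $V$ of $u$ in $U_n(K)$ on which $\ev_{f_n^* b}$ is constant, then $f_n(V)$ is an open neighbourhood of $x$ in $\mathcal{X}[K]$, and for any $x' \in f_n(V)$, lifted to $u' \in V$, one gets
\[
b(x') \; = \; (f_n^* b)(u') \; = \; (f_n^* b)(u) \; = \; b(x),
\]
so $\ev_b$ is constant on $f_n(V)$. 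Thus the lemma reduces to the scheme case.

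The scheme case is the classical statement that for a smooth $K$-scheme $U$ and a class $c \in \Br U$ the evaluation map $U(K) \to \Br K$ is locally constant in the natural topology on $U(K)$. For $K$ Archimedean this is immediate since $\Br \R$ and $\Br \C$ are finite and evaluation is continuous by functoriality of the analytic topology, while for $K$ the fraction field of a henselian DVR it is a well-known consequence of Hensel's lemma applied to an \'etale representative of $c$ (see e.g.\ \cite[Ch.~13]{Col21}). The only real subtlety is the reduction from stacks to schemes; once one exploits openness of smooth morphisms on $K$-points, no further ingredients are needed.
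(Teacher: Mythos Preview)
Your proof is correct and follows essentially the same approach as the paper: reduce via the open smooth atlases $U_n \to \mathcal{X}$ from Lemma~\ref{lem:image_k_points} to the scheme case, then invoke the known result for smooth schemes. The paper cites \cite[Prop.~10.5.2]{Col21} directly for the scheme case rather than splitting into Archimedean and non-Archimedean arguments, but the logic is the same.
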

\begin{proof}
	Let $f_n:U_n \to \mathcal{X}$ be a sequence as in Lemma \ref{lem:image_k_points}. The fact that $U_n(K) \to \mathcal{X}[K]$ is open reduces the lemma to the following statement: for all $b \in \Br \mathcal{X}$ the map $U_n(K) \to \Br K: u \to b(f(u))$ is continuous.
	
	But $b(f(u)) = (f^*b)(u)$ so this follows from \cite[Prop.~10.5.2]{Col21}.
\end{proof}
\subsection{Unramified Brauer group}

We now define the unramified Brauer group of a stack. It can differ from the actual Brauer group, even if the stack is smooth and proper. 

\begin{definition} \label{def:Br_unramified}
Let $\mathcal{X}$ be a smooth finite type algebraic stack over a field $k$.
	Let $b \in \Br \mathcal{X}$. 
\begin{enumerate} 
	\item	Let  $\mathcal{O}$ be a DVR with fraction field $K$ over $k$
	and $x \in \mathcal{X}(K)$ a map. We say that $b$ is \textit{unramified} at the triple $(\mathcal{O}, K, x)$ if $b(x) \in \Br  \mathcal{O}$.
	\item  The element $b$ is \emph{unramified} if it is unramified at all possible $(\mathcal{O}, K, x)$; otherwise it is called \emph{ramified}.
	\item	The \emph{unramified Brauer group} is
	$\Brun \mathcal{X} := \{b \in \Br \mathcal{X}: b \text{ is unramified}\}.$
\end{enumerate}	
\end{definition}

\begin{remark} \label{rem:functorial}
Let $g: \mathcal{X} \to \mathcal{Y}$ be a map of smooth integral finite type stacks, $b \in \Br \mathcal{Y}$ and $(\mathcal{O}, K, x \in \mathcal{X}(K))$ a triple. The definition immediately implies that $b$ is unramified at $g(x)$ if and only if $g^*b$ is unramified at $x$. Pullling back thus defines a map $g^*: \Br_{\text{un}} \mathcal{Y} \to \Br_{\text{un}} \mathcal{X}$.
\end{remark}

It suffices to check whether a Brauer class is unramified for either henselian or complete DVRs.
\begin{lemma}\label{lem:unramified_henselian} 
	Let $b \in \Br \mathcal{X}$.
	\begin{enumerate}
	\item If $b$ is unramified at all triples $(\mathcal{O}, K, x)$ with $\mathcal{O}$ henselian, then $b$ is unramified.
	\item If $b$ is unramified at all triples of the form $(\F[[t]], \F((t)), x)$ with $\F$ a  field and $k \subset \F[[t]]$ an inclusion \footnote{The inclusion $k \subset \F[[t]]$ does not necessarily factor through the constant power series $\F$.}, then $b$ is unramified.
	\end{enumerate}
\end{lemma}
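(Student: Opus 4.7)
The plan is to exploit the naturality of the Witt residue exact sequence of Definition~\ref{def:Witt_residue} under henselization and completion of the DVR.

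For part (1), I would consider an arbitrary triple $(\mathcal{O}, K, x)$ and pass to the henselization $\mathcal{O}^h$ with fraction field $K^h$, setting $x^h \in \mathcal{X}(K^h)$ to be the composition of $x$ with $\Spec K^h \to \Spec K$. Since $\mathcal{O}$ and $\mathcal{O}^h$ share the same residue field $\F$ and the same strict henselization $\mathcal{O}^{\text{sh}}$, the residue maps $\partial_{\mathcal{O}}$ and $\partial_{\mathcal{O}^h}$ fit into a commutative square with vertical arrow $\Br K \to \Br K^h$ and identity on $\H^1(\F,\Q/\Z)$. By hypothesis $b(x^h) \in \Br \mathcal{O}^h$, so $\partial_{\mathcal{O}^h}(b(x^h)) = 0$; by commutativity $\partial_{\mathcal{O}}(b(x)) = 0$. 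Since $\Br K \to \Br K^{\text{sh}}$ factors through $\Br K^h$, the class $b(x)$ also maps to zero in $\Br K^{\text{sh}}$. The exactness of the sequence in Proposition~\ref{prop:Grothendieck_purity}(3) applied to $\Spec \mathcal{O}$ then yields $b(x) \in \Br \mathcal{O}$.

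For part (2), I would first apply (1) to reduce to the case where $\mathcal{O}$ is henselian. Then, passing to the completion $(\hat{\mathcal{O}}, \hat{K}, \hat{x})$ and running the same diagram (completion preserves the residue field $\F$, and $(\hat{\mathcal{O}})^{\text{sh}}$ is the completion of $\mathcal{O}^{\text{sh}}$, so the residue maps remain compatible), I reduce further to the case where $\mathcal{O}$ is complete. Since $\mathcal{O}$ is a DVR over $k$, the inclusion $k \hookrightarrow \mathcal{O}$ forces the valuation to vanish on $k^\times$, which in particular implies that $k$ embeds into $\F$ and that $\mathcal{O}$ is equicharacteristic. Cohen's structure theorem then supplies an isomorphism $\mathcal{O} \cong \F[[t]]$, and the composite $k \hookrightarrow \mathcal{O} \cong \F[[t]]$ is the required inclusion; as the footnote in the lemma records, the Cohen coefficient field is not canonical, so this inclusion need not factor through the constants $\F \subset \F[[t]]$. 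The hypothesis of (2) now gives $b(\hat{x}) \in \Br \hat{\mathcal{O}}$, and tracing back through the two reduction steps yields $b(x) \in \Br \mathcal{O}$.

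The main obstacle, or at least the point requiring the most care, is verifying the naturality of the Witt residue sequence under henselization and completion. This ultimately rests on the functorial construction of $\Res^2 i_* \Gm$ in Proposition~\ref{prop:Grothendieck_purity}(2), and is routine once one is willing to track through the sheaf-theoretic definitions; nonetheless it is the step that underpins the whole argument and where any genuine technical subtlety would appear (for instance for non-perfect residue fields, working $\ell$-primary by $\ell$-primary part using the vanishing $\Res^2 i_* \Gm[\ell^\infty] = 0$).
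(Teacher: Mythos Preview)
Your argument for part~(1) is essentially identical to the paper's: both pass to the henselization, use that $\mathcal{O}$ and $\mathcal{O}^{\mathrm{h}}$ share residue field and strict henselization, and perform the evident diagram chase with the Witt residue sequence.

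For part~(2) your strategy (reduce to henselian via (1), then to complete, then invoke Cohen's structure theorem) is the same as the paper's, but the paper executes the henselian-to-complete step differently and more cleanly. Rather than re-running the residue diagram, it simply invokes the isomorphisms $\Br \mathcal{O} \cong \Br \hat{\mathcal{O}}$ and $\Br K \cong \Br \hat{K}$ for henselian $\mathcal{O}$ from \cite[Cor.~3.4.3, Prop.~7.1.8]{Col21}; these immediately give $b(x)\in\Br\mathcal{O}$ iff $b(\hat{x})\in\Br\hat{\mathcal{O}}$. Your second diagram chase is not quite parallel to the first: unlike $K\subset K^{\mathrm{h}}\subset K^{\mathrm{sh}}$, the field $K^{\mathrm{sh}}$ does not sit between $K$ and $\hat{K}$, so to deduce $b(x)\in\ker(\Br K\to\Br K^{\mathrm{sh}})$ from $b(\hat{x})\in\ker(\Br\hat{K}\to\Br\hat{K}^{\mathrm{sh}})$ you would still need injectivity of $\Br K^{\mathrm{sh}}\to\Br\hat{K}^{\mathrm{sh}}$, which is again an instance of the same completion-invariance input. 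Also, the parenthetical claim that $(\hat{\mathcal{O}})^{\mathrm{sh}}$ is the completion of $\mathcal{O}^{\mathrm{sh}}$ is not literally correct (the strict henselization of a complete local ring is henselian but typically not complete); fortunately this plays no role once you use the direct isomorphisms above.
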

\begin{proof}
	Let us first assume that $b$ is unramified for all henselian triples. Let $(\mathcal{O}, K, x)$ be any triple. We will show that $b(x) \in \Br \mathcal{O}$.
	
	Let $\F$ be the residue field of $\mathcal{O}$, $\mathcal{O} \subset \mathcal{O}^{\text{h}}$ a henselization of $\mathcal{O}$ with fraction field $K^{\text{h}}$ and $\mathcal{O}^{\text{h}} \subset \mathcal{O}^{\text{sh}}$ a strict henselization with fraction field $K^{\text{sh}}$. Consider the following commutative diagram
	\[\begin{tikzcd}
		0 & {\Br \mathcal{O}} & {\ker(\Br K \to \Br K^{\text{sh}})} & {\H^1(\F, \Q/\Z)} \\
		0 & {\Br \mathcal{O}^{\text{h}}} & {\ker(\Br K^{\text{h}} \to \Br K^{\text{sh}})} & {\H^1(\F, \Q/\Z)}
		\arrow[from=1-1, to=1-2]
		\arrow[from=1-2, to=1-3]
		\arrow[from=1-2, to=2-2]
		\arrow[from=1-3, to=1-4]
		\arrow[from=1-3, to=2-3]
		\arrow["{=}"{marking, allow upside down}, draw=none, from=1-4, to=2-4]
		\arrow[from=2-1, to=2-2]
		\arrow[from=2-2, to=2-3]
		\arrow[from=2-3, to=2-4].
	\end{tikzcd}\]
	The rows of this diagram are special cases of the exact sequence in Proposition \ref{prop:Grothendieck_purity}(3) (see Definition \ref{def:Witt_residue}). We know that $b(x) \in \Br \mathcal{O}^{\text{h}}$ by assumption and a simple diagram chase then shows that $b(x) \in \Br \mathcal{O}$. 
	
	If $\mathcal{O}$ is a henselian DVR with fraction field $K$ and $\hat{\mathcal{O}}$ is the completion with fraction field $\hat{K}$ then $\Br \mathcal{O} = \Br \hat{\mathcal{O}}$ and $\Br K = \Br \hat{K}$ by \cite[Cor.~3.4.3, Prop.~7.1.8]{Col21}. But by Cohen's structure's theorem \cite[\href{https://stacks.math.columbia.edu/tag/0C0S}{Tag 0C0S}]{stacks-project} we have $\hat{\mathcal{O}} \cong \F[[t]]$ where $\F$ is the residue field of $\mathcal{O}$. This implies the second part of the lemma.
\end{proof}

We next show that for varieties, our definition of the unramified Brauer group agrees with the usual definition \cite[Def.~6.2.1]{Col21}. The subtle point is that we allow DVRs with arbitrary fraction fields, but \cite[Def.~6.2.1]{Col21} allows only DVRs with fraction field equal to $k(X)$. The proof of the following lemma is inspired by \cite[Thm.~10.5.12]{Col21}, but works in arbitrary characteristic unlike \textit{loc.~cit.}
\begin{lemma}\label{lem:unramified_other_definition}
	Let $X$ be a smooth geometrically integral variety over a field $k$. Then
	$\Brun X = \Brun(k(X)/k).$
\end{lemma}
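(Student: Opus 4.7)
The plan is to establish the two inclusions separately. For $\Brun X \subseteq \Brun(k(X)/k)$, I would start from $b \in \Brun X \subseteq \Br X$ (with the injection $\Br X \hookrightarrow \Br k(X)$ valid for smooth integral $X$) and apply the unramifiedness hypothesis to the triples $(\mathcal{O}, k(X), x_\eta)$, where $\mathcal{O}$ runs over DVRs of $k(X)/k$ and $x_\eta \colon \Spec k(X) \to X$ is the inclusion of the generic point. Since $x_\eta^* b$ is simply $b$ regarded in $\Br k(X)$, this yields $b \in \Br \mathcal{O}$ for every such $\mathcal{O}$, which is the definition of $\Brun(k(X)/k)$.

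For the reverse inclusion, let $b \in \Brun(k(X)/k)$. First I will exploit that each codimension 1 point of the regular scheme $X$ supplies a DVR of $k(X)/k$, so the hypothesis forces trivial Witt residues at all such points, and Grothendieck purity (Theorem \ref{thm:Grothendieck_purity}) yields $b \in \Br X$. To upgrade this to $b \in \Brun X$, by Lemma \ref{lem:unramified_henselian}(2) it is enough to verify unramifiedness at each triple of the form $(\F[[t]], \F((t)), x)$ with $k \hookrightarrow \F[[t]]$. Let $\eta \in X$ be the image of $x$, so that $x$ factors as $\Spec \F((t)) \to \Spec k(\eta) \to X$ via an embedding $\iota \colon k(\eta) \hookrightarrow \F((t))$. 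Pulling back the $t$-adic valuation along $\iota$ gives a valuation $w$ on $k(\eta)$ with value group inside $\Z$. If $w$ is trivial, then $\iota$ factors through $\F[[t]]$, the map $x$ lifts to $\Spec \F[[t]] \to X$, and $x^* b \in \Br \F[[t]]$ is automatic. Otherwise $w$ defines a DVR $\mathcal{O}_w \subset k(\eta)$ over $k$ and $\F[[t]]$ sits above it with some ramification index $e \geq 1$; by the functoriality of the Witt residue of Definition \ref{def:Witt_residue} under this extension of DVRs, $\res_{\F[[t]]}(x^* b) = e \cdot \iota_*(\res_{\mathcal{O}_w}(b|_\eta))$ in $\H^1(\F, \Q/\Z)$, reducing the problem to showing $\res_{\mathcal{O}_w}(b|_\eta) = 0$.

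The main obstacle will be promoting the DVR $\mathcal{O}_w$ of $k(\eta)/k$ to a DVR of $k(X)/k$ whose Witt residue of $b$ recovers $\res_{\mathcal{O}_w}(b|_\eta)$. For this I plan to invoke the Cohen structure theorem: since $X$ is smooth over $k$, the regular local ring $\mathcal{O}_{X,\eta}$ is equicharacteristic, so its completion is isomorphic to $k(\eta)[[y_1, \dots, y_c]]$ with $c = \dim \mathcal{O}_{X,\eta}$, into which $\mathcal{O}_w$ lifts as the regular local subring $\mathcal{O}_w[[y_1, \dots, y_c]]$ of dimension $c+1$. Localising this subring at the height 1 prime generated by a uniformizer of $\mathcal{O}_w$ produces a DVR on $\operatorname{Frac}(\widehat{\mathcal{O}}_{X,\eta})$, and restricting its valuation along the inclusion $k(X) \hookrightarrow \operatorname{Frac}(\widehat{\mathcal{O}}_{X,\eta})$ yields the required DVR $\mathcal{D}$ of $k(X)/k$; that this restriction is Noetherian follows from its value group being contained in $\Z$. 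Tracing the construction shows that $\res_{\mathcal{D}}$ is compatible with $\res_{\mathcal{O}_w}$, so the hypothesis $\res_{\mathcal{D}}(b) = 0$ translates into $\res_{\mathcal{O}_w}(b|_\eta) = 0$, completing the argument.
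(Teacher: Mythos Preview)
Your argument matches the paper up through the reduction to triples $(\F[[t]],\F((t)),x)$ via Lemma~\ref{lem:unramified_henselian}(2), and the easy inclusion $\Brun X \subseteq \Brun(k(X)/k)$ is handled identically. The divergence comes in how you treat the case where $x$ is not dominant.

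There is a genuine gap in your Cohen structure theorem step. The localisation $V := \mathcal{O}_w[[y_1,\dots,y_c]]_{(\pi)}$ is indeed a DVR, but its fraction field is $\operatorname{Frac}(\mathcal{O}_w[[y_1,\dots,y_c]])$, which is \emph{strictly smaller} than $\operatorname{Frac}(k(\eta)[[y_1,\dots,y_c]]) = \operatorname{Frac}(\widehat{\mathcal{O}}_{X,\eta})$ whenever $c \ge 1$. For instance, a power series $\sum a_I y^I$ with $w(a_I)$ unbounded below (e.g.\ decaying quadratically) lies in $k(\eta)[[y]]$ but not in $\operatorname{Frac}(\mathcal{O}_w[[y]])$, as one checks by bounding the $\pi$-valuation of the Taylor coefficients of any ratio $g/h$ with $g,h \in \mathcal{O}_w[[y]]$. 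So your DVR $V$ does \emph{not} live on $\operatorname{Frac}(\widehat{\mathcal{O}}_{X,\eta})$, and you have not shown that $k(X)$ lands inside $\operatorname{Frac}(V)$; hence there is no valuation to restrict. One could try to first extend $V$ to the larger field and then restrict, but extensions of discrete valuations along transcendental field extensions need not remain discrete, so the value group need no longer sit inside $\Z$. A secondary issue: your appeal to ``functoriality of the Witt residue'' presumes that $b|_\eta$ and $x^*b$ lie in the domain of the residue map (the kernel of $\Br K \to \Br K^{\mathrm{sh}}$), which is not automatic when the residue field is imperfect.

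The paper avoids all of this by first reducing to the case where $x:\Spec\F((t)) \to X$ is \emph{dominant}, citing \cite[Lem.~10.5.11]{Col21} (a deformation argument showing one may perturb $x$ without changing whether $b(x)$ lies in $\Br\F[[t]]$). Once $x$ is dominant, $\eta$ is the generic point, $k(\eta)=k(X)$, and one simply invokes the functoriality of $\Brun(\cdot/k)$ under field extensions \cite[Prop.~6.2.3]{Col21}: since $b \in \Brun(k(X)/k)$ and $k(X) \hookrightarrow \F((t))$, one gets $b(x) \in \Brun(\F((t))/k) \subseteq \Br\F[[t]]$ directly. What you are really trying to prove is the specialisation property ``$b \in \Brun(k(X)/k) \Rightarrow b|_\eta \in \Brun(k(\eta)/k)$ for every $\eta \in X$''; this is true but requires more input (e.g.\ Gersten-type arguments) than your sketch provides.
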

\begin{proof}
	The inclusion $\Br X \subset \Br k(X)$ and the definitions immediately imply that $\Brun X \subset \Brun(k(X)/k)$. The other inclusion is more difficult.
	
	We have the inclusion $\Brun(k(X)/k) \subset \Br X$ by \cite[Thm.~3.7.7]{Col21}. Given $b \in \Brun(k(X)/k) \subset \Br X$ we thus have to show that it is unramified in our sense. By Lemma \ref{lem:unramified_henselian} we have to show that for all fields $\F$, inclusions $k \subset \F[[t]]$ and points $x \in X(\F((t)))$ that $b(x) \in \Br \F[[t]]$. Moreover, by \cite[Lem.~10.5.11]{Col21} we may assume that $x: \Spec \F((t)) \to X$ is a dominant morphism (the stated lemma assumes that $X_{\F[[t]]}$ arises by base change from $X_{\F}$, but the proof does not use this assumption). But \cite[Prop.~6.2.3]{Col21} implies that $b(x) \in \Brun(\F((t))/k)$. Now note that $\Brun(\F((t))/k) \subset \Br(\F[[t]])$ by definition, which finishes the proof.
\end{proof}

\subsection{Birational invariance}
\begin{lemma}
	Let $\mathcal{U} \subset \mathcal{X}$ be a dense open immersion of smooth finite type algebraic stacks over a field. Then $\Brun \mathcal{U} = \Brun \mathcal{X}$
\end{lemma}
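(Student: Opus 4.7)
The plan is to show that the restriction map $\Br \mathcal{X} \to \Br \mathcal{U}$ is injective and induces an isomorphism $\Brun \mathcal{X} \xrightarrow{\sim} \Brun \mathcal{U}$.

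First I would observe that $\Br \mathcal{X} \hookrightarrow \Br \mathcal{U}$ and that restriction carries $\Brun \mathcal{X}$ into $\Brun \mathcal{U}$. Injectivity follows from purity: by Lemma~\ref{lem:purity_codimenion_2} we may discard the codimension $\geq 2$ part of $\mathcal{X} \setminus \mathcal{U}$ and so assume $\mathcal{X} \setminus \mathcal{U}$ is a smooth divisor, and then Proposition~\ref{prop:Grothendieck_purity}(3) gives the claim. The inclusion $\Brun \mathcal{X} \subset \Brun \mathcal{U}$ is immediate from functoriality, since any triple $(\mathcal{O}, K, x)$ for $\mathcal{U}$ is also a triple for $\mathcal{X}$ via composition with the open immersion (Remark~\ref{rem:functorial}).

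For the reverse inclusion, given $b \in \Brun \mathcal{U}$, I would first extend $b$ to a class $\tilde b \in \Br \mathcal{X}$. After removing a codimension $\geq 2$ closed substack as before, we reduce to the case where $\mathcal{D} := \mathcal{X} \setminus \mathcal{U}$ is a smooth divisor. Proposition~\ref{prop:Grothendieck_purity}(3) then identifies the obstruction to lifting $b$ with its residue $\partial_{\mathcal{D}}(b) \in \H^1(\mathcal{D}, \Q/\Z)$. To show this vanishes, I would take a smooth atlas $V \to \mathcal{D}$ from a scheme and consider, at the generic point of each component of $V$, the henselian local ring $\mathcal{O}_v$ with fraction field $K_v$: this comes with an induced $K_v$-point $x_v$ of $\mathcal{X}$ that factors through $\mathcal{U}$. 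The assumption $b \in \Brun \mathcal{U}$ gives $b(x_v) \in \Br \mathcal{O}_v$, so the Witt residue in $\H^1(\kappa(v), \Q/\Z)$ vanishes (Definition~\ref{def:Witt_residue}). Hence $\partial_{\mathcal{D}}(b)$ vanishes at each generic point of $V$; applying Lemma~\ref{lem:torsor_field_points} to the associated $\Z/n\Z$-torsor on the smooth (hence normal) stack $\mathcal{D}$ forces $\partial_{\mathcal{D}}(b) = 0$, and therefore $b$ lifts to some $\tilde b \in \Br \mathcal{X}$.

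It remains to show the extension $\tilde b$ is unramified on $\mathcal{X}$. By Lemma~\ref{lem:unramified_henselian}(2) it suffices to test at triples $(\F[[t]], \F((t)), x)$. Here Lemma~\ref{lem:dense_implies_dense_points} gives that $\mathcal{U}[\F((t))]$ is dense in $\mathcal{X}[\F((t))]$, while Lemma~\ref{lem:evaluation_continuous} gives that the evaluation $x \mapsto \tilde b(x) \in \Br \F((t))$ is locally constant (as $\Br \F((t))$ carries the discrete topology). Any $x \in \mathcal{X}[\F((t))]$ thus admits an $x_0 \in \mathcal{U}[\F((t))]$ with $\tilde b(x) = \tilde b(x_0) = b(x_0)$, and the last element lies in $\Br \F[[t]]$ by unramifiedness of $b$; combined with injectivity of $\Br \F[[t]] \to \Br \F((t))$, this gives $\tilde b(x) \in \Br \F[[t]]$. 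The main obstacle I expect is the residue vanishing step, both in handling the auxiliary $\H^0(\mathcal{X}, R^2 i_* \Gm)$ term appearing in Proposition~\ref{prop:Grothendieck_purity}(3) at residue characteristics, and in rigorously passing from generic vanishing on an atlas to vanishing of the global class $\partial_{\mathcal{D}}(b) \in \H^1(\mathcal{D}, \Q/\Z)$ on the stack $\mathcal{D}$.
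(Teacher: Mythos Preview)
Your overall architecture matches the paper's: both establish $\Br\mathcal{X}\hookrightarrow\Br\mathcal{U}$ via purity, the inclusion $\Brun\mathcal{X}\subset\Brun\mathcal{U}$ from functoriality, and the step ``$\tilde b\in\Br\mathcal{X}$ unramified on $\mathcal{U}\Rightarrow$ unramified on $\mathcal{X}$'' via density (Lemma~\ref{lem:dense_implies_dense_points}) plus local constancy of evaluation (Lemma~\ref{lem:evaluation_continuous}). These parts are fine.

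The genuine gap is exactly where you flag it: showing $\partial_{\mathcal{D}}(b)=0$. Your argument establishes that $f^*\partial_{\mathcal{D}}(b)\in\H^1(V,\Q/\Z)$ vanishes at the generic points of the atlas $V$, hence on all of $V$ by normality. But the pullback $\H^1(\mathcal{D},\Z/n\Z)\to\H^1(V,\Z/n\Z)$ along a smooth atlas is \emph{not} injective for stacks (think $\mathcal{D}=BG$, $V=\Spec k$), so vanishing on $V$ does not force $\partial_{\mathcal{D}}(b)=0$. Invoking Lemma~\ref{lem:torsor_field_points} does not close this: that lemma produces a field point witnessing nontriviality, which need not be a generic point of your chosen atlas.

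The paper fills both obstacles as follows. For the $\H^0(\mathcal{X},\Res^2 i_*\Gm)$ term, it localises to the spectrum of a strictly henselian ring and appeals to \v{C}esnavi\v{c}ius's purity \cite{Ces19} (which needs no hypothesis on residue characteristic) to kill it. For $\partial_{\mathcal{D}}(b)$, it argues by contradiction: if nonzero, Lemma~\ref{lem:torsor_field_points} yields a field $\F$ and $x\in\mathcal{D}(\F)$ with $\partial_{\mathcal{D}}(b)(x)\neq 0$; lifting $x$ to a point $v$ of an atlas $f:X\to\mathcal{X}$ (Lemma~\ref{lem:image_k_points}) and using functoriality of the residue gives $\partial_D(f^*b)\neq 0$, hence $f^*b\notin\Br X$. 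But $b\in\Brun\mathcal{U}$ implies $f^*b\in\Brun U$, and now the \emph{scheme} case kicks in: Lemma~\ref{lem:unramified_other_definition} gives $\Brun U=\Brun(k(U)/k)=\Brun X\subset\Br X$, a contradiction. The key manoeuvre you are missing is this reduction to the scheme case via Lemma~\ref{lem:unramified_other_definition}, which is precisely what lets one trade ``vanishing at one atlas'' for the full statement.
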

\begin{proof}
	We have $\Br \mathcal{X} \subset \Br \mathcal{U}$ by Lemma \ref{lem:purity_codimenion_2} and Proposition \ref{prop:Grothendieck_purity}. The inclusion $\Brun \mathcal{X} \subset \Brun \mathcal{U}$ is then immediate from the definition.
	
	We will next show that $\Brun \mathcal{U} \cap \Br \mathcal{X} \subseteq \Brun \mathcal{X}$. Let $b \in \Brun \mathcal{U} \cap \Br \mathcal{X}$ and let $(\mathcal{O}, K, x)$ be a triple. We may assume that $\mathcal{O}$ is henselian by Lemma \ref{lem:unramified_henselian}. The map $x \to b(x)$ is locally constant for $x \in \mathcal{X}[K]$ by Lemma \ref{lem:evaluation_continuous} and $\mathcal{U}[K] \subset \mathcal{X}[K]$ is dense by Lemma \ref{lem:dense_implies_dense_points}. There thus exists an $u \in \mathcal{U}[K]$ such that $b(x) = b(u)$. But $b \in \Brun \mathcal{U}$ so $b(u) \in \Br \mathcal{O}$. This implies that $b \in \Brun \mathcal{X}$.
	
	The last step is to show that $\Brun \mathcal{U} \subset \Br \mathcal{X}$. If $\mathcal{X} \setminus \mathcal{U}$ has codimension $2$ then $\Br \mathcal{U} = \Br \mathcal{X}$ by Lemma \ref{lem:purity_codimenion_2} so we may remove a codimension $2$ subset and assume without loss of generality that $\mathcal{D} := \mathcal{X} \setminus \mathcal{U}$ is a smooth divisor.
	
	Let $i: \mathcal{U} \to \mathcal{X}$ and $j: \mathcal{D} \to \mathcal{X}$ be the inclusions. Let $b \in \Brun \mathcal{U}$. We will first show that the image of $b$ in $\H^0(\mathcal{X}, \Res^2i_* \Gm)$ is trivial. This is a local question so we may assume that $\mathcal{X}$ is the spectrum of a strictly henselian ring $R$ with fraction field $K$. We can then think of $b \in \Br K$.
	The assumption that $b \in \Brun \mathcal{U}$ implies in particular that $b$ is unramified at all triples $(\mathcal{O}_{\mathcal{X}, x}, K, \Spec K \to \mathcal{X})$ where $x \in X$ ranges over all codimension $1$ points of $\mathcal{X}$ and $ \Spec K \to \mathcal{X}$ is the generic point. This implies that $b \in \Br \mathcal{X} = 0$ by \cite[Thm.~1.2]{Ces19}, which shows the claim.
	
	Thus by Proposition \ref{prop:Grothendieck_purity} it remains to show that $\partial_{\mathcal{D}}(b) \in \H^1(\mathcal{D}, \Q/\Z)$ is trivial. Assume for the sake of contradiction that this is false.
	
	By Lemma \ref{lem:torsor_field_points} and the correspondence between $\H^1$ and torsors there exists a field $\F$ and a point $x \in \mathcal{X}(\F)$ such that $\partial_{\mathcal{D}}(b)(x) \neq 0 \in \H^1(\F, \Q/\Z)$. By Lemma \ref{lem:image_k_points}  there exists an atlas $f: X \to \mathcal{X}$ and a $v \in X(\F)$ with $f(v) = x$.
	
	Let  $D = f^{-1}(\mathcal{D})$ and $i': U \subset X$ the pullback of $i$. The sequence \eqref{eq:residue_sheaves} is functorial in $f$ so the following diagram is commutative and has exact rows by Proposition~\ref{prop:Grothendieck_purity}
	\[\begin{tikzcd}
		0 & {\Br \mathcal{X}} & {\ker({\Br \mathcal{U}} \to \H^0(\mathcal{X}, \Res^2i_* \Gm))} & {\H^1(\mathcal{D}, \Q/\Z)} \\
		0 & {\Br X} & {\ker({\Br U} \to \H^0(X, \Res^2i'_* \Gm))} & {\H^1(D, \Q/\Z)}.
		\arrow[from=1-1, to=1-2]
		\arrow[from=1-2, to=1-3]
		\arrow["{f^*}"', from=1-2, to=2-2]
		\arrow["{\res_{\mathcal{D}}}", from=1-3, to=1-4]
		\arrow["{f^*}"', from=1-3, to=2-3]
		\arrow["{f^*}"', from=1-4, to=2-4]
		\arrow[from=2-1, to=2-2]
		\arrow[from=2-2, to=2-3]
		\arrow["{\res_D}", from=2-3, to=2-4]
	\end{tikzcd}\]
	We then have the equality $\partial_{\mathcal{D}}(b)(x) = f^*(\partial_{\mathcal{D}}(b))(v) = \partial_{D}(f^*b)(v)$ by the commutativity of the above diagram. This implies that $\partial_{D}(f^*b) \neq 0$ and thus that $f^*b \not \in \Br X$.
	
	However $b \in \Brun \mathcal{U}$ implies that $f^*b \in \Brun U$. But from Lemma \ref{lem:unramified_other_definition} we deduce that $\Brun U = \Brun(k(U)/k) = \Brun X \subset \Br X$ which provides the desired contradiction.
\end{proof}

We now show that the unramified Brauer group is a stable birational invariant. For this we need the following lemma; by a projective bundle over $\mathcal{X}$ we mean an algebraic stack over $\mathcal{X}$ isomorphic to $\P(\mathcal{V})$ for some vector bundle $\mathcal{V}$ over $\mathcal{X}$. Note that a vector bundle on an algebraic stack need only be locally trivial for the smooth topology, and not necessarily the Zariski topology. We start with some basic lemmas.


\begin{lemma}
	Let $\mathcal{X}$ be a smooth algebraic stack over a field and $f:\mathcal{Y} \to \mathcal{X}$ a projective bundle. Then $f_* \Gm = \Gm$, we have $\Res^1 f_* \Gm = \Z$ generated by $\mathcal{O}_{\mathcal{Y}}(1)$, and $\Res^2 f_* \Gm = 0$.
	\label{Higher pushforwards of projective bundle}
\end{lemma}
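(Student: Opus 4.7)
The strategy is to reduce the computation of the higher direct images to a stalkwise computation over strict henselisations of a scheme cover, where the projective bundle trivialises, and then to invoke the classical projective bundle formula for étale cohomology.

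First I would reduce to the case of a scheme base. Pick a smooth atlas $u \colon U \to \mathcal{X}$ with $U$ a scheme. Isomorphisms of sheaves on the smooth site of $\mathcal{X}$ can be detected after pulling back along $u$, and the formation of $R^i f_* \Gm$ commutes with smooth base change, so it suffices to prove the analogous statement for the pulled-back projective bundle $f_U \colon \mathbb{P}(u^* \mathcal{V}) \to U$, which is now a projective bundle over a scheme. At a geometric point $\bar s \to U$ with strict henselisation $A := \mathcal{O}_{U,\bar s}^{\mathrm{sh}}$, every vector bundle on $\Spec A$ is free, so the stalk of $R^i f_{U,*} \Gm$ is identified with $H^i(\mathbb{P}^n_A, \Gm)$ where $n+1 = \operatorname{rank} \mathcal{V}$.

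Next I would compute these three groups directly. The identity $H^0(\mathbb{P}^n_A, \Gm) = A^\times$ is immediate from $H^0(\mathbb{P}^n_A, \mathcal{O}) = A$ and the fact that a regular section of the structure sheaf is invertible on $\mathbb{P}^n_A$ iff it is invertible on $\Spec A$. For $H^1$, the standard split exact sequence $0 \to \Pic A \to \Pic \mathbb{P}^n_A \to \Z \to 0$, with splitting given by $\mathcal{O}(1)$ and last map given by the degree along a geometric fibre, combined with $\Pic A = 0$ for $A$ strictly henselian local, yields $\Pic \mathbb{P}^n_A = \Z \cdot \mathcal{O}(1)$. For $H^2 = \Br \mathbb{P}^n_A$, via the Kummer sequence (using that all torsion primes are invertible on $A$) it suffices to show $H^2(\mathbb{P}^n_A, \mu_m) = \mu_m(A) \cdot c_1(\mathcal{O}(1))$ for each $m$ invertible on $A$. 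The projective bundle formula for étale cohomology gives
\[
H^*(\mathbb{P}^n_A, \mu_m) = H^*(\Spec A, \mu_m)[c]/(c^{n+1}),
\]
with $c = c_1(\mathcal{O}(1))$, and since $A$ is strictly henselian we have $H^i(\Spec A, \mu_m) = 0$ for $i \geq 1$, so $H^2(\mathbb{P}^n_A, \mu_m) = \mu_m \cdot c$ lies entirely in the image of $\Pic / m$. Feeding this back into Kummer gives $\Br \mathbb{P}^n_A = 0$.

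The main obstacle is the vanishing $R^2 f_* \Gm = 0$, which ultimately rests on the projective bundle formula in étale cohomology with $\mu_m$-coefficients; this is classical (see SGA 5 or Milne) and provable by the standard Leray spectral sequence argument combined with the affine stratification of $\mathbb{P}^n$. Finally, to identify the isomorphism $R^1 f_* \Gm \cong \Z$ globally on $\mathcal{X}$ (not just stalkwise), I would observe that the tautological line bundle $\mathcal{O}_{\mathcal{Y}}(1)$ on $\mathcal{Y} = \mathbb{P}(\mathcal{V})$ gives a global section of $R^1 f_* \Gm$ which is a generator at every geometric point, so $R^1 f_* \Gm$ is the constant sheaf $\Z$ with canonical generator $\mathcal{O}(1)$.
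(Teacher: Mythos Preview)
Your approach is essentially the same as the paper's: reduce to stalks, i.e.\ to the case where the base is the spectrum of a strictly henselian regular local ring $A$, where the bundle trivialises to $\mathbb{P}^n_A$, and then compute $H^i(\mathbb{P}^n_A, \Gm)$ for $i = 0, 1, 2$. Your treatment of $i = 0, 1$ and the global identification of $R^1 f_* \Gm$ with the constant sheaf $\Z$ via $\mathcal{O}(1)$ are fine.

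There is a gap in your $i = 2$ argument when the base field has characteristic $p > 0$. The Kummer sequence $0 \to \mu_m \to \Gm \to \Gm \to 0$ is only exact for $m$ invertible on $A$, so your projective bundle formula computation only shows that the prime-to-$p$ torsion of $\Br \mathbb{P}^n_A$ vanishes; the parenthetical ``using that all torsion primes are invertible on $A$'' is not justified and is in fact equivalent to what you are trying to prove. The paper sidesteps this by directly citing the identification $\Br \mathbb{P}^n_R \cong \Br R$ for any regular base $R$ (Colliot-Th\'el\`ene--Skorobogatov, \emph{The Brauer--Grothendieck group}, Cor.~6.1.4), which is valid in all characteristics and gives $\Br \mathbb{P}^n_A \cong \Br A = 0$ since $A$ is strictly henselian. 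In characteristic $0$ your argument is complete as written; in positive characteristic you would need either this citation or a separate treatment of the $p$-torsion.
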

\begin{proof}
	It suffices to consider the case $\mathcal{X} = \Spec R$ where $R$ is a regular strictly henselian local ring. In this case we have $\mathcal{Y} \cong \P_{R}^n$ for some $n$.
	
	The lemma follows from the fact that $\mathcal{O}_{\P_{R}^n}(\P_{R}^n)^{\times} = R^{\times}$, $\Pic \P_{R}^n \cong \Z$, generated by $\mathcal{O}_{\P_{R}^n}(1)$, and $\Br \P_{R}^n \cong \Br R = 0$ by \cite[Cor.~6.1.4]{Col21}.
\end{proof}

\begin{lemma}
	Let $\mathcal{X}$ be  smooth finite type algebraic stack over a field $k$ and $f:\mathcal{Y} \to \mathcal{X}$ a projective bundle. The map $f^*: \Br_{\emph{un}}\mathcal{X} \to \Br_{\emph{un}} \mathcal{Y}$ is an isomorphism.
\end{lemma}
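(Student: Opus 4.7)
My plan is first to apply the Leray spectral sequence for $f: \mathcal{Y} \to \mathcal{X}$ with coefficients in $\Gm$. From the preceding lemma we have $f_*\Gm = \Gm$, $R^1 f_*\Gm = \Z$ (generated by $\mathcal{O}_{\mathcal{Y}}(1)$), and $R^2 f_*\Gm = 0$, so the low-degree five-term exact sequence reads
$$\Pic \mathcal{Y} \xrightarrow{\deg} \H^0(\mathcal{X},\Z) \xrightarrow{d_2} \Br \mathcal{X} \xrightarrow{f^*} \Br \mathcal{Y} \longrightarrow \H^1(\mathcal{X}, \Z).$$
The tautological bundle $\mathcal{O}_{\mathcal{Y}}(1)$ is sent to a generator of $\Z$, so the degree map is surjective, $d_2 = 0$, and hence $f^*: \Br \mathcal{X} \to \Br \mathcal{Y}$ is injective. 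Combining this with Remark \ref{rem:functorial} (pullback preserves unramifiedness) already gives the injectivity half: $f^*: \Brun \mathcal{X} \hookrightarrow \Brun \mathcal{Y}$.

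For surjectivity, I would next argue that $f^*: \Br \mathcal{X} \to \Br \mathcal{Y}$ is in fact an isomorphism by invoking $\H^1(\mathcal{X}, \Z) = 0$, which holds because on a connected algebraic stack $\Z$-torsors are classified by continuous homomorphisms from the profinite \'etale fundamental group to the discrete torsion-free group $\Z$, of which only the trivial one exists. Given $b' \in \Brun \mathcal{Y}$ and its unique preimage $b \in \Br \mathcal{X}$, I would verify unramifiedness of $b$ by a point-lifting trick: for any triple $(\mathcal{O}, K, x)$ with $x \in \mathcal{X}(K)$, the fibre $\mathcal{Y}_x = \P(x^*\mathcal{V}) \cong \P^n_K$ has $K$-rational points, so $x$ lifts to some $y \in \mathcal{Y}(K)$ with $f \circ y = x$. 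Then
$$b(x) = y^*(f^*b) = y^* b' = b'(y) \in \Br \mathcal{O},$$
the final inclusion being the unramifiedness of $b'$ at $(\mathcal{O}, K, y)$. Hence $b \in \Brun \mathcal{X}$.

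The hard part is justifying $\H^1(\mathcal{X}, \Z) = 0$ in full generality, since $\mathcal{X}$ is not assumed to be Deligne--Mumford and its \'etale fundamental group could in principle misbehave. If a direct treatment proves awkward, my fallback is to shrink $\mathcal{X}$ to a dense open $\mathcal{U}$ over which $f$ acquires a section $s: \mathcal{U} \to f^{-1}(\mathcal{U})$ (which exists because the generic fibre of $f$ is $\P^n$ over the generic point, which always has rational points that spread out to a section over some dense open), set $b_{\mathcal{U}} := s^*(b'|_{f^{-1}(\mathcal{U})})$ and check unramifiedness of $b_{\mathcal{U}}$ exactly as above; the preceding birational invariance lemma then extends $b_{\mathcal{U}}$ to $b \in \Brun \mathcal{X}$, and the identity $f^*b = b'$ can be verified on each $K$-point $y \in \mathcal{Y}(K)$ using the classical fact $\Br \P^n_K = \Br K$, which ensures $b'(y) = b'(s(f(y)))$ for any two $K$-points of the same fibre, combined with the injectivity of $f^*$ established in the first paragraph.
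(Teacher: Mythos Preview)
Your argument is correct and follows the same route as the paper: the Leray spectral sequence together with the tautological bundle gives $\Br\mathcal{X}\cong\Br\mathcal{Y}$, and the point-lifting trick (every $x\in\mathcal{X}(K)$ lifts to $\mathcal{Y}(K)$ since the fibre is $\P^n_K$) transfers unramifiedness in both directions. Your caution about $\H^1(\mathcal{X},\Z)=0$ is unnecessary---the paper simply asserts it from smoothness (elsewhere invoking that $\Q$ has trivial higher cohomology, whence $\H^1(\mathcal{X},\Z)\hookrightarrow\H^1(\mathcal{X},\Q)=0$), and your profinite $\pi_1$ argument is equally valid since the \'etale fundamental group of a connected normal algebraic stack is profinite by construction; the fallback via a generic section is correct but superfluous.
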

\begin{proof}
	By Lemma \ref{Higher pushforwards of projective bundle} and the Leray spectral sequence we have an exact sequence 
	\begin{equation*}
		\Pic \mathcal{Y} \to \H^0(\mathcal{X}, \Z) \to \Br \mathcal{X} \to \Br \mathcal{Y} \to \H^1(\mathcal{X}, \Z).
	\end{equation*}
	We also deduce from Lemma \ref{Higher pushforwards of projective bundle} that the image of $\mathcal{O}_{\mathcal{Y}}(1) \in \Pic \mathcal{Y}$ generates $\H^0(\mathcal{X}, \Z) = \Z$. Moreover, we have $\H^1(\mathcal{X}, \Z) = 0$ since $\mathcal{X}$ is smooth. We deduce that $\Br \mathcal{Y} = \Br \mathcal{X}$.
	
	Let $R$ be a DVR with fraction field $K$. Any point $x \in \mathcal{X}(K)$ can be lifted to a point $y \in \mathcal{Y}(K)$; indeed the base change $\mathcal{Y}_x$ is a projective bundle over $K$ and thus isomorphic to $\P^n_K$ for some $n$ and this variety always has a $K$-point.
	
	It follows immediately from the definition 
	that $\Br_{\text{un}}\mathcal{X} = \Br_{\text{un}} \mathcal{Y}$.
\end{proof}
\begin{remark}
	It also follows from the above lemma that if $\mathcal{V} \to \mathcal{X}$ is a vector bundle then $\Br_{\text{un}} \mathcal{V} = \Br_{\text{un}} \mathcal{X}$ since the unramified Brauer group is a birational invariant and $\mathcal{V} \subset \P(\mathcal{V} \oplus \mathcal{O}_{\mathcal{X}})$ is a dense open immersion.
\end{remark}
\begin{example}\label{rem:Bogomolov_multiplier}
	Let $G$ be a finite group scheme over a field $k$ of characteristic $0$ and $G \to  \GL_n$ a faithful representation. The map $[\mathbb{A}_{k}^n/G] \to BG$ is a vector bundle so by the above remark we have $\Br_{\text{un}} BG_{k} = \Br_{\text{un}} [\mathbb{A}_{k}^n/G]$.
	
	The action of $G$ on $\mathbb{A}_{k}^n$ is faithful so the stack $[\mathbb{A}_{k}^n/G]$ has trivial generic stabiliser and is thus birational to its coarse moduli space $\mathbb{A}_{k}^n/G$. Let $X$ be a proper smooth variety birational to $\mathbb{A}_{k}^n/G$. We deduce that $\Br_{\text{un}} BG = \Br_{\text{un}} X = \Br X$. 	This provides a new proof of the fact that $\Br X$ is independent of the choice of faithful representation $G \to \GL_n$ (the usual proof passes by the so-called \textit{no-name lemma}, see \cite[\S  3]{Bog87}.)
	
	When $k = \C$ the group $\Br X$ is a known invariant of the group $G$, the \emph{Bogomolov multiplier}, introduced by Bogomolov \cite{Bog87}.  We revisit this group in \S \ref{sec:transcendental_Brauer_group}.
\end{example}

\subsection{Purity}
We now obtain a version of the purity theorem for the unramified Brauer group (stated as Theorem \ref{thm:Br_unramified_intro} in the introduction). It is a version of Theorem \ref{thm:Grothendieck_purity} but with sectors (see \S \ref{sec:cylotomic_inertia}) in place of divisors.

\begin{theorem}[Purity for the unramified Brauer group]	\label{thm:equivalent_conditions_unramified}\hfill \\ 	Let $\mathcal{X}$ be a smooth proper tame DM stack over a field $k$. The following are equivalent for $b \in \Br \mathcal{X}$:
	\begin{enumerate}
		\item $b$ is unramified.
		\item For every field $K/k$, $n \in \N$ and representable map $f: (B \mu_n)_K \to \mathcal{X}$ we have $f^* b \in \Br K$.
		\item For every algebraic stack $S$, $n \in \N$ and representable map $f: (B \mu_n)_S \to \mathcal{X}_S$ we have $f^* b \in \Br S$.
		\item For every sector $\mathcal{S} \in \pi_0(I_{\mu} \mathcal{X})$ of order $n$ let $f_{\mathcal{S}}: (B \mu_n)_{\mathcal{S}} \to \mathcal{X}$ be the universal map. We have $f_{\mathcal{S}}^*(b) \in \Br \mathcal{S}$.
	\end{enumerate}
\end{theorem}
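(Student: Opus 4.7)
The plan is to prove the chain $(4) \Rightarrow (3) \Rightarrow (2) \iff (1) \Rightarrow (4)$. The first two implications are formal. For $(4) \Rightarrow (3)$, a representable $f: (B\mu_n)_S \to \mathcal{X}_S$ is classified by a morphism $g: S \to I_\mu \mathcal{X}$ landing in the order-$n$ part; decomposing $S$ according to the sectors hit by $g$ realises $f$ as a pullback of the universal maps $f_\mathcal{S}$, so $f^*b$ pulls back from $f_\mathcal{S}^*b \in \Br \mathcal{S}$. The converse $(3) \Rightarrow (4)$ is the specialisation $S = \mathcal{S}$, $f = f_\mathcal{S}$, and $(3) \Rightarrow (2)$ is the case $S = \Spec K$.

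For $(2) \iff (1)$: by Lemma~\ref{lem:unramified_henselian} it suffices to test unramifiedness on triples $(\F[[t]], \F((t)), x)$. By the arithmetic valuative criterion (Lemma~\ref{lem:arithmetic_valuative_criterion}), $x$ extends uniquely to a representable $\tilde{x}: \Spec \F[[t]]_{\sqrt[n]{t}} \to \mathcal{X}$ for some $n$, whose special fibre is a representable map $f: (B\mu_n)_\F \to \mathcal{X}$; conversely Theorem~\ref{thm:Hensel} shows that every such $f$ arises this way. The equivalence thus reduces to the claim: $x^*b \in \Br \F[[t]]$ if and only if $f^*b \in \Br \F$. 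I will prove this by combining the stacky purity sequence (Proposition~\ref{prop:Grothendieck_purity}) applied to the smooth divisor $(B\mu_n)_\F \subset \Spec \F[[t]]_{\sqrt[n]{t}}$ with a proper henselian base change argument for tame stacks, showing that restriction to the special fibre induces an isomorphism $\Br \Spec \F[[t]]_{\sqrt[n]{t}} \cong \Br (B\mu_n)_\F$ under which the subgroup $\Br \F[[t]] = \Br \F$ pulled back from $\Spec \F[[t]]$ corresponds to $\Br \F \subset \Br (B\mu_n)_\F$ pulled back from $\Spec \F$.

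For $(1) \Rightarrow (4)$: fix a sector $\mathcal{S}$ of order $n$ with structure map $\pi: (B\mu_n)_\mathcal{S} \to \mathcal{S}$, a trivial $\mu_n$-gerbe. The identity torsor provides a section of $\pi$, so $\pi^*: \Br \mathcal{S} \to \Br (B\mu_n)_\mathcal{S}$ is a split injection whose cokernel is controlled by the Leray spectral sequence for $\pi$ and involves $\H^1(\mathcal{S}, \Z/n)$ via the Cartier duality $\underline{\Hom}(\mu_n, \Gm) = \Z/n$. Since we have already shown $(1) \Rightarrow (2)$, the image of $f_\mathcal{S}^*b$ in this cokernel pulls back to zero at every $K$-point of $\mathcal{S}$; as the cokernel classes classify cyclic \'etale covers and analogous character data, they are detected by $K$-points via Lemmas~\ref{lem:torsor_field_points} and~\ref{lem:Chebotarev}, so the class vanishes globally and $f_\mathcal{S}^*b \in \Br \mathcal{S}$.

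The main obstacle will be the Brauer-group comparisons. For $(2) \iff (1)$ the key input is the identification $\Br \Spec \F[[t]]_{\sqrt[n]{t}} \cong \Br (B\mu_n)_\F$ and the matching of the constant subgroups $\Br \F$ on both sides under this isomorphism; these rest on a Leray spectral sequence for the presentation $\Spec \F[[t]]_{\sqrt[n]{t}} = [\Spec \F[[s]]/\mu_n]$ together with $\Br \F[[s]] = \Br \F$ and a careful analysis of the $\mu_n$-action on $\F[[s]]^\times$. For $(1) \Rightarrow (4)$ the technical content lies in describing $\Br (B\mu_n)_\mathcal{S}/\pi^*\Br \mathcal{S}$ cohomologically over the smooth (but not necessarily proper) stack $\mathcal{S}$ and verifying that its classes are detected by $K$-points.
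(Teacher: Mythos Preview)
Your proposal is essentially the paper's own argument, with the same key ingredients: the residue exact sequence \eqref{eqn:residue_map} for trivial $\mu_n$-gerbes giving $\Br(B\mu_n)_\mathcal{S}/\Br\mathcal{S} \hookrightarrow \H^1(\mathcal{S},\Z/n\Z)$, detection of nonzero classes in $\H^1(\mathcal{S},\Z/n\Z)$ by field-valued points, and the comparison $\Br\,\F[[t]]_{\sqrt[n]{t}} \cong \Br(B\mu_n)_\F$ matching the constant subgroups $\Br\F$ (this is exactly Lemma~\ref{lem:Cup products defines a section of residue map for DVR} combined with the diagram chase in \S\ref{sec:proof_sector_purity}). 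Your step $(1)\Rightarrow(4)$ via the already-established $(1)\Rightarrow(2)$ is logically the same as the paper's direct $(2)\Rightarrow(4)$.

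Two small corrections. First, Theorem~\ref{thm:Hensel} does not apply here: it requires $\mathcal{X}$ to be \emph{\'etale} over the base, whereas our $\mathcal{X}$ is an arbitrary smooth proper tame DM stack. What you actually need for the direction $(1)\Rightarrow(2)$ is that any representable $f:(B\mu_n)_\F\to\mathcal{X}$ arises as the reduction of some $x\in\mathcal{X}(\F((t)))$; the paper obtains this directly by precomposing $f$ with the natural representable map $\Spec\F[[t]]_{\sqrt[n]{t}}=[\Spec\F[[t^{1/n}]]/\mu_n]\to(B\mu_n)_\F$ and taking the generic fibre. Second, Lemma~\ref{lem:Chebotarev} is specific to global fields and not relevant here; only Lemma~\ref{lem:torsor_field_points} is needed to detect nonvanishing of $\H^1(\mathcal{S},\Z/n\Z)$ at a field point.
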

\begin{remark}\label{rem:tame_DM_representable_map}
	The fact that $\mathcal{X}$ is tame DM implies that the stabiliser groups are tame finite \'etale groups. This implies that every representable map $(B \mu_n)_S \to \mathcal{X}_S$ in the above theorem is such that $n$ is coprime to the characteristic of $k$.
\end{remark}
The proof of this theorem is quite technical so we postpone it to  \S \ref{sec:mu_n-gerbe}, \ref{sec:root-stacks}, \ref{sec:proof_sector_purity}. Theorem \ref{thm:equivalent_conditions_unramified} naturally leads to the following definition.

\begin{definition} \label{def:partially_ramified_Br}
	Let $\mathcal{X}$ be a finite type tame DM stack over a field. Let $\mathcal{S} \in \pi_0(I_{\mu} \mathcal{X})$ be a sector and $f_{\mathcal{S}}: (B \mu_n)_{\mathcal{S}} \to \mathcal{X}$ the universal map. 
	We say that $b \in \mathcal{X}$ is \emph{unramified} along $\mathcal{S}$ if $f_{\mathcal{S}}^*(b) \in  \Br \mathcal{S}$.
	
	Let $\mathcal{C} \subset \pi_0(I_{\mu} BG)$ be a subset of sectors. We define the \emph{Brauer group partially unramified with respect to $\mathcal{C}$} as $$\Br_{\mathcal{C}}(\mathcal{X}) := \{b \in \Br \mathcal{X}: b \text{ is unramified along $\mathcal{S}$ for all } \mathcal{S} \in \mathcal{C}\}.$$
\end{definition}

This group satisfies the following functoriality. For a map $f: \mathcal{X}_1 \to \mathcal{X}_2$ and a collection of sectors $\mathcal{C}_2$ of $\mathcal{X}_2$, denote by $\mathcal{C}_1$ the sectors of $\mathcal{X}_1$ above $\mathcal{C}_2$. Then $f^*: \Br \mathcal{X}_2 \to \Br \mathcal{X}_1$ restricts to a map
\begin{equation} \label{eq:Br_C_functorial}
f^*: \Br_{\mathcal{C}_2} \mathcal{X}_2 \to \Br_{\mathcal{C}_1} \mathcal{X}_1
\end{equation}
It is this partially unramified Brauer group which will appear in the leading constant in  Malle's conjecture. This definition naturally begs the question whether it is possible to define the residue of a Brauer group element along a sector; this is indeed possible and we achieve this in Definition \ref{def:residue_along_sector} below.

We now turn to arithmetic applications.

\subsection{Brauer--Manin obstruction} \label{sec:BM}
We now define the Brauer--Manin obstruction on a stack. There are actually two kinds of Brauer--Manin obstruction on a stack; both contain useful information, however it seems that it is the unramified Brauer--Manin pairing which is more fundamental. For background on the Brauer--Manin obstruction for varieties, see \cite[\S13.3]{Col21}. We denote by $\inv_v: \Br k_v \to \Q/\Z$ the local invariant from class field theory. We present some examples of Brauer--Manin obstructions on algebraic stacks in \S \ref{sec:Br_BG}.

\subsubsection{Brauer--Manin pairing}
Let $\mathcal{X}$ be a smooth integral stack over a global field $k$. We have the \textit{Brauer--Manin pairing}
$$\langle \cdot, \cdot \rangle_{\text{BM}}: \mathcal{X}(\Adele_k) \times \Br \mathcal{X} \to \Q/\Z, \quad ( (x_v), b) \mapsto \sum_v \inv_v b(x_v).$$
We define $\mathcal{X}(\Adele_k)^{\Br}$ to be the left kernel of this pairing; the usual argument shows that $\mathcal{X}(k) \subset \mathcal{X}(\Adele_k)^{\Br}$. This pairing can be used for example to prove Stickelberger's theorem that the discriminant of a number field is  $0$ or $1$ modulo $4$ (see \S \ref{sec:Stickelberger}). 

We equip $\mathcal{X}[k_v]$ with the topology from \S\ref{sec:topologies_points_stacks}. The space $\mathcal{X}[\Adele_k]$ is then equipped with the restricted product topology as in \cite[\S13]{Chr}.

In what follows we use the following abuse of notation: Let $f:\mathcal{X}(k_v) \to Y$ be a map to a topological space $Y$ which is constant on isomorphism classes of $k_v$-points. Then we say that $f$ is \emph{continuous} if the induced map from $\mathcal{X}[k_v]$ is continuous (similarly for adelic points).

\begin{lemma}\label{lem:BM_pairing}
	The Brauer--Manin pairing on $\mathcal{X}(\Adele_k)$ is well-defined and continuous on the left.
\end{lemma}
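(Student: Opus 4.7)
The plan has two parts: well-definedness (the sum has only finitely many non-zero terms) and continuity on the left.

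For well-definedness, I would first observe that the pairing descends to the set of isomorphism classes $\mathcal{X}[\Adele_k]$ by functoriality of pullback on Brauer groups. To see that only finitely many terms are non-zero, I would spread $b$ out: since $\mathcal{X}$ is of finite type over $k$ and $b \in \Br \mathcal{X}$, there exists a finite set $S$ of places containing the archimedean ones, an integral model $\mathcal{X}_S$ of $\mathcal{X}$ over $\Spec \mathcal{O}_{k,S}$, and an extension $\tilde b \in \Br \mathcal{X}_S$ restricting to $b$ on the generic fibre. By the definition of the restricted product topology on $\mathcal{X}[\Adele_k]$ as in \cite[\S13]{Chr}, for any $(x_v) \in \mathcal{X}(\Adele_k)$ there is a finite set $T \supseteq S$ of places such that for every $v \notin T$ the point $x_v$ lies in the image of $\mathcal{X}_S[\mathcal{O}_v] \to \mathcal{X}[k_v]$. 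For such $v$, pick a lift $\tilde x_v \in \mathcal{X}_S(\mathcal{O}_v)$; then by functoriality $b(x_v) \in \Br k_v$ is the restriction of $\tilde b(\tilde x_v) \in \Br \mathcal{O}_v$ to the generic fibre. Since $\mathcal{O}_v$ is henselian with finite residue field, $\Br \mathcal{O}_v = 0$, so $b(x_v) = 0$. Hence the Brauer--Manin sum has only finitely many non-zero terms and takes values in $\Q/\Z$.

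For continuity of $\langle \cdot, b\rangle_{\mathrm{BM}}$ on the left, Lemma \ref{lem:evaluation_continuous} applied locally at each place yields that $x_v \mapsto b(x_v) \in \Br k_v$ is continuous into the discrete group $\Br k_v$; composing with $\inv_v$ gives a continuous map to the discrete group $\Q/\Z$. A basic open subset of $\mathcal{X}[\Adele_k]$ has the form $\prod_{v \in T'} U_v \times \prod_{v \notin T'} \mathrm{im}(\mathcal{X}_S[\mathcal{O}_v] \to \mathcal{X}[k_v])$ for some finite $T' \supseteq S$ and opens $U_v \subset \mathcal{X}[k_v]$. By the argument above, the Brauer--Manin pairing restricted to such a basic open reduces to the finite sum $\sum_{v \in T'} \inv_v b(x_v)$, which is continuous as a finite sum of continuous maps into a discrete target.

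The main technical subtlety will be the spreading-out step for Brauer classes on stacks and verifying that the restricted product topology on $\mathcal{X}[\Adele_k]$ is compatible with an integral model in the expected way, so that adelic points are integral at almost all places. Once these foundational points are in place, the remainder reduces to the classical vanishing $\Br \mathcal{O}_v = 0$ for non-archimedean local fields with finite residue field together with the local continuity statement of Lemma \ref{lem:evaluation_continuous}.
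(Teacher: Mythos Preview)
Your proposal is correct and follows essentially the same approach as the paper: spread out $b$ to an integral model so that it evaluates trivially on integral points at almost all places (the paper cites this as ``the usual argument'' from \cite[Prop.~13.3.1]{Col21}), then reduce continuity on a basic open to a finite sum of the locally continuous maps from Lemma~\ref{lem:evaluation_continuous}. The only difference is that you have written out the spreading-out step explicitly rather than citing it.
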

\begin{proof}
	Let $b \in \Br \mathcal{X}$. 
	It suffices to show continuity when restricted to the basic open 
	$\prod_{v \notin S} \mathcal{X}(k_v) \prod_{v\in S} \mathcal{X}(\O_v)$ for a finite
	set of places $S$. However the usual argument \cite[Prop.~13.3.1]{Col21} shows that 
	$b$ evaluates trivially on $\mathcal{X}(\O_v)$ providing $S$ is sufficiently large;
	this shows that the pairing is well-defined.
	Continuity follows from Lemma~\ref{lem:evaluation_continuous}.
\end{proof}

Thus the Brauer--Manin pairing can be used to obtain obstructions to \emph{strong approximation} (here we say that $\mathcal{X}$ satisfies strong approximation if the image of $\mathcal{X}(k)$ in $\mathcal{X}[\Adele_k]$ is dense in the collection of connected components).

\subsubsection{Unramified Brauer--Manin pairing}

The Brauer--Manin pairing can however be quite unwieldy in general since $\Br X/\Br k$ can be infinite, even for $X = BG$. So we  define the  \emph{unramified Brauer--Manin pairing} to be
$$\langle \cdot, \cdot \rangle_{\text{BM}}: \prod_v X(k_v) \times \Brun X \to \Q/\Z, \quad ( (x_v), b) \mapsto \sum_v \inv_v b(x_v).$$
We similarly define $(\prod_v X(k_v))^{\Brun}$ to be the left kernel of this pairing and have $X(k) \subset (\prod_v X(k_v))^{\Brun}$. This pairing seems to be more fundamental to the theory and can be used  to give obstructions to \emph{weak approximation}. (Here we say that $\mathcal{X}$ satisfies weak approximation if the image of $\mathcal{X}[k]$ in $\prod_v \mathcal{X}[k_v]$ is dense; some background on this for stacks can be found in \cite[\S4]{LS23}.) For example, it allows one to recover Wang's counter-example to Grunwald's theorem as a Brauer--Manin obstruction (see \S \ref{sec:Grunwald}). To verify that this pairing is well-defined we require the following result, which is a minor generalistion of Theorem \ref{thm:Harari's_formal_lemma_intro}. For varieties this is well-known and part of Harari's formal lemma \cite[Thm.~13.4.1]{Col21}. We generalise this result to the case of the partially unramified Brauer group of $BG$ in Theorem~\ref{thm:Harari_formal_partially_unramified}.

\begin{theorem}\label{thm:Harari's_formal_lemma}
	Let $\mathcal{X}$ be a smooth finite type DM stack over a global field $k$
	and let $b \in \Br \mathcal{X}$. If $b \in \Brun \mathcal{X}$ then $b$ evaluates trivially
	on 	$\mathcal{X}(k_v)$ for all but finitely many places $v$ of $k$.
	The converse holds if $k$ is a number field.
\end{theorem}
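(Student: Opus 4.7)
The plan is to handle the two implications separately: the forward direction is essentially formal, while the converse reduces to the classical Harari formal lemma applied to a smooth atlas. For the forward direction, given $b \in \Brun \mathcal{X}$ and a non-archimedean place $v$, since $\mathcal{O}_v$ is a henselian DVR with fraction field $k_v$, Lemma \ref{lem:unramified_henselian}(1) applied to the triple $(\mathcal{O}_v, k_v, x_v)$ for any $x_v \in \mathcal{X}(k_v)$ forces $b(x_v) \in \Br \mathcal{O}_v$. But $\Br \mathcal{O}_v \cong \Br \mathbb{F}_v = 0$ since $\mathbb{F}_v$ is finite, so $\inv_v b(x_v) = 0$. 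The exceptional set is therefore contained in the (finite) collection of archimedean places of $k$ and is empty in the function field case.

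For the converse, assume $k$ is a number field and argue by contrapositive: suppose $b$ is ramified. Since $\mathcal{X}$ is smooth and DM, the DM strengthening of Lemma \ref{lem:image_k_points} produces a single smooth morphism $f \colon U \to \mathcal{X}$ from an affine scheme $U$ with $U(K) \to \mathcal{X}(K)$ essentially surjective for every field $K$; composing with smooth $\mathcal{X}$ shows $U$ is itself smooth over $k$. Set $b_U := f^*b \in \Br U$. Then $b_U$ is ramified: any witness $(\mathcal{O},K,x)$ for the ramification of $b$ lifts via essential surjectivity to $u \in U(K)$, and $b_U(u) = b(x) \notin \Br \mathcal{O}$.

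Restricting to a connected component of $U$ containing such a $u$, viewed as a smooth affine variety over the finite extension $k' \supseteq k$ given by its field of constants, Lemma \ref{lem:unramified_other_definition} identifies our notion of unramifiedness with the classical one. Invoking the classical Harari formal lemma \cite[Thm.~13.4.1]{Col21} for this smooth variety (using Hironaka's resolution of singularities in characteristic zero to obtain a smooth projective compactification) yields infinitely many places $w$ of $k'$ together with points in $U(k'_w)$ on which $b_U$ evaluates non-trivially. Pushing these forward by $f$ and restricting $w$ to places $v$ of $k$ produces infinitely many $v$ with $x_v \in \mathcal{X}(k'_w) \supseteq \mathcal{X}(k_v)$-points at which $\inv b \neq 0$, contradicting the hypothesis after finitely many restrictions.

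The main subtlety is the reduction to a scheme. The DM hypothesis is precisely what enables Lemma \ref{lem:image_k_points} to supply a single atlas, so that $b_U$ is an honest Brauer class on a scheme to which the classical machinery (smooth compactifications, approximation plus Chebotarev along boundary divisors) applies directly. A purely stacky route via Theorem \ref{thm:equivalent_conditions_unramified} — locating a sector $\mathcal{S}$ on which the residue of $b$ is non-trivial and then using Lemma \ref{lem:Chebotarev} on $\mathcal{S}$ to produce residue-detecting $\mathbb{F}_v$-points, lifted back to $\mathcal{X}(k_v)$ via the stacky Hensel's lemma (Theorem \ref{thm:Hensel}) — is possible in principle but requires $\mathcal{X}$ proper, so the atlas reduction is cleaner in the finite type generality of the statement.
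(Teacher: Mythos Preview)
Your forward direction has a genuine error. The triple $(\mathcal{O}_v, k_v, x_v)$ is \emph{not} admissible in Definition~\ref{def:Br_unramified}: there ``DVR with fraction field $K$ over $k$'' means that $\mathcal{O}$ is a $k$-algebra (cf.\ the proof of the present theorem, which opens the converse with ``Let $\mathcal{O}$ be a DVR over $k$'', and Lemma~\ref{lem:unramified_henselian}(2), whose appeal to Cohen's structure theorem $\hat{\mathcal{O}}\cong\F[[t]]$ presupposes equicharacteristic DVRs). For a number field $k$ and a non-archimedean place $v$ one has $k \not\subset \mathcal{O}_v$. Indeed, your argument would show that every $b \in \Brun \mathcal{X}$ evaluates trivially at \emph{all} non-archimedean places, but the Grunwald--Wang element of \S\ref{sec:Grunwald} lies in $\Brun B(\Z/8\Z)$ and is non-trivial at $v=2$. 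The paper handles the forward direction by pulling $b$ back along an atlas $f:U\to\mathcal{X}$, noting $f^*b \in \Brun U$ by functoriality (Remark~\ref{rem:functorial}), and invoking the classical statement for smooth schemes, where spreading out to an integral model is what produces the finite exceptional set. (Also, Lemma~\ref{lem:unramified_henselian}(1) is the wrong citation: it is the converse implication and plays no role here.)

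Your converse is close in spirit but is executed differently from the paper, and the last step is not correct as written. The paper does not pass through an atlas; it spreads out the ramification witness $(\mathcal{O},K,x)$ directly (using $k\subset\mathcal{O}$) to a morphism $f:V\to\mathcal{X}$ with $V$ open in a smooth finite type $k$-scheme $Y$ and $f^*b\notin\Br Y$, then applies Harari's formal lemma over $k$ to obtain points of $V(k_v)\subset\mathcal{X}(k_v)$ directly. In your atlas route the containment $\mathcal{X}(k'_w)\supseteq\mathcal{X}(k_v)$ runs the wrong way for what you need: you have produced points in $\mathcal{X}(k'_w)$, not in $\mathcal{X}(k_v)$. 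This gap is repairable (e.g.\ apply Harari over $k$ rather than over the field of constants $k'$, or combine with a density argument at split places), but the paper's spreading-out of the witness avoids the issue entirely.
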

\begin{proof}
	Suppose that $b$ is unramified. Let $f:U \to \mathcal{X}$ be an atlas as in Lemma \ref{lem:image_k_points}. The pullback $f^* b \in \Br U$ is unramified by Remark \ref{rem:functorial}. We can thus reduce the lemma to the case of schemes where it is well-known.
	
	Now assume that $b$ is ramified and $k$ is a number field. Let $\mathcal{O}$ be a DVR over $k$ with fraction field $K$ and $f: \Spec K \to \mathcal{X}$ a map such that $f^*(b) \not \in \Br \mathcal{O}$. By spreading out we may replace $\Spec \mathcal{O}$ by a smooth finite type scheme $Y$ over $k$, $K$ by an open subscheme $V \subset Y$ and the map $f$ by a map $f: V \to \mathcal{X}$ such that $f^*(b) \not \in \Br Y$. We may assume without loss of generality that $Y$ is connected. It follows from Harari's formal lemma \cite[Thm.~13.4.1]{Col21} that there exists infinitely many places $v$ such that $f^*(b)$ evaluates non-trivially on $V(k_v)$. This implies that $b$ evaluates non-trivially on $f(V(k_v)) \subset \mathcal{X}(k_v)$.
\end{proof}

\subsubsection{Partially unramified Brauer---Manin pairing}
For the partially unramified Brauer group (Definition~\ref{def:partially_ramified_Br}),
it makes sense to modify the adelic
space to obtain a different topological space between 
$\mathcal{X}(\Adele_k)$ and $\prod_v\mathcal{X}(k_v)$ and upon which the Brauer--Manin pairing
is well-defined. This can be done in general, though to simplify the
exposition we only do this in the case of $BG$ (see \S \ref{sec:BMO_BG}.)

\subsection{Brauer groups of $\mu_n$-gerbes} \label{sec:mu_n-gerbe}
We now prepare for the proof of Theorem~\ref{thm:equivalent_conditions_unramified}. 
The proof will rely on the computation of the Brauer groups of neutral $\mu_n$-gerbes and root stacks. 

The inclusion $\mu_n \subset \Gm$ induces a map $B \mu_n \to B \Gm$ defining a canonical element $\mathcal{O}(\frac{1}{n}) \in \Pic B \mu_n$.
\begin{lemma} \label{lem:pushforward_mu_n}
	Let $\mathcal{X}$ be an algebraic stack, $n \in \N$ an invertible number on $\mathcal{X}$ and $f: (B \mu_n)_{\mathcal{X}} \to \mathcal{X}$ the structure map. Then $f_* \Gm = \Gm$, $\Res^1 f_* \Gm \cong \Z/ n \Z$, generated by $\mathcal{O}(\frac{1}{n})$, and $\Res^2 f_*\Gm = 0$.
\end{lemma}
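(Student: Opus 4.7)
The plan is to compute the higher direct images smooth-locally, reducing to a direct cohomological calculation over a strictly henselian base. Since the formation of $R^i f_* \Gm$ commutes with smooth base change on $\mathcal{X}$ and the question is local on $\mathcal{X}$, it suffices to check the claim after pulling back along a smooth atlas $\Spec R \to \mathcal{X}$ where $R$ is a strictly henselian local ring in which $n$ is invertible. Thus the task reduces to computing $\H^i((B\mu_n)_R, \Gm)$ for $i = 0,1,2$ and identifying the generator in degree $1$.

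For this I would use the canonical atlas $\Spec R \to (B\mu_n)_R$ corresponding to the trivial torsor. This is a $\mu_n$-torsor (étale since $n$ is invertible in $R$), and the associated Cartan--Leray spectral sequence reads
\[
E_2^{p,q} = \H^p(\mu_n, \H^q(\Spec R, \Gm)) \;\Longrightarrow\; \H^{p+q}((B\mu_n)_R, \Gm),
\]
with $\mu_n$ acting trivially on the coefficient groups. Because $R$ is strictly henselian we have $\Pic R = 0$ (every finitely generated projective module is free) and $\Br R = 0$ (the Brauer group is controlled by the separably closed residue field together with Hensel lifting of Azumaya algebras), so $\H^q(\Spec R, \Gm) = 0$ for $q = 1, 2$. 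The spectral sequence therefore collapses to an isomorphism $\H^i((B\mu_n)_R, \Gm) \cong \H^i(\mu_n, R^\times)$ in the range of interest.

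The rest is now the group cohomology of a cyclic group acting trivially on $R^\times$. In degree $0$ this is $R^\times$, recovering $f_*\Gm = \Gm$. In degree $1$ it is $\Hom(\mu_n, \Gm)(R) = \Z/n\Z$, and tracing through the spectral sequence the generator is precisely the tautological character $\mu_n \hookrightarrow \Gm$, which by construction is $\mathcal{O}(1/n)$. In degree $2$ the periodicity of cyclic group cohomology gives $\H^2(\mu_n, R^\times) = R^\times / (R^\times)^n$; Hensel's lemma applied to $x^n - u$ (valid since $n \in R^\times$ and the residue field is separably closed) shows every unit of $R$ is an $n$-th power, so this quotient vanishes and $R^2 f_* \Gm = 0$.

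The only genuine subtleties are justifying the reduction to the strictly henselian case for an algebraic stack $\mathcal{X}$ (handled by smooth descent for $R^i f_*$ and the fact that $f$ is a smooth morphism of algebraic stacks, so formation of cohomology commutes with smooth base change) and the vanishing of $\Br R$ for a strictly henselian local ring in which $n$ is invertible; the latter is standard and only the $n$-primary part is strictly needed here, which follows from Kummer theory together with the triviality of $\H^i(\Spec R, \mu_n)$ for $i \geq 1$.
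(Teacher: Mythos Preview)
Your proof is correct and follows essentially the same approach as the paper: reduce to a strictly henselian local base, apply the Hochschild--Serre (Cartan--Leray) spectral sequence for the torsor $\Spec R \to (B\mu_n)_R$ to identify $\H^i((B\mu_n)_R,\Gm)$ with group cohomology $\H^i(\mu_n,R^\times)$, and then compute the latter directly. Your write-up gives slightly more justification for the base-change reduction and for $R^\times/(R^\times)^n = 0$ than the paper does, but the argument is otherwise identical.
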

\begin{proof}
	We may assume that $\mathcal{X} = \Spec R$ for $R$ a strictly henselian local ring. Let $\pi: \Spec R \to (B \mu_n)_R$ be the corresponding $\mu_n$-torsor. The scheme $\Spec R$ has trivial cohomology so the Hochschild-Serre spectral sequence associated to this torsor implies that for all $i \in \N$ we have $\H^i((B \mu_n)_R, \Gm) \cong \H^i(\mu_n , R^{\times})$. Where $\mu_n$ acts trivially on $R^{\times}$.
	
	 We have $\H^0(\mu_n, R^{\times}) = R^{\times}$, $\H^1(\mu_n, R^{\times}) = \Hom(\mu_n, R^{\times}) \cong \Z / n\Z$ with a generator given by the inclusion $\mu_n \subset R^{\times}$. A cocycle computation shows that this inclusion corresponds to $\mathcal{O}(\frac{1}{n}) \in \H^1((B \mu_n)_R, \Gm)$. The group $\mu_n$ is cyclic so $\H^2(\mu_n, R^{\times}) = R^{\times}/R^{\times n} = 0$ since $R$ is strictly henselian and $n \in R^{\times}$.
\end{proof}
The map $f$ has a section so the Leray spectral sequence $\H^p(\mathcal{X}, \Res^q f_* \Gm) \implies \H^{p + q}((B \mu_n)_{\mathcal{X}}, \Gm)$ splits. We deduce from Lemma \ref{lem:pushforward_mu_n} the existence of a residue map $\res_{\mathcal{X}, n}: \Br((B \mu_n)_{\mathcal{X}}) \to \H^1(\mathcal{X}, \Z/n\Z)$. This yields a split exact sequence
\begin{equation} \label{eqn:residue_map}
		0 \to \Br \mathcal{X} \to \Br((B \mu_n)_{\mathcal{X}}) \xrightarrow{\res_{\mathcal{X}, n}} \H^1(\mathcal{X}, \Z/n\Z) \to 0.
\end{equation}
The  sequence \eqref{eqn:residue_map} suggest the following definition of the residue along a sector.

\begin{definition}\label{def:residue_along_sector}
	Let $\mathcal{X}$ be a finite type tame DM stack over a Noetherian base $S$.
	Let $\mathcal{S} \subset I_{\mu} \mathcal{X}$ be a sector of order $n$. Consider the universal map $(B \mu_n)_{\mathcal{S}} \to \mathcal{X}$. The \emph{residue along $\mathcal{S}$} is the composition
	\[
	\res_{\mathcal{S}}:\Br \mathcal{X} \to \Br (B \mu_n)_{\mathcal{S}} \xrightarrow{\res_{\mathcal{S}, n}} \H^1(\mathcal{S}, \Z/n \Z) \subset \H^1(\mathcal{S}, \Q/\Z).
	\]
\end{definition}	

We verify that the residue has the expected property with respect to the partially unramified
Brauer group from Definition \ref{def:partially_ramified_Br}.

\begin{lemma} \label{lem:Br_partially_unramified_purity}
	Let $\mathcal{X}$ be a finite type tame DM stack over a field $k$ and
	$\mathcal{C} \subset \pi_0(I_{\mu} BG)$ a subset of sectors. Then the sequence
	$$0 \to \Br_{\mathcal{C}} \mathcal{X} \to \Br \mathcal{X} \to \bigoplus_{\mathcal{S} \in \mathcal{C}} \H^1(\mathcal{S}, \Q/\Z) $$
	is exact.
\end{lemma}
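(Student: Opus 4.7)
The plan is that this lemma is essentially a formal consequence of the definitions and the split exact sequence \eqref{eqn:residue_map}, applied separately to each sector $\mathcal{S}$. The only real content beyond unwinding definitions is to check that the hypotheses of Lemma \ref{lem:pushforward_mu_n} are satisfied for each sector.

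First I would observe that the map $\Br \mathcal{X} \to \bigoplus_{\mathcal{S} \in \mathcal{C}} \H^1(\mathcal{S}, \Q/\Z)$ in the statement is the direct sum of the residue maps $\res_{\mathcal{S}}$ from Definition \ref{def:residue_along_sector}. By that definition, $\res_{\mathcal{S}}$ is the composition $\Br \mathcal{X} \xrightarrow{f_{\mathcal{S}}^*} \Br (B\mu_n)_{\mathcal{S}} \xrightarrow{\res_{\mathcal{S}, n}} \H^1(\mathcal{S}, \Z/n\Z)$, where $n$ is the order of the sector $\mathcal{S}$ and $f_{\mathcal{S}}$ is its universal representable map.

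Next I would apply Lemma \ref{lem:pushforward_mu_n} with base $\mathcal{S}$ (rather than $\mathcal{X}$) to obtain the split short exact sequence
\[
0 \to \Br \mathcal{S} \to \Br (B\mu_n)_{\mathcal{S}} \xrightarrow{\res_{\mathcal{S}, n}} \H^1(\mathcal{S}, \Z/n\Z) \to 0.
\]
This application requires $n$ to be invertible on $\mathcal{S}$. This is where tameness comes in: since $\mathcal{X}$ is a tame DM stack, every representable morphism $B\mu_n \to \mathcal{X}$ has $n$ coprime to the characteristic (Remark \ref{rem:tame_DM_representable_map}), so the hypothesis holds for every sector $\mathcal{S}$ arising in $I_\mu \mathcal{X}$.

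Given this exact sequence, $f_{\mathcal{S}}^*(b)$ lies in the subgroup $\Br \mathcal{S} \subset \Br (B\mu_n)_{\mathcal{S}}$ if and only if $\res_{\mathcal{S}, n}(f_{\mathcal{S}}^*(b)) = 0$, i.e., if and only if $\res_{\mathcal{S}}(b) = 0$. Combined with Definition~\ref{def:partially_ramified_Br}, which defines $\Br_{\mathcal{C}}\mathcal{X}$ as the set of $b \in \Br\mathcal{X}$ such that $f_{\mathcal{S}}^*(b) \in \Br \mathcal{S}$ for all $\mathcal{S} \in \mathcal{C}$, this says precisely that $\Br_{\mathcal{C}}\mathcal{X}$ is the kernel of $\Br \mathcal{X} \to \bigoplus_{\mathcal{S} \in \mathcal{C}} \H^1(\mathcal{S}, \Q/\Z)$. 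Injectivity of $\Br_{\mathcal{C}}\mathcal{X} \hookrightarrow \Br\mathcal{X}$ is immediate as it is a subgroup inclusion, so the sequence is exact. Since the whole argument is formal, there is no real obstacle; the only point that requires any care is invoking tameness to justify the use of Lemma \ref{lem:pushforward_mu_n} at each $\mathcal{S}$.
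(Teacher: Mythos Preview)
Your proof is correct and follows the same approach as the paper, which simply says the lemma is immediate from \eqref{eqn:residue_map} and Definition~\ref{def:partially_ramified_Br}. You have spelled out the details carefully, including the invocation of tameness to ensure $n$ is invertible on each $\mathcal{S}$.
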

\begin{proof}
	Immediate from \eqref{eqn:residue_map} and Definition \ref{def:partially_ramified_Br}.
\end{proof}

For cup products there is an explicit description of $\res_{\mathcal{X}}$; the following argument is inspired by the proof of \cite[Thm.~4.1.1]{Sko01}.
\begin{lemma} \label{lem:residue_map_cup}
	Let $T$ be a group of multiplicative type over an algebraic stack $\mathcal{X}$ and $\dual{T} := \Hom(T, \Gm)$ the sheaf of characters. Let $\pi: \mathcal{Y} \to (B \mu_n)_{\mathcal{X}}$ be a $T$-torsor corresponding to a map $(B \mu_n)_{\mathcal{X}} \to (B T)_{\mathcal{X}}$. Let $p: \mu_n \to T$ be the induced map on automorphism sheaves and $\dual{p}: \dual{T} \to \Z/n\Z$ the Cartier dual.
	
	Let $[\pi] \in \H^1((B \mu_n)_{\mathcal{X}}, T)$ be the class of $\pi$. Then for all $\alpha \in \H^1(\mathcal{X}, \hat{T})$ one has $\res_{\mathcal{X} , n}(\alpha \cup [\pi]) = \dual{p}_*(\alpha)$, where the cup product is induced by the bilinear pairing $\hat{T} \times T \to \Gm$.
\end{lemma}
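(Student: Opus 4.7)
The plan is to reduce the identity to a compatibility statement between the Leray edge map and the cup product. First, following the template of Lemma~\ref{lem:pushforward_mu_n}, we compute $\Res^1 f_* T$ for an arbitrary group of multiplicative type $T$ on $\mathcal{X}$, where $f:(B\mu_n)_{\mathcal{X}} \to \mathcal{X}$ is the structure map. Locally on the smooth site we may replace $\mathcal{X}$ by $\Spec R$ with $R$ strictly henselian and use the fact that a $T$-torsor on $(B\mu_n)_R$ is the same as a $\mu_n$-equivariant $T$-torsor on $\Spec R$. Since $T$-torsors on $\Spec R$ are trivial, this amounts to the $\mu_n$-equivariant structure on the trivial $T$-torsor, which is the same as an element of $\Hom(\mu_n, T)$. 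This yields a canonical identification $\Res^1 f_* T \cong \Hom(\mu_n,T)$, and by construction the edge map $\H^1((B\mu_n)_{\mathcal{X}}, T) \to \H^0(\mathcal{X}, \Hom(\mu_n,T))$ sends $[\pi]$ to the map $p$ on automorphism sheaves.

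Next, under the pairing of sheaves $\hat{T} \times T \to \Gm$, I would check that the induced pairing on the $E_2$-page of the Leray spectral sequence fits into a commutative square
\[
\xymatrix{
\hat{T} \otimes \Res^1 f_* T \ar[r] \ar[d]_{\cong} & \Res^1 f_* \Gm \ar[d]^{\cong} \\
\hat{T} \otimes \Hom(\mu_n,T) \ar[r]^-{\text{eval}} & \Z/n\Z,
}
\]
where the bottom arrow sends $(\chi, q) \mapsto \chi \circ q$ and the identifications are those of Lemma~\ref{lem:pushforward_mu_n}. This is a local check that reduces to computing the obvious compatibility between Kummer theory and characters of $T$, using that a character $\chi:T \to \Gm$ restricts along $p:\mu_n \to T$ to an element $\hat{p}(\chi) \in \Hom(\mu_n,\Gm)=\Z/n\Z$ (up to the canonical identification used in the proof of Lemma~\ref{lem:pushforward_mu_n}).

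Then I would apply the standard multiplicativity of Leray spectral sequences with respect to cup product (as in \cite[Tag~0711]{stacks-project}). Writing the cup product of $f^*\alpha \in \H^1((B\mu_n)_{\mathcal{X}}, \hat{T})$ and $[\pi] \in \H^1((B\mu_n)_{\mathcal{X}}, T)$ via the filtration from the spectral sequence, the element $f^*\alpha$ sits in $E_\infty^{1,0}$ (it comes from $\H^1(\mathcal{X}, \hat T)$), while $[\pi]$ has image $p$ in $E_\infty^{0,1} = \H^0(\mathcal{X}, \Hom(\mu_n,T))$. The product therefore lies in the $E_\infty^{1,1}$ piece of $\H^2((B\mu_n)_{\mathcal{X}}, \Gm)$, and its class in $\H^1(\mathcal{X}, \Res^1 f_* \Gm) = \H^1(\mathcal{X}, \Z/n\Z)$ is $\alpha \cup p$, which by the commutative square above equals $\hat{p}_*(\alpha)$.

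The one delicate point is the compatibility between edge maps and cup products in the Leray spectral sequence; once that is set up correctly, the identification of the pairing on the $E_2$-page and the description of the edge map above are relatively formal. I expect this to be the main obstacle, and I would handle it by appealing to the general machinery in \cite[Tag~0711]{stacks-project} (or by a direct \v{C}ech-theoretic argument analogous to the proof of \cite[Thm.~4.1.1]{Sko01}).
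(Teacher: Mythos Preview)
Your proposal is correct and follows essentially the same route as the paper: compute $\Res^1 f_* T \cong \Hom(\mu_n,T)$, identify the edge map image of $[\pi]$ as $p$, and then invoke the multiplicativity of the Leray spectral sequence to conclude $\res_{\mathcal{X},n}(\alpha\cup[\pi])=\alpha\cup p=\hat{p}_*(\alpha)$. The paper cites \cite[Cor.~8.8]{Swa99} rather than the Stacks Project for the cup-product compatibility, but the argument is otherwise the same.
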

\begin{proof}
	The map $\res_{\mathcal{X} , n}$ is a boundary map of the Leray spectral sequence of $f:(B \mu_n)_{\mathcal{X}} \to \mathcal{X}$. The Leray spectral sequence has cup products \cite[Cor.~8.8]{Swa99} which implies that $\res_{\mathcal{X} , n}(\alpha \cup [\pi])$ is the cup product of $\alpha \in  \H^1(\mathcal{X}, \hat{T})$ and the image of $[\pi]$ under the boundary map $d:\H^1((B \mu_n)_{\mathcal{X}}, T) \to \H^0(\mathcal{X}, \Res^1 T)$. The same computation as in Lemma \ref{lem:pushforward_mu_n} shows that $\Res^1f_* T \cong \Hom(\mu_n, T)$.
	
	Consider the bilinear pairing $\hat{T} \times \Res^{1}f_* T \cong \hat{T} \times \Hom(\mu_n, T) \to \Res^1f_*\Gm = \Hom(\mu_n, \Gm)$ induced by the cup product. The functoriality of the isomorphism $\Res^1 f_* T \cong \Hom(\mu_n, T)$  implies that this bilinear pairing is given by the formula $(\chi: T \to \Gm, t: \mu_n \to T) \to \chi(t) \in \Hom(\mu_n, \Gm)$. We find that $\res_{\mathcal{X} , n}(\alpha \cup [\pi]) = \alpha \cup d([\pi])$. It suffices to show that $d([\pi]) = p$ since $\alpha \cup p = \dual{p}_*(\alpha)$.
	
	This is a local statement so we may reduce to the case that $\mathcal{X}$ is the spectrum of a strictly henselian ring. In this case $d$ is the identity and $[\pi] = p$ by definition.
\end{proof}

\subsection{Brauer groups of root stacks} \label{sec:root-stacks}
There is an analogue of Lemma \ref{lem:pushforward_mu_n} for root stacks.

\begin{lemma}
	Let $\mathcal{X}$ be an algebraic stack, $n \in \N$ invertible on $X$ and $\mathcal{D} \subset \mathcal{X}$ an effective Cartier divisor. Let $i: \mathcal{D} \to \mathcal{X}$ be the inclusion. Let $p: \mathcal{X}[\sqrt[n]{\mathcal{D}}] \to \mathcal{X}$ be the forgetful map and $q: \mathcal{G} \to \mathcal{D}$ be the $\mu_n$-gerbe over $\mathcal{D}$.
	
	Then $p_* \Gm = \Gm$, the map $\Res^1 p_* \Gm \to i_* \Res^1 q_* \Gm \cong i_* \Z / n \Z$ is an isomorphism and $\Res^2 p_* \Gm = 0$. 
\end{lemma}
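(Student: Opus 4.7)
The plan is to reduce to strict-henselian stalks and then to a cyclic group cohomology computation. The claims are smooth-local on $\mathcal{X}$, so I may assume $\mathcal{X} = \Spec R$ with $R$ strictly henselian local and $\mathcal{D} = V(f)$ for some $f \in R$. When $f \in R^\times$, the divisor $\mathcal{D}$ is empty over $\Spec R$, the root stack is just $\Spec R$, and all three assertions are trivial. Otherwise $f \in \mathfrak{m}_R$ and by the standard presentation \cite[Ex.~2.4.1]{Cad07} we have $\mathcal{X}[\sqrt[n]{\mathcal{D}}] = [\Spec S/\mu_n]$ with $S = R[t]/(t^n - f)$ and $\mu_n$ acting by $t \mapsto \zeta t$. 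Because $S$ is finite and local over $R$ with the same (separably closed) residue field, it is itself strictly henselian, so $\H^q(\Spec S, \Gm) = 0$ for $q \geq 1$. Applying Hochschild--Serre to the $\mu_n$-torsor $\Spec S \to [\Spec S/\mu_n]$ collapses the spectral sequence and identifies $\H^i([\Spec S/\mu_n], \Gm) = \H^i(\mu_n, S^\times)$, reducing the problem to group cohomology.

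I would then exploit the short exact sequence of $\mu_n$-modules
\[
1 \to 1 + tS \to S^\times \to (R/fR)^\times \to 1,
\]
in which the right-hand side carries the trivial $\mu_n$-action. The subgroup $1 + tS$ is uniquely $n$-divisible: Hensel's lemma in the strictly henselian $S$ applied to $X^n - y$ for $y \in 1 + tS$ produces a unique $n$-th root in $1 + \mathfrak{m}_S$, and reducing modulo $tS$ (where the equation becomes $X^n = 1$ in the strictly henselian ring $R/fR$) forces this root to lie in $1 + tS$. Consequently $\H^{i \geq 1}(\mu_n, 1 + tS) = 0$. Since $R/fR$ is itself strictly henselian, $(R/fR)^\times$ is $n$-divisible with $n$-torsion equal to $\mu_n$, so its cyclic cohomology with trivial action has $\H^0 = (R/fR)^\times$, $\H^1 = \Hom(\mu_n, (R/fR)^\times) \cong \Z/n\Z$, and $\H^2 = 0$. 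The long exact sequence, together with the surjection $R^\times \twoheadrightarrow (R/fR)^\times$ coming from unit-lifting in strictly henselian rings, then delivers $\H^0(\mu_n, S^\times) = R^\times$, $\H^1 = \Z/n\Z$, and $\H^2 = 0$, which gives $p_*\Gm = \Gm$ and $\Res^2 p_*\Gm = 0$ together with the abstract stalk computation for $\Res^1 p_*\Gm$.

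Finally, the displayed map $\Res^1 p_* \Gm \to i_* \Res^1 q_* \Gm$ is the natural base-change map along the closed immersion $\mathcal{G} \hookrightarrow \mathcal{X}[\sqrt[n]{\mathcal{D}}]$. At a stalk in $\mathcal{D}$ the generator of $\Res^1 p_* \Gm$ constructed above is the class of the tautological line bundle $\mathcal{T}$ of the root stack (satisfying $\mathcal{T}^n = \mathcal{O}(\mathcal{D})$); its restriction to $\mathcal{G}$ is the tautological character line bundle $\mathcal{O}(\tfrac{1}{n})$ of the $\mu_n$-gerbe, which generates $\Res^1 q_* \Gm$ by Lemma \ref{lem:pushforward_mu_n}, so the comparison map is an isomorphism. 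The main obstacle I anticipate is a clean verification of the unique $n$-divisibility of $1 + tS$: one must confirm not only that Hensel yields a root in $1 + \mathfrak{m}_S$ but that it falls in the smaller subgroup $1 + tS$, which is precisely where the auxiliary reduction through the strictly henselian quotient $R/fR$ is needed.
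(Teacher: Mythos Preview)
Your proof is correct and shares the paper's overall strategy: reduce to a strictly henselian base $R$, present the root stack as $[\Spec S/\mu_n]$ with $S=R[t]/(t^n-f)$, observe $S$ is strictly henselian, and collapse Hochschild--Serre to identify $\H^i(\mathcal{X}[\sqrt[n]{\mathcal{D}}],\Gm)=\H^i(\mu_n,S^\times)$.

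Where you diverge is in the actual group-cohomology computation. The paper handles $\H^1$ by citing Cadman's computation of $\Pic$ of a root stack, and handles $\H^2$ directly via the Tate cohomology of a cyclic group: $\H^2(\mu_n,S^\times)=R^\times/N(S^\times)$, together with $N(R^\times)=R^{\times n}=R^\times$. Your route instead filters $S^\times$ by $1+tS$ and shows this kernel is uniquely $n$-divisible, reducing everything to the trivial-action cohomology of $(R/fR)^\times$. This is slightly longer but more self-contained (no external citation for $\H^1$), and it has a bonus you did not fully exploit: the isomorphism $\H^1(\mu_n,S^\times)\xrightarrow{\sim}\H^1(\mu_n,(R/fR)^\times)$ coming from your long exact sequence \emph{is} the stalk of the comparison map $\Res^1p_*\Gm\to i_*\Res^1q_*\Gm$, since the closed immersion $\mathcal{G}\hookrightarrow\mathcal{X}[\sqrt[n]{\mathcal{D}}]$ is induced by the $\mu_n$-equivariant surjection $S\to S/tS=R/fR$ and Hochschild--Serre is functorial. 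So your final paragraph tracking the tautological line bundle is unnecessary; your filtration already delivers the comparison isomorphism for free.
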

\begin{proof}
	We may assume that $\mathcal{X} = \Spec R$ for $R$ a strictly henselian local ring. Let $f \in R$ be the section defining $\mathcal{D}$ and $S = R[x]/(x^n - f)$, it is finite over $R$ and thus strictly henselian. Let $\Spec S \to \mathcal{X}[\sqrt[n]{\mathcal{D}}]$ be the defining $\mu_n$-torsor and $\Spec R \to \mathcal{G}$ the defining $\mu_n$-torsor of $\mathcal{G}$.
	
	The map $\H^1(\mathcal{X}[\sqrt[n]{\mathcal{D}}], \Gm) \to \H^1(\mathcal{G}, \Gm) \cong \Z/ n\Z$ is an isomorphism by the computation of the Picard group of root stacks \cite[Cor. 3.1.2]{Cad07}.
	
	The cohomology of $S$ and $R$ is trivial so by the Hochschild-Serre spectral sequence we find that $\H^0(\mathcal{X}[\sqrt[n]{\mathcal{D}}], \Gm) = \H^0(\mu_n, S^{\times}) = (S^{\times})^{\mu_n} = R^{\times}$ and $\H^2(\mathcal{X}[\sqrt[n]{\mathcal{D}}], \Gm) = \H^2(\mu_n, S^{\times})$. The group $\mu_n$ is cyclic so $\H^2(\mu_n, S^{\times})$ is isomorphic to $R^{\times}$ modulo norms. The norms of $R^{\times} \subset S^{\times}$ are $R^{\times n} = R^{\times}$ so $\H^2(\mu_n, S^{\times}) = 0$.
\end{proof}

Let $\mathcal{O}$ be a DVR with fraction field $K$, valuation $v$ and residue field $\F$. Let $n \in \N$ be invertible on $\O$. We now consider the special root stack $\Spec \mathcal{O}_{\sqrt[n]{v}}$ from \S \ref{sec:def_O_root_n}. 

We next deduce the existence of a residue map from the Leray spectral sequence. Fix an isomorphism $\Spec \F_{\sqrt[n]{v}} \cong (B\mu_n)_{\F}$ and let $\res_{ \mathcal{O}, n}: \Br \mathcal{O}_{\sqrt[n]{v}} \to \H^1(\F, \Z/ n\Z)$ be the composition of the map $\Br \mathcal{O}_{\sqrt[n]{v}} \to \Br \Spec \F_{\sqrt[n]{v}} \cong \Br (B \mu_n)_{\F}$ with $\res_{\F, n}$. 

The Leray spectral sequence implies that the following sequence is exact (compare with \eqref{eqn:residue_map}).
\begin{equation}\label{eqn:residue_map_DVR}
	0 \to \Br \mathcal{O} \to \Br \mathcal{O}_{\sqrt[n]{v}} \xrightarrow{\res_{\O, \sqrt[n]{v}}}  \H^1(\F, \Z/ n\Z)
\end{equation}
If $\mathcal{O}$ is henselian then this exact sequence splits after a choice of uniformiser. The following lemma also shows that the map $\res_{\O, \sqrt[n]{v}}$ is independent of the choice of isomorphism $\Spec \F_{\sqrt[n]{v}} \cong (B\mu_n)_{\F}$.
\begin{lemma} 	\label{lem:Cup products defines a section of residue map for DVR}
	Let $\pi \in \mathcal{O}$ be a uniformiser. Consider the $\mu_n$-torsor $p: \Spec \mathcal{O}[\pi^{\frac{1}{n}}] \to \Spec \mathcal{O}_{\sqrt[n]{v}}$. For all $\alpha \in \H^1(\mathcal{O}, \Z/ n \Z)$ we have $\res_{\O, \sqrt[n]{v}}(\alpha \cup [p]) = \alpha|_\F$.
	
	If $\mathcal{O}$ is henselian then $\H^1(\mathcal{O}, \Z/n \Z) \cong \H^1(\F, \Z/ n\Z)$ so $\cdot \cup [p]$ defines a section of $\res_{\O, \sqrt[n]{v}}$. Moreover, the map $\Br \mathcal{O}_{\sqrt[n]{v}} \to \Br (B \mu_n)_{\F}$ is an isomorphism.
\end{lemma}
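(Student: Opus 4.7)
The strategy is to reduce the calculation to the gerbe case handled by Lemma \ref{lem:residue_map_cup}, by restricting everything to the special fiber. Recall that by definition $\res_{\O, \sqrt[n]{v}}$ is the composition
\[ \Br \mathcal{O}_{\sqrt[n]{v}} \xrightarrow{\,\text{restriction}\,} \Br \Spec \F_{\sqrt[n]{v}} \cong \Br (B\mu_n)_{\F} \xrightarrow{\res_{\F, n}} \H^1(\F, \Z/n\Z). \]
By the functoriality of cup product, one has $(\alpha \cup [p])|_{(B\mu_n)_\F} = \alpha|_\F \cup [p|_{(B\mu_n)_\F}]$, so the problem reduces to identifying the restriction of the $\mu_n$-torsor $p$ to the special fiber.

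The key observation for this identification is that under the presentation $\Spec \mathcal{O}_{\sqrt[n]{v}} \cong [\Spec \mathcal{O}[s]/(s^n - \pi)/\mu_n]$ from \S\ref{sec:def_O_root_n}, the torsor $p$ is the defining $\mu_n$-torsor coming from the atlas, and the inclusion $i_\pi:(B\mu_n)_\F \hookrightarrow \Spec \mathcal{O}_{\sqrt[n]{v}}$ corresponds at the level of atlases to setting $s = 0$. The pullback of $\Spec \mathcal{O}[s]/(s^n - \pi)$ along $s \mapsto 0$ is $\Spec \F$, and therefore the restriction of $p$ is the universal (atlas) $\mu_n$-torsor on $(B\mu_n)_\F$, namely the one whose class corresponds under Lemma \ref{lem:residue_map_cup} (with $T = \mu_n$) to the identity map $\mu_n \to \mu_n$. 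Applying Lemma \ref{lem:residue_map_cup}, whose Cartier dual $\dual{p}$ is then the identity $\Z/n\Z \to \Z/n\Z$, gives
\[ \res_{\F, n}(\alpha|_\F \cup [p|_{(B\mu_n)_\F}]) = \alpha|_\F, \]
which is exactly part (1).

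For the henselian case, Hensel's lemma for the finite \'etale sheaf $\Z/n\Z$ yields $\H^1(\mathcal{O}, \Z/n\Z) \cong \H^1(\F, \Z/n\Z)$, so by part (1) the assignment $\alpha \mapsto \alpha \cup [p]$ descends to a well-defined set-theoretic section of $\res_{\O, \sqrt[n]{v}}$. Its existence promotes \eqref{eqn:residue_map_DVR} to a split short exact sequence
\[ 0 \to \Br \mathcal{O} \to \Br \mathcal{O}_{\sqrt[n]{v}} \to \H^1(\F, \Z/n\Z) \to 0, \]
which is to be compared with the split sequence \eqref{eqn:residue_map} for $(B\mu_n)_\F$. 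Since the restriction of $p$ is the universal $\mu_n$-torsor, the same cup-product formula computes a section of $\res_{\F, n}$, and the restriction map $\Br \mathcal{O}_{\sqrt[n]{v}} \to \Br (B\mu_n)_\F$ intertwines the two splittings. Invoking the classical comparison $\Br \mathcal{O} \cong \Br \F$ for henselian local rings (with $n$ invertible, which controls the only relevant torsion via the $\H^1$-summand) and applying the five lemma yields the asserted isomorphism.

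The only step with any real content is the identification of $p|_{(B\mu_n)_\F}$ with the universal $\mu_n$-torsor; everything else is a diagram chase once this is in place. The cleanest way I see to make this identification rigorous is to work with the explicit groupoid presentation of the root stack, as above, so that the inclusion $i_\pi$ and the atlas projection are made simultaneously explicit and the base change computation becomes tautological.
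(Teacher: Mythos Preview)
Your proposal is correct and follows essentially the same approach as the paper: restrict the cup product to the special fiber via functoriality, identify the restricted $\mu_n$-torsor, and invoke Lemma~\ref{lem:residue_map_cup}; then in the henselian case compare the split sequences \eqref{eqn:residue_map} and \eqref{eqn:residue_map_DVR} together with $\Br\mathcal{O}\cong\Br\F$.

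One small point of care: the paper fixes the identification $\Spec\F_{\sqrt[n]{v}}\cong(B\mu_n)_\F$ \emph{before} the lemma, not necessarily via $i_\pi$, so the restricted torsor need not literally be the universal one; it only induces the identity on automorphism sheaves $\mu_n\to\mu_n$ (any two such identifications differ by a translation). This is exactly what Lemma~\ref{lem:residue_map_cup} needs, so your conclusion is unaffected, but your claim that the restriction is ``the universal $\mu_n$-torsor'' is slightly too strong in general. The paper phrases this as ``the induced map $(B\mu_n)_\F\to(B\mu_n)_\F$ is the identity on automorphism groups (but the induced map is not necessarily equal to the identity)''.
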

\begin{proof}
	The fibre $\Spec \F \cong \pi^{-1}(\Spec \F_{\sqrt[n]{v}}) \to \Spec \F_{\sqrt[n]{v}} \cong (B \mu_n)_{\F}$ is a $\mu_n$-torsor such that the induced map $(B \mu_n)_{\F} \to (B \mu_n)_{\F}$ is the identity on automorphism groups (but the induced map is not necessarily equal to the identity). The lemma then follows from functoriality of the cup product and Lemma \ref{lem:residue_map_cup}.
	
	If $\mathcal{O}$ is henselian then $\H^1(\mathcal{O}, \Z/n \Z) \cong \H^1(\F, \Z/ n\Z)$. Comparing \eqref{eqn:residue_map} and \eqref{eqn:residue_map_DVR} shows that $\Br \mathcal{O}_{\sqrt[n]{v}} \to \Br (B \mu_n)_{\F}$ is an isomorphism
\end{proof}

We can relate the residue $\res_{\O, \sqrt[n]{v}}$ to the Witt residue $\res_{\mathcal{O}}$.
\begin{lemma}\label{lem:Witt_residue_equal_root_residue}
	Let $v$ be the valuation of $\mathcal{O}$ and $n \in \N$ coprime to the characteristic of $\F$. The group $\Br \mathcal{O}_{\sqrt[n]{v}}$ is contained in $\ker(\Br K \to \Br K^{\emph{sh}})$. Moreover, the following square commutes
	\[\begin{tikzcd}
		{\Br \mathcal{O}_{\sqrt[n]{v}}} & {\ker(\Br K \to \Br K^{\emph{sh}})} \\
		{\H^1(\F, \Z/n\Z)} & {\H^1(\F, \Q/\Z).}
		\arrow["\subset"{description}, draw=none, from=1-1, to=1-2]
		\arrow["{\res_{\mathcal{O}}}", from=1-2, to=2-2]
		\arrow["\subset"{description}, draw=none, from=2-1, to=2-2]
		\arrow["{\res_{\O, \sqrt[n]{v}}}"', from=1-1, to=2-1]
	\end{tikzcd}\]
\end{lemma}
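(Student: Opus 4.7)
My plan is to prove the two assertions separately: first the containment, then the commutativity of the square.

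For the containment $\Br\mathcal{O}_{\sqrt[n]{v}} \subset \ker(\Br K \to \Br K^{\text{sh}})$, I would apply Lemma \ref{lem:Cup products defines a section of residue map for DVR} to the strict henselisation $\mathcal{O}^{\text{sh}}$, which identifies $\Br\mathcal{O}^{\text{sh}}_{\sqrt[n]{v}}$ with $\Br (B\mu_n)_{\F^{\text{sep}}}$. Instantiating the exact sequence \eqref{eqn:residue_map} with $\mathcal{X}=\Spec\F^{\text{sep}}$ shows this group vanishes, since both $\Br \F^{\text{sep}}$ and $\H^1(\F^{\text{sep}},\Z/n\Z)$ are zero (using that $n$ is coprime to the characteristic of $\F$). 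The composition $\Br\mathcal{O}_{\sqrt[n]{v}} \to \Br K \to \Br K^{\text{sh}}$ then factors through $\Br\mathcal{O}^{\text{sh}}_{\sqrt[n]{v}}=0$, proving the claim. (The ambient injection $\Br\mathcal{O}_{\sqrt[n]{v}} \hookrightarrow \Br K$ used here is given by regularity and Proposition \ref{prop:Grothendieck_purity}.)

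For the commutativity I would first reduce to the case where $\mathcal{O}$ is henselian. Both residue maps are compatible with base change $\mathcal{O}\to\mathcal{O}^h$: for $\res_{\mathcal{O},\sqrt[n]{v}}$ this is functoriality of the Leray spectral sequence computing $\Br$ of the root stack (noting that $\mathcal{O}$ and $\mathcal{O}^h$ share the residue field $\F$), while for $\res_{\mathcal{O}}$ it is a standard property of the Witt residue. Chasing these naturality squares, commutativity over $\mathcal{O}$ reduces to commutativity over $\mathcal{O}^h$. Under the henselian assumption, Lemma \ref{lem:Cup products defines a section of residue map for DVR} yields a splitting $\Br\mathcal{O}_{\sqrt[n]{v}} = \Br\mathcal{O} \oplus \H^1(\F,\Z/n\Z)$, where the second summand is the image of $\alpha \mapsto \alpha\cup[p]$ for a chosen uniformiser $\pi$.

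On the first summand both residues are zero: $\res_{\mathcal{O},\sqrt[n]{v}}$ by exactness of \eqref{eqn:residue_map_DVR}, and the Witt residue because the images in $\Br K$ come from $\Br\mathcal{O}$, which is the kernel of $\res_{\mathcal{O}}$ by definition. On the second summand, Lemma \ref{lem:Cup products defines a section of residue map for DVR} gives $\res_{\mathcal{O},\sqrt[n]{v}}(\alpha\cup[p]) = \alpha|_\F$. The restriction of $p:\Spec\mathcal{O}[\pi^{1/n}]\to\Spec\mathcal{O}_{\sqrt[n]{v}}$ to the generic point is the $\mu_n$-torsor $\Spec K[\pi^{1/n}]\to\Spec K$, whose class in $\H^1(K,\mu_n)=K^\times/K^{\times n}$ is $[\pi]$. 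Hence the image of $\alpha\cup[p]$ in $\Br K$ is the cyclic symbol $\alpha_K\cup[\pi]$, with $\alpha_K$ unramified; the standard tame-symbol formula then gives $\res_{\mathcal{O}}(\alpha_K\cup[\pi]) = v(\pi)\cdot\alpha|_\F = \alpha|_\F$, matching $\res_{\mathcal{O},\sqrt[n]{v}}$. The main obstacle is the bookkeeping needed for the reduction to the henselian case, after which the two-summand verification using Lemma \ref{lem:Cup products defines a section of residue map for DVR} and the tame-symbol formula is routine.
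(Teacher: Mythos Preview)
Your proposal is correct and follows essentially the same route as the paper: reduce to the henselian case by compatibility of both residues with unramified base change, use the splitting $\Br\mathcal{O}_{\sqrt[n]{v}} = \Br\mathcal{O} \oplus \H^1(\F,\Z/n\Z)$ from Lemma~\ref{lem:Cup products defines a section of residue map for DVR}, and verify commutativity on each summand via the tame-symbol formula (the paper cites \cite[(1.29) p.~36]{Col21} for this). The only minor variation is in the containment step: the paper checks in the henselian case that both summands die in $\Br K^{\mathrm{sh}}$, whereas you argue more directly that $\Br\mathcal{O}^{\mathrm{sh}}_{\sqrt[n]{v}} \cong \Br(B\mu_n)_{\F^{\mathrm{sep}}} = 0$ and factor the map $\Br K \to \Br K^{\mathrm{sh}}$ through it---both arguments are equally valid.
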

This is a minor generalization of \cite[Thm.~1]{Oes18}; at first sight there seems to be a sign difference, but this is due to different sign conventions for the Leray spectral sequence.
\begin{proof}
	Both $\res_{\mathcal{O}}$ and $\res_{\O, \sqrt[n]{v}}$ are compatible with unramified base change so we may assume that $\mathcal{O}$ is henselian. Let $\pi$ be a uniformiser and consider the $\mu_n$-torsor $p: \Spec \mathcal{O}[\pi^{\frac{1}{n}}] \to \Spec \mathcal{O}_{\sqrt[n]{v}}$.	
	Let $A \in \Br \mathcal{O}_{\sqrt[n]{v}}$. By Lemma \ref{lem:Cup products defines a section of residue map for DVR} and the exactness of \eqref{eqn:residue_map_DVR} there exists an $A_0 \in \Br \mathcal{O}$ such that $A = A_0 + \res_{\O, \sqrt[n]{v}}(A) \cup [p]$. We have $\res_{\mathcal{O}}(A_0) = 0$. As $A_0 \in \Br \O$ and $\res_{\O, \sqrt[n]{v}}(A) \in \H^1(\F,\Q/\Z)$, these both become trivial after base change to $K^{\text{sh}}$. So $A \in \ker(\Br K \to \Br K^{\text{sh}})$. 
	
	The class $[p] \in \H^1(K, \mu_n) \cong K^{\times}/K^{\times n}$ is equal to $\pi$ by Kummer theory. So $\res_{\mathcal{O}}(A)= \res_{\mathcal{O}}(\res_{\O, \sqrt[n]{v}}(A) \cup \pi) + \res_{\mathcal{O}}(A_0) = \res_{\O, \sqrt[n]{v}}(A)$, where the last equality follows from \cite[(1.29) p.~36]{Col21} and the fact that $\res_{\mathcal{O}}(A_0) = 0$ by \eqref{eq:purity_DVR}.
\end{proof}

\subsection{Proof of Theorem \ref{thm:equivalent_conditions_unramified}} \label{sec:proof_sector_purity}
	Recall from Remark \ref{rem:tame_DM_representable_map} that all the $n \in \N$ which will appear are coprime to the characteristic of $k$.
	
	That $(3)$ implies $(2)$ is immediate. We also have that $(4)$ implies $(3)$. Indeed, we may assume that $S$ is connected and in that case any representable map $(B \mu_n)_S \to \mathcal{X}$ factors through the universal map $f_{\mathcal{S}}: (B \mu_n)_{\mathcal{S}} \to \mathcal{X}$ for some map $S \to \mathcal{S}$ and a sector $\mathcal{S} \in \pi_0(I_{\mu} \mathcal{X})$.
	
	Let us now show that $(2)$ implies $(4)$. Assume for the sake of contradiction that $f_{\mathcal{S}}^{*} b \not \in \Br \mathcal{S}$. The exactness of \eqref{eqn:residue_map} implies that $\res(f_{\mathcal{S}}^ b) \in \H^1(\mathcal{S}, \Z/n\Z)$ is non-zero. By Lemma \ref{lem:torsor_field_points} there exists a field $K/k$ and a point $x \in \mathcal{S}(K)$ such that $\res(f^* b)(x) \neq 0$. The point $x$ corresponds to a map $f_{\mathcal{S}}(x): (B \mu_n)_K \to \mathcal{X}$ such that $\res(f_{\mathcal{S}}(x)^* b) = \res(f_{\mathcal{S}}^* b)(x) \neq 0$. The exactness of \eqref{eqn:residue_map} implies that $f_{\mathcal{S}}^* b \not \in \Br K$ which contradicts $(2)$.
	
	We now show that $(1)$ is equivalent to $(2)$.	
	Let $\O$ be a DVR over $k$ with fraction field $K$, residue field $\F$, valuation $v$ and uniformiser $\pi$. Let $f: K \to \mathcal{X}$. By the arithmetic valuative criterion for properness (Lemma \ref{lem:arithmetic_valuative_criterion}) there exists an $n \in \N$ and a representable map $\underline{f}: \O_{\sqrt[n]{v}} \to \mathcal{X}$ whose generic fibre is $f$. Let ${\bar f} := \underline{f} \circ i_{\pi}: (B \mu_n)_{\F} \to \mathcal{X}$ be the restriction to the special point. 
	
	By definition of $\res$ we can combine \eqref{eqn:residue_map} and \eqref{eqn:residue_map_DVR} into the following commutative diagram with exact rows.
	\[
	\begin{tikzcd}
		0 \arrow[r] & \Br \mathcal{O} \arrow[d] \arrow[r] & \Br \mathcal{O}_{\sqrt[n]{v}} \arrow[r, "\res_{\O, \sqrt[n]{v}}"] \arrow[d, "i_{\pi}^*"] & \H^1(\F, \Z/n \Z) \arrow[d, equal] \\
		0 \arrow[r] & \Br \F \arrow[r] & \Br (B \mu_n)_{\F} \arrow[r, "\res_{\F, n}"] & \H^1(\F, \Z/n \Z)
	\end{tikzcd}
	\] 
	A diagram chase then shows that for $\beta \in \Br \mathcal{O}_{\sqrt[n]{v}}$ we have $\beta \in \Br \mathcal{O}$ if and only if $i_{\pi}^* \beta \in \Br \F$. We will apply this to the case $\beta = f^* b = \underline{f}^* b$, where $i_{\pi}^* \beta = i_{\pi}^* \underline{f}^* b = \bar{f}^* b$.
	 
	 If (2) holds then let $\mathcal{O}$ and $f$ be arbitrary. We then have that $\bar{f}^{*} b \in \Br \F$ by (2). This implies that $f^* b \in \Br \mathcal{O}$ and thus that (1) holds.
	
	Let $\F/k$ be a field and $\bar{f} :(B \mu_n)_{\F} \to \mathcal{X}$ a map. Let $\Tilde{f}: \Spec \F[[t]]_{\sqrt[n]{t}} \to \mathcal{X}$ be the composition with the map $\Spec \F[[t]]_{\sqrt[n]{t}} \to (B \mu_n)_{\F}$ defined by the composition $\F[[t]]_{\sqrt[n]{t}} \cong [\Spec \F[[t^{\frac{1}{n}}]]/\mu_n] \to (B\mu_n)_{\F}$, where $\mu_n$ acts by multiplication on $t^{\frac{1}{n}}$, to get a map $\Tilde{f}: \Spec \F[[t]]_{\sqrt[n]{t}} \to \mathcal{X}$. We have $\bar{f} = \tilde{f} \circ i_{t}$ by the definition of $i_t$ so the above argument in reverse shows that (1) implies (2).
\qed

\begin{remark}
	Some of the tools in this section also appear in recent work of Achenjang \cite{Ach24},
	though ours aims and focus are quite different, in particular Achenjang does not
	consider the unramified Brauer group.
	For example a version of our Proposition \ref{prop:Grothendieck_purity}(3) be found
	in \cite[Prop.~8.5]{Ach24} for stacks over a $1$-dimensional base,
	with the Grothendieck purity theorem for stacky curves in 
	\cite[Prop.~8.15]{Ach24}. A version of our sequence \eqref{eqn:residue_map}
	and Lemma \ref{lem:residue_map_cup} appear in 
	\cite[Prop.~3.9, Prop.~3.15]{Ach24}, but Achenjang allows $\mu_n$ for arbitrary $n$ 
	(see also \cite[Prop.~4.1.4]{Lieb11} for a version over fields).
	A version of our sequence \eqref{eqn:residue_map_DVR} appears in 
	\cite[Prop.~6.11]{Ach24}, though again Achenjang allows arbitrary $n$.
\end{remark}

\subsection{A geometric interpretation of the residue}
We finish with a description of the residue for certain algebraic Brauer group elements, which we will require when studying the Brauer group of $BG$. The construction works in the following generality. Let $\mathcal{X}$ be a stack and $G$ an \'etale group of multiplicative type over $\mathcal{X}$. Consider the map $f: B G \to \mathcal{X}$. Let $T$ be another group of multiplicative type. A similar computation as in Lemma \ref{lem:pushforward_mu_n} shows that $f_* T = T$ and $R^1 f_* T = \underline{\Hom}(A, T)$, where $\underline{\Hom}$ denotes the Hom sheaf. The Leray spectral sequence leads to a boundary map $r:\H_1^2(BG,T) := \ker(\H^2(BG, T) \to \H^0(\mathcal{X}, R^2f_* T)) \to \H^1(\mathcal{X}, \underline{\Hom}(G, T))$. The goal of this section is give a geometric description of this map. In practice we will mostly care about the case $T = \Gm$ and $G = \mu_m$ in which case $r = \res_{\mathcal{X}, n}$.

An element of $[\pi] \in \H^2(BG, T)$ corresponds to a $T$-gerbe $\pi: \mathcal{T} \to BG$ as in \cite[\S12.2.7]{Ols}. The condition that $[\pi] \in \H_1^2(BG, T)$ means that the gerbe is locally neutral on $\mathcal{X}$ (rather than just locally neutral on $BG$), i.e.~locally on $\mathcal{X}$ of the form $B T \times G$. In particular, $\mathcal{T}$ is a gerbe with banded by an abelian group scheme $A$ over $\mathcal{X}$. The map $\pi: \mathcal{T} \to BG$ induces a map $A \to G$ on bands. Moreover, this map is surjective and has kernel $T$, both of these facts are checked locally on $\mathcal{X}$. In particular, $A$ defines an element of $\Ext^1_{\mathcal{X}}(G, T)$. Moreover, this extensions splits locally on $\mathcal{X}$ and by the Leray spectral sequence $\H^p(\mathcal{X}, \underline{\Ext}^q(G, T)) \implies \Ext^{p + q}(G, T)$ thus lives as  a class $[A] \in \H^1(\mathcal{X}, \underline{\Hom}(G, T))$.

This gives a geometric description of $r$.
\begin{proposition}\label{prop:geometric_description_residue}
	We have the equality $r([\pi]) = [A]$.
\end{proposition}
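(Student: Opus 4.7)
\emph{Step 1: Naturality and reduction to Čech cocycles.} Both the map $r$ (arising from the Leray spectral sequence for $f : BG \to \mathcal{X}$) and the assignment $[\pi] \mapsto [A]$ (arising from the Leray spectral sequence $\H^p(\mathcal{X}, \underline{\Ext}^q(G,T)) \Rightarrow \Ext^{p+q}_{\mathcal{X}}(G,T)$) are natural with respect to étale pullback on $\mathcal{X}$, and both land in the same Čech cohomology group $\H^1(\mathcal{X}, \underline{\Hom}(G,T))$. The plan is to represent each class by an explicit Čech $1$-cocycle on a common refinement and verify that the cocycles coincide.

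\emph{Step 2: Producing a cocycle for $[A]$.} By hypothesis the extension $1 \to T \to A \to G \to 1$ splits étale-locally on $\mathcal{X}$. Choose an étale cover $U \to \mathcal{X}$ on which a splitting $s : G_U \to A_U$ exists (so $A_U \cong T_U \oplus G_U$ as extensions), and set $V := U \times_{\mathcal{X}} U$. The two pulled-back splittings on $V$ differ by a homomorphism $\sigma \in \underline{\Hom}(G_V, T_V)$, and by the Leray spectral sequence for $\underline{\Ext}$ this $\sigma$ represents $[A]$.

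\emph{Step 3: Producing a cocycle for $r([\pi])$.} Pulling $\pi$ back to $U$, the gerbe $\mathcal{T}_U$ is neutral on $U$ and isomorphic to $BA_U$; the chosen splitting of $A_U$ identifies $\mathcal{T}_U \simeq BG_U \times_U BT_U$ as $T$-gerbes over $BG_U$, i.e.~gives a trivialization $t$ of $\pi|_U$. The difference of the two pulled-back trivializations on $V$ is a $T$-torsor on $BG_V$, and under the canonical isomorphism $R^1 f_* T \cong \underline{\Hom}(G, T)$ from (the proof of) Lemma \ref{lem:pushforward_mu_n} it defines an element $\tau \in \underline{\Hom}(G_V, T_V)$. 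Unwinding the Čech description of the boundary map in the Leray spectral sequence, $\tau$ is a cocycle representative of $r([\pi])$.

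\emph{Step 4: Matching the two cocycles.} It remains to identify $\sigma = \tau$. A splitting of $A_U$ as an extension is equivalent to a trivialization of the $T$-gerbe $BA_U \to BG_U$ — namely, the section $s : G_U \to A_U$ induces the map $BG_U \to BA_U$ realizing the trivialization. Under this dictionary the obstruction to two splittings agreeing on $V$ is the \emph{same} element $\underline{\Hom}(G_V,T_V)$ whether measured as a difference of extension splittings or as a difference of gerbe trivializations. Hence $\sigma = \tau$ in $\underline{\Hom}(G_V, T_V)$, and therefore $[A] = r([\pi])$ in $\H^1(\mathcal{X}, \underline{\Hom}(G,T))$.

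\emph{Main obstacle.} The subtle point is the compatibility of conventions: one must check that the identification $R^1 f_* T \cong \underline{\Hom}(G,T)$ coming from the strictly-local Hochschild--Serre computation (which reads off a $T$-torsor on $BG$ as the $G$-equivariant structure on a trivialized $T$-torsor over a point) matches the Yoneda/extension interpretation used to define $[A]$, with no sign or inversion discrepancy. Once this dictionary is pinned down, Steps 1--4 assemble the equality; alternatively, after the splitting in Step 2 the gerbe becomes a cup-product gerbe and one may deduce the equality directly from Lemma \ref{lem:residue_map_cup}, applied with the $\mu_n$ there replaced by the general group of multiplicative type $G$ (whose proof goes through verbatim in this generality).
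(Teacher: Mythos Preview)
Your approach is sound and genuinely different from the paper's. The paper proves this via derived categories: it invokes Lemma~\ref{lem:derived_interpretation_gerbe} to realise $[\pi]$ as the image of $\mathrm{id}_T$ under the boundary map of the distinguished triangle $T[1] \to \tau_{\leq 0} R\pi_* T[1] \to \underline{\Hom}(T,T) \to T[2]$, then applies $\tau_{\leq 0} Rf_*$ to obtain a new triangle \eqref{eq:important_triangle}, checks it is distinguished by computing cohomology, and finally compares with the triangle coming from the locally split extension $0 \to T \to A \to G \to 0$ via a commutative diagram. Your \v{C}ech-cocycle argument is instead concrete and hands-on: trivialise on a cover, compare on overlaps, and use the dictionary between splittings of $A$ and neutralisations of the gerbe $BA \to BG$.

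The paper explicitly anticipates your style of argument in the remark preceding its proof: ``this can be proven up to a sign by essentially only using functoriality. But something more is needed if one wants to understand the sign.'' That is precisely the obstacle you flag in your final paragraph. Your Step~4 is correct on the nose once conventions are fixed (two splittings $s_1,s_2$ with $s_1-s_2=\phi$ do yield sections $Bs_i$ whose difference is the $T$-torsor on $BG$ represented by $\phi$ under the Hochschild--Serre identification), but the chain of identifications --- difference of splittings, difference of gerbe sections, $R^1f_*T \cong \underline{\Hom}(G,T)$, and the \v{C}ech model of the Leray edge map --- each carries a potential sign, and your write-up does not pin them all down. The derived-category route was chosen precisely because it packages these compatibilities into the functoriality of truncation and the triangulated structure, so the sign comes out automatically. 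Your alternative via a generalised Lemma~\ref{lem:residue_map_cup} is plausible but would itself need the sign verified in that lemma's generalisation. In short: your strategy is correct and arguably more transparent, but to make it a complete proof you must actually carry out the sign check you identify as the main obstacle; the paper's abstraction trades transparency for not having to do that bookkeeping.
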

To prove this we will use the language of derived categories. We remark that this can be proven up to a sign by essentially only using functoriality. But something more is needed if one wants to understand the sign.

For any algebraic stack $\mathcal{X}$ let $D(\mathcal{X})$ denote the derived category of sheaves on the (small) fppf site of $\mathcal{X}$. Given a map $\mathcal{X} \to \mathcal{Y}$ we let $Rf_*: D(\mathcal{X}) \to D(\mathcal{Y})$ be the derived pushforward. We let $\tau_{\leq n}: D(\mathcal{X}) \to D(\mathcal{X})$ be the functor which truncates complexes to cohomological degree $\leq n$. The following lemma provides a geometric interpretation of the class $[\pi]$.

\begin{lemma}\label{lem:derived_interpretation_gerbe}
	Let $\pi: \mathcal{Y} \to \mathcal{X}$ be a $T$-gerbe and $T'$ a sheaf of abelian groups over $\mathcal{X}$. We have a canonical isomorphisms $ \pi_* T' \cong T'$ and $R^1 \pi_*T' \cong \underline{\Hom}(T, T')$.
	
	Moreover the map $d$ in the following distinguished triangle in $D(\mathcal{X})$
	\[
	T'[1] \to \tau_{\leq 0} R f_* T'[1] \to \underline{\Hom}(T, T') \xrightarrow{d} T'[2]
	\]
	sends $h \in \underline{\Hom}(T, T')$ to $h_*[\pi] \in T'[2]$.
\end{lemma}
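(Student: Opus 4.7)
The plan is first to establish the identifications $\pi_*T' \cong T'$ and $\Res^1\pi_*T' \cong \underline{\Hom}(T,T')$ by a local computation, then to derive the triangle from the two-step truncation filtration on $\tau_{\leq 0} R\pi_* T'[1]$, and finally to identify the connecting map $d$ by reducing to the universal case $T' = T$.

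First I would work locally on $\mathcal{X}$: since $\pi$ is a $T$-gerbe it is smooth-locally neutral, so we may assume $\mathcal{Y} \cong (BT)_{\mathcal{X}}$. Then $R\pi_* T'$ is computed via the cohomology of $BT$ using the universal $T$-torsor $\ast \to BT$, exactly as in the proof of Lemma \ref{lem:pushforward_mu_n}: one finds $\pi_* T' \cong T'$ (global sections descend) and $\Res^1\pi_* T' \cong \underline{\Hom}(T,T')$ (identifying $\H^1$ of the classifying stack of a multiplicative-type group with the sheaf of homomorphisms into $T'$). These identifications are canonical and so glue to the required global statement. Since $\tau_{\leq 0} R\pi_* T'[1]$ is concentrated in cohomological degrees $[-1,0]$ with exactly these two cohomology sheaves, the standard truncation triangle for a complex concentrated in two adjacent degrees produces the displayed triangle.

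The key step is the identification of $d$. By naturality of $R\pi_*$ and of the truncation construction in the coefficient sheaf, any $h \in \underline{\Hom}(T,T')$ induces a morphism $h_*: R\pi_* T \to R\pi_* T'$ compatible with the triangles, and the induced map on $\Res^1\pi_*$ is post-composition with $h$, i.e.~the morphism $\underline{\Hom}(T,T) \to \underline{\Hom}(T,T')$ sending $\mathrm{id}_T$ to $h$. It therefore suffices to treat the universal case $T' = T$ and to show that $d(\mathrm{id}_T) \in \H^2(\mathcal{X}, T)$ equals $[\pi]$. But this is precisely how the class of a gerbe is defined from the Leray spectral sequence $\H^p(\mathcal{X}, \Res^q\pi_* T) \Rightarrow \H^{p+q}(\mathcal{Y}, T)$: the transgression $d_2^{0,1}$ sends $\mathrm{id}_T \in \H^0(\mathcal{X}, \underline{\Hom}(T,T))$ to $[\pi]$ (cf.~\cite[\S 12.2.7]{Ols}), and by standard homological algebra this $d_2$ is computed by the connecting map of the two-step truncation of $R\pi_* T$, which is exactly $d$. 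Naturality then gives $d(h) = h_*(d(\mathrm{id}_T)) = h_*[\pi]$ in general.

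The main obstacle will be pinning down the sign conventions: both the isomorphism $\Res^1\pi_* T \cong \underline{\Hom}(T,T)$ and the Leray transgression depend on choices of shifts and signs, and the identity $d(\mathrm{id}_T) = [\pi]$ (as opposed to $-[\pi]$) needs to be verified against the conventions implicitly fixed by the earlier computations in \S\ref{sec:mu_n-gerbe}, most notably in Lemma \ref{lem:residue_map_cup}.
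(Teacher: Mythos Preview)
Your proposal is correct and follows essentially the same approach as the paper: a local computation reducing to the neutral case $\mathcal{Y} \cong (BT)_{\mathcal{X}}$ (as in Lemma~\ref{lem:pushforward_mu_n}), followed by a functoriality reduction to $T' = T$, $h = \mathrm{id}_T$, and then an identification of $d(\mathrm{id}_T)$ with the transgression of the gerbe class. The paper cites Giraud \cite[Prop.~V.3.2.1]{Gir71} rather than Olsson for this last step, and additionally remarks that the identification $R^1\pi_* T' \cong \underline{\Hom}(T,T')$ is independent of the choice of neutralisation $\mathcal{Y} \cong BT$ (unique only up to non-unique isomorphism), a point worth making explicit in your gluing step.
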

\begin{proof}
	The first computation is local and can thus be done in the case $\mathcal{X}$ is the spectrum of a strictly henselian ring $R$, in which case $\mathcal{Y} \cong BT$ and the computation is completely analogous to the proof of Lemma \ref{lem:pushforward_mu_n}. An important point is that the identification $R^1 \pi_* T' \cong \underline{\Hom}(T, T')$ is independent of the choice of isomorphism $\mathcal{Y} \cong BT$ as $T$-gerbes (which is unique up to non-unique isomorphism.)
	
	For the second statement we are reduced by functoriality to the case that $T' = T$ and $h = \text{id}_T$. We thus have to study the map $\Hom(T, T) \to \H^2(\mathcal{X}, T)$. This is known as the transgression map and the statement in the lemma then follows from unfolding \cite[Prop.~V.3.2.1]{Gir71}. 
%
\end{proof}

\begin{proof}[Proof of Proposition \ref{prop:geometric_description_residue}]
	By Lemma \ref{lem:derived_interpretation_gerbe} we have that $[\pi]$ is the image of $\text{id}_T$ in the distinguished triangle in $D(BG)$.
	\[
	T[1] \to \tau_{\leq 0} R\pi_* T[1] \to \underline{\Hom}(T, T) \to T[2]
	\]
	Applying the functor $\tau_{\leq 0} Rf_*$ and using the fact that $\tau_{\leq 0} Rf_* \tau_{\leq 0} R\pi_* T[1] \cong \tau_{\leq 0} R(f \circ \pi) T[1]$ by \cite[\href{https://stacks.math.columbia.edu/tag/015M}{Tag 015M}]{stacks-project} we get a triangle
	\begin{equation}\label{eq:important_triangle}
		\tau_{\leq 0} Rf_* T[1] \to \tau_{\leq 0} R(f \circ \pi)_* T[1] \to \underline{\Hom}(T, T) \to \tau_{\leq 1} Rf_* T[2].
	\end{equation}
	We claim that the triangle \eqref{eq:important_triangle} is distinguished. Indeed, it suffices to show that the corresponding long exact sequence of cohomology is exact. To see this we apply Lemma \ref{lem:derived_interpretation_gerbe} to $Rf_*$ and $R(f \circ \pi)_*$ to see that the corresponding long exact sequence is 
	\[
	0 \to T \to T \to 0  \to \underline{\Hom}(G, T) \to \underline{\Hom}(A, T)  \to \underline{\Hom}(T, T) \to 0
	\]
	The exact sequence $0 \to T \to A \to G \to 0$ splits locally on $\mathcal{X}$ which implies that this long exact sequence is exact.
	
	The description of spectral sequences in terms of derived functors (as explained in e.g.~\cite[pp.~67--68]{Sko01}) shows that $r([\pi])$ is equal to the image of $\text{id}_T$ under the composition of the maps $\underline{\Hom}(T, T) \to \tau_{\leq 1} Rf_* T[2] \to R^1 f_* T[1] \cong \underline{\Hom(G, T)}[1]$.
	
	The locally split exact  sequence $0 \to T \to A \to G \to 0$ leads to a distinguished triangle 
	\[
	\underline{\Hom}(G, T) \to \underline{\Hom}(A, T)  \to \underline{\Hom}(T, T) \to \underline{\Hom}(G, T)[1]
	\]
	where the last map sends $\text{id}_T$ to $[A]$. It thus suffices to show that the following diagram commutes, which is clear by the functoriality of Lemma \ref{lem:derived_interpretation_gerbe}.
	\[\begin{tikzcd}
		{	\tau_{\leq 0} Rf_* T[1]} & {\tau_{\leq 0} R(f \circ \pi)_* T[1]} & {\underline{\Hom}(T, T)} & {	\tau_{\leq 1} Rf_* T[2]} \\
		{\underline{\Hom}(G, T)} & {\underline{\Hom}(G, T)} & {\underline{\Hom}(T, T)} & {\underline{\Hom}(G, T)[1]}.
		\arrow[from=1-1, to=1-2]
		\arrow[from=1-1, to=2-1]
		\arrow[from=1-2, to=1-3]
		\arrow[from=1-2, to=2-2]
		\arrow[from=1-3, to=1-4]
		\arrow["{ = }", from=1-3, to=2-3]
		\arrow[from=1-4, to=2-4]
		\arrow[from=2-1, to=2-2]
		\arrow[from=2-2, to=2-3]
		\arrow[from=2-3, to=2-4]
	\end{tikzcd}\]
\end{proof}

\section{Brauer group of $BG$} \label{sec:Br_BG}

In this section we specialise the general theory from \S \ref{sec:Br} to the case of $BG$, and use it to calculate the partially unramified Brauer group of $BG$. The ultimate goal is to give an effective method to compute it, which we achieve using central extensions and Kummer theory (Theorem \ref{thm:orbifold_Kummer}), as well as Galois cohomology (Theorem \ref{thm:Br_BG}). This method is described in \S \ref{sec:procedure}.  Let $k$ be a field of characteristic $p$ and $G$ a finite \'etale group scheme over $k$.

\subsection{Brauer group of $BG$} \label{sec:representation_Brauer_group_elements}

 For a stack $X$ over a field $k$, we denote by $\Br_1 X = \ker(\Br X \to \Br X_{k^{\sep}})$ its \emph{algebraic Brauer group}. An element which is not algebraic is called \emph{transcendental}.

\begin{lemma}\label{lem:coh_sep_closed}
	For all $p > 0$ there exist functorial isomorphisms $\H^p(BG_{k^{\mathrm{sep}}}, \Gm) \cong \H^p(G(k^{\mathrm{sep}}), k^{\mathrm{sep}, \times})$ , where the latter cohomology is group cohomology. In particular, $\Pic BG_{k^{\mathrm{sep}}} = \Hom(G(k^{\mathrm{sep}}), k^{\mathrm{sep},\times})$ and $\Br BG_{k^{\mathrm{sep}}} = \H^2(G(k^{\mathrm{sep}}), k^{\mathrm{sep}, \times})$.
\end{lemma}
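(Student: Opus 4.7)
The plan is to use the atlas $\pi: \Spec k^{\mathrm{sep}} \to BG_{k^{\mathrm{sep}}}$, which is the universal $G(k^{\mathrm{sep}})$-torsor (here $G(k^{\mathrm{sep}})$ is viewed as a constant group scheme since we are over a separably closed field). This is a smooth cover, and the Hochschild--Serre / Leray spectral sequence for this torsor takes the form
\[
E_2^{p,q} = \H^p(G(k^{\mathrm{sep}}), \H^q(\Spec k^{\mathrm{sep}}, \Gm)) \Longrightarrow \H^{p+q}(BG_{k^{\mathrm{sep}}}, \Gm),
\]
where the outer cohomology on $E_2$ is group cohomology of $G(k^{\mathrm{sep}})$ acting on $\H^q(\Spec k^{\mathrm{sep}}, \Gm)$.

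The key input is the vanishing of the higher cohomology of the base: since $k^{\mathrm{sep}}$ is separably closed and $\Gm$ is smooth, fppf and \'etale cohomology agree \cite[\href{https://stacks.math.columbia.edu/tag/03YX}{Tag 03YX}]{stacks-project}, and then $\H^1(\Spec k^{\mathrm{sep}}, \Gm) = 0$ by Hilbert 90 while $\H^q(\Spec k^{\mathrm{sep}}, \Gm) = 0$ for $q \geq 2$ because $\Spec k^{\mathrm{sep}}$ has \'etale cohomological dimension $0$. Hence $E_2^{p,q} = 0$ for all $q \geq 1$, the spectral sequence degenerates at $E_2$ and collapses onto the row $q=0$, yielding
\[
\H^p(BG_{k^{\mathrm{sep}}}, \Gm) \;\cong\; \H^p(G(k^{\mathrm{sep}}), k^{\mathrm{sep},\times})
\]
for every $p \geq 0$. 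The identifications in the second sentence are then the special cases $p=1,2$, using the definition of the Picard group and the cohomological Brauer group.

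Functoriality in $G$ follows because a map $G' \to G$ of finite \'etale group schemes induces a compatible map of universal torsors $\Spec k^{\mathrm{sep}} \to BG'_{k^{\mathrm{sep}}} \to BG_{k^{\mathrm{sep}}}$ and hence a morphism of spectral sequences, which on the $E_2$-page agrees with the restriction map in group cohomology.

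The main obstacle is purely technical: one needs the Hochschild--Serre spectral sequence for the $G(k^{\mathrm{sep}})$-torsor $\pi$ in the setting of fppf cohomology on an algebraic stack. This is a standard fact (see e.g.\ the discussion in \cite[\href{https://stacks.math.columbia.edu/tag/06XA}{Tag 06XA}]{stacks-project} combined with the comparison between quotient-stack cohomology and equivariant cohomology), but care is needed in identifying the $E_2$-terms as ordinary group cohomology of the discrete group $G(k^{\mathrm{sep}})$; this uses that $G$ becomes constant over $k^{\mathrm{sep}}$, so that fppf cohomology of the classifying stack of a constant finite group agrees with the group cohomology of its underlying group.
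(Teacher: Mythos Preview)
Your proof is correct and follows exactly the same approach as the paper: apply the Hochschild--Serre spectral sequence to the $G$-torsor $\Spec k^{\mathrm{sep}} \to BG_{k^{\mathrm{sep}}}$ and use that $\Spec k^{\mathrm{sep}}$ has trivial higher cohomology. The paper's proof is a single sentence to this effect; you have simply filled in the details.
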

\begin{proof}
	Follows from applying the Hochschild-Serre spectral sequence to the $G$-torsor $\Spec k^{\mathrm{sep}} \to BG_{k^{\mathrm{sep}}}$, since $\Spec k^{\mathrm{sep}}$ has trivial cohomology.
\end{proof}

Note in particular that this lemma describes the Galois module structure of the finite group $\Br BG_{k^{\mathrm{sep}}}$. Moreover it allows us to identify the Picard scheme of $BG$ with $\dual{G}$. Inspired by Sansuc  \cite[p.~42]{San81}, we define $\Br_{e} BG = \{\beta \in \Br_1 BG: \beta(e) = 0\}$ where $e$ is the identity cocycle. 

\begin{lemma} \label{lem:H1_Pic}
	The map $r: \Br_1 BG \to \H^1(k , \dual{G})$ from the Hochshild--Serre spectral sequence
	induces an isomorphism $\Br_1 BG/ \Br k \cong \H^1(k , \dual{G})$.
	
	The map $r$ is an isomorphism when restricted to $\Br_{e} BG$.
\end{lemma}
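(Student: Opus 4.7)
The plan is to derive a version of Sansuc's exact sequence for $BG$ from the Leray spectral sequence of the structure morphism $f : BG \to \Spec k$. First I would identify $f_* \Gm = \Gm$ and $\Res^1 f_* \Gm = \dual{G}$ as \'etale sheaves on $\Spec k$; this is obtained by applying Lemma \ref{lem:coh_sep_closed} on stalks (in particular, the base change of $BG$ to any finite separable extension of $k$). Together with the definition $\Br_1 BG = \ker(\H^2(BG, \Gm) \to \H^2(BG_{k^{\sep}}, \Gm))$, the seven-term exact sequence of low degrees in the Leray spectral sequence reads
\[
0 \to \H^1(k, \Gm) \to \Pic BG \to \dual{G}(k) \xrightarrow{d_2} \Br k \to \Br_1 BG \xrightarrow{r} \H^1(k, \dual{G}) \xrightarrow{d_2} \H^3(k, \Gm).
\]

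Next I would exploit the fact that the identity cocycle $e \in BG(k)$ is a section of $f$, so that $e^*$ retracts $\H^p(k, \Gm) \to \H^p(BG, \Gm)$ for every $p \geq 0$. On the spectral sequence this forces $E_\infty^{p,0} = E_2^{p,0} = \H^p(k, \Gm)$, hence every differential whose target lies in the bottom row must vanish. Applied to $d_2 : \dual{G}(k) \to \Br k$ and $d_2 : \H^1(k, \dual{G}) \to \H^3(k, \Gm)$, this collapses the seven-term sequence to
\[
0 \to \Br k \to \Br_1 BG \xrightarrow{r} \H^1(k, \dual{G}) \to 0,
\]
which gives the first claim.

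For the second claim, the retraction $e^* : \Br_1 BG \to \Br k$ splits the short exact sequence above, so $\Br_e BG = \ker(e^*)$ is a canonical complement of $\Br k$ inside $\Br_1 BG$. Consequently $r$ restricts to an isomorphism $\Br_e BG \xrightarrow{\sim} \H^1(k, \dual{G})$.

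The main point requiring care is the setup: one must verify that the Leray (equivalently Hochschild--Serre) spectral sequence for fppf cohomology is available for the non-representable morphism $f : BG \to \Spec k$, and that $\Res^1 f_* \Gm$ indeed equals $\dual{G}$ as a sheaf (and not merely at the geometric generic point); both follow from Lemma \ref{lem:coh_sep_closed} together with fppf descent applied over finite separable extensions. Once this is in place, the remainder of the argument is a routine consequence of the splitting induced by the rational point $e$.
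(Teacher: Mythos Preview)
Your proposal is correct and is essentially the same approach the paper takes: the paper simply cites \cite[Prop.~5.4.2]{Col21} for the first part (which is exactly the Leray/Hochschild--Serre argument with the section $e$ killing the differentials that you spell out) and for the second part writes down the explicit inverse $\beta \mapsto \beta - \beta(e)$, which is your retraction $e^*$ rephrased. Your version has the merit of making the argument self-contained rather than appealing to the variety case.
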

\begin{proof}
	The first part is well-known in the case of smooth proper varieties with a rational point
	(see e.g.~\cite[Prop.~5.4.2]{Col21}); the same proof works here.
	
	For the second part the map $\beta \to \beta - \beta(e)$ defines an inverse to the morphism $\Br_{e} BG \to \Br_1 BG/ \Br k$.
\end{proof}

A version of Lemma \ref{lem:H1_Pic} for different base schemes can be found in \cite[Cor.~5.15]{Ach24}, under the additional assumption that the transcendental Brauer group is trivial.

\begin{corollary} \label{cor:no_p_torsion}
	The group $\Br BG/\Br k$ is $|\exp(G)|^2$-torsion and $\Br BG_{k^{\mathrm{sep}}}$ is finite,
	where $\exp(G)$ denotes the exponent of $G$.
\end{corollary}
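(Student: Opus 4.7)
The strategy is to decompose $\Br BG / \Br k$ into its algebraic and transcendental pieces via the short exact sequence
\[
0 \to \Br_1 BG / \Br k \to \Br BG / \Br k \to \Br BG_{k^{\sep}}^{\Gamma_k}
\]
arising from the Hochschild--Serre spectral sequence for $BG \to \Spec k$ (equivalently, from the very definition $\Br_1 BG = \ker(\Br BG \to \Br BG_{k^{\sep}})$). It suffices to bound the exponent of each outer term by $\exp(G)$; multiplying these bounds along the extension gives the claimed $|\exp(G)|^2$-torsion, and finiteness of $\Br BG_{k^{\sep}}$ will be an immediate byproduct.

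For the algebraic part, Lemma \ref{lem:H1_Pic} identifies $\Br_1 BG / \Br k$ with $\H^1(k, \dual{G})$. Since $\dual{G} = \Hom(G, \Gm)$ is $\exp(G)$-torsion as a Galois module (every $1$-dimensional character of $G$ factors through $\mu_{\exp(G)}$), so is its Galois cohomology. No further work is needed for this factor.

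For the transcendental part, Lemma \ref{lem:coh_sep_closed} gives $\Br BG_{k^{\sep}} \cong \H^2(G(k^{\sep}), (k^{\sep})^\times)$ with trivial $G$-action. Since $(k^{\sep})^\times$ is divisible and hence injective as an abelian group, the $\Ext^1$ summand in the universal coefficient theorem vanishes and one is left with
\[
\Br BG_{k^{\sep}} \;\cong\; \Hom(M(G),(k^{\sep})^\times),
\]
where $M(G) = \H_2(G,\Z)$ is the Schur multiplier. As $M(G)$ is a finite abelian group, this group is finite -- proving the second claim of the corollary -- and its exponent equals $\exp(M(G))$ (using tameness so that $(k^{\sep})^\times$ contains a primitive $\exp(M(G))$-th root of unity).

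The main obstacle is thus reducing to the divisibility $\exp(M(G)) \mid \exp(G)$, a classical and genuinely delicate statement about Schur multipliers. Once this input is invoked, splicing with the $\exp(G)$-torsion of $\H^1(k,\dual G)$ along the short exact sequence yields the sharp $|\exp(G)|^2$-torsion bound. Without it one obtains only the weaker but still useful bound $|G|\cdot\exp(G)$, coming from the trivial annihilator $|G|$ on any $\H^2(G,-)$.
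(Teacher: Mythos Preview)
Your filtration into algebraic and transcendental pieces is the natural route and almost certainly what the paper has in mind (no proof is given there). The handling of $\Br_1 BG/\Br k$ via Lemma~\ref{lem:H1_Pic}, and of the finiteness of $\Br BG_{k^{\sep}}$ via universal coefficients, is correct; the identification $\Br BG_{k^{\sep}}\cong\Hom(M(G),(k^{\sep})^{\times})$ is valid once one notes that tameness makes $(k^{\sep})^{\times}$ divisible by every prime dividing $|G|$.

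The gap is exactly where you place it, but it is fatal rather than merely delicate: the divisibility $\exp(M(G))\mid\exp(G)$ is \emph{false} in general. This was a long-standing question (sometimes attributed to Schur), and counterexamples are known among $2$-groups --- there exist finite groups of exponent $4$ whose Schur multiplier has exponent $8$ (see e.g.\ Bayes--Kautsky--Wamsley in Springer LNM~372, and the discussion in Karpilovsky~\cite{Kar87}). So the input cannot simply be ``invoked''. Your argument therefore only yields the bound $\exp(G)\cdot\exp(M(G))$, and unconditionally the $\exp(G)\cdot|G|$ fallback you record at the end. Whether the sharper $\exp(G)^2$ bound holds would require instead $\exp(M(G))\mid\exp(G)^2$, which is a different statement; in any case the corollary as stated appears to rest on the same false belief. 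For the downstream use in \S\ref{sec:procedure} any explicit bound in $G$ suffices, so nothing essential breaks --- but your write-up should state $\exp(G)\cdot|G|$ rather than $\exp(G)^2$ unless you supply a genuinely new argument.
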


Lemma \ref{lem:H1_Pic} shows that the natural map $\Br_{1} BG^{\mathrm{ab}} \to \Br_{1} BG$ is an isomorphism. This need not be true for the partially unramified algebraic Brauer group from Definition \ref{def:partially_ramified_Br}, but see Lemma \ref{lem:Brun_ab} for a partial result. For algebraic Brauer group elements, the Brauer  pairing admits an interpretation via the cup product.

\begin{lemma} \label{lem:cup_products}
	Assume that $G$ is tame. The diagram 
	\[
\xymatrix{ \Br_{e} BG \times BG(k)  \ar[r] \ar[d] & \Br k \ar[d] \\ 
\H^1(k, \dual{G}) \times \H^1(k,G^{\emph{ab}}) \ar[r] & \H^2(k, \Gm) }
\]
	commutes, where the bottom row is induced by the cup product.
\end{lemma}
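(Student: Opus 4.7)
The approach is to exhibit an explicit preimage of $r(\beta)$ under the isomorphism $r : \Br_e BG \xrightarrow{\sim} \H^1(k, \dual{G})$ of Lemma~\ref{lem:H1_Pic} as a cup product with a tautological class, and then to deduce the commutativity of the diagram from functoriality of cup products.

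I first introduce the \emph{tautological class} $\xi \in \H^1(BG, G^{\mathrm{ab}})$ corresponding to the canonical morphism $\pi_{\mathrm{ab}} : BG \to BG^{\mathrm{ab}}$ induced by the abelianisation $G \twoheadrightarrow G^{\mathrm{ab}}$. By construction $\varphi^*\xi = \varphi^{\mathrm{ab}}$ for every $\varphi \in BG(k)$. Using the natural pairing $\dual{G} \times G^{\mathrm{ab}} \to \Gm$ (noting $\dual{G} = \Hom(G^{\mathrm{ab}}, \Gm)$) and the structure map $\pi : BG \to \Spec k$, define
\[ \iota : \H^1(k, \dual{G}) \to \Br BG, \qquad \iota(\alpha) := \pi^*\alpha \cup \xi. \]
Since $e^*\xi = e^{\mathrm{ab}} = 0$, we get $e^*\iota(\alpha) = 0$, so $\iota$ lands in $\Br_e BG$.

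The key step is to check that $r \circ \iota$ is the identity, via the multiplicative Leray spectral sequence $E_2^{p,q}(F) = \H^p(k, R^q\pi_* F) \Rightarrow \H^{p+q}(BG, F)$. Tameness of $G$ gives the identifications $R^1\pi_*\Gm \cong \dual{G}$ and $R^1\pi_* G^{\mathrm{ab}} \cong \underline{\Hom}(G, G^{\mathrm{ab}})$. The class $\pi^*\alpha$ represents $\alpha \in E_2^{1,0}(\dual{G})$, while $\xi$ represents $\mathrm{id}_{G^{\mathrm{ab}}} \in E_2^{0,1}(G^{\mathrm{ab}})$. By multiplicativity, the cup product $\iota(\alpha)$ has a representative in $E_2^{1,1}(\Gm) = \H^1(k, \dual{G})$ obtained from $(\alpha, \mathrm{id}_{G^{\mathrm{ab}}})$ via the induced sheaf pairing $\dual{G} \otimes \underline{\Hom}(G, G^{\mathrm{ab}}) \to \dual{G}$, $(\chi, \phi) \mapsto \chi \circ \phi$; this image is $\alpha$. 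The canonical section $e$ degenerates the spectral sequence at $E_2$, identifying the $E_\infty^{1,1}$-component on $\Br_e BG$ with $r$, so $r(\iota(\alpha)) = \alpha$.

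Since $r$ is an isomorphism, $\iota$ is inverse to $r$. Hence every $\beta \in \Br_e BG$ equals $\pi^* r(\beta) \cup \xi$, and for $\varphi \in BG(k)$ the naturality of cup products together with $\pi \circ \varphi = \mathrm{id}_{\Spec k}$ and $\varphi^*\xi = \varphi^{\mathrm{ab}}$ yield
\[ \varphi^*\beta \;=\; (\pi \circ \varphi)^* r(\beta) \cup \varphi^*\xi \;=\; r(\beta) \cup \varphi^{\mathrm{ab}}, \]
which is precisely the commutativity of the diagram. The most delicate step is the spectral sequence computation of $r(\iota(\alpha))$; it requires care with the multiplicativity of the Leray spectral sequence and the explicit form of the induced sheaf pairing on the higher direct image functors.
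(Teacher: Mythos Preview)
Your proof is correct and follows essentially the same route as the paper. Both construct the inverse of $r$ by cupping with the tautological $G^{\mathrm{ab}}$-torsor class on $BG$ (the paper writes it as $[\pi]$ for the torsor $B[G,G] \to BG$, which is your $\xi$) and then read off the commutativity from functoriality of cup products. The one difference is that where you carry out the multiplicative Leray spectral sequence computation to show $r\circ\iota = \mathrm{id}$, the paper invokes Skorobogatov's result \cite[Thm.~4.1.1]{Sko01} on types of torsors, which packages exactly that argument; your phrase ``degenerates at $E_2$'' is slightly loose, but you only use $E_\infty^{1,1}=E_2^{1,1}$ and the identification with $r$ on $\Br_e BG$, both of which do follow from the existence of the section $e$.
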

\begin{proof}
	Consider the fibre of the map $BG \to B G^{\text{ab}}$ over $e$. This is the $G^{\text{ab}}$-torsor $\pi: B[G, G] \to BG$. The type, as defined in \cite[Def.~2.3.2]{Sko01}, of this $G^{\text{ab}}$-torsor is a map $\Hom(G^{\text{ab}}, \Gm) = \dual{G} \to \Pic BG = \dual{G}$. It is the identity map, which can be seen by unfolding the definitions.

	Consider the map $\cdot \cup [\pi]: \H^1(k, \dual{G}) \to \Br BG$ given by the cup product with $[\pi] \in \H^1(BG, G^{\text{ab}})$. This map factors through $\Br_1 BG$ since elements of $\H^1(k, \dual{G})$ become trivial after base change to $k^{\sep}$. Moreover, it factors through $\Br_{e} BG$ since $[\pi](e) = [\pi(e)]$ is the identity cocycle.
	
	It follows from \cite[Thm.~4.1.1]{Sko01} that $\cdot \cup [\pi]:  \H^1(k, \dual{G}) \to \Br_{e} BG$ is an inverse to $r$ (this result is only stated for varieties, but the proof for stacks is analogous. To finish the lemma it suffices to notice that the map $BG(k) \to \H^1(k, G^{\text{ab}}), \chi \to [\pi](\chi)$ is by construction the map induced by $BG \to BG^{\text{ab}}$.
\end{proof}

\subsection{Representation by central extensions} \label{sec:central_extension}
We next explain how to write down elements of $\Br BG$ for $G$ a finite \'etale group scheme over a field $k$.

Recall that for an abelian group scheme $N$, the isomorphism classes of $N$-gerbes over a stack $\mathcal{X}$ are classified by the cohomology group $\H^2(\mathcal{X},N)$ \cite[Thm.~12.2.8]{Ols}.

\begin{definition} \label{def:construction_Brauer_group_elements}
Let $$1 \to \mu_n \to E \to G \to 1$$ be a central extension of group schemes. Then $BE \to BG$ is a $\mu_n$-gerbe. The image of this gerbe under the induced map $\H^2(BG,\mu_n) \to \H^2(BG,\Gm) = \Br BG$ is a Brauer group element, which we denote by $b_E$.

We call such an extension \emph{tame} if $E$ is a tame group scheme.
\end{definition}

This element has the property that $b_E(e) = 0$ for $e$ is the identity of $G$; we next show that every such element arises this way. (This is not a major restriction, since every Brauer group element satisfies this up to translation by an element of $\Br k$.) We begin with some Kummer theory.

\begin{lemma}\label{lem:boundary_map_Kummer_theory}
	The Kummer sequence gives rise to the short exact sequence
	$$0 \to \Pic BG/n\Pic BG \to \H^2(BG,\mu_n) \to \H^2(BG,\Gm)[n] \to 0.$$
	This map sends a character $\chi \in \Pic BG = \dual{G}(k)$
	to the central extension
	$$G_{\chi, n} := \{ (g, \zeta) \in G \times \Gm : \chi(g) = \zeta^n\} \to G.$$
\end{lemma}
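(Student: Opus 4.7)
My plan is to split the lemma into two parts: the exactness statement and the explicit description of the boundary map.

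First, I would take the long exact sequence in fppf cohomology coming from the Kummer sequence $1 \to \mu_n \to \Gm \xrightarrow{n} \Gm \to 1$, which is exact as a sequence of fppf sheaves on $BG$ even if $n$ is not coprime to $\chr k$. The relevant piece reads
\[
\H^1(BG,\Gm) \xrightarrow{\cdot n} \H^1(BG,\Gm) \to \H^2(BG,\mu_n) \to \H^2(BG,\Gm) \xrightarrow{\cdot n} \H^2(BG,\Gm),
\]
which upon using $\H^1(BG,\Gm)=\Pic BG$ and $\H^2(BG,\Gm)=\Br BG$ immediately yields the claimed short exact sequence.

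For the explicit description, I would work geometrically using the Picard scheme. A character $\chi \in \dual{G}(k) = \Pic BG$ corresponds to a morphism $f_\chi : BG \to B\Gm$ (pulling back the universal line bundle). Functoriality of the Kummer connecting map reduces the computation to the universal case on $B\Gm$: the connecting map $\Pic B\Gm \to \H^2(B\Gm,\mu_n)$ applied to the universal class is represented by the $\mu_n$-gerbe $B\Gm \xrightarrow{[n]} B\Gm$ (the $n$-th power map). Hence the image of $\chi$ in $\H^2(BG,\mu_n)$ is represented by the $\mu_n$-gerbe
\[
\mathcal{G}_\chi := BG \times_{f_\chi,\,B\Gm,\,[n]} B\Gm \longrightarrow BG,
\]
whose $S$-points are triples $(P,L,\alpha)$ consisting of a $G$-torsor $P$, a line bundle $L$ on $S$, and an isomorphism $\alpha : L^{\otimes n} \xrightarrow{\sim} \mathcal{L}_\chi(P)$.

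The main task is then to identify $\mathcal{G}_\chi$ with $BG_{\chi,n}$. Since $\mathcal{G}_\chi \to BG$ has a tautological section over the trivial torsor given by $(P_{\mathrm{triv}},\mathcal{O},\mathrm{id})$, it is a neutral gerbe of the form $BE$ where $E$ is the automorphism group of this section. An element of $E$ is a pair $(g,\zeta)\in G\times\Gm$ where $g$ is an automorphism of $P_{\mathrm{triv}}$ and $\zeta$ an automorphism of $\mathcal{O}$; compatibility with $\alpha$ demands that the induced automorphism $\zeta^n$ of $L^{\otimes n}$ agree with the induced automorphism $\chi(g)$ of $\mathcal{L}_\chi(P_{\mathrm{triv}})=\mathcal{O}$, i.e.~$\zeta^n = \chi(g)$. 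This is exactly the defining condition of $G_{\chi,n}$, with its central subgroup $\mu_n=\{(e,\zeta):\zeta^n=1\}$, and the projection to $G$ gives the surjection $G_{\chi,n}\to G$. Thus the image of $\chi$ under the connecting map is represented by $BG_{\chi,n}\to BG$, which by Definition~\ref{def:construction_Brauer_group_elements} maps to $b_{G_{\chi,n}}\in\Br BG$. The main obstacle is bookkeeping in the identification of $\mathcal{G}_\chi$ with $BG_{\chi,n}$; everything else is formal.
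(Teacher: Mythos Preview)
Your proposal is correct and follows essentially the same approach as the paper: the exactness is immediate from the Kummer long exact sequence, and the image of $\chi$ is identified with the $\mu_n$-gerbe given by the fiber product $BG \times_{f_\chi,\,B\Gm,\,[n]} B\Gm$, which the paper obtains by citing \cite[\S12.2.5]{Ols} and then says ``a simple computation'' identifies with $BG_{\chi,n}$. You have simply made that simple computation explicit by computing the automorphism group of the tautological section.
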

\begin{proof}
	The sequence is immediate from Kummer theory.
	The corresponding gerbe is described as the fiber product of the following diagram $BG \xrightarrow{f} B \Gm \xleftarrow{\cdot n} B \Gm$ by the construction in \cite[\S12.2.5]{Ols}. A simple computation shows that this is $BG_{\chi}$.
\end{proof}

\begin{lemma} \label{lem:central_gerbe}
	Every element $b \in \Br BG$ with $b(e) = 0$ arises by the construction in Definition \ref{def:construction_Brauer_group_elements}.
\end{lemma}
\begin{proof}
	Let $\mathcal{X} \to BG$ be a gerbe and $P \in \mathcal{X}(k)$ a lift of $e$, let $E := \underline{\Aut}(P)$. The stack $\mathcal{X}$ is a gerbe over $k$ and has a rational point $P$ so $\mathcal{X} \cong BE$. This procedure defines an equivalence of categories between the category of gerbes over $BG$ equipped with a lift $P \in \mathcal{X}(k)$ and the category of surjective maps of group schemes $E \to G$, see \cite[Thm.~7.2.5]{Gir71}. Moreover, if $A$ is an abelian group scheme and $\mathcal{X} \to BG$ is an $A$-gerbe then the analysis of \S7.3 of \textit{loc.~cit.} shows that the corresponding extension $E \to G$ is a central $A$-extension (see also \S7.1 of \textit{loc.~cit.})
	
By Lemma \ref{lem:boundary_map_Kummer_theory} any $b \in \Br BG$ may be represented by a $\mu_n$-gerbe $\mathcal{X}$ over $BG$ for	some $n$. The fact that $b(e) = 0 \in \H^2(k, \Gm)$ and Hilbert Theorem 90 implies that the fibre of $\mathcal{X}$ over $e$ has a rational point. By the above reasoning this gerbe is given by $BE \to BG$ for some central $\mu_n$-extension $E \to G$ and functoriality implies that $b = b_E$.
\end{proof}

\begin{remark}
	Another way of seeing the relation between elements of $\Br BG$ and central extensions is via Hochschild cohomology, see \cite[\S2.1]{Ach24}. 
\end{remark}

\subsection{Central extensions and the residue}
From now on we assume that $G$ is tame.
We begin to study the residue of a Brauer group element in terms of the corresponding central extension. In the next lemma we use the following notation. Let $\mathcal{C} \subset \mathcal{C}_G^*$ and let $c \in \mathcal{C}$. We denote by $\mathcal{S}_{c}$ be the sector corresponding to the Galois orbit of $c$ and $\mathcal{C}_c \subset G(-1)$ the subscheme corresponding to the Galois orbit of $c$, as in Lemma \ref{lem:sectors_BG}. Then we write $\Br_{\mathcal{C}} BG$ for the partially unramified Brauer group with respect to these sectors.

\begin{lemma}\label{lem:residue_central_extension}
	Let $1 \to \mu_n \to E \xrightarrow{f} G \to 1$ be a tame central extension with corresponding Brauer group element $b_E$ and $c \in \mathcal{C}^*_G/\Gamma_{k}$ a Galois orbit of order $n$. Note that the $G$-action on $\mathcal{C}_c$ lifts to $f^{-1}(\mathcal{C}_c) \subset E(-1)$ because $f$ is a central extension. Moreover, translation by $\Z/n \Z = \mu_n(-1) \subset E(-1)$ defines a $\Z/n \Z$-action on $f^{-1}(\mathcal{C}_c)$ which commutes with the $G$-action and such that $f^{-1}(\mathcal{C}_c) \to \mathcal{C}_c$ is a $\Z/n \Z$-torsor. We thus get a $\Z/n \Z$-torsor
	\begin{equation}\label{eq:residue_central_extension}
	 	[f^{-1}(\mathcal{C}_c)/G] \to [\mathcal{C}_c/G] \cong \mathcal{S}_c
	\end{equation}
	The residue $\res_{\mathcal{S}_c}(b_E) \in \H^1(\mathcal{S}_c, \Z/n)$ is represented by the torsor \eqref{eq:residue_central_extension}.
\end{lemma}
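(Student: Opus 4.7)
The strategy is to apply Proposition \ref{prop:geometric_description_residue} to the structure map $\pi: (B\mu_n)_{\mathcal{S}_c} \to \mathcal{S}_c$ together with the pullback $\mathcal{T} := BE \times_{BG} (B\mu_n)_{\mathcal{S}_c}$ of the $\mu_n$-gerbe $BE \to BG$ along the universal map $f_{\mathcal{S}_c}$. By Definition \ref{def:residue_along_sector}, the residue $\res_{\mathcal{S}_c}(b_E)$ is the image of $[\mathcal{T}] = f_{\mathcal{S}_c}^*[BE] \in \H^2((B\mu_n)_{\mathcal{S}_c}, \mu_n)$ under the Leray boundary map attached to $\pi$. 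Proposition \ref{prop:geometric_description_residue} will translate this into a geometric calculation, provided $\mathcal{T}$ is locally neutral on $\mathcal{S}_c$: under that hypothesis $\mathcal{T}$ is itself a gerbe over $\mathcal{S}_c$ banded by an extension $1 \to \mu_n \to A \to \mu_n \to 1$ of group schemes over $\mathcal{S}_c$, and the residue is the class $[A] \in \H^1(\mathcal{S}_c, \underline{\Hom}(\mu_n, \mu_n)) = \H^1(\mathcal{S}_c, \Z/n\Z)$.

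The next step is to identify the band $A$. Using the explicit groupoid presentation of the universal map from Lemma \ref{lem:sectors_BG}(3), a direct calculation with fibre products shows that pulling back the central extension $E \to G$ along the evaluation map $\mathcal{C}_c \times \mu_n \to G$, $(\gamma, \zeta) \mapsto \gamma(\zeta)$, yields a central extension $1 \to \mu_n \to A' \to \mu_n \to 1$ of group schemes over $\mathcal{C}_c$ on which $G$ acts compatibly; centrality of $\mu_n \subset E$ is what ensures that the conjugation action of $G$ on $\mathcal{C}_c$ lifts to $A'$. Descending along $\mathcal{C}_c \to \mathcal{S}_c = [\mathcal{C}_c/G]$ produces the desired extension $A$, and a comparison of groupoid presentations identifies $\mathcal{T}$ with the neutral $A$-gerbe $BA$ over $\mathcal{S}_c$. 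Since $A$ is finite \'etale over $\mathcal{S}_c$, this confirms local neutrality.

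Finally, the class $[A] \in \H^1(\mathcal{S}_c, \Z/n\Z)$ is by definition represented by the $\Z/n\Z$-torsor of local splittings of $A$. Over $\mathcal{C}_c$, a group-scheme splitting of $A'$ at the point $\gamma \in \mathcal{C}_c$ is exactly a homomorphism $\tilde\gamma: \mu_n \to E$ lifting $\gamma$, that is, an element of $f^{-1}(\mathcal{C}_c)$ lying over $\gamma$; moreover the action of $\Z/n\Z = \mu_n(-1)$ by translation via the central $\mu_n \subset E$ matches the torsor structure in the statement. Passing to the $G$-quotient yields the asserted torsor \eqref{eq:residue_central_extension}. The main technical obstacle is the explicit identification of the pullback gerbe $\mathcal{T}$ with $BA$, which amounts to carefully comparing two groupoid schemes assembled from the central extension $E \to G$ and the sector $\mathcal{C}_c \subset G(-1)$; once this is in place, the remaining arguments are bookkeeping.
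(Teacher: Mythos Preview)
Your approach is essentially the same as the paper's: both pull back the $\mu_n$-gerbe $BE \to BG$ along the universal map $f_{\mathcal{S}_c}$, invoke Proposition~\ref{prop:geometric_description_residue} to reduce to computing the band $A$ of the resulting gerbe over $\mathcal{S}_c$, compute $A$ explicitly over the atlas $\mathcal{C}_c$ using Lemma~\ref{lem:sectors_BG}(3), and then identify the class $[A]$ with the torsor~\eqref{eq:residue_central_extension}. The paper writes $A_{\mathcal{C}_c} = \{(\zeta, e, h) \in \mu_d \times E \times \mathcal{C}_c : f(e) = h(\zeta)\}$ directly and observes that $A$ is the twist of the split extension by the torsor~\eqref{eq:residue_central_extension}; your phrasing in terms of splittings of $A$ being lifts $\tilde\gamma \in f^{-1}(\mathcal{C}_c)$ is the same identification from the dual point of view. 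One minor remark: you do not actually need to show $\mathcal{T} \cong BA$ globally over $\mathcal{S}_c$; local neutrality (which is automatic once you have computed the band to be abelian \'etale) is all that Proposition~\ref{prop:geometric_description_residue} requires.
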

\begin{proof}
	Let $d$ be the order of $c$ and $\mathcal{X} \to (B\mu_d)_{\mathcal{S}_c}$ the pullback of $BE \to BG$. This is an gerbe with abelian automorphism group over $\mathcal{S}_c$ and by Proposition \ref{prop:geometric_description_residue} we have to understand the automorphism group scheme $A$.
	
	We can directly compute $\mathcal{X}_{\mathcal{C}_c}$ using Lemma \ref{lem:sectors_BG} and find that $A_{\mathcal{C}_c} = \{(\zeta, e, h) \in \mu_d \times E \times \mathcal{C}_c: f(e) = h(\zeta)\}$. Moreover, $G$ acts on $A$ by acting by conjugation on $E$ and $\mathcal{C}_c$. One checks using this description that $A$ is obtained by twisting $\mu_n \times \mu_d$ by the torsor \eqref{eq:residue_central_extension}, i.e. the class $[A] \in \H^0(\mathcal{S}_c, \Hom(\mu_n, \mu_d))$ is equal to \eqref{eq:residue_central_extension}, as desired.
\end{proof}
We deduce the following criterion for being partially unramified. This will be crucial for calculating the partially ramified Brauer group.

\begin{lemma}\label{lem:central_extension_unramified}
	Let $1 \to \mu_n \to E \xrightarrow{f} G \to 1$ be a tame central 
	extension with Brauer group element $b_E \in \Br BG$.
	Let $C \subset \mathcal{C}_G^*$ be Galois invariant and 
	let $\mathcal{C} \subset G(-1)(k^{\sep})$ be the corresponding collection of elements.
	
	We have $b_E \in \Br_{\mathcal{C}} BG$ if and only if there exists a Galois and conjugacy invariant
	subset $\mathcal{E} \subset E(-1)(k^{\mathrm{sep}})$ such that $f$ induces a (Galois equivariant) bijection between 
	$\mathcal{C}$ and $\mathcal{E}$.
\end{lemma}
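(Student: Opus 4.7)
By Lemma \ref{lem:Br_partially_unramified_purity}, we have $b_E \in \Br_\mathcal{C} BG$ if and only if $\res_{\mathcal{S}_c}(b_E) = 0$ for every Galois orbit $c \in \mathcal{C}/\Gamma_k$. The plan is to compute each residue explicitly as a $\Z/n\Z$-torsor over $\mathcal{S}_c$, and then translate its triviality into the desired lifting statement.

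The first step is to extend Lemma \ref{lem:residue_central_extension} to Galois orbits $c$ of arbitrary order $d$, not only those of order $n$. For any $\gamma : \widehat{\Z}(1) \to G$ representing an element of $\mathcal{C}_c$, the lifts to $\widehat{\Z}(1) \to E$ form a torsor under $\Hom(\widehat{\Z}(1), \mu_n) \cong \Z/n\Z$, and this torsor is non-empty because $\widehat{\Z}(1)$ has cohomological dimension one and the extension is tame. Varying $\gamma$ over $\mathcal{C}_c$ thus produces a $\Z/n\Z$-torsor $f_*^{-1}(\mathcal{C}_c) \to \mathcal{C}_c$, and centrality of the extension ensures that the $\Z/n\Z$-action commutes with the conjugation action of $G$, yielding a $\Z/n\Z$-torsor
\[
	T_c := [f_*^{-1}(\mathcal{C}_c)/G] \to [\mathcal{C}_c/G] \cong \mathcal{S}_c.
\]
Repeating the band computation from the proof of Lemma \ref{lem:residue_central_extension} and invoking Proposition \ref{prop:geometric_description_residue}, followed by pushout along $\mu_n \hookrightarrow \Gm$, shows that the class of $T_c$ in $\H^1(\mathcal{S}_c, \Z/n\Z)$ maps to $\res_{\mathcal{S}_c}(b_E) \in \H^1(\mathcal{S}_c, \Q/\Z)$. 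The inclusion $\H^1(\mathcal{S}_c, \Z/n\Z) \hookrightarrow \H^1(\mathcal{S}_c, \Q/\Z)$ is injective, because the multiplication by $n$ map on $\H^0(\mathcal{S}_c, \Q/\Z)$ is surjective (the constant sheaf $\Q/\Z$ is divisible), so $\res_{\mathcal{S}_c}(b_E) = 0$ if and only if $T_c$ is trivial as a $\Z/n\Z$-torsor.

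The second step is essentially formal: the $\Z/n\Z$-torsor $T_c = [f_*^{-1}(\mathcal{C}_c)/G] \to [\mathcal{C}_c/G]$ is trivial exactly when it admits a section, which unfolds to a $k$-morphism $s_c : \mathcal{C}_c \to f_*^{-1}(\mathcal{C}_c) \subset E(-1)$ that is equivariant for both the Galois and the conjugation actions and lifts the identity on $\mathcal{C}_c$. The image $s_c(\mathcal{C}_c)$ is then a Galois and conjugacy invariant subset of $E(-1)(k^{\mathrm{sep}})$ on which $f$ restricts to a Galois equivariant bijection onto $\mathcal{C}_c$. Assembling over all $c \in \mathcal{C}/\Gamma_k$, given such sections set $\mathcal{E} := \bigsqcup_{c} s_c(\mathcal{C}_c)$; conversely an $\mathcal{E}$ as in the lemma decomposes as $\bigsqcup_{c} \mathcal{E} \cap f_*^{-1}(\mathcal{C}_c)$, each piece providing the necessary section. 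The main technical hurdle is the generalization of Lemma \ref{lem:residue_central_extension} to arbitrary sector order, which requires careful bookkeeping of the band when $\gcd(n,d) < n$; once this is in place, the remaining steps are largely formal manipulations with torsors and disjoint unions.
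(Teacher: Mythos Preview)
Your proof is correct and follows essentially the same route as the paper: reduce via Lemma~\ref{lem:Br_partially_unramified_purity} to the vanishing of each residue, identify each residue with the $\Z/n\Z$-torsor $[f_*^{-1}(\mathcal{C}_c)/G] \to \mathcal{S}_c$ using Lemma~\ref{lem:residue_central_extension} and Proposition~\ref{prop:geometric_description_residue}, and then translate triviality of that torsor into the existence of a $G$- and Galois-equivariant section, i.e.\ a marking. The paper's proof is slightly terser in that it simply cites Lemma~\ref{lem:residue_central_extension} without commenting on the sector order; you are more explicit in noting that this lemma must be applied with an arbitrary sector order $d$ rather than $d=n$, and in verifying that the injection $\H^1(\mathcal{S}_c,\Z/n\Z)\hookrightarrow \H^1(\mathcal{S}_c,\Q/\Z)$ lets one pass between the $\Z/n\Z$-torsor class and the residue living in the $\Z/d\Z$-part. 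This extra bookkeeping is a genuine clarification rather than a different argument.
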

\begin{proof}
	By Lemma \ref{lem:Br_partially_unramified_purity} we have to show that the given condition is equivalent to the vanishing of all residues. Without loss of generality, we may assume that $C$ consists of a single Galois orbit of conjugacy classes. Let $c \in C$ and $d$ be its order.
	
	By Lemma \ref{lem:residue_central_extension} we have to show that the torsor \eqref{eq:residue_central_extension} splits if and only if the conditions of the lemma hold. 
	
	We claim that $\eqref{eq:residue_central_extension}$ splits if and only if there exists a $G$-equivariant section of $f^{-1}(\mathcal{C}_c) \to \mathcal{C}_c$. Indeed, a section $[\mathcal{C}_c/G] \to 	[f^{-1}(\mathcal{C}_c)/G]$ defines by pullback a $G$-equivariant section $\mathcal{C}_c \to f^{-1}(\mathcal{C}_c)$. Conversely, a $G$-equivariant section $\mathcal{C}_c \to f^{-1}(\mathcal{C}_c)$ defines a section $[\mathcal{C}_c/G] \to 	[f^{-1}(\mathcal{C}_v)/G]$ after quotienting by $G$.
	
	It now remains to notice that the condition in the lemma is equivalent to the existence of a $G$-equivariant section of $f^{-1}(\mathcal{C}_c) \to \mathcal{C}_c$. If such a section exists then $\mathcal{E}$ is equal to the $k^{\sep}$ points of the image. On the other hand given a $\mathcal{E}$ as in the lemma the subscheme of $E(-1)$ defined by it is $G$-equivariantly isomorphic to $\mathcal{C}_c$ due to the fact that $\mathcal{E} \to \mathcal{C}_c(k^{\sep})$ is a bijection.
\end{proof}

\subsection{Marked central extensions} \label{sec:marked_central_extensions}
Lemma \ref{lem:central_extension_unramified} naturally leads us to the following definition. This definition is also inspired by \cite[\S 7.4]{EVW13}, but we allow $G$ and $E$ to be group schemes instead of just groups and consider a marking to be a subscheme of $E(-1)$ instead of $E$. We remark that the definition of $E(-1)$ makes sense when $E$ is a profinite \'etale group scheme of order coprime to $p$.
\begin{definition} \label{def:marked_central_extension}
	Let $\mathcal{C} \subset G(-1)(k^{\sep})$ be a conjugacy and Galois invariant subset. A $\mathcal{C}$-\emph{marked central extension} of $G$ is a central extension $1 \to A \to E \to G \to 1$, where $A$ and $E$ are profinite \'etale group schemes, equipped with a conjugacy and Galois invariant subset $\mathcal{E} \subset E(-1)(k^{\sep})$ such that $\mathcal{E} \to \mathcal{C}$ is a (Galois equivariant) bijection. We call $\mathcal{E}$ the \emph{marking}.
	
	The \emph{trivial marked extension (with kernel $A$)} is the central extension $G \times A \to G$ equipped with the \emph{trivial marking} $\mathcal{C} \times \{0\} \subset G(-1) \times A(-1)$.
	
	A morphism of marked central extensions is a morphism of extensions which preserves the marking.
	
	We will denote the set of isomorphism classes of central marked extensions with kernel $A$ by $\H^{2, \text{orb}}_{\mathcal{C}}(BG, A)$. As the notations suggests, it should be viewed as some version of orbifold cohomology.
\end{definition}

\begin{definition}
	The \emph{Baer sum} of two marked central extensions $1 \to A \to E_1 \to G \to 1$, $1 \to A \to E_2 \to G \to 1$ with respective markings $\mathcal{E}_i \subset E_i(-1)$ is the Baer sum extension \cite[\href{https://stacks.math.columbia.edu/tag/010I}{Tag 010I}]{stacks-project} $1 \to A \to E_1 + E_2 \to G \to 1$. Note that $E_1 + E_2$ is a subquotient of $E_1 \times_G E_2$ and the marking $(\mathcal{E}_1, \mathcal{E}_2) \subset (E_1 \times_G E_2)(-1)$ defines a marking of $\mathcal{E}_1 + \mathcal{E}_2 \subset (E_1 + E_2)(-1)$.
\end{definition}
\begin{lemma}
	Baer sum defines a commutative group structure on $\H^{2, \orb}_{\mathcal{C}}(BG, A)$.
\end{lemma}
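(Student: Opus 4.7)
The plan is to verify the abelian group axioms directly, leveraging the classical fact that Baer sum gives an abelian group structure on (unmarked) central extensions $\H^2(G, A)$. The forgetful map $\H^{2,\orb}_{\mathcal{C}}(BG, A) \to \H^2(G, A)$ then takes care of associativity and commutativity automatically, so the real content is verifying well-definedness of the marking operation, together with the existence of identity and inverses in the marked category.

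First I would show that the Baer sum of markings is well-defined. Given $(E_i, \mathcal{E}_i)$ for $i=1,2$, the fibre product $\mathcal{E}_1 \times_\mathcal{C} \mathcal{E}_2 \subset (E_1 \times_G E_2)(-1)$ projects bijectively onto $\mathcal{C}$, since each $\mathcal{E}_i \to \mathcal{C}$ is a bijection. The image $\mathcal{E}_1 + \mathcal{E}_2$ in $(E_1 + E_2)(-1)$ remains in bijection with $\mathcal{C}$: any two pre-images in $\mathcal{E}_1 \times_\mathcal{C} \mathcal{E}_2$ of a common class in $(E_1+E_2)(-1)$ must project to the same element of $\mathcal{C}$, which by the bijection hypothesis forces them to coincide already in $\mathcal{E}_1 \times_\mathcal{C} \mathcal{E}_2$. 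Conjugacy and Galois invariance are inherited from the $\mathcal{E}_i$, so $\mathcal{E}_1 + \mathcal{E}_2$ is a valid marking. A morphism of marked extensions $E_i \to E_i'$ induces a morphism of Baer sums compatible with markings, so the operation descends to isomorphism classes.

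The identity element is the trivial marked extension $(G \times A, \mathcal{C} \times \{0\})$. Under the canonical isomorphism $E + (G \times A) \cong E$ of unmarked extensions, the marking $\mathcal{E} \times_\mathcal{C} (\mathcal{C} \times \{0\})$ is the graph of the bijection $\mathcal{E} \to \mathcal{C}$ sitting inside $E \times A$, which maps to $\mathcal{E}$ under $(e,a) \mapsto e + a$ by inspection.

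The main obstacle, though ultimately routine, is the construction of inverses. Given $(E, \mathcal{E})$, I would take $E^{\mathrm{op}}$ to be the same underlying group extension but with the embedding $A \hookrightarrow E$ precomposed with inversion on $A$; in particular $E^{\mathrm{op}}(-1) = E(-1)$ as sets, so the same $\mathcal{E}$ remains a valid marking with respect to the quotient $E^{\mathrm{op}} \to G$. Classically $E + E^{\mathrm{op}} \cong G \times A$ canonically, and one checks that $\mathcal{E} + \mathcal{E}$ — the image of the diagonal $\Delta(\mathcal{E}) \subset \mathcal{E} \times_\mathcal{C} \mathcal{E}$ under the quotient by the antidiagonal $A$-action — is carried to $\mathcal{C} \times \{0\}$, because the antidiagonal identification sends $(\gamma, \gamma)$ to $(\pi(\gamma), 0)$ in $G \times A$, where $\pi: E \to G$ is the projection. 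This identifies $(E^{\mathrm{op}}, \mathcal{E})$ as the Baer inverse of $(E, \mathcal{E})$ in $\H^{2,\orb}_{\mathcal{C}}(BG, A)$, completing the verification of the group axioms.
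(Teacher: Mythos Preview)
Your proof supplies more detail than the paper's, which simply states that one must check the canonical isomorphisms witnessing commutativity, associativity, and inverses all preserve markings, and then omits the calculation. Your treatments of well-definedness, the identity, and inverses are correct and essentially what the paper has in mind.

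There is one genuine flaw in your reasoning: the claim that the forgetful map $\H^{2,\orb}_{\mathcal{C}}(BG,A)\to\H^2(G,A)$ ``takes care of associativity and commutativity automatically'' is not valid. A set-map to an abelian group that respects a binary operation on the source does not force the source to be associative or commutative unless the map is injective, and here it is not: distinct markings on the same extension give distinct classes in $\H^{2,\orb}_{\mathcal{C}}(BG,A)$. What you actually need is that the specific canonical isomorphisms $(E_1+E_2)+E_3\cong E_1+(E_2+E_3)$ and $E_1+E_2\cong E_2+E_1$ carry the corresponding Baer-summed markings to one another. This is exactly the ``tedious calculation'' the paper omits, and it is routine: both sides of the associativity isomorphism carry markings that are images of $\mathcal{E}_1\times_{\mathcal{C}}\mathcal{E}_2\times_{\mathcal{C}}\mathcal{E}_3$ under compatible subquotient maps of $E_1\times_G E_2\times_G E_3$, and the commutativity isomorphism is induced by the swap $E_1\times_G E_2\to E_2\times_G E_1$, which visibly exchanges $(\mathcal{E}_1,\mathcal{E}_2)$ with $(\mathcal{E}_2,\mathcal{E}_1)$. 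Once you replace the forgetful-map sentence with this direct check, the argument is complete.
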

\begin{proof}
	The Baer sum of group extensions defines a group structure. It remains to check that the isomorphisms witnessing commutativity, associativity and the existence of an inverse, all preserve the marking. This is a straightforward but tedious calculation, which is omitted.
\end{proof}

The following allows one to change the kernel of a marked central extension.
\begin{lemma}\label{lem:marked_central_extension_kernel}
	Let $1 \to A \to E \xrightarrow{f} G \to 1$ a central extension with marking $\mathcal{E} \subset E$. For any profinite abelian group scheme $A'$ and map $A \to A'$ there exists a unique marked central extension $1 \to A' \to E' \to G \to 1$ and map of marked central extensions $E \to E'$ which restricts to $A \to A'$ on the kernel.
	
	Moreover, if $\mathcal{C}$ generates $G$ then $E'$ is unique up to unique isomorphism.
\end{lemma}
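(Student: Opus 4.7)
The plan is to construct $E'$ as the pushout of group schemes and then equip it with the unique marking compatible with $\mathcal{E}$. Concretely, given the map $\phi: A \to A'$, I would set $E' := (E \times A')/A$, where $A$ acts antidiagonally by $a \cdot (e, a') = (ea, a' - \phi(a))$. This is a central extension $1 \to A' \to E' \to G \to 1$ (using centrality of $A$ in $E$), with a canonical map $\iota: E \to E'$ sending $e \mapsto [e, 0]$ and restricting to $\phi$ on the kernel. The induced map $\iota_*: E(-1) \to E'(-1)$ yields a candidate marking $\mathcal{E}' := \iota_*(\mathcal{E})$, and this is automatically Galois- and conjugacy-invariant by naturality. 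To check $\mathcal{E}' \to \mathcal{C}$ is a bijection, note that the composition $\mathcal{E} \to \mathcal{E}' \to \mathcal{C}$ agrees with the original marking bijection, which forces $\mathcal{E} \to \mathcal{E}'$ to be injective; surjectivity of $\mathcal{E}' \to \mathcal{C}$ is then immediate since $\mathcal{E} \to \mathcal{C}$ is surjective. So $(E', \mathcal{E}')$ is a marked central extension and $\iota: E \to E'$ is a morphism of marked extensions.

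For the uniqueness of the map $\iota$ into this $E'$: any other central-extension map $E \to E'$ restricting to $\phi$ on $A$ must agree with $\iota$ because $E \to E'$ and $A' \to E'$ together generate $E'$ as a group (the quotient $E'/A' = G$ comes from $E$); this is a standard pushout argument. Preservation of the marking by $\iota$ is built into the construction.

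For the moreover clause, suppose $(E'', \mathcal{E}'')$ is another marked central extension with kernel $A'$ and a marked map $E \to E''$ restricting to $\phi$. The universal property of the pushout of groups gives a unique map of central extensions $g: E' \to E''$ compatible with these data; the five lemma shows $g$ is an isomorphism. To check $g$ is an isomorphism of marked extensions, observe that $g(\mathcal{E}') = g(\iota_*(\mathcal{E}))$ equals the image of $\mathcal{E}$ in $E''$, which lies in $\mathcal{E}''$ by hypothesis; since both sides biject onto $\mathcal{C}$ via projection to $G(-1)$, we get $g(\mathcal{E}') = \mathcal{E}''$. Finally, to upgrade ``unique up to isomorphism'' to ``unique up to unique isomorphism'', I would show that when $\mathcal{C}$ generates $G$ the marked extension $E'$ has no non-trivial automorphisms. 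Any such automorphism $\alpha$ has the form $\alpha(e) = e \cdot c(f(e))$ for a unique homomorphism $c: G \to A'$ (using that $A'$ is central in $E'$ and $G$ acts trivially on $A'$). Preservation of $\mathcal{E}'$ element-wise, forced by the bijection $\mathcal{E}' \cong \mathcal{C}$, gives $c|_{\mathcal{C}} = 0$, hence $c = 0$ by the generation hypothesis and $\alpha = \mathrm{id}$.

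The main obstacle is really the moreover clause: one must identify the precise rigidity condition (no non-trivial central automorphisms preserving $\mathcal{E}'$) and verify that this is exactly captured by $\mathcal{C}$ generating $G$ via the cocycle computation above. The rest of the argument is essentially the universal property of the pushout bookkept against the marking.
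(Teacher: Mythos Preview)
Your proof is correct and follows essentially the same route as the paper: both construct $E'$ as the pushout $E \amalg_A A'$ with marking the image of $\mathcal{E}$, and both prove the uniqueness clause by showing that automorphisms of $E'$ over $A'$ are parametrised by $\Hom(G,A')$ and that preservation of the marking forces such a homomorphism to vanish on $\mathcal{C}$, hence everywhere. Your version is simply more explicit about the pushout construction and the verification that $\mathcal{E}'$ is a marking.
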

\begin{proof}
	Existence is given by letting $E' := E \coprod_{A} A'$ be the pushout and letting $\mathcal{E}'$ be the image of $\mathcal{E}$. Uniqueness follows from the universal property of pushout.
	
	To see that it is unique up to unique isomorphism note that the automorphism group of $E'$ which fixes $A'$ is isomorphic to $\Hom(G, A')$, where $\chi \in \Hom(G, A')$ acts by sending $\epsilon \in E$ to $\epsilon + \chi(f(\epsilon))$. An automorphism which preserves the marking thus has the property that $\chi(g) = 0$ for all $g \in \mathcal{C}$. But $\mathcal{C}$ generates $G$ so this implies that $\chi = 0$.
\end{proof}

\subsubsection{The universal central marked extension}
In \cite[\S7.4]{EVW13} a specific marked central extension is identified which is initial with respect to all marked central extension. This extension was also studied in \cite{Woo21}. Its kernel is closely related to the components of Hurwitz spaces.

There are two main differences with our approach and the construction in \cite{EVW13, Woo21}. The first that we allow group schemes over fields, the second is that we since we define a marking as living in $E(-1)$ the universal marked central extension will be a non-constant group scheme even if $G$ is constant.

For the rest of this section to keep notation light, we will use the non-standard definition $\widehat{\Z} := \varprojlim_{p \nmid n} \Z/n \Z$ instead of $\widehat{\Z}(p')$. We also consider the $\widehat{\Z}^{\times}$-torsor $\widehat{\Z}(1)^{\times} = \varprojlim_{p \nmid n} \mu_n^{\times}$, where $\mu_n^{\times} \subset \mu_n$ denotes the subscheme of primitive roots of unity.

\begin{definition} \label{def:universal_marked_central_extension}
	Let $\mathcal{C} \subset \mathcal{C}_G$ be a Galois invariant subset of conjugacy classes which generate $G$.
	
	Define $\hat{U}(G, \mathcal{C})$ to be the profinite group scheme such that $\hat{U}(G, \mathcal{C})(k^{\sep})$ is the profinite group generated by symbols of the form $[\gamma]^{\zeta}$ for all $\gamma \in c \in \mathcal{C}$ and all $\zeta \in \widehat{\Z}(1)^{\times}$. These generators are subject to the relations $[\gamma]^{\lambda \zeta} = ([\gamma]^{\zeta})^{\lambda}$ for all $\lambda \in \hat{\Z}^{\times}$ and $[\xi]^{\eta} [\gamma]^{\zeta} [\xi]^{-\eta} = [\xi(\eta) \gamma \xi(\eta)^{-1}]^{\zeta}$ for all $\xi \in c' \in  \mathcal{C}$ and $\eta \in \widehat{\Z}(1)^\times$.
	
	The $\Gamma_k$-action on $\hat{U}(G, \mathcal{C})$ is given by the formula $\sigma([\gamma]^{\zeta}) = [\sigma(\gamma)]^{\sigma(\zeta)}$ where $\sigma \in \Gamma_k$ and $\gamma \in G(-1)(k^{\sep})$.
\end{definition}
The presence of $\zeta$ in the generators is to give the correct definition of the Galois action and to be able to define the morphism $\hat{U}(G, \mathcal{C}) \to G:[\gamma]^{\zeta} \to \gamma(\zeta) $. 
This morphism preserves the relations and is equivariant with respect to the Galois action and is thus well-defined. It is a central extension by the same argument as in \cite[Lem.~2.1]{Woo21}.

We now define orbifold homology.

\begin{definition}
	We denote by $\H_{2, \orb}^{\mathcal{C}}(G, \widehat{\Z}) :=\ker(\hat{U}(G, \mathcal{C}) \to G)$. 
	We consider $\hat{U}(G, \mathcal{C}) \to G$ as a marked central extension by giving it the marking consisting of the functions $\widehat{\Z}(1)^{\times} \to \hat{U}(G, \mathcal{C}): \zeta \to [g]^{\zeta}$ for all $g \in c \in \mathcal{C}$.
\end{definition}
We then have the following analogue of \cite[\S8.1.6]{EVW13}.
\begin{proposition}\label{prop:universal_marked_extensions}
	If $\mathcal{C}$ generates $G$ then $\hat{U}(G, \mathcal{C}) \to G$ is an initial object in the category of $\mathcal{C}$-marked central extensions.
\end{proposition}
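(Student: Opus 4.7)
The plan is to exhibit a canonical morphism from $\hat{U}(G,\mathcal{C})$ to any other marked central extension, using that a marking $\mathcal{E} \subset E(-1)(k^{\sep})$ is required to map bijectively onto $\mathcal{C}$. First, given any $\mathcal{C}$-marked central extension $1 \to A \to E \xrightarrow{f} G \to 1$ with marking $\mathcal{E}$, viewing $\mathcal{C}$ as a conjugacy- and Galois-invariant subset of $G(-1)(k^{\sep})$, the bijection $\mathcal{E} \to \mathcal{C}$ provides for each $\gamma \in \mathcal{C}$ a unique lift $\tilde\gamma \in \mathcal{E} \subset \Hom(\widehat{\Z}(1),E)$. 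I then define a map of profinite groups
\[
	\phi: \hat{U}(G,\mathcal{C})(k^{\sep}) \to E(k^{\sep}), \qquad [\gamma]^{\zeta} \mapsto \tilde\gamma(\zeta),
\]
on generators and extend by multiplicativity.

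The next step is to verify that $\phi$ respects the defining relations. Since each $\tilde\gamma$ is a group homomorphism $\widehat{\Z}(1) \to E$, the relation $[\gamma]^{\lambda\zeta} = ([\gamma]^{\zeta})^{\lambda}$ is sent to the identity $\tilde\gamma(\zeta^{\lambda}) = \tilde\gamma(\zeta)^{\lambda}$, which holds. For the conjugation relation $[\xi]^{\eta}[\gamma]^{\zeta}[\xi]^{-\eta} = [\xi(\eta)\gamma\xi(\eta)^{-1}]^{\zeta}$, I use that $\mathcal{E}$ is conjugacy invariant in $E(-1)(k^{\sep})$: the element $\tilde\xi(\eta) \cdot \tilde\gamma \cdot \tilde\xi(\eta)^{-1} \in \Hom(\widehat{\Z}(1),E)$ lies in $\mathcal{E}$ and, by functoriality of $f$, maps under $\mathcal{E} \to \mathcal{C}$ to $\xi(\eta)\gamma\xi(\eta)^{-1}$. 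By uniqueness of the lift in $\mathcal{E}$, it must equal $\widetilde{\xi(\eta)\gamma\xi(\eta)^{-1}}$, and evaluating at $\zeta$ yields precisely the required identity in $E$. Thus $\phi$ is a well-defined group homomorphism; it is Galois-equivariant because $\mathcal{E}$ is Galois-invariant, which forces $\widetilde{\sigma(\gamma)} = \sigma(\tilde\gamma)$, and then $\phi(\sigma([\gamma]^{\zeta})) = \widetilde{\sigma(\gamma)}(\sigma(\zeta)) = \sigma(\tilde\gamma(\zeta)) = \sigma(\phi([\gamma]^{\zeta}))$.

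It remains to check that $\phi$ is a morphism of marked central extensions. Compatibility with the projections to $G$ follows from $f(\tilde\gamma(\zeta)) = \gamma(\zeta)$, which is the defining property of $\tilde\gamma$ being a lift of $\gamma$. The marking of $\hat{U}(G,\mathcal{C})$ is by construction the collection of homomorphisms $\widehat{\Z}(1) \to \hat{U}(G,\mathcal{C}),\ \zeta \mapsto [\gamma]^{\zeta}$, and these are sent by $\phi$ to $\zeta \mapsto \tilde\gamma(\zeta)$, i.e.~to $\tilde\gamma \in \mathcal{E}$, so the marking is preserved. Finally, any morphism of marked central extensions $\psi: \hat{U}(G,\mathcal{C}) \to E$ must send the marking element $\zeta \mapsto [\gamma]^{\zeta}$ to the unique element of $\mathcal{E}$ lying above $\gamma$, namely $\tilde\gamma$; hence $\psi([\gamma]^{\zeta}) = \tilde\gamma(\zeta) = \phi([\gamma]^{\zeta})$, and since the $[\gamma]^{\zeta}$ generate $\hat{U}(G,\mathcal{C})$ (as $\mathcal{C}$ generates $G$ and every element is built from such symbols by construction), $\psi = \phi$. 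This yields uniqueness and completes the proof.

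The subtle step I expect to be the main obstacle is the verification of the conjugation relation: it hinges on the combination of conjugacy-invariance of the marking $\mathcal{E}$, the uniqueness of lifts coming from the bijection $\mathcal{E} \to \mathcal{C}$, and centrality of the extension (which ensures the $G$-action on $f^{-1}(\mathcal{C})$ is well-defined). Once this is established, everything else is bookkeeping.
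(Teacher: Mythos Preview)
Your proof is correct and follows exactly the same approach as the paper's: define the map on generators via the unique lift in the marking, then verify the relations, Galois-equivariance, and uniqueness. The paper's proof is terser (it simply asserts the map ``is compatible with the relations by the definition of a marking''), whereas you spell out the verification of the conjugation relation using conjugacy-invariance of $\mathcal{E}$ and uniqueness of lifts; this is the right unpacking. One minor remark: for uniqueness you do not need that $\mathcal{C}$ generates $G$, since the $[\gamma]^{\zeta}$ already generate $\hat{U}(G,\mathcal{C})$ by its very definition as a group presented on these symbols.
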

\begin{proof}
	Let $E \to G$ be a marked central extension and for each $\gamma \in c \in \mathcal{C}$ let $e_\gamma: \widehat{\Z}(1)^{\times} \to E$ be the corresponding marked element. The only possible map of central extensions $\hat{U}(G, \mathcal{C})  \to E$ which preserves the markings has to send $[\gamma]^{\zeta} \mapsto e_\gamma(\zeta)$. Conversely, we note that the formula $[\gamma]^{\zeta} \to e_\gamma(\zeta)$ defines a map of extensions since it is compatible with the relations by the definition of a marking.
\end{proof}
Following this proposition, providing $\mathcal{C}$ generates $G$,  we will call $\hat{U}(G, \mathcal{C}) \to G$ the \emph{universal central $\mathcal{C}$-marked extension}. Note that since initial objects are unique up to unique isomorphism this property defines $\hat{U}(G, \mathcal{C})$.

An immediate consequence of Proposition \ref{prop:universal_marked_extensions} and Lemma \ref{lem:marked_central_extension_kernel} is the following formula for $\H^{2, \text{orb}}_{\mathcal{C}}(BG, A)$, which is analogous to the universal coefficient theorem.

\begin{proposition} \label{prop:hom_cohom}
	Assume that $\mathcal{C}$ generates $G$. For all profinite tame abelian group schemes $A$
	the map $\Hom(\H_{2, \orb}^{\mathcal{C}}(G, \widehat{\Z}), A) \to \H^{2, \orb}_{\mathcal{C}}(BG, A)$ is an isomorphism of groups.
\end{proposition}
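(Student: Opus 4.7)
The plan is to exhibit the map explicitly using the universal property of $\hat U(G,\mathcal{C})$, and then separately verify bijectivity and compatibility with the group structures.

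First, I would construct the map. Given a homomorphism $\phi \colon \H_{2,\orb}^{\mathcal{C}}(G,\widehat{\Z}) \to A$, apply Lemma \ref{lem:marked_central_extension_kernel} to the universal marked central extension
\[
1 \to \H_{2,\orb}^{\mathcal{C}}(G,\widehat{\Z}) \to \hat U(G,\mathcal{C}) \to G \to 1
\]
and the homomorphism $\phi$ on kernels. This produces a $\mathcal{C}$-marked central extension $1 \to A \to E_\phi \to G \to 1$, together with a morphism of marked central extensions $\hat U(G,\mathcal{C}) \to E_\phi$ restricting to $\phi$ on the kernel. This assignment $\phi \mapsto [E_\phi]$ is the map of the proposition.

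Next I would verify bijectivity. For surjectivity, let $1 \to A \to E \to G \to 1$ be any $\mathcal{C}$-marked central extension. By Proposition \ref{prop:universal_marked_extensions}, there is a unique morphism $\hat U(G,\mathcal{C}) \to E$ of marked central extensions. Restricting to the kernels gives a homomorphism $\phi \colon \H_{2,\orb}^{\mathcal{C}}(G,\widehat{\Z}) \to A$, and by the uniqueness clause in Lemma \ref{lem:marked_central_extension_kernel} (which applies because $\mathcal{C}$ generates $G$), the extension $E_\phi$ built from $\phi$ above is canonically isomorphic to $E$ as a marked central extension. For injectivity, suppose $\phi_1,\phi_2$ produce isomorphic marked extensions; composing with the universal map $\hat U(G,\mathcal{C}) \to E_{\phi_1} \cong E_{\phi_2}$ gives two morphisms of marked central extensions $\hat U(G,\mathcal{C}) \to E_{\phi_2}$ restricting to $\phi_1$ and $\phi_2$ respectively on kernels. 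By the uniqueness part of Proposition \ref{prop:universal_marked_extensions}, these morphisms coincide, hence $\phi_1 = \phi_2$.

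Finally I would check that the map is a group homomorphism, i.e.~that Baer sum corresponds to addition of homomorphisms. Given $\phi_1,\phi_2 \colon \H_{2,\orb}^{\mathcal{C}}(G,\widehat{\Z}) \to A$, both $E_{\phi_1+\phi_2}$ and the Baer sum $E_{\phi_1} + E_{\phi_2}$ come equipped with canonical morphisms from $\hat U(G,\mathcal{C})$ (the latter via the diagonal into $\hat U(G,\mathcal{C}) \times_G \hat U(G,\mathcal{C})$ followed by the pushout along $A \times A \xrightarrow{+} A$) whose restrictions to kernels are both $\phi_1 + \phi_2$; uniqueness of the pushout construction in Lemma \ref{lem:marked_central_extension_kernel} identifies them as marked central extensions.

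The main technical point — and the reason we needed the hypothesis that $\mathcal{C}$ generates $G$ — is the uniqueness clauses in Proposition \ref{prop:universal_marked_extensions} and Lemma \ref{lem:marked_central_extension_kernel}: without this hypothesis, a marked extension could admit nontrivial automorphisms preserving the marking (given by characters $G \to A$ vanishing on $\langle \mathcal{C}\rangle$), which would both destroy injectivity of the map and obstruct the identification of Baer sum with addition. Once one has these uniqueness statements in hand, the argument is essentially formal.
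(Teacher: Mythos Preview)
Your proof is correct and follows exactly the approach the paper intends: the paper simply states that the proposition is ``an immediate consequence of Proposition \ref{prop:universal_marked_extensions} and Lemma \ref{lem:marked_central_extension_kernel}'', and what you have written is precisely the unpacking of that sentence. Your explicit verification of the group-homomorphism property via Baer sums and your remark on where the hypothesis that $\mathcal{C}$ generates $G$ enters are both accurate elaborations of details the paper leaves implicit.
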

\begin{remark}
	If $G$ is constant then $\hat{U}(G, \mathcal{C})(k^{\sep})$ agrees with what Wood \cite[p. 7]{Woo21} denotes $\hat{U}_k(G, \mathcal{C})$ equipped with the Galois action given by $\sigma(u) = \chi_{\text{cycl}}(\sigma) \cdot u$, where $\sigma \in \Gamma_k$, $u \in \hat{U}_k(G, \mathcal{C})$, $\chi_{\text{cycl}}: \Gamma_k \to \widehat{\Z}^{\times}$ denotes the cyclotomic character and $\cdot$ is defined in \textit{loc.~cit.} page 7.
\end{remark}
It is immediate from the construction that universal marked central extensions are compatible with products.
\begin{proposition}\label{prop:universal_marked_central_product}
	Let $G_1, G_2$ finite \'etale tame group schemes over $k$. Let $\mathcal{C}_i \subset \mathcal{C}_{G_i}$ generate $G_i$ for $i = 1,2$. Let $\mathcal{C} = \mathcal{C}_1 \times \{e\} \cup \{e\} \times \mathcal{C}_2 \subset \mathcal{C}_{G_1 \times G_2}$. Then the canonical map $\hat{U}(G_1 \times G_2, \mathcal{C}) \to \hat{U}(G_1, \mathcal{C}_1) \times \hat{U}(G_2, \mathcal{C}_2)$ is an isomorphism.
\end{proposition}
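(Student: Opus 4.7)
The plan is to verify that the right-hand side $\hat{U}(G_1, \mathcal{C}_1) \times \hat{U}(G_2, \mathcal{C}_2)$ is also an initial object in the category of $\mathcal{C}$-marked central extensions of $G_1 \times G_2$, whence the canonical map from $\hat{U}(G_1 \times G_2, \mathcal{C})$ must be an isomorphism by uniqueness of initial objects (Proposition \ref{prop:universal_marked_extensions}).

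First I would check that the product, equipped with the componentwise surjection to $G_1 \times G_2$ and the marking $\mathcal{E}_1 \times \{e\} \cup \{e\} \times \mathcal{E}_2$ (where $\mathcal{E}_i \subset \hat{U}(G_i, \mathcal{C}_i)(-1)(k^{\sep})$ is the tautological marking), really is a $\mathcal{C}$-marked central extension. This is routine: the kernel is the product of the two central kernels and hence central, and the stated marking bijects Galois-equivariantly with $\mathcal{C}_1 \times \{e\} \cup \{e\} \times \mathcal{C}_2 = \mathcal{C}$ because each $\mathcal{E}_i \to \mathcal{C}_i$ is a Galois-equivariant bijection.

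To verify initiality, given any $\mathcal{C}$-marked central extension $\pi: E \to G_1 \times G_2$ with marking $\mathcal{E}$, I would restrict along the inclusions $G_i \hookrightarrow G_1 \times G_2$ to obtain subextensions $E_i := \pi^{-1}(G_i) \to G_i$. Any element of $\mathcal{E}$ lifting an element of $\mathcal{C}_i \subset \mathcal{C}$ automatically lies in $E_i(-1)$, so $\mathcal{E} \cap E_i(-1)$ makes $E_i \to G_i$ into a $\mathcal{C}_i$-marked central extension. Applying the universal property of $\hat{U}(G_i, \mathcal{C}_i)$ then yields unique marked morphisms $\phi_i: \hat{U}(G_i, \mathcal{C}_i) \to E_i \subset E$.

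The main obstacle will be to show that the images of $\phi_1$ and $\phi_2$ commute in $E$, so that they assemble into a single homomorphism from the product. For this, consider marked lifts $\tilde{\gamma}_1, \tilde{\gamma}_2 \in \mathcal{E}$ of $(\gamma_1, e)$ and $(e, \gamma_2)$ respectively, and form the conjugate $\tilde{\gamma}_1(\zeta) \tilde{\gamma}_2 \tilde{\gamma}_1(\zeta)^{-1} \in E(-1)(k^{\sep})$ for an arbitrary $\zeta \in \widehat{\Z}(1)$. Conjugation invariance of $\mathcal{E}$ keeps this element inside $\mathcal{E}$, while commutativity of $G_1 \times \{e\}$ with $\{e\} \times G_2$ in $G_1 \times G_2$ shows it still projects to $(e, \gamma_2)$; the bijection $\mathcal{E} \to \mathcal{C}$ then forces it to equal $\tilde{\gamma}_2$. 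Hence $\tilde{\gamma}_1(\zeta)$ commutes with $\tilde{\gamma}_2(\eta)$ in $E$ for all $\zeta, \eta \in \widehat{\Z}(1)$. Since each $\hat{U}(G_i, \mathcal{C}_i)$ is generated as a profinite group by its marked elements (inspecting the presentation in Definition \ref{def:universal_marked_central_extension}), this produces the desired homomorphism $\phi: \hat{U}(G_1, \mathcal{C}_1) \times \hat{U}(G_2, \mathcal{C}_2) \to E$; it preserves markings by construction and is unique because $\phi_1, \phi_2$ are.
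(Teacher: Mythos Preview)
Your proof is correct and genuinely different from the paper's. The paper argues directly from the explicit presentation in Definition~\ref{def:universal_marked_central_extension}: since $\mathcal{C} = \mathcal{C}_1 \times \{e\} \cup \{e\} \times \mathcal{C}_2$, the generators of $\hat{U}(G_1 \times G_2, \mathcal{C})$ split into two families, and the conjugation relation $[\xi]^{\eta}[\gamma]^{\zeta}[\xi]^{-\eta} = [\xi(\eta)\gamma\xi(\eta)^{-1}]^{\zeta}$ collapses to commutativity when $\xi$ and $\gamma$ come from different factors (because they commute in $G_1 \times G_2$); the remaining relations are precisely those defining each $\hat{U}(G_i,\mathcal{C}_i)$, so the presentation is that of the direct product. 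This is a one-line syntactic check.

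Your argument instead verifies initiality of the product directly via the universal property. The advantage of the paper's route is brevity; the advantage of yours is that it is presentation-free and makes explicit the key point---that conjugacy invariance of the marking together with the bijection $\mathcal{E} \to \mathcal{C}$ forces marked lifts from different factors to commute in \emph{any} $\mathcal{C}$-marked central extension. This is exactly the same mechanism hiding behind the collapse of the conjugation relation in the paper's proof, but you isolate it as a standalone fact about markings rather than as a feature of one particular presentation.
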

\begin{proof}
	Consider the definition of $\hat{U}(G_i, \mathcal{C}_i)$ in terms of generators and relations. This leads to a construction of the product $\hat{U}(G_1, \mathcal{C}_1) \times \hat{U}(G_2, \mathcal{C}_2)$ in terms of generators and relations which is exactly the same as the definition of $\hat{U}(G_1 \times G_2, \mathcal{C})$. The induced isomorphism is clearly the canonical map.
\end{proof}
\subsection{Orbifold Kummer exact sequence}
Lemma \ref{lem:central_extension_unramified} implies that base change from $\mu_n$ to $\Gm$ defines a morphism $\H^{2, \text{orb}}_{\mathcal{C}}(G, \mu_n) \to \Br_{\mathcal{C}} BG[n]$. We will now describe the kernel of this morphism. To do this we first construct a morphism $\PicOrb_{\mathcal{C}}(BG) \to \H^{2, \text{orb}}_{\mathcal{C}}(G, \mu_n)$.


For $\chi \in \dual{G}(k)$ consider the central extension $1 \to \mu_n \to G_{\chi, n } \to G \to 1$ defined in Lemma \ref{lem:boundary_map_Kummer_theory}. Let $n \mid m \in \N$ be such that $G_{\chi, n} \subset G \times \mu_m \subset G \times \Gm$. We then have $(\gamma, r) \in G_{\chi, n}(-1) \subset G(-1) \times \Z/m\Z \subset G(-1) \times \Q/\Z$ if and only if for all $\zeta \in \mu_m$ we have $\chi(\gamma(\zeta)) = \zeta^{n r}$, or in other words $\age(\chi, \gamma) = n r$. 

For $(\chi, w) \in \PicOrb_{\mathcal{C}}(BG)$ consider the marking 
\begin{equation} \label{eqn:C_chi_w_n}
	\mathcal{C}_{(\chi, w), n} \subset G_{\chi, n}(-1)
\end{equation} given by all pairs $(\gamma, \frac{w(C)}{n} \bmod \Z) \in G_{\chi, n}(-1)$, where $C \in \mathcal{C}$, $\gamma \in C$ and $\frac{w(C)}{n} \in \Q$. This is well-defined because $w(C) = \age(\chi, \gamma)$.

Then the map $\PicOrb_{\mathcal{C}}(BG) \to \H^{2, \text{orb}}_{\mathcal{C}}(G, \mu_n)$ sending $(\chi, w)$ to the central extension $G_{\chi, n}$ equipped with the marking $\mathcal{C}_{\chi,w}$ is a group homomorphism. We then have the following analogue for the partially unramified Brauer group of the Kummer exact sequence (Lemma \ref{lem:boundary_map_Kummer_theory}).

\begin{theorem}\label{thm:orbifold_Kummer}
	Assume that $\mathcal{C}$ generates $G$.
	The following sequence is exact for all tame $n$
	\begin{equation*}
		\PicOrb_{\mathcal{C}}(BG) \xrightarrow{n} \PicOrb_{\mathcal{C}}(BG)  \to \H^{2, \orb}_{\mathcal{C}}(BG, \mu_n) \to \Br_{\mathcal{C}} BG/\Br k \xrightarrow{n} \Br_{\mathcal{C}} BG/ \Br k.
	\end{equation*}
\end{theorem}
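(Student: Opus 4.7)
The plan is to establish the exactness of the sequence by reducing it to the standard Kummer sequence (Lemma \ref{lem:boundary_map_Kummer_theory}), together with the characterisation of partial unramifiedness via markings (Lemma \ref{lem:central_extension_unramified}) and the central-extension description of Brauer classes (Lemmas \ref{lem:central_gerbe}, \ref{lem:boundary_map_Kummer_theory}). Concretely, there is a commutative diagram
\begin{equation*}
\xymatrix@C=1.2em{
\PicOrb_{\mathcal{C}}(BG) \ar[r]^-{n} \ar[d] & \PicOrb_{\mathcal{C}}(BG) \ar[r] \ar[d] & \H^{2,\orb}_{\mathcal{C}}(BG,\mu_n) \ar[r] \ar[d] & \Br_{\mathcal{C}} BG/\Br k \ar[r]^-{n} \ar@{^{(}->}[d] & \Br_{\mathcal{C}} BG/\Br k \ar@{^{(}->}[d]\\
\Pic BG \ar[r]^-{n} & \Pic BG \ar[r] & \H^{2}(BG,\mu_n) \ar[r] & \Br BG/\Br k \ar[r]^-{n} & \Br BG/\Br k
}
\end{equation*}
in which the vertical forgetful maps remember only $\chi$ and the underlying extension, and whose bottom row is exact (in fact, a short exact sequence after an obvious truncation) by Lemma \ref{lem:boundary_map_Kummer_theory}.

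First I would verify that the two compositions vanish. For $\PicOrb_{\mathcal{C}}(BG) \xrightarrow{n} \PicOrb_{\mathcal{C}}(BG) \to \H^{2,\orb}_{\mathcal{C}}(BG,\mu_n)$: given $(\chi',w')$, the image of $n(\chi',w')=(n\chi',nw')$ is $(G_{n\chi',n},\mathcal{C}_{(n\chi',nw'),n})$. The section $s(g)=(g,\chi'(g))$ of $G_{n\chi',n}\to G$ induces an isomorphism $G_{n\chi',n}\cong G\times\mu_n$, and a direct computation on the $(-1)$-points shows that a marking element $(\gamma, nw'(c)/n \bmod \Z)=(\gamma,w'(c) \bmod \Z)$ maps to $(\gamma,w'(c)-\age(\chi',\gamma)\bmod\Z)=(\gamma,0)$, which is the trivial marking. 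For the second composition $\H^{2,\orb}_{\mathcal{C}}(BG,\mu_n)\to\Br_{\mathcal{C}}BG/\Br k\xrightarrow{n}$, this is immediate from the bottom row, since any class in the image of $\H^2(BG,\mu_n)$ is $n$-torsion.

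For exactness at $\Br_{\mathcal{C}}BG/\Br k$, take $b\in\Br_{\mathcal{C}}BG$ with $nb\in\Br k$. Evaluating at $e\in BG(k)$ and subtracting, I may assume (using Hilbert 90 to identify $\Br k$-shifts) that $b(e)=0$ and $nb=0$. Lemma \ref{lem:central_gerbe} then gives $b=b_E$ for a central extension $1\to\mu_n\to E\to G\to 1$, and Lemma \ref{lem:central_extension_unramified}, applied to the hypothesis $b\in\Br_{\mathcal{C}}BG$, furnishes a Galois- and conjugation-invariant subset $\mathcal{E}\subset E(-1)$ bijecting with $\mathcal{C}$; this is exactly a marking, so $b$ lies in the image. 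For exactness at $\H^{2,\orb}_{\mathcal{C}}(BG,\mu_n)$, suppose $(E,\mathcal{E})$ maps to zero. After shifting by a class from $\Br k$, $b_E=0$, so by the standard Kummer sequence $E\cong G_{\chi,n}$ for some $\chi\in\dual{G}(k)$. The marking $\mathcal{E}$ then records, for each $c\in\mathcal{C}$ and $\gamma\in c$, an element of $\tfrac{1}{n}\Z/\Z$ lifting $\age(\chi,c)$; pulling back to $\Q$ defines a weight function $w$ with $w(e)=0$ and $w(c)\equiv\age(\chi,c)\pmod\Z$, giving the desired $(\chi,w)\in\PicOrb_{\mathcal{C}}(BG)$. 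Finally, for exactness at the first $\PicOrb_{\mathcal{C}}(BG)$, if $(\chi,w)$ maps to the trivial marked extension then first $\chi=n\chi'$ by the bottom Kummer sequence. Fixing any such lift $\chi'$, the induced splitting identifies the marking with $(\gamma,w(c)/n-\age(\chi',\gamma)\bmod\Z)$; triviality forces $w(c)-n\age(\chi',\gamma)\in n\Z$ for every $\gamma\in c$, so adjusting $\chi'$ by an $n$-torsion character (using that $\mathcal{C}$ generates $G$, so such adjustments are controlled by Lemma \ref{lem:Picorb_Galois_action}) and choosing an integer lift produces a $w'$ with $w=nw'$ and $(\chi,w)=n(\chi',w')$.

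The main obstacle is the careful bookkeeping at the marking level in the two ``middle'' exactness steps: a marked extension carries strictly more information than the underlying extension in $\H^2(BG,\mu_n)$, and markings are only determined up to isomorphisms which themselves come from characters $G\to\mu_n$. Keeping track of how these character ambiguities translate, via the age pairing, into rational shifts of the weight function $w$ is where the hypothesis that $\mathcal{C}$ generates $G$ is used (through Lemma \ref{lem:Picorb_Galois_action}), since it guarantees that the marking uniquely determines $w$ modulo $n\Hom(\mathcal{C},\Z)^{\Gamma_k}$ and that no ``phantom'' automorphisms of marked extensions cause the exactness at the first $\PicOrb_{\mathcal{C}}(BG)$ to fail.
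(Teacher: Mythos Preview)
Your approach is essentially the same as the paper's: exactness at $\Br_{\mathcal{C}}BG/\Br k$ via Lemmas \ref{lem:central_gerbe} and \ref{lem:central_extension_unramified}, exactness at $\H^{2,\orb}_{\mathcal{C}}$ by reducing to $G_{\chi,n}$ via Kummer and reading off a weight function from the marking, and exactness at $\PicOrb_{\mathcal{C}}$ by analysing automorphisms of the trivial extension. One small point: at the last step the paper argues more directly---any isomorphism $G\times\mu_n\to G_{\chi,n}$ of extensions is of the form $(g,\zeta)\mapsto(g,\psi(g)\zeta)$ for some $\psi$ with $\psi^n=\chi$, and the marking-preservation condition then reads $\age(\psi,c)\equiv w(c)/n\bmod\Z$, giving $(\chi,w)=n(\psi,w/n)$ immediately; this avoids first fixing a $\chi'$ and then ``adjusting by an $n$-torsion character'', and in particular does not require invoking the generating hypothesis or Lemma \ref{lem:Picorb_Galois_action} at this step.
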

\begin{proof}
	Exactness at $\Br_{\mathcal{C}} BG/\Br k$ is due to Lemmas \ref{lem:central_gerbe}
	and \ref{lem:central_extension_unramified}.
	
	Consider now exactness at $\H^{2, \text{orb}}_{\mathcal{C}}(G, \mu_n)$. An element maps to $0$ in $\Br_{\mathcal{C}} BG$ if and only if it is a marking on the extension $G_{\chi,n} \to G$ for some $\chi \in \dual{G}(k)$, by Lemma~\ref{lem:boundary_map_Kummer_theory}. Note that in $G_{\chi,n}(-1) \subset G(-1) \times \Q/\Z$ one has that $(\gamma, r)$ lies in the same $G$-orbit as $(\gamma',r')$ if and only if $\gamma$ is conjugate to $\gamma'$ and $r = r'$. It follows that all markings consist of the elements $(\gamma, r(C))$ where $\gamma \in C \in \mathcal{C}$ and $r$ is a function $\mathcal{C} \to \Q/\Z$ such that $\age(\chi, g) = n r(C)$.
	
	 Let $w: \mathcal{C} \to \Q$ be a $\Gamma_k$-equivariant lift of  $nr: \mathcal{C} \to \Q/ n \Z$. We then have $(\chi, w) \in \PicOrb_{\mathcal{C}} BG$ since $\age(\chi, C) = n r(C) = w(C) \bmod \Z$ for all $C \in \mathcal{C}$
	 and the marking is equal to $\mathcal{C}_{(\chi, w), n}$ from \eqref{eqn:C_chi_w_n}.
	 
	 It remains to prove exactness at $\PicOrb_{\mathcal{C}}(BG)$. Let $(\chi, w) \in \PicOrb_{\mathcal{C}}(BG)$ be such that that there exists an isomorphism $f: G \times \mu_n \to G_{\chi,n}$ which sends the trivial marking to the marking $\mathcal{C}_{(\chi, w), n}$.
	 
	 As $f$ is a map of extensions which preserves $\mu_n$ we have $f(g, \zeta) = (g, \psi(g)\zeta)$ where $\psi: G \to \Gm$ is a morphism such that $\chi(g) = \psi(g)^n$ for all $g \in G$. The condition that $f$ preservers the markings is then equivalent to $\age(\psi, g) = w(C)/n \mod \Z$ for all $g \in C \in \mathcal{C}$. This means that $(\psi, w/n) \in \PicOrb BG$ which implies that $(\chi, w)  = n \cdot (\psi, w/n)\in n \PicOrb_{\mathcal{C}} BG$.
	 
	 Conversely, if $(\chi, w) \in n \PicOrb_{\mathcal{C}} BG$ then there exists $\psi \in \dual{G}(k)$ such that we have $(\psi, w/n) \in \PicOrb_{\mathcal{C}} BG$. The same reasoning as above then implies that the map $G \times \mu_n \to G_{\chi,n}: (g, \zeta) \to (g, \psi(g) \zeta)$ is an isomorphism of central extensions which sends the trivial marking to $\mathcal{C}_{(\chi, w), n}$. This implies that the image of $(\chi, w)$ in $\H^{2, \text{orb}}_{\mathcal{C}}(G, \mu_n)$ is $0$.
\end{proof}
Unfolding the definitions we see that for all $n \mid m$ we have the commutative diagram  
\[
\xymatrix{
	0 \ar[r]  & \PicOrb_{\mathcal{C}}(BG)/n \PicOrb_{\mathcal{C}}(BG) \ar[r] \ar[d]^{\cdot m/n} & \H^{2, \text{orb}}_{\mathcal{C}}(BG, \mu_n) \ar[r] \ar[d] &  \Br_{\mathcal{C}} BG[n] \ar[r] \ar[d] & 0\\
	0  \ar[r] & \PicOrb_{\mathcal{C}}(BG)/m \PicOrb_{\mathcal{C}}(BG) \ar[r] & \H^{2, \text{orb}}_{\mathcal{C}}(BG, \mu_m) \ar[r] & \Br_{\mathcal{C}} BG[m] \ar[r] & 0
}
\]
This diagram has exact rows by Theorem \ref{thm:orbifold_Kummer}. Define $\H^{2, \orb}_{\mathcal{C}}(BG, \Q/\Z(1)) := \lim_{\rightarrow{n}} \H^{2, \text{orb}}_{\mathcal{C}}(BG, \mu_n)$. Taking the colimit of the above diagram over $n$ we thus find the following.
\begin{corollary} \label{cor:orbifold_Kummer}
	The following sequence is exact
	\[
	0 \to \PicOrb_{\mathcal{C}}(BG) \otimes \Q/\Z  \to \H^{2, \orb}_{\mathcal{C}}(BG, \Q/\Z(1)) \to \Br_{\mathcal{C}} BG/\Br k \to 0
	\]
	\end{corollary}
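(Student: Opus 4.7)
The plan is to obtain the sequence by passing to the colimit over $n$ in the natural short exact sequence extracted from Theorem~\ref{thm:orbifold_Kummer}. Specifically, for each tame $n$ the five-term exact sequence of Theorem~\ref{thm:orbifold_Kummer} truncates to a short exact sequence
\[
0 \to \PicOrb_{\mathcal{C}}(BG)/n \PicOrb_{\mathcal{C}}(BG) \to \H^{2,\orb}_{\mathcal{C}}(BG, \mu_n) \to (\Br_{\mathcal{C}} BG/\Br k)[n] \to 0,
\]
and these assemble, as $n$ ranges over (tame) integers ordered by divisibility, into a directed system whose transition maps are described by the commutative diagram displayed immediately after Theorem~\ref{thm:orbifold_Kummer}.

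First I would verify that the three columns indeed form a directed system and identify their colimits. The right-hand column has transition maps given by the inclusions $(\Br_{\mathcal{C}} BG/\Br k)[n] \hookrightarrow (\Br_{\mathcal{C}} BG/\Br k)[m]$, so its colimit is the torsion subgroup of $\Br_{\mathcal{C}} BG/\Br k$; by Corollary~\ref{cor:no_p_torsion} the group $\Br BG/\Br k$ is torsion, hence so is its subgroup $\Br_{\mathcal{C}} BG/\Br k$, and the colimit recovers the whole quotient. The middle column colimits to $\H^{2,\orb}_{\mathcal{C}}(BG, \Q/\Z(1))$ by definition of the latter. The left-hand column has transition map $\cdot m/n$, so the colimit is
\[
\varinjlim_n \PicOrb_{\mathcal{C}}(BG)/n\PicOrb_{\mathcal{C}}(BG) \;\cong\; \PicOrb_{\mathcal{C}}(BG) \otimes_{\Z} \Q/\Z,
\]
using the standard identification of an abelian group tensored with $\Q/\Z$ as the colimit of its quotients by $n$ under multiplication maps.

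Next, since direct limits are exact in the category of abelian groups, the colimit of the short exact sequences above is again short exact, yielding the desired sequence. The only nontrivial point to check is therefore the commutativity of the directed system, which is exactly the content of the diagram preceding the statement of the Corollary; this compatibility rests on the functoriality of the Kummer map $\mu_n \hookrightarrow \mu_m$ together with the explicit description of the map $\PicOrb_{\mathcal{C}}(BG) \to \H^{2,\orb}_{\mathcal{C}}(BG, \mu_n)$ via the marked extensions $(G_{\chi,n}, \mathcal{C}_{(\chi,w),n})$, and an inspection of how changing $n$ to $m$ rescales both the kernel and the marking accordingly.

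The only real subtlety, and what I would treat as the main point to double-check carefully, is the identification of the colimit of the right-hand column with $\Br_{\mathcal{C}} BG/\Br k$ (rather than with just its torsion part); this reduces to confirming that $\Br_{\mathcal{C}} BG/\Br k$ is torsion, which follows from the inclusion $\Br_{\mathcal{C}} BG \subset \Br BG$ together with Corollary~\ref{cor:no_p_torsion}. Everything else is formal, so no further obstacles arise.
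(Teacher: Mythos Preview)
Your proposal is correct and follows exactly the same approach as the paper: the text immediately preceding the corollary displays the commutative diagram of short exact sequences for $n \mid m$ with the transition maps you describe, and simply says ``Taking the colimit of the above diagram over $n$ we thus find the following.'' Your write-up fleshes out this one-line argument with appropriate care (in particular, your observation that the colimit on the right recovers all of $\Br_{\mathcal{C}} BG/\Br k$ because this group is torsion by Corollary~\ref{cor:no_p_torsion} is the only point requiring verification, and you handle it correctly).
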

\begin{remark}
	One can interpret $\H^{2, \text{orb}}_{\mathcal{C}}(BG, \Q/\Z(1))$ as being the group of all marked central extensions with kernel $\Q/\Z(1)$. The group $\PicOrb_{\mathcal{C}}(BG) \otimes \Q/\Z \cong \Hom(\mathcal{C}, \Q/\Z)$ can then be thought of as corresponding to different markings.
\end{remark}

An important consequence of this is that even though the Brauer group itself is not compatible with products, it turns out that the partially unramified Brauer group is, providing one is in a balanced case.
\begin{proposition}\label{prop:product_Brauer_group}
	Let $G_1, G_2$ finite \'etale tame group schemes over $k$. Let $\mathcal{C}_i \subset \mathcal{C}_{G_i}$ and assume that $\mathcal{C}_i$ generates $G_i$ for $i = 1,2$. Let $\mathcal{C} = \mathcal{C}_1 \times \{e\} \cup \{e\} \times \mathcal{C}_2 \subset \mathcal{C}_{G_1 \times G_2}$. Then the map
	\begin{equation*}
		\Br_{\mathcal{C}} B(G_1 \times G_2)/\Br k \to \Br_{\mathcal{C}_1} B G_1/\Br k \times \Br_{\mathcal{C}_2} B G_2/\Br k
	\end{equation*}
	induced by $BG_i \to B(G_1 \times G_2)$
	is an isomorphism.
\end{proposition}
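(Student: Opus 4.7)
The plan is to derive the result by comparing the orbifold Kummer sequence of Corollary \ref{cor:orbifold_Kummer} for $B(G_1 \times G_2)$ with the product of the corresponding sequences for $BG_1$ and $BG_2$, and then to apply the five lemma. The projections $B(G_1 \times G_2) \to BG_i$ induce via \eqref{eq:Br_C_functorial}, together with the compatible constructions on $\PicOrb$ and on marked central extensions, a commutative diagram
\[
\begin{tikzcd}[column sep=small, font=\footnotesize]
0 \ar[r] & \PicOrb_{\mathcal{C}}B(G_1 \times G_2) \otimes \Q/\Z \ar[r] \ar[d] & \H^{2,\orb}_{\mathcal{C}}(B(G_1 \times G_2), \Q/\Z(1)) \ar[r] \ar[d] & \Br_{\mathcal{C}}B(G_1 \times G_2)/\Br k \ar[r] \ar[d] & 0 \\
0 \ar[r] & \prod_{i} \PicOrb_{\mathcal{C}_i}BG_i \otimes \Q/\Z \ar[r] & \prod_i \H^{2,\orb}_{\mathcal{C}_i}(BG_i, \Q/\Z(1)) \ar[r] & \prod_i \Br_{\mathcal{C}_i}BG_i/\Br k \ar[r] & 0
\end{tikzcd}
\]
whose rows are exact by Corollary \ref{cor:orbifold_Kummer} (note that $\mathcal{C}$ generates $G_1 \times G_2$ because $\mathcal{C}_i$ generates $G_i$, which is what is needed to apply the Corollary).

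The left vertical arrow is an isomorphism: this is exactly Lemma \ref{lem:Picorb_products} (together with the fact that tensoring with $\Q/\Z$ commutes with finite products). The middle vertical arrow is an isomorphism by combining Proposition \ref{prop:universal_marked_central_product} with Proposition \ref{prop:hom_cohom}. Indeed, Proposition \ref{prop:universal_marked_central_product} gives $\hat{U}(G_1 \times G_2, \mathcal{C}) \cong \hat{U}(G_1, \mathcal{C}_1) \times \hat{U}(G_2, \mathcal{C}_2)$; taking the kernel of the projection to $G_1 \times G_2$ yields $\H^{\mathcal{C}}_{2,\orb}(G_1 \times G_2, \widehat{\Z}) \cong \H^{\mathcal{C}_1}_{2,\orb}(G_1, \widehat{\Z}) \times \H^{\mathcal{C}_2}_{2,\orb}(G_2, \widehat{\Z})$, and then Proposition \ref{prop:hom_cohom} (applied to each $\mu_n$ and passed to the colimit over $n$) turns this into the desired isomorphism on $\H^{2,\orb}(-, \Q/\Z(1))$, since $\Hom$ out of a direct product of profinite groups into an abelian group decomposes as a product.

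The five lemma then forces the right vertical arrow to be an isomorphism, which is the stated claim. The only point requiring verification is that the diagram genuinely commutes, i.e.\ that all the identifications used are natural with respect to the projections $G_1 \times G_2 \to G_i$: for $\PicOrb$ this is clear from its definition in terms of characters and weight functions; for $\H^{2,\orb}$ it follows from the universal property of $\hat{U}(G_1 \times G_2, \mathcal{C})$ given by Proposition \ref{prop:universal_marked_extensions}; and the passage to $\Br_{\mathcal{C}}/\Br k$ is natural by the construction in Lemma \ref{lem:central_gerbe} and the functoriality \eqref{eq:Br_C_functorial}. No step here looks genuinely difficult once the orbifold Kummer sequence and the product compatibility of universal marked extensions are in place; the real content has been packaged into those earlier results.
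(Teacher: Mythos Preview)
Your argument is essentially the paper's proof, spelled out in full: use Propositions \ref{prop:hom_cohom} and \ref{prop:universal_marked_central_product} for the middle vertical map, Lemma \ref{lem:Picorb_products} for the left one, and conclude from Corollary \ref{cor:orbifold_Kummer} via the five lemma. One terminological slip: you wrote ``the projections $B(G_1 \times G_2) \to BG_i$'', but pullback along those would go the wrong way; the relevant maps (as in the statement) are the inclusions $BG_i \to B(G_1 \times G_2)$ coming from $G_i \hookrightarrow G_1 \times G_2$, whose pullbacks give the vertical arrows in your diagram.
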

\begin{proof}
	Combining Propositions \ref{prop:hom_cohom} and \ref{prop:universal_marked_central_product} and we find that 
	\[
	\H^{2, \orb}_{\mathcal{C}}(B(G_1 \times G_2), \Q/\Z(1)) \cong \H^{2, \orb}_{\mathcal{C}_1}(BG_1, \Q/\Z(1)) \times \H^{2, \orb}_{\mathcal{C}_2}(BG_2, \Q/\Z(1)).
	\]
	We then conclude using Corollary \ref{cor:orbifold_Kummer} and Lemma \ref{lem:Picorb_products}.
\end{proof}
\subsection{Algebraic Brauer group}
Let $k$ be a field and $G$ a finite \'etale tame group scheme over $k$ with $\mathcal{C} \subset \mathcal{C}_G$ Galois invariant. We now calculate the partially unramified algebraic Brauer group  $\Br_{\mathcal{C},1} BG$. We call a central extension \emph{algebraic} if the corresponding Brauer group element is algebraic. One can construct such extensions as Galois twists of the trivial central extension $\mu_n \times G$.

\begin{definition} \label{def:algebraic_central_extension}
	Let $n$ be coprime to $p$ and $\alpha \in \H^1(k, \Hom(G, \mu_n))$. We define the central extension
	$1 \to \mu_n \to E_{\alpha} \to G \to 1$ by defining $E_{\alpha}(k^{\sep}) = \mu_n(k^{\sep}) \times G(k^{\sep})$ equipped with the twisted Galois action $\sigma(\zeta, g) = (\sigma(\zeta) \cdot \alpha(\sigma)(g), \sigma(g))$ for $\sigma \in \Gamma_k$ and $(\zeta, g) \in \mu_n(k^{\sep}) \times G(k^{\sep})$.
\end{definition}

We next show that they all arise this way.
\begin{lemma}\label{lem:algebraic_central_extension}
	We have $b_{E_{\alpha}} \in \Br_1 BG$. The image of $b_{E_{\alpha}}$ under $\Br_1 BG \to \H^1(k, \Pic BG)$ is the image of $\alpha$ along the map $\H^1(k, \Hom(G, \mu_n)) \to \H^1(k, \Pic BG)$. 
	Hence every element of $\Br_e BG$ arises via the construction in Definition \ref{def:algebraic_central_extension} for some $n \mid \exp(G)$.
\end{lemma}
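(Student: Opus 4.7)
The plan is to prove the three claims in order, with the key ingredient being a descent-theoretic identification of $[E_\alpha]$ via the Hochschild--Serre spectral sequence for $BG \to \Spec k$.

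For the first claim, I would observe that over $k^{\sep}$ the twisting cocycle $\alpha$ becomes a coboundary, so there is a canonical isomorphism of central extensions $E_\alpha \times_k k^{\sep} \cong \mu_n \times G_{k^{\sep}}$, i.e.\ the split extension. The corresponding $\mu_n$-gerbe $(BE_\alpha)_{k^{\sep}} \to BG_{k^{\sep}}$ is therefore trivial in $\H^2(BG_{k^{\sep}}, \mu_n)$, hence its image $b_{E_\alpha}$ in $\Br BG_{k^{\sep}}$ vanishes. This shows $b_{E_\alpha} \in \Br_1 BG$; moreover the same calculation shows $b_{E_\alpha}(e) \in \Br k$ vanishes after passing to $k^{\sep}$, and a direct check (or Hilbert 90) gives $b_{E_\alpha} \in \Br_e BG$.

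For the second claim, I would analyse the Hochschild--Serre (equivalently Leray) spectral sequence
\[
\H^p(k, \H^q(BG_{k^{\sep}}, \mu_n)) \Longrightarrow \H^{p+q}(BG, \mu_n).
\]
Using Lemma \ref{lem:coh_sep_closed} and the Kummer sequence, one has $\H^0(BG_{k^{\sep}}, \mu_n) = \mu_n(k^{\sep})$ and $\H^1(BG_{k^{\sep}}, \mu_n) = \Hom(G, \mu_n)(k^{\sep})$, yielding an edge map $\H^1(k, \Hom(G,\mu_n)) \to \H^2(BG, \mu_n)$. By construction, $E_\alpha$ is obtained by twisting the trivial gerbe $B(\mu_n \times G) \to BG$ by the cocycle $\alpha$; unravelling the definition of the spectral sequence differential shows that the class $[BE_\alpha] \in \H^2(BG, \mu_n)$ is precisely the image of $\alpha$ under this edge map. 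Composing with $\H^2(BG, \mu_n) \to \H^2(BG, \Gm)$ and the Hochschild--Serre edge map $\Br_1 BG \to \H^1(k, \Pic BG_{k^{\sep}})$ of Lemma \ref{lem:H1_Pic}, naturality identifies the image of $b_{E_\alpha}$ with the image of $\alpha$ under the functorial map induced by $\Hom(G,\mu_n) \hookrightarrow \Hom(G,\Gm) = \dual G$. The main technical point here is the compatibility of the spectral sequence differential with twisting, which I expect to be the main obstacle; it can be handled by working with the explicit \v Cech description of $E_\alpha$ or, alternatively, by invoking Proposition \ref{prop:geometric_description_residue} to compute the residue geometrically.

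For the third claim, Lemma \ref{lem:H1_Pic} gives an isomorphism $\Br_e BG \cong \H^1(k, \dual G)$. Since $\dual G$ is killed by $\exp(G)$, the inclusion $\Hom(G,\mu_{\exp(G)}) \hookrightarrow \dual G$ is an isomorphism, so the map $\H^1(k, \Hom(G,\mu_{\exp(G)})) \to \H^1(k, \dual G)$ is surjective (in fact an isomorphism). Combined with the second claim, this shows that every $b \in \Br_e BG$ is of the form $b_{E_\alpha}$ for some $\alpha \in \H^1(k, \Hom(G,\mu_n))$ with $n \mid \exp(G)$, completing the proof.
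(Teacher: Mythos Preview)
Your proposal is correct and follows essentially the same approach as the paper. The only minor difference is in the second claim: the paper first reduces to the case where $G$ is abelian via the isomorphism $\Br_1 BG \cong \Br_1 BG^{\mathrm{ab}}$, and then applies Proposition~\ref{prop:geometric_description_residue} with $\mathcal{X} = \Spec k$ to identify the spectral sequence differential, whereas you work directly with the Hochschild--Serre edge map (and correctly note Proposition~\ref{prop:geometric_description_residue} as the way to handle the compatibility).
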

\begin{proof}
	By definition $E_{\alpha}$ becomes isomorphic to the trivial central extension $\mu_n \times G$ 
	after a finite field extension, hence $b_{E_{\alpha}} \in \Br_1 BG$. For the second part,
	we have $\Br_1 BG \cong \Br_1 BG^{\text{ab}}$ so by functoriality we may assume that $G$ is abelian. This then follows after unfolding Proposition \ref{prop:geometric_description_residue} for the case $\mathcal{X} = \Spec k$.
\end{proof}

We next study in detail the residue for algebraic Brauer group elements, and give various different interpretations.
\subsubsection{Stacky residue}
We make explicit Definition \ref{def:residue_along_sector} for algebraic Brauer group elements. Let $\mathcal{S}_{\mathcal{C}}$ be the sector corresponding to $\mathcal{C}$ from Lemma \ref{lem:sectors_BG}. Consider the composition induced from Lemma \ref{lem:H1_Pic}:
\begin{equation}\label{eqn:residue_map_alg_Br_stacky}
	\H^1(k , \dual{G}) \cong \Br_e BG \subset \Br BG \xrightarrow{\partial_{\mathcal{S}_{\mathcal{C}}}} \H^1(\mathcal{S}_{\mathcal{C}}, \Q/ \Z).
\end{equation}
Elements of $ \Br_e BG$ become trivial after base change to $k^{\sep}$ so the image of this map is contained in the kernel of the map $\H^1(\mathcal{S}_{\mathcal{C}}, \Q/ \Z) \to \H^1(\mathcal{S}_{\mathcal{C}, k^{\sep}}, \Q/ \Z)$. This is equal to $\H^1(k, \H^0(\mathcal{S}_{\mathcal{C}, k^{\sep}}, \Q/\Z))$ by the Hochschild-Serre spectral sequence. The geometrically connected components of $\mathcal{S}_{\mathcal{C}, k^{\sep}}$ are in a $\Gamma_k$-equivariant bijection with $\mathcal{C}$ by Lemma \ref{lem:sectors_BG}. We thus have $\H^0(\mathcal{S}_{\mathcal{C}, k^{\sep}}, \Q/\Z) = \Hom(\mathcal{C}, \Q/\Z)$ as a $\Gamma_{k}$-module.

The map in \eqref{eqn:residue_map_alg_Br_stacky} thus factors through a map $\H^1(k , \dual{G}) \to \H^1(k, \Hom(\mathcal{C}, \Q/\Z))$ which we will call the \emph{stacky residue} $\res_{\mathcal{C}, \text{Stacky}}$.

\subsubsection{Picard residue}
 By Lemma~\ref{lem:Picorb(BG,C)} and Shapiro's Lemma we have for all subsets $\mathcal{C} \subset \mathcal{C}^{*}_G$ an exact sequence 
\begin{equation}\label{eqn:exact_seq_H1_Picorb_H1_Pic}
	0 \to \H^1(k , \PicOrb_{\mathcal{C}} BG_{k^{\sep}}) \to \H^1(k , \dual{G}) \to \H^2(k, \Hom(\mathcal{C}, \Z)).
\end{equation}
Note that $\H^p(k, \Hom(\mathcal{C}, \Q)) = 0$ for $p > 0$ because $\Hom(\mathcal{C}, \Q)$ is divisible. The short exact sequence 
\[
0 \to \Hom(\mathcal{C}, \Z) \to \Hom(\mathcal{C}, \Q) \to \Hom(\mathcal{C}, \Q/\Z) \to 0
\]
then induces an isomorphism $\H^2(k, \Hom(\mathcal{C}, \Z)) \cong \H^1(k, \Hom(\mathcal{C}, \Q/\Z))$.

We define the \emph{Picard residue} via the composition
\begin{equation}\label{eqn:residue_map_alg_Br_les}
	\res_{\mathcal{C}, \Pic}: \H^1(k , \dual{G}) \to \H^2(k, \Hom(\mathcal{C}, \Z)) \cong \H^1(k, \Hom(\mathcal{C}, \Q/\Z)).
\end{equation}

\subsubsection{Age residue}\label{sec:age_residue}
Recall the age pairing from Definition \ref{def:age}. Consider the $\Gamma_k$-equivariant map
\[
\age(\cdot, \mathcal{C}): \dual{G}(k^{\sep}) \to \Hom(\mathcal{C}, \Q/\Z): \chi \to (c \in \mathcal{C} \to \age(\chi, c)).
\]
Let $\res_{\mathcal{C}, \age}: \H^1(k , \dual{G}) \to \H^1(k, \Hom(\mathcal{C}, \Q/\Z))$ be the induced map on cohomology; we call it the \emph{age residue}.

The following lemma gives an alternative description of the age residue. 
\begin{lemma}\label{lem:age_residue_restriction}
	Let $C \in \mathcal{C}$ and $k(C)$ its field of definition. The age residue is equal to the composition
	\[
		\H^1(k , \dual{G}) \to \H^1(k(C) , \dual{G}) \to \H^1(k(C), \Q/\Z) \cong \H^1(k, \Hom(\mathcal{C}, \Q/\Z)).
	\]
	The first map is the restriction with respect to the field extension $k \subset k(C)$,
	the second is the map induced by $\age(\cdot, c): \dual{G}(k^{\sep}) \to \Q/\Z$ on $\H^1$
	and the last isomorphism is due to Shapiro's lemma.
\end{lemma}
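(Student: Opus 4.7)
The plan is a direct cocycle computation using Shapiro's lemma. First I would reduce to the case where $\mathcal{C}$ consists of a single Galois orbit: the module $\Hom(\mathcal{C},\Q/\Z)$ decomposes as a direct sum of $\Gamma_k$-modules indexed by the Galois orbits of $\mathcal{C}$, and both the map $\res_{\mathcal{C},\age}$ and the composition in the statement are compatible with this decomposition (the right-hand side only ever sees the orbit of the chosen $C$). So I may assume $\mathcal{C}=\Gamma_k\cdot C$.

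With this reduction, the orbit--stabiliser bijection $\mathcal{C}\cong \Gamma_k/\Gamma_{k(C)}$ gives a $\Gamma_k$-equivariant identification $\Hom(\mathcal{C},\Q/\Z)\cong \Ind_{\Gamma_{k(C)}}^{\Gamma_k}(\Q/\Z)$. In the standard normalisation, Shapiro's isomorphism $\H^1(k,\Hom(\mathcal{C},\Q/\Z))\xrightarrow{\sim}\H^1(k(C),\Q/\Z)$ is then induced on cocycles by restriction from $\Gamma_k$ to $\Gamma_{k(C)}$ followed by the $\Gamma_{k(C)}$-equivariant evaluation map
\[
\mathrm{ev}_C\colon \Hom(\mathcal{C},\Q/\Z)\to \Q/\Z,\qquad f\mapsto f(C).
\]

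The key observation is that the diagram of $\Gamma_{k(C)}$-modules
\[
\begin{tikzcd}
\dual{G}(k^{\sep}) \ar[r,"\age(\cdot{,}\mathcal{C})"] \ar[dr,"\age(\cdot{,}C)"'] & \Hom(\mathcal{C},\Q/\Z) \ar[d,"\mathrm{ev}_C"] \\ & \Q/\Z
\end{tikzcd}
\]
commutes by the very definition of $\age(\cdot,\mathcal{C})$. Passing to $\H^1$ and pre-composing with the restriction $\H^1(k,\dual{G})\to \H^1(k(C),\dual{G})$, the composition through the top edge yields $\res_{\mathcal{C},\age}$ followed by the Shapiro isomorphism, while the composition through the diagonal yields the right-hand side of the lemma. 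The equality is thus immediate once the correct form of Shapiro's isomorphism is in place.

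The argument is almost entirely formal; the only real care needed is to pin down the normalisation of Shapiro's lemma so that the identification $\Hom(\mathcal{C},\Q/\Z)\cong \Ind_{\Gamma_{k(C)}}^{\Gamma_k}(\Q/\Z)$ sends evaluation at $C$ to evaluation at $1\in \Gamma_k/\Gamma_{k(C)}$. This is a standard but easily-botched sign/convention check; once fixed, the proof collapses to the commutative triangle above.
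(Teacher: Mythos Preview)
Your argument is correct and essentially identical to the paper's proof: both describe the Shapiro isomorphism as restriction to $\Gamma_{k(C)}$ followed by evaluation at $C$, then reduce the claim to the commutativity of the triangle $\age(\cdot,C) = \mathrm{ev}_C \circ \age(\cdot,\mathcal{C})$ on $\Gamma_{k(C)}$-modules. Your explicit reduction to a single Galois orbit is a minor addition but otherwise the structure is the same.
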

\begin{proof}
	The isomorphism $\H^1(k, \Hom(\mathcal{C}, \Q/\Z)) \cong \H^1(k(C), \Q/\Z)$ given by Shapiro's Lemma has the following description. It is the composition 
	\[
	\H^1(k, \Hom(\mathcal{C}, \Q/\Z)) \to \H^1(k(C), \Hom(\mathcal{C}, \Q/\Z)) \to \H^1(k(C), \Q/\Z).
	\]
	The first map is restriction and the second map is given by $\Hom(\mathcal{C}, \Q/\Z) \to \Q/\Z: f \to f(C)$. The lemma then follows the commutativity of the following diagram
	\[
	\xymatrix{
		\H^{1}(k, \hat{G}(k^{\sep})) \ar[rr] \ar[d] & & \H^1(k, \Hom(\mathcal{C}, \Q/\Z)) \ar[d] \\
		\H^1(k(C), \hat{G}(k^{\sep})) \ar[r] & \H^1(k(C), \Q/\Z) & \H^1(k(), \Hom(\mathcal{C}, \Q/\Z)).   \ar[l] 
	} 
	\]
\end{proof}

This version of the age residue seems to be the most amenable for explicit computations.

\subsubsection{Comparison of residues}
\begin{lemma}\label{lem:residue_maps_equal}
	The maps $\res_{\mathcal{C}, \Pic}, \res_{\mathcal{C}, \emph{Stacky}}$ and $\res_{\mathcal{C}, \age}$ are equal for all Galois orbits $\mathcal{C} \subset \mathcal{C}^*_G$.
\end{lemma}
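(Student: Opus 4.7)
The plan is to prove the three maps coincide by separately comparing $\res_{\mathcal{C},\Pic}$ and $\res_{\mathcal{C},\text{Stacky}}$ with $\res_{\mathcal{C},\age}$.

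For $\res_{\mathcal{C},\Pic} = \res_{\mathcal{C},\age}$, the key is an explicit cocycle computation using the defining exact sequence from Lemma \ref{lem:Picorb(BG,C)}. Starting with a cocycle $\alpha\colon \Gamma_k \to \dual{G}(k^{\sep})$, I will lift it pointwise along the surjection $\PicOrb_{\mathcal{C}}(BG)_{k^{\sep}} \twoheadrightarrow \dual{G}(k^{\sep})$ to a cochain $\tilde{\alpha}(\sigma) = (\alpha(\sigma), w_\sigma)$, where $w_\sigma \in \Hom(\mathcal{C},\Q)$ satisfies $w_\sigma(c) \equiv \age(\alpha(\sigma),c) \pmod{\Z}$; such a choice exists by condition (2) of Definition \ref{def:partial_orbifold_line_bundle}. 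Since $\alpha$ is a cocycle, the coboundary $d\tilde{\alpha}$ has trivial $\dual{G}$-component and lies in $C^2(\Gamma_k, \Hom(\mathcal{C},\Z))$; its class is $\res_{\mathcal{C},\Pic}(\alpha)$ by definition. Under the Bockstein isomorphism $\H^2(k,\Hom(\mathcal{C},\Z)) \cong \H^1(k,\Hom(\mathcal{C},\Q/\Z))$ arising from $0 \to \Z \to \Q \to \Q/\Z \to 0$ together with the divisibility of $\Hom(\mathcal{C},\Q)$, the coboundary of any $\Hom(\mathcal{C},\Q)$-valued cochain is sent to that cochain reduced modulo $\Z$. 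Applied here, this gives the cocycle $\sigma \mapsto \age(\alpha(\sigma),\cdot)$, which is precisely the age residue.

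For $\res_{\mathcal{C},\text{Stacky}} = \res_{\mathcal{C},\age}$, the strategy is to use the cup-product interpretation from Lemma \ref{lem:cup_products}: under the inverse isomorphism $\H^1(k,\dual{G}) \cong \Br_e BG$, a cocycle $\alpha$ corresponds to $\alpha \cup [\pi]$, where $[\pi] \in \H^1(BG, G^{\mathrm{ab}})$ is the class of the $G^{\mathrm{ab}}$-torsor $B[G,G] \to BG$. Let $f_c \colon (B\mu_n)_{\mathcal{S}_c} \to BG$ be the universal map of the sector $\mathcal{S}_{\mathcal{C}} = \mathcal{S}_c$ of order $n$. By functoriality, $f_c^*(\alpha \cup [\pi]) = f_c^*\alpha \cup f_c^*[\pi]$, and Lemma \ref{lem:residue_map_cup} yields $\res_{\mathcal{S}_c,n}(f_c^*\alpha \cup f_c^*[\pi]) = \dual{p}_*(f_c^*\alpha)$, where $p \colon \mu_n \to G^{\mathrm{ab}}$ is the induced map on automorphism sheaves. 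From the explicit description in Lemma \ref{lem:sectors_BG}(3), on the connected component of $\mathcal{S}_{c,k^{\sep}}$ indexed by a conjugacy class $C \in \mathcal{C}$, the map $p$ is the composition $\mu_n \xrightarrow{C} G \to G^{\mathrm{ab}}$, so its Cartier dual $\dual{p}\colon \dual{G} \to \Z/n\Z$ sends $\chi$ to $\age(\chi,C) \pmod n$. Hence $\dual{p}_*(f_c^*\alpha) \in \H^1(k, \Hom(\mathcal{C}, \Z/n\Z))$ is represented by $\sigma \mapsto (\age(\alpha(\sigma), C))_{C\in\mathcal{C}}$, which coincides with $\res_{\mathcal{C},\age}(\alpha)$ under Shapiro's isomorphism as in Lemma \ref{lem:age_residue_restriction}.

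The main obstacle is the bookkeeping in the second step: verifying that the automorphism-sheaf map $p$ arising from $f_c^*\pi$ is correctly identified with the age pairing via Lemma \ref{lem:sectors_BG}(3), and matching the target of $\dual{p}_*$ with the Shapiro-reformulated age residue of Lemma \ref{lem:age_residue_restriction}. The first step is essentially formal homological algebra; once the pieces are in place, Lemma \ref{lem:residue_map_cup} does the heavy lifting in the second.
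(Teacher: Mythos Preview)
Your argument for $\res_{\mathcal{C},\Pic}=\res_{\mathcal{C},\age}$ is correct and is the cocycle-level unwinding of what the paper does: the paper packages exactly the same computation as a morphism between the short exact sequence of Lemma~\ref{lem:Picorb(BG,C)} and the sequence $0\to\Hom(\mathcal{C},\Z)\to\Hom(\mathcal{C},\Q)\to\Hom(\mathcal{C},\Q/\Z)\to 0$, with vertical maps $(\chi,w)\mapsto w$ and $\chi\mapsto\age(\chi,\cdot)$, and reads off the equality from the induced diagram on cohomology. Your lift--coboundary--Bockstein description is equivalent.

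Your argument for $\res_{\mathcal{C},\mathrm{Stacky}}=\res_{\mathcal{C},\age}$ is also correct but genuinely different from the paper's. The paper does not use the cup-product description at all here: instead it represents $b\in\Br_e BG$ with image $\alpha$ by the explicit twisted central extension $E_\alpha$ of Definition~\ref{def:algebraic_central_extension} (via Lemma~\ref{lem:algebraic_central_extension}), and then applies Lemma~\ref{lem:residue_central_extension}, which identifies $\res_{\mathcal{S}_c}(b_{E_\alpha})$ with the class of the torsor $[f^{-1}(\mathcal{C}_c)/G]\to[\mathcal{C}_c/G]$. Since $E_\alpha(-1)(k^{\sep})=\Z/n\Z\times G(-1)(k^{\sep})$ with twisted Galois action $\sigma\cdot(r,\gamma)=(r+\age(\alpha(\sigma),\gamma),\sigma(\gamma))$, the cocycle of this torsor is visibly $\sigma\mapsto\age(\alpha(\sigma),\cdot)$. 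Your route via Lemma~\ref{lem:cup_products} and Lemma~\ref{lem:residue_map_cup} trades this explicit central-extension computation for the universal sector description of Lemma~\ref{lem:sectors_BG}(3); it is more in the spirit of the cup-product framework but, as you note, carries more bookkeeping in matching $\dual{p}_*(f_c^*\alpha)\in\H^1(\mathcal{S}_c,\Z/n\Z)$ with the Shapiro-identified age residue. One small point worth making explicit: Lemma~\ref{lem:residue_map_cup} is stated for $T$ of multiplicative type, and you are applying it with $T=G^{\mathrm{ab}}$; this is fine because $G$ is tame, so $G^{\mathrm{ab}}_{k^{\sep}}$ is a constant finite abelian group, hence a product of $\mu_{n_i}$'s over $k^{\sep}$. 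The paper's approach has the advantage of reusing the central-extension machinery already set up for Theorem~\ref{thm:orbifold_Kummer}, while yours keeps closer to the general residue formalism of \S\ref{sec:mu_n-gerbe}.
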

\begin{proof}
	We will first show that $\res_{\mathcal{C}, \Pic} = \res_{\mathcal{C}, \age}$. For this consider the following diagram with exact rows
	\begin{equation*}
		\begin{tikzcd}
		0 & {\Hom(\mathcal{C}, \Z)} & {\PicOrb_\mathcal{C} BG_{k^{\sep}}} & {\dual{G}(k^{\sep})} & 0 \\
		0 & {\Hom(\mathcal{C}, \Z)} & {\Hom(\mathcal{C}, \Q)} & {\Hom(\mathcal{C}, \Q/\Z)} & 0
		\arrow[from=1-1, to=1-2]
		\arrow[from=1-2, to=1-3]
		\arrow["{=}"{marking, allow upside down}, draw=none, from=1-2, to=2-2]
		\arrow[from=1-3, to=1-4]
		\arrow["{(\chi, w) \to w}", from=1-3, to=2-3]
		\arrow[from=1-4, to=1-5]
		\arrow["{\age(\cdot, \mathcal{C})}", from=1-4, to=2-4]
		\arrow[from=2-1, to=2-2]
		\arrow[from=2-2, to=2-3]
		\arrow[from=2-3, to=2-4]
		\arrow[from=2-4, to=2-5].
		\end{tikzcd}
	\end{equation*}
	This diagram commutates because for all $(\chi, w) \in \PicOrb_{\mathcal{C}} BG_{k^{\sep}}$ we have $\age(\chi, \mathcal{C}) \equiv w \bmod \Z$ by Definition \ref{def:partial_orbifold_line_bundle}.
	This diagram induces the following commutative diagram in cohomology
	\[
	\begin{tikzcd}
	& {\H^1(k, \dual{G}(k^{\sep}))} & {\H^2(k, \Hom(\mathcal{C}, \Z))} \\
	0 & {\H^1(k, \Hom(\mathcal{C}, \Q/\Z))} & {\H^2(k, \Hom(\mathcal{C}, \Z))} & 0
	\arrow[from=1-2, to=1-3]
	\arrow["{\res_{\mathcal{C}, \age}}", from=1-2, to=2-2]
	\arrow["{=}"{marking, allow upside down}, draw=none, from=1-3, to=2-3]
	\arrow[from=2-1, to=2-2]
	\arrow["\cong"{description}, draw=none, from=2-2, to=2-3]
	\arrow[from=2-3, to=2-4].
	\end{tikzcd}
	\]
	The commutativity of this diagram shows that $\res_{\mathcal{C}, \Pic} = \res_{\mathcal{C}, \age}$.
	
	We will now compare $\res_{\mathcal{C}, \age}$ and $\res_{\mathcal{C}, \text{Stacky}}$. Let $b \in \Br_1 BG$, let $\alpha = r(b) \in \H^1(k, \Pic BG_{k^{\sep}})$, and fix $n$ coprime to $p$ such that $\alpha \in \H^1(k, \Hom(G, \mu_n))$. Let $E_\alpha$ be as in Definition \ref{def:algebraic_central_extension}.
	We may then assume that $b = b_{E_{\alpha}}$ by Lemma \ref{lem:algebraic_central_extension}.

	Let $\mathcal{C}_c \subset G(-1)$ denote the subscheme of $G(-1)$ corresponding to $c \in \mathcal{C}$, as in Lemma \ref{lem:residue_central_extension}. Let $\mathcal{E}_c$ be the pre-image of $\mathcal{C}_c \subset G(-1)$ under the map $E_{\alpha}(-1) \to G(-1)$. By the definition of $E_{\alpha}$ we have $\mathcal{E}_c(k^{\sep}) = \Z/n \Z \times \mathcal{C}$ equipped with the twisted Galois action $\sigma(r, \gamma) \to (r + \age(\alpha(\sigma), \gamma), \sigma(\gamma))$, where $\sigma \in \Gamma_k$ and $(r, \gamma) \in E_{\alpha}(k^{\sep})$.
	
	We have that $\res_{\mathcal{C}, \text{Stacky}}(\alpha)$ is equal to the cocycle representing the $\Z/n \Z$-torsor $[\mathcal{E}_c/G] \to [\mathcal{C}_c/G]$ by Lemma \ref{lem:residue_central_extension}. This cocycle is computed to be $\sigma \in \Gamma_k \to \age(\alpha(\sigma), \gamma)$. This is exactly the definition of $\res_{\mathcal{C}, \age}(\alpha)$.
\end{proof}

This leads to the following description of the algebraic Brauer group. Part (2) in particular shows that the partial orbifold Picard group plays the expected role in the classical isomorphism $\H^1(k,\Pic X_{k^{\mathrm{sep}}}) \cong \Br_1 X/ \Br k$ for any variety $X$ with $k^{\mathrm{sep}}[X]^\times =k^{\mathrm{sep}, \times}$  \cite[Prop.~5.4.2]{Col21}.

\begin{theorem} \label{thm:Br_BG}
	Let $G$ be a finite \'etale tame group scheme over a field $k$
	and $\mathcal{C} \subset \mathcal{C}_G^*$ a Galois invariant collection of conjugacy classes.
	The following subgroups of $\H^1(k , \dual{G})$ are equal:
	\begin{enumerate}
		\item The image of $\Br_{\mathcal{C},1} BG$ under the map
		$r: \Br_1 BG \to \H^1(k , \dual{G})$.
		\item The subgroup $\H^1(k , \PicOrb_{\mathcal{C}} BG_{k^{\sep}})$
		embedded via \eqref{eqn:exact_seq_H1_Picorb_H1_Pic}.
		\item The kernel of the residue map $\H^1(k , \dual{G}) \to \H^1(k, \Hom(\mathcal{C},\Q/\Z))$.
	\end{enumerate}
	In particular $\Br_{\mathcal{C},1} BG / \Br k \cong \H^1(k , \PicOrb_{\mathcal{C}} BG_{k^{\sep}})$. If $\mathcal{C}$ generates $G$ then this group is finite.
\end{theorem}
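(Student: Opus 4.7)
The plan is to establish the equality of the three subgroups by showing $(2)=(3)$ directly from the exact sequence \eqref{eqn:exact_seq_H1_Picorb_H1_Pic}, and then $(1)=(3)$ by identifying the restriction of $r$ to $\Br_{\mathcal{C},1} BG$ as a kernel via Lemma \ref{lem:Br_partially_unramified_purity} and invoking the coincidence of residue maps from Lemma \ref{lem:residue_maps_equal}. The finiteness claim will then follow from standard Galois cohomology of finitely generated lattices with finite action.

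First, for $(2)=(3)$: by \eqref{eqn:exact_seq_H1_Picorb_H1_Pic} (which in turn comes from Lemma \ref{lem:Picorb(BG,C)} and Shapiro's lemma) we have a left-exact sequence
\[0 \to \H^1(k,\PicOrb_{\mathcal{C}} BG_{k^{\sep}}) \to \H^1(k,\dual{G}) \to \H^2(k,\Hom(\mathcal{C},\Z)),\]
and the rightmost map, composed with the isomorphism $\H^2(k,\Hom(\mathcal{C},\Z)) \cong \H^1(k,\Hom(\mathcal{C},\Q/\Z))$ explained in \S\ref{sec:age_residue}, is by definition the Picard residue $\res_{\mathcal{C},\Pic}$. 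Hence $(2)$ is exactly the kernel of $\res_{\mathcal{C},\Pic}$; by Lemma \ref{lem:residue_maps_equal} this kernel coincides with the kernel in $(3)$.

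Next, for $(1)=(3)$: Lemma \ref{lem:Br_partially_unramified_purity} applied to the algebraic part gives that $\Br_{\mathcal{C},1} BG$ is the kernel of the residue map $\Br_1 BG \to \bigoplus_{\mathcal{S} \in \mathcal{C}/\Gamma_k} \H^1(\mathcal{S},\Q/\Z)$. Composing with the identification $\Br_1 BG/\Br k \cong \H^1(k,\dual{G})$ from Lemma \ref{lem:H1_Pic} and pushing the codomain through the inclusion $\H^1(k,\Hom(\mathcal{C},\Q/\Z)) \hookrightarrow \bigoplus_{\mathcal{S}} \H^1(\mathcal{S},\Q/\Z)$ (coming from the computation of $\H^0(\mathcal{S}_{\mathcal{C},k^{\sep}},\Q/\Z)$ in \S\ref{sec:age_residue} via Hochschild--Serre), one sees that the image of $\Br_{\mathcal{C},1} BG$ under $r$ is the kernel of the stacky residue $\res_{\mathcal{C},\text{Stacky}}$. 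By Lemma \ref{lem:residue_maps_equal} again, this equals the kernel in $(3)$. The main subtlety here is checking that restricting to $\Br_1$ is compatible with the residue, which is clear because algebraic classes become trivial after base change to $k^{\sep}$ so their residues land in the Galois-invariant part described above.

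For the finiteness statement, assume $\mathcal{C}$ generates $G$. Lemma \ref{lem:Picorb_Galois_action} shows that $\PicOrb_{\mathcal{C}} BG_{k^{\sep}}$ embeds in $\Hom(\mathcal{C},\Q)$ with image containing $\Hom(\mathcal{C},\Z)$, so it is a finitely generated torsion-free $\Z$-module, necessarily of finite rank $|\mathcal{C}|$. Since $\mathcal{C}$ is a finite set and $\dual{G}(k^{\sep})$ carries a finite Galois action, the action of $\Gamma_k$ on $\PicOrb_{\mathcal{C}} BG_{k^{\sep}}$ factors through a finite quotient $\Gamma$. Then by the standard Galois cohomology argument (applying the exact sequence $0 \to M \to M \otimes \Q \to M \otimes \Q/\Z \to 0$ and using vanishing of $\H^i(\Gamma,\cdot \otimes \Q)$ in positive degree for $\Gamma$ finite), the group $\H^1(\Gamma,\PicOrb_{\mathcal{C}} BG_{k^{\sep}})$ is a subquotient of a finite torsion group, hence finite. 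The main obstacle, though conceptually minor, is verifying Lemma \ref{lem:residue_maps_equal} in the generality needed; in the statement above this is cited directly, so the proof of the theorem itself is essentially assembly of the preceding lemmas.
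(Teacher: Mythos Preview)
Your proof is correct and follows essentially the same approach as the paper: the paper's proof simply cites Lemma~\ref{lem:Br_partially_unramified_purity}, Lemma~\ref{lem:residue_maps_equal}, and the exact sequence~\eqref{eqn:exact_seq_H1_Picorb_H1_Pic}, with finiteness from Lemma~\ref{lem:Picorb_Galois_action}. You have spelled out the assembly of these ingredients in more detail, but the logical structure is identical.
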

\begin{proof}
	Immediate from Lemma \ref{lem:Br_partially_unramified_purity}, Lemma \ref{lem:residue_maps_equal} 
	and the exact sequence \eqref{eqn:exact_seq_H1_Picorb_H1_Pic}. Finiteness then follows from the fact
	that $\PicOrb_{\mathcal{C}} BG_{k^{\sep}}$ is a finitely generated free abelian
	group (Lemma \ref{lem:Picorb_Galois_action}).
\end{proof}

We can now show the finiteness from Theorem \ref{thm:finiteness_partially_unramified_Brauer_group}.

\begin{corollary} \label{cor:Br_finite}
	If $\mathcal{C}$ generates $G$, then $\Br_{\mathcal{C}} BG / \Br k$ is finite.
\end{corollary}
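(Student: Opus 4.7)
The plan is to combine Theorem \ref{thm:Br_BG} (which handles the algebraic part) with the finiteness of the geometric Brauer group of $BG$ (which controls the transcendental part). Consider the exact sequence
\[
0 \to \Br_{\mathcal{C},1} BG \to \Br_{\mathcal{C}} BG \to \Br BG_{k^{\sep}}
\]
where the rightmost map is given by restriction to $k^{\sep}$ (the composition lands in $\Br BG_{k^{\sep}}$ since elements of $\Br_{\mathcal{C},1} BG$ are algebraic by definition, giving the inclusion, and the composition $\Br_{\mathcal{C}} BG \to \Br BG \to \Br BG_{k^{\sep}}$ has kernel exactly $\Br_{\mathcal{C},1} BG := \Br_{\mathcal{C}} BG \cap \Br_1 BG$).

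By Corollary \ref{cor:no_p_torsion}, the group $\Br BG_{k^{\sep}}$ is finite, hence so is the image of $\Br_{\mathcal{C}} BG$ in it. Therefore $\Br_{\mathcal{C}} BG / \Br_{\mathcal{C},1} BG$ is finite. On the other hand, Theorem \ref{thm:Br_BG} (which requires the hypothesis that $\mathcal{C}$ generates $G$) gives the isomorphism
\[
\Br_{\mathcal{C},1} BG / \Br k \cong \H^1(k, \PicOrb_{\mathcal{C}} BG_{k^{\sep}}),
\]
and the right-hand side is finite since $\PicOrb_{\mathcal{C}} BG_{k^{\sep}}$ is a finitely generated free abelian group by Lemma \ref{lem:Picorb_Galois_action}, with $\Gamma_k$-action factoring through a finite quotient.

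Combining these two finiteness statements in the obvious way—via the filtration $\Br k \subseteq \Br_{\mathcal{C},1} BG \subseteq \Br_{\mathcal{C}} BG$—yields the desired conclusion that $\Br_{\mathcal{C}} BG / \Br k$ is finite. There is essentially no obstacle here; the only non-trivial ingredients are Theorem \ref{thm:Br_BG} for the algebraic part and Corollary \ref{cor:no_p_torsion} for the transcendental part, both already established.
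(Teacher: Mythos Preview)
Your proof is correct and takes essentially the same approach as the paper, which simply writes ``Immediate from Theorem \ref{thm:Br_BG} and Corollary \ref{cor:no_p_torsion}.'' You have spelled out the filtration argument in more detail than the paper does, and your appeal to Lemma \ref{lem:Picorb_Galois_action} to justify finiteness of $\H^1(k,\PicOrb_{\mathcal{C}} BG_{k^{\sep}})$ is already absorbed into the final sentence of Theorem \ref{thm:Br_BG}, but the logic is identical.
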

\begin{proof}
	Immediate from Theorem \ref{thm:Br_BG} and Corollary \ref{cor:no_p_torsion}.
\end{proof}

The following lemmas assist with computing the partially unramified Brauer group.
The stated property in the first lemma holds for example if $G \to G^{\mathrm{ab}}$ admits
a section.

\begin{lemma} \label{lem:Brun_ab}
	Let $f:G  \to G^{\mathrm{ab}}$ be the abelianisation and
	$\mathcal{C} \subset \mathcal{C}_{G^{\mathrm{ab}}}^*$ Galois invariant.
	Assume that for all Galois orbits $\mathcal{C}' \subset \mathcal{C}$ there
	is a Galois orbit $\mathcal{T} \subset f^{-1}(\mathcal{C}')$ such 
	that $k(\mathcal{C}') = k(\mathcal{T})$.
	Then the map $\Br_{\mathcal{C},1} BG^{\mathrm{ab}} \to \Br_{f^{-1}(\mathcal{C}),1} BG$ 
	is an isomorphism.
\end{lemma}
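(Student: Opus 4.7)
Since the natural map $\Br_1 BG^{\mathrm{ab}} \to \Br_1 BG$ is an isomorphism (as observed just after Lemma \ref{lem:H1_Pic}, using $\dual{G} = \dual{G^{\mathrm{ab}}}$), and since both $\Br_{\mathcal{C},1} BG^{\mathrm{ab}}$ and $\Br_{f^{-1}(\mathcal{C}),1} BG$ contain $\Br k$, the plan is to view both as subgroups of the common group $\H^1(k, \dual{G})$ via Theorem \ref{thm:Br_BG} and show they coincide. Concretely, by Theorem \ref{thm:Br_BG}(3) combined with Lemma \ref{lem:residue_maps_equal}, the first is the intersection over Galois orbits $\mathcal{C}' \subset \mathcal{C}$ of the kernels of the age residues $\res_{\mathcal{C}',\mathrm{age}}$, and the second is the analogous intersection over Galois orbits $\mathcal{T} \subset f^{-1}(\mathcal{C})$.

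Next, I would compare these residues orbit by orbit, using the explicit description in Lemma \ref{lem:age_residue_restriction}. Fix a Galois orbit $\mathcal{C}' \subset \mathcal{C}$ and a Galois orbit $\mathcal{T} \subset f^{-1}(\mathcal{C}')$. Since $f$ is Galois equivariant, any $\sigma \in \Gamma_{k(\mathcal{T})}$ stabilises $\mathcal{C}'$, so $k(\mathcal{T}) \supseteq k(\mathcal{C}')$. The key observation is that every character $\chi \in \dual{G}(k^{\sep})$ factors through $f: G \to G^{\mathrm{ab}}$, so for any $c_{\mathcal{T}} \in \mathcal{T}$ we have $\age(\chi, c_{\mathcal{T}}) = \age(\chi, f(c_{\mathcal{T}}))$ with $f(c_{\mathcal{T}}) \in \mathcal{C}'$. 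Combined with Lemma \ref{lem:age_residue_restriction} (which is indifferent to the chosen representative in a single orbit), this yields a commutative diagram
\[
\xymatrix{
\H^1(k, \dual{G}) \ar[r]^-{\res_{\mathcal{C}',\mathrm{age}}} \ar[dr]_-{\res_{\mathcal{T},\mathrm{age}}} & \H^1(k(\mathcal{C}'), \Q/\Z) \ar[d]^-{\mathrm{res}} \\
& \H^1(k(\mathcal{T}), \Q/\Z).
}
\]

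With this in place, both directions of the claimed isomorphism follow. Injectivity is automatic from the isomorphism $\Br_1 BG^{\mathrm{ab}} \cong \Br_1 BG$, and well-definedness of the map also follows directly from the diagram (if the top horizontal residue vanishes then so does the diagonal one). For surjectivity, given $b \in \Br_{f^{-1}(\mathcal{C}),1} BG$ and a Galois orbit $\mathcal{C}' \subset \mathcal{C}$, I would apply the hypothesis to pick an orbit $\mathcal{T} \subset f^{-1}(\mathcal{C}')$ with $k(\mathcal{T}) = k(\mathcal{C}')$: for such $\mathcal{T}$ the vertical restriction map in the diagram is the identity, and the assumption $\res_{\mathcal{T},\mathrm{age}}(b) = 0$ forces $\res_{\mathcal{C}',\mathrm{age}}(\tilde b) = 0$, where $\tilde b$ is the preimage of $b$ in $\Br_1 BG^{\mathrm{ab}}$. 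Ranging over all $\mathcal{C}'$ gives $\tilde b \in \Br_{\mathcal{C},1} BG^{\mathrm{ab}}$.

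There is no serious obstacle in this argument; the only subtle point is recognising that the age pairing is compatible with $G \to G^{\mathrm{ab}}$ (immediate from the definition) and ensuring that the choice of representative of a Galois orbit is irrelevant when invoking Lemma \ref{lem:age_residue_restriction} via Shapiro's Lemma. The hypothesis $k(\mathcal{C}') = k(\mathcal{T})$ enters exactly once, to identify the residues at $\mathcal{C}'$ and at the distinguished $\mathcal{T}$, which is precisely what is needed to go backward from unramifiedness along $f^{-1}(\mathcal{C})$ to unramifiedness along $\mathcal{C}$.
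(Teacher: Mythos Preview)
Your proposal is correct and follows essentially the same approach as the paper: both use the isomorphism $\Br_1 BG^{\mathrm{ab}} \cong \Br_1 BG$ from Lemma~\ref{lem:H1_Pic}, then compare the age residues at $\mathcal{C}'$ and at the distinguished orbit $\mathcal{T}$ via the compatibility $\age(\chi,c_{\mathcal{T}}) = \age(\chi,f(c_{\mathcal{T}}))$, with the hypothesis $k(\mathcal{C}') = k(\mathcal{T})$ making the two residues literally equal. Your presentation is slightly more detailed (making the commutative diagram and the role of Lemma~\ref{lem:age_residue_restriction} explicit), but the argument is the same.
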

\begin{proof}
	We may assume that $\mathcal{C}$ consists of a single Galois orbit.
	The map $\Br_{1} BG^{\mathrm{ab}} \to \Br_{1} BG$ is an isomorphism
	by Lemma \ref{lem:H1_Pic}. With respect to this we have an embedding 
	$\Br_{\mathcal{C},1} BG^{\mathrm{ab}} \subseteq 
	\Br_{f^{-1}(\mathcal{C}),1} BG$.
	Choose a Galois orbit $\mathcal{T} \subset f^{-1}(\mathcal{C})$ 
	as in the statement. Then any $b \in \Br_{f^{-1}(\mathcal{C}),1} BG$
	is unramified along $\mathcal{T}$. 
	
	The assumption $k(\mathcal{C}') = k(\mathcal{T})$ implies that $\age(\cdot, \mathcal{C}') = \age(\cdot, \mathcal{T})$ (as maps on $\Pic BG = \Pic BG^{\text{ab}}$). We then have that $\res_{\age, \mathcal{C}'} = \res_{\age, \mathcal{T}}$, which implies the statement by Lemma \ref{lem:Br_partially_unramified_purity}. 
\end{proof}

\begin{lemma} \label{lem:Br_1_trivial}
	Assume that $\mathcal{C}$ generates $G$ and that the Galois action on $\mathcal{C}$ is trivial.
	Then $\Br_{\mathcal{C},1} BG = \Br k$.
\end{lemma}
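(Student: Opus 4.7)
The plan is to reduce to a Galois cohomology vanishing statement and then exploit the trivial action on $\mathcal{C}$ through the age pairing. The starting point is Theorem \ref{thm:Br_BG}, which identifies $\Br_{\mathcal{C},1} BG / \Br k$ with $\H^1(k, \PicOrb_{\mathcal{C}} BG_{k^{\sep}})$. Thus it suffices to prove this cohomology group vanishes.

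The key step I would carry out is to show that $\Gamma_k$ acts trivially on $\dual{G}(k^{\sep})$ itself. To see this, I would apply Lemma \ref{lem:age}(4) to write $\age(\sigma\chi, \gamma) = \age(\chi, \sigma^{-1}\gamma)$ for any $\sigma \in \Gamma_k$, $\chi \in \dual{G}(k^{\sep})$, and $\gamma \in G(-1)(k^{\sep})$ whose conjugacy class lies in $\mathcal{C}$. Since that conjugacy class is Galois-fixed by hypothesis, $\sigma^{-1}\gamma$ is conjugate to $\gamma$, and Lemma \ref{lem:age}(3) gives $\age(\sigma\chi,\gamma) = \age(\chi,\gamma)$. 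Bilinearity in the first slot then says that $\sigma\chi \cdot \chi^{-1}$ pairs trivially with every $\gamma$ whose class lies in $\mathcal{C}$. Since $\mathcal{C}$ generates $G$ by hypothesis, the images of such $\gamma$'s exhaust $G(k^{\sep})$, so the non-degeneracy statement of Lemma \ref{lem:age}(2) forces $\sigma\chi = \chi$.

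Once this is established, the proof finishes formally. Since $\Gamma_k$ acts trivially on both $\mathcal{C}$ and $\dual{G}(k^{\sep})$, and since $\sigma$ acts on $(\chi,w) \in \PicOrb_{\mathcal{C}} BG_{k^{\sep}}$ by $(\sigma\chi, \, c \mapsto w(\sigma^{-1}c))$, the whole group $\PicOrb_{\mathcal{C}} BG_{k^{\sep}}$ carries trivial Galois action. It is torsion-free by Lemma \ref{lem:Picorb_Galois_action} and finitely generated because it fits in the short exact sequence of Lemma \ref{lem:Picorb(BG,C)} between $\Hom(\mathcal{C},\Z)$ and $\dual{G}(k^{\sep})$, so it is a free abelian group with trivial Galois action. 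Hence $\H^1(k, \PicOrb_{\mathcal{C}} BG_{k^{\sep}})$ is a direct sum of copies of $\H^1(k,\Z) = \Hom_{\mathrm{cts}}(\Gamma_k,\Z) = 0$.

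The only non-formal step is the propagation of trivial Galois action from $\mathcal{C}$ to $\dual{G}(k^{\sep})$; this is what the age pairing and the generation hypothesis are precisely designed to achieve, and I do not anticipate any real obstacle beyond carefully assembling these ingredients.
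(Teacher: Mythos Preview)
Your proof is correct and follows essentially the same strategy as the paper: reduce to the vanishing of $\H^1(k,\PicOrb_{\mathcal{C}} BG_{k^{\sep}})$ via Theorem~\ref{thm:Br_BG}, then show this module is free with trivial Galois action. The paper reaches the trivial-action conclusion slightly more directly by invoking the injection $\PicOrb_{\mathcal{C}} BG_{k^{\sep}}\hookrightarrow \Hom(\mathcal{C},\Q)$ of Lemma~\ref{lem:Picorb_Galois_action}, which makes your separate verification that $\Gamma_k$ fixes $\dual{G}(k^{\sep})$ unnecessary (though correct).
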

\begin{proof}
	By Lemmas \ref{lem:Picorb(BG,C)} and \ref{lem:Picorb_Galois_action}, we know that 
	$\PicOrb_{\mathcal{C}} BG_{k^{\mathrm{sep}}}$
	is a free $\Z$-module with trivial Galois action. Its first Galois cohomology group
	is therefore trivial. The result now follows from Theorem \ref{thm:Br_BG}.
\end{proof}

\begin{remark}
	Lemma \ref{lem:H1_Pic} shows that $\Br_1 BG/ \Br k \cong \H^1(k , \Pic (BG)_{k^{\sep}})$. However this group is huge in general; for example for 
	$G = \Z/n\Z$ we have $\dual{(\Z/n\Z)} = \mu_n$, thus $\Br_1 B(\Z/n\Z) / \Br k \cong k^\times /k^{\times n}$. The unramified Brauer group is much smaller, as Theorem \ref{thm:Br_BG} shows.
\end{remark}

\subsection{Procedure for calculating $\Br_{\mathcal{C}} BG/ \Br k$} \label{sec:procedure}
We summarise our theoretical investigations and explain how it leads to a procedure to calculate $\Br_{\mathcal{C}} BG/ \Br k$ as a group, as well as write down elements, when $\mathcal{C}$ generates $G$. We use the exact sequence
\begin{equation} \label{seq:Br_extension}
	0 \to \Br_{\mathcal{C},1} BG \to \Br_{\mathcal{C}} BG \to (\Br_{\mathcal{C}} BG_{k^{\sep}})^{\Gamma_k}.
\end{equation}
The first step is to calculate $\PicOrb_{\mathcal{C}}(BG_{k^\sep})$. This is a finitely generated free abelian group with Galois action. If one is very lucky there is no transcendental Brauer group (see Lemma \ref{lem:unramified_geometric_Brauer_group_computation} for example computations) and one only has to calculate the algebraic Brauer group. By Theorem~\ref{thm:Br_BG} it suffices to compute $\H^1(k , \PicOrb_{\mathcal{C}} BG_{k^{\sep}})$, for which standard computer algebra systems can be applied. This gives a description of the Brauer group as an abstract group. To actually write down elements, one can use either cup products using Lemma \ref{lem:cup_products}, or represent elements via marked central extensions using Lemma \ref{lem:algebraic_central_extension}.

If there are transcendental elements then by the orbifold Kummer sequence (Corollary \ref{cor:orbifold_Kummer}) it suffices to calculate the orbifold cohomology $\H^{2, \orb}_{\mathcal{C}}(BG, \Q/\Z(1))$. One achieves this by calculating the universal marked central extension (Definition~\ref{def:universal_marked_central_extension}); if $G$ is constant then this is computable by  \cite[Thm.~2.5]{Woo21}. Then the orbifold cohomology is calculated using Proposition \ref{prop:hom_cohom}.

An approach which works for non-constant $G$ but is less efficient, is as follows. Once the algebraic Brauer group has been calculated, by Corollary \ref{cor:no_p_torsion} one enumerates all relevant central extensions of $G$ by $\mu_{|\exp(G)|^2}$. Given such a central extension $E$, one needs to determine whether there is a Galois action on $E(k^{\sep})$ which preserves both a marking induced by $\mathcal{C}$ and the kernel $\mu_{|\exp(G)|^2}$; if a Galois action exists then it is unique as $\mathcal{C}$ generates $G$. The relevant Galois action must factor through $\Gal(L/k)$, where $L$ is a splitting field of $G$ which contains $\mu_{|\exp(E)|}$. This is a finite group so it is a finite computation. Altogether this procedure allows one to compute $\Br_{\mathcal{C}} BG / \Br k$ as a group and write down explicit representatives in terms of cup products and marked central extensions.

Naturally in special cases there are all kinds of tricks which can be used to simplify various steps of the process. In the next section we give some methods to calculate $\Br_{\mathcal{C}} BG_{k^{\sep}}$. This additional information can help to speed up the above algorithm. But a description of $\Br_{\mathcal{C}} BG_{k^{\sep}}$ alone is insufficient in general, as a Galois invariant element need not descend to $k$, and the sequence \eqref{seq:Br_extension} need not split.

\subsection{Transcendental Brauer group} \label{sec:transcendental_Brauer_group}

Let $k$ be a separably closed field, $G$ be a finite group of order coprime to the characteristic of $k$ and $\mathcal{C} \subset \mathcal{C}_G$ a collection of conjugacy classes.
We finish with a method to calculate the partially unramified transcendental Brauer group. Recall from \S \ref{sec:central_extension} that elements of $\beta \in \Br BG$ correspond to central extensions $1 \to \Q/\Z(1) \to G_{\beta} \to G \to 1$

\begin{proposition}\label{prop:partially_unramified_Brauer_groups_alg_clos}
	Let $\beta \in \Br BG \cong \H^2(G, \Q/\Z(1))$ correspond to the central extension $1 \to \Q/\Z(1) \to G_{\beta} \to G \to 1$. Then $\beta \in \Br_{\mathcal{C}} BG$ if and only if for all $\gamma \in c \in \mathcal{C}$ and $g \in G$ such that $g \gamma g^{-1} = \gamma$ there exist lifts $\hat{\gamma} \in G_{\beta}(-1)$ and $\hat{g} \in G_{\beta}$ of $\gamma,g$ respectively such that $\hat{g} \hat{\gamma} \hat{g}^{-1} = \hat{\gamma}$.
\end{proposition}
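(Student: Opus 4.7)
The plan is to apply Lemma~\ref{lem:central_extension_unramified} (whose Galois-equivariance clause is vacuous here since $k$ is separably closed), after representing $\beta$ by a $\mu_n$-extension $E$ for some $n$ annihilating $\beta$ (which exists by Corollary~\ref{cor:no_p_torsion}) and pushing out along $\mu_n \hookrightarrow \Q/\Z(1)$ to recover $G_\beta$.  Divisibility of $\Q/\Z(1)$ makes $\operatorname{Ext}^1(\mu_m,\Q/\Z(1))$ vanish, so every $\gamma : \mu_m \to G$ admits a lift $\hat{\gamma} : \mu_m \to G_\beta$.  Thus the question becomes to produce or to obstruct a conjugation-invariant subset $\tilde{\mathcal{E}} \subset G_\beta(-1)$ for which $f_*$ biject $\tilde{\mathcal{E}}$ with $\bigcup_{c \in \mathcal{C}} c \subset G(-1)$.

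The central construction will be a commutator obstruction.  Fix $\gamma \in c \in \mathcal{C}$ and any lift $\hat{\gamma} \in G_\beta(-1)$.  For each $h \in f^{-1}(Z_G(\gamma))$, the conjugate $h\hat{\gamma}h^{-1}$ is another lift of $\gamma$, so equals $\hat{\gamma} \cdot z$ for a unique $z \in \Q/\Z(1)(-1)$; set $\alpha_\gamma(h) := z$.  The centrality of $\Q/\Z(1)$ in $G_\beta$ yields
\[
h_1 h_2 \hat{\gamma} h_2^{-1} h_1^{-1} = h_1 \hat{\gamma} h_1^{-1} \cdot \alpha_\gamma(h_2) = \hat{\gamma} \cdot \alpha_\gamma(h_1)\alpha_\gamma(h_2),
\]
so $\alpha_\gamma$ is a group homomorphism which is trivial on the subgroup $\Q/\Z(1)$ and therefore descends to $\alpha_\gamma : Z_G(\gamma) \to \Q/\Z(1)(-1)$.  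A further short check shows $\alpha_\gamma$ is independent of the choice of lift $\hat{\gamma}$ (replacing $\hat{\gamma}$ by $\hat{\gamma}\cdot z_0$ with $z_0$ central does not affect the commutator), and that $\alpha_\gamma(g) = 0$ is exactly the statement that some (equivalently, every) pair of lifts of $\gamma$ and $g$ commute in $G_\beta$.  Hence the hypothesis of the proposition is nothing other than the vanishing $\alpha_\gamma \equiv 0$ for every $\gamma$ in $\bigcup_{c \in \mathcal{C}} c$.

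The equivalence with the existence of $\tilde{\mathcal{E}}$ is then a one-line fibre computation.  For the forward direction, given such a $\tilde{\mathcal{E}}$, let $\hat{\gamma}$ be the unique element of $\tilde{\mathcal{E}}$ over $\gamma$; for any $h \in f^{-1}(Z_G(\gamma))$ the conjugate $h\hat{\gamma}h^{-1}$ lies in $\tilde{\mathcal{E}}$ (by conjugation-invariance) and projects to $\gamma$, hence must equal $\hat{\gamma}$ by injectivity of $f_*$ on $\tilde{\mathcal{E}}$, so $\alpha_\gamma \equiv 0$.  Conversely, if $\alpha_\gamma \equiv 0$ for all representatives, define $\tilde{\mathcal{E}}_c$ to be the $G_\beta$-conjugation orbit of any fixed lift $\hat{\gamma}$; its fibre over $\gamma$ is $\hat{\gamma}\cdot\operatorname{im}(\alpha_\gamma) = \{\hat{\gamma}\}$, so $f_* : \tilde{\mathcal{E}}_c \to c$ is a bijection, and $\tilde{\mathcal{E}} := \bigsqcup_{c \in \mathcal{C}} \tilde{\mathcal{E}}_c$ verifies the condition of Lemma~\ref{lem:central_extension_unramified}.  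The main delicate point is purely bookkeeping, namely the compatibility between the pro-finite extension $G_\beta$ and its $\mu_n$-representative $E$ together with the Tate-twist conventions; once these are in place the argument relies on no input beyond centrality of the kernel.
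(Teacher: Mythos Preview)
Your proof is correct and follows essentially the same approach as the paper: both reduce immediately to Lemma~\ref{lem:central_extension_unramified} and then argue via centraliser considerations in the central extension. The only difference is notational---the paper phrases the converse via surjectivity of $C_{G_\beta}(\hat\gamma) \to C_G(\gamma)$ and the resulting orbit--stabiliser bijection $G_\beta/C_{G_\beta}(\hat\gamma) \xrightarrow{\sim} G/C_G(\gamma)$, rather than through your equivalent commutator homomorphism $\alpha_\gamma$.
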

\begin{proof}
	We use Lemma \ref{lem:central_extension_unramified}.	Assume that there exists a marking $\mathcal{E} \subset G_{\beta}(-1)(k^{\sep})$. Let $\gamma \in c \in \mathcal{C}$ and $g \in G$ such that $g \gamma g^{-1} = \gamma$. Let $\hat{\gamma} \in \mathcal{E}$ be the unique lift of $\gamma$ and $\hat{g} \in G_{\beta}$ any lift of $g$. Since $\hat{\gamma}$ is unique we must have $\hat{g} \hat{\gamma} \hat{g}^{-1} = \hat{\gamma}$ as desired.
	
	For the converse we may assume that $\mathcal{C}$ consists of single conjugacy class $c$. Fix one element $\gamma \in c$ and choose any lift $\hat{\gamma} \in G_{\beta}(-1)$. Consider the map of centralizers $C_{G_{\beta}}(\hat{\gamma}) \to C_{G}(\gamma)$. This map is surjective if the condition in the statement is satisfied. Moreover $\Q/\Z(1) \subset C_{G_{\beta}}(\gamma)$ because $G_{\beta} \to G$ is a central extension. It follows that the map of quotients $G_{\beta}/C_{G_{\beta}}(\hat{\gamma}) \to G/C_{G}(\gamma)$ is a bijection. This implies that the conjugacy class of $\hat{\gamma}$ bijects onto the conjugacy class of $\gamma$. The conjugacy class of $\hat{\gamma}$ thus defines a marking.
\end{proof}

\begin{remark} \label{rem:Bogomolov}
	Fix a system of primitive roots of unity to identify $G(-1)$ with $G$ and $\Q/\Z(1)$ with $\Q/\Z$. Then the condition in Proposition \ref{prop:partially_unramified_Brauer_groups_alg_clos} translates to asking that all bicyclic groups $A \subset G$ such that one of the generators is contained in some $c \in \mathcal{C}$ can be lifted to a bicyclic group of $G_{\beta}$, i.e.~$\beta_A = 0 \in \H^2(A, \Q/\Z)$ for all such bicyclic groups $A \subset G$. In the case that $\mathcal{C} = \mathcal{C}_G^*$ this recovers Bogomolov's formula \cite[Thm.~3.1]{Bog87} for $\Brun \mathbb{A}^n/G$, hence gives a new stacky proof of Bogomolov's formula (see Example
	\ref{rem:Bogomolov_multiplier}).
	
	For general $\mathcal{C}$, one should interpret 
	Proposition \ref{prop:partially_unramified_Brauer_groups_alg_clos} 
	as giving a description for the Brauer
	group of certain open subsets of a smooth proper model of $\mathbb{A}^n/G$.
\end{remark}

There exists a (non-unique) central extension
\[1 \to \H_2(G, \Z) \to \tilde{G} \to G \to 1\]
such that for each $\beta \in \H^2(G, \Q/\Z(1)) = \Hom(\H_2(G, \Z), \Q/\Z(1))$ (the isomorphism being due to the universal coefficient theorem) the extension $G_{\beta} \to G$ is given by the pushout of $\tilde{G}$ along the map $\H_2(G, \Z) \to \Q/\Z(1)$. The group $\tilde{G}$ is known as a \emph{Schur covering group}.

\begin{lemma}\label{lem:Schur_covering_group}
	For each pair of commuting elements $g, h \in G$ let $\tilde{g}, \tilde{h} \in \tilde{G}$ be lifts of these two elements. The commutator $[\tilde{g}, \tilde{h}] = \tilde{g} \tilde{h} \tilde{g}^{-1} \tilde{h}^{-1} \in H_2(G, \Z)$ is independent of the choice of $\tilde{G}$ and of the choices of lifts $\tilde{g}, \tilde{h}$. Then the subgroup $\Br_{\mathcal{C}} BG \subset \H^2(G, \Q/\Z(1)) \cong \Hom(\H_2(G, \Z), \Q/\Z(1))$ of Proposition \ref{prop:partially_unramified_Brauer_groups_alg_clos} is given by
	$$\left\{\beta \in \Hom(\H_2(G, \Z), \Q/\Z(1)):
	\begin{array}{l}
	[\tilde{\gamma}(\zeta), \tilde{h}](\beta) = 0 \text{ for all } 
	 \gamma \in c \in \mathcal{C}, \zeta \in \widehat{\Z}(1) \\ \text{and }h \in G \text{ such that } g \gamma g^{-1} = \gamma.
	\end{array}\right\}.$$
\end{lemma}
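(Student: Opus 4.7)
The plan is to first establish the well-definedness claim for the commutator, and then to translate Proposition \ref{prop:partially_unramified_Brauer_groups_alg_clos} by pushing forward along the surjection $\tilde{G} \to G_\beta$ induced by $\beta$.

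For well-definedness, the element $[\tilde{g}, \tilde{h}] \in \tilde{G}$ lies in $\H_2(G,\Z)$ because its image in $G$ is the commutator $[g,h]$, which is trivial by the commutativity assumption on $g,h$. Since $\H_2(G,\Z)$ sits as a central subgroup of $\tilde{G}$, replacing $\tilde{g}$ by $\tilde{g} z$ with $z \in \H_2(G,\Z)$ gives $[\tilde{g}z, \tilde{h}] = [\tilde{g}, \tilde{h}]$, and similarly for $\tilde{h}$; this gives independence of the choice of lifts. For independence of the choice of Schur covering $\tilde{G}$, I would invoke the fact that any two Schur covers of $G$ induce the same commutator pairing on commuting pairs: concretely, given commuting $g,h$ one obtains a homomorphism $\Z^2 \to G$, the induced map on $\H_2$ lands in $\H_2(G,\Z)$, and the commutator $[\tilde{g},\tilde{h}]$ is precisely the image of the canonical generator of $\H_2(\Z^2,\Z) \cong \Z$. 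This identification is intrinsic and does not depend on the chosen Schur cover. (Alternatively, one can note that for \emph{any} central extension $1 \to A \to E \to G \to 1$ the commutator pairing on commuting pairs is a well-defined bilinear map $G \wedge G \supseteq \{[g,h]=e\} \to A$, and it is natural in the extension; applying this to both Schur covers and using the canonical identification of their kernels with $\H_2(G,\Z)$ gives the claim.)

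For the second assertion, recall that $G_\beta$ is the pushout of $\tilde{G}$ along $\beta \colon \H_2(G,\Z) \to \Q/\Z(1)$, so there is a canonical surjection $\tilde{G} \to G_\beta$ making $\Q/\Z(1)$ central in $G_\beta$. By Proposition \ref{prop:partially_unramified_Brauer_groups_alg_clos}, membership of $\beta$ in $\Br_{\mathcal{C}} BG$ is equivalent to: for every $\gamma \in c \in \mathcal{C}$ and every $h \in G$ with $h\gamma h^{-1}=\gamma$, there exist lifts $\hat{\gamma} \in G_\beta(-1)$ and $\hat{h} \in G_\beta$ with $\hat{h}\hat{\gamma}(\zeta)\hat{h}^{-1} = \hat{\gamma}(\zeta)$ for every $\zeta \in \widehat{\Z}(1)$. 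Since $\Q/\Z(1) \subset G_\beta$ is central, the commutator $\hat{h}\hat{\gamma}(\zeta)\hat{h}^{-1}\hat{\gamma}(\zeta)^{-1} \in \Q/\Z(1)$ is independent of the choice of the (individual) lifts, by exactly the same argument as in the first paragraph applied to the extension $1 \to \Q/\Z(1) \to G_\beta \to G \to 1$. Therefore the existence of commuting lifts is equivalent to the vanishing of this commutator.

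Finally, choosing any lift $\tilde{\gamma}(\zeta) \in \tilde{G}$ of $\gamma(\zeta)$ and any lift $\tilde{h} \in \tilde{G}$ of $h$, pushing them forward to $G_\beta$ produces valid choices of $\hat{\gamma}(\zeta)$ and $\hat{h}$, and the commutator in $G_\beta$ is by construction the image of $[\tilde{\gamma}(\zeta),\tilde{h}] \in \H_2(G,\Z)$ under $\beta$. Consequently the commuting-lifts condition on $(\gamma,\zeta,h)$ becomes $\beta\bigl([\tilde{\gamma}(\zeta),\tilde{h}]\bigr) = 0$, which is the stated condition. The main technical obstacle is making the independence of the Schur cover rigorous, since $\tilde{G}$ itself is not unique; I would handle this cleanly by expressing the commutator as the image of the canonical generator of $\H_2(\Z/n\Z \times \Z/m\Z,\Z)$ under the functorial map induced by $\Z/n\Z \times \Z/m\Z \to G$, so that no specific choice of Schur covering is used in the definition.
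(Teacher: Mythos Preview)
Your proposal is correct and follows essentially the same route as the paper: both start from Proposition~\ref{prop:partially_unramified_Brauer_groups_alg_clos}, use that $G_\beta$ is the pushout of $\tilde{G}$ along $\beta$, and identify the commutator of lifts in $G_\beta$ with $\beta\bigl([\tilde{\gamma}(\zeta),\tilde{h}]\bigr)$. The only difference is presentational: the paper restricts to the bicyclic subgroup $A=\langle\gamma(\zeta),h\rangle$, invokes the explicit description of $\H^2(A,\Q/\Z)$ from \cite{Kar87}, and identifies the restricted class $\beta_A$ with $\beta([\tilde{\gamma}(\zeta),\tilde{h}])$; you instead argue directly that the commutator of any two lifts in $G_\beta$ is independent of the lifts by centrality, so the ``existence of commuting lifts'' condition is equivalent to its vanishing. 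Your argument is slightly more direct and avoids the explicit bicyclic computation, while the paper's version makes the link to Bogomolov's formula (Remark~\ref{rem:Bogomolov}) more transparent.
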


\begin{proof}
	Let $n, m$ be the orders of $\gamma, h$ and $A =\langle \gamma(\zeta), h\rangle$. It is well-known (e.g.~it can be deduced from \cite[Lem.~2.9.1, Thm.~2.9.3]{Kar87}) that $\H^2(A,\Q/\Z) = 0$ if $A$ is cyclic and $\H^2(A, \Q/\Z) \cong \Z/\gcd(n,m) \Z$ otherwise. In the second case $\frac{k}{\gcd(n,m)} \in \Z/\gcd(n,m) \Z$ is represented by the extension
	\[
		1 \to \Q/\Z \to A_{\frac{k}{\gcd(n,m)}} \to A \to 1
	\]
	The group $A_{\frac{k}{\gcd(n,m)}}$ is generated by $\Q/\Z$ and two elements $\hat{g}, \hat{h}$. It is defined by the relations $\hat{g}^n = \hat{h}^m = 1$ and $[\hat{g}, \hat{h}] = \frac{k}{\gcd(n,m)}$. 
	
	We claim that if $\beta \in \H^2(G, \Q/\Z(1)) = \Hom(\H_2(G, \Z), \Q/\Z)$ then $\beta_A \in \H^2(A, \Q/\Z)$ is equal to $\beta([\tilde{g}, \tilde{h}])$. The lemma then follows from Proposition \ref{prop:partially_unramified_Brauer_groups_alg_clos}.
	
	The claim is clear if $A$ is cyclic so assume otherwise. Since $G_{\beta}$ is the pushout of $\tilde{G}$ we have that $\beta([\tilde{\gamma}(\zeta), \tilde{h}])$ is equal to $[\tilde{\gamma}(\zeta)_{\beta}, \tilde{h}_{\beta}] \in \Q/\Z(1)$ where $\tilde{\gamma}(\zeta)_{\beta}, \tilde{h}_{\beta}$ are lifts of $g, h$ along $G_{\beta} \to G$. The claim is functorial in $G$ so we may assume that $G = A$, this case follows immediately from the explicit description of $A_{\frac{k}{\gcd(n,m)}}$.
\end{proof}

\begin{remark} \label{rem:Schur_multiplier}
	The group $\H^2(G,\Q/\Z(1))$ from Lemma~\ref{lem:coh_sep_closed}
	is the (dual of the) Schur multiplier of $G$. 
	The appearance of the Schur multiplier in Malle's conjecture was proposed in \cite[\S2.4]{EV10}.
	We have $\H_2(G, \Z) = \H^2(G, \Q/\Z)^{\sim}$ by the universal coefficient theorem. Lemma \ref{lem:Schur_covering_group} shows that $\Br_{\mathcal{C}} BG \cong \H_2(G,\mathcal{C}; \Z)^{\sim}$ where the later group is defined by Ellenberg, Venkatesh and Westerland in the retracted preprint \cite[Def.~7.3]{EVW13}. We refer to \cite{Woo21} for a published account of this group.

	We explain these observations via the transcendental Brauer group of $BG$. In particular, Ellenberg-Venkatesh consider the (dual of) $\Br_{\mathcal{C}} BG_{\overline{k}}$ in \cite[(4)]{EV10} in the case that $\mathcal{C}$ consist of a single conjugacy class. They believe that the size of this group should play a role as long as the number field $k$ contains sufficiently many roots of unity, but the relevant factor could be smaller otherwise. This is explained by the fact that in general the transcendental Brauer group will be a subgroup of $(\Br_{\mathcal{C}} (BG)_{\overline{k}})^{\Gamma_k}$, and its cardinality appears in Conjecture \ref{conj:balanced}.
\end{remark}

The following will be used in \S \ref{sec:examples}. In this proof we fix a system of compatible roots of unity to identify $G(-1)$ with $G$ and $\Q/\Z(1)$ with $\Q/\Z$, via Lemma \ref{lem:Galois_action_on_G(-1)}.

\begin{lemma}\label{lem:unramified_geometric_Brauer_group_computation}
Let $k$ be a separably closed field, $G$ be a finite group of order coprime to the characteristic of $k$ and $\mathcal{C} \subset \mathcal{C}_G$ a collection of conjugacy classes.
In the following situations we have $\Br_{\mathcal{C}} BG = 0$.
	\begin{enumerate}
		\item $G$ is abelian and $\mathcal{C}$ generates $G$.
		\item $G = S_n$ and $\mathcal{C}$ contains a transposition.
		\item $G = D_n$ and $\mathcal{C}$ contains a reflection (i.e.~an element whose image under the map $D_n \to C_2$ is non-trivial).
		\item $G = A_4$ and $\mathcal{C}$ contains an element of order $2$.
	\end{enumerate}
\end{lemma}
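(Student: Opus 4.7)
The plan is to apply Proposition \ref{prop:partially_unramified_Brauer_groups_alg_clos} in the form given by Lemma \ref{lem:Schur_covering_group}. Since $k$ is separably closed, after fixing a compatible system of roots of unity we identify $G(-1)(k^{\sep})$ with $G$ and the criterion becomes: $\beta \in \Br_{\mathcal{C}} BG$ if and only if $\beta\bigl([\tilde g^m,\tilde h]\bigr)=0$ in $\H_2(G,\Z)$ for every $g\in c\in\mathcal{C}$, every $m\in\Z$, and every $h\in G$ centralising $g$. Here $\tilde g, \tilde h$ denote arbitrary lifts to a Schur covering $\tilde G$; the commutator is independent of these choices. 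Consequently, showing $\Br_{\mathcal{C}} BG=0$ reduces in each case to verifying that such commutators generate the abelian group $\H_2(G,\Z)$.

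For (1), since $G$ is abelian every $h\in G$ commutes with every $g$, so all pairs $[\tilde g,\tilde h]$ with $g$ in some $c\in\mathcal{C}$ and $h\in G$ are available. In the abelian setting the commutator descends to a well-defined alternating bilinear pairing $G\otimes G\to\H_2(G,\Z)$, and by Künneth this induces an isomorphism $\wedge^2 G\xrightarrow{\sim}\H_2(G,\Z)$. Since $\mathcal{C}$ generates $G$ by hypothesis, the wedges $\{g\wedge h:g\in\mathcal{C},\,h\in G\}$ span $\wedge^2 G$, hence the corresponding commutators span $\H_2(G,\Z)$.

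For (2)--(4) I would exhibit, in each case, one commuting pair $(g,h)$ with $g\in c\in\mathcal{C}$ whose Schur-cover commutator already generates $\H_2(G,\Z)$. For $S_n$ with $n\geq 4$ take $g\in\mathcal{C}$ a transposition and $h$ any transposition disjoint from $g$; their preimages in the double cover $\tilde S_n$ anticommute and produce the unique nontrivial class of $\H_2(S_n,\Z)\cong\Z/2$, while the cases $n\leq 3$ are immediate as $\H_2(S_n,\Z)=0$. For $D_n$ with $n$ even take $g\in\mathcal{C}$ a reflection and $h$ the perpendicular reflection; these commute and their lifts to the binary dihedral group $\mathrm{Dic}_n$ form the standard Klein four configuration that detects $\H_2(D_n,\Z)\cong\Z/2$, while for $n$ odd the Schur multiplier vanishes. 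For $A_4$, the Klein four subgroup $V=\{e,(12)(34),(13)(24),(14)(23)\}$ contains every order-two element; given any such $g\in\mathcal{C}$ and any $h\in V\setminus\{e,g\}$, the preimages of $V$ in the binary tetrahedral Schur cover $2T$ generate a copy of $Q_8$ in which $[\tilde g,\tilde h]=-1$ represents the nontrivial class of $\H_2(A_4,\Z)\cong\Z/2$.

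The hard part will not be any individual commutator computation --- these are classical --- but the bookkeeping ensuring that the $\widehat{\Z}(1)$-parameter $\zeta$ appearing in Lemma \ref{lem:Schur_covering_group} is harmless. Over a separable closure $\zeta$ merely ranges over a generator of the cyclic subgroup of $\widehat{\Z}(1)$ acting on $g$, and this is only genuinely exploited in the abelian case (where one still recovers the full wedge pairing). The remaining verifications reduce to finite checks inside the $2$-groups $\tilde S_n$, $\mathrm{Dic}_n$ and $Q_8\subset 2T$.
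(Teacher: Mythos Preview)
Your proposal is correct and follows essentially the same route as the paper: both fix roots of unity to identify $G(-1)$ with $G$, invoke Lemma~\ref{lem:Schur_covering_group}, handle the abelian case via $\H_2(G,\Z)\cong\wedge^2 G$, and for each of $S_n$, $D_n$, $A_4$ exhibit a single commuting pair $(g,h)$ with $g\in\mathcal{C}$ whose Schur-cover commutator generates the relevant $\Z/2$. The only cosmetic differences are your choice of dicyclic Schur cover and perpendicular reflection for $D_n$ (the paper uses $D_{4n}\to D_{2n}$ and the central rotation) and your direct use of $Q_8\subset 2A_4$ for $A_4$ (the paper computes inside the presentation of $2\cdot S_4^-$); either set of choices works.
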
	
\begin{proof}
	We will use Lemma \ref{lem:Schur_covering_group} to compute $\Br_{\mathcal{C}} BG$.
	
	(1) If $G = A$ is abelian then $\H^2(A, \Q/\Z)^{\sim} =\wedge^2 A$ and any Schur covering group $0 \to \wedge^2 A \to \Tilde{A} \to A \to 0$ has the property that $[\tilde{a}, \tilde{b}] = a \wedge b$. 
		
	The elements of $\mathcal{C} \subset \mathcal{C}_A = A(-1)$ generate $A$ so every pure wedge $a \wedge b$ with $a,b$ arbitrary can be written as a sum of pure wedges with $a \in \mathcal{C}, b \in A$. We deduce that $\Br_{\mathcal{C}} BA  = 0$ since the pure wedges generate $\wedge^2 A$.
	
	(2a) If $G = S_3$ then $\H^2(G, \Q/\Z) = 0$ by \cite[Thm.~2.12.3]{Kar87}.
	
	(2b) If $G = S_n$ and $n \geq 4$ then by \cite[Thm.~2.12.3]{Kar87} one Schur covering group is the group $2 \cdot S_n^{-}$ which has generators $z$ and $s_i$ for $1 \leq i \leq n-1$ subject to the relations $z^2 = 1$, $s_i^2 = z, [s_i, z] = 1$ for $1 \leq i \leq n-1$, $(s_{i} s_{i + 1})^3 = z$ for $1 \leq i \leq n-2$ and $[s_i, s_j] = z$ for $|i - j| \geq 2$. The map $\Tilde{G} \to S_n$ is given by sending $z$ to the identity and $s_i$ to $(i, i + 1)$.
		
		The central subgroup $\H^2(S_n, \Q/\Z) \subset 2 \cdot S_n^{-}$ is the copy of $\Z/2\Z$ generated by $z$. We may then choose $\tilde{g} = s_1$ and $\tilde{h} = s_3$.
	
	(3a) If $G = D_n$ and $n$ is odd then $\H^2(G, \Q/\Z) = 0$ by \cite[Prop.~2.11.4]{Kar87}.
	
	(3b) If $G = D_{2n}$ then by the proof of \cite[Prop.~2.11.4]{Kar87} a Schur covering group is given by $D_{4n} \to D_{2n}$. This map has kernel $\Z/2\Z$ generated by $(1, 2n + 1)(2, 2n+ 2)\cdots(2n, 4n)$. A choice for $\tilde{g}$ is then any lift of the reflection and $\tilde{h} = (1, n+ 1, 2n + 1, 3n + 1)(2, n + 2, 2n + 2, 3n + 2)\cdots(n,2n,3n,4n)$. The images of these elements commute since $h = (1, n+1)(2, n+2)\cdots(n,2n)$ lies in the center of $D_{2n}$.
		
	(4) If $G = A_4$ then its Schur covering group is unique by \cite[Thm.~2.12.5]{Kar87} and it is the subgroup $2 \cdot A_4 \subset 2 \cdot S_4^{-}$ which is the pre-image of $A_4 \subset S_4$. We may assume without loss of generality that $g = (1,2)(3,4) \in c \in \mathcal{C}$. A possible $\tilde{g}$ is then $s_1 s_3$. If we pick $\tilde{h} = s_2 s_4$ then $[\tilde{g}, \tilde{h}] = z$. 
\end{proof}

The earliest examples of groups with \textit{non-trivial} unramified transcendental Brauer group (via Example \ref{rem:Bogomolov_multiplier}) are due to Saltman \cite{Sal84}. He constructed such groups of order $p^9$ for any prime $p$. We give a new example which forms the basis of Conjecture \ref{conj:A_4_conductor_intro}, namely in \S \ref{sec:A_4-quartics} we show that $A_4$ has a non-trivial partially unramified transcendental Brauer group for certain choices of generating conjugacy classes.

\section{Brauer--Manin obstruction on $BG$} \label{sec:BMO_BG}
We now study the Brauer--Manin pairing on $BG$ for a finite \'etale tame group scheme $G$, as defined in \S \ref{sec:BM}. We begin by defining the ramification type of a cocycle.

\subsection{Ramification type} \label{sec:ramification_type}
Let $\O$ be a DVR with fraction field $k$, residue field $\F$ of characteristic $p$, and $\pi$ a uniformizer. Let $G$ be a finite \'etale tame group scheme over $k$ with good reduction.

\begin{definition}
 We define the \emph{ramification type} map $\rho_G: BG(k) \to \mathcal{C}_G^{\Gamma_k}$ via taking $\rho_G = \pi_0 \circ \pmod{\pi}$ where $\pmod{\pi}$ is the modulo $\pi$ map from \S\ref{sec:modulo_pi} and we use the identification $\pi_0(I_\mu BG(k)) = \mathcal{C}_G^{\Gamma_k}$ from Lemma \ref{lem:sectors_BG}.
\end{definition}
This has the following explicit description. An element $\varphi \in BG(k)$ is represented by a cocycle $\Gamma^{\text{tame}}_k \to G(k^{\sep})$. Moreover being continuous, it factors through some finite tamely ramified Galois extension $K/k$ of some ramification degree $e$. Enlarging $K$ if necessary, we assume that $K$ contains $\mu_e$. Let $\varpi$ be a uniformiser of $K/k$ and denote by $I_{K/k} \subset \Gal(K/k)$ the inertia group. By \cite[Tag 09EE]{stacks-project} the map
\begin{equation} \label{eqn:tame_inertia_finite}
\mu_e \to I_{K/k}, \quad \zeta \mapsto (\sigma_{\zeta}: \varpi \mapsto \zeta \varpi),
\end{equation}
is an isomorphism, where $\sigma_\zeta$ is defined to act trivially on the residue field of $K$. 
Hence composing with $\varphi$ we obtain a cocycle
\begin{equation} \label{def:inertia}
	\mu_e \to I_{K/k} \overset{\varphi}{\to} G.
\end{equation}
However, as $G$ has good reduction the action of $I_k$ on $G(k^{\sep})$ is trivial, thus this is actually a homomorphism, hence yields an element of $G(-1)$. 
Changing the cocycle $\varphi$ changes the homomorphism by conjugation in $G(k^{\sep})$. This map thus defines an element of $\mathcal{C}_G$, which is checked to be invariant under $\Gamma_k$ and equals the ramification type $\rho_{G}(\varphi)$ by Lemma~\ref{lem:residue_map_BG}.

Thus the ramification type of $\varphi$ is the induced map on the inertia subgroup, viewed up to conjugacy (this is more data than simply the image of the inertia subgroup).

\subsection{Partial adelic space}

We next define a partial adelic space for $BG$. This is important as it the natural space on which the Brauer--Manin pairing is well-defined for partially unramified Brauer group elements, and moreover the measure of this space will appear in our conjecture. For the rest of this section we let $G$ be a finite \'etale tame group scheme over a global field $k$.

\begin{definition} \label{def:adelic_space}
	Let $\mathcal{C}\subseteq \mathcal{C}_G^*$ be Galois invariant.
	Then for a non-archimedean place $v$ of good tame reduction we define 
	\begin{align*}
	BG(\O_v)_{\mathcal{C}} &= \{ \varphi_v \in BG(k_v) : \rho_{G,v}(\varphi_v) \in \mathcal{C} \cup \{e\}\}, \\
	BG(\Adele_k)_{\mathcal{C}} & = \lim_{S} \prod_{v \in S} BG(k_v) 
	\prod_{v \notin S} BG(\O_v)_{\mathcal{C}},
	\end{align*}
	where the limit is over all finite sets of places $S$, i.e.~$BG(\Adele_k)_{\mathcal{C}}$
	is the restricted direct product of the $BG(k_v)$ with respect to 	$BG(\O_k)_{\mathcal{C}}$.
	We call this the \emph{partial adelic space} with respect to $\mathcal{C}$
	and  an element of $BG(\O_v)_{\mathcal{C}}$ a \textit{partial $v$-adic integral point} with respect to 
	$\mathcal{C}$.
\end{definition}

The space $BG(\Adele_k)_{\mathcal{C}}$ is locally compact. It is compact if and only if
$\mathcal{C} = \mathcal{C}_G^*$ (this is compatible with Remark \ref{rem:intuition}).

If $G$ is constant, then an element of $BG(\O_v)_{\mathcal{C}}$ exactly corresponds to a continuous homorphism $\varphi: \Gamma_{k_v} \to G$ whose ramification type (from \S\ref{sec:ramification_type}) is either trivial or lies in $\mathcal{C}$. With respect to the boundary divisor analogy (Remark \ref{rem:intuition}), the set $BG(\O_v)_{\mathcal{C}}$ should be viewed as exactly the collection of $\O_v$-points of the space given by removing the divisors not in $\mathcal{C}$.

\subsection{Brauer--Manin pairing}

We now return to the partially unramified Brauer group from Definition \ref{def:partially_ramified_Br}. We first give a description of the local evaluation map for Brauer elements in terms of the residue maps $\partial_{S}$. The following should be viewed as an analogue of \cite[Thm.~3.7.5]{Col21} in our setting. (In \emph{loc.~cit~} a valuation appears, but the corresponding valuation always equals $1$ in our case, c.f.~Remark~\ref{rem:intuition}.)

\begin{lemma} \label{lem:local_evaluation_residue_map}
	Let $\mathcal{O}$ be a DVR with fraction field $K$, valuation $v$, residue field $\F$ and fix a uniformizer $\pi$. Let $\mathcal{G}$ be a finite \'etale tame group scheme over $\mathcal{O}$ with generic fibre $G$.
	Let $\varphi \in BG(K)$ and $\mathcal{S} := \rho_G(\varphi) \in \pi_0(I_{\mu} B \mathcal{G})$ its ramification type, viewed as a sector. Let $n$ be the order of $\mathcal{S}$ and $\overline{\varphi} := \varphi \pmod{\pi} \in \mathcal{S}(\F)$.
	
	 The following square is then well-defined and commutes
	 \[\begin{tikzcd}
	 {\Br B \mathcal{G}} & {\H^1(\mathcal{S}, \Z/n\Z)} \\
	 {\ker(\Br K \to \Br K^{\emph{sh}})} & {\H^1(\F, \Q/\Z)}
	 \arrow["{\partial_{\mathcal{S}}}", from=1-1, to=1-2]
	 \arrow["{b \to b(\varphi)}"', from=1-1, to=2-1]
	 \arrow["{f \to f(\overline{\varphi})}", from=1-2, to=2-2]
	 \arrow["{\partial_{\mathcal{O}}}"', from=2-1, to=2-2].
	 \end{tikzcd}\]
\end{lemma}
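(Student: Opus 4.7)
The plan is to reduce everything to the compatibilities already established for root stacks and $\mu_n$-gerbes. By the arithmetic valuative criterion (Lemma \ref{lem:arithmetic_valuative_criterion}) there exists a representable extension $\underline{\varphi}: \Spec \mathcal{O}_{\sqrt[n]{v}} \to B\mathcal{G}$ of $\varphi$, where $n$ is precisely the order of the sector $\mathcal{S}$; moreover by construction of $\pmod{\pi}$ in \S\ref{sec:modulo_pi}, the restriction $\overline{\varphi} = \underline{\varphi} \circ i_\pi: (B\mu_n)_{\F} \to B\mathcal{G}$ factors through $\mathcal{S} \subset I_{\mu}B\mathcal{G}$, so that $\overline{\varphi} \in \mathcal{S}(\F)$.

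The key input is Lemma \ref{lem:Witt_residue_equal_root_residue} applied to $\underline{\varphi}^{*}b \in \Br \mathcal{O}_{\sqrt[n]{v}}$. Since the generic fibre of $\underline{\varphi}^{*}b$ is $b(\varphi) \in \Br K$, that lemma immediately yields both that $b(\varphi) \in \ker(\Br K \to \Br K^{\mathrm{sh}})$ (so the bottom row of the square is well-defined) and that
\[
\partial_{\mathcal{O}}(b(\varphi)) = \res_{\O, \sqrt[n]{v}}(\underline{\varphi}^{*}b) \in \H^1(\F,\Z/n\Z) \subset \H^1(\F,\Q/\Z).
\]
By the very definition of $\res_{\O, \sqrt[n]{v}}$ (see the paragraph preceding \eqref{eqn:residue_map_DVR}), the right-hand side equals $\res_{\F,n}(i_\pi^{*}\underline{\varphi}^{*}b) = \res_{\F,n}(\overline{\varphi}^{*}b)$.

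It remains to identify $\res_{\F,n}(\overline{\varphi}^{*}b)$ with $\partial_{\mathcal{S}}(b)(\overline{\varphi})$. For this we factor $\overline{\varphi}$ through the universal map: the induced morphism $\overline{\varphi}: (B\mu_n)_{\F} \to (B\mu_n)_{\mathcal{S}}$ over $\mathcal{S}$ sits in a commutative diagram with the universal map $f_{\mathcal{S}}: (B\mu_n)_{\mathcal{S}} \to B\mathcal{G}$. By naturality of the Leray spectral sequence for the structure maps of the two $\mu_n$-gerbes $(B\mu_n)_{\F} \to \Spec \F$ and $(B\mu_n)_{\mathcal{S}} \to \mathcal{S}$, the residue $\res_{\cdot,n}$ commutes with pullback along the base map $\Spec \F \to \mathcal{S}$ determined by $\overline{\varphi}$; this is a straightforward consequence of Lemma \ref{lem:pushforward_mu_n} and functoriality. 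Applied to $f_{\mathcal{S}}^{*}b$ this gives $\res_{\F,n}(\overline{\varphi}^{*}b) = (\res_{\mathcal{S},n}(f_{\mathcal{S}}^{*}b))(\overline{\varphi}) = \partial_{\mathcal{S}}(b)(\overline{\varphi})$, where the last equality is the definition of the residue along a sector (Definition \ref{def:residue_along_sector}). Chaining the three displayed equalities proves commutativity.

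The main obstacle will be the last step, namely making precise the naturality of the residue $\res_{\cdot,n}$ of a $\mu_n$-gerbe along base change on the base stack. This is essentially a formal consequence of the functoriality of the Leray spectral sequence together with the canonical identifications $R^1 f_{*}\Gm = \Z/n\Z$ on $\Spec \F$ and on $\mathcal{S}$ from Lemma \ref{lem:pushforward_mu_n}; however it requires a brief verification, and it is where the sign conventions must be tracked. The remaining ingredients (the valuative criterion, Lemma \ref{lem:Witt_residue_equal_root_residue}, and the definition of $\partial_{\mathcal{S}}$) can then be assembled quickly.
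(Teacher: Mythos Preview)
Your proof is correct and follows essentially the same approach as the paper: both extend $\varphi$ via the arithmetic valuative criterion, apply Lemma~\ref{lem:Witt_residue_equal_root_residue} to identify $\partial_{\mathcal{O}}(b(\varphi))$ with the root-stack residue of $\underline{\varphi}^*b$, pass to $\res_{\F,n}(\overline{\varphi}^*b)$ by the definition of $\res_{\O,\sqrt[n]{v}}$, and then invoke functoriality of the Leray residue for $\mu_n$-gerbes to reach $\partial_{\mathcal{S}}(b)(\overline{\varphi})$. The only cosmetic difference is that the paper organises the final functoriality step as one large commutative diagram passing through $\mathcal{S}_{\F}$ (and then notes the composition is $\res_{\mathcal{S}}$ followed by restriction $\H^1(\mathcal{S},\Z/n\Z)\to\H^1(\mathcal{S}_{\F},\Z/n\Z)$), whereas you phrase it as a chain of equalities factoring directly through the universal map over $\mathcal{S}$; both are the same argument.
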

\begin{proof}
	Let $\varphi: \Spec \mathcal{O}_{\sqrt[n]{v}} \to B \mathcal{G}$ be the extension of $\varphi$ using the arithmetic valuative criterion of properness (Lemma \ref{lem:arithmetic_valuative_criterion}). By the definition of $\varphi \pmod{\pi}$ and $\rho_G$ we get the following commutative diagram
	\[\begin{tikzcd}
		{\Spec k_v} & {\Spec \mathcal{O}_{\sqrt[n]{v}}} & {(B \mu_n)_{\F}} \\
		BG & {B\mathcal{G}} & {(B \mu_n)_{\mathcal{S}_{\F}}}
		\arrow[from=1-1, to=1-2]
		\arrow["{\varphi}", from=1-1, to=2-1]
		\arrow["{\varphi}", from=1-2, to=2-2]
		\arrow["{i_{\pi}}", from=1-3, to=1-2]
		\arrow["{(B \mu_n)_{\overline{\varphi}}}", from=1-3, to=2-3]
		\arrow[from=2-1, to=2-2]
		\arrow[from=2-3, to=2-2].
	\end{tikzcd}\] 
The functoriality of $\Br$ and $\partial_{\cdot, n}$ applied to this diagram and Lemma \ref{lem:Witt_residue_equal_root_residue} shows that the following diagram commutes
\[\begin{tikzcd}
	{\H^1(\F, \Q/\Z)} & {\H^1(\F, \Z/n \Z)} & {\H^1(\F, \Z/n \Z)} \\
	{\ker(\Br K \to \Br K^{\text{sh}})} & {\Br \mathcal{O}_{\sqrt[n]{v}}} & {\Br (B\mu_n)_{\F}} \\
	& {\Br B\mathcal{G}} & {\Br (B\mu_n)_{\mathcal{S}_{\F}}} & {\H^1(\mathcal{S}_{\F}, \Z/n\Z)}
	\arrow[from=1-2, to=1-1]
	\arrow["{=}"{description}, draw=none, from=1-2, to=1-3]
	\arrow["{\partial_{\mathcal{O}}}", from=2-1, to=1-1]
	\arrow["{\partial_{\mathcal{O}, n}}", from=2-2, to=1-2]
	\arrow[from=2-2, to=2-1]
	\arrow[from=2-2, to=2-3]
	\arrow["{\partial_{\F, n}}", from=2-3, to=1-3]
	\arrow["{b \to b(\varphi)}", from=3-2, to=2-2]
	\arrow[from=3-2, to=3-3]
	\arrow["{b \to b(\overline{\varphi})}", from=3-3, to=2-3]
	\arrow["{\partial_{\mathcal{S}_{\F}, n}}", from=3-3, to=3-4]
	\arrow[from=3-4, to=1-3].
\end{tikzcd}\]
The lemma follows as the composition of the bottom maps is by definition $\res_{\mathcal{S}}$ composed with the restriction map $\H^1(\mathcal{S}, \Z/n \Z) \to \H^1(\mathcal{S}_{\F}, \Z/n\Z)$.
\end{proof}

Motivated by Harari's formal lemma (Theorem \ref{thm:Harari's_formal_lemma}) we now prove the following. It demonstrates that $BG(\O_v)_{\mathcal{C}}$ and $\Br_{\mathcal{C}} BG$ interact in the expected way, i.e.~as if they were the $\O_v$-points and the Brauer group of some open subset of $BG$. The second part of this proof is inspired by Harari's formal lemma for varieties \cite[Thm.~13.4.1]{Col21}.
\begin{theorem}\label{thm:Harari_formal_partially_unramified}
	Let $k$ be a global field and $G$ a finite \'etale tame group scheme over $k$, $\mathcal{C} \subset \mathcal{C}_G$ Galois invariant, and $b \in \Br BG$. Then $b \in \Br_{\mathcal{C}} BG$ if and only if $b$ evaluates trivially on $BG(\O_v)_{\mathcal{C}}$ for all but finitely many $v$.
	
	More precisely, let $v$ be a place of good reduction coprime to the order of $G$ and $\mathcal{G}_v$ a group scheme model of $G$ over $\mathcal{O}_v$. Then any $b \in \Br B \mathcal{G}_v \cap \Br_{\mathcal{C}} BG$ evaluates trivially on $BG(\O_v)_{\mathcal{C}}$.
\end{theorem}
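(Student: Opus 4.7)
The plan is to reduce both directions to residue computations via Lemma \ref{lem:local_evaluation_residue_map}, and then run Chebotarev-style density arguments for the converse. Note that if $v$ is a place of good tame reduction for $\mathcal{G}$, then the sectors of $B \mathcal{G}_v$ are in bijection with $\Gamma_{\F_v}$-orbits of $\mathcal{C}_{G_{k_v}}$ by Lemma \ref{lem:sectors_BG}, and these are compatible with the $\Gamma_{k_v}$-orbit description of sectors of $BG_{k_v}$ via the surjection $\Gamma_{k_v} \twoheadrightarrow \Gamma_{\F_v}$ (which acts on $\mathcal{C}_G$ the same way since $G$ has good reduction).

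For the forward direction (including the more precise second statement), let $b \in \Br B \mathcal{G}_v \cap \Br_{\mathcal{C}} BG$ and $\varphi_v \in BG(\O_v)_{\mathcal{C}}$ with ramification type $\rho_G(\varphi_v)$, corresponding to a sector $\mathcal{S}$ of $B\mathcal{G}_v$ of order $n$. Either $\mathcal{S}$ is the trivial sector (if $\rho_G(\varphi_v) = e$), in which case $n = 1$ and $\H^1(\mathcal{S}, \Z/n\Z) = 0$ automatically; or $\mathcal{S}$ lies above a $\Gamma_k$-orbit in $\mathcal{C}$, in which case the functoriality \eqref{eq:Br_C_functorial} applied to the map $B\mathcal{G}_v \to BG$ gives that $b \in \Br_{\mathcal{C}} B\mathcal{G}_v$, so $\res_{\mathcal{S}}(b) = 0$ by Lemma \ref{lem:Br_partially_unramified_purity}. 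In either case, Lemma \ref{lem:local_evaluation_residue_map} gives $\res_{\mathcal{O}_v}(b(\varphi_v)) = 0$, hence $b(\varphi_v) \in \Br \mathcal{O}_v = 0$ since $\mathcal{O}_v$ is a henselian DVR with finite residue field. For the global statement, spread $\mathcal{G}$ out to a smooth group scheme over $\mathcal{O}_{k,S}$ for some finite $S$ containing the archimedean places and places dividing $|G|$; by Grothendieck purity (Theorem \ref{thm:Grothendieck_purity}), any $b \in \Br BG$ extends to $\Br B\mathcal{G}$ after possibly enlarging $S$, whence the argument applies for all $v \notin S$.

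For the converse, suppose $b \notin \Br_{\mathcal{C}} BG$. By Lemma \ref{lem:Br_partially_unramified_purity} there exists a sector $\mathcal{S}$ corresponding to a $\Gamma_k$-orbit in $\mathcal{C}$ with $\res_{\mathcal{S}}(b) \neq 0 \in \H^1(\mathcal{S}, \Z/n\Z)$. After spreading out $\mathcal{G}$ and $b$ to a model over $\mathcal{O}_{k,S}$ as above, the sector $\mathcal{S}$ extends to a connected normal finite type stack over $\mathcal{O}_{k, S}$ on which the residue of the extended Brauer class is nonzero. Lemma \ref{lem:Chebotarev} then produces infinitely many places $v \notin S$ and points $x_v \in \mathcal{S}(\F_v)$ with $\res_{\mathcal{S}}(b)(x_v) \neq 0 \in \H^1(\F_v, \Z/n\Z)$. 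By the stacky Hensel's Lemma (Theorem \ref{thm:Hensel}) applied to $B\mathcal{G}_v \to \Spec \mathcal{O}_v$, the point $x_v \in I_\mu B \mathcal{G}_v(\F_v)$ lifts to some $\varphi_v \in BG(k_v)$ with $\varphi_v \pmod{\pi_v} = x_v$; by construction $\rho_G(\varphi_v)$ is the conjugacy class of $x_v$, which lies in $\mathcal{C}$, so $\varphi_v \in BG(\O_v)_{\mathcal{C}}$. Lemma \ref{lem:local_evaluation_residue_map} gives $\res_{\mathcal{O}_v}(b(\varphi_v)) = \res_{\mathcal{S}}(b)(x_v) \neq 0$, contradicting the hypothesis that $b$ evaluates trivially on $BG(\O_v)_{\mathcal{C}}$ for almost all $v$.

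The main obstacle is bookkeeping: carefully matching up sectors of $B\mathcal{G}_v$ over $\mathcal{O}_v$ with Galois orbits of conjugacy classes in a way compatible with the ramification type and the subset $\mathcal{C}$, and ensuring that the spreading-out of $b$ and $\mathcal{S}$ over $\mathcal{O}_{k,S}$ behaves compatibly with residues so that Chebotarev on the integral model actually produces the desired local failures. Once this is set up, the analytical input is minimal and reduces to the triviality of $\Br \mathcal{O}_v$ and the two structural results (Theorem \ref{thm:Hensel} and Lemma \ref{lem:Chebotarev}).
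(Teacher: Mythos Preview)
Your proof is correct and follows essentially the same route as the paper: spread out to a model over $\mathcal{O}_{k,S}$, use Lemma~\ref{lem:local_evaluation_residue_map} to translate triviality of $b(\varphi_v)$ into vanishing of the sector residue (forward direction), and for the converse combine Lemma~\ref{lem:Chebotarev} with the stacky Hensel's Lemma (Theorem~\ref{thm:Hensel}) to lift a point detecting the nonzero residue. The only minor remarks are that Lemma~\ref{lem:Chebotarev} as stated yields a single place (one iterates by enlarging $S$ to get infinitely many), and that extending $b$ to $\Br B\mathcal{G}$ is really a spreading-out/limit argument rather than an application of Theorem~\ref{thm:Grothendieck_purity} as stated.
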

\begin{proof}
	Let $b \in \Br BG$. Fix $S$ a finite set of places containing the archimedean ones such that $G$ has a finite \'etale tame model $\mathcal{G}$ over $\mathcal{O}_{k, S}$. 
	In which case $B \mathcal{G}$ is a tame proper \'etale DM stack over $\mathcal{O}_{k,S}$.
	By enlarging $S$ we may assume that $b \in \Br B\mathcal{G}$.
	
	Let $v \not \in S$, let $\varphi_v \in B \mathcal{G}(\mathcal{O}_v)_{\mathcal{C}}$ and
	$\mathcal{S} = \rho_{G,v}(\varphi_v) \in \mathcal{C}$. If $b \in \Br_{\mathcal{C}} BG_{k_v}$ then $\partial_{\mathcal{S}}(b) = 0 \in \H^1(\mathcal{S}_{\mathcal{O}_v}, \Q/\Z) \subset \H^1(\mathcal{S}_{k_v}, \Q/\Z)$. Lemma \ref{lem:local_evaluation_residue_map} then implies that $\partial_{\mathcal{O}_v}(b(\varphi_v)) = 0$. The exactness of \eqref{eq:purity_DVR} implies that $b(\varphi) \in \Br \mathcal{O}_v = 0$, as required.
	
	Assume now that $b \not \in {\Br_{\mathcal{C}} BG}$. Then there exists a sector $\mathcal{S}$ of order $n$ such that $\partial_{\mathcal{S}}(b) \neq 0 \in \H^1(\mathcal{S}, \Z/n\Z)$.
	By Lemma \ref{lem:Chebotarev} there exists $v \not \in S$ and a point $\overline{x} \in \mathcal{S}(\F_v)$ such that $\partial_{\mathcal{S}}(b)(x) \neq 0 \in \H^1(\F_v, \Z/n\Z)$. By Theorem \ref{thm:Hensel} there exists an $x \in BG(K_v)$ such that $x \pmod{\pi} = \overline{x}$. It then follows from Lemma~\ref{lem:local_evaluation_residue_map} that $\res_{\mathcal{O}}(b(x)) \neq 0$ from which we deduce that $b(x) \neq 0$, as required.
\end{proof}

\begin{lemma} \label{lem:modified_adeles_Brauer-Manin}
	Let $\mathcal{C}\subseteq \mathcal{C}_G^*$ be Galois invariant.
	Then the Brauer--Manin pairing yields a well-defined continuous pairing
	$$ \Br_{\mathcal{C}} BG \times BG(\Adele_k)_{\mathcal{C}} \to \Q/\Z.$$
\end{lemma}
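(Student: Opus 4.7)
The plan is to model the argument on Lemma \ref{lem:BM_pairing}, using Theorem \ref{thm:Harari_formal_partially_unramified} in place of the classical fact that a Brauer class on a smooth variety evaluates trivially on $\O_v$-points at almost all places. Concretely, fix $b \in \Br_{\mathcal{C}} BG$. By Theorem \ref{thm:Harari_formal_partially_unramified}, there exists a finite set of places $S$ containing the archimedean places such that $b$ evaluates trivially on $BG(\O_v)_{\mathcal{C}}$ for every $v \notin S$.

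For well-definedness, I would take $(\varphi_v) \in BG(\Adele_k)_{\mathcal{C}}$; by the definition of the restricted direct product, $\varphi_v \in BG(\O_v)_{\mathcal{C}}$ for all $v$ outside some finite set $T$. Then at every $v \notin S \cup T$ we have $b(\varphi_v) = 0$ by the choice of $S$, so the sum $\sum_v \inv_v b(\varphi_v)$ has only finitely many nonzero terms and is thus a well-defined element of $\Q/\Z$.

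For continuity in the left variable (with $\Br_{\mathcal{C}} BG$ given the discrete topology, so that it suffices to fix $b$), it is enough to check continuity on each basic open set of the form $U_{S'} = \prod_{v \in S'} BG(k_v) \times \prod_{v \notin S'} BG(\O_v)_{\mathcal{C}}$ where $S' \supseteq S$ is finite, since these cover $BG(\Adele_k)_{\mathcal{C}}$. On $U_{S'}$ the pairing reduces to the finite sum $(\varphi_v) \mapsto \sum_{v \in S'} \inv_v b(\varphi_v)$, because the contribution from $v \notin S'$ vanishes by the choice of $S$. Each term in this finite sum is continuous by Lemma \ref{lem:evaluation_continuous}, so the whole pairing is continuous on $U_{S'}$, and hence globally.

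The only nontrivial input is Theorem \ref{thm:Harari_formal_partially_unramified}; everything else is formal manipulation of the restricted product topology and the continuity of local Brauer evaluation from Lemma \ref{lem:evaluation_continuous}. No real obstacle is expected.
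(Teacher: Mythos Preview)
Your proposal is correct and follows exactly the approach the paper takes: the paper's proof simply cites Lemma~\ref{lem:evaluation_continuous}, Theorem~\ref{thm:Harari_formal_partially_unramified}, and the proof of Lemma~\ref{lem:BM_pairing}, which is precisely what you have spelled out in detail.
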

\begin{proof}
	By Lemma \ref{lem:evaluation_continuous}, Theorem \ref{thm:Harari_formal_partially_unramified},
	and the proof of Lemma \ref{lem:BM_pairing}.
\end{proof}

\subsubsection{Via central extensions} \label{sec:BM_central_extension}
Let 
$$1 \to \mu_n \to E \to G \to 1$$
be a tame central extension with corresponding Brauer group element $b_E \in \Br BG$
(see \S \ref{sec:central_extension}).
Let $k \subset L$ be a field extension.
Applying Galois cohomology, by \cite[Prop.~5.7.43]{Ser02}  we obtain the exact sequence of pointed sets
$$\H^1(L,E) \to \H^1(L,G) \to \H^2(L,\mu_n) = \Br L[n].$$
Given a cocycle $\varphi \in BG(L)$, the image of $\varphi$ under this map is exactly the evaluation of $b_E(\varphi) \in \Br L$. This description can be used to calculate the Brauer--Manin obstruction in terms of $E$. For example, from exactness we see that $b_E(\varphi) = 0$ if and only if $\varphi$ lifts to $E$, i.e.~if and only if the corresponding embedding problem has a solution. We use this approach to calculate the Brauer--Manin obstruction for Conjecture \ref{conj:A_4_conductor_intro}.

\subsection{Stickelberger's Theorem as a Brauer-Manin obstruction} \label{sec:Stickelberger}
Consider the stack $BS_n$ classifying $S_n$-extensions, or equivalently degree $n$ extensions (Lemma \ref{lem:S_n}). It has a $\Z/2\Z$-torsor given by $B A_n \to B S_n$. Consider the image of the cup product $\alpha:=-1 \cup [B A_n \to B S_n] \in \H^2(B S_n, \mu_n)$ in the Brauer group $\Br BS_n$. We can use the Brauer--Manin obstruction coming from this Brauer element to reprove Stickelberger''s theorem.
\begin{theorem} \label{thm:Stickelberger}
	Let $k/\Q$ be a number field. Then $\Delta_{k/\Q} \equiv 0, 1 \bmod{4}$. This is explained by a Brauer--Manin obstruction to strong approximation on $BS_n$.
\end{theorem}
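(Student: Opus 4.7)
The plan is to identify $\alpha$ concretely and then translate Brauer--Manin reciprocity into the Stickelberger congruence.

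First, I would interpret the $\Z/2\Z$-torsor $[BA_n \to BS_n] \in \H^1(BS_n,\Z/2\Z)$: this is classified by the sign character $\mathrm{sgn}:S_n \to \Z/2\Z$, and evaluation at $\varphi \in BS_n(k)$ corresponding to a degree-$n$ \'etale algebra $L/k$ gives, via the canonical identification $\Z/2\Z \cong \mu_2$ and Kummer theory, the square class of the discriminant $[\Delta_{L/k}] \in k^\times/k^{\times 2} \cong \H^1(k,\mu_2)$. Then applying Lemma~\ref{lem:cup_products} (or equivalently a direct cocycle calculation) to the abelianization map $S_n \twoheadrightarrow \Z/2\Z$, the global value of the Brauer element is
\[
\alpha(\varphi) \;=\; (-1)\cup [\Delta_{L/k}] \;\in\; \H^2(k,\mu_2) \hookrightarrow \Br k,
\]
which coincides with the Hilbert symbol / quaternion algebra class $(-1,\Delta_{L/k}) \in \Br k[2]$.

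Second, for $k=\Q$ and $\varphi$ the class of a number field $K/\Q$, the Brauer--Manin pairing (Lemma~\ref{lem:BM_pairing}) applied to $\alpha$ gives
\[
\sum_v \inv_v\bigl((-1,\Delta_K)_v\bigr) \;=\; 0 \in \Q/\Z,
\]
which is Hasse reciprocity for the quaternion algebra $(-1,\Delta_K)$.

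Third, I would substitute the classical Hilbert-symbol computations: $\inv_\infty(-1,\Delta_K)=\tfrac12$ iff $\Delta_K<0$; for odd primes $p$ with $\Delta_K = p^{e_p}u_p$ and $u_p \in \Z_p^\times$, one has $\inv_p(-1,\Delta_K) = \tfrac{e_p}{2}\cdot\tfrac{p-1}{2} \bmod \Z$; and writing $\Delta_K = 2^a u$ with $u$ the odd part, $\inv_2(-1,\Delta_K)=\tfrac12$ iff $u \equiv 3 \pmod 4$. Substituting into the reciprocity sum and using that $\Delta_K$ is a specific integer (so the local square-class data must be globally coherent via Theorem~\ref{thm:Hensel}), one extracts after $\bmod 4$ arithmetic the Stickelberger congruence $\Delta_K \equiv 0,1 \pmod 4$. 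The obstruction interpretation is then immediate: adelic \'etale algebras $(\varphi_v) \in BS_n(\Adele_\Q)$ whose local discriminants would assemble to a ``$\Delta \equiv 2,3 \pmod 4$'' pair non-trivially with $\alpha$ and hence cannot lie in the closure of $BS_n(\Q)$.

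The hard part will be the third step: the reciprocity sum $\sum_v \inv_v(-1,\Delta_K)_v = 0$ is automatic for any integer $\Delta_K$ simply by Hasse reciprocity, so extracting a $\bmod 4$ congruence from it is not tautological. The resolution must combine reciprocity with the fact that $\Delta_K$ is an \emph{integer} (not merely a square class), using the stacky Hensel's lemma (Theorem~\ref{thm:Hensel}) to constrain the local contributions at $v=2$ and $v=\infty$ in terms of admissible local \'etale algebras. This is where the core difficulty lies, and it is likely that writing out the local computation at $v=2$ carefully — in particular using that the local $2$-adic discriminants of degree-$n$ \'etale algebras land in a restricted subset of $\Q_2^\times/\Q_2^{\times 2}$ — is what actually pins down $\Delta_K \bmod 4$.
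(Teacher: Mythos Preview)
Your first two steps are correct and match the paper's approach: the identification $\alpha(\varphi) = (-1,\Delta_K)$ as a quaternion class, followed by the reciprocity sum $\sum_v \inv_v(-1,\Delta_K) = 0$. You also correctly diagnose that reciprocity alone is vacuous, since it holds for any nonzero rational $\Delta_K$.

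However, your proposed resolution of the third step has a genuine gap. The invocation of Theorem~\ref{thm:Hensel} is misplaced: that result concerns \emph{tame} places and the cyclotomic inertia stack, and says nothing useful at the wild prime $v=2$, which is precisely where the content lies. Likewise, the emphasis on ``$\Delta_K$ being an integer rather than a square class'' and on the image of $\Delta_{K_2}$ in $\Q_2^\times/\Q_2^{\times 2}$ does not by itself pin down $\Delta_K \bmod 4$.

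The paper supplies two concrete ingredients you are missing. First, it reduces immediately to $n=2$ by observing that $\Delta_K$ is a square times the discriminant of the quadratic resolvent; this collapses the $2$-adic analysis to quadratic \'etale $\Q_2$-algebras. Second, the actual $2$-adic input is entirely elementary: a quadratic extension of $\Q_2$ has discriminant with $v_2 \in \{0,2,3\}$, never $1$. Hence if $K_2/\Q_2$ is ramified one has $4\mid\Delta_{K_2}$ and the congruence is immediate; if unramified, then $\Delta_{K_2}\equiv 1\bmod 4$ and $\inv_2(-1,\chi_2)=0$. Now the remaining local invariants at $v=\infty$ and odd $v$ are each $\tfrac12$ exactly when $\Delta_{k_v}\equiv 3\bmod 4$, and reciprocity forces an even number of such places, whence $\prod_v \Delta_{k_v}\equiv 1\bmod 4$. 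This is the step you should replace your appeal to Theorem~\ref{thm:Hensel} with.
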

\begin{proof}
	Let $n \geq 2$ and  $(k_v)_{v} \in BS_n(\Adele_\Q)^\alpha$. It suffices to show that
	$\prod_v \Delta_{k_v/\Q_v}  \equiv 0,1 \bmod 4$.  By Lemma \ref{lem:BG(k)}, 
	each $k_v$ corresponds to a homomorphism 
	$\chi_v: \Gamma_{\Q_v} \to S_n$. 
	The $\Z/2\Z$-torsor $[B A_n \to B S_n](\chi_v)$ is equal to the composition $\Gamma_{\Q_v} \to S_n \to \Z/2\Z$, which corresponds to the quadratic resolvent of $k_v/\Q_v$. Since $\Delta_{k_v/\Q_v}$ is equal to a square times the discriminant of the quadratic resolvent we may assume that $n = 2$.

	If $k_2/\Q_2$ is ramified then $4 \mid \Delta_{k_2/\Q_2}$ as there are no extensions of $\Q_2$ with discriminant $2$. So we may assume that $k_2/\Q_2$ is unramified.
	We compute $\inv_v(-1, \chi_v)$ for all places $v$ of $\Q$.
	
	\begin{enumerate}
		\item $\inv_2(-1, \chi_2) = 0$ since $k_2/\Q_2$ is unramified.
		\item $\inv_\infty(-1, \chi)_\infty = 1/2$ if and only if $k_\infty \cong \C$, in which case $\Delta_{k_\infty/\R} = -1 \equiv 3 \bmod 4$. Otherwise $\Delta_{k_\infty/\R} = 1$.
		\item If $p \equiv 1 \bmod 4$ then $-1$ is a square in $\Q_p$ so $\inv_p(-1, \chi_p) = 0$. In this case $\Delta_{k_p/\Q_p}$ is a power of $p$ so $\Delta_{k_p/\Q_p} \equiv 1 \bmod 4$.
		\item If $p \equiv 3 \bmod 4$ then $-1$ is not a square in $\Q_p$ so $\inv_p(-1, \chi_p) = 1/2$ if and only if $k_p/\Q_p$ is ramified. In this case $\Delta_{k_p/\Q_p} = p \equiv 3 \bmod{4}$. Otherwise we have $\Delta_{k_p/\Q_p} = 1$.
	\end{enumerate}
	The relation $\sum_v \inv_v (-1,\chi_v) = 0$ thus shows that the number of places $v$ for which $\Delta_{k_v/\Q_v} \equiv 3 \bmod 4$ is even. Stickelberger's theorem follows.
\end{proof}

\begin{remark}
	The Brauer group element $\alpha$ we use in the proof is ramified, i.e.~lies in $\Br S_n$ but not in $\Brun S_n$. To see this we note that $\inv_p(-1, \chi_p) \neq 0$ for all $p \equiv 3 \bmod 4$ where $\chi_p$ ramifies. As $\chi_p$ varies there are an infinite number of such places, so $\alpha$ is ramified by Theorem \ref{thm:Harari's_formal_lemma}.
	
	We conclude that Stickelberger's theorem is an obstruction to \textit{strong approximation}
	on $BG$, but not to \textit{weak approximation}.
\end{remark}
\subsection{Grunwald--Wang as a Brauer--Manin obstruction} \label{sec:Grunwald}

\begin{theorem}\label{thm:Grunwald-Wang}
	There is no $\Z/8\Z$-extension $k$ of $\Q$ such that $k \otimes_\Q \Q_2$ is the unramified
	$\Z/8\Z$-extension of $\Q_2$. This is explained by a Brauer--Manin obstruction to 
	weak approximation on $B\Z/8\Z$.
\end{theorem}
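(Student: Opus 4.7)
The plan is to construct an explicit unramified Brauer class $b \in \Brun B(\Z/8\Z)$ that obstructs the specified local data via the Brauer--Manin pairing. By Lemmas~\ref{lem:H1_Pic} and~\ref{lem:cup_products}, elements of $\Br_e B(\Z/8\Z)$ correspond to classes $\alpha \in \H^1(\Q, \dual{G}) = \H^1(\Q, \mu_8) = \Q^{\times}/\Q^{\times 8}$, and evaluation on $\chi \in B(\Z/8\Z)(L) = \H^1(L, \Z/8\Z)$ is the norm residue symbol $(\alpha, \chi) \in \Br L[8]$. I would take $b$ to correspond to $\alpha = 16 \in \Q^{\times}/\Q^{\times 8}$; equivalently (via Definition~\ref{def:algebraic_central_extension} and Lemma~\ref{lem:algebraic_central_extension}), $b = b_{E_2}$ is the twisted central extension corresponding to $2 \in \H^1(\Q, \Hom(\Z/8\Z, \mu_2)) = \Q^{\times}/\Q^{\times 2}$, since the map $\H^1(\Q, \mu_2) \to \H^1(\Q, \mu_8)$ induced by $\mu_2 \hookrightarrow \mu_8$ sends the Kummer class $2$ to $2^4 = 16$.

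Next I would verify $b \in \Brun B(\Z/8\Z)$ via the age residue criterion of Theorem~\ref{thm:Br_BG} and Lemma~\ref{lem:age_residue_restriction}. The Galois orbits of $\mathcal{C}^*_{\Z/8\Z} = \Z/8\Z \setminus \{0\}$ under $(\Z/8\Z)^{\times}$ acting via $\chi_{\mathrm{cycl}}^{-1}$ are $\{4\}$, $\{2, 6\}$, and $\{1, 3, 5, 7\}$, with fields of definition $\Q$, $\Q(i)$, and $\Q(\zeta_8)$ respectively. The age residue at each orbit is obtained by restricting $16 \in \H^1(\Q, \mu_8)$ to the appropriate field and projecting to $\H^1(\cdot, \mu_n)$ where $n$ is the order of the orbit. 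The class becomes trivial in each: $16 = 4^2 \in \Q^{\times 2}$; $16 = 2^4 \in \Q(i)^{\times 4}$; and $16 = (\sqrt{2})^8 \in \Q(\zeta_8)^{\times 8}$, since $\sqrt{2} = \zeta_8 + \zeta_8^{-1} \in \Q(\zeta_8)$. Hence $b \in \Brun$, and by Theorem~\ref{thm:Harari_formal_partially_unramified} the Brauer--Manin pairing is well-defined on adelic points.

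For the local invariants: at $v = 2$, when $\chi_2$ is the unramified $\Z/8\Z$-character, the corresponding cyclic unramified extension $L/\Q_2$ of degree $8$ has norm group $N(L^{\times}) = \Z_2^{\times} \cdot 2^{8\Z}$; since $v_2(16) = 4$ is not divisible by $8$, local class field theory gives $\inv_2 b(\chi_2) = 4/8 = 1/2$. At $v = \infty$, $16 > 0$ is a square in $\R$ and hence an $8$th power there, so $\inv_\infty = 0$. For odd $p$ I would show $16 \in (\Q_p^{\times})^8$: by Hensel this reduces to showing $16 \in (\F_p^{\times})^8$, and writing $2 = g^a$ for a primitive root $g$ of $\F_p$, this amounts to $4a \equiv 0 \pmod{\gcd(8, p-1)}$, which is automatic unless $\gcd(8, p-1) = 8$ (i.e.\ $p \equiv 1 \pmod 8$), in which case quadratic reciprocity gives $(2/p) = 1$ so $a$ is even and the condition holds. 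Hence $\inv_p b(\chi_p) = 0$ for every $\chi_p$ at every odd $p$.

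Combining everything, for any adelic $(\chi_v) \in B(\Z/8\Z)(\Adele_\Q)$ with $\chi_2$ the unramified $\Z/8\Z$-character of $\Q_2$,
\[
\sum_v \inv_v b(\chi_v) = \tfrac{1}{2} \neq 0 \in \Q/\Z,
\]
so $(\chi_v)$ lies outside $B(\Z/8\Z)(\Adele_\Q)^{\Brun}$ and cannot be the image of a rational point. The main conceptual difficulty is identifying the correct Brauer class; the verification that $16$ is an $8$th power at every odd prime is precisely the classical Wang phenomenon, now recast as the unramifiedness of $b$ on $B(\Z/8\Z)$.
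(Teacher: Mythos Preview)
Your proof is correct and uses the same Brauer element as the paper, namely the class corresponding to $16 \in \Q^{\times}/\Q^{\times 8} \cong \H^1(\Q,\mu_8) \cong \Br_e B(\Z/8\Z)$. The overall structure is identical: identify the class, check it is unramified, and compute the local invariants.

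The differences are in how you verify the intermediate steps. For unramifiedness you invoke the age residue criterion of Theorem~\ref{thm:Br_BG} and Lemma~\ref{lem:age_residue_restriction}, checking directly that $16$ becomes an $n$th power over each field of definition $\Q$, $\Q(i)$, $\Q(\zeta_8)$; the paper instead observes (in the remark following the proof) that $b_{\Q_v}=0$ for all $v\neq 2$ and appeals to Theorem~\ref{thm:Harari's_formal_lemma}. For the invariant at $2$ you use the norm group of the unramified degree~$8$ extension to get $\inv_2=v_2(16)/8=1/2$, whereas the paper uses bilinearity to rewrite $\chi_2\cup 2^4=(4\chi_2)\cup 2$ and evaluates the Hilbert symbol $(5,2)_2=1/2$. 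Finally, you supply an explicit argument (Hensel plus the supplement to quadratic reciprocity) that $16\in(\Q_p^{\times})^8$ for all odd $p$, while the paper simply cites this as classical. Your age residue verification is a nice illustration of the paper's general machinery, though note it is slightly redundant with your later local triviality computation: once you know $16\in(\Q_v^{\times})^8$ for all $v\neq 2$, unramifiedness already follows from Theorem~\ref{thm:Harari's_formal_lemma}.
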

\begin{proof}
	Let $\pi: \Spec \Q \to B \Z / 8 \Z$ be the defining $\Z/ 8 \Z$-torsor with class $[\pi] \in \H^1(B \Z / 8 \Z, \Z/ 8 \Z)$. Let $16 \in \Q^{\times}/ \Q^{\times 8} \cong \H^1(\Q, \mu_8)$ and consider the cup product $b := [\pi] \cup 16 \in \H^2(B \Z / 8 \Z, \Gm) = \Br B \Z / 8 \Z$ induced by the standard pairing $\Z / 8 \Z \times \mu_8 \to \Gm$.
	
	Let $(\chi_v)_{v} \in \prod_v B \Z / 8 \Z(\Q_v)$ with $\chi_2$ a character $\Gamma_{\Q_2} \to \Z/8 \Z$ which defines an unramified $\Z / 8\Z$-extension. Note that by construction $[\pi](\chi_2) = \chi_2$. 
	It is classical that $16 \in \Q_v^{\times 8}$ for all $v \neq 2$ thus $b_{\Q_v} =0 \in \Br (B \Z / 8 \Z)_{\Q_v}$ for $v \neq 2$. Hence
	\[\sum_{v} \inv_v(b(\chi_v)) = \inv_2(b(\chi_2)) = \inv_2(\chi_2 \cup 2^4) =  \inv_2(4 \cdot \chi_2 \cup 2).\]
The character $4 \cdot \chi_2$ corresponds to $\Q_2(\sqrt{5})/\Q_2$ so $\inv_2(4 \cdot \chi_2 \cup 2) = (5,2)_2 = \frac{1}{2} \neq 0$.
	
	Thus there is a Brauer--Manin obstruction to the existence of a character $\chi \in B\Z/ 8 \Z(\Q)$ such that $\chi_{\Q_2}$ defines an unramified $\Z/8 \Z$-extension.
\end{proof}

\begin{remark}
We emphasise that the Brauer group element $b$ used in the proof is unramified, contrary to Stickelberger's theorem. Indeed, in the above proof we saw that $b_{\Q_v} = 0$ for all places $v \neq 2$. Theorem \ref{thm:Harari's_formal_lemma} implies that $b$ is unramified. Therefore Grunwald--Wang is an obstruction to \emph{weak approximation} on $B \Z/8\Z$.
\end{remark}

\subsection{A gerbe which fails the Hasse principle}

The above examples all concern failures of weak approximation on stacks with rational points. We give an example of a Brauer--Manin obstruction to the Hasse principle on a gerbe. It arises through considering which fibres of the map $B \Z/16\Z(k) \to B \Z/2\Z(k)$ have a rational point; this is relevant to Malle's conjecture through consideration of the Iitaka fibration. In more classical language, this corresponds to trying to solve an embedding problem in the sense of Galois theory.

\subsubsection{Cohomological argument}
The existence of such a gerbe follows from the following. Poitou--Tate duality \cite[Thm.~8.6.8]{NSW08} yields a perfect pairing
$$\Sha^1(k,\mu_n) \times \Sha^2(k,\Z/n\Z) \to  \Q/\Z$$
for any number field $k$. It is well-known that the group $\Sha^1(k,\mu_8)$, can be non-trivial for suitable $k$ \cite[Thm.~9.1.3]{NSW08}; indeed, this is closely related to the Grunwald--Wang theorem already discussed. In such cases one deduces that $\Sha^2(k,\Z/8\Z)$ is non-trivial. However $\H^2(k,\Z/8\Z)$ classifies $\Z/8\Z$-gerbes, and a non-trivial element of $\Sha^2(k,\Z/8\Z)$ exactly corresponds to a non-neutral $\Z/8\Z$-gerbe which is everywhere locally neutral, i.e.~fails the Hasse principle. But then \cite[Thm.~1.1]{PS22} can be interpreted as the statement that the only obstruction to the Hasse principle for gerbes with constant abelian stabilisers is the Brauer--Manin obstruction, so this failure can be explained by the Brauer--Manin obstruction.

\subsubsection{An explicit example}
The above shows existence; more challenging is to actually construct an explicit such gerbe and to give an explicit Brauer--Manin obstruction. We do this now; the following is more-or-less rephrasing an example of Conrad \cite[Ex.~2.1]{Con11}.

Let $k = \Q(\sqrt{7})$,  $L = k(\sqrt{15 + 4\sqrt{7}})$ and $\chi_L \in B\Z/2 \Z(k)$ the character defining $L/k$. Let $f: \mathcal{X} \to B\Z/16 \Z$ be the fibre product of $\chi_L: \Spec k \to B \Z/2 \Z$ and the map $B \Z/16 \Z \to B \Z/2 \Z$. The stack $\mathcal{X}$ is a $\Z/8 \Z$-gerbe over $k$ which becomes neutral over $L$, as can be checked after base change to $L$.
                                                                            
For any field $K/k$ the image of the map $f: \mathcal{X}[K] \to B \Z/16 \Z[K]$ consist of morphisms $\varphi: \Gamma_K \to \Z/16 \Z$ such that the composition $\Gamma_K \to \Z/16 \Z \to \Z/2 \Z$ is $(\varphi_L)_K: \Gamma_K \to \Gamma_k \to \Z/2 \Z$, i.e.~solutions to the embedding problem posed by $\chi_L$ and $\Z/16\Z \to \Z/2\Z$. (The map $ \mathcal{X}[K] \to B \Z/16 \Z[K]$ is actually injective, but we will not need this.)

Using this description Conrad has shown \cite[Ex.~2.1]{Con11} (in our language) that $\mathcal{X}(k) = \emptyset$ but $\mathcal{X}(k_v) \neq \emptyset$ for all places $v$ of $k$,~i.e.~$\mathcal{X}$ is a gerbe which fails the Hasse principle. We will now show that this is due to a Brauer-Manin obstruction.

\begin{theorem}
	The gerbe $\mathcal{X}$ has a Brauer-Manin obstruction to the Hasse principle.
\end{theorem}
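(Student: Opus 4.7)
The plan is to construct an explicit Brauer element by pulling back a cup-product class from $B\Z/16\Z$, mimicking the strategy of Theorem~\ref{thm:Grunwald-Wang}. The short exact sequence $0 \to \Z/8\Z \to \Z/16\Z \to \Z/2\Z \to 0$ induces a connecting homomorphism $\delta \colon \H^1(k, \Z/2\Z) \to \H^2(k, \Z/8\Z)$, and unwinding the fibre-product construction shows that $\delta(\chi_L) = [\mathcal{X}]$. Since $\mathcal{X}$ is everywhere locally neutral but not globally (Conrad's observation), we have $[\mathcal{X}] \in \Sha^2(k, \Z/8\Z) \setminus \{0\}$, and Poitou--Tate duality produces $\beta \in \Sha^1(k, \mu_8) \subset k^\times/k^{\times 8}$ pairing non-trivially with $[\mathcal{X}]$; a natural candidate is the Grunwald--Wang counter-example $15 + 4\sqrt{7}$ underlying Conrad's construction.

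Let $\widetilde{\beta} \in \H^1(k, \mu_{16})$ denote the image of $\beta$ under $\mu_8 \hookrightarrow \mu_{16}$, and let $[\pi] \in \H^1(B\Z/16\Z, \Z/16\Z)$ be the universal torsor class. Define
$$b := f^*\bigl([\pi] \cup \widetilde{\beta}\bigr) \in \Br \mathcal{X}.$$
For any adelic point $(x_v) \in \mathcal{X}(\Adele_k)$, with corresponding family $(\chi_v)$ of local lifts of $\chi_L|_{k_v}$ to $\H^1(k_v, \Z/16\Z)$, the Brauer--Manin pairing takes the form
$$\langle b, (x_v) \rangle_{\mathrm{BM}} = \sum_v \inv_v(\chi_v \cup \widetilde{\beta}_v) = \sum_v (\chi_v, \widetilde{\beta})_v.$$

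I would then check that this pairing is constant on $\mathcal{X}(\Adele_k)$. Any two families $(\chi_v)$, $(\chi'_v)$ of local lifts differ by $(\eta_v) \in \prod_v \H^1(k_v, \Z/8\Z)$, embedded in $\H^1(k_v, \Z/16\Z)$ via multiplication by $2$. The difference of the two pairings is then $\sum_v (\eta_v, \beta)_v$, which vanishes term by term because $\beta_v = 0$ for every $v$ by construction of $\beta$. It remains to show the common value is non-zero. This is the heart of the matter: a diagram chase identifies $\sum_v(\chi_v, \widetilde{\beta})_v$ with the Poitou--Tate pairing $\langle [\mathcal{X}], \beta \rangle$. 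Heuristically, if a global lift $\chi$ of $\chi_L$ existed then reciprocity would force the local sum to vanish, and the failure of lifting is measured precisely by $\delta(\chi_L) = [\mathcal{X}]$ paired against $\beta$, which is non-zero by our choice of $\beta$.

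The main obstacle is this final compatibility of the local cup-product sum with the Poitou--Tate pairing; it is an instance of the standard compatibility between connecting homomorphisms and cup products in the arithmetic duality formalism, but requires some care to unwind. Alternatively, one can circumvent this abstract identification by computing $(15 + 4\sqrt{7}, \chi_v)_v$ place-by-place for a concrete family $(\chi_v) \in \mathcal{X}(\Adele_k)$, using the independence established above to conclude that the computation at the finitely many places where $\chi_v$ ramifies already determines the value of the pairing, and verifying directly that this value is non-zero in $\Q/\Z$.
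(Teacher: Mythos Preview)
Your strategy of pulling back a cup product from $B\Z/16\Z$ matches the paper's, but the construction of $\widetilde{\beta}$ via the inclusion $\mu_8\hookrightarrow\mu_{16}$ is fatal. Since $\beta\in\Sha^1(k,\mu_8)$ means $\beta_v=0$ for every $v$, functoriality forces $\widetilde{\beta}_v=0$ in $\H^1(k_v,\mu_{16})$ as well, so $b(x_v)=\chi_v\cup\widetilde{\beta}_v=0$ identically and the Brauer--Manin sum vanishes. Your constancy argument is correct but only confirms that the constant value is zero, directly contradicting the claimed identification with the (non-zero) Poitou--Tate pairing. (Separately, your candidate $15+4\sqrt{7}$ is not in $\Sha^1(k,\mu_8)$ anyway: it is not even a square at the place $w\mid 2$, since $L_w/k_w$ is a nontrivial quadratic extension.)

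The paper instead works with $16\in k^\times/k^{\times 16}=\H^1(k,\mu_{16})$ directly. This element is \emph{not} a $16$th power at $w$, so it is locally nontrivial there; but it \emph{is} everywhere locally an $8$th power, $16=\alpha_v^8$ with $\alpha_v\in\{\sqrt{2},\sqrt{-2},1+\sqrt{-1}\}$. The key identity
\[
\varphi_v\cup 16=(8\varphi_v)\cup\alpha_v=\chi_L\cup\alpha_v
\]
then simultaneously establishes independence of the local lift $\varphi_v$ and reduces the computation to the global character $\chi_L$ cupped with an explicit local square-class; a direct place-by-place check shows the unique non-zero contribution is $\tfrac12$ at $w$. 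In your framework, the correct move is to \emph{lift} an element of $\Sha^1(k,\mu_8)$ along the surjection $k^\times/k^{\times 16}\to k^\times/k^{\times 8}$, not to push it forward along the inclusion $\mu_8\hookrightarrow\mu_{16}$.
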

\begin{proof}
	We use the following facts from Conrad \cite[Ex.~2.1]{Con11}.
	Firstly he shows using local class field theory that
	$\chi_L$, viewed as a character with values in $\Q/\Z$,
	is everywhere locally an $8$th power, thus can be lifted everywhere locally to a 
	$\Z/16\Z$-character. This implies that $\mathcal{X}(k_v) \neq \emptyset$ for all $v$.
	Secondly, the character $\chi_L$ is only ramified at the primes of $k$ dividing $113$ and there is a unique place $w$ of $k$ which divides $2$ at which $k_w \cong \Q_2(\sqrt{-1})$ and $L \otimes_k k_w \cong \Q_2(\sqrt{-1}, \sqrt{5})$. 
	
	Let now $\beta \in \Br_{e} B\Z/16 \Z \cong \H^1(k, \mu_{16})$ be the element corresponding to $16 \in k^{\times}/k^{\times 16}$. We will show that $f^*\beta$ induces a Brauer-Manin obstruction to the Hasse principle on $\mathcal{X}$.                                                         
	Let $(x_v)_{v} \in \prod_{v \in \Omega_v} \mathcal{X}[k_v]$ and write $\varphi_v := f(x_v) \in B \Z/16 \Z[k_v]$. By Lemma~\ref{lem:cup_products} we have the equality 
	\[
	f^*(\beta)(x_v) = \beta(\varphi_v) = \varphi_v \cup 16
	\]                                               
	where the relevant cup product is $\H^1(k_v, \Z/16 \Z) \times \H^1(k_v, \mu_{16}) \to \Br k_v$.
	
	We can now compute the local invariants.
	\begin{enumerate}                
		\item If $v$ is an infinite place then $16 = 1 \in \H^1(k_v, \mu_{16})$ so $\text{inv}_v(f^*(\beta)(x_v)) = 0$.                                 
		\item For $v \mid 113$ we check that $4^{16} \equiv 16 \pmod{113}$. Hensel's lemma then implies that $16 \in \Q_{113}^{\times 16}$. Thus $\text{inv}_v(f^*(\beta)(x_v) = 0$.
		\item If $v \nmid 113$ then $16 = \alpha_v^8$ where $\alpha_v \in \{\sqrt{2}, \sqrt{-2}, 1 + \sqrt{-1}\}$. We have $\varphi_v = f(x_v)$ so $8 \varphi_v = \chi_L \in \H^1(k, \Z/2 \Z)$. We then have $\varphi_v \cup 16 = (8\varphi_v) \cup \alpha_v = \chi_L \cup \alpha_v$.
		The  character $\chi_L$ is unramified since $v \nmid 113$.
		\begin{itemize}
			\item If $v$ is coprime to $2$ then $\alpha_v \in \mathcal{O}_v^{\times}$ which implies that $\chi_L \cup \alpha_v = 0$ and thus that $\text{inv}_v(f^*(\beta)(x_v)) = 0$.
			\item If $v = w$ then $\alpha_w = 1 + \sqrt{-1}$. We then have $\text{inv}_w(f^*(\beta)(x_w)) = \text{inv}_w(\chi_L \cup \alpha_w) = (5, 1 + \sqrt{-1})_w = \frac{1}{2}$.
		\end{itemize}                                                                        

	\end{enumerate}
	To summarize $\sum_v \text{inv}_v(f^*(\beta)(x_v)) = \text{inv}_w(f^*(\beta)(x_w)) = \frac{1}{2} \neq 0$. This means that $f^*(\beta)$ induces a Brauer-Manin obstruction to the Hasse principle. 
\end{proof}


\subsection{Brauer--Manin obstruction over global function fields}
In the preceding sections we have seen numerous examples of Brauer--Manin obstructions over number fields. We show now that surprisingly, Brauer--Manin obstructions are a lot rarer over global function fields. 

\begin{theorem} \label{thm:Brauer_function_field}
	Let $K \subset L$ be a field extension in which $K$ is algebraically closed.
	Let $G$ be a finite \'etale tame group scheme over $K$ 
	and $\mathcal{C} \subset \mathcal{C}_G^*$ Galois invariant which generates $G$.
	Then pull-back via $BG_{L} \to BG$ induces an isomorphism 
	$$\Br_{\mathcal{C}} BG/\Br K \cong  \Br_{\mathcal{C}} BG_{L}/ \Br L.$$
\end{theorem}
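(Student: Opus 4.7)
My plan is to reduce the statement to the orbifold Kummer exact sequence of Corollary \ref{cor:orbifold_Kummer}. Applied over both $K$ and $L$ and combined with the naturality of the construction under pull-back along $BG_L \to BG$, this will yield a commutative diagram with exact rows. By the five lemma it will then suffice to check that the natural maps $\PicOrb_{\mathcal{C}}(BG) \otimes \Q/\Z \to \PicOrb_{\mathcal{C}}(BG_L) \otimes \Q/\Z$ and $\H^{2,\orb}_{\mathcal{C}}(BG, \Q/\Z(1)) \to \H^{2,\orb}_{\mathcal{C}}(BG_L, \Q/\Z(1))$ are isomorphisms.

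The key input will be the observation that, since $K$ is algebraically closed in $L$, any embedding $K^{\sep} \hookrightarrow L^{\sep}$ extending $K \hookrightarrow L$ induces a \emph{surjection} $\Gamma_L \twoheadrightarrow \Gamma_K$. Indeed its image equals $\Gal(K^{\sep}/K^{\sep} \cap L)$, and $K^{\sep} \cap L = K$ since every element of $L$ separably algebraic over $K$ lies in $K$ by hypothesis. For the first vertical map I will then unpack Definition \ref{def:partial_orbifold_line_bundle} and observe that $\dual{G}(L) = \dual{G}(K^{\sep})^{\Gamma_L} = \dual{G}(K^{\sep})^{\Gamma_K} = \dual{G}(K)$, and that $\Gamma_L$-equivariance of weight functions on $\mathcal{C}$ coincides with $\Gamma_K$-equivariance. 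Both identifications follow directly from surjectivity of $\Gamma_L \to \Gamma_K$, so the natural map on $\PicOrb_{\mathcal{C}}(BG)$ is already an isomorphism before tensoring with $\Q/\Z$.

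For the second vertical map my plan is to invoke Proposition \ref{prop:hom_cohom}, which identifies $\H^{2,\orb}_{\mathcal{C}}(BG, \Q/\Z(1))$ with the group of Galois-equivariant homomorphisms $\H^{\mathcal{C}}_{2,\orb}(G, \widehat{\Z}) \to \Q/\Z(1)$. The universal marked central extension $\hat{U}(G,\mathcal{C})$ of Definition \ref{def:universal_marked_central_extension} is built from generators indexed by $G(-1)(K^{\sep})$ and by roots of unity in $K^{\sep}$; the chosen embedding $K^{\sep} \hookrightarrow L^{\sep}$ identifies these generators (and relations) with the analogous data over $L$, so the underlying abstract group is unchanged and the $\Gamma_L$-action factors through $\Gamma_K$. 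The same is true of the coefficients $\Q/\Z(1)$, since all roots of unity of order coprime to the characteristic already lie in $K^{\sep}$. Surjectivity of $\Gamma_L \to \Gamma_K$ then forces $\Gamma_L$-equivariant homomorphisms to agree with $\Gamma_K$-equivariant ones, giving the required isomorphism.

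The only thing that really needs care is the naturality of the Kummer-style construction in Theorem \ref{thm:orbifold_Kummer}: I would verify by direct inspection that the map $(\chi,w) \mapsto (G_{\chi,n},\mathcal{C}_{(\chi,w),n})$ is manifestly compatible with pull-back along $\Spec L \to \Spec K$, so that the diagram of Kummer sequences genuinely commutes. The conceptual hard part is really Step 1 — recognising that the partially unramified Brauer group, which is defined by a geometric condition, fits into a short exact sequence whose outer terms are visibly insensitive to extending $K$ to a field in which $K$ is algebraically closed; once this reduction is in place the rest of the proof is a routine unwinding of definitions using the surjectivity of $\Gamma_L \twoheadrightarrow \Gamma_K$.
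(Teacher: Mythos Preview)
Your proposal is correct and follows essentially the same strategy as the paper's proof: both reduce to Corollary~\ref{cor:orbifold_Kummer} and then verify that the two outer terms of the orbifold Kummer sequence are unchanged under base change from $K$ to $L$. The only cosmetic difference is that you phrase the key input as surjectivity of $\Gamma_L \twoheadrightarrow \Gamma_K$ and handle $\H^{2,\orb}_{\mathcal{C}}$ via Proposition~\ref{prop:hom_cohom} and the universal marked central extension, whereas the paper argues directly with individual marked central extensions (showing that the marking, together with $\mu_n$, generates $E$ and hence forces the Galois action to descend); these are equivalent unpackings of the same idea.
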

\begin{proof}
	By the orbifold Kummer sequence (Corollary \ref{cor:orbifold_Kummer}), it suffices to show
	that the maps
	$$\PicOrb_{\mathcal{C}}(BG) \to \PicOrb_{\mathcal{C}}(BG_L),
	\quad \H^{2, \orb}_{\mathcal{C}}(BG, \mu_n) \to 
	\H^{2, \orb}_{\mathcal{C}}(BG_L, \mu_n)$$
	are isomorphisms for all tame $n$.
	The Galois action on $G_L(-1)$ arises exactly from the field extension of $K$
	which gives the Galois action on $G(-1)$.
	Thus the isomorphism of orbifold Picard groups follows immediately from Definition
	\ref{def:partial_orbifold_line_bundle}. For the orbifold cohomology,
	we use the definition in terms of marked central extensions 
	(Definition \ref{def:marked_central_extension}). Injectivity is clear.
	For surjectivity let $1 \to \mu_n \to E \to G_K \to 1$ be
	the corresponding central extension with marking $\mathcal{E}$.
	As $\mathcal{E} \to \mathcal{C}$ is a Galois equivariant
	bijection, it follows that the Galois action
	on $\mathcal{E}$ is uniquely determined by the Galois action of $\mathcal{C}$,
	thus factors uniquely through a finite extension of $K$.
	As $\mathcal{C}$ generates $G$ it follows that $\mathcal{E}$ and $\mu_n$ 
	together generate $E$.
	Thus the Galois action on $E$ factors through a finite 
	field extension of $K$, hence $E$ is actually defined over $K$, as required.
\end{proof}

The map $\Br BG /\Br K \to \Br BG_{L}/ \Br L$ need not be an isomorphism in general (even for $G = \Z/2\Z$). This gives further evidence for the naturality of the (partially) unramified Brauer group. Note that there is no reason to expect an analogue of Theorem \ref{thm:Brauer_function_field} for non-constant group schemes in general. Non-constant $G$ are relevant in Malle's conjecture when counting with unbalanced heights (see Conjecture \ref{conj:non_balanced}). From our results one obtains the following significant strengthening of Theorem \ref{thm:Harari_formal_partially_unramified} for constant $G$.

\begin{corollary} \label{cor:Brauer_function_fields}
	Let $k$ be a global function field and $G$ a finite group of order coprime to the characteristic
	of $k$. Let $\mathcal{C} \subset \mathcal{C}_G^*$ be Galois invariant
	and generate $G$.
	Then every element of $\Br_{\mathcal{C}} BG$ has constant evaluation on
	$BG(\O_v)_\mathcal{C}$ for all places $v$. In particular
	$\prod_vBG(k_v) = (\prod_{v}BG(k_v))^{\Brun BG}$, i.e.~there is no unramified
	Brauer--Manin obstruction.
\end{corollary}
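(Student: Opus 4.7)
The plan is to combine Theorem \ref{thm:Brauer_function_field} with Theorem \ref{thm:Harari_formal_partially_unramified} and global reciprocity. The key observation is that for a global function field $k$ with constant field $\F_q$, the field $\F_q$ is algebraically closed in $k$, and since $G$ is a constant finite group it is already defined over $\F_q$. Thus Theorem \ref{thm:Brauer_function_field} applies with $K = \F_q$ and $L = k$, yielding an isomorphism $\Br_{\mathcal{C}} BG_{\F_q}/\Br \F_q \cong \Br_{\mathcal{C}} BG/\Br k$. Since $\Br \F_q = 0$, every $b \in \Br_{\mathcal{C}} BG$ can be written as $b = \pi^* b_0 + c$, where $b_0 \in \Br_{\mathcal{C}} BG_{\F_q}$ is pulled back along $BG \to BG_{\F_q}$ and $c \in \Br k$.

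The crucial point is now that $b_0$ automatically extends to every integral model. Indeed, for each place $v$ of $k$ we have the structure map $B\mathcal{G}_v = BG_{\O_v} \to BG_{\F_q}$, and the pullback $\tilde b_0$ lies in $\Br B\mathcal{G}_v$. Consequently the pullback of $\pi^* b_0$ to $BG_{k_v}$ extends over $\Spec \O_v$, so the hypothesis of the more precise part of Theorem \ref{thm:Harari_formal_partially_unramified} is met at \emph{every} place, not merely cofinitely many. Applying that theorem gives $(\pi^* b_0)(\varphi_v) = 0$ for all $\varphi_v \in BG(\O_v)_\mathcal{C}$ and all places $v$. Therefore
\[
b(\varphi_v) \;=\; (\pi^* b_0)(\varphi_v) + c|_{k_v} \;=\; c|_{k_v},
\]
which depends only on $v$ and not on $\varphi_v$; this proves the first assertion.

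For the second assertion, take $\mathcal{C} = \mathcal{C}_G^*$ and note that $BG(\O_v)_{\mathcal{C}_G^*} = BG(k_v)$ because every ramification type lies in $\mathcal{C}_G = \mathcal{C}_G^* \cup \{e\}$. For any $b \in \Brun BG$ and any $(\varphi_v) \in \prod_v BG(k_v)$, the unramified Brauer--Manin pairing equals $\sum_v \inv_v(c|_{k_v}) = 0$ by global class field theory, since $c \in \Br k$. Hence $\prod_v BG(k_v) = \bigl(\prod_v BG(k_v)\bigr)^{\Brun BG}$.

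The only potentially subtle point is the extension of $b_0$ over the integral model at every place; this is automatic only because $G$ is \emph{constant}, allowing a global product model $\mathcal{G}_v = G \times_{\F_q} \O_v$ and a retraction $B\mathcal{G}_v \to BG_{\F_q}$. Without this (e.g.~if $G$ were only a finite \'etale $k$-group scheme), one would have to contend with bad places where $G$ does not extend, which is why the corollary requires $G$ to be a constant group.
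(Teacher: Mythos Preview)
Your proof is correct and follows essentially the same approach as the paper's: descend $b$ (up to a constant from $\Br k$) to $\Br_{\mathcal{C}} BG_{\F_q}$ via Theorem~\ref{thm:Brauer_function_field}, observe that the resulting class extends over the integral model at every place, and apply the precise form of Theorem~\ref{thm:Harari_formal_partially_unramified}. The only cosmetic difference is that the paper normalises so that $b(e)=0$ and then cites Lemma~\ref{lem:central_gerbe} to produce the integral extension, whereas you obtain it directly from the structure map $B\mathcal{G}_v \to BG_{\F_q}$; both arguments are equivalent here.
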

\begin{proof}
	Let $\kappa$ be the field of constants of 
	$k$ and $b \in \Br_{\mathcal{C}} BG$.
	Translating by an element of $\Br k$, we may assume that 
	$b$ evaluates trivially at the identity cocycle.
	By Theorem~\ref{thm:Brauer_function_field} this element arises from
	base change from $\kappa$. Moreover by Lemma~\ref{lem:central_gerbe} 
	it  has everywhere good reduction. 
	Thus by Theorem~\ref{thm:Harari_formal_partially_unramified}
	the element $b$ evaluates trivially on $BG(\O_v)_{\mathcal{C}}$ for all $v$.
	The result follows.
\end{proof}

The reason why Corollary \ref{cor:Brauer_function_fields} holds over function fields, but not number fields, is that over function fields $BG$ has everywhere good tame reduction. We believe that Corollary \ref{cor:Brauer_function_fields} corroborates the observation that the Brauer--Manin obstruction does not appear in the literature on field counting problems over function fields, where often the leading constant is given by a single Euler product. It also highlights the limitations of using heuristics and results over global function fields to give predictions over number fields.

\section{Heights and Tamagawa measures} \label{sec:heights_Tamagawa}
\subsection{Heights}  \label{sec:heights}
We now prepare for considering Malle's conjecture. We first define heights on $BG$. Our definition is inspired by the papers \cite{DYTor,DYBM}, as well as the ``$f$-discriminant'' of Ellenberg and Venkatesh \cite[\S4.2]{EV05} and Wood's ``counting function'' \cite[\S2.1]{Woo10}. Let $G$ be a finite \'etale tame group scheme over a global field $k$.

Let $L=(\chi,w)$ be an orbifold line bundle (Definition \ref{def:orbifold_line_bundle_over_k}). A \textit{local height function} at a place $v$ is simply a map 
$$H_v: BG(k_v) \to \R_{> 0},$$
which is constant on isomorphism classes of objects. An \emph{adelic height} on $L$ is a collection $H=(H_v)_v$ of local heights for all places $v$ of $k$ such that for all but finitely many tame places $v$ we have
\begin{equation} \label{eqn:tame_height}
H_v(\varphi_v) = q_v^{w(\rho_{G,v}(\varphi_v))} \quad \text{ for all } \varphi_v \in BG(k_v),
\end{equation}
where $\rho_{G,v}$ denotes the ramification type at $v$ from \S \ref{sec:ramification_type} and $w$ the weight function of $L$. The height of $\varphi \in BG(k)$ is then defined to be $H(\varphi) := \prod_v H_v(\varphi)$, where we consider the image of $\varphi$ under the (possibly non-essentially injective) map $BG(k) \to \prod_v BG(k_v)$. For simplicity we often call an adelic height simply a height. 

\begin{definition} \label{def:balanced_height}
We call $H$ \emph{balanced} if $L$ is balanced in the sense of Definition \ref{def:rigid}, i.e.~if the minimal weight conjugacy classes $\mathcal{M}(H)$ generate $G$.
\end{definition}

\subsubsection{Examples}

\begin{example}[The discriminant] \label{ex:disc}
	Let $G \subseteq S_n$ be a transitive subgroup. We take the orbifold line bundle
	$\Delta = (1,\ind)$ where 
	$$\ind:  G \to \Z, \quad g \mapsto n - \#\{\text{orbits under the action of } g\}.$$
	The absolute value of the norm of the relative
	discriminant of the associated degree $n$ extension (see Lemma \ref{lem:S_n})
	is a height function corresponding to $\Delta$ 
	(see \cite[\S7]{Mal02}).
\end{example}

\begin{example}[The radical discriminant] \label{ex:rad_disc}
	We take the orbifold line bundle $-K_{BG}^{\text{orb}} := (1, \mathbf{1})$
	where $1$ denotes the trivial character and $\mathbf{1}$ denotes the constant function $\mathcal{C}^*_G \to \{e\}$ (see Definition \ref{def:canonical_class}).
	A choice of associated height is given by the local heights 
	$$H_v(\varphi_v) = \begin{cases}
	1,&  \text{if $\varphi_v$ is unramified}, \\
	q_v,& \text{if $\varphi_v$ is ramified}.
	\end{cases}$$ The corresponding orbifold anticanonical height is thus the 
	radical discriminant.
\end{example}

\begin{remark}
From Lemma \ref{lem:Picorb_torsion_free} the weight function $w$ determines the character $\chi$, hence it is not suprising that $\chi$ does not appear in the definition of the height. It implicitly plays a role as $w$ satisfies some compatibility with $\chi$ via the age pairing (Definition~\ref{def:orbifold_line_bundle}). In particular, for non-trivial $\chi$ the height function can take non-integer values. This happens for example when $G = S_n$ and $L=(\mathrm{sign}, \ind/2)$, with corresponding height function the square root of the absolute value of the norm of the discriminant. This is seen to be an orbifold line bundle using the relation $\mathrm{sign}(g) = (-1)^{\ind(g)}$ for all $g \in S_n$.

Line bundles play a larger role in the theory of heights in \cite{DYBM} for general stacks. It is important moreover to keep track of $\chi$ to make sure that one has the correct integral structure on the orbifold Picard group. This is relevant to the effective cone constant and  the algebraic Brauer group (see Theorem \ref{thm:Br_BG}).
\end{remark}

\subsubsection{$\hat{\Z}^\times$-invariance}
Assume that $L$ is $\hat{\Z}^\times$-invariant in the sense of Definition \ref{def:special}. Then by Remark \ref{rem:special} we may canonically view $w$ as a function on $G(k^{\sep})$ and not just on $G(-1)(k^{\sep})$. This is the case for example for the  discriminant and radical discriminant above.

Here the formula for the height is slightly more explicit, in terms of a generator of tame inertia. Choose a suitably large finite field extension $K/k$ such that $\varphi$ factors through $\Gal(K/k)$ and $K$ contains the $\mu_{|G|}$th roots of unity.

\begin{lemma} \label{lem:special_height}
	Assume that $L$ is $\hat{\Z}^\times$-invariant.
	Let $v$ be a tame place for $G$ and $w$ a place of $K$ above
	$v$. Let $\sigma_v$ be any choice of generator for the inertia subgroup of $w$ over $v$. Then
	$$w(\rho_{G,v}(\varphi_v)) = w(\varphi_v(\sigma_v)).$$
\end{lemma}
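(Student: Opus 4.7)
The plan is to unwind the construction of $\rho_{G,v}(\varphi_v)$ and compare it directly with $\varphi_v(\sigma_v)$. First I would recall from \S\ref{sec:ramification_type}, specifically \eqref{def:inertia}, that $\rho_{G,v}(\varphi_v)$ is the conjugacy class in $\mathcal{C}_G$ of the homomorphism $\gamma: \mu_e \to G(k^{\sep})$ obtained by composing the canonical isomorphism $\mu_e \xrightarrow{\sim} I_{K/k}$, $\zeta \mapsto \sigma_\zeta$, of \eqref{eqn:tame_inertia_finite} with the inclusion $I_{K/k} \hookrightarrow \Gal(K/k)$ and with $\varphi_v$. Because $\sigma_v$ is by hypothesis a generator of the inertia subgroup at the place $w$, the same isomorphism \eqref{eqn:tame_inertia_finite} writes $\sigma_v = \sigma_\zeta$ for some primitive $e$-th root of unity $\zeta \in \mu_e$; hence $\varphi_v(\sigma_v) = \gamma(\zeta) \in G(k^{\sep})$.

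The key step is then to invoke the hypothesis that $L$ is $\hat{\Z}^\times$-invariant. By Remark \ref{rem:special}, this means that $w$, a priori a function on $G(-1)(k^{\sep})$, descends canonically to a conjugation-invariant function $w: G(k^{\sep}) \to \Q$ whose value at $\gamma \in G(-1)(k^{\sep})$ is computed by $w(\gamma(\zeta'))$ for any primitive root of unity $\zeta'$, independently of that choice. Evaluating at our $\zeta$ yields
\[
w(\rho_{G,v}(\varphi_v)) \;=\; w(\gamma) \;=\; w(\gamma(\zeta)) \;=\; w(\varphi_v(\sigma_v)),
\]
which is the desired equality.

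The only point that needs some care is that Remark \ref{rem:special} phrases the canonical descent via a primitive $\exp(G)$-th root of unity, while in the lemma $\zeta$ is only a primitive $e$-th root, with $e \mid |G|$ possibly strict. This is not a serious obstacle: any two primitive $e$-th roots of $\mu_e$ differ by a power $k \in (\Z/e\Z)^\times$, and the Chinese Remainder Theorem allows us to lift $k$ to an integer coprime to $|G|$. Then Definition \ref{def:special} gives $w(\gamma) = w(\gamma^k)$, which shows that $w(\gamma(\zeta))$ is indeed independent of the choice of primitive $e$-th root $\zeta$ and matches the canonical value from Remark \ref{rem:special}. This is essentially the only verification needed; the rest of the argument is a direct unwinding of definitions.
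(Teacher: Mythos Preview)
Your proposal is correct and takes essentially the same approach as the paper. The paper's proof is a one-liner observing that the $\hat{\Z}^\times$-invariance assumption makes the value $w(\varphi_v(\sigma_v))$ independent of the choice of primitive root of unity in \eqref{eqn:tame_inertia_finite} and Lemma~\ref{lem:Galois_action_on_G(-1)}; you have simply unpacked this, including the minor point about lifting from $(\Z/e\Z)^\times$ to integers coprime to $|G|$.
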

\begin{proof}
	Our assumptions imply that $w(\sigma_v)$ is independent of the corresponding
	choice of primitive root of unity
	in \eqref{eqn:tame_inertia_finite} and Lemma \ref{lem:Galois_action_on_G(-1)}.
\end{proof}

We give an example of a non-$\hat{\Z}^\times$-invariant height for completeness.

\begin{example}
	Let $k = \Q(\omega)$ where $\omega = e^{2\pi i /3}$ and $G = \Z/3\Z$. 
	We take the orbifold line bundle $(1,w)$ with weight function
	$$w: \Z/3\Z(-1) \to \Q, \quad \gamma \mapsto \gamma(\omega)$$
	where we view $\gamma(\omega)$ as an element of $\{0,1,2\}$.
	We let $H$ be an associated adelic height whose local heights
	take the constant value $1$ for $v \mid 3 \infty$.
	Let $\varphi: \Gamma_k \to \Z/3\Z$. Then the description in 
	\S\ref{sec:ramification_type} shows that
	$$H(\varphi) = \prod_{\substack{v \nmid 3\infty \\ \varphi_v \textrm{ ramified}}} q_v^{w(\sigma_{\omega,\varphi_v})}$$
	where $\sigma_{\omega,\varphi_v}$ denotes the element of 
	the inertia group of $\varphi_v$ which multiplies a uniformiser
	by $\omega$. 
	
	This height genuinely depends on the choice
	of generator for tame inertia and hence not only
	on the inertia group as a subgroup of the Galois group.
\end{example}

\subsubsection{Pull-back heights} \label{sec:pull_back_heights}
One can pull-back heights. Let $f:G_1 \to G_2$ be a morphism of finite \'etale tame group schemes over $k$ and $H$ an adelic height on $G_2$ with associated orbifold line bundle $L=(\chi,w)$. Then the map
$$f^*H_v: BG_1(k_v) \to \R_{>0}, \quad \varphi_v \mapsto H_v(f \circ \varphi_v),$$
is easily checked to determine an adelic height $f^*H$ on $BG_1$ with weight function
$$f^*w: \mathcal{C}_{G_1} \to \Q, \quad c \mapsto f(w(c)).$$
In particular, given a cocycle $\psi \in Z^1(k,G)$ and a normal subgroup scheme $N \subset G_\psi$, by Lemma \ref{lem:inner_twist_BG} one can pull-back an adelic height on $BG$ via $BN \to BG_\psi=BG$.

\subsection{Local Tamagawa measures}\label{sec:local_Tamagawa}
Let $H$ be a height associated to a big orbifold line bundle $L$. 
In \cite[\S 2.2.1]{Pey95}, Peyre defines a Tamagawa measure on $X(k_v)$ for a Fano variety $X$ by taking an atlas of $X(k_v)$ and gluing measures locally. He introduces a factor coming from a $v$-adic metric to ensure that these measures glue. Measures for other adjoint rigid line bundles are defined in \cite[\S 3.3]{BT98}.

In our case of $BG$, the natural stack-theoretic atlas is simply a point $\Spec k$. Obviously this one atlas does not cover all of $BG(k_v)$, in fact it only gives rise the identity cocycle. Therefore  we need to take multiple atlases to cover all $k_v$-points;  one atlas for each $k_v$-point. The conclusion is that our Tamagawa measure should simply be a sum over all elements of $BG[k_v]$ weighted by the metric corresponding to our height function. Moreover we should naturally use the groupoid cardinality which weights each element by the inverse of its automorphism group. These considerations lead us to the following definition. (This generalises Kedlaya's ``total mass'' from \cite[Def~2.2, (2.3.1)]{Ked07}; note also that the papers \cite{DYTor,DYBM} do not define Tamagawa measures.)

\begin{definition} \label{def:local_Tamagawa_measure}
	Let $v$ be a place of $k$ and $W_v \subseteq BG[k_v]$. We define the \textit{Tamagawa measure}
	$\tau_{H,v}$ associated to our choice of adelic height to be
	$$\tau_{H,v}(W_v) = \sum_{\varphi_v \in [W_v]} \frac{1}{|\Aut(\varphi_v)| H_v(\varphi_v)^{a(L)}}.$$
\end{definition}
The fact that $G$ is tame implies that this sum is finite. This determines a well-defined measure on the set $BG[k_v]$ of isomorphism classes of $k_v$-points of $BG$.

Lemma \ref{lem:groupoid_count} allows one to rewrite this in terms of a count over $1$-cocycles with a different weighting. Namely if $F: Z^1(k_v,G) \to BG(k_v)$ denotes the natural map then
\begin{equation} \label{eqn:tau_alternative}
	\tau_{H,v}(W_v) = \frac{1}{|G|}\sum_{\varphi_v \in F^{-1}(W_v)} \frac{1}{H_v(F(\varphi_v))^{a(L)}}.
\end{equation}

\subsection{Mass formula}

We now prepare for our mass formula (Theorem \ref{thm:mass_formula_intro} from the introduction). The following will be crucial.

\begin{lemma} \label{lem:gerbe_finite_field}
	Let $\mathcal{X}$ be a proper \'etale gerbe over $\F_q$. 
	Then $\#\mathcal{X}(\F_q) = 1$. In particular 
	$\mathcal{X}(\F_q) \neq \emptyset$.
\end{lemma}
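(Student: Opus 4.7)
The plan is first to show $\mathcal{X}(\F_q) \neq \emptyset$ in order to identify $\mathcal{X}$ with $BG$ for some finite \'etale group scheme $G$ over $\F_q$, and then to compute the groupoid cardinality by an orbit-stabiliser argument on twisted Frobenius-conjugation.

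First I would establish non-emptiness. After base change to $\overline{\F_q}$, a proper \'etale gerbe is of the form $BG_{\overline{\F_q}}$ for some finite group $G$, so $\mathcal{X}$ is a gerbe banded by a finite \'etale group over $\F_q$. Since $\Gamma_{\F_q} = \widehat{\Z}$ has cohomological dimension $1$, every such gerbe is neutral: the obstruction class vanishes in the relevant (possibly non-abelian) $H^2$. Given an $\F_q$-point $x_0 \in \mathcal{X}(\F_q)$, set $G := \underline{\Aut}(x_0)$, which is a finite \'etale group scheme over $\F_q$. Then $\mathcal{X} \cong BG$ by \cite[\href{https://stacks.math.columbia.edu/tag/06QG}{Tag 06QG}]{stacks-project}.

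Next I would apply the cocycle description of $BG(\F_q)$ from Lemma~\ref{lem:BG(k)}(3). Since $\Gamma_{\F_q}$ is topologically generated by the Frobenius $F$, a $1$-cocycle is determined by its value $g := \varphi(F) \in G(\overline{\F_q})$, and every such $g$ arises. Two cocycles given by $g, g'$ are isomorphic iff $g' = h g F(h)^{-1}$ for some $h \in G(\overline{\F_q})$, i.e.~they lie in the same orbit under the twisted conjugation action of $G(\overline{\F_q})$ on itself. By Lemma~\ref{lem:automorphism_group_is_inner_twist}(1), the automorphism group of the cocycle corresponding to $g$ is $G_{\varphi}(\F_q)$, which is precisely the stabiliser of $g$ under this action.

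The orbit--stabiliser theorem applied to the twisted conjugation action then gives
\[
|G(\overline{\F_q})| = \sum_{[\varphi] \in \H^1(\F_q, G)} \frac{|G(\overline{\F_q})|}{|G_{\varphi}(\F_q)|},
\]
and dividing by $|G(\overline{\F_q})|$ yields $\#BG(\F_q) = \sum_{[\varphi]} \frac{1}{|G_{\varphi}(\F_q)|} = 1$. The main obstacle is the non-emptiness step, but once this is in hand the counting is a clean consequence of orbit-stabiliser. The final assertion $\mathcal{X}(\F_q) \neq \emptyset$ is then automatic since $\#\mathcal{X}(\F_q) = 1$, but it is in fact used in the proof to identify $\mathcal{X}$ with $BG$ in the first place.
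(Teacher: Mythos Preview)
Your argument is correct and takes a genuinely different route from the paper. The paper applies Behrend's Grothendieck--Lefschetz trace formula for stacks \cite[Thm.~2.5.2]{Beh93}: over $\bar{\F}_q$ one has $\mathcal{X}_{\bar{\F}_q} \cong BG$ for a finite group $G$, and $\H^n(BG_{\bar{\F}_q},\Q_\ell) = \H^n(G,\Q_\ell)$ vanishes for $n>0$ (being torsion of order dividing $|G|$, hence zero for $\ell \nmid |G|$). Only $\H^0$ contributes, with trivial Frobenius action, giving $\#\mathcal{X}(\F_q)=1$ directly; non-emptiness then falls out as a corollary rather than being needed as input.

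Your approach instead front-loads the non-emptiness via $\mathrm{cd}(\F_q)=1$, reduces to $BG$, and then runs an orbit--stabiliser count on twisted Frobenius conjugation (equivalently, invokes Lemma~\ref{lem:groupoid_count} together with the bijection $Z^1(\F_q,G) \cong G(\bar{\F}_q)$ given by evaluation at Frobenius). This is more elementary, avoiding the trace formula entirely, and is essentially the classical Lang--Weil-style argument for $\#BG(\F_q)=1$. The paper's route is heavier but more uniform: it does not need to establish neutrality separately, and the same trace-formula machinery underlies the later exponential sum computations (Lemma~\ref{lem:exponential_sum_gerbe}).
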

\begin{proof}
	We apply the version of the Grothendieck--Lefschetz trace formula for stacks 
	from \cite[Thm.~2.5.2]{Beh93}. This says that
	$$\#\mathcal{X}(\F_q) = 
	\sum_{n = 1}^\infty (-1)^n\mathrm{Tr}(\Frob_q | \H^n( \mathcal{X}_{{\bar \F}_q}, \Q_\ell))$$
	where $\Frob_q$ denotes the arithmetic Frobenius and $\ell$ is a sufficiently large prime. However over $\bar{\F}_q$ the gerbe $\mathcal{X}$
	becomes neutral, thus there exists a finite group $G$ such that $\mathcal{X}_{\bar\F_q} \cong
	BG_{\bar \F_q}$. But then
	$$\H^n( \mathcal{X}_{{\bar \F}_q}, \Q_\ell) = \H^n( BG_{\bar \F_q}, \Q_\ell) = \H^n( G, \Q_\ell), \quad n \geq 0,$$
	as vector spaces,
	where the latter cohomology is group cohomology. For $n > 0$ the group cohomology
	$\H^n( G, \Q_\ell)$ is torsion of order dividing $|G|$, but $\Q_{\ell}$ is torsion-free so
	this cohomology group is trivial providing $\ell \nmid |G|$.
	Hence only $\H^0( \mathcal{X}_{{\bar \F}_q}, \Q_\ell)$ contributes,
	and here $\Frob_q$ acts trivially, thus the trace is $1$, as required.
\end{proof}

We say that $v$ is a \emph{good place} for $G$ and $H$ if $G$ has good reduction as a group scheme, if $v$ is tame for $G$ (i.e~$\gcd(q_v,|G|) = 1$), and $v$ is tame for $H_v$ in the sense that the formula \eqref{eqn:tame_height} holds.

In this case $G$ has a unique finite smooth group scheme model $\mathcal{G}_v$ over $\O_v$; by abuse of notation we denote by $BG(\O_v) := B\mathcal{G}_v(\O_v)$. 
If $G$ is constant, then $BG(\O_v)$ is equivalent to the groupoid of homomorphisms $\Gamma_{\F_v} \to G$. Warning: $BG(\O_v) \neq BG(k_v)$ in general despite $BG$ being smooth and proper at $v$. This is because the valuative criterion for properness for stacks \cite[Tag 0CLK]{stacks-project} only guarantees a lift after a possibly ramified base-change; our replacement for this is Theorem \ref{thm:Hensel}.

\begin{lemma} \label{lem:tau_BG(O_v)}
	Let $v$ be good place for $G$ and $H$. Then the natural map
	$BG(\O_v) \to BG(\F_v)$ is an equivalence of categories and 
	$\tau_{H,v}(BG(\O_v)) = \#BG(\F_v) = 1.$
\end{lemma}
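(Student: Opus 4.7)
The plan is to prove the three statements in succession, using that good reduction plus tameness puts us in the classical Hensel setting over the henselian local ring $\mathcal{O}_v$.

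First, I would establish the equivalence $BG(\mathcal{O}_v) \to BG(\mathbb{F}_v)$. Since $v$ is good, $\mathcal{G}_v$ is a finite \'etale tame group scheme over $\mathcal{O}_v$. The functor ``reduction mod $\mathfrak{p}_v$'' is an equivalence between finite \'etale schemes over $\mathcal{O}_v$ and finite \'etale schemes over $\mathbb{F}_v$ (this is standard for henselian local rings), and it preserves the property of being a $\mathcal{G}_v$-torsor as well as $\mathcal{G}_v$-equivariant morphisms. Hence $BG(\mathcal{O}_v) \to BG(\mathbb{F}_v)$ is fully faithful and essentially surjective. In particular automorphism groups are preserved, so the groupoid cardinalities agree: $\#BG(\mathcal{O}_v) = \#BG(\mathbb{F}_v)$.

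Next, the equality $\#BG(\mathbb{F}_v) = 1$ follows directly from Lemma~\ref{lem:gerbe_finite_field}, applied to $B\mathcal{G}_v \otimes \mathbb{F}_v$. Indeed, this stack is a proper \'etale gerbe over $\mathbb{F}_v$ because $\mathcal{G}_v$ is finite \'etale and tame over $\mathcal{O}_v$.

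Finally, I would compute the Tamagawa measure. For any $\varphi_v \in BG(\mathcal{O}_v)$, its image in $BG(k_v)$ extends to an $\mathcal{O}_v$-point, so the corresponding cocycle is trivial on the inertia subgroup of $\Gamma_{k_v}$. Consequently the ramification type $\rho_{G,v}(\varphi_v)$ equals the identity conjugacy class $e$. Since $v$ is tame for $H$, the formula \eqref{eqn:tame_height} gives $H_v(\varphi_v) = q_v^{w(e)} = q_v^{0} = 1$ (using $w(e)=0$ from Definition~\ref{def:orbifold_line_bundle}). Therefore, by Definition~\ref{def:local_Tamagawa_measure},
\[
\tau_{H,v}(BG(\mathcal{O}_v)) = \sum_{\varphi_v \in [BG(\mathcal{O}_v)]} \frac{1}{|\mathrm{Aut}(\varphi_v)|} = \#BG(\mathcal{O}_v) = \#BG(\mathbb{F}_v) = 1.
\]
None of these steps presents a serious obstacle; the only point worth double-checking is that the classical (non-stacky) Hensel-type equivalence for finite \'etale torsors over a henselian base gives both essential surjectivity and preservation of automorphism groups, so that the groupoid cardinalities genuinely match on the nose.
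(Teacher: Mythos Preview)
Your proof is correct and follows essentially the same approach as the paper. The paper phrases the equivalence via the isomorphism $\pi_1(\mathcal{O}_v)\cong\Gal(\bar{\F}_v/\F_v)$ rather than the equivalence of finite \'etale categories, but these are the same fact; the remaining steps (triviality of $H_v$ on integral points and the appeal to Lemma~\ref{lem:gerbe_finite_field}) match exactly, with your version being somewhat more explicit about why $H_v(\varphi_v)=1$.
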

\begin{proof}	
	The equivalence easily follows from the isomorphism $\pi_1(\O_v) \cong \Gal(\bar{\F}_v/\F_v)$
	The first equality follows from the fact that $H_v(\varphi_v) = 1$
	for all $\varphi_v \in BG(\O_v)$. The last equality is
	Lemma \ref{lem:gerbe_finite_field}.	
\end{proof}

We next obtain a generalisation of Bhargava's \cite[Thm.~1.1]{Bha07} and Kedlaya's \cite[Prop.~5.3]{Ked07} mass formulae. Our proof is completely different and uses our stacky Hensel's Lemma (Theorem \ref{thm:Hensel}). We consider a certain zeta function in a complex variable $s$, which can be viewed as an analogue in our setting of an Igusa integral in the sense of \cite[\S4.1]{CT10}. Our formula may be interpreted as an analogue of Denef's formula \cite[Prop.~4.1.6]{CT10} for $BG$, using the philosophy that the boundary divisors of $BG$ should correspond to the non-identity conjugacy classes of $G$, and moreover that they should be disjoint (see Remark \ref{rem:intuition}). There are differences however in our setting. For example Denef's formula for varieties contains the term $(q_v-1)/(q_v^{s+1}-1)$, which arises from a geometric series keeping track of the possible valuations at which a $v$-adic point meets the boundary divisors; in our case only the valuation $1$ occurs. In the statement $\rho_{G,v}$ denotes the ramification type at $v$ from \S\ref{sec:ramification_type}.

\begin{theorem}[Igusa integral formula] \label{thm:Igusa}
	Let $v$ be a good place of $G$ and for a height $H$ with weight function 
	$w: \mathcal{C}_G \to \R$. Let $f: \mathcal{C}_G \to \C$ be any Galois equivariant function.
	Then for any $s \in \C$ we have
	$$
	\sum_{\varphi_v \in BG[k_{v}]} \frac{f(\rho_{G,v}(\varphi_v))}{|\Aut(\varphi_v)| H_v(\varphi_v)^{s}}
	= \sum_{c \in \mathcal{C}_G^{\Gamma_{k_v}}} \frac{f(c)}{q_v^{w(c)s}}.$$
\end{theorem}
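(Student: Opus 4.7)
The plan is to transport the left-hand side from $BG(k_v)$ to $I_{\mu}BG(\F_v)$ via the stacky Hensel's Lemma, decompose according to sectors, and use that proper \'etale gerbes over a finite field have groupoid cardinality one. First, since $v$ is a good place for $H$, the defining tameness condition \eqref{eqn:tame_height} gives $H_v(\varphi_v)^{s} = q_v^{w(\rho_{G,v}(\varphi_v))\,s}$ for every $\varphi_v \in BG(k_v)$. By Theorem~\ref{thm:Hensel} the reduction-modulo-$\pi_v$ map induces an equivalence of groupoids $BG(k_v) \simeq I_{\mu}B\mathcal{G}_v(\F_v)$, and by construction the ramification type $\rho_{G,v}$ is the composition of this equivalence with the map to $\pi_0$. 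In particular, the equivalence preserves automorphism groups and the values of $f \circ \rho_{G,v}$ and of $H_v$.

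Next, Lemma~\ref{lem:sectors_BG} decomposes $I_{\mu}B\mathcal{G}_v$ into sectors $\mathcal{S}_{\bar c}$ indexed by Galois orbits $\bar c \in \mathcal{C}_G/\Gamma_{k_v}$, and each sector is a proper \'etale gerbe over $\Spec \kappa_{\bar c}$, where $\kappa_{\bar c}$ is the residue field of the orbit (so $[\kappa_{\bar c}:\F_v] = |\bar c|$). A point $\Spec \F_v \to \mathcal{S}_{\bar c}$ factors through $\Spec \kappa_{\bar c}$, and such a factorisation exists if and only if $\kappa_{\bar c} = \F_v$, equivalently $|\bar c|=1$, i.e.~the orbit is a single Galois-fixed conjugacy class $c \in \mathcal{C}_G^{\Gamma_{k_v}}$. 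For such $c$, the sector $\mathcal{S}_c$ is a proper \'etale gerbe over $\F_v$, so Lemma~\ref{lem:gerbe_finite_field} gives $\#\mathcal{S}_c(\F_v) = 1$.

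Putting this together, and using that $f \circ \rho_{G,v}$ and $H_v$ are constant on each sector with values $f(c)$ and $q_v^{w(c)}$ respectively, we compute
\[
\sum_{\varphi_v \in [BG(k_{v})]} \frac{f(\rho_{G,v}(\varphi_v))}{|\Aut(\varphi_v)| H_v(\varphi_v)^{s}}
= \sum_{\bar c \in \mathcal{C}_G/\Gamma_{k_v}} \frac{f(\bar c)}{q_v^{w(\bar c)\,s}} \cdot \#\mathcal{S}_{\bar c}(\F_v)
= \sum_{c \in \mathcal{C}_G^{\Gamma_{k_v}}} \frac{f(c)}{q_v^{w(c)\,s}},
\]
since only the singleton orbits contribute to the middle sum. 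This is the desired identity. The principal step to verify carefully is the compatibility between the three pieces of data (the ramification type, the automorphism group, and the height) under the equivalence of Theorem~\ref{thm:Hensel}; all three compatibilities are essentially built into the definitions, so once that is checked the rest is a formal decomposition and an application of Lemma~\ref{lem:gerbe_finite_field}.
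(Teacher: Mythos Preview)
Your proof is correct and follows essentially the same approach as the paper's: transport the sum to $I_\mu BG(\F_v)$ via the stacky Hensel's Lemma (Theorem~\ref{thm:Hensel} together with Proposition~\ref{prop:cyclotomic_inertia_stack_BG}), decompose into sectors, observe that only Galois-fixed sectors can have $\F_v$-points, and apply Lemma~\ref{lem:gerbe_finite_field}. Your argument that a sector $\mathcal{S}_{\bar c}$ is a gerbe over $\Spec \kappa_{\bar c}$ (hence has no $\F_v$-points unless $|\bar c|=1$) is a slightly more explicit justification of what the paper states in one line, but the logic is the same.
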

\begin{proof}
	By Proposition \ref{prop:cyclotomic_inertia_stack_BG}
	and Theorem~\ref{thm:Hensel} the reduction modulo $\pi_v$ induces an 
	equivalence of groupoids $BG(k_v) \to [G(-1)/G](\F_v)$.
	We can therefore perform our groupoid count on $[G(-1)/G](\F_v)$,
	which gives 
	$$\sum_{\varphi_v \in BG[k_{v}]} \frac{f(\rho_{G,v}(\varphi_v))}{|\Aut(\varphi_v)| H_v(\varphi_v)^{s}}
	= \sum_{c \in \mathcal{C}_G^{\Gamma_{k_v}}} \frac{f(c)\#\mathcal{S}_c(\F_v)}{q_v^{w(c)s}},$$
	where $\mathcal{S}_c$ is the sector corresponding to $c$. 
	For this equality we use that the height function 
	takes constant value $q_v^{w(c)}$ on $\mathcal{S}_c(\F_v)$,
	and that if a sector has an $\F_v$-point then it must be
	Galois invariant. As $\mathcal{S}_c$ is a proper \'etale
	gerbe over $\F_v$, the result
	then follows from Lemma \ref{lem:gerbe_finite_field}.
\end{proof}

We use this to calculate the Tamagawa measure of $BG(k_v)$ and the partial adelic space from Definition~\ref{def:adelic_space}.

\begin{corollary}[Mass formula] \label{cor:mass_formula}
	For good places $v$ we have
 	\begin{align*}
		\tau_{H, v}(BG(k_v))= \sum_{c \in \mathcal{C}_G^{\Gamma_{k_v}}} q_v^{-w(c)a(L)}, \quad
		\tau_{H, v}(BG(\O_{v})_{\mathcal{M}(L)}) = 
		1+ \frac{\#\mathcal{M}(L)^{\Gamma_{k_v}}}{q_v}.		
	\end{align*}
\end{corollary}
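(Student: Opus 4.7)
The proof is essentially a direct application of the Igusa integral formula (Theorem \ref{thm:Igusa}), so the plan will just be to identify the correct test function in each case. The main obstacle is conceptual rather than technical: one must check that the indicator functions chosen are Galois equivariant, and for the second formula one must correctly invoke the characterisation of $\mathcal{M}(L)$ from Lemma \ref{lem:Fujita_minimal}.

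For the first formula, I would specialise Theorem \ref{thm:Igusa} to $s = a(L)$ and $f \equiv 1$. This function is trivially Galois equivariant, so the theorem yields
\[
\tau_{H,v}(BG(k_v)) = \sum_{\varphi_v \in BG[k_v]} \frac{1}{|\Aut(\varphi_v)| H_v(\varphi_v)^{a(L)}} = \sum_{c \in \mathcal{C}_G^{\Gamma_{k_v}}} q_v^{-w(c) a(L)},
\]
which is exactly the first claim after comparing with Definition \ref{def:local_Tamagawa_measure}.

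For the second formula, I would take $f$ to be the indicator function of $\mathcal{M}(L) \cup \{e\}$. Since $\mathcal{M}(L)$ is a $\Gamma_k$-invariant collection of conjugacy classes (being defined as the minimisers of a $\Gamma_k$-equivariant weight function, Definition \ref{def:C(L)}), this $f$ is Galois equivariant. By the definition of the partial adelic space (Definition \ref{def:adelic_space}), the subset $BG(\O_v)_{\mathcal{M}(L)}$ consists precisely of those cocycles $\varphi_v$ with $\rho_{G,v}(\varphi_v) \in \mathcal{M}(L) \cup \{e\}$, so Theorem \ref{thm:Igusa} gives
\[
\tau_{H,v}(BG(\O_v)_{\mathcal{M}(L)}) = \sum_{c \in (\mathcal{M}(L) \cup \{e\})^{\Gamma_{k_v}}} q_v^{-w(c)a(L)}.
\]
To evaluate the right-hand side, I would split off the contribution of $c = e$, for which $w(e) = 0$ gives a contribution of $1$, and use Lemma \ref{lem:Fujita_minimal} which states that $w(c) = a(L)^{-1}$ for every $c \in \mathcal{M}(L)$. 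Hence each $c \in \mathcal{M}(L)^{\Gamma_{k_v}}$ contributes $q_v^{-1}$, yielding
\[
1 + \frac{\#\mathcal{M}(L)^{\Gamma_{k_v}}}{q_v},
\]
as required. There is essentially no hard step; the work was all done in proving Theorem \ref{thm:Igusa}.
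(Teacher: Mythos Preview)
Your proof is correct and follows exactly the same approach as the paper, which simply says the result follows from Theorem~\ref{thm:Igusa} by taking $s = a(L)$ and $f$ to be the constant function $1$ and the indicator function for $\mathcal{M}(L)$, respectively. Your write-up is in fact more careful than the paper's one-line proof: you correctly note that the indicator function must be on $\mathcal{M}(L) \cup \{e\}$ (to match Definition~\ref{def:adelic_space}) and you spell out the invocation of Lemma~\ref{lem:Fujita_minimal} to evaluate $w(c)a(L) = 1$ on $\mathcal{M}(L)$.
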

\begin{proof}
	Follows from Theorem \ref{thm:Igusa} by taking $s = a(L)$ and $f$ to be the identity
	and indicator function for $\mathcal{M}(L)$, respectively.
\end{proof}

Following Kedlaya \cite[Def.~5.1]{Ked07}, given a set of places $T$ of $k$, we say that there is a \emph{uniform mass formula on $T$} if there is an integer polynomial $P$ such that $\tau_{H, v}(BG(k_v)) = P(q_v^{-1})$ for all $v \in T$. Kedlaya \cite[Cor.~5.5]{Ked07} has shown that for constant $G$, there is a uniform mass formula away from some finite set of places if and only if the character table of $G$ is rational. The following generalises this to non-constant $G$ (new cases include for example $G = \mu_n$).

\begin{corollary}
	$G$ has a uniform mass formula with respect to $H$ away from some finite set of places 
	if and only if the Galois action on $\mathcal{C}_G$ is trivial.
\end{corollary}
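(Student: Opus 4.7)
The plan is to deduce both directions directly from the explicit formula
\[
\tau_{H,v}(BG(k_v)) = \sum_{c \in \mathcal{C}_G^{\Gamma_{k_v}}} q_v^{-w(c)a(L)}
\]
of Corollary \ref{cor:mass_formula}, which is valid at every good place $v$.

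For the forward direction, suppose the action of $\Gamma_k$ on $\mathcal{C}_G$ is trivial. Then $\mathcal{C}_G^{\Gamma_{k_v}} = \mathcal{C}_G$ for every $v$, so the sum above takes the shape $P(q_v^{-1})$ where
\[
P(x) = \sum_{c \in \mathcal{C}_G} x^{w(c)a(L)}
\]
is a fixed (Laurent) polynomial with non-negative integer coefficients, independent of $v$. This yields a uniform mass formula on the cofinite set of good places.

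For the converse, suppose a uniform mass formula $\tau_{H,v}(BG(k_v)) = P(q_v^{-1})$ holds for all $v$ in some cofinite set $T$, and assume for contradiction that the Galois action on $\mathcal{C}_G$ is non-trivial. Because $\mathcal{C}_G$ is finite, this action factors through $\Gal(F/k)$ for some finite Galois extension $F/k$, and non-triviality produces a $\sigma \in \Gal(F/k)$ with $\mathcal{C}_G^{\langle\sigma\rangle} \subsetneq \mathcal{C}_G$. By the Chebotarev density theorem, there are infinitely many places $v \in T$ of good reduction which are totally split in $F/k$ (so that $\mathcal{C}_G^{\Gamma_{k_v}} = \mathcal{C}_G$), and infinitely many $v' \in T$ with Frobenius in the conjugacy class of $\sigma$ (so that $\mathcal{C}_G^{\Gamma_{k_{v'}}} = \mathcal{C}_G^{\langle\sigma\rangle}$).

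The two resulting equalities
\[
P(q_v^{-1}) = \sum_{c \in \mathcal{C}_G} q_v^{-w(c)a(L)}, \qquad P(q_{v'}^{-1}) = \sum_{c \in \mathcal{C}_G^{\langle\sigma\rangle}} q_{v'}^{-w(c)a(L)}
\]
hold for the infinite sets of values $\{q_v\}$ and $\{q_{v'}\}$. Since a Laurent polynomial in one variable is determined by its values on any infinite set, both identities must hold as identities of functions of a real variable $q > 1$, and hence
\[
\sum_{c \in \mathcal{C}_G} q^{-w(c)a(L)} = \sum_{c \in \mathcal{C}_G^{\langle\sigma\rangle}} q^{-w(c)a(L)} \quad \text{for all } q > 1.
\]
Subtracting the right-hand side yields a sum of strictly positive terms indexed by the non-empty set $\mathcal{C}_G \setminus \mathcal{C}_G^{\langle\sigma\rangle}$ which must vanish identically in $q$, a contradiction. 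The main subtlety is precisely this last step: weights $w(c)$ may coincide across distinct conjugacy classes, so one cannot argue by matching coefficients, but positivity of $q^{-w(c)a(L)}$ at any fixed $q>1$ circumvents this obstacle cleanly.
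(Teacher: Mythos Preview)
Your proof is correct and follows precisely the paper's approach: the explicit mass formula of Corollary~\ref{cor:mass_formula} combined with the Chebotarev density theorem. The paper's own proof is the single line ``Immediate from Corollary~\ref{cor:mass_formula} and the Chebotarev density theorem,'' and your argument is a faithful elaboration of this, including the clean positivity step at the end to extract the contradiction.
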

\begin{proof}
	Immediate from Corollary \ref{cor:mass_formula} and the Chebotarev density theorem.
\end{proof}

\begin{remark}
More generally, Corollary \ref{cor:mass_formula} shows that there is a partition of the set of places of $k$ into finitely many frobenian sets such that there is a uniform mass formula on each set in the partition. Moreover if $G$ is constant, then the frobenian sets are determined by finitely many congruence conditions (since the Galois action factors through a cyclotomic extension).
\end{remark}

\begin{remark}
	Corollary \ref{cor:mass_formula} recovers Bhargava's mass formula \cite[Thm.~1.1]{Bha07}
	at the tame places. To see this one notes that the character table of $S_n$ is rational
	and that $q(k,n-k)$ equals the number of conjugacy classes of $S_n$ of index
	$k$. It would be interesting to obtain a stack-theoretic explanation at the wild places.
\end{remark}


\begin{lemma} \label{lem:archimedean_mass_formula}
	Let $v$ be archimedean. Then
	$$\tau_{H, v}(BG(k_v)) =
	\begin{cases}
	 \frac{1}{|G|}\sum_{\varphi_v \in Z^1(k_v,G)} \frac{1}{H_v(\varphi_v)^{a(L)}}, & \text{if $v$ is real}, \\
	 \frac{1}{|G|}\cdot \frac{1}{H_v(e)^{a(L)}}, & \text{if $v$ is complex}.
	 \end{cases}$$
\end{lemma}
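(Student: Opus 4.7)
The plan is simply to unwind the definition of the local Tamagawa measure using the alternative form \eqref{eqn:tau_alternative}, which expresses $\tau_{H,v}$ as a (weighted) sum over $1$-cocycles rather than over isomorphism classes. Taking $W_v = BG[k_v]$, we have $F^{-1}(W_v) = Z^1(k_v,G)$, so Lemma \ref{lem:groupoid_count} (equivalently the identity in \eqref{eqn:tau_alternative}) immediately gives
\[
\tau_{H,v}(BG(k_v)) \;=\; \frac{1}{|G|}\sum_{\varphi_v \in Z^1(k_v,G)} \frac{1}{H_v(\varphi_v)^{a(L)}}.
\]
This handles the real case verbatim.

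For the complex case, I would simply observe that $k_v = \C$ is separably (indeed algebraically) closed, so $\Gamma_{k_v} = 1$ and consequently $Z^1(k_v, G) = G(k_v^{\sep})^{\Gamma_{k_v}} \cap \{\text{trivial cocycle}\}$ consists only of the identity cocycle $e$. Plugging this into the formula above collapses the sum to a single term $\frac{1}{|G| H_v(e)^{a(L)}}$, which is the claimed expression.

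The only minor subtlety is verifying that the sum converges so that the identity \eqref{eqn:tau_alternative} is meaningful at archimedean places; but since $G$ is finite and (in the real case) $Z^1(k_v, G)$ is finite (as $\Gamma_\R$ acts through $\Gal(\C/\R)$, which is finite, so there are finitely many cocycles up to equality, not merely cohomology), both sides are finite sums of positive reals and the equality is a tautological reindexing. There is no genuine obstacle; the lemma is essentially a direct consequence of the definition combined with the observation that only the trivial cocycle exists over an algebraically closed field.
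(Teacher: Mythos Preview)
Your proof is correct and follows exactly the paper's approach: the paper's proof is the single line ``Immediate from the definition and Lemma~\ref{lem:groupoid_count}'', and you have simply unpacked this via the equivalent formulation \eqref{eqn:tau_alternative}. The only cosmetic issue is the stray expression ``$Z^1(k_v, G) = G(k_v^{\sep})^{\Gamma_{k_v}} \cap \{\text{trivial cocycle}\}$'' in the complex case, which is garbled notation; the correct statement (which you clearly intend) is that since $\Gamma_\C$ is trivial, the unique cocycle is the identity cocycle $e$.
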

\begin{proof}
	Immediate from the definition and Lemma \ref{lem:groupoid_count}.
\end{proof}

In particular if $G$ is constant, $v$ is real, and $H_v$ takes constant value $1$, then
\begin{equation} \label{eqn:real_mass_formula}
	\tau_{H, v}(BG(k_v)) = \frac{\#G[2]}{|G|}
\end{equation}
where $G[2]$ denotes the set of $2$-torsion elements of $G$.

\begin{remark}
Peyre shows in \cite[Lem.~2.2.1]{Pey95} that for a Fano variety $X$, we have $\tau_v(X(k_v)) = \#X(\F_v)/q_v^{\dim X}$ for all but finitely many places $v$. This is \textit{not} true in our case; indeed we showed in Lemma \ref{lem:tau_BG(O_v)} that $\# BG(\F_v) = 1$. This difference all
comes from the modified valuative criterion for properness for stacks.
\end{remark}

\subsection{Global Tamagawa measure} \label{sec:global_Tamagawa}
We now wish to take the product of the local Tamagawa measures. To do so we need to introduce convergence factors. In exact analogy with the convergence factors of Peyre, these should come from the Artin $L$-function  of $\PicOrb_{\mathcal{M}(L)} BG_{k^{\sep}}$. However from Lemma \ref{lem:Picorb(BG,C)} we can take the following equivalent $L$-function. The collection $\mathcal{M}(L)$ of minimal weight conjugacy classes is a finite $\Gamma_k$-set. We denote by $\L(\mathcal{M}(L),s) := \L(\C[\mathcal{M}(L)],s)$ the Artin $L$-function of the corresponding permutation representation, with corresponding local Euler factors $\L_v(\mathcal{M}(L),s)$. Then for each place $v$ of $k$ we define
$$ \lambda_v = \left \{
	\begin{array}{ll}
		\L_v(\mathcal{M}(L),1),& \quad v \text{ non-archimedean}, \\
		1,& \quad v \text{ archimedean}. \\
	\end{array}\right.$$	
We emphasise that our convergence factors depend on the orbifold line bundle $L$.
Our global Tamagawa measure is now defined to be
\begin{equation} \label{def:Tamagawa}
	\tau_H = \L^*(\mathcal{M}(L),1)\prod_v \lambda_v^{-1} \tau_{H,v}.
\end{equation}
We let $\L^*(\mathcal{M}(L),1) := \lim_{s \to 1}(s-1)^{b(k,L)} \L(\mathcal{M}(L),s)$, which is non-zero as $b(k,L)$ is the order of pole at $s= 1$. We now show that these $\lambda_v$ are indeed a family of convergence factors on the partial adelic space (see Definition \ref{def:adelic_space}). The following is a slightly more general version of Theorem \ref{thm:Tamagawa_products_intro}

\begin{theorem} \label{thm:Tamagawa_products}
	The infinite product measure $\prod_v \lambda_v^{-1} \tau_{H,v}$ 
	converges absolutely on $BG(\Adele_k)_{\mathcal{M}(L)}$ and $\prod_v BG(k_v)$.
\end{theorem}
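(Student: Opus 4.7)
The plan is to reduce everything to the explicit mass formulas provided by Corollary~\ref{cor:mass_formula} and to a Taylor expansion of the local Euler factors of $\L(\mathcal{M}(L),s)$. First, I would restrict attention to a cofinite set of good places, since at the remaining (finitely many) archimedean and bad places the factor $\lambda_v^{-1}\tau_{H,v}$ is a finite positive number (use Lemma~\ref{lem:archimedean_mass_formula} at the infinite places and the finiteness of $BG[k_v]$ at the bad non-archimedean places) and so contributes nothing to absolute convergence.

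At a good place $v$, Corollary~\ref{cor:mass_formula} gives
\[
\tau_{H,v}(BG(\mathcal{O}_v)_{\mathcal{M}(L)}) = 1 + \frac{\#\mathcal{M}(L)^{\Gamma_{k_v}}}{q_v},
\qquad
\tau_{H,v}(BG(k_v)) = \sum_{c\in\mathcal{C}_G^{\Gamma_{k_v}}} q_v^{-w(c)a(L)}.
\]
By Lemma~\ref{lem:Fujita_minimal} we have $w(c)a(L)\geq 1$ for $c\in\mathcal{C}_G^*$ with equality exactly on $\mathcal{M}(L)$. Since $\mathcal{C}_G$ is finite, there exists $\delta>0$ such that $w(c)a(L)\geq 1+\delta$ for every $c\in\mathcal{C}_G^*\setminus\mathcal{M}(L)$. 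Isolating the identity class and the minimal ones, this yields the uniform expansion
\[
\tau_{H,v}(BG(k_v)) = 1 + \frac{\#\mathcal{M}(L)^{\Gamma_{k_v}}}{q_v} + O(q_v^{-1-\delta}).
\]
For the convergence factor, the Artin $L$-function of the permutation representation factors locally as $\lambda_v = \prod_O(1-q_v^{-|O|})^{-1}$, the product running over Frobenius orbits $O$ in $\mathcal{M}(L)$. Only fixed points contribute at order $q_v^{-1}$, so
\[
\lambda_v = 1 + \frac{\#\mathcal{M}(L)^{\Gamma_{k_v}}}{q_v} + O(q_v^{-2}),
\qquad
\lambda_v^{-1} = 1 - \frac{\#\mathcal{M}(L)^{\Gamma_{k_v}}}{q_v} + O(q_v^{-2}).
\]

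Multiplying these expansions, the $q_v^{-1}$-terms cancel and we obtain
\[
\lambda_v^{-1}\tau_{H,v}(BG(\mathcal{O}_v)_{\mathcal{M}(L)}) = 1 + O(q_v^{-2}),
\qquad
\lambda_v^{-1}\tau_{H,v}(BG(k_v)) = 1 + O(q_v^{-1-\delta}),
\]
with implied constants uniform in $v$ (they depend only on $|G|$ and $w$). Since $\sum_v q_v^{-s}$ converges for every real $s>1$, both series $\sum_v q_v^{-2}$ and $\sum_v q_v^{-(1+\delta)}$ converge, giving absolute convergence of the infinite products $\prod_v \lambda_v^{-1}\tau_{H,v}(BG(\mathcal{O}_v)_{\mathcal{M}(L)})$ and $\prod_v \lambda_v^{-1}\tau_{H,v}(BG(k_v))$. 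The only real subtlety is pinning down the gap $\delta$; everything else is routine bookkeeping once the two expansions are in hand.
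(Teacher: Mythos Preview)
Your proof is correct and follows essentially the same approach as the paper: expand $\tau_{H,v}$ via the mass formula (Corollary~\ref{cor:mass_formula}), expand $\lambda_v^{-1}$, observe the cancellation of the $q_v^{-1}$ terms, and conclude absolute convergence from the remaining $O(q_v^{-1-\delta})$ tail. You are simply more explicit than the paper about the existence of the gap $\delta$ and the Frobenius-orbit decomposition of the Artin Euler factor.
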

\begin{proof}
	By Corollary \ref{cor:mass_formula} there exists an $\varepsilon > 0$ such that
	for all sufficiently large $v$
	$$\tau_{H, v}(BG(k_v)) = 1 +  \sum_{c \in \mathcal{M}(L)^{\Gamma_v}} q_v^{-1} + O(q_v^{-1 - \varepsilon}),$$
	and similarly for $\tau_{H, v}(BG(\O_{v})_{\mathcal{M}(L)})$.
	However we also have
	$$\lambda_v^{-1} = 1-  \sum_{c \in \mathcal{M}(L)^{\Gamma_v}} q_v^{-1} + O(q_v^{-2}).$$
	Thus the resulting product converges absolutely, as required.
\end{proof}

It is important for topological reasons that one considers the measure on the partial adelic space, particularly when considering the Brauer--Manin obstruction (cf.~Lemma \ref{lem:modified_adeles_Brauer-Manin}). However for the total measure one has a simple Euler product.

\begin{lemma} \label{lem:total_adelic_measure}
	$$\tau_H(BG(\Adele_k)_{\mathcal{M}(L)}) = \L^*(\mathcal{M}(L),1)\prod_v \lambda_v^{-1} \tau_{H,v}(BG(k_v)).$$
\end{lemma}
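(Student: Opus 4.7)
The plan is essentially to unfold the definition of the restricted product measure and use absolute convergence to rearrange. Recall that $BG(\Adele_k)_{\mathcal{M}(L)}$ is a restricted product with respect to the ``integral'' subsets $BG(\O_v)_{\mathcal{M}(L)}$ (interpreting this as $BG(k_v)$ at archimedean and bad places), and the measure $\tau_H$ is defined by the formula \eqref{def:Tamagawa}. Concretely, for a finite set of places $S$, the basic open set $\prod_{v \in S} BG(k_v) \times \prod_{v \notin S} BG(\O_v)_{\mathcal{M}(L)}$ has $\tau_H$-measure equal to
\[
\L^*(\mathcal{M}(L),1) \prod_{v \in S} \lambda_v^{-1}\tau_{H,v}(BG(k_v)) \prod_{v \notin S} \lambda_v^{-1}\tau_{H,v}(BG(\O_v)_{\mathcal{M}(L)}),
\]
and these basic opens exhaust $BG(\Adele_k)_{\mathcal{M}(L)}$ as $S$ grows. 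By continuity of measure, $\tau_H(BG(\Adele_k)_{\mathcal{M}(L)})$ is the limit of the above expression over finite $S$.

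The next step is to rewrite the limit by factoring out the product over \emph{all} places with the integral measures:
\[
\L^*(\mathcal{M}(L),1) \prod_v \lambda_v^{-1}\tau_{H,v}(BG(\O_v)_{\mathcal{M}(L)}) \cdot \lim_S \prod_{v \in S} \frac{\tau_{H,v}(BG(k_v))}{\tau_{H,v}(BG(\O_v)_{\mathcal{M}(L)})}.
\]
The first infinite product converges absolutely by Theorem~\ref{thm:Tamagawa_products}. For the remaining product over $S$, Corollary~\ref{cor:mass_formula} together with Lemma~\ref{lem:Fujita_minimal} gives, for every good place $v$,
\[
\frac{\tau_{H,v}(BG(k_v))}{\tau_{H,v}(BG(\O_v)_{\mathcal{M}(L)})} = 1 + O(q_v^{-w_{\min}}),
\]
where $w_{\min} = \min\{w(c)a(L) : c \in \mathcal{C}_G^* \setminus \mathcal{M}(L)\} > 1$ (and the ratio is $1$ at archimedean places). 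Thus $\prod_v \tau_{H,v}(BG(k_v))/\tau_{H,v}(BG(\O_v)_{\mathcal{M}(L)})$ converges absolutely, the limit exists, and equals this full infinite product.

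Finally, combining the two absolutely convergent infinite products (which is justified precisely by absolute convergence) yields
\[
\prod_v \lambda_v^{-1}\tau_{H,v}(BG(\O_v)_{\mathcal{M}(L)}) \cdot \prod_v \frac{\tau_{H,v}(BG(k_v))}{\tau_{H,v}(BG(\O_v)_{\mathcal{M}(L)})} = \prod_v \lambda_v^{-1}\tau_{H,v}(BG(k_v)),
\]
the right-hand side again converging absolutely by Theorem~\ref{thm:Tamagawa_products}. Multiplying by $\L^*(\mathcal{M}(L),1)$ gives the claim. There is no real obstacle here: the statement is essentially a formal consequence of the restricted-product construction, and the only nontrivial input is the absolute convergence of the two Euler products, which was already established by the mass formula of Corollary~\ref{cor:mass_formula}.
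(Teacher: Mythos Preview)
Your proof is correct and follows the same approach as the paper, which simply says the lemma ``follows immediately from Definition~\ref{def:adelic_space} and absolute convergence.'' You have expanded this one-liner into the explicit limit-over-$S$ computation that the paper leaves implicit; the only substantive input in both arguments is Theorem~\ref{thm:Tamagawa_products} (absolute convergence of the regularised Euler product), together with the mass formula of Corollary~\ref{cor:mass_formula} to control the ratio $\tau_{H,v}(BG(k_v))/\tau_{H,v}(BG(\O_v)_{\mathcal{M}(L)})$.
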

\begin{proof}
	Follows immediately from Definition \ref{def:adelic_space} and absolute convergence.
\end{proof}


Our convergence factors have the nice property that $\prod_v \lambda_v^{-1} \tau_{H,v}$ is absolutely convergent. There are alternative convergence factors which appear in practice, but are only conditionally convergent in general.
\begin{lemma} \label{lem:convergence_factors_alternative}
	We have
	$$\tau_H = (\mathrm{Res}_{s =1} \zeta_k(s))^{b(k,L)} 
	\prod_{v \mid \infty}  \tau_{H,v} 
	\prod_{v \nmid \infty} (1 - 1/q_v)^{b(k,L)}\tau_{H,v}$$
	where the product is convergent.
\end{lemma}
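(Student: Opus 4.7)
The plan is to rewrite the convergence factors $\lambda_v$ using the decomposition of $\C[\mathcal{M}(L)]$ into its trivial and non-trivial isotypic components, and then use the corresponding factorization of Artin $L$-functions to trade the normalisation $\L^{*}(\mathcal{M}(L),1)$ for $(\mathrm{Res}_{s=1}\zeta_k(s))^{b(k,L)}$. Concretely, I would first decompose the permutation representation as $\C[\mathcal{M}(L)] \cong \mathbf{1}^{\oplus b(k,L)} \oplus \rho'$ where $\rho'$ has no $\Gamma_k$-invariants (the multiplicity of $\mathbf{1}$ is exactly the number of Galois orbits, which is $b(k,L)$). By the multiplicativity of local Euler factors, this gives
$$\lambda_v = \L_v(\mathcal{M}(L),1) = (1-1/q_v)^{-b(k,L)} \L_v(\rho',1) \qquad (v \nmid \infty).$$
Globally the same decomposition yields $\L(\mathcal{M}(L),s) = \zeta_k(s)^{b(k,L)} \L(\rho',s)$; since $\rho'$ contains no trivial subrepresentation, $\L(\rho',s)$ is holomorphic and non-vanishing at $s=1$ (by the theorem of Brauer--Artin), and taking residues gives $\L^{*}(\mathcal{M}(L),1) = (\mathrm{Res}_{s=1}\zeta_k(s))^{b(k,L)} \L(\rho',1)$.

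Substituting both identities into the definition \eqref{def:Tamagawa} of $\tau_H$, the product becomes
$$\tau_H = (\mathrm{Res}_{s=1}\zeta_k(s))^{b(k,L)} \L(\rho',1) \prod_{v \mid \infty}\tau_{H,v} \prod_{v \nmid \infty}(1-1/q_v)^{b(k,L)} \L_v(\rho',1)^{-1} \tau_{H,v}.$$
The identity in the lemma then amounts to the statement $\L(\rho',1) = \prod_{v \nmid \infty} \L_v(\rho',1)$ (the archimedean Euler factors of $\rho'$ play no role in our normalisation since $\lambda_v = 1$ there). For permutation representations $\rho'$ is a virtual sum of inductions of trivial characters of open subgroups of $\Gamma_k$, so $\L(\rho',s)$ is expressible as a ratio of Dedekind zeta functions of intermediate fields; the conditional convergence of the Euler product of such a ratio at $s=1$ is classical and gives the required identity.

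For the convergence of the right-hand side, I would expand using Corollary \ref{cor:mass_formula}: for all but finitely many $v$,
$$(1-1/q_v)^{b(k,L)} \tau_{H,v}(BG(k_v)) = 1 + \bigl(\#\mathcal{M}(L)^{\Gamma_v} - b(k,L)\bigr)q_v^{-1} + O(q_v^{-1-\varepsilon})$$
for some $\varepsilon>0$. Convergence of the product thus reduces to convergence of the series $\sum_v (\#\mathcal{M}(L)^{\Gamma_v} - b(k,L)) q_v^{-1} = \sum_v \chi_{\rho'}(\mathrm{Frob}_v)q_v^{-1}$, which is exactly the Dirichlet series at $s=1$ of the character of $\rho'$; by the non-vanishing of $\L(\rho',1)$ this series converges (conditionally), giving conditional convergence of the Euler product.

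The only genuinely delicate point is that $\prod_{v \nmid \infty} \L_v(\rho',1)$ is only conditionally convergent, since $\rho'$ has non-zero character of average $0$; accordingly the product in the statement of the lemma must be understood with the places ordered (for example) by $q_v$. This reflects the well-known fact that the Euler product of an Artin $L$-function at $s=1$ is only conditionally convergent for representations without trivial summand. The convergence of $\prod_v \lambda_v^{-1}\tau_{H,v}$ in Theorem \ref{thm:Tamagawa_products} is, by contrast, \emph{absolute}; replacing $\lambda_v$ by the cruder $(1-1/q_v)^{-b(k,L)}$ reintroduces this residual cancellation and is the main technical subtlety in the argument.
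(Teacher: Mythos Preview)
Your proposal is correct and follows essentially the same route as the paper: decompose $\C[\mathcal{M}(L)] = \mathbf{1}^{\oplus b(k,L)} \oplus \rho'$, factor $\L(\mathcal{M}(L),s) = \zeta_k(s)^{b(k,L)}\L(\rho',s)$, and reduce the identity to the conditional Euler product $\L(\rho',1) = \prod_{v\nmid\infty}\L_v(\rho',1)$. The only difference is in how this last step is justified: the paper invokes analyticity and non-vanishing of $\L(\rho',s)$ on $\re s = 1$ together with Newman's Tauberian theorem applied to $\L(\rho',s)$ and $\log\L(\rho',s)$, while you appeal to writing $\L(\rho',s)$ as a ratio of Dedekind zeta functions. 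Your justification is valid (indeed $\C[\mathcal{M}(L)]$ is a genuine permutation representation, so $\L(\rho',s) = \prod_i \zeta_{K_i}(s)/\zeta_k(s)^{b(k,L)}$), though ``classical'' hides that one still needs non-vanishing at $s=1$ and a Tauberian argument for each factor; the paper's route via Newman is slightly more self-contained. Your separate direct argument for convergence of the right-hand side via Corollary~\ref{cor:mass_formula} is correct but unnecessary once the Euler product identity for $\L(\rho',1)$ is established, since Theorem~\ref{thm:Tamagawa_products} already gives absolute convergence of $\prod_v \lambda_v^{-1}\tau_{H,v}$.
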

\begin{proof}
	Write $\C[\mathcal{M}(L)]	= \C^{b(k,L)} \oplus V$ where $V$ is an
	Artin representation containing no non-zero trivial subrepresentations. 
	Then $\L(\mathcal{M}(L),s) = \zeta_k(s)^{b(k,L)} \L(V,s)$. 
	Moreover $\L(V,1) = \prod_v \L_v(V,1)$ and the product is conditionally convergent 
	(this follows from analyticity and non-vanishing of $\L(V,s)$ along $\re s = 1$,
	together with Newman's Tauberian theorem \cite{New80} applied to both $\L(V,s)$ and
	$\log \L(V,s)$.)
	The result now follows from \eqref{def:Tamagawa} and Theorem \ref{thm:Tamagawa_products}.
\end{proof}


\subsection{Measure of the Brauer--Manin set}
We now consider the Tamagawa measure of  
$BG(\Adele_k)_{\mathcal{M}(L)}^{\Br}:= BG(\Adele_k)_{\mathcal{M}(L)}^{\Br_{\mathcal{M}(L)} BG}$.
To study this we assume that $H$ is balanced, since this implies that  $\Br_{\mathcal{M}(L)} BG/ \Br k$ is finite (Corollary \ref{cor:Br_finite}).

There is an analogue of Lemma \ref{lem:total_adelic_measure} for the orthogonal complement to the Brauer group, though it is somewhat subtle. For any finite set of places $S$, there is a well-defined Brauer--Manin pairing
$$ \Br BG \times \prod_{v \in S} BG(k_v) \to \Q/\Z$$
given by the sum of local invariants. For a subset $\br \subset \Br BG$, we let $\prod_{v \in S} BG(k_v)^{\br}$ denote the orthogonal complement to $\br$.
\begin{lemma} \label{lem:total_Brauer_measure}
	$$\tau_H( BG(\Adele_k)_{\mathcal{M}(L)}^{\Br}) = \lim_{S} 
	\prod_{v \in S}\lambda_v^{-1} \tau_{H,v}\left(
	\prod_{v \in S} BG(k_v)^{\Br_{\mathcal{M}(L)}BG}\right)$$
	where the limit is over all finite sets of places $S$. Moreover 
	$\tau_H( BG(\Adele_k)_{\mathcal{M}(L)}^{\Br}) \neq 0$.
\end{lemma}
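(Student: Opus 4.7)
The plan is to reduce to a finite calculation via Harari's formal lemma, in parallel with the proof of Lemma \ref{lem:total_adelic_measure}, and then to establish the non-vanishing by exhibiting the identity cocycle as an explicit point with a positive-measure open neighborhood inside the Brauer--Manin set.

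For the formula, Corollary \ref{cor:Br_finite} gives that $\Br_{\mathcal{M}(L)} BG/\Br k$ is finite. Applying Theorem \ref{thm:Harari_formal_partially_unramified} to a finite set of representatives, I obtain a finite set $S_0$ of places such that every $b \in \Br_{\mathcal{M}(L)} BG$ evaluates trivially on $BG(\O_v)_{\mathcal{M}(L)}$ for all $v \notin S_0$. Consequently, for any finite $S \supseteq S_0$ the Brauer--Manin condition on an adele whose components at $v \notin S$ lie in $BG(\O_v)_{\mathcal{M}(L)}$ depends only on its $S$-components, giving a product decomposition
\[
BG(\Adele_k)_{\mathcal{M}(L)}^{\Br} \cap \Bigl(\prod_{v \in S} BG(k_v) \times \prod_{v \notin S} BG(\O_v)_{\mathcal{M}(L)}\Bigr) = \Bigl(\prod_{v \in S} BG(k_v)\Bigr)^{\Br_{\mathcal{M}(L)} BG} \times \prod_{v \notin S} BG(\O_v)_{\mathcal{M}(L)}.
\]
Taking the global Tamagawa measure and using the absolute convergence of Theorem \ref{thm:Tamagawa_products}, the tail factor $\prod_{v \notin S} \lambda_v^{-1} \tau_{H,v}(BG(\O_v)_{\mathcal{M}(L)})$ tends to $1$ (after incorporating the global factor $\L^*(\mathcal{M}(L),1)$ from the definition of $\tau_H$), and the stated limit formula follows.

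For non-vanishing, I use the tautological rational point: the identity cocycle $e \in BG(k)$ has trivial ramification type at every place, so the adele $(e)_v$ lies in $BG(\Adele_k)_{\mathcal{M}(L)}$, and by the reciprocity law $\sum_v \inv_v b(e) = 0$ for any $b \in \Br BG$, so $(e)_v$ lies in the Brauer--Manin set. Since the Brauer--Manin pairing with the finite group $\Br_{\mathcal{M}(L)} BG/\Br k$ is continuous and discrete-valued (Lemma \ref{lem:modified_adeles_Brauer-Manin}), each generator of this group is locally constant near $(e)_v$, so an open neighborhood of the form $\prod_{v \in T} U_v \times \prod_{v \notin T} BG(\O_v)_{\mathcal{M}(L)}$ is contained in the Brauer--Manin set, where $T$ is a finite set of places and each $U_v \subseteq BG[k_v]$ is an open neighborhood of $e$. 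Since $\tau_{H,v}(\{e\}) = 1/(|G| H_v(e)^{a(L)}) > 0$ and the infinite product over $v \notin T$ is convergent and positive, this neighborhood has strictly positive $\tau_H$-measure.

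The main obstacle is purely the bookkeeping of the convergence factors $\lambda_v$ and $\L^*(\mathcal{M}(L),1)$ needed to match the stated formula precisely; the rest of the argument is formal once the Brauer--Manin condition has been localised to the finite set $S$. The non-vanishing is considerably easier than its counterpart in Manin's conjecture for Fano varieties, as it is forced by the existence of the tautological rational point $e \in BG(k)$, a feature peculiar to classifying stacks.
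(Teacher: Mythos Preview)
Your proposal is correct and follows essentially the same approach as the paper: finiteness of $\Br_{\mathcal{M}(L)} BG/\Br k$ plus Theorem~\ref{thm:Harari_formal_partially_unramified} localises the Brauer--Manin condition to a finite set of places, giving the product decomposition and hence the limit formula by absolute convergence. For non-vanishing the paper simply observes that the displayed product set has positive measure by Corollary~\ref{cor:mass_formula}; your argument via the identity cocycle and continuity is the same idea made explicit (and indeed the paper's argument implicitly needs $(e)_v$ to witness that $\bigl(\prod_{v \in S} BG[k_v]\bigr)^{\mathscr{B}}$ is nonempty).
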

\begin{proof}
	By Corollary \ref{cor:Br_finite} the group $\Br_{\mathcal{M}(L)} BG/ \Br k$ is finite.
	Let $\mathscr{B}$ be a finite group of representatives.
	By Theorem \ref{thm:Harari_formal_partially_unramified} we have that there is a finite set
	of places $S_0$ such that for all finite sets of places $S_0 \subset S$ we have
	$$BG[\Adele_k]_{\mathcal{M}(L)}^{\mathscr{B}} \bigcap\left( \prod_{v \in S} BG[k_v] 
	\prod_{v \notin S} BG[\O_v]_{\mathcal{M}(L)}  \right)
	= \left(\prod_{v \in S} BG[k_v]\right)^{\mathscr{B}} 
	\prod_{v \notin S} BG[\O_v]_{\mathcal{M}(L)}.$$
	The limit now follows from Definition \ref{def:adelic_space} and absolute convergence.
	The non-vanishing is because 
	$\left(\prod_{v \in S} BG[k_v]\right)^{\mathscr{B}} 
	\prod_{v \notin S} BG[\O_v]_{\mathcal{M}(L)}$ has positive measure by Corollary \ref{cor:mass_formula}.
\end{proof}

If the Brauer--Manin pairing restricted to $BG(\Adele_k)_{\mathcal{M}(L)}$ is trivial at all but finitely many places, then one can take the limit inside the product in Lemma \ref{lem:total_Brauer_measure}. However by Theorem \ref{thm:Harari's_formal_lemma}, this only happens if the Brauer group elements are unramified.

In many examples in the literature, the leading constant in Malle's conjecture is not given by a 	measure, but rather a sum of Euler products. This is compatible with our perspective (in practice the following actually seems a more useful formula than Lemma \ref{lem:total_Brauer_measure}). We use the map $\Q/\Z \to S^1: x \to e^{2 \pi i x}$.
	
\begin{lemma} \label{lem:sum_Euler_products}
	For each $b \in \Br_{\mathcal{M}(L)} BG$ consider the Euler product
	\begin{align*}
	\hat{\tau}_H(b) &:= \int_{BG(\Adele_k)_{\mathcal{M}(L)}} e^{2 \pi i \langle b, \varphi \rangle_{\text{BM}}} d\tau_H(\varphi)  \\
	&= \L^*(\mathcal{M}(L),1)
	\prod_v \lambda_v^{-1}\int_{BG(k_v)} e^{2 \pi i \inv_v b(\varphi_v) } d\tau_{H, v}(\varphi_v).
	\end{align*}
	Then 
	$$
		|\Br_{\mathcal{M}(L)} BG/\Br k| \cdot \tau_H( BG(\Adele_k)_{\mathcal{M}(L)}^{\Br}) = \sum_{b \in \Br_{\mathcal{M}(L)} BG/\Br k} \hat{\tau}_H(b)
	$$
	is a finite sum of Euler products.
\end{lemma}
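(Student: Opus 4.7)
The plan is to apply Fourier analysis (orthogonality of characters) on the finite abelian group $\mathcal{B} := \Br_{\mathcal{M}(L)} BG/\Br k$, which is finite by Corollary~\ref{cor:Br_finite} since $L$ is fair. The key observation is that for fixed $\varphi \in BG(\Adele_k)_{\mathcal{M}(L)}$, the map
\[
\chi_\varphi: \mathcal{B} \to \R/\Z, \qquad b \mapsto \langle b, \varphi\rangle_{\mathrm{BM}}
\]
is a well-defined homomorphism: independence of the $\Br k$-coset representative follows from reciprocity $\sum_v \inv_v = 0$ on $\Br k$, while well-definedness on the partial adelic space is Lemma~\ref{lem:modified_adeles_Brauer-Manin}. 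Dually, each $b \in \mathcal{B}$ defines a continuous character $\varphi \mapsto e^{2\pi i\langle b,\varphi\rangle_{\mathrm{BM}}}$ on $BG(\Adele_k)_{\mathcal{M}(L)}$.

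First I would rewrite the indicator function of the Brauer--Manin set via orthogonality on $\mathcal{B}$:
\[
\mathbf{1}_{BG(\Adele_k)_{\mathcal{M}(L)}^{\Br}}(\varphi) \;=\; \frac{1}{|\mathcal{B}|}\sum_{b\in\mathcal{B}} e^{2\pi i\langle b,\varphi\rangle_{\mathrm{BM}}}.
\]
Integrating both sides against $\tau_H$ and interchanging the finite sum and integral gives
\[
\tau_H\bigl(BG(\Adele_k)_{\mathcal{M}(L)}^{\Br}\bigr) \;=\; \frac{1}{|\mathcal{B}|}\sum_{b\in\mathcal{B}} \hat{\tau}_H(b),
\]
which is exactly the claimed identity after multiplying by $|\mathcal{B}|$.

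Next I would justify the Euler product expression for $\hat{\tau}_H(b)$. Since $\langle b,\varphi\rangle_{\mathrm{BM}} = \sum_v \inv_v b(\varphi_v)$, the integrand factors as $\prod_v e^{2\pi i\inv_v b(\varphi_v)}$. Combined with the product structure of $\tau_H$ from~\eqref{def:Tamagawa}, absolute convergence of $\prod_v \lambda_v^{-1}\tau_{H,v}$ on $BG(\Adele_k)_{\mathcal{M}(L)}$ from Theorem~\ref{thm:Tamagawa_products}, and the fact that by Theorem~\ref{thm:Harari_formal_partially_unramified} the local character $e^{2\pi i\inv_v b(\cdot)}$ equals $1$ on $BG(\O_v)_{\mathcal{M}(L)}$ for all but finitely many $v$ (so bounded by $\tau_{H,v}(BG(k_v))$ at the bad places and by $\tau_{H,v}(BG(\O_v)_{\mathcal{M}(L)})$ elsewhere), the dominated convergence theorem lets us factor the global integral into the infinite product of local ones, giving the stated Euler product.

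The main obstacle is the bookkeeping at places where $b$ is ramified: one must first approximate the global integral by integrals over the cylinder sets $\prod_{v\in S}BG(k_v) \prod_{v\notin S} BG(\O_v)_{\mathcal{M}(L)}$ as in Lemma~\ref{lem:total_Brauer_measure}, use Theorem~\ref{thm:Harari_formal_partially_unramified} to choose $S$ large enough so that the characters trivialise outside $S$, and then pass to the limit using the absolute convergence established in Theorem~\ref{thm:Tamagawa_products}. This simultaneously justifies the interchange of sum and integral and produces the Euler product. Finiteness of the sum is immediate from $|\mathcal{B}|<\infty$.
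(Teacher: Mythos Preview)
Your proposal is correct and follows essentially the same approach as the paper: character orthogonality on the finite group $\Br_{\mathcal{M}(L)} BG/\Br k$ to express the indicator of the Brauer--Manin set, then integrate. The paper's proof is a one-liner invoking orthogonality; your version is simply a more detailed write-up, and the extra care you take in justifying the Euler product factorisation (via Theorem~\ref{thm:Harari_formal_partially_unramified} and dominated convergence) is implicit in the paper but not spelled out.
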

\begin{proof}
	Immediate from character orthogonality, which implies that
	\[\sum_{b \in \Br_{\mathcal{M}(L)} BG/\Br k} e^{2 \pi i \langle b, x \rangle_{\text{BM}}}=
	\begin{cases}
		|\Br_{\mathcal{M}(L)} BG/\Br k|, & \text{ if } 
		\varphi \in BG(\Adele_k)_{\mathcal{M}(L)}^{\Br}, \\
		0, & \text{ otherwise}.
	\end{cases}   \]
\end{proof}

The method of proof for the Igusa integral formula (Theorem \ref{thm:Igusa}) is powerful enough to allow one to calculate the integrals which appear in Lemma \ref{lem:sum_Euler_products} at the tame places. We start with a lemma on exponential sums over gerbes.
\begin{lemma}\label{lem:exponential_sum_gerbe}
	Let $\mathcal{X}$ be a proper \'etale gerbe over $\F_q$ and let $\chi \in \H^1(\mathcal{X}, \Q/\Z)$ be such that $\chi_{{\bar \F}_q} \neq 0$. Then
	\[
	\sum_{x \in \mathcal{X}(\F_q)} \frac{e^{2 \pi i \chi(x)(\Frob_q)} }{|\Aut(x)|}= 0.
	\]
\end{lemma}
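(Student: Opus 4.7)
The plan is to interpret the sum as the trace of Frobenius on the $\ell$-adic cohomology of $\mathcal{X}$ with coefficients in a rank one local system determined by $\chi$, mirroring the proof of Lemma \ref{lem:gerbe_finite_field}. Since $\mathcal{X}_{\overline{\F}_q} \cong BH$ for some finite group $H$ (as in the proof of Lemma \ref{lem:gerbe_finite_field}), the class $\chi_{\overline{\F}_q} \in \Hom(H, \Q/\Z)$ has finite order, say $n$. Hence $\chi$ factors through $\H^1(\mathcal{X}, \tfrac{1}{n}\Z/\Z) \cong \H^1(\mathcal{X}, \Z/n\Z)$.

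To set up the coefficient sheaf, I would choose a prime $\ell$ coprime to $nq$ together with an embedding $\Z/n\Z \hookrightarrow \overline{\Q}_\ell^\times$ sending $1$ to a primitive $n$-th root of unity, and a field isomorphism $\overline{\Q}_\ell \cong \C$ compatible with this choice (so that the chosen root corresponds to $e^{2\pi i/n} \in \C$). Pushing forward the $\Z/n\Z$-torsor defined by $\chi$ along this embedding yields a rank one lisse $\overline{\Q}_\ell$-sheaf $\mathcal{L}_\chi$ on $\mathcal{X}$ with the key property that for every $x \in \mathcal{X}(\F_q)$ the stalk-wise Frobenius trace equals
\[
\mathrm{Tr}\bigl(\Frob_q \mid \mathcal{L}_{\chi,x}\bigr) = e^{2\pi i \chi(x)(\Frob_q)}.
\]

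Next, I would apply the Grothendieck--Lefschetz--Behrend trace formula \cite[Thm.~2.5.2]{Beh93} with coefficients in $\mathcal{L}_\chi$, obtaining
\[
\sum_{x \in \mathcal{X}(\F_q)} \frac{\mathrm{Tr}(\Frob_q \mid \mathcal{L}_{\chi,x})}{|\Aut(x)|} = \sum_{i \geq 0} (-1)^i \mathrm{Tr}\bigl(\Frob_q \mid \H^i(\mathcal{X}_{\overline{\F}_q}, \mathcal{L}_\chi)\bigr),
\]
where the left hand side is exactly the sum we want to evaluate. It then suffices to show that each cohomology group on the right vanishes. Under the identification $\mathcal{X}_{\overline{\F}_q} \cong BH$, the sheaf $\mathcal{L}_\chi$ corresponds to the one-dimensional $\overline{\Q}_\ell$-representation $V_\chi$ of $H$ afforded by $\chi_{\overline{\F}_q}$, and $\H^i(\mathcal{X}_{\overline{\F}_q}, \mathcal{L}_\chi) \cong \H^i(H, V_\chi)$ is group cohomology. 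By hypothesis $\chi_{\overline{\F}_q} \neq 0$, so $V_\chi$ is non-trivial and $V_\chi^H = 0$, handling $i = 0$. For $i \geq 1$, since $\ell \nmid |H|$, Maschke's theorem makes $\overline{\Q}_\ell[H]$ semisimple and forces $\H^i(H, V_\chi) = 0$.

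The main technical point is the construction of $\mathcal{L}_\chi$ and the verification of its stalk-wise Frobenius trace property, which rely on the standard dictionary between $\Z/n\Z$-torsors and rank one lisse $\overline{\Q}_\ell$-sheaves via a chosen root of unity. A minor subtlety is that the order of $H$ may be divisible by $\mathrm{char}(\F_q)$ (the gerbe is only assumed \'etale, not tame), but since $\ell$ can be chosen coprime to both $|H|$ and $q$ this causes no issue with either the trace formula or the vanishing of group cohomology.
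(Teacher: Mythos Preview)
Your argument is correct and follows a genuinely different route from the paper. The paper proceeds combinatorially: it fixes $n$ with $\chi \in \H^1(\mathcal{X},\Z/n\Z)$, and for each $a \in \Z/n\Z \cong \H^1(\F_q,\Z/n\Z)$ considers the $\Z/n\Z$-torsor $\mathcal{Y}_a \to \mathcal{X}$ corresponding to $\chi - a$. A point $x \in \mathcal{X}(\F_q)$ lifts to $\mathcal{Y}_a$ exactly when $\chi(x)(\Frob_q) = a$, and the fibre has groupoid cardinality $n$, so the sum becomes $\tfrac{1}{n}\sum_a e^{2\pi i a}\,\#\mathcal{Y}_a(\F_q)$. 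Counting $\#\mathcal{Y}_a(\F_q)$ via Lemma~\ref{lem:gerbe_finite_field} reduces to counting Frobenius-fixed geometric components, and the hypothesis $\chi_{\bar\F_q}\neq 0$ forces these counts to be constant on cosets of a proper subgroup $m\Z/n\Z$, whence the character sum vanishes.

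Your approach bypasses this torsor-by-torsor bookkeeping in favour of a single application of the trace formula with twisted coefficients. This is exactly the alternative the authors have in mind in the remark following Theorem~\ref{thm:local_invariant_integral}, where they note that a proof ``in the style of the proof of Lemma~\ref{lem:gerbe_finite_field}, by using a suitable version of the Lefschetz trace formula for lisse $\ell$-adic sheaves on stacks'' should be possible. Your argument is arguably more conceptual; the paper's argument has the advantage of being entirely self-contained within the constant-coefficient trace formula already invoked. Two very minor remarks: first, the passage from $\chi \in \H^1(\mathcal{X},\Q/\Z)$ to some $\H^1(\mathcal{X},\Z/n\Z)$ is simply because cohomology commutes with the filtered colimit $\Q/\Z = \varinjlim \Z/n\Z$ (your appeal to the order of $\chi_{\bar\F_q}$ gives a specific $n$, but any $n$ works); second, the condition $\ell \nmid |H|$ is harmless but unnecessary for Maschke, since $\bar\Q_\ell$ already has characteristic $0$.
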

\begin{proof}
	Let $n$ be such that $\chi \in \H^1(\mathcal{X}, \Z/n\Z)$. Note that evaluation at $\Frob_q$ defines an isomorphism $\H^1(\F, \Z/n \Z) \cong \Z/n \Z$. For each element $a \in \H^1(\F, \Z/n \Z)$ let $\psi_a: \mathcal{Y}_a \to \mathcal{X}$ be the $\Z/n \Z$-torsor corresponding to $\chi - a \in \H^1(\mathcal{X}, \Z/n \Z)$. We will frequently abuse notation by identifying $a$ with $a(\Frob_q)$
	
	 Note that if $x \in \mathcal{X}(\F_q)$ then $x$ lies in the image of $\psi_a$ if and only if $(\chi - a)(x)$ is trivial. This later statement is equivalent to $\chi(x)(\Frob_q) = a \in \Z/n\Z$. In this case the fibre product of $\Spec \F_q \xrightarrow{x} \mathcal{X} \xleftarrow{\psi_a} \mathcal{Y}_a$ is the trivial $\Z/n \Z$-torsor so in terms of groupoid cardinality the map $\mathcal{Y}_a(\F_q) \to \mathcal{X}(\F_q)$ is $n$ to $1$. It follows that
	 \[
	 	\sum_{x \in \mathcal{X}(\F_q)} e^{2 \pi i \chi(x)(\Frob_q)} = \frac{1}{n}\sum_{a \in \Z/n \Z} e^{2 \pi i a} \#\mathcal{Y}_a(\F_q).
	 \]
	 If a connected component of $\mathcal{Y}_a$ is geometrically connected then it is a gerbe so the $\F_q$-points have cardinality $1$ by Lemma \ref{lem:groupoid_count}. If it is not geometrically connected then it contains no $\F_q$-points. We thus have $\#\mathcal{Y}_a(\F_q) = \pi_0(\mathcal{Y}_{a, {\bar \F}_q})^{\Gamma_{\F_q}}$.
	 
	 Let $\chi: \pi_{1}(\mathcal{X}) \to \Z/n \Z$ be the character corresponding to $\chi$ and let $m\Z/n \Z \subset \Z/n\Z$ be $\chi(\pi_{1}(\mathcal{X}_{\overline{\F_q}}))$. The fact that $\mathcal{X}_{\overline{\F}_q}$ is geometrically connected implies that $\pi_0(\mathcal{Y}_{a, {\bar \F}_q})$ naturally has the structure of a $(\Z/ n \Z)/\chi(\pi_{1}(\mathcal{X}_{\overline{\F_q}})) \cong \Z/m \Z$-torsor over $\Gamma_{\F}$. The assumption that $\chi \neq 0$ implies that $m \neq n$.

	 Let $\pi_0(\chi)$ be the quotient map $\Gamma_F \cong \pi_{1}(\mathcal{X})/\pi_{1}(\mathcal{X}_{\F_q}) \to \Z/m\Z$ induced by $\chi$. The $\Z/m\Z$-torsor $\pi_0(\mathcal{Y}_{a, {\bar \F}_q})$ corresponds to the map $\chi - a: \Gamma_{\F} \to \Z/m \Z$.
	 
	 Since $\Frob_q$ generates $\Gamma_{\F_q}$ it follows that if $\pi_{0}(\chi)(\Frob_q) = a(\Frob_q) \pmod{m}$ then $\# \pi_0(\mathcal{Y}_{a, {\bar \F}_q})^{\Gamma_{\F_q}} = m$ and that $\# \pi_0(\mathcal{Y}_{a, {\bar \F}_q})^{\Gamma_{\F_q}}  = 0$ otherwise. Let $b \in \Z/n \Z$ be a lift of $\pi_{0}(\chi)(\Frob_q) \in \Z/m\Z$. Letting $c = a - b$ we find that
	 \[
	 \sum_{x \in \mathcal{X}(\F_q)} e^{2 \pi i \chi(x)(\Frob_q)}  = \frac{m}{n} \sum_{\substack{a \in \Z/n\Z \\ a \equiv b \pmod{m}}} e^{2 \pi i a} = \frac{m e^{2 \pi i b}}{n} \sum_{c \in m\Z/n \Z} e^{2 \pi i c} = 0. \qedhere
	 \]
\end{proof}

\begin{theorem}\label{thm:local_invariant_integral}
	Let $v$ be a good place for $G$ and for a height $H$ with weight function 
	$w: \mathcal{C}_G \to \Q$. Let $\mathcal{G}$ be the $\mathcal{O}_v$-model of $G$ and
	$b \in \Br B \mathcal{G}$. We define a function $\chi_{v}: \mathcal{C}_G^{\Gamma_{k_v}} \to \C$ as
	follows. For each $c \in \mathcal{C}_G^{\Gamma_{k_v}}$
	let $\mathcal{S}_{c} \subset I_{\mu} B\mathcal{G}$ be the corresponding sector.
	
	\noindent \textbf{Transcendental residue:}  If $\partial_{c}(b)_{\bar k_v} \in \H^1((\mathcal{S}_{c})_{\bar k_v}, \Q/\Z)$ is non-zero then $\chi_{v}(c) = 0$.
	
	\noindent \textbf{Algebraic residue: } 
	Otherwise $\partial_{c}(b) \in \ker(\H^1(\mathcal{S}_{c}, \Q/\Z) \to
	\H^1((\mathcal{S}_{c})_{\bar k_v}, \Q/\Z))$, thus by the Hochshild--Serre spectral 
	sequence we may view $\partial_{c}(b) \in \H^1(k_v,\Q/\Z) = \Hom(\Gamma_{k_v},\Q/\Z)$.
	Moreover as $b \in \Br B \mathcal{G}$ this is unramified. So we 
	define $\chi_{v}(c) = e^{2 \pi i \partial_{c}(b)(\Frob_v)}$ where $\Frob_v$ 
	denotes the Frobenius element in $\Gamma_{k_v}$.
		
	Then for all $s \in \C$ and all Galois equivariant functions $f: \mathcal{C}_G \to \C$ 
	we have
	\[
	\sum_{\varphi_v \in BG[k_v]} \frac{f(\rho_{G,v}(\varphi_v))e^{2 \pi i \cdot \emph{inv}_v(b(\varphi_v))}}{|\Aut{\varphi_v}| H_v(\varphi_v)^s} = \sum_{c \in \mathcal{C}_{G}^{\Gamma_v}} \frac{f(c)\chi_{v}(c)}{q_v^{w(c)s}}.
	\]
\end{theorem}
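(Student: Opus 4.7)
The plan is to run the argument of Theorem~\ref{thm:Igusa} with the extra Brauer factor in place, reducing the inner sum on each sector to the exponential sum over a gerbe handled by Lemma~\ref{lem:exponential_sum_gerbe}. First, Theorem~\ref{thm:Hensel} together with Proposition~\ref{prop:cyclotomic_inertia_stack_BG} and Lemma~\ref{lem:sectors_BG} identifies $B\mathcal{G}(k_v)$ with $I_\mu B\mathcal{G}(\F_v) = \coprod_{c \in \mathcal{C}_G^{\Gamma_{k_v}}} \mathcal{S}_c(\F_v)$, and on each component $\mathcal{S}_c(\F_v)$ the ramification type is constantly $c$ and the height is $q_v^{w(c)}$. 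The left-hand side thus splits as
\[
\sum_{c \in \mathcal{C}_G^{\Gamma_{k_v}}} \frac{f(c)}{q_v^{w(c)s}} \,\Sigma_c,
\qquad
\Sigma_c := \sum_{\overline{\varphi} \in \mathcal{S}_c(\F_v)} \frac{e^{2\pi i \cdot \inv_v(b(\varphi_v))}}{|\Aut(\overline{\varphi})|},
\]
where $\varphi_v \in BG(k_v)$ denotes the point corresponding to $\overline{\varphi}$ under the Hensel equivalence.

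Next I would reinterpret $\Sigma_c$ as an exponential sum controlled by the residue along the sector. By Lemma~\ref{lem:local_evaluation_residue_map} the class $b(\varphi_v)$ lies in $\ker(\Br k_v \to \Br k_v^{\mathrm{sh}})$ and satisfies $\partial_{\mathcal{O}_v}(b(\varphi_v)) = \partial_{\mathcal{S}_c}(b)(\overline{\varphi})$. Since on this kernel $\inv_v$ equals the Witt residue followed by evaluation at $\Frob_v$, we obtain
\[
\inv_v(b(\varphi_v)) = \partial_{\mathcal{S}_c}(b)(\overline{\varphi})(\Frob_v),
\]
so that $\Sigma_c$ is precisely the exponential sum of Lemma~\ref{lem:exponential_sum_gerbe} on the proper \'etale gerbe $\mathcal{S}_{c,\F_v}$ against the character $\chi := \partial_{\mathcal{S}_c}(b) \in \H^1(\mathcal{S}_c, \Q/\Z)$.

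Finally I would split according to the two cases in the definition of $\chi_v(c)$. Since $\mathcal{S}_c$ is a proper \'etale gerbe over $\mathcal{O}_v$, the restriction maps $\H^1(\mathcal{S}_{c,\bar k_v}, \Q/\Z) \leftarrow \H^1(\mathcal{S}_c, \Q/\Z) \to \H^1(\mathcal{S}_{c,\bar\F_v}, \Q/\Z)$ identify the geometric generic and special fibres of $\chi$. In the transcendental case $\chi_{\bar k_v} \neq 0$, hence also $\chi_{\bar\F_v} \neq 0$, and Lemma~\ref{lem:exponential_sum_gerbe} gives $\Sigma_c = 0 = \chi_v(c)$. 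In the algebraic case $\chi_{\bar k_v} = 0$, and because $c$ is a single $\Gamma_{k_v}$-fixed orbit the sector $\mathcal{S}_{c,\F_v}$ is geometrically connected, so the Hochschild--Serre spectral sequence identifies $\chi$ with a class in $\H^1(\F_v, \Q/\Z)$; this class is unramified precisely because $\chi$ extends over $\mathcal{O}_v$, matching the description in the theorem. Thus $\chi(\overline{\varphi})(\Frob_v) = \partial_c(b)(\Frob_v)$ is independent of $\overline{\varphi}$, and Lemma~\ref{lem:gerbe_finite_field} gives $\#\mathcal{S}_c(\F_v) = 1$, so $\Sigma_c = e^{2\pi i \partial_c(b)(\Frob_v)} = \chi_v(c)$, as required. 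The main obstacle in executing this plan is bookkeeping rather than substance: one must check carefully that the specialisation isomorphism on $\H^1$ of the sector matches the decomposition of $\partial_c(b)$ prescribed in the statement, and that the Hochschild--Serre identification produces the unramified character obtained from the integral model, both of which follow from \'etaleness of $\mathcal{S}_c$ over $\mathcal{O}_v$.
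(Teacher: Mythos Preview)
Your proposal is correct and follows essentially the same approach as the paper's proof: decompose the sum by sectors via stacky Hensel (Theorem~\ref{thm:Hensel} and Proposition~\ref{prop:cyclotomic_inertia_stack_BG}), rewrite $\inv_v(b(\varphi_v))$ using Lemma~\ref{lem:local_evaluation_residue_map}, and then handle the inner sum on each sector by Lemma~\ref{lem:exponential_sum_gerbe} in the transcendental case and Lemma~\ref{lem:gerbe_finite_field} in the algebraic case. You are in fact slightly more careful than the paper in one place: you explicitly note that \'etaleness of $\mathcal{S}_c$ over $\mathcal{O}_v$ is what transports the hypothesis $\partial_c(b)_{\bar k_v}\neq 0$ to the geometric special fibre condition $\chi_{\bar\F_v}\neq 0$ required by Lemma~\ref{lem:exponential_sum_gerbe}, whereas the paper leaves this implicit.
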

\begin{proof}
	By Proposition \ref{prop:cyclotomic_inertia_stack_BG}
	and Theorem \ref{thm:Hensel} the modulo $\pi_v$ map defines an equivalence $BG(k_v) \to [G(-1)/G](\F_v)$ of groupoids. On the other hand by Lemma~\ref{lem:local_evaluation_residue_map} and the definition of $\text{inv}_v$ for all $\varphi_v \in BG[k_v]$ we have 
	\[
		\text{inv}_v(b(\varphi_v)) = \left(\res_{\rho_{G,v}(\varphi_v)}(b)\left(\varphi_v \bmod \pi_v\right)\right)(\Frob_v).
	\]
	Combining these two facts, and noting that a sector $\mathcal{S}_{c}$ only has an $\F_v$-point if it corresponds to an element $c \in \mathcal{C}_G^{\Gamma_v}$, we find that 
	\[
		\sum_{\varphi_v \in BG[k_v]} \frac{f(\rho_{G,v}(\varphi_v))e^{2 \pi i \cdot \text{inv}_v(b(\varphi_v))}}{|\Aut{\varphi}| H_v(\varphi)^s} 
		= \sum_{c \in \mathcal{C}_G^{\Gamma_v}} \frac{f(c)}{q_v^{w(c) s}}
		\sum_{x \in \mathcal{S}_c[\F_v]}
		\frac{e^{2 \pi i \left(\res_{c}(b)(x)\right)(\Frob_v)}}{|\Aut(x)|}.
	\]
	If $\chi_{v}(c)= 0$ then Lemma \ref{lem:exponential_sum_gerbe} applied to $\res_{\mathcal{S}_c}(b) \in \H^1(\mathcal{S}_c, \Q/\Z)$ implies that the sum over $\mathcal{S}_c[\F_v]$ is $0$.
	
	If $\chi_{v}(c)\neq 0$ then $e^{2 \pi i \left(\res_{\mathcal{S}_c}(b)(x)\right)(\Frob_v)}$ takes the constant value $\chi_v(c)$ since the residue is constant. In this case the sum over $\mathcal{S}_c[\F_v]$ is $\chi_c(v)$ by Lemma~\ref{lem:gerbe_finite_field}. 
\end{proof}

This theorem allows us for any $b \in \Br BG$ to compute $\hat{\tau}_{H, v}(b)$ for all but finitely many places. For algebraic residues, the formula is in terms of a Galois character, which can be viewed as a Dirichlet character by class field theory. In particular the local integrals can be viewed as character sums. One can then compute $\hat{\tau}_{H, v}(b)$ at the remaining places by hand (or computer). See Lemma~\ref{lem:A4_local_densities_conductor} for a worked example. 

\begin{remark}
 It is presumably possible to give a proof of Lemma \ref{lem:exponential_sum_gerbe} in the style
 of the proof of Lemma \ref{lem:gerbe_finite_field}, by using a suitable version of the Lefschetz 
 trace formula for lisse $\ell$-adic sheaves on stacks.
\end{remark}

\begin{remark}[Hierarchy for the Tamagawa measure] \label{rem:hierarchy}
	Lemma \ref{lem:sum_Euler_products} leads to a natural hierarchy for 
	$\tau_H( BG(\Adele_k)_{\mathcal{M}(L)}^{\Br})$,
	with three cases, providing that $H$ is balanced.
	\begin{enumerate}
		\item If $\Br_{\mathcal{M}(L)} BG \neq \Br_{\mathrm{un}} BG$, then 
		$\tau_H( BG(\Adele_k)_{\mathcal{M}(L)}^{\Br})$ is a sum of finitely many Euler
		products with different Euler factors at infinitely many places. This follows
		from Theorem \ref{thm:Harari's_formal_lemma}, 
		and is the situation in Conjecture \ref{conj:A_4_conductor_intro}.
		\item If $\Br_{\mathcal{M}(L)} BG = \Br_{\mathrm{un}} BG$, then 
		$\tau_H( BG(\Adele_k)_{\mathcal{M}(L)}^{\Br})$ is a sum of finitely many Euler
		products, where the Euler factors may differ at only finitely many places. This
		again follows from Theorem \ref{thm:Harari's_formal_lemma}.
		This is the situation which has occurred so far in the literatue, for example
		in Wood's paper \cite{Woo10} (see \S \ref{sec:balanced_wood} for details).
		\item If $\Br_{\mathrm{un}} BG = \Br k$, then 
		$\tau_H( BG(\Adele_k)_{\mathcal{M}(L)}^{\Br})$ is given by a single Euler product.
		This is the case in Bhargava's paper \cite{Bha07}, for example, and is typical
		in many examples in the literature.
	\end{enumerate}

	Moreover in cases (1) and (2), Theorem \ref{thm:local_invariant_integral} implies
	that the local Euler factors are determined by frobenian conditions.
\end{remark}

\section{Conjectures} \label{sec:conjectures}

We now state our conjectures on the leading constant for Malle's conjecture, inspired by Peyre's conjecture \cite{Pey95} concerning the leading constant in Manin's conjecture, as well as the generalisations of Batyrev and Tschinkel \cite{BT98}. Recall that for a Fano variety $X$ equipped with the anticanonical height function, the asymptotic formula in Manin's conjecture is 
\begin{equation} \label{eqn:Peyre}
	\frac{\alpha^*(X) \beta(X) \tau( X(\Adele_k)^{\Br})}{(\rho(X) - 1)!} B (\log B)^{\rho(X) - 1}
\end{equation}
where $\rho(X) = \rank \Pic X$, Peyre's (renormalised) effective cone constant is denoted by $\alpha^*(X)$, and $\beta(X) = |\H^1(k, \Pic \bar{X})| = |\Br_1 X/ \Br k|$. (The constant $\beta(X)$ first appeared in \cite{BT95, BT98, Sal98}.) Here $\tau$ is Peyre's Tamagawa measure and $X(\Adele_k)^{\Br}$ is the collection of adelic points orthogonal to $\Br X$.
Peyre considered the anticanonical height only; for general line bundles we follow the dichotomy presented in \cite[\S 3.4]{BT98}, as well as the predictions from \cite[\S 3.3]{BT98} (note that what Batyrev-Tschinkel  call ``$\mathcal{L}$-primitive'' is now called ``adjoint rigid'' in the literature; in our paper we use the softer terminology ``balanced'').

Throughout this section, we let  $H$ be a height function associated to a big orbifold line bundle $L = (\chi,w)$ on $BG$, for some non-trivial finite \'etale group scheme $G$ over a number field $k$ (see \S \ref{sec:heights}). We expect analogous conjectures to hold over global function fields  providing $G$ is tame, but one needs to be slightly more careful with the statements due to the nature of asymptotic formulae over function fields, one often needs to take a weighted average over the count for different fixed height values (see e.g. \cite[Thm.~I.3]{Wri89} for example of this).

\subsection{Balanced heights} \label{sec:adj_rigid_conj}
We say that $H$ (and $L$) is \textit{balanced} if the line bundle $L$ is adjoint rigid; equivalently if the collection $\mathcal{M}(L)$ of minimal weight elements generates $G$ (Definition \ref{def:balanced_height}). Recall that we denote by $BG[k]$ the set of isomorphism classes of elements of the groupoid $BG(k)$ (see \S \ref{sec:groupoid} for basic properties of this groupoid).

\begin{conjecture} \label{conj:balanced}
	Assume that $H$ is balanced. Then there exists a thin subset $\Omega \subset BG[k]$ such that
	$$\frac{1}{|Z(G)(k)|}\#\{ \varphi \in BG[k] \setminus \Omega : H(\varphi) \leq B\} \sim c(k,G,H) B^{a(L)} (\log B)^{b(k,L)-1}$$
	where 
	\begin{align*}
	a(L) & = (\min_{c \in \mathcal{C}_G^*}w(c))^{-1},   \quad
	\mathcal{M}(L) = \{ c \in \mathcal{C}_G^* : w(c) = a(L)^{-1}\}, \\
	 b(k,L) & = \#\mathcal{M}(L)/\Gamma_k,
	\end{align*}
	and 
	$$c(k,G,H) = \frac{a(L)^{b(k,L)-1}\cdot |\Br_{\mathcal{M}(L)} BG / \Br k| \cdot
	\tau_H( BG(\Adele_k)_{\mathcal{M}(L)}^{\Br})}{\#\dual{G}(k) (b(k,L) - 1)!}.$$
\end{conjecture}

An equivalent conjecture can be obtained from Lemma \ref{lem:groupoid_count}, which allows one to replace the sum over $\varphi \in BG[k]$ by a sum over $1$-cocycles $\Gamma_k \to G$ and replace the weighting by $1/|G|$ (if $G$ is constant then a $1$-cocycle is simply a homorphism, cf.~Lemma \ref{lem:BG(k)}). We denote by $BG(\Adele_k)_{\mathcal{M}(L)}^{\Br}:= BG(\Adele_k)_{\mathcal{M}(L)}^{\Br_{\mathcal{M}(L)} BG}$.

Let us motivate this expression using \eqref{eqn:Peyre} as a guide. Firstly, since we are performing a groupoid count, we weight each element by its reciprocal  automorphism group, which away from a thin set is $1/|Z(G)(k)|$ by Lemma \ref{lem:automorphism_group_is_inner_twist}(3). The exponents $a(L)$ and $b(k,L)$ come from Lemma \ref{lem:Fujita_minimal}.  The effective cone constant is $\alpha^*(BG, L) = a(L)^{b(k,L)}/|\dual{G}(k)|$ by Lemma \ref{lem:effective_cone_calc}. There is also an additional Fujita invariant $a(L)$ on the denominator coming from the exponent of $B$ (since in Peyre's formula the exponent of $B$ is $1$).
For the Tamagawa measure, we mimic the setting considered in \cite[Def.~3.3.10]{BT98} where the Tamagawa measure is taken on an open subset $U$ of $X$. For the cohomological constant, we use the cardinality of the corresponding Brauer group of the mimicked open subset $U$ (cf.~\cite[Def.~3.4.3]{BT98}); this is exactly the partially unramified Brauer group from Definition \ref{def:partially_ramified_Br} and the partial adelic space from Definition \ref{def:adelic_space}. 

The Brauer group $\Br_{\mathcal{M}(L)} BG$ is indeed finite modulo $\Br k$ by Corollary \ref{cor:Br_finite}. In the case of the anticanonical height function given by the radical discriminant, the corresponding Brauer group is the unramified Brauer group $\Brun BG$. 

The Tamagawa factor $\tau_H( BG(\Adele_k)_{\mathcal{M}(L)}^{\Br})$ which appears is quite complicated in general; it is best viewed via the hierarchy explained in Remark \ref{rem:hierarchy}. It is non-zero by Lemma \ref{lem:total_Brauer_measure}.

Conjecture \ref{conj:balanced} is a more precise version of \cite[Conj.~1.3.1]{DYTor} and \cite[Conj.~3.10]{Alb21}, which gave no prediction for the leading constant.

\begin{remark}[The thin set $\Omega$] \label{rem:breaking}
We have allowed flexibility in the choice of $\Omega$ in Conjecture~\ref{conj:balanced}. However once one has found an $\Omega$ which works, any larger choice of thin set will also do (see Theorem \ref{thm:thin_negligable}). Nevertheless, we also have a conjecture for which $\Omega$ to take. This should be the collection of breaking cocycles from \S \ref{sec:breaking}. In the case where $G$ is constant, one can instead take the larger set from Theorem~\ref{thm:breaking}. As Conjecture \ref{conj:balanced} is phrased, for general group schemes $G$ one should also restrict to $\varphi$ with $\Aut \varphi \cong Z(G)(k)$; this is cothin by Lemma \ref{lem:automorphism_group_is_inner_twist}(3).
\end{remark}

We also give a version of the conjecture for field extensions. In the statement we consider field
extensions up to isomorphism, denote by $\widetilde{K}$ the Galois closure of $K$, and consider $\Gal(\widetilde{K}/k)$ as a permutation group. This is the original setting for Malle's conjecture, and we give a precise prediction for the asymptotic providing the discriminant is balanced.

\begin{conjecture} \label{conj:discriminant}
	Let $G \subseteq S_n$ be a transitive subgroup with normaliser $N$ and centraliser $C$.
	Assume that the discriminant is a balanced height function. Then 
	\begin{align*}
	\frac{|N|}{|C| \cdot |G|}&\#\left\{ [K:k] = n :
	\begin{array}{l}
	  |\Norm_{k/\Q} \Delta_{K/k}| \leq B, \Gal(\widetilde{K}/k) \cong G, \\
	 \widetilde{K} \cap k(\mu_{\exp(G)}) = k
	 \end{array} \right\} \\ 
	 & \quad \sim c(k,G,\Delta) B^{a(G)} (\log B)^{b(k,G) - 1}
	\end{align*}
	where $c(k,G,\Delta)$ is as in Conjecture \ref{conj:balanced} and $(a(G),b(k,G))$ are as in
	\eqref{def:a_b_Malle}.
\end{conjecture}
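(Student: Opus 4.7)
The strategy is to deduce Conjecture \ref{conj:discriminant} from Conjecture \ref{conj:fair} applied to $BG$ equipped with the discriminant height function from Example \ref{ex:disc}. The relevant orbifold line bundle is $\Delta = (1,\ind)$, whose Fujita invariants satisfy $a(\Delta) = a(G)$ and $b(k,\Delta) = b(k,G)$ by Lemma \ref{lem:Fujita_minimal} together with the definitions in \eqref{def:a_b_Malle}. The fairness hypothesis on the discriminant is precisely the assumption that the minimal-index conjugacy classes generate $G$, so Conjecture \ref{conj:fair} applies and yields, for some thin $\Omega \subset BG[k]$,
$$\frac{1}{|Z(G)(k)|}\#\{\varphi \in BG[k] \setminus \Omega : H(\varphi) \leq B\} \sim c(k,G,\Delta) B^{a(G)} (\log B)^{b(k,G)-1}.$$

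The main step is to identify the correct $\Omega$ and convert the cocycle count into a field count. I would take $\Omega = \Omega_{G,k}$ from Theorem \ref{thm:breaking}, i.e.\ the union of non-surjective cocycles (thin by Lemma \ref{lem:thin_base_change}) and those $\varphi$ whose defining field $k_\varphi$ fails to be linearly disjoint from $k(\mu_{\exp(G)})$. By Lemmas \ref{lem:S_n} and \ref{lem:connected}, the cocycles in $BG[k]\setminus\Omega$ correspond under $F \colon BG(k) \to BS_n(k)$ exactly to degree-$n$ fields $K$ with $\Gal(\widetilde{K}/k)\cong G$ as a permutation subgroup of $S_n$ and $\widetilde{K}\cap k(\mu_{\exp(G)}) = k$. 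Setting $W := \{\varphi \in BG[k]\setminus\Omega : H(\varphi)\leq B\}$, both the surjectivity and linear-disjointness conditions are $S_n$-conjugacy invariant, so $F^{-1}(F(W)) = W$ and Lemma \ref{lem:G_S_n} applied with $f \equiv 1$ gives
$$\frac{1}{|Z(G)(k)|}\#W = \frac{|N|}{|C|\cdot|G|}\#F(W),$$
in which $F(W)$ is exactly the set of field extensions appearing on the left of Conjecture \ref{conj:discriminant}. Substituting the asymptotic for $\#W$ yields the desired formula, with the same leading constant $c(k,G,\Delta)$.

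The principal obstacle is not technical but conceptual, and lies entirely in the transition from the existential thin set in Conjecture \ref{conj:fair} to the explicit exceptional set dictated by the cyclotomic-intersection condition. Conjecture \ref{conj:fair} only asserts existence of \emph{some} thin $\Omega$, and I would need to know that removing the possibly larger set $\Omega_{G,k}$ leaves the main term unchanged; this is the content of Theorem \ref{thm:thin_negligable} referenced in Remark \ref{rem:breaking}, namely that enlarging the exceptional set by a thin subset does not affect the leading constant or power of $\log B$. Once this negligibility is granted, the rest of the argument is a direct bookkeeping exercise in groupoid cardinalities via Lemma \ref{lem:G_S_n}, the factor $|N|/(|C|\cdot|G|)$ in the statement of Conjecture \ref{conj:discriminant} being exactly the ratio produced by that lemma applied to surjective cocycles.
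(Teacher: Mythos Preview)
Your approach is correct and matches the paper's own derivation (Lemma \ref{lem:equiv_conjectures}): apply Conjecture \ref{conj:fair} with the discriminant height, identify $a(\Delta),b(k,\Delta)$ via Lemma \ref{lem:Fujita_minimal}, and then convert the cocycle count into a field count using Lemma \ref{lem:S_n} and Lemma \ref{lem:G_S_n}, which produces exactly the factor $|N|/(|C|\cdot|G|)$.

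One small point on the thin-set discussion: the paper does not invoke Theorem \ref{thm:thin_negligable} here, because that result assumes the stronger equidistribution Conjecture \ref{conj:equi}, not merely Conjecture \ref{conj:fair}. Instead, Lemma \ref{lem:equiv_conjectures} simply takes as its hypothesis ``Conjecture \ref{conj:fair} for $\Omega$ as in Theorem \ref{thm:breaking}'', i.e.\ it assumes from the outset that the specific set $\Omega_{G,k}$ works (cf.\ Remark \ref{rem:breaking}). Your version, which starts from the existential form of Conjecture \ref{conj:fair} and then enlarges $\Omega$, would strictly speaking need equidistribution; the paper avoids this by building the choice of $\Omega$ into the hypothesis.
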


\begin{lemma} \label{lem:equiv_conjectures}
	Conjecture \ref{conj:balanced} for $\Omega$ as in Theorem \ref{thm:breaking}
	implies both Conjectures \ref{conj:intro}(1) and  \ref{conj:discriminant}.
\end{lemma}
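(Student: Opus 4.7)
The plan is to apply Conjecture~\ref{conj:fair} to the discriminant height $H$ from Example~\ref{ex:disc}, associated to the orbifold line bundle $L = (1, \ind)$, with the thin set $\Omega$ taken to be the breaking cocycles of Theorem~\ref{thm:breaking}(1). Under the hypothesis that $\mathcal{M}(G)$ generates $G$, the line bundle $L$ is fair, its minimal weight conjugacy classes coincide with $\mathcal{M}(G)$, and Lemma~\ref{lem:Fujita_minimal} gives $a(L) = a(G)$ and $b(k, L) = b(k, G)$. One then performs three conversions to identify the resulting asymptotic with the constants of Conjectures~\ref{conj:intro}(1) and~\ref{conj:discriminant}.

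First, enlarge the thin set. By Theorem~\ref{thm:breaking}(2), the breaking cocycles sit inside the explicit set $\Omega_{G,k}$ of non-surjective homomorphisms and those whose associated field is not linearly disjoint from $k(\mu_{\exp(G)})$. Enlarging $\Omega$ to $\Omega_{G,k}$ preserves the asymptotic by the thin-negligibility result Theorem~\ref{thm:thin_negligable}. Next, convert the groupoid cardinality into a cocycle count: every $\varphi$ outside $\Omega_{G,k}$ is surjective, so Lemma~\ref{lem:automorphism_group_is_inner_twist}(2) gives $\Aut(\varphi) \cong Z(G)(k)$, and Lemma~\ref{lem:groupoid_count} then rewrites $\tfrac{1}{|Z(G)(k)|}$ times the groupoid count as $\tfrac{1}{|G|}$ times the count of $1$-cocycles $\varphi \in Z^1(k, G)$; this is precisely the left-hand side of Conjecture~\ref{conj:intro}(1).

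Second, identify the Tamagawa factor with the Euler product $\tau(k, G)$. Under the Brauer--Manin hypothesis, $BG(\Adele_k)_{\mathcal{M}(L)}^{\Br} = BG(\Adele_k)_{\mathcal{M}(L)}$, so Lemma~\ref{lem:total_adelic_measure} combined with Lemma~\ref{lem:convergence_factors_alternative} expresses $\tau_H(BG(\Adele_k)_{\mathcal{M}(L)}^{\Br})$ as $(\mathrm{Res}_{s=1}\zeta_k(s))^{b(k,L)}$ times an absolutely convergent infinite product of local factors, with $(1 - 1/q_v)^{b(k,L)}$ as convergence factors at finite places. Unfolding each local Tamagawa measure via \eqref{eqn:tau_alternative} and Lemma~\ref{lem:archimedean_mass_formula} into a cocycle sum then produces exactly the local factors of $\tau(k, G)$ in Conjecture~\ref{conj:intro}(1); assembling all pieces (together with the factor $a(L)^{b(k,L)-1}/|\dual{G}(k)|$ which matches on both sides) yields the constant $c(k, G)$.

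Finally, for Conjecture~\ref{conj:discriminant}, pass from the cocycle count to a count of field extensions using Lemma~\ref{lem:G_S_n}. For surjective $\varphi$ this lemma provides the precise combinatorial ratio, converting $\tfrac{1}{|Z(G)|}$ times a surjective-cocycle count into $\tfrac{|N|}{|C|\cdot|G|}$ times the count of degree $n$ extensions with Galois closure $G$, where $N$ and $C$ denote the normaliser and centraliser of $G$ in $S_n$. Under this dictionary, the surjectivity clause in $\Omega_{G,k}$ matches the Galois closure condition $\Gal(\widetilde{K}/k) \cong G$, while the linear disjointness clause matches $\widetilde{K} \cap k(\mu_{\exp(G)}) = k$. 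The main obstacle will be the bookkeeping in the second step: ensuring that the $(1-1/q_v)$ convergence factors, the residues of $\zeta_k(s)$, and the exponent $a(L)$ appearing in each local height $H_v^{-a(L)}$ combine correctly against the normalisations in $\tau(k, G)$ to produce the stated Euler product. The remaining steps are essentially formal once this identification is secured.
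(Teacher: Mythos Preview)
Your overall approach matches the paper's, and the identifications via Lemmas~\ref{lem:groupoid_count}, \ref{lem:convergence_factors_alternative}, and \ref{lem:G_S_n} are exactly what the paper cites. However, there is one genuine gap: you invoke Theorem~\ref{thm:thin_negligable} to enlarge $\Omega$ from the breaking cocycles to $\Omega_{G,k}$, but Theorem~\ref{thm:thin_negligable} has as its hypothesis that Conjecture~\ref{conj:equi} (equidistribution) holds, which is strictly stronger than Conjecture~\ref{conj:fair} and is not assumed in Lemma~\ref{lem:equiv_conjectures}. So that step is circular.

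The fix is to read ``$\Omega$ as in Theorem~\ref{thm:breaking}'' as the explicit set $\Omega_{G,k}$ of Theorem~\ref{thm:breaking}(2) from the outset (this is the set that actually appears in both Conjectures~\ref{conj:intro}(1) and~\ref{conj:discriminant}). With that interpretation no enlargement is needed, and the paper's two-line proof simply cites Lemmas~\ref{lem:groupoid_count} and~\ref{lem:convergence_factors_alternative} for Conjecture~\ref{conj:intro}(1), then Lemma~\ref{lem:G_S_n} for Conjecture~\ref{conj:discriminant}. Your more detailed unpacking of the Tamagawa factor via Lemma~\ref{lem:total_adelic_measure}, \eqref{eqn:tau_alternative}, and Lemma~\ref{lem:archimedean_mass_formula} is correct and useful, but the thin-set step should simply be dropped.
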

\begin{proof}
	The implication for Conjecture \ref{conj:intro}(1) follows from 
	Lemmas~\ref{lem:groupoid_count} and \ref{lem:convergence_factors_alternative}.
	For Conjecture \ref{conj:discriminant}
	as we are counting field extensions, rather than homomorphisms,
	by Lemma \ref{lem:S_n} the count is actually taking place in the essential image
	of the functor $BG(k) \to BS_n(k)$. The result then follows from Lemma \ref{lem:G_S_n}.
\end{proof}

\begin{remark}[Transcendental Brauer group]
	In Manin's conjecture \eqref{eqn:Peyre},
	the leading constant contains the term $|\Br_1 X / \Br k|$, i.e.~only
	algebraic Brauer group elements are considered. In our setting however, we allow
	transcendental Brauer group elements. Evidence for this 
	comes from Ellenberg--Venkatesh \cite[\S2.4]{EV10}, who heuristically 
	observed that the Schur multiplier
	can play a role in Malle's conjecture. The Schur multiplier arises for us via the transcendental
	Brauer group of $BG$ (see Remark \ref{rem:Schur_multiplier}). Moreover we provide numerical evidence in \S \ref{sec:A_4-transcendental} of an explicit example that the transcendental Brauer group should play a role. This suggests that in \eqref{eqn:Peyre} it is actually the term
	$|\Br X/\Br k|$ which should appear instead of $\beta(X)$.
\end{remark}

\subsection{Unbalanced heights} \label{sec:non-balanced}

Assume now that $H$ is not necessarily balanced. Here one reduces to the balanced case as follows. Let $M:=\langle \mathcal{M}(L) \rangle \subset G$ be the subgroup scheme generated by the minimal weight conjugacy classes with respect to $H$. We consider the associated (adjoint) Iitaka fibration $BG \to B(G/M)$ from \S \ref{sec:Iitaka}. The counting problem is then given as a sum of the counting problems of the fibres of the Iitaka fibration, with the restriction of $H$ to each fibre now being balanced by Lemma~\ref{lem:Iitaka_rigid_fibres}. So the setting from \S \ref{sec:adj_rigid_conj} applies. We thus apply Conjecture \ref{conj:balanced} to each fibre, and keep track of groupoid cardinalities using Lemmas \ref{lem:automorphism_group_is_inner_twist} and \ref{lem:outer_automorphism}(1), with the expectation that the sum over all the fibres converges. This leads to the following.

\begin{conjecture} \label{conj:non_balanced}
	There exists a thin subset $\Omega \subset BG[k]$ such that
	$$\frac{1}{|Z(G)(k)|}\#\{ \varphi \in BG[k] \setminus \Omega : H(\varphi) \leq B\} \sim c(k,G,H,\Omega) B^{a(L)} (\log B)^{b(k,L)-1}$$
	where $a(L)$ and $b(k,L)$ are as in Conjecture \ref{conj:balanced}, and 
	$$c(k,G,H,\Omega) = \frac{1}{|(G/M)(k)|}\sum_{\substack{\psi \in \im(BG[k] \to B(G/M)[k]) \\
	BM_\psi(k) \not \subseteq \Omega}} c(k,M_\psi,H)$$
	with the resulting sum being convergent.
\end{conjecture}

Here $M_\psi$ denotes the inner twist of $M$ by a lift of $\psi$ along $BG(k) \to B(G/M)(k)$, as defined in Definition \ref{def:inner_twist_normaliser}; this is a minor abuse of notation which naturally requires that $\psi$ is in the image of $BG(k)$, and $BM_\psi$ is independent of the choice of lift. The stack $BM_\psi$ is a fibre of the Iitaka fibration by Lemma \ref{lem:fibration_normal_quotient}. The constant $c(k,M_\psi,H)$ is as in Conjecture \ref{conj:balanced}; this is admissible as the pull-back of $H$ to $B M_\psi$ is balanced by Lemma \ref{lem:Iitaka_rigid_fibres}. If $G$ is constant, this fibre is exactly the collection of $G$-extensions of $k$ which realise a given $G/M$-extension.

Implicit in this conjecture is that a positive proportion of $G$-extensions of $k$ contain any given $G/M$-extension, providing the set of such $G$-extensions is non-empty. In particular the leading constant $c(k,G,H,\Omega)$ depends on the choice of $\Omega$, since adding to $\Omega$ a fibre of the Iitaka fibration changes the leading constant. However, as in Remark \ref{rem:breaking} we conjecture that a valid choice of $\Omega$ is given by the collection of breaking cocycles from \S \ref{sec:breaking} which, when $G$ is constant, can be replaced by the larger set from Theorem \ref{thm:breaking}.

Warning: even if $G$ is constant, the group scheme $M_\psi$ need not be constant (this happens when counting $D_4$-quartics of bounded discriminant; see \S \ref{sec:D_4}). So in Malle's conjecture one needs to work with general height functions on finite \'etale group schemes, even if one is only interested in number fields of bounded discriminant. The appearance of finite \'etale group schemes in Malle's conjecture appears to have been first noticed by T\"{u}kelli \cite[Conj.~4.8]{Tur15} (see also \cite[Lem.~1.3]{Alb21} and \cite[\S9.5]{DYBM}).

\subsection{The total count} \label{sec:total_count}

Despite our conjectures requiring the removal of a thin set, it is also possible to use Conjecture \ref{conj:non_balanced} 	to give a formula for the total number of surjective elements of $BG$ of bounded height. By Lemma \ref{lem:thin_classification}, one stratifies the thin set $\Omega$ into a finite union of images of maps of the form $BT \to BG$, then applies Conjecture \ref{conj:non_balanced} and Lemma~\ref{lem:outer_automorphism} to each such $BT$ to get a precise prediction; a further stratification of $BT$ may be required so this gives an inductive description for the total count.

We explain this process in detail  when $G$ is a finite group and $\Omega$ is the thin set from Theorem \ref{thm:breaking}(2). We represent elements of $BG(k)$ by homorphisms $\Gamma_k \to G$ of bounded height $H$. Writing the thin set from Theorem \ref{thm:breaking}(2) as a union of images of $BT \to BG$ corresponds to stratifying all homorphisms $\Gamma_k \to G$ according to whether they lift suitable cyclotomic fields; this latter perspective is also considered in a recent paper of Wang \cite{Wan24}.  The leading constant obtained will be quite complicated, as it can be a mix of both balanced and unbalanced cases, and finite \'etale group schemes (even though $G$ itself is constant). This leads to 
$$\Hom(\Gamma_k,G) = \bigsqcup_{N \subset G} \bigsqcup_{ \psi: \Gal(k(\mu_{\exp(G)})/k) \to G/N} \{ \varphi \in \Hom(\Gamma_k,G) : \varphi \text{ is an exact lift of } \psi \}$$
where $N$ runs over all normal subgroups of $G$. We say that $\varphi$ \textit{lifts} $\psi$ if the diagram
		\[
\xymatrix{ \Gamma_k \ar[r] \ar[d] & G  \ar[d] \\ 
\Gal(k(\mu_{\exp(G)})/k) \ar[r] & G/N}
\]
commutes. An \emph{exact lift} is a lift for which $\varphi(\Gamma_{k(\mu_{\exp(G)})}) = N$; it is easy to see that this is equivalent to $\varphi$ not lifting any other homomorphism $\Gal(k(\mu_{\exp(G)})/k) \to G/M$ with $M \subsetneq N \subset G$.
 This therefore leads to the equality
\begin{align} \label{eqn:decomposition}
&\#\{ \varphi \in \Hom(\Gamma_k,G) : \varphi \text{ surjective}, H(\varphi) \leq B\} \\
 = & \sum_{N \subset G} \sum_{ \substack{ \text{surjections } \\ \psi: \Gal(k(\mu_{\exp(G)})/k) \to G/N}} \#\{ \varphi \in \Hom(\Gamma_k,G) : \varphi \text{ is a surjective exact lift of } \psi \} \nonumber
\end{align}
of counting functions. We consider each inner cardinality separately using our framework as follows. For each $\psi$ fix a choice of lift $\varphi_\psi$, providing it exists. Then by Lemma~\ref{lem:fibration_normal_quotient}, the collection of lifts is the $k$-rational points of the stack $B N_{\varphi_\psi}$, where $N_{\varphi_\psi}$ denotes the inner twist of $N$ with respect to the $\varphi_\psi$. To ensure that breaking cocycles, as in \S \ref{sec:breaking}, will not cause problems, we verify that they give non-exact lifts.

\begin{lemma} \label{lem:breaking_total_count}
	The image of a breaking cocycle for $BN_{\varphi_\psi}$ in $BG$ is a non-exact lift.
\end{lemma}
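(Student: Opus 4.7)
The plan is to unwind the correspondence of Lemma \ref{lem:fibration_normal_quotient} between cocycles in the fibre $BN_{\varphi_\psi}$ and lifts of $\psi$ in $BG$, and then translate the hypothesis that $\eta$ breaks some conjugacy class directly into the statement that $\varphi|_{\Gamma_{k(\mu_{\exp(G)})}}$ cannot be surjective onto $N$. The containment $N \subseteq G$ gives $\exp(N) \mid \exp(G)$, and this divisibility is what will let the breaking condition (phrased in terms of $\mu_{\exp(N)}$ inside $N_{\varphi_\psi}$) pass to a statement about $\mu_{\exp(G)}$.

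First I would verify, by a direct cocycle computation using the twisted action $\sigma \cdot_{\varphi_\psi} n = \varphi_\psi(\sigma)\, n\, \varphi_\psi(\sigma)^{-1}$ on the constant group $N$, that a cocycle $\eta \in Z^1(k, N_{\varphi_\psi})$ corresponds under $BN_{\varphi_\psi} \to BG$ to the homomorphism $\varphi: \Gamma_k \to G$ with $\varphi(\sigma) = \eta(\sigma)\varphi_\psi(\sigma)$, and that this $\varphi$ indeed lifts $\psi$ (it does so because $\varphi_\psi$ does and $\eta$ is $N$-valued). A short calculation then shows that the Galois action on the successive inner twist $(N_{\varphi_\psi})_\eta$ is conjugation by $\varphi(\sigma)$, so that on $(N_{\varphi_\psi})_\eta(-1) = \Hom(\widehat{\Z}(1), N)$ it reads
\[(\sigma \cdot f)(x) = \varphi(\sigma)\, f(x)^{\cycl(\sigma)^{-1}}\, \varphi(\sigma)^{-1}.\]

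Next I would observe that for $\sigma \in \Gamma_{k(\mu_{\exp(G)})}$ one has $\cycl(\sigma) \equiv 1 \pmod{\exp(G)}$ and hence modulo $\exp(N)$; the power operation is therefore trivial on every conjugacy class $c \in \mathcal{C}_{N_{\varphi_\psi}}$, giving $\Gamma_{k(\mu_{\exp(G)})} \subseteq \Gamma_c$, and the action above reduces on $c$ to ordinary conjugation by $\varphi(\sigma) \in N$. The key step is then the contradiction: if $\eta$ breaks $c$ and yet $\varphi$ were an exact lift, so that $\varphi(\Gamma_{k(\mu_{\exp(G)})}) = N$, then for every $f \in c$ the $\Gamma_{k(\mu_{\exp(G)})}$-orbit of $f$ under this conjugation action equals the full $N$-conjugacy class $\{nfn^{-1} : n \in N\} = c$; since $\Gamma_{k(\mu_{\exp(G)})} \subseteq \Gamma_c$, the $\Gamma_c$-orbit of $f$ is also all of $c$, contradicting that $\eta$ breaks $c$. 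Hence $\varphi(\Gamma_{k(\mu_{\exp(G)})})$ is a proper subgroup of $N$ and $\varphi$ is a non-exact lift of $\psi$.

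I expect the only real obstacle to be bookkeeping through the two successive inner twists $N \leadsto N_{\varphi_\psi} \leadsto (N_{\varphi_\psi})_\eta$, in particular confirming that the correspondence from Lemma \ref{lem:fibration_normal_quotient} is given by $\eta \mapsto \eta \cdot \varphi_\psi$ (and not a conjugate or inverse), so that the product $\varphi$ appears cleanly in the action formula and the whole argument collapses to the one-line conjugacy orbit computation above.
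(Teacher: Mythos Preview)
Your argument is correct, and in substance it is the same mechanism as the paper's: breaking forces the associated homomorphism $\varphi$ to be non-surjective when restricted to $\Gamma_{k(\mu_{\exp(G)})}$, hence the lift is not exact. The packaging differs, however. The paper first base changes to $k(\mu_{\exp(G)})$; over this base $\psi$ becomes trivial, so $\varphi_\psi$ is $N$-valued and $N_{\varphi_\psi}$ is a genuine inner twist of the constant group $N$, whence $BN_{\varphi_\psi}\cong BN$ by Lemma~\ref{lem:inner_twist_BG}. One then invokes Lemma~\ref{lem:Breaking_implies_reducible} directly for the constant group $N$ to get non-surjectivity. Your proof instead keeps the twist and inlines the content of Lemma~\ref{lem:Breaking_implies_reducible} via the explicit action formula on $(N_{\varphi_\psi})_\eta(-1)$. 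The paper's route is shorter and reuses the existing lemmas; yours is more self-contained and makes the bookkeeping through the two successive twists explicit.

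One small point in your write-up: the justification for $\Gamma_{k(\mu_{\exp(G)})}\subseteq\Gamma_c$ needs both the triviality of the power operation \emph{and} the fact that $\varphi_\psi(\sigma)\in N$ for $\sigma\in\Gamma_{k(\mu_{\exp(G)})}$ (since $\psi$ factors through $\Gal(k(\mu_{\exp(G)})/k)$), because $\Gamma_c$ is computed for $N_{\varphi_\psi}(-1)$, whose Galois action already includes conjugation by $\varphi_\psi(\sigma)$. You use this implicitly when asserting $\varphi(\sigma)\in N$, so the logic is sound, but it would be worth stating explicitly.
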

\begin{proof}
The base change of a breaking cocycle is still breaking. Moreover if the base change of a homomorphism to $k(\mu_{\exp(G)})$ is an exact lift, then the homomorphism itself must be an exact lift. Therefore we may assume that $k(\mu_{\exp(G)}) \subset k$.

So let $\varpi \in B N_{\varphi_\psi}(k)$ be breaking. As $\varphi_{\psi}(\Gamma_{k}) \subset N$, the group scheme $N_{\varphi_\psi}$ is an inner twist of $N$. So let $\varpi'$ be the image of $\varpi$ under the isomorphism $B(N_{\varphi_\psi}) \cong BN$ from Lemma \ref{lem:inner_twist_BG}. As $\varpi':\Gamma_k \to N$ is breaking, Lemma~\ref{lem:Breaking_implies_reducible} implies it is not surjective, so it cannot correspond to an exact lift.
\end{proof}

Therefore applying Conjecture \ref{conj:non_balanced}, we obtain the following prediction for the terms in \eqref{eqn:decomposition}. To emphasise the dependence on the group, we include this in the exponents.

\begin{conjecture}
Assume that $G$ is constant. Let $N \subset G$ be a normal subgroup and $\psi: \Gal(k(\mu_{\exp(G)})/k) \to G/N$ surjective. Assume that a lift $\varphi_\psi$ of $\psi$ exists. Let $\Omega_\psi$ be the collection of lifts of $\psi$ which are either non-surjective or non-exact. Then
\begin{align*}
&\frac{1}{|G|}\#\{ \varphi \in \Hom(\Gamma_k,G)  : \varphi \text{ is a surjective exact lift of } \psi, H(\varphi) \leq B \} \\
\sim  & \frac{|N|}{|G|} c(k,N_{\varphi_\psi},H,\Omega_\psi) B^{a(N_{\varphi_\psi},H)} (\log B)^{b(k,N_{\varphi_\psi},H)-1}
\end{align*}
where $c(k,N_{\varphi_\psi},H,\Omega_\psi)$ is as in Conjecture \ref{conj:non_balanced}.
\end{conjecture}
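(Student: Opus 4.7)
The plan is to reduce the statement to an application of Conjecture \ref{conj:non_fair} for the fibre stack $BN_{\varphi_\psi}$, viewed as the fibre of the map $BG \to B(G/N)$ above the point determined by $\varphi_\psi$. First, I would use Lemma \ref{lem:fibration_normal_quotient} to identify the set of lifts of $\psi$ with the rational points $BN_{\varphi_\psi}(k)$, under the isomorphism of the fibre with $BN_{\varphi_\psi}$. Under this identification, the subset of lifts which are surjective and exact corresponds exactly to the complement in $BN_{\varphi_\psi}[k]$ of the set $\Omega_\psi$, so counting surjective exact lifts of $\psi$ of bounded $H$-height is the same as counting $BN_{\varphi_\psi}[k]\setminus\Omega_\psi$ with respect to the pull-back height (still denoted $H$) of bounded height.

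Next I would handle the groupoid cardinalities. On the $BN_{\varphi_\psi}$ side, Conjecture \ref{conj:non_fair} gives an asymptotic for $\frac{1}{|Z(N_{\varphi_\psi})(k)|}\#\{\varpi\in BN_{\varphi_\psi}[k]\setminus\Omega_\psi: H(\varpi)\le B\}$, while the conjecture I am proving weights each lift by $1/|G|$. The discrepancy comes from (i) the orbit-stabiliser book-keeping of Lemma \ref{lem:outer_automorphism}, since the normaliser of $N$ in $G$ is all of $G$, and (ii) the equality $\underline{\Aut}(\varpi)\cong (N_{\varphi_\psi})_\varpi$ of Lemma \ref{lem:automorphism_group_is_inner_twist}(1), which away from a thin set has $k$-points equal to $Z(N_{\varphi_\psi})(k)$ by Lemma \ref{lem:automorphism_group_is_inner_twist}(3). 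A short computation using these two ingredients shows that the two normalisations differ by the factor $|N|/|G|$ which appears in the statement.

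It remains to check that the thin set $\Omega_\psi$ which appears on the $BN_{\varphi_\psi}$ side is admissible in Conjecture \ref{conj:non_fair}; this is exactly Lemma \ref{lem:breaking_total_count}, which shows that breaking cocycles for $BN_{\varphi_\psi}$ are automatically non-exact and therefore contained in $\Omega_\psi$. With $\Omega_\psi$ an admissible thin set, applying Conjecture \ref{conj:non_fair} to $(BN_{\varphi_\psi},H,\Omega_\psi)$ yields precisely the predicted asymptotic $c(k,N_{\varphi_\psi},H,\Omega_\psi)B^{a(N_{\varphi_\psi},H)}(\log B)^{b(k,N_{\varphi_\psi},H)-1}$, and combining this with the $|N|/|G|$ normalisation finishes the proof.

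The main obstacle is conceptual rather than technical: the statement is a \emph{consequence} of Conjecture \ref{conj:non_fair}, so the real work (finiteness of the relevant partial Brauer group, convergence of the Tamagawa measure, validity of removing the breaking thin set) has already been packaged into that conjecture. The only genuinely new point to check carefully is the groupoid-cardinality bookkeeping, where one must be careful that (a) inner twists by $N$ can induce outer automorphisms of $N$ itself via elements of $G$, so Lemma \ref{lem:outer_automorphism} must be invoked with the correct normaliser, and (b) the identification of fibres with $BN_{\varphi_\psi}$ is only well-defined up to a choice of lift $\varphi_\psi$, and one must check that the asymptotic is independent of this choice, which follows from the fact that different lifts produce canonically isomorphic stacks.
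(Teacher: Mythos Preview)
Your proposal follows essentially the same route as the paper: identify the fibre via Lemma~\ref{lem:fibration_normal_quotient}, apply Conjecture~\ref{conj:non_fair} to $BN_{\varphi_\psi}$, invoke Lemma~\ref{lem:breaking_total_count} to confirm that $\Omega_\psi$ contains the breaking cocycles, and sort out the groupoid normalisation via Lemma~\ref{lem:outer_automorphism}.

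One point your sketch glosses over: to conclude that the normalisation factor is exactly $|N|/|G|$, you need $|(G_{\varphi_\psi}/N_{\varphi_\psi})(k)| = |G/N|$. Lemma~\ref{lem:outer_automorphism}(1) only gives $G_{\varphi_\psi}/N_{\varphi_\psi}$ as a group scheme, namely the twist $(G/N)_\psi$, and a priori this twist could have fewer $k$-points than $|G/N|$. The paper closes this by observing that $G/N$ is \emph{abelian}---it is the surjective image of the abelian group $\Gal(k(\mu_{\exp(G)})/k)$---so the conjugation twist $(G/N)_\psi$ is trivial and $(G/N)_\psi = G/N$ as constant group schemes, giving $|(G/N)(k)| = |G/N|$. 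Your appeal to ``the normaliser of $N$ in $G$ is all of $G$'' is pointing in the right direction but does not by itself yield this; you should make the abelianness of $G/N$ explicit.
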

The factor $|N|/|G|$ comes from groupoid cardinality considerations: Lemma \ref{lem:outer_automorphism}(1) implies that each fibre of $BN_{\varphi_{\psi}}(k) \to BG(k)$ has groupoid cardinality $|(G_{\varphi_\psi}/N_{\varphi_\psi})(k)|$. But $G_{\varphi_\psi}/N_{\varphi_\psi} = (G/N)_{\psi} = G/N$ because $G/N$ is abelian.


\begin{remark}
Kl\"{u}ners \cite{Klu05} was the first to observe that fields which contain a given cyclotomic field can be accumulating in Malle's conjecture. Türkelli \cite[Conj~4.8]{Tur15} proposed a possible fix to Malle's conjecture by taking into account these cyclotomic fields. A recent paper of Wang \cite[Conj.~7]{Wan24} gives a counter-example to Türkelli's original conjecture and suggests a modification for finding the power of $\log B$, with a key observation that one discounts those subfields of $k(\mu_{\exp(G)})$ for which no lift exists.  A priori it could be possible that one should remove further thin sets from each $B N_{\varphi_\psi}$; but  we have shown in Lemma \ref{lem:breaking_total_count} that this is unnecessary. Our work builds on Wang's paper by also putting forward a conjecture for the leading constant.
\end{remark}

\subsection{Equidistribution} \label{sec:equidistribution}
There are numerous papers in the Malle's conjecture literature which concern counting number fields
with ``local specifications'' imposed, and ask whether the quotient of this count with respect to the total
count has the expected local behaviour; see for example \cite{Bha07, Woo10, BSW, ShTh22}. In the Malle conjecture literature this is informally referred to as the \textit{Malle--Bhargava heuristics}.

In the Manin's conjecture literature the property is called \emph{equidistribution}, since it stipulates weak convergence of a sequence of measures, and was first considered by Peyre \cite[\S 3]{Pey95} (see \cite[3.21]{Pey21} for this perspective, as well as the Portmanteau theorem \cite[Thm.~13.16]{Klen14} for various equivalent formulations). With the relevant Tamagawa measure now in hand for $BG$, we are able to state the following equidistribution conjecture.  Recall that a \emph{continuity set} is set whose boundary has measure $0$.

\begin{conjecture}[Equidistribution] \label{conj:equi}
	Assume that $H$ is balanced. Let $W \subseteq \prod_v BG[k_v]$ be a continuity set.
	Then there exists a thin subset $\Omega \subset BG[k]$ such that
	$$\lim_{B \to \infty}\frac{\#\{ \varphi \in BG[k] \setminus \Omega : \varphi \in W, H(\varphi) \leq B\}}
	{\#\{ \varphi \in BG[k] \setminus \Omega : H(\varphi) \leq B\}} = 
	\frac{\tau_H(W \cap BG[\Adele_k]_{\mathcal{M}(L)}^{\Br})}
	{\tau_H( BG[\Adele_k]_{\mathcal{M}(L)}^{\Br})}.$$
\end{conjecture}

In the statement, we abuse notation and the condition $\varphi \in W$ means that the image of $\varphi$ under the map $BG[k] \to \prod_v BG[k_v]$ lies in $W$ (this map need not be injective if $G$ is non-constant). The following shows that this conjecture is equivalent to a more elementary statement regarding approximating at finitely many places (as in the Malle--Bhargava heuristics). In the statement we denote by $\varphi_v$ the image of $\varphi$ in $BG[k_v]$ and $\lambda_v$ the convergence factors from \S \ref{sec:global_Tamagawa}.

\begin{proposition} \label{prop:Malle-Bhargava}
\, \hfill \,

(1) Conjecture \ref{conj:equi} holds for $(G,H)$ if and only if there exists a thin subset $\Omega \subset BG[k]$ such that for all finite sets of places $S$ and all $\psi_v \in BG(k_v)$ we have
\begin{align*}
& \lim_{B \to \infty}\frac{\#\left\{ \varphi \in BG[k] \setminus \Omega : 
\begin{array}{l}
\varphi_v = \psi_v \text{ for all } v \in S, \\
H(\varphi) \leq B
\end{array}\right\}}
	{\#\{ \varphi \in BG[k] \setminus \Omega : H(\varphi) \leq B\}} 
	= \frac{\L^*(\mathcal{M}(L),1)}	{\tau_H( BG(\Adele_k)_{\mathcal{M}(L)}^{\Br})} \times \\
&\frac{1}{|\Br_{\mathcal{M}(L)} BG/\Br k|} \sum_{b}
\prod_{v \in S} \frac{\lambda_v^{-1}e^{2 \pi i \inv_v b(\psi_v)}}{|\Aut(\psi_v)| H_v(\psi_v)^{a(L)} }
\prod_{v \notin S}\lambda_v^{-1}\int_{BG(k_v)} e^{2 \pi i \inv_v b(\varphi_v) } d\tau_{H, v}(\varphi_v)
\end{align*}
where the sum is over $b \in \Br_{\mathcal{M}(L)} BG/\Br k$.

	(2) Assume that $\Br_{\mathcal{M}(L)} BG =\Brun BG$. 	Conjecture \ref{conj:equi} holds for $(G,H)$ if and only if there exists a thin subset $\Omega \subset BG[k]$ such that for all finite sets of places $S$ and all $\psi_v \in BG(k_v)$ with $\prod_{v \in S} \{\psi_v\} \times \prod_{v \notin S} BG(k_v) \subset \prod_v BG(k_v)^{\Brun BG}$ we have
\begin{align*}
& \lim_{B \to \infty}\frac{\#\left\{ \varphi \in BG[k] \setminus \Omega : 
\begin{array}{l}
\varphi_v = \psi_v \text{ for all } v \in S, \\
H(\varphi) \leq B
\end{array}\right\}}
	{\#\{ \varphi \in BG[k] \setminus \Omega : H(\varphi) \leq B\}}  \\
&= \frac{\L^*(\mathcal{M}(L),1)}	{\tau_H( BG(\Adele_k)_{\mathcal{M}(L)}^{\Br})} 
\prod_{v \in S} \frac{\lambda_v^{-1}}{|\Aut(\psi_v)| H_v(\psi_v)^{a(L)} }
\prod_{v \notin S}\lambda_v^{-1}\tau_{H,v}(BG(k_v)).
\end{align*}
\end{proposition}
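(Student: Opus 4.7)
The plan is to deduce the stated equivalences by unpacking the Tamagawa measure on the Brauer--Manin locus using the Euler product representation of Lemma~\ref{lem:sum_Euler_products}, together with a standard approximation argument to pass between continuity sets and basic cylinder sets.

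For the forward direction of (1), I would specialise Conjecture~\ref{conj:equi} to the cylinder set $W = \prod_{v \in S} \{\psi_v\} \prod_{v \notin S} BG[k_v]$. Since $BG[k_v]$ is discrete for each $v$, such a $W$ is open and its boundary is empty, so it is automatically a continuity set for $\tau_H$. The definition of the local Tamagawa measure gives $\tau_{H,v}(\{\psi_v\}) = 1/(|\Aut(\psi_v)| H_v(\psi_v)^{a(L)})$ for $v \in S$. Applying Lemma~\ref{lem:sum_Euler_products} to $W \cap BG[\Adele_k]_{\mathcal{M}(L)}^{\Br}$ then yields
\[
|\Br_{\mathcal{M}(L)} BG/\Br k| \cdot \tau_H(W \cap BG[\Adele_k]_{\mathcal{M}(L)}^{\Br})
= \sum_b \L^*(\mathcal{M}(L),1) \prod_v \lambda_v^{-1} \int_{W_v} e^{2\pi i \inv_v b(\varphi_v)} \, d\tau_{H,v},
\]
where $W_v = \{\psi_v\}$ for $v \in S$ and $W_v = BG(k_v)$ otherwise. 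Dividing by $\tau_H(BG[\Adele_k]_{\mathcal{M}(L)}^{\Br})$ gives exactly the right-hand side of (1).

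For the reverse direction, I would use the fact that the open cylinder sets $\prod_{v \in S} \{\psi_v\} \prod_{v \notin S} BG[k_v]$ form a basis for the restricted product topology on $\prod_v BG[k_v]$, hence generate the Borel $\sigma$-algebra. Given the uniform mass bounds provided by Corollary~\ref{cor:mass_formula} (which provide tightness of the normalised counting measures), the convergence of integrals against the indicators of all such cylinder sets implies weak convergence of the counting measures to $\tau_H/\tau_H(BG[\Adele_k]_{\mathcal{M}(L)}^{\Br})$, restricted to the Brauer--Manin locus. The Portmanteau theorem then gives equidistribution on every continuity set.

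For (2), the assumption $\Br_{\mathcal{M}(L)} BG = \Brun BG$ combined with Theorem~\ref{thm:Harari's_formal_lemma} implies that every $b \in \Br_{\mathcal{M}(L)} BG$ evaluates trivially on $BG(\O_v)$ for all but finitely many $v$, so the twisted local integrals equal the untwisted ones outside a finite set. Moreover, the hypothesis $\prod_{v \in S}\{\psi_v\} \prod_{v \notin S} BG(k_v) \subset \prod_v BG(k_v)^{\Brun BG}$ forces $\sum_{v \in S}\inv_v b(\psi_v) = 0$ for every $b$, so in the finite sum from (1) the $b \neq 0$ terms cancel by character orthogonality applied over $v \notin S$; only $b = 0$ survives, producing the single Euler product. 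The main obstacle will be the reverse implication in (1): making the approximation argument rigorous for non-compact restricted products requires checking tightness, and this is where absolute convergence of the Tamagawa product (Theorem~\ref{thm:Tamagawa_products}) and the uniform mass formula (Corollary~\ref{cor:mass_formula}) do the essential work.
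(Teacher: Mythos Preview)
Part (1) is essentially correct, though the paper avoids the abstract weak-convergence machinery: it approximates the interior of a continuity set $W$ from below by a finite disjoint union of basic cylinders to within $\varepsilon$ in measure, applies the assumed cylinder formula to each piece to obtain a $\liminf$ bound, and then handles the $\limsup$ by passing to the complement. Your tightness remark is superfluous, since $\prod_v BG[k_v]$ is already compact (a countable product of finite discrete sets), so any family of probability measures on it is tight.

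Part (2) has two gaps. In the forward direction, the claim ``the $b \neq 0$ terms cancel by character orthogonality applied over $v\notin S$'' is not what happens. The inclusion $\prod_{v\in S}\{\psi_v\}\times\prod_{v\notin S}BG(k_v)\subset\prod_v BG(k_v)^{\Brun}$ forces, by varying one coordinate at a time, $\inv_v b$ to be \emph{constant} on $BG(k_v)$ for \emph{every} $v\notin S$ (hence zero after normalising $b(e)=0$), and therefore $\sum_{v\in S}\inv_v b(\psi_v)=0$. Consequently every $b$ contributes \emph{the same} term to the sum in (1), and the prefactor $|\Br_{\mathcal{M}(L)}BG/\Br k|^{-1}$ cancels against the number of summands; there is no orthogonality cancellation. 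Note that Theorem~\ref{thm:Harari's_formal_lemma} alone only gives triviality outside some finite $S_0$, which does not suffice to conclude $\sum_{v\in S}\inv_v b(\psi_v)=0$ when $S_0\not\subset S$; you need the varying argument. Second, you do not address the reverse implication in (2) at all. The paper reduces it to Part (1): after enlarging $S$ to contain $S_0$, each basic cylinder $W$ satisfies either $W\subset\prod_v BG(k_v)^{\Brun}$ (in which case the formula in (2) \emph{is} the formula in (1)) or $W\cap\prod_v BG(k_v)^{\Brun}=\emptyset$ (in which case both sides of (1) vanish---the left because $BG[k]$ maps into the Brauer--Manin set, the right by character orthogonality over $\Br_{\mathcal{M}(L)}BG/\Br k$). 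Part (1) then delivers Conjecture~\ref{conj:equi}.
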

\begin{proof}
	Let $W$ be as in Conjecture \ref{conj:equi}. We first obtain a lower bound.
	As $W$ and its interior
	have the same measure, we are free to replace $W$ by its interior, so assume
	that $W$ is open. The set $BG[k_v]$ is finite with the discrete topology. 
	Thus $W$ may be written as a	disjoint union of open sets of the form
	\begin{equation} \label{eqn:W_open}
	\prod_{v \in S} \{\psi_v\} \times \prod_{v \notin S} BG[k_v]
	\end{equation} for some varying finite
	set of places $S$ and some $\psi_v$. Moreover for all $\varepsilon > 0$, there exists
	finitely many such sets $W_j$ for $j \in J_\varepsilon$ whose union has measure within $\varepsilon$
	of the measure of $W$. However a minor variant of Lemma \ref{lem:sum_Euler_products}
	shows that $\tau_H(W_j \cap BG[\Adele_k]_{\mathcal{M}(L)}^{\Br})$ equals
	$$\frac{\L^*(\mathcal{M}(L),1)}{|\Br_{\mathcal{M}(L)} BG/\Br k|} \sum_{b}
\prod_{v \in S} \frac{\lambda_v^{-1}e^{2 \pi i \inv_v b(\psi_v)}}{|\Aut(\psi_v)| H_v(\psi_v)^{a(L)} }
\prod_{v \notin S}\lambda_v^{-1}\int_{BG(k_v)} e^{2 \pi i \inv_v b(\varphi_v) } d\tau_{H, v}(\varphi_v)$$
as in the statement. Thus applying our assumptions, altogether we obtain
	\begin{align*}
		& \liminf_{B \to \infty}\frac{\#\{ \varphi \in BG[k] \setminus \Omega : \varphi \in W, H(\varphi) \leq B\}}
	{\#\{ \varphi \in BG[k] \setminus \Omega : H(\varphi) \leq B\}}  \\
	\geq &
	\sum_{j \in J_{\varepsilon}}\lim_{B \to \infty}\frac{\#\{ \varphi \in BG[k] \setminus \Omega : \varphi \in W_j, H(\varphi) \leq B\}}
	{\#\{ \varphi \in BG[k] \setminus \Omega : H(\varphi) \leq B\}} \\
	\geq &\frac{\tau_H(W \cap BG[\Adele_k]_{\mathcal{M}(L)}^{\Br}) - \varepsilon}
	{\tau_H( BG[\Adele_k]_{\mathcal{M}(L)}^{\Br})}.
	\end{align*}
	Thus taking $\varepsilon \to 0$ gives the required lower bound.
	For the upper bound, as $W$ is a continuity set so is its complement. Hence applying the lower bound to the complement
	we obtain the correct upper bound for $W$. Part (1) now follows.
	
For Part (2) let $\mathscr{B}$ be a finite group of representatives of $\Brun BG/\Br k$ with trivial evaluation at the identity cocycle. By Theorem \ref{thm:Harari's_formal_lemma} there is a finite set of places $S_0$ such that the local invariant of each $b \in \mathscr{B}$ is constant away from $S_0$, hence by our choice trivial away from $S_0$.

First assume Conjecture \ref{conj:equi}. Let $\prod_{v \in S} \{\psi_v\} \times \prod_{v \notin S} BG(k_v) \subset \prod_v BG(k_v)^{\Brun BG}$. The only option is that the local invariant of each $b \in \mathscr{B}$ is constant on $BG(k_v)$ for $v\notin S$, thus it is trivial. We deduce $\sum_{v \in S}\inv_v b(\psi_v) = 0$ hence $\prod_{v \in S} e^{2 \pi i \inv_v b(\psi_v)} = 1$ for all $b \in \mathscr{B}$. The stated formula now follows from Part (1).

Next let $W$ as in \eqref{eqn:W_open}. We may assume that $S_0 \subset S$. As $S_0 \subset S$ we have either $W \cap BG[\Adele_k]_{\mathcal{M}(L)}^{\Br} = W$ or $\emptyset$. In the first case, our assumptions in (2) imply the formula in (1) since the sum of all local invariants is $0$. In the second case character orthogonality shows that the both sides in (1) equal $0$. Thus Part (1) shows that Conjecture \ref{conj:equi} holds, as required. 
\end{proof}

The Malle--Bhargava heuristics as they usually appear in the literature are stated in a version which more closely resembles the formula from (2) from Proposition~\ref{prop:Malle-Bhargava}. This is the second case of the hierarchy explained in Remark \ref{rem:hierarchy}. However for general height functions the Brauer group need not be unramified and the more complicated formula in (1) is the correct one. We could not find any cases of this more general formula in the literature.

Peyre shows in \cite[Prop.~3.3]{Pey95} in the setting of Manin's conjecture that the equidistribution property is equivalent to proving his asymptotic with respect to arbitrary choices of adelic metric on the line bundle (see also \cite[Prop.~2.10]{CT10} for a general topological statement). We have an analogous property in our case.

\begin{lemma} \label{lem:equi_any_height} \hfill
\begin{enumerate}
 \item Assume that Conjectures \ref{conj:balanced} and \ref{conj:equi} hold for $(G,H)$. Then Conjecture~\ref{conj:balanced} holds for all choices of height on $L$.
\item Assume that Conjecture \ref{conj:balanced} holds for all choices of height on $L$. Then Conjecture \ref{conj:equi} holds for $(G,H)$.
\end{enumerate}
\end{lemma}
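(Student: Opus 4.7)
The strategy adapts Peyre's argument in \cite[Prop.~3.3]{Pey95} to our stacky setting. The key observation is that two adelic heights $H, H'$ associated to the same $L$ agree at all but a finite set of places $S$, at which the ratio $H'_v/H_v$ is an arbitrary positive function on the finite set $BG[k_v]$. Accordingly, $BG[k]$ decomposes as a finite disjoint union of strata indexed by tuples $\boldsymbol{\psi} = (\psi_v)_{v \in S} \in \prod_{v \in S} BG[k_v]$, namely $W_{\boldsymbol{\psi}} := \prod_{v \in S}\{\psi_v\} \times \prod_{v \notin S} BG[k_v]$, and each $W_{\boldsymbol{\psi}}$ is clopen, hence a continuity set. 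Setting $\lambda_{\boldsymbol{\psi}} = \prod_{v \in S} H'_v(\psi_v)/H_v(\psi_v)$, we have $H' = \lambda_{\boldsymbol{\psi}} H$ on $W_{\boldsymbol{\psi}}$, and from Definition~\ref{def:local_Tamagawa_measure} one checks $\tau_{H'}|_{W_{\boldsymbol{\psi}}} = \lambda_{\boldsymbol{\psi}}^{-a(L)} \tau_H|_{W_{\boldsymbol{\psi}}}$, since the convergence factors and $\L^*(\mathcal{M}(L),1)$ depend only on $L$.

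For part (1), assuming a common thin set $\Omega$ works for Conjectures~\ref{conj:fair} and \ref{conj:equi} for $(G,H)$, the stratification gives
\[
\#\{\varphi \in BG[k] \setminus \Omega : H'(\varphi) \leq B\} = \sum_{\boldsymbol{\psi}} \#\{\varphi \in W_{\boldsymbol{\psi}} \setminus \Omega : H(\varphi) \leq B/\lambda_{\boldsymbol{\psi}}\}.
\]
Applying Conjecture~\ref{conj:equi} to each $W_{\boldsymbol{\psi}}$ and Conjecture~\ref{conj:fair} for $H$, together with $(\log(B/\lambda))^{b(k,L)-1} \sim (\log B)^{b(k,L)-1}$, the right-hand side is asymptotic to $c(k,G,H) B^{a(L)}(\log B)^{b(k,L)-1}$ times $\sum_{\boldsymbol{\psi}} \lambda_{\boldsymbol{\psi}}^{-a(L)} \tau_H(W_{\boldsymbol{\psi}} \cap BG(\Adele_k)_{\mathcal{M}(L)}^{\Br})/\tau_H(BG(\Adele_k)_{\mathcal{M}(L)}^{\Br})$. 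By the measure identification above, this sum telescopes to $\tau_{H'}(BG(\Adele_k)_{\mathcal{M}(L)}^{\Br})/\tau_H(BG(\Adele_k)_{\mathcal{M}(L)}^{\Br})$, yielding exactly the leading constant predicted by Conjecture~\ref{conj:fair} for $H'$.

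For part (2), by a standard Portmanteau-type argument it suffices to verify equidistribution on a single basic clopen set $W = W_{\boldsymbol{\psi}}$. Fix such a $W$ and, for each $t > 0$, construct $H^{(t)}$ by modifying $H$ at $v \in S$ so that $H^{(t)} = t H$ on $W$ and $H^{(t)} = H$ off $W$. Then $N(H^{(t)}, B) = N(H, B/t; W) + N(H, B) - N(H, B; W)$. Applying Conjecture~\ref{conj:fair} to both $H$ and $H^{(t)}$ and using the measure identity from part (1) to compute $c(k,G,H^{(t)}) - c(k,G,H)$ produces
\[
N(H, B/t; W) - N(H, B; W) \sim c(k,G,H)(t^{-a(L)}-1)\mu_W \cdot B^{a(L)}(\log B)^{b(k,L)-1},
\]
where $\mu_W := \tau_H(W \cap BG(\Adele_k)_{\mathcal{M}(L)}^{\Br})/\tau_H(BG(\Adele_k)_{\mathcal{M}(L)}^{\Br})$. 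The main obstacle is the Tauberian extraction: set $f(B) := N(H, B; W)/(B^{a(L)}(\log B)^{b(k,L)-1})$ and $g(B) := f(B) - c(k,G,H)\mu_W$; the displayed asymptotic rearranges to $g(B/t) t^{-a(L)} - g(B) \to 0$, and $g$ is bounded by Conjecture~\ref{conj:fair} for $H$. Fixing any $t > 1$, every subsequential limit $L_\infty$ of $g$ along $B_n$ (with $g(B_n/t)$ also convergent, by boundedness) satisfies $L_\infty \cdot t^{a(L)} = L_\infty'$, where $L_\infty'$ is a subsequential limit of $g$; comparing with $\limsup g$ and $\liminf g$ forces $\limsup g \leq 0 \leq \liminf g$, so $f(B) \to c(k,G,H)\mu_W$. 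Dividing by $N(H,B)$ yields Conjecture~\ref{conj:equi} for $W$, and hence for all continuity sets.
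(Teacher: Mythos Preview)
Your proof of part (1) is correct and essentially identical to the paper's: both stratify $\prod_v BG[k_v]$ by the value of $H'/H$ and apply Conjectures~\ref{conj:fair} and \ref{conj:equi} on each stratum.

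Part (2) has a genuine gap. The height $H^{(t)}$ you describe---equal to $tH$ on $W$ and to $H$ off $W$---is \emph{not} an adelic height when $|S|\geq 2$. An adelic height is by definition a product $\prod_v H^{(t)}_v$ of local heights, so the ratio $H^{(t)}/H$ must factor as $\prod_{v\in S} r_v(\varphi_v)$ for some functions $r_v$ depending only on $\varphi_v$. But the indicator of $W=\prod_{v\in S}\{\psi_v\}\times\prod_{v\notin S}BG[k_v]$ does not factor this way: taking $\varphi$ with $\varphi_v=\psi_v$ for all $v\in S$, then changing it at a single place $v_0$, then independently at a second place $v_1$, and comparing the resulting values of $r_{v_0}$ forces $t=1$. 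So Conjecture~\ref{conj:fair} cannot be invoked for $H^{(t)}$, and the Tauberian argument (which is correct and rather nice for $|S|=1$) does not get off the ground in general.

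The paper's proof avoids this by deflating off $W$ rather than inflating on $W$: it sets $H_{\varepsilon,v}(\varphi_v)=\varepsilon$ whenever $v\in S$ and $\varphi_v\neq\psi_v$, and $H_{\varepsilon,v}=H_v$ otherwise. This \emph{is} a product of local heights, and gives $H_\varepsilon\leq H$ with equality exactly on $W$; applying Conjecture~\ref{conj:fair} to $H_\varepsilon$ and sending $\varepsilon\to 0$ produces the upper bound for the density of $W$ directly (the Tamagawa measure of the complement of $W$ scales like $\varepsilon^{a(L)}$ at one of the places in $S$), and the lower bound follows by passing to the complement. Your approach could be repaired either by adopting this construction or by proving a multi-parameter version of your Tauberian step for the family $H^{(t_1,\dots,t_{|S|})}$ obtained by scaling independently at each $v\in S$, but this is substantially more work than what you have written.
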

\begin{proof}
	Recall from \S \ref{sec:heights} that we allow our local heights to be arbitrary 
	\textit{positive functions} at finitely many
	places (in particular one cannot take indicator functions for the local heights, since this
	would break the Northcott property).
	
	(1) Let $H'$ be a different choice of height on $L$.
	Then $H'/H: \prod_v BG[k_v] \to \R_{>0}$ is a well-defined continuous
	function with finite image. Thus for any $c \in \R_{>0}$ the inverse
	image, which we denote by $W_c$, is open. We obtain
	\begin{align*}
	&\lim_{B \to \infty}\frac{\#\left\{ \varphi \in BG[k] \setminus \Omega : 
\begin{array}{l}H'(\varphi) \leq B
\end{array}\right\}}
	{\#\{ \varphi \in BG[k] \setminus \Omega : H(\varphi) \leq B\}} \\
	= &\sum_{c \in \mathrm{Im}(H'/H)} 
	\lim_{B \to \infty}\frac{\#\left\{ \varphi \in BG[k] \setminus \Omega : 
\begin{array}{l}
\varphi \in W_c, H(\varphi) \leq cB
\end{array}\right\}}
	{\#\{ \varphi \in BG[k] \setminus \Omega : H(\varphi) \leq B\}}.
	\end{align*}
	Applying Conjectures \ref{conj:balanced} and \ref{conj:equi}
	shows that this limit equals
	$$
	\frac{c^{a(L)}\tau_H(W_c \cap BG[\Adele_k]_{\mathcal{M}(L)}^{\Br})}
	{\tau_H( BG(\Adele_k)_{\mathcal{M}(L)}^{\Br})}	 = 	
	\frac{\tau_{H'}(W_c \cap BG[\Adele_k]_{\mathcal{M}(L)}^{\Br})}
	{\tau_H( BG(\Adele_k)_{\mathcal{M}(L)}^{\Br})}.
	$$
	Summing over all $c$ now shows that Conjecture \ref{conj:balanced} holds for $H'$.

	(2) By Proposition \ref{prop:Malle-Bhargava} it suffices
	to consider the case where 
	$W = \prod_{v \in S} \{\psi_v\} \times \prod_{v \notin S} BG(k_v)$ for some 
	$\psi_v \in BG(k_v)$ and finite set of places $S$.
	We begin with an upper bound.
	Let $\varepsilon > 0$ and let $H_{\varepsilon}$ be the height on $L$
	such that
	$$H_{\varepsilon,v}(\varphi_v)  =
	\begin{cases}
	H_v(\varphi_v), & \text{if $v \notin S$ or $v \in S$ and } 
	\varphi_v = \psi_v, \\
	\varepsilon, & \textit{otherwise},
	\end{cases}.$$
	Thus applying Conjecture \ref{conj:balanced} and
	Lemma \ref{lem:sum_Euler_products} we find that
	\begin{align*}
	& \lim_{B \to \infty}\frac{\#\left\{ \varphi \in BG[k] \setminus \Omega : 
\begin{array}{l}
\varphi_v = \psi_v \text{ for all } v \in S, \\
H(\varphi) \leq B
\end{array}\right\}}
	{\#\{ \varphi \in BG[k] \setminus \Omega : H(\varphi) \leq B\}} \\
	\leq & \lim_{B \to \infty}\frac{\#\left\{ \varphi \in BG[k] \setminus \Omega : 
\begin{array}{l}
H_{\varepsilon}(\varphi) \leq B
\end{array}\right\}}
	{\#\{ \varphi \in BG[k] \setminus \Omega : H(\varphi) \leq B\}} \\
= &\frac{\L^*(\mathcal{M}(L),1)}	{\tau_H( BG(\Adele_k)_{\mathcal{M}(L)}^{\Br})|\Br_{\mathcal{M}(L)} BG/\Br k|} \sum_{b}
\prod_{v}\lambda_v^{-1}\int_{BG(k_v)} e^{2 \pi i \inv_v b(\varphi_v) } d\tau_{H_{\varepsilon},v}(\varphi_v),
\end{align*}
where the sum is over  $b \in \Br_{\mathcal{M}(L)} BG/\Br k$.
Taking $\varepsilon \to 0$,
we obtain the correct upper bound (see Proposition \ref{prop:Malle-Bhargava}).
For the lower bound, $W$ is closed as $BG(k_v)$ is finite. Thus applying the  upper bound to the complement of $W$, which is open, we obtain the correct lower bound for Conjecture \ref{conj:equi}.
\end{proof}

The equidistribution conjecture adds further evidence to the need to remove a thin set, since in general equidistribution can only hold after removal of a thin set (this has been shown for example in the case of cubic extensions of bounded radical discriminant \cite[Thm.~6]{ShTh22}). But more than that, once equidistribution has been obtained, the following result shows that removing an additional thin set does not change the asymptotic formula. This means that actually there is no danger in removing a larger thin set than required in Conjecture \ref{conj:balanced}. This property for Manin's conjecture was first verified in \cite[Thm.~1.2]{BL19}.

\begin{theorem}	\label{thm:thin_negligable}
	Assume that Conjecture \ref{conj:equi} holds for $(G,H)$.
	Let $\Upsilon \subseteq BG[k]$ be thin. Then
	$$\lim_{B \to \infty} 
	\frac{\#\{ \varphi \in \Upsilon \setminus \Omega : H(\varphi) \leq B\}}
	{\#\{ \varphi \in BG[k] \setminus \Omega : H(\varphi) \leq B\}} = 0.$$
\end{theorem}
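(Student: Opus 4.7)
The plan is to apply the equidistribution conjecture to a carefully chosen continuity set in $\prod_v BG[k_v]$ containing the image of $\Upsilon$, and then to show via a Chebotarev-type argument that this continuity set has arbitrarily small measure. First, by Lemma~\ref{lem:thin_classification} we may assume $\Upsilon = f(BH(k))$ for a single finite morphism $f : BH \to BG$ with $H$ a proper subgroup scheme of an inner twist of $G$; using Lemma~\ref{lem:inner_twist_BG} to identify the inner twist with $G$, we may assume $H \subsetneq G$ itself.

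For each finite set of places $S$ define
\[
W_S := \prod_{v \in S} \im\bigl(BH[k_v] \to BG[k_v]\bigr) \times \prod_{v \notin S} BG[k_v] \subseteq \prod_v BG[k_v].
\]
Each factor $BG[k_v]$ is discrete, so $W_S$ is clopen and hence a continuity set. Since the image of $\Upsilon$ in $\prod_v BG[k_v]$ lies in $W_S$, Conjecture~\ref{conj:equi} yields
\[
\limsup_{B \to \infty}\frac{\#\{\varphi \in \Upsilon \setminus \Omega : H(\varphi) \leq B\}}{\#\{\varphi \in BG[k] \setminus \Omega : H(\varphi) \leq B\}} \leq \frac{\tau_H(W_S \cap BG(\Adele_k)^{\Br}_{\mathcal{M}(L)})}{\tau_H(BG(\Adele_k)^{\Br}_{\mathcal{M}(L)})}.
\]
Dropping the Brauer--Manin condition in the numerator and rewriting the resulting measure as an absolutely convergent Euler product via Lemma~\ref{lem:total_adelic_measure}, combined with the non-vanishing in Lemma~\ref{lem:total_Brauer_measure}, the right-hand side is at most $C\prod_{v \in S}\tau_{H,v}(\im f_v)/\tau_{H,v}(BG(k_v))$ for some positive $C$ independent of $S$. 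Letting $S$ exhaust all places, it thus suffices to show that $\sum_v \bigl(1 - \tau_{H,v}(\im f_v)/\tau_{H,v}(BG(k_v))\bigr)$ diverges.

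The main obstacle will be establishing this divergence. By the Igusa formula (Theorem~\ref{thm:Igusa}) applied separately to $BG$ and $BH$ together with Theorem~\ref{thm:Hensel}, the gap at a good place $v$ equals
\[
\tau_{H,v}(BG(k_v))^{-1}\sum_{c \in \mathcal{C}_G^{\Gamma_{k_v}} \setminus f_*(\mathcal{C}_H^{\Gamma_{k_v}})} q_v^{-w(c) a(L)},
\]
where $f_* : \mathcal{C}_H \to \mathcal{C}_G$ is induced by $H \subseteq G$. For $\sum_v q_v^{-w(c) a(L)}$ to diverge one requires $w(c) a(L) \leq 1$, i.e.\ $c \in \mathcal{M}(L)$, so the delicate point is to exhibit a class $c \in \mathcal{M}(L) \setminus f_*(\mathcal{C}_H)$. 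Fairness of $L$ ensures $\langle \mathcal{M}(L) \rangle = G \not\subseteq H$; combining this with Lemma~\ref{lem:breaking_b} implies that either $a(f^*L) > a(L)$ (in which case every minimal class is missed by $H$), or every element of $f(BH(k))$ is a breaking cocycle (a thin set that may be absorbed into $\Omega$ using Remark~\ref{rem:breaking} and Theorem~\ref{thm:breaking}). In the remaining case, choosing any $c \in \mathcal{M}(L)$ and applying Chebotarev (Lemma~\ref{lem:Chebotarev}) to the finite splitting field of its Galois orbit produces a positive Dirichlet density of places $v$ with $c \in \mathcal{C}_G^{\Gamma_{k_v}}$, each contributing $\gg q_v^{-1}$ to the sum; the divergence then follows from $\sum_v q_v^{-1} = \infty$.
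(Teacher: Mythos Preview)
Your overall strategy matches the paper: reduce to $\Upsilon = f(BH[k])$ for a proper subgroup scheme $H$, apply equidistribution to the sets $W_S$, and show the resulting product of local ratios $\tau_{H,v}(\im f_v)/\tau_{H,v}(BG(k_v))$ tends to zero as $S$ grows. The divergence step, however, contains a genuine gap.

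Your displayed expression for the gap is not an equality: a cocycle $\varphi_v$ can have ramification type lying in $f_*(\mathcal{C}_H)$ without itself lying in $\im f_v$ (the Frobenius part need not be conjugate into $H$). At best you obtain the lower bound
\[
1 - \frac{\tau_{H,v}(\im f_v)}{\tau_{H,v}(BG(k_v))} \;\geq\; \tau_{H,v}(BG(k_v))^{-1}\sum_{c \in \mathcal{C}_G^{\Gamma_{k_v}} \setminus f_*(\mathcal{C}_H^{\Gamma_{k_v}})} q_v^{-w(c)a(L)},
\]
and this lower bound can vanish. Take for instance $G=S_3$, $H=\langle(1,2)\rangle$, and $L$ the discriminant: then $\mathcal{M}(L)=\{(1,2)\}\subseteq f_*(\mathcal{C}_H)$, so your sum contributes only $O(q_v^{-2})$, which is summable. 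Your fallback via Lemma~\ref{lem:breaking_b} is circular: you invoke Remark~\ref{rem:breaking} to absorb the breaking cocycles into $\Omega$, but that remark appeals to Theorem~\ref{thm:thin_negligable}, which is precisely what you are proving. The conjecture fixes $\Omega$; you cannot enlarge it.

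The paper's argument sidesteps this by exploiting the \emph{unramified} sector rather than the ramified ones. For $v$ completely split in a splitting field of $G$ and $H$, the map $BH(\O_v)\to BG(\O_v)$ identifies with $\Hom(\Gamma_{\F_v},H)\hookrightarrow\Hom(\Gamma_{\F_v},G)$, and by Lemma~\ref{lem:tau_BG(O_v)} and \eqref{eqn:tau_alternative} one finds $\tau_{H,v}(f(BH(k_v)))=|H|/|G|+O(1/q_v)$. The gap is therefore the fixed positive constant $1-|H|/|G|$ at a positive density of places (Chebotarev), forcing the product to zero. This uses no information about $\mathcal{M}(L)$ whatsoever.
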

\begin{proof}
	By Lemma \ref{lem:thin_classification}, it suffices to consider the case of $\Upsilon = f(BT[k])$
	where $f:BT \to BG$ for $T$ a proper subgroup scheme of some inner twist of $G$.
	For any finite set of places $S$ of $k$,
	the equidistribution property implies that
	\begin{align*}
	&\lim_{B \to \infty} \frac{\#\{ \varphi \in f(BT[k]) \setminus \Omega : H(\varphi) \leq B\}}
	{\#\{ \varphi \in BG[k] \setminus \Omega : H(\varphi) \leq B\}} \\
	\leq &\lim_{B \to \infty} 
	\frac{\#\{ \varphi \in BG[k] \setminus \Omega : H(\varphi) \leq B, \varphi \in f(BG[k_v]) \text{ for all }
	v \in S\}}
	{\#\{ \varphi \in BG[k] \setminus \Omega : H(\varphi) \leq B\}} \\
	\leq & \L^*(\mathcal{M}(L),1) \prod_{v \in S}  \lambda_v^{-1} \tau_{H,v}(f(BT(k_v)) 
	\prod_{v \notin S} \lambda_v^{-1}\tau_{H,v}(BG(k_v)).
	\end{align*}
	It thus suffices to show that this product diverges to $0$ as $S \to \Omega_k$.
	
	Let $K/k$ be a splitting field for $G$ and $T$. Let
	$v$ be completely split in $K$. By Lemma \ref{lem:tau_BG(O_v)}
	the map $BT(\O_v) \to BG(\O_v)$ corresponds
	to $\Hom(\Gamma_{\F_v},T_{k_v}) \to \Hom(\Gamma_{\F_v},G_{k_v})$, which is injective 
	as $T_{k_v} \subseteq G_{k_v}$. However, by \eqref{eqn:tau_alternative}, 
	Lemma \ref{lem:tau_BG(O_v)}, and Corollary \ref{cor:mass_formula}, for such $v$ we have
	\begin{align*}
	\tau_{H,v}(f(BT(k_v))) & = \tau_{H,v}(f(BT(\O_v)))
	+ O(\tau_{H,v}(BG(k_v) \setminus BG(\O_v))) \\
	& = \frac{|T|}{|G|} + O(1/q_v).
	\end{align*}
	The claim now follows from the Chebotarev density theorem as $T \neq G$.
\end{proof}

\begin{remark}
	Conjecture \ref{conj:equi} implies that the map
	$$BG[k] \to (\prod_v BG[k_v])^{\Brun BG}$$
	has dense image. This can be interpreted as saying that the Grunwald problem
	(with Brauer--Manin obstruction) always has a solution for $G$.
	(See for example \cite{DG12, DLAN17,BN24} for background on the Grunwald problem).
\end{remark}

\subsection{Strong equidistribution} 
Conjecture \ref{conj:equi} considers counting problems related to imposing finitely many local conditions. However there are results in the Malle's conjecture literature which give results with \emph{infinitely} many local conditions imposed; see e.g.~\cite[Thm.~1.1]{Bha14}, \cite[Thm.~2]{BSW}, \cite[Thm.~1.7]{FLN18}, or \cite[Thm.~3]{ASVW21}. We finish by considering a strengthening of Conjecture \ref{conj:equi} which covers this case. 

\begin{conjecture}[Strong equidistribution] \label{conj:equi_strong}
	Assume that $H$ is balanced. Let $W \subseteq BG[\Adele_k]_{\mathcal{M}(L)}$ be a
	continuity set.
	Then there exists a thin subset $\Omega \subset BG[k]$ such that
	$$\lim_{B \to \infty}\frac{\#\{ \varphi \in BG[k] \setminus \Omega : \varphi \in W, H(\varphi) \leq B\}}
	{\#\{ \varphi \in BG[k] \setminus \Omega : H(\varphi) \leq B\}} = 
	\frac{\tau_H(W \cap BG[\Adele_k]_{\mathcal{M}(L)}^{\Br})}
	{\tau_H( BG[\Adele_k]_{\mathcal{M}(L)}^{\Br})}.$$
\end{conjecture}

We forgive the reader for not appreciating the difference between the two equidistribution conjectures at first glance, which comes from the important difference between the topologies of $BG(\Adele_k)_{\mathcal{M}(L)}$ and $\prod_v BG(k_v)$.

Let $S$ be a sufficiently large finite set of places of $k$. In Conjecture \ref{conj:equi_strong} one may take $W = \prod_{v \in S} BG(k_v) \prod_{v \notin S} BG(\O_v)_{\mathcal{M}(L)}$. When $G$ is constant, the corresponding counting function counts those $G$-extensions of $k$ whose ramification type at all $v \notin S$ is either trivial or lies in the collection $\mathcal{M}(L)$ of minimal weight conjugacy classes. One can thus view this as counting number fields with prescribed ramification imposed. For example, this recovers as a special case the problem of counting $S_n$-extensions of degree $n$ of bounded discriminant whose discriminant is a squarefree; this latter problem was considered by Ellenberg and Venkatesh in \cite[\S 2.3]{EV10}.
We have the following analogue of Proposition \ref{prop:Malle-Bhargava}.

\begin{proposition}\label{prop:Malle-Bhargava_strong}
	Conjecture \ref{conj:equi_strong} holds for $(G,H)$ if and only if 
	there exists a thin subset $\Omega \subset BG[k]$ such that for all finite sets of 
	places $S$ containing the non-good places,  and all $\psi_v \in BG(k_v)$ with 
	$\prod_{v \in S}\{\psi_v\} \times \prod_{v \notin S} BG(\O_k)_{\mathcal{M}(L)} \subseteq BG(\Adele_k)_{\mathcal{M}(L)}^{\Br}$
	we have	 
\begin{align*}
& \lim_{B \to \infty}\frac{\#\left\{ \varphi \in BG[k] \setminus \Omega : 
\begin{array}{l}
\varphi_v = \psi_v \text{ for all } v \in S, 
\varphi_v \in BG[\O_v]_{\mathcal{M}(L)} \text{ for all } v \notin S, \\ H(\varphi) \leq B
\end{array}\right\}}
	{\#\{ \varphi \in BG[k] \setminus \Omega : H(\varphi) \leq B\}} \\
&= \frac{\L^*(\mathcal{M}(L),1)}	{\tau_H( BG(\Adele_k)_{\mathcal{M}(L)}^{\Br})} 
\prod_{v \in S} \frac{\lambda_v^{-1}}{|\Aut(\psi_v)| H_v(\psi_v)^{a(L)} }
\prod_{v \notin S}\lambda_v^{-1}\tau_{H,v}(BG(\O_v)_{\mathcal{M}(L)}).
\end{align*}
\end{proposition}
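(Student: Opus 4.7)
The strategy is to mirror the argument of Proposition \ref{prop:Malle-Bhargava}(2), with the key modifications reflecting that we now work with the restricted product topology on $BG[\Adele_k]_{\mathcal{M}(L)}$ rather than the product topology on $\prod_v BG[k_v]$. For the ``only if'' direction, given $S$ and $\psi_v$ as in the statement, take
\[
W = \prod_{v \in S}\{\psi_v\} \times \prod_{v \notin S} BG[\O_v]_{\mathcal{M}(L)}.
\]
Since each $BG[k_v]$ is finite and discrete, and $BG[\O_v]_{\mathcal{M}(L)}$ is a clopen subset of $BG[k_v]$, this $W$ is both open and closed in the restricted product topology, hence a continuity set. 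By hypothesis $W \subseteq BG[\Adele_k]_{\mathcal{M}(L)}^{\Br}$, so $W \cap BG[\Adele_k]_{\mathcal{M}(L)}^{\Br} = W$. Applying Conjecture \ref{conj:equi_strong} and using absolute convergence of the Tamagawa measure (as in Lemma \ref{lem:total_adelic_measure}) to decompose $\tau_H(W)$ as a product of local factors yields the displayed formula.

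For the ``if'' direction, one first reduces the statement for arbitrary continuity sets to a statement about open sets via a Portmanteau-style argument (treating lower and upper bounds separately and applying the result for open sets to the open complement of a closed set). So let $W \subseteq BG[\Adele_k]_{\mathcal{M}(L)}$ be open. By Corollary \ref{cor:Br_finite}, fix a finite set of representatives $\mathscr{B} \subset \Br_{\mathcal{M}(L)} BG$ of $\Br_{\mathcal{M}(L)} BG/\Br k$, and let $S_0$ be a finite set of places containing the non-good places and such that every $b \in \mathscr{B}$ is unramified outside $S_0$ (which exists by Theorem \ref{thm:Harari_formal_partially_unramified}). Because the Tamagawa measure is finite on $BG[\Adele_k]_{\mathcal{M}(L)}^{\Br}$ (Lemma \ref{lem:total_Brauer_measure}) and the basic opens of the form $W_{S,\psi} := \prod_{v \in S}\{\psi_v\} \times \prod_{v \notin S} BG[\O_v]_{\mathcal{M}(L)}$ with $S \supseteq S_0$ generate the topology, one can approximate $W$ from within, to within any $\varepsilon > 0$, by a finite disjoint union of such basic opens.

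Now fix such a basic open $W_{S,\psi}$ with $S \supseteq S_0$. Because $\inv_v b$ vanishes on $BG[\O_v]_{\mathcal{M}(L)}$ for all $b \in \mathscr{B}$ and $v \notin S_0$ (Theorem \ref{thm:Harari_formal_partially_unramified}), the Brauer--Manin pairing on $W_{S,\psi}$ reduces to the constant $\sum_{v \in S} \inv_v b(\psi_v)$. Thus $W_{S,\psi}$ either lies entirely in $BG[\Adele_k]_{\mathcal{M}(L)}^{\Br}$ or is entirely disjoint from it. In the first case the hypothesis of the proposition applies and gives the desired formula. In the second case, global reciprocity $\sum_v \inv_v b(\varphi_v) = 0$ for $\varphi \in BG[k]$ forces the counting function on $W_{S,\psi}$ to vanish, while $\tau_H(W_{S,\psi} \cap BG[\Adele_k]_{\mathcal{M}(L)}^{\Br}) = 0$ by construction, so both sides are trivially zero. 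Summing over the finite collection of basic opens and letting $\varepsilon \to 0$ yields the lower bound; applying the same argument to the open complement of $W$ in $BG[\Adele_k]_{\mathcal{M}(L)}$ yields the matching upper bound.

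The main technical obstacle is the inner approximation of open sets by finite disjoint unions of basic opens. This requires knowing that the Tamagawa measure is a finite Radon measure on the locally compact Hausdorff space $BG[\Adele_k]_{\mathcal{M}(L)}$ (so that inner regularity applies), together with absolute convergence of the product measure (Theorem \ref{thm:Tamagawa_products}) to guarantee that the contribution from the ``tail'' $\prod_{v \notin S} BG[\O_v]_{\mathcal{M}(L)}$ at large enough $S$ is negligible. Once these measure-theoretic ingredients are in place, the dichotomy in the previous paragraph cleanly reduces the statement to the proposition's hypothesis.
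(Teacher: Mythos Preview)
Your proof is correct and follows essentially the same route as the paper's. The only organisational difference is in the ``if'' direction: the paper first observes that $BG(\Adele_k)_{\mathcal{M}(L)}^{\Br}$ is clopen (by Corollary~\ref{cor:Br_finite} and Lemma~\ref{lem:modified_adeles_Brauer-Manin}) and so immediately replaces $W$ by $W \cap BG(\Adele_k)_{\mathcal{M}(L)}^{\Br}$, after which every basic open in the approximation automatically lies inside the Brauer--Manin set and the hypothesis applies directly. You instead keep the full $W$, approximate by basic opens $W_{S,\psi}$, and handle each via the dichotomy (inside the Brauer--Manin set versus disjoint from it), invoking global reciprocity for the disjoint case. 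Both are fine; the paper's reduction is slightly more economical since it avoids the separate treatment of the empty case, while yours makes the role of the Brauer--Manin obstruction on rational points more visible. The measure-theoretic core---inner regularity plus finiteness of $\tau_H$ on the partial adelic space---is identical in both arguments.
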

\begin{proof}
	The proof is similar to the proof of Proposition \ref{prop:Malle-Bhargava},
	so we shall be brief. Firstly $\prod_{v \in S}\{\psi_v\} \times \prod_{v \notin S} BG(\O_k)_{\mathcal{M}(L)}$
	is obviously a continuity set, being both open and compact. Thus Conjecture \ref{conj:equi_strong}
	gives an asymptotic formula, which is easily verified to agree with the formula in the statement.
	
	Next let $W$ be as in Conjecture \ref{conj:equi_strong}. It suffices to prove
	a lower bound in the case where $W$ is open. 
	The set $BG(\Adele_k)_{\mathcal{M}(L)}^{\Br}$ is open and closed by Corollary~\ref{cor:Br_finite}
	and Lemma \ref{lem:modified_adeles_Brauer-Manin}.
	Thus we may assume that $W \subseteq BG(\Adele_k)_{\mathcal{M}(L)}^{\Br}$.
	Recall from	Definition~\ref{def:adelic_space} that
	$$BG(\Adele_k)_{\mathcal{M}(L)} = \lim_{S} \prod_{v \in S} BG(k_v) 
	\prod_{v \notin S} BG(\O_v)_{\mathcal{M}(L)}.$$
	For any finite set of places $S$ we let $W_S = W \cap(\prod_{v \in S} BG[k_v] 
	\prod_{v \notin S} BG[\O_v]_{\mathcal{M}(L)}). $ Then for any such $S$ we have 
	\begin{align*}
	\#\{ \varphi \in BG[k] \setminus \Omega : \varphi \in W, H(\varphi) \leq B\} 
	\geq \#\{ \varphi \in BG[k] \setminus \Omega : \varphi \in W_S, H(\varphi) \leq B\}.
	\end{align*}
	However $W_S$ is open in $\prod_{v \in S} BG[k_v] 
	\prod_{v \notin S} BG[\O_v]_{\mathcal{M}(L)}$, thus may be well approximated
	by a disjoint union of sets as appearing in the statement.
	Hence applying our assumptions we obtain
	$$\liminf_{B \to \infty}
	\frac{\#\{ \varphi \in BG[k] \setminus \Omega : \varphi \in W, H(\varphi) \leq B\}}
	{\#\{ \varphi \in BG[k] \setminus \Omega : H(\varphi) \leq B\}} \geq
	\frac{\tau_H(W_S)}
	{\tau_H( BG(\Adele_k)_{\mathcal{M}(L)}^{\Br})}.$$
	But $\lim_S \tau_H(W_S) = \tau_H(W)$: this follows from the fact
	that the Tamagawa measure is inner regular and $W_S$ is compact. This gives the correct
	lower bound. The upper bound comes from applying the lower bound to the complement of $W$.
\end{proof}

We also have the following version of Lemma \ref{lem:equi_any_height}. The proof is analogous, hence omitted (a similar converse can be formulated).

\begin{lemma}
Assume that for all choices of height $H$ on $L$ and all finite sets of places $S$ we have
\begin{align*}
&\frac{1}{|Z(G)(k)|}\#\{ \varphi \in BG[k] \setminus \Omega : H(\varphi) \leq B,  \varphi \in W_S\} \\
& \sim \frac{a(L)^{b(k,L)-1}\cdot |\Br_{\mathcal{M}(L)} BG / \Br k| \cdot
	\tau_H( W_S \cap BG(\Adele_k)_{\mathcal{M}(L) }^{\Br})}{\#\dual{G}(k) (b(k,L) - 1)!} B^{a(L)} (\log B)^{b(k,L)-1}
\end{align*}
where $W_S = \prod_{v \in S} BG[k_v]  \prod_{v \notin S} BG[\O_v]_{\mathcal{M}(L)}$.
Then Conjecture \ref{conj:equi_strong} holds for all $H$ on $L$.
\end{lemma}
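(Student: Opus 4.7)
The plan adapts the strategy of Lemma \ref{lem:equi_any_height}(2) to the setting where $W \subseteq BG[\Adele_k]_{\mathcal{M}(L)}$ may impose conditions at infinitely many places. First I would invoke Proposition \ref{prop:Malle-Bhargava_strong} to reduce Conjecture \ref{conj:equi_strong} to verifying, for each finite set of places $S$ containing the non-good ones and each tuple $(\psi_v)_{v \in S}$ with $W := \prod_{v \in S}\{\psi_v\} \times \prod_{v \notin S} BG[\O_v]_{\mathcal{M}(L)} \subseteq BG[\Adele_k]_{\mathcal{M}(L)}^{\Br}$, the prescribed asymptotic for $\#\{\varphi \in BG[k] \setminus \Omega : \varphi \in W,\, H(\varphi) \leq B\}$ together with the companion asymptotic for the total count $\#\{\varphi \in BG[k] \setminus \Omega : H(\varphi) \leq B\}$ appearing in the denominator.

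The central device is a modified adelic height $H_\varepsilon$ on $L$ defined by $H_{\varepsilon,v}(\varphi_v) = H_v(\varphi_v)$ whenever $v \notin S$ or $\varphi_v = \psi_v$, and $H_{\varepsilon,v}(\varphi_v) = \varepsilon \cdot H_v(\varphi_v)$ otherwise, for a parameter $\varepsilon \geq 1$. Since $H_\varepsilon$ coincides with $H$ at all but finitely many places it is a valid height on $L$, and $H_\varepsilon = H$ on the matching set, so $\#\{\varphi \in W : H(\varphi) \leq B\} \leq \#\{\varphi \in W_S : H_\varepsilon(\varphi) \leq B\}$ with $W_S = \prod_{v \in S} BG[k_v] \prod_{v \notin S} BG[\O_v]_{\mathcal{M}(L)}$. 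Applying the hypothesis to the pair $(H_\varepsilon, S)$ produces the asymptotic $c(H_\varepsilon, W_S) B^{a(L)} (\log B)^{b(k,L)-1}$, and Lemma \ref{lem:sum_Euler_products} expresses $c(H_\varepsilon, W_S)$ as a finite sum of Euler products indexed by $\Br_{\mathcal{M}(L)} BG/\Br k$ whose local factors at each $v \in S$ are finite sums over $BG[k_v]$; the summands with $\varphi_v \neq \psi_v$ carry a factor $\varepsilon^{-a(L)}$ and vanish as $\varepsilon \to \infty$, leaving precisely the target local factor of Proposition \ref{prop:Malle-Bhargava_strong}. No interchange of the limits $B \to \infty$ and $\varepsilon \to \infty$ is required, since the $B$-asymptotic is granted for fixed $\varepsilon$ and the $\varepsilon$-limit is taken on the explicit leading constants. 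For the lower bound I would decompose $W_S = \bigsqcup_{\varphi'} W_{\varphi'}$ into finitely many cells indexed by $\varphi' \in \prod_{v \in S} BG[k_v]$, apply the upper bound argument to each $W_{\varphi'}$ with $\varphi' \neq \psi$, and use countable additivity of $\tau_H$ together with the hypothesis applied directly to $(H,S)$ to conclude.

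The main obstacle is accessing the total count appearing in the denominator of Conjecture \ref{conj:equi_strong}, which is not directly supplied by any single instance of the hypothesis. Since every rational cocycle lies in $BG[\Adele_k]_{\mathcal{M}(L)} = \bigcup_{S} W_S$, one has the trivial identity $\#\{\varphi : H \leq B\} = \sup_S \#\{\varphi \in W_S : H \leq B\}$, and the hypothesis for each $W_S$ immediately yields the lower bound on the leading constant by taking the supremum over $S$ and using absolute convergence of the Tamagawa measure. For the matching upper bound I would control the residual count of $\varphi$ with bad ramification at some $v \notin S$ via a union bound: such $\varphi$ satisfy $H(\varphi) \geq q_v^{w(c)}$ for some $c \in \mathcal{C}_G^* \setminus \mathcal{M}(L)$, hence $w(c) > a(L)^{-1}$, and decomposing the residual count into finitely many cells at each $v$ and applying the hypothesis to the enlarged set $S \cup \{v\}$ produces a summable tail estimate which vanishes as $S$ exhausts all places. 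Combining these estimates with the modified height trick yields the full strong equidistribution statement.
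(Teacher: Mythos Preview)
Your steps 1--3 are exactly the argument the paper has in mind (it writes ``the proof is analogous [to Lemma~\ref{lem:equi_any_height}], hence omitted''): reduce via Proposition~\ref{prop:Malle-Bhargava_strong} to cells $W=\prod_{v\in S}\{\psi_v\}\times\prod_{v\notin S}BG[\O_v]_{\mathcal{M}(L)}$, bound the cell count from above by the hypothesis applied to $(H_\varepsilon,S)$ and let $\varepsilon\to\infty$, and deduce the matching lower bound by decomposing $W_S$ into finitely many cells. This correctly yields $\#\{\varphi\in W:H(\varphi)\le B\}\sim u_W\,B^{a(L)}(\log B)^{b(k,L)-1}$ for every such cell.

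The gap is in step 4. Proposition~\ref{prop:Malle-Bhargava_strong} asks for the ratio with denominator $\#\{\varphi:H(\varphi)\le B\}$, so you need the total-count asymptotic (Conjecture~\ref{conj:fair}). In Lemma~\ref{lem:equi_any_height}(2) this is literally the hypothesis; here it is not, and your proposed tail estimate does not close the gap. Applying the hypothesis to the enlarged set $S\cup\{v\}$ only controls $\#\{\varphi\in W_{S\cup\{v\}}:\varphi_v\text{ bad},\,H\le B\}$, i.e.\ those $\varphi$ with $\varphi_{v'}\in BG[\O_{v'}]_{\mathcal{M}(L)}$ for all $v'\notin S\cup\{v\}$. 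Cocycles with \emph{two or more} bad places outside $S$ are not covered by any term in your union bound, so the argument is circular: bounding them requires exactly the total count you are trying to establish. (This is precisely why Proposition~\ref{prop:Ekedahl} takes the tail estimate as an \emph{extra} hypothesis rather than deriving it.) Your lower bound $\liminf\#\{H\le B\}/(B^{a(L)}(\log B)^{b-1})\ge c(H)$ is fine, and from it one does get $\limsup$ of the ratio $\le u_W/c(H)$; but the matching $\liminf$ of the ratio needs an \emph{upper} bound on the total count, which the stated hypothesis does not supply in any evident way. Since the paper omits the proof, this subtlety is not resolved there either.
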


Naturally Conjecture \ref{conj:equi_strong} implies Conjecture \ref{conj:equi}. One can obtain the converse providing one shows a suitable tail estimate, which is reminiscent of the condition in Ekedahl's geometric sieve (see e.g.~\cite[Lem.~3.1]{BBL16}, \cite[Thm.~3.3]{Bha14}), \cite[Thm~17]{BSW}, \cite[Thm.~1.2]{Eke91}, \cite{PS99b}).

\begin{proposition} \label{prop:Ekedahl}
	Assume that Conjecture \ref{conj:equi} holds for $(G,H)$ and  that
	$$\lim_{M \to \infty} \limsup_{B \to \infty} 
\frac{\#\left\{ \varphi \in BG[k] \setminus \Omega : 
\begin{array}{l}
\varphi_v \notin BG(\O_v)_{\mathcal{M}(L)} \text{ for some }q_v \geq M, \\
 H(\varphi) \leq B
\end{array}\right\}}
	{\#\{ \varphi \in BG[k] \setminus \Omega : H(\varphi) \leq B\}} = 0.$$
	Then Conjecture \ref{conj:equi_strong} holds for $(G,H)$.
\end{proposition}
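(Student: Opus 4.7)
The plan is to reduce Conjecture~\ref{conj:equi_strong} to the cylinder-wise formulation given by Proposition~\ref{prop:Malle-Bhargava_strong}: it suffices to show that for every cylinder
\[
C := \prod_{v \in S}\{\psi_v\} \times \prod_{v \notin S} BG[\O_v]_{\mathcal{M}(L)} \subseteq BG[\Adele_k]_{\mathcal{M}(L)}^{\Br}
\]
(with $S$ finite and containing the non-good places), the density $N_C(B)/N(B)$ converges to $\tau^*(C) := \tau_H(C \cap BG[\Adele_k]_{\mathcal{M}(L)}^{\Br})/\tau_H(BG[\Adele_k]_{\mathcal{M}(L)}^{\Br})$, where $N_C$ and $N$ are the counts appearing in Conjecture~\ref{conj:equi_strong}. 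Since $G$ is tame, each $BG[k_v]$ is finite and discrete, so $BG[\Adele_k]_{\mathcal{M}(L)}$ is totally disconnected, locally compact, Hausdorff and second countable, with cylinders forming a basis of compact-open sets. Combined with the fact that $\tau_H$ is Radon (a finite Borel measure in this setting, cf.~Theorem~\ref{thm:Tamagawa_products}) and $\tau_H(\partial W) = 0$ for any continuity set $W$, a standard approximation argument produces a finite disjoint union of cylinders $V^- \subseteq W$ and a complement of a finite union of cylinders $V^+ \supseteq W$ with $\tau_H(V^+ \setminus V^-) < \eps$; then $N_{V^-}(B) \leq N_W(B) \leq N_{V^+}(B)$ and the cylinder case recovers Conjecture~\ref{conj:equi_strong} for $W$ upon $\eps \to 0$.

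For the cylinder case, fix $\eps > 0$ and use the tail hypothesis to choose $M$ with $\limsup_B T_M(B)/N(B) < \eps$, where $T_M(B)$ is the tail count appearing in the hypothesis. By refining $C$ into a finite disjoint union of cylinders indexed by $(\psi_v)_{v \in S_0 \setminus S} \in \prod_{v \in S_0 \setminus S} BG[\O_v]_{\mathcal{M}(L)}$ with $S_0 := \{v : q_v < M\}$, we may assume $S \supseteq S_0$. For the \emph{lower bound}, any $\varphi \in BG[k]\setminus\Omega$ with $\varphi_v = \psi_v$ for $v \in S$ that fails to lie in $C$ must satisfy $\varphi_w \notin BG[\O_w]_{\mathcal{M}(L)}$ at some $w \notin S$, hence at some $w$ with $q_w \geq M$, so $N_C(B) \geq N_S(B) - T_M(B)$ where $N_S(B)$ counts $\varphi$ satisfying only the constraints at $v \in S$. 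Applying Conjecture~\ref{conj:equi} to the clopen cylinder $W' := \prod_{v \in S}\{\psi_v\} \times \prod_{v \notin S} BG[k_v] \subseteq \prod_v BG[k_v]$ yields $\lim_B N_S(B)/N(B) = \tau^*(C^{\mathrm{big}})$, where $C^{\mathrm{big}} := W' \cap BG[\Adele_k]_{\mathcal{M}(L)}$ has the integrality constraint outside $S$ removed; since $C \subseteq C^{\mathrm{big}}$, we conclude $\liminf_B N_C(B)/N(B) \geq \tau^*(C) - \eps$. For the \emph{upper bound}, choose any finite $S' \supseteq S$ and decompose $C$ as the finite disjoint union $C = \bigsqcup_{(\phi_v)_{v \in S' \setminus S} \in \prod BG[\O_v]_{\mathcal{M}(L)}} C_{(\phi_v)}^{S'}$; bounding each $N_{C_{(\phi_v)}^{S'}}(B)$ by the count for the corresponding ``big'' cylinder in $\prod_v BG[k_v]$, applying Conjecture~\ref{conj:equi}, and summing gives $\limsup_B N_C(B)/N(B) \leq \tau^*(C^{(S')})$, where $C^{(S')}$ agrees with $C$ on $S'$ but replaces $BG[\O_v]_{\mathcal{M}(L)}$ by $BG[k_v]$ outside $S'$. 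By Lemma~\ref{lem:sum_Euler_products}, Corollary~\ref{cor:mass_formula} and Theorem~\ref{thm:local_invariant_integral}, $\tau^*(C^{(S')}) - \tau^*(C)$ is a finite sum (indexed by $\Br_{\mathcal{M}(L)}BG/\Br k$) of differences of absolutely convergent Euler products whose $v$-factors for $v \notin S'$ agree up to $O(q_v^{-1-\delta})$ for some $\delta > 0$, and so tends to $0$ as $S' \to \Val(k)$. This yields $\limsup_B N_C(B)/N(B) \leq \tau^*(C)$, matching the lower bound up to $\eps$.

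The main obstacle is reconciling two distinct notions of ``integrality at almost all places'': the tail hypothesis controls failures of integrality with respect to $\mathcal{M}(L)$ within the partial adelic space, whereas Conjecture~\ref{conj:equi} is phrased in the coarser product topology on $\prod_v BG[k_v]$, where no such integrality is imposed. The lower bound bridges this gap directly via the tail estimate, at the cost of an additive error $\eps$; the upper bound proceeds indirectly by successive refinement over enlarging finite sets $S'$, with absolute convergence of the global Tamagawa measure (Theorem~\ref{thm:Tamagawa_products}) together with the explicit mass formula and its twisted analogue ensuring that the limit $S' \to \Val(k)$ recovers the correct Tamagawa volume of $C$. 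Once the cylinder case is established, the reduction from arbitrary continuity sets is purely measure-theoretic and requires no further arithmetic input.
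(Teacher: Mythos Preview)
Your approach is essentially the paper's: reduce to cylinders via Proposition~\ref{prop:Malle-Bhargava_strong}, then compare $N_C(B)$ with a finite-local-conditions count using Conjecture~\ref{conj:equi} and absorb the discrepancy into the tail hypothesis. The paper carries out both bounds symmetrically and more directly: for any $M$, writing $N_{S,M}(B)$ for the count imposing the conditions of $C$ only at places $v$ with $v \in S$ or $q_v < M$, one has $N_{S,M}(B) - T_M(B) \leq N_C(B) \leq N_{S,M}(B)$; Conjecture~\ref{conj:equi} handles $N_{S,M}$, and letting $M\to\infty$ the measure converges by Corollary~\ref{cor:mass_formula} while the tail vanishes. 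In particular your separate upper-bound route via $S' \to \Val(k)$, and the appeal to Theorem~\ref{thm:local_invariant_integral}, are not needed.

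There is one genuine slip in your lower bound. The reduction ``we may assume $S \supseteq S_0$'' by decomposing $C = \bigsqcup_j C'_j$ and proving $\liminf N_{C'_j}/N \geq \tau^*(C'_j) - \eps$ for each $j$ does not recombine to $\liminf N_C/N \geq \tau^*(C) - \eps$: since $\liminf\sum \geq \sum\liminf$, you only get $\tau^*(C) - n\eps$ with $n = \#\{j\}$, and $n$ grows with $M$ (hence with $1/\eps$), so you cannot send $\eps\to 0$. The fix is to apply the tail estimate once at the level of $C$ rather than piecewise: the total over-count $\sum_j N_{S'}^{(j)}(B) - N_C(B)$ is bounded by a single $T_M(B)$, because each $\varphi$ in the difference lies in exactly one $(C'_j)^{\mathrm{big}}$ and has a bad place of norm $\geq M$. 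This is exactly the paper's argument, and with this correction your proof goes through.
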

\begin{proof}
	By Proposition \ref{prop:Malle-Bhargava_strong}, it suffices
	to consider the case where $W = 
	\prod_{v \in S}\{\psi_v\} \times \prod_{v \notin S} 
	BG(\O_k)_{\mathcal{M}(L)} \subseteq BG(\Adele_k)_{\mathcal{M}(L)}^{\Br}$ for some finite set 
	of places $S$. We have
	\begin{align*}
	&\#\{ \varphi \in BG[k] \setminus \Omega : \varphi \in W, H(\varphi) \leq B\} \\
	=& 
	\#\left\{ \varphi \in BG[k] \setminus \Omega :
	\begin{array}{l}
	\varphi_v = \psi_v \text{ for all } v \in S , \\
	\varphi_v \in BG(\O_v)_{\mathcal{M}(L)} \text{ for all } v \notin S, q_v < M\\
	H(\varphi) \leq B
	\end{array}\right\} \\
	+ &O\left(\#\left\{ \varphi \in BG[k] \setminus \Omega : 
\begin{array}{l}
\varphi_v \notin BG(\O_v)_{\mathcal{M}(L)} \text{ for some }q_v \geq M, \\
 H(\varphi) \leq B
\end{array}\right\}\right)
	\end{align*}
	for all $M \geq 0$. Conjecture \ref{conj:equi} gives an asymptotic for the first term
	as $B \to \infty$,
	which is absolutely convergent as $M \to \infty$ by Corollary \ref{cor:mass_formula}.
	Our assumption shows that the error term is negligible. Conjecture \ref{conj:equi_strong}
	follows.
\end{proof}

Conjecture \ref{conj:equi_strong} has applications to the Cohen--Lenstra--Martinet heuristics on  class groups of number fields. This will be explored in forthcoming work.

The proof of the following is a minor variant of Lemma \ref{lem:total_Brauer_measure}. However its significance (assuming Conjecture \ref{conj:equi_strong}) is that it suffices to find a single
cocycle in the intersection, which may be non-surjective, and this allows one to deduce the existence
of a surjective cocycle in the intersection (as follows from Theorem~\ref{thm:thin_negligable}). This flexibility can be crucial for applications.

\begin{lemma}
	Let $W \subseteq BG[\Adele_k]_{\mathcal{M}(L)}$ be a open.
	If the intersection of $W$ and $BG[\Adele_k]_{\mathcal{M}(L)}^{\Br}$ is non-empty 
	then it has positive Tamagawa measure.
\end{lemma}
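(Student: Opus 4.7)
The plan is to reduce the statement to the non-vanishing already established in Lemma \ref{lem:total_Brauer_measure}, by first shrinking $W$ to a basic open neighbourhood of a chosen point in the intersection and then using Harari's formal lemma for the partially unramified Brauer group to show that the Brauer--Manin condition outside a large finite set of places is automatic.

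Concretely, pick $x = (x_v) \in W \cap BG[\Adele_k]_{\mathcal{M}(L)}^{\Br}$, which exists by hypothesis. Because each $BG[k_v]$ is finite and discrete, basic open neighbourhoods in the restricted product topology on $BG[\Adele_k]_{\mathcal{M}(L)}$ have the shape $\prod_{v \in T} \{y_v\} \times \prod_{v \notin T} BG[\O_v]_{\mathcal{M}(L)}$ for some finite $T$ containing all non-good places. The openness of $W$ therefore yields a finite set $T$ with
\[
V := \prod_{v \in T}\{x_v\} \times \prod_{v \notin T} BG[\O_v]_{\mathcal{M}(L)} \subseteq W.
\]

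Next, enlarge $T$ so that it contains a set of places $S_0$ beyond which a (finite) system of representatives $\mathscr{B}$ for $\Br_{\mathcal{M}(L)} BG/\Br k$ (finite by Corollary \ref{cor:Br_finite}) evaluates trivially on $BG[\O_v]_{\mathcal{M}(L)}$; such an $S_0$ exists by Theorem \ref{thm:Harari_formal_partially_unramified}, exactly as in the proof of Lemma \ref{lem:total_Brauer_measure}. Then the Brauer--Manin condition on $V$ reduces to $\sum_{v \in T} \inv_v b(y_v) = 0$ for every $b \in \mathscr{B}$, and since the $T$-components are frozen at $x_v$ and $x$ is already orthogonal to $\Br_{\mathcal{M}(L)} BG$, this condition is automatically satisfied throughout $V$. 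Hence $V \subseteq W \cap BG[\Adele_k]_{\mathcal{M}(L)}^{\Br}$.

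Finally, $\tau_{H,v}(\{x_v\}) = 1/(|\Aut(x_v)| H_v(x_v)^{a(L)}) > 0$ for each $v \in T$, while $\prod_{v \notin T} \lambda_v^{-1}\tau_{H,v}(BG[\O_v]_{\mathcal{M}(L)})$ is a nonzero absolutely convergent Euler product by Corollary \ref{cor:mass_formula} and Theorem \ref{thm:Tamagawa_products}. Therefore $\tau_H(V) > 0$, and a fortiori $\tau_H(W \cap BG[\Adele_k]_{\mathcal{M}(L)}^{\Br}) > 0$. The only substantive step is the application of Harari's formal lemma (Theorem \ref{thm:Harari_formal_partially_unramified}) to ensure that after enlarging $T$ the Brauer--Manin condition disappears outside $T$; the rest is a packaging of the restricted product topology together with the positivity of the local Tamagawa measures.
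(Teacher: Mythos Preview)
Your proof is correct and follows essentially the same approach as the paper, which simply notes that the argument is a minor variant of Lemma \ref{lem:total_Brauer_measure}: pick a point in the intersection, shrink to a basic open cylinder, enlarge the finite set of places using Theorem \ref{thm:Harari_formal_partially_unramified} so the Brauer condition becomes automatic outside it, and conclude positive measure via Corollary \ref{cor:mass_formula} and Theorem \ref{thm:Tamagawa_products}.
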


\begin{remark}
	Peyre \cite[\S3]{Pey95} only considered equidistribution for anticanonical heights. We could not find
	any results in the Manin's conjecture literature regarding equidistribution for 
	non-anticanonical heights. However it seems very reasonable to expect that the 
	equidistribution property should hold for height functions associated to big adjoint rigid
	line bundles, providing equidistribution takes place in the adelic
	points of the open subset given by removing the adjoint divisor. Conjecture \ref{conj:equi_strong}
	is exactly modelled on this situation (cf.~Remark \ref{rem:intuition}).
\end{remark}

\subsection{Equidistribution for unbalanced heights} \label{sec:equi_unbalanced}

If $H$ is not balanced then we follow the procedure from \S \ref{sec:non-balanced}. Namely we pass to the Iitaka fibration, and then Conjecture~\ref{conj:equi} predicts that the rational points in each fibre are equidistributed with respect to the induced Tamagawa measure on the fibre. 

This explains the phenomenon, first observed by Wood \cite[Prop.~1.4]{Woo10} when counting abelian extensions of bounded discriminant, that the equidistribution property need not hold when counting all extensions. The fix is very simple: to get equidistribution one should only count extensions which lie in a given fibre of the Iitaka fibration.

\begin{remark}
	Darda and Yasuda define in \cite[\S3.5]{DYTor} a Radon measure in the case of abelian $G$ 
	via an abstract process. It is not clear how this relates to our Tamagawa measure. 
	The fact that they obtain equidistribution in \cite[Thm.~3.5.8]{DYTor} for 
	unbalanced heights suggests it is of a very different nature.
\end{remark}

\subsection{Products}
We next consider compatibility of our conjectures with respect to products. We focus on the balanced case as it is more fundamental. Let $G_1$ and $G_2$ be finite \'etale group schemes over $k$ with balanced heights $H_1$ and $H_2$. We define a height $H:= H_1 \boxtimes H_2$ on $G:=G_1 \times G_2$ by taking the product of height functions. To ensure that a product of balanced heights is balanced, we assume that the Fujita invariants are equal.

\begin{proposition} \label{prop:products}
	Assume that Conjecture \ref{conj:balanced} holds for $(G_1,H_1)$
	and $(G_2,H_2)$ and that $a(H_1) = a(H_2)$.
	Then Conjecture \ref{conj:balanced} holds for $(G,H)$.
\end{proposition}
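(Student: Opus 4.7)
The plan is to reduce the product conjecture to a Tauberian convolution of the two individual asymptotics, then verify that all factors appearing in the leading constant $c(k,G,H)$ decompose compatibly across the product.

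First I would verify that every ingredient of Conjecture \ref{conj:fair} decomposes. Since $BG = BG_1 \times BG_2$, a cocycle in $BG[k]$ is a pair $(\varphi_1,\varphi_2)$ and $H(\varphi) = H_1(\varphi_1) H_2(\varphi_2)$. Because $\mathcal{C}_G^* = (\mathcal{C}_{G_1}^* \times \{e\}) \cup (\{e\}\times \mathcal{C}_{G_2}^*) \cup (\mathcal{C}_{G_1}^*\times \mathcal{C}_{G_2}^*)$ with weight function $w(c_1,c_2)=w_1(c_1)+w_2(c_2)$, the hypothesis $a(L_1)=a(L_2)$ forces the minimum to be attained on the first two pieces only, giving $a(L)=a(L_1)=a(L_2)$ and $\mathcal{M}(L) = (\mathcal{M}(L_1)\times\{e\}) \sqcup (\{e\}\times\mathcal{M}(L_2))$, so $b(k,L)=b(k,L_1)+b(k,L_2)$. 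Fairness is preserved because $\mathcal{M}(L)$ generates $G_1\times G_2$ as soon as $\mathcal{M}(L_i)$ generates $G_i$. We have $|Z(G)(k)|=|Z(G_1)(k)||Z(G_2)(k)|$ and $\#\dual{G}(k)=\#\dual{G_1}(k)\#\dual{G_2}(k)$. Proposition~\ref{prop:product_Brauer_group} gives $|\Br_{\mathcal{M}(L)}BG/\Br k|=\prod_{i} |\Br_{\mathcal{M}(L_i)}BG_i/\Br k|$, and the local Tamagawa measures factor by Definition \ref{def:local_Tamagawa_measure} as $\tau_{H,v}=\tau_{H_1,v}\times \tau_{H_2,v}$ (both sides expanding as a double sum weighted by the product of inverse automorphism groups). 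Since $\lambda_v=\lambda_{1,v}\lambda_{2,v}$ and $\L^*(\mathcal{M}(L),1)=\L^*(\mathcal{M}(L_1),1)\L^*(\mathcal{M}(L_2),1)$, the global Tamagawa measure likewise factors, and the product structure is compatible with the Brauer--Manin condition because the $\Br_{\mathcal{M}(L)}BG$-orthogonal space splits as a product (using again Proposition~\ref{prop:product_Brauer_group}).

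Next I would carry out the asymptotic count. Set $\Omega := (\Omega_1 \times BG_2[k]) \cup (BG_1[k] \times \Omega_2)$, where $\Omega_i$ are the thin sets from Conjecture~\ref{conj:fair} for $(G_i,H_i)$. This is thin in $BG[k]$ by Lemma~\ref{lem:thin_classification}. Writing $A_i(B) = \frac{1}{|Z(G_i)(k)|}\#\{\varphi_i\in BG_i[k]\setminus\Omega_i : H_i(\varphi_i)\leq B\}$, the hypothesis gives $A_i(B)\sim c(k,G_i,H_i) B^{a}(\log B)^{b_i-1}$. The count for $(G,H)$ factors as a Dirichlet convolution
\begin{equation*}
A(B) = \frac{1}{|Z(G)(k)|}\#\{\varphi\in BG[k]\setminus\Omega : H(\varphi)\leq B\} = \sum_{\substack{(\varphi_1,\varphi_2) \\ H_1(\varphi_1)H_2(\varphi_2)\leq B}} \frac{1}{|Z(G_1)(k)||Z(G_2)(k)|}.
\end{equation*}
A standard Tauberian argument — e.g.\ via Stieltjes integration $A(B)=\int_1^B A_1(B/t)\,dA_2(t)$ together with Abel summation — gives
\begin{equation*}
A(B)\;\sim\; c(k,G_1,H_1)\,c(k,G_2,H_2)\cdot \frac{a(L)\,(b_1-1)!\,(b_2-1)!}{(b_1+b_2-1)!}\, B^{a}(\log B)^{b_1+b_2-1}.
\end{equation*}

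Finally I would check that this matches the predicted leading constant $c(k,G,H)$. Multiplying out the formula in Conjecture~\ref{conj:fair} and using the decompositions from the first step, one finds
\begin{equation*}
c(k,G,H) = \frac{a(L)^{b_1+b_2-1}\,|\Br_{\mathcal{M}(L_1)}BG_1/\Br k|\,|\Br_{\mathcal{M}(L_2)}BG_2/\Br k|\,\tau_{H_1}(\cdot)^{\Br}\,\tau_{H_2}(\cdot)^{\Br}}{\#\dual{G_1}(k)\#\dual{G_2}(k)\,(b_1+b_2-1)!},
\end{equation*}
which equals $c(k,G_1,H_1)c(k,G_2,H_2)\cdot a(L)(b_1-1)!(b_2-1)!/(b_1+b_2-1)!$, precisely the constant produced by the Tauberian convolution. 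The main technical obstacle is the Tauberian step: one cannot apply a Dirichlet series Tauberian theorem directly without controlling the analytic continuation of $\sum_\varphi H_i(\varphi)^{-s}$ beyond the pole at $s=a$. However, since Conjecture~\ref{conj:fair} is assumed (not proved) for each factor, it is enough to deduce the asymptotic of $A(B)$ from the asymptotics of $A_1,A_2$ by the elementary convolution estimate above, which is valid whenever both summands satisfy a power-log asymptotic with matching exponent $a$; the combinatorial identity involving the binomial-like factor $(b_1-1)!(b_2-1)!/(b_1+b_2-1)!$ is then the content of this step and is what matches the factorial denominators in $c(k,G,H)$.
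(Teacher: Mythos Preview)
Your proposal is correct and follows essentially the same route as the paper: decompose all the invariants ($a$, $b$, $\mathcal{M}(L)$, $\dual{G}$, the Tamagawa measure, and the Brauer group via Proposition~\ref{prop:product_Brauer_group}), then use a Dirichlet-convolution/Tauberian argument (which the paper cites from \cite{FMT89}) to produce the combinatorial factor $a(L)\Gamma(b_1)\Gamma(b_2)/\Gamma(b_1+b_2)$ and match it against the predicted constant. Your choice of thin set $\Omega=(\Omega_1\times BG_2[k])\cup(BG_1[k]\times\Omega_2)$ is exactly what is needed for the count to factor.
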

\begin{proof}
	Firstly, by \cite[\S1.1]{FMT89}, one finds that
	\begin{align*}
	&\#\{ \varphi \in BG(k) \setminus \Omega_1 \times \Omega_2 : 
	H(\varphi) \leq B\} \\
	\sim &
	\frac{a(H_1)\Gamma(b(H_1))\Gamma(b(H_2))c(k,G_1,H_1)c(k,G_2,H_2)}{\Gamma(b(H_1) +b(H_2))} B^{a(H_1)}(\log B)^{b(H_1) + b(H_2) -1}.
	\end{align*}
	(The cited result assumes that $a(H_1) = a(H_2) = 1$, but one obtains the stated formula by rescaling the height.)
	However one easily checks that
	\begin{align*}
	&\dual{G_1 \times G_2} = \dual{G}_1 \times \dual{G}_2, \\
	& \mathcal{M}(H) = \mathcal{M}(H_1) \times \{e\} 
	\, \cup \, \{e\} \times \mathcal{M}(H_2), \\
	&a(H) = a(H_1), \quad b(H_1 \boxtimes H_2) = b(H_1) + b(H_2).
	\end{align*}
	This allows one to match up all factors in the conjecture, except the Brauer group and Tamagawa
	measure. For the Tamagawa measures, the above description of $\mathcal{M}(H)$
	shows that the convergences factors match up. We also  have 
	$\Aut (\varphi_1, \varphi_2) \cong  \Aut(\varphi_1 \times \varphi_2)$, 
	so the groupoid cardinalities match. We obtain the equality
	$\tau_{G,H} = \tau_{G_1,H_1}\tau_{G_1,H_2}$ of measures. The partially unramified Brauer group is compatible with products by Proposition \ref{prop:product_Brauer_group}, which by functoriality implies that $B(G_1 \times G_2)(\Adele_k)_{\mathcal{M}(H)}^{\Br} = BG_1(\Adele_k)_{\mathcal{M}(H_1)}^{\Br} \times BG_2(\Adele_k)_{\mathcal{M}(H_2)}^{\Br}$. The proposition follows.
\end{proof}

\begin{example} \label{ex:products}
	Beware that a product of anticanonical heights need not be an anticanonical height in general
	(contrary to Manin's conjecture).
	Take $G = \Z/2\Z \times \Z/2\Z$. Let $H_1$ (resp.~$H_2$) 
	be the conductor of the quadratic extension corresponding to the first (resp.~second) factor.
	Then each $H_i$ is an anticanonial height on the quadratic field,
	and  Proposition \ref{prop:products} implies an asymptotic formula for $H_1 \boxtimes H_2$. 
	However $H_1 \boxtimes H_2$ is not an anticanonical height: an anticanonical height
	can be given by taking the square root of the product of the discriminants
	of the three quadratic subfields of the corresponding biquadratic extension.
\end{example}
\begin{remark}
	Wang \cite{Wan21} has studied Malle's conjecture for $S_n \times A$ where $A$ is an abelian group. The height is the discriminant coming from the embedding $S_n \times A \subset S_n \times S_{|A|} \subset S_{n|A|}$. On the level of \'etale algebras the inclusion $S_n \times S_{|A|} \subset S_{n|A|}$ corresponds to the tensor product.
	
	This work is not a special case of Proposition \ref{prop:products} as the discriminant of a tensor product of \'etale algebras is not a product (of powers) of the individual discriminants. (This is related to the phenomenon described in Example \ref{ex:products}).
\end{remark}

\begin{remark}[Weil restriction] \label{rem:Weil}
The compatibility of the Weil restriction with Manin's conjecture has been studied in \cite{Lou15}. It would be worthwhile to investigate this in the context of Malle's conjecture, since Weil restrictions of constant group schemes appear in practice as the fibres of Iitaka fibrations (see \S\ref{sec:D4_disc} and \S\ref{sec:Kluners}). 
\end{remark}

\section{Examples} \label{sec:examples}

We now study explicit examples of groups $G$ and compare our conjectures with existing results in the literature, as well as consider some new examples. We focus on some of the more well-known results in the literature which moreover exhibit the wide range of behaviour we wish to encapsulate. A nice survey of the current state of the art in Malle's conjecture can be found in \cite{BFLV}. Unless otherwise stated, we work over a number field $k$.

\subsection{$S_n$-extensions} \label{sec:S_n}
Recall from Lemma \ref{lem:S_n} that the category of $S_n$-torsors is equivalent to the category of degree $n$ \'etale $k$-algebras, with the irreducible torsors corresponding to field extensions with Galois closure $S_n$. Bhargava \cite[Conj.~1.2]{Bha07}  put forward a conjecture on the number of such extensions of bounded discriminant. We explain here how this is a special case of our conjectures.

We take the orbifold line bundle $\Delta$ whose corresponding height is the discriminant, as in Example \ref{ex:disc}.

\begin{lemma}
	The orbifold line bundle $\Delta$ is balanced on $B S_n$.
\end{lemma}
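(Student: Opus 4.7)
The plan is straightforward: I want to identify the minimal weight conjugacy classes with respect to $\Delta = (1, \mathrm{ind})$ and verify that they generate $S_n$.

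First, I would unpack the weight function. By definition, $\mathrm{ind}(g) = n - c(g)$, where $c(g)$ denotes the number of orbits (cycles, including fixed points) of $g$ acting on $\{1,\dots,n\}$. For non-identity $g \in S_n$, the quantity $c(g)$ is maximised precisely when $g$ has exactly $n-1$ cycles, which forces $g$ to consist of a single transposition together with $n-2$ fixed points. Hence $\min_{g \neq e} \mathrm{ind}(g) = 1$, achieved exactly on the (single) conjugacy class of transpositions.

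Next, via the canonical identification of $\mathcal{C}_{S_n}$ with conjugacy classes of $S_n$ (using any choice of primitive root of unity as in Lemma \ref{lem:Galois_action_on_G(-1)}; this is harmless since $\mathrm{ind}$ is a class function invariant under $\widehat{\Z}^\times$-action), we deduce that $\mathcal{M}(\Delta)$ consists of the single conjugacy class of transpositions.

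Finally, it is a classical fact that the transpositions generate $S_n$. By Definition \ref{def:rigid}, this is exactly the statement that $\Delta$ is fair (equivalently, adjoint rigid). The main content is therefore the elementary combinatorial identification of the minimal index conjugacy class; there is no real obstacle to overcome.
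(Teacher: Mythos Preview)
Your proposal is correct and follows exactly the paper's approach: the paper's proof is the single sentence ``The conjugacy classes of minimal index are exactly the transpositions, which generate $S_n$,'' and you have simply unpacked this with the explicit computation $\mathrm{ind}(g) = n - c(g)$ and the observation that $c(g) = n-1$ forces $g$ to be a transposition.
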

\begin{proof}
	The conjugacy classes of minimal index are exactly the transpositions, which generate $S_n$.
\end{proof}

Therefore in this case the adjoint Iitaka fibration is trivial and the discriminant is a balanced height function, thus we are in the situation of Conjecture \ref{conj:balanced}.

\begin{lemma}
	$\dual{S}_n= \mu_2$ and $\Br_{\Delta} B S_n = \Br k$.
\end{lemma}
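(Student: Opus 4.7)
The plan is to compute the two parts separately, each reducing quickly to a result already established in the paper.

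For the identification $\dual{S}_n = \mu_2$: any character $S_n \to \Gm$ factors through the abelianisation $S_n^{\mathrm{ab}} = \Z/2\Z$ (generated by the sign character), so its image has order dividing $2$. Hence $\dual{S}_n = \Hom(S_n,\Gm) \cong \Hom(\Z/2\Z,\Gm) = \mu_2$, with the trivial Galois action since $\mu_2(k^{\mathrm{sep}}) = \{\pm 1\}$ is fixed by $\Gamma_k$.

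For the Brauer group, we use the defining inclusion
\[
0 \to \Br_{\mathcal{M}(\Delta),1} BS_n \to \Br_{\Delta} BS_n \to (\Br_{\Delta} BS_{n,k^{\mathrm{sep}}})^{\Gamma_k}
\]
from \eqref{seq:Br_extension}, and handle the two flanking groups separately. Recall that $\mathcal{M}(\Delta)$ is the set of transpositions (a single conjugacy class) and that transpositions generate $S_n$.

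For the algebraic part: the set $\mathcal{M}(\Delta)$ forms a single Galois orbit on which $\Gamma_k$ acts trivially (it is the geometric Galois orbit of the unique class of transpositions in $S_n(-1)$, and all transpositions are $\Gamma_k$-conjugate), and it generates $S_n$. Lemma \ref{lem:Br_1_trivial} then applies directly to yield $\Br_{\mathcal{M}(\Delta),1} BS_n = \Br k$.

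For the transcendental part: Lemma \ref{lem:unramified_geometric_Brauer_group_computation}(2) states exactly that for $G = S_n$ with $\mathcal{C}$ containing a transposition, one has $\Br_{\mathcal{C}} BS_{n,k^{\mathrm{sep}}} = 0$. Applied to $\mathcal{C} = \mathcal{M}(\Delta)$ this gives $(\Br_{\Delta} BS_{n,k^{\mathrm{sep}}})^{\Gamma_k} = 0$. Combining these two inputs in the exact sequence above forces $\Br_{\Delta} BS_n = \Br k$. No step is a genuine obstacle since both ingredients have already been proved in the paper; the only thing to verify is the trivial Galois action on $\mathcal{M}(\Delta)$, which is immediate from the character-theoretic observation that $S_n$ has a rational character table (so the Galois action on its conjugacy classes via the anticyclotomic character is trivial).
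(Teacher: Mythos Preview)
Your proof is correct and follows essentially the same approach as the paper: both use Lemma~\ref{lem:unramified_geometric_Brauer_group_computation}(2) to kill the transcendental part and Lemma~\ref{lem:Br_1_trivial} to show the algebraic part is constant. You frame this explicitly via the exact sequence \eqref{seq:Br_extension}, while the paper states the two conclusions directly, but the underlying argument is the same.
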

\begin{proof}
	There is only one non-trivial $1$-dimensional representation of $S_n$, given
	by the sign representation. This is defined over $\Q$ thus always Galois invariant.
	We have $\Br_{\Delta} BS_n = \Br_{\Delta,1} BS_n$ by 
	Lemma~\ref{lem:unramified_geometric_Brauer_group_computation}(2) and $\Br_{\Delta,1} B S_n = \Br k$ by 
	Lemma~\ref{lem:Br_1_trivial}.
\end{proof}

The minimal index conjugacy class is the transpositions with trivial Galois action, thus the convergence factors for the Tamagawa measure come from the Dedekind zeta function $\zeta_k(s)$. The leading constant \cite[(4.2)]{Bha07} in Bhargava's conjecture is thus seen to agree with Conjecture \ref{conj:balanced} on applying Lemma \ref{lem:S_n}. (Bhargava has a missing factor of $1/2$ in the case $n =2$ which is necesary to account for the centre of $S_2$; the correct factor appears in \cite[Thm.~1]{BSW}).

Bhargava also conjectures a version of equidistribution in \cite[Conj.~5.1]{Bha07}, though only for a single prime. This conjecture is a special case of our Conjecture \ref{conj:equi}. Moreover  \cite[Thm.~2]{BSW} is a version of our Conjecture \ref{conj:equi_strong}.

\begin{remark}
Understanding the factor $1/2$ which appears in Bhargava's heuristic was in fact the initial problem which started this project. Bhargava appears to introduce this factor $1/2$ into his constant in \cite[(4.2)]{Bha07} as an extra archimedean factor coming from sign considerations. In our case it is arises more naturally as the effective cone constant, since $\alpha^*(BS_n,\Delta) = 1/2$ by Lemma \ref{lem:effective_cone_calc}.
\end{remark}

\subsection{$D_4$-quartics} \label{sec:D_4}
\subsubsection{Groupoid cardinalities} \label{sec:D4-groupoid-counts}

$D_4$ admits a unique embedding into $S_4$, up to conjugation.
A $D_4$-\emph{quartic extension} of $k$ is a degree $4$ extension $K/k$ such that its Galois closure $\widetilde{K}$ has Galois group isomorphic to $D_4$ (as a permutation group). In the literature one usually counts isomorphism classes of such extensions. This means counting in the image of the map $BD_4(k) \to BS_4(k)$. To relate to our conjecture we therefore need to keep track of the correct groupoid counts; this is achieved using Lemma \ref{lem:G_S_n}. The conclusion is that we need to multiply counts which appear in the literature by a factor of $1/2$ (the normaliser of $D_4$ in $S_4$ being $D_4$ and the centraliser having order $2$, generated by the double transposition $(1,3)(2,4)$).

\begin{remark}
	A Galois $D_4$ extension $L/k$ contains two non-isomorphic $D_4$-quartics, namely $L^{(1,3)}$ and $L^{(1,2)(3,4)}$. This is due to the existence of non-trivial outer automorphisms of $D_4$, see \cite[\S 2.1]{ASVW21}.
\end{remark}

\subsubsection{Discriminant} \label{sec:D4_disc}
Malle's conjecture is known here when $k = \Q$ by \cite[Thm~1.3]{CDO02}.
The character table of $D_4$ is rational so the Galois action on the conjugacy classes is trivial.
Here is an important difference with the case of $S_n$.

\begin{lemma}
	The subgroup generated by the minimal index elements is $M = \{(), (1,3), (2,4), (1,3)(2,4)\}$.
	The adjoint Iitaka fibration of the orbifold line bundle $\Delta$ is given by $BD_4 \to B C_2$. 
	In particular $\Delta$ is not balanced.
\end{lemma}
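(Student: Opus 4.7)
The plan is essentially a finite computation, guided by the framework already established. First, I would enumerate the conjugacy classes of $D_4 \subset S_4$. Writing $r = (1,2,3,4)$, the five classes are $\{e\}$, $\{r^2\} = \{(1,3)(2,4)\}$, $\{r, r^3\}$, $\{(1,3),(2,4)\}$, and $\{(1,2)(3,4),(1,4)(2,3)\}$; one verifies that $(1,3)$ and $(2,4)$ are conjugate in $D_4$ via $r(1,3)r^{-1} = (r(1),r(3)) = (2,4)$. Using the definition of the index $\ind$ from Example \ref{ex:disc}, I would compute indices by counting orbits on $\{1,2,3,4\}$: one gets index $2$ for $(1,3)(2,4)$, index $3$ for the $4$-cycles, index $1$ for the diagonal reflections $\{(1,3),(2,4)\}$, and index $2$ for the edge reflections.

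Thus the unique non-identity conjugacy class of minimal index is $\mathcal{M}(\Delta) = \{(1,3),(2,4)\}$, with index $1$. Since the character table of $D_4$ is rational, by the discussion in \S\ref{sec:Galois_action_conjugacy_classes} the Galois action on $\mathcal{C}_{D_4}$ is trivial, so this coincides with the set of (Galois-invariant) minimal weight classes after identifying $G(-1)$ with $G$ via Lemma \ref{lem:Galois_action_on_G(-1)}. Next I would compute the subgroup $\langle \mathcal{M}(\Delta)\rangle$ generated by $(1,3)$ and $(2,4)$; a direct calculation gives
\[
\langle (1,3),(2,4)\rangle = \{e,\,(1,3),\,(2,4),\,(1,3)(2,4)\},
\]
which is the claimed subgroup $M$, isomorphic to the Klein four-group $V_4$.

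Finally I would verify the structural consequences. Being generated by a conjugacy class, $M$ is normal in $D_4$ by Definition \ref{def:subroup_generated} (equivalently, $M$ has index $2$). Hence $D_4/M \cong C_2$, and by the definition in \S\ref{sec:Iitaka} the adjoint Iitaka fibration associated to $\Delta$ is the induced map $BD_4 \to B(D_4/M) = BC_2$. Since $|M| = 4 \neq 8 = |D_4|$, the minimal weight classes do not generate $D_4$, so $\Delta$ fails to be fair by Definition \ref{def:rigid}. The main (though modest) bookkeeping task here is simply matching the permutation-theoretic definitions of the index with the orbifold line bundle formalism; the rest is routine verification.
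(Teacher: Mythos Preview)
Your proof is correct and follows essentially the same approach as the paper's: both compute the index of each conjugacy class, observe that the minimum value $1$ is attained only by the class $\{(1,3),(2,4)\}$, and conclude that $M = \langle (1,3),(2,4)\rangle$ is the Klein four-group so that $D_4/M \cong C_2$. The paper simply records the index table and the resulting $M$; your version spells out the conjugacy class enumeration, the Galois-invariance (via rationality of the character table), and the normality of $M$ in more detail, but the content is the same.
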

\begin{proof}
	The non-identity conjugacy classes of $D_4$ have the following indices:
	$$
	\begin{tabular}{c|cccc}
		c& (1,3) & (1,2)(3,4) & (1,3)(2,4) & (1,2,3,4) \\ \hline
		$\ind(c)$ & 1 & 2 & 2 & 3
	\end{tabular}
	$$
	The minimal index elements are  the reflections fixing two points, which gives $M$.
\end{proof}

The discriminant is not a balanced height function in this case. The adjoint Iitaka fibration is by definition the map which sends a $D_4$-extension $L/k$ to its quadratic subfield $L^{(1,3), (2,4)}$. By the Galois correspondence this is the unique quadratic subfield of the corresponding $D_4$-quartic $L^{(1,3)}$.

We conclude from \S \ref{sec:non-balanced} that the counting function should be sorted by the quadratic subfield which each $D_4$-quartic contains. The leading constant is thus given by a sum of leading constants, as occurs in \cite[Thm~1.3]{CDO02}. To determine this sum we compute the fibres of the Iitaka fibration.
\begin{lemma}
	Let $A/k \in BC_2(k)$ be a quadratic \'etale $k$-algebra. The fibre of this point along $BD_4 \to BC_2$ is the Weil restriction $\Res_{A/k} B C_2 = B \Res_{A/k} C_2$.
	
	Let $\Delta_{A}$ be the orbifold line bundle which is the restriction of $\Delta$ to 
	$B \Res_{A/k} C_2$. Then $a(\Delta_{A}) = 1$ and $b(\Delta_{A}) = 1$ for $A \neq k \times k$.
\end{lemma}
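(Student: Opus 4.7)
The plan is to directly apply Lemma \ref{lem:fibration_normal_quotient} to the normal subgroup $M \subset D_4$ and the quotient $D_4/M \cong C_2$, then identify the resulting inner twist $M_\varphi$ with $\Res_{A/k} C_2$ by comparing Galois actions. Concretely, for any lift $\varphi \in Z^1(k, D_4)$ of $\psi \in Z^1(k, C_2)$, Lemma \ref{lem:fibration_normal_quotient} says the fibre is $BM_\varphi$, where $M_\varphi$ is the twist of $M \cong C_2 \times C_2$ by $\varphi$ acting through conjugation. One checks by direct computation in $D_4$ that conjugation by $(1,2,3,4)$ swaps the two generators $(1,3)$ and $(2,4)$ of $M$, so the conjugation action factors through $D_4/M \cong C_2$ as the swap of the two $C_2$-factors.

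Next I would identify this twist with $\Res_{A/k} C_2$. Over $k^{\mathrm{sep}}$ one has $\Res_{A/k} C_2 \cong (C_2)^{\Hom(A, k^{\mathrm{sep}})} = C_2 \times C_2$ with $\Gamma_k$ acting by permuting the two factors through the quotient $\Gamma_k \twoheadrightarrow \Gal(A/k) \hookrightarrow S_2$. Since the $\Gamma_k$-action on $M_\varphi$ is precisely the twist of the trivial action by the homomorphism $\psi : \Gamma_k \to C_2 \cong D_4/M$ acting as swap on $M(k^{\mathrm{sep}})$, the two actions coincide, giving a canonical isomorphism $M_\varphi \cong \Res_{A/k} C_2$ of finite \'etale $k$-group schemes. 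This is the only non-formal step, and the main obstacle will be making precise how the cocycle twisting on $M$ by $D_4$-conjugation matches the Weil restriction description; but it is essentially bookkeeping.

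For the invariants, I would compute the weight function of the pullback $\Delta_A$. By construction (see \S\ref{sec:pull_back_heights}), the weight function on $\mathcal{C}_{M_\varphi}$ is the restriction of the index function on $\mathcal{C}_{D_4}$ to the conjugacy classes lying in $M$. The three non-identity elements of $M$ are $(1,3), (2,4), (1,3)(2,4)$ with indices $1, 1, 2$ respectively. Under the isomorphism $M_\varphi \cong \Res_{A/k} C_2$ they correspond to the geometric points $(1,0), (0,1), (1,1)$ of $(C_2)^{\Hom(A, k^{\mathrm{sep}})}$, so $w$ takes the values $1, 1, 2$ on these three classes.

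Finally, when $A \neq k \times k$ the \'etale algebra $A$ is a field, so $\Gamma_k$ acts non-trivially on $\Hom(A, k^{\mathrm{sep}})$; hence it swaps $(1,0)$ and $(0,1)$ while fixing $(1,1)$. By Lemma \ref{lem:Fujita_minimal} we obtain $a(\Delta_A) = (\min w)^{-1} = 1$ and $\mathcal{M}(\Delta_A) = \{(1,0), (0,1)\}$ forms a single Galois orbit, so $b(k, \Delta_A) = 1$. As an aside, the elements $(1,0)$ and $(0,1)$ generate $\Res_{A/k} C_2$, so $\Delta_A$ is automatically fair, as expected from Lemma \ref{lem:Iitaka_rigid_fibres}.
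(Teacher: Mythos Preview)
Your proof is correct and follows essentially the same approach as the paper's own argument. The only cosmetic difference is that the paper picks the specific lift $\varphi_A$ with image generated by the reflection $(1,2)(3,4)$, whereas you argue more abstractly that the conjugation action of $D_4$ on $M$ factors through $D_4/M\cong C_2$ (so the twist depends only on $\psi$); both routes give the same identification $M_\varphi\cong \Res_{A/k}C_2$ and the same computation of $a(\Delta_A)$ and $b(\Delta_A)$.
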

\begin{proof}
	Let $\psi_A: \Gamma_k \to C_2$ be the character defining $A$. Note that $\psi_A$ lifts to a map $\varphi_A:\Gamma_k \to D_4$ whose image is generated by the reflection $(1,2)(3,4)$. It follows from Lemma \ref{lem:fibration_normal_quotient} that the fibre of $A/k$ of $BD_4 \to BC_2$ is $B M_A$, where $M_A := \{1, (1,3), (2,4), (1,3)(2,4)\}$ and $\psi_A$ acts by permuting $(1,3)$ and $(2,4)$. We see that $M_A \cong R_{A/k}C_2$.

	We have $M_A(-1) = M_A$ as group schemes. 
	The conjugacy classes $c$ of $M_A$ for which $\ind(c)$ are minimal are $(1,3)$ and $(2,4)$, where $\ind(c) = 1$. If $A \neq k \times k$ then these lie in the same Galois orbit. The lemma follows.
\end{proof}

For a field $A$ the groupoid $B \Res_{A/k} C_2(k)  = BC_2(A)$ classifies quadratic \'etale $A$-algebras. Let $H_A$ be the restriction of the discriminant height along the map $B \Res_{A/k} C_2 \to BD_4$
\begin{lemma}
	For $K \in B \Res_{A/k} C_2(k)$ a field $H_A(K) =  \Norm_{k}(\Delta_{A/k})^2 |\Norm_{A}(\Delta_{K/A})|$.
\end{lemma}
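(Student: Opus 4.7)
The plan is to identify the quartic \'etale $k$-algebra associated to the $D_4$-torsor coming from $K$ with $K$ itself viewed as a $k$-algebra via the tower $k\subset A\subset K$, and then finish with the standard tower formula for relative discriminants.

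First I would establish the identification of $k$-algebras. The composition $B\Res_{A/k}C_2 \to BD_4 \subset BS_4$ is induced by the inclusion of group schemes $\Res_{A/k}C_2 \cong M_{\varphi_A} \hookrightarrow D_4 \hookrightarrow S_4$ arising from the preceding lemma. I would argue that at the level of $k$-points this corresponds to the Weil-restriction-of-scalars functor from quadratic \'etale $A$-algebras to degree $4$ \'etale $k$-algebras, which concretely sends $K/A$ to $K$ viewed as a $k$-algebra. The cleanest way to verify this is to compute the $\Gamma_k$-set $\Hom_k(K,\bar k)$ on both sides: this set decomposes via the tower as
\[
\Hom_k(K,\bar k) = \coprod_{\sigma\in\Hom_k(A,\bar k)} \{\phi: K \to \bar k : \phi|_A = \sigma\},
\]
a set of two blocks of size two, and one checks that $\Gamma_k$ permutes the two blocks according to $\psi_A$ and acts within each block according to the cocycle $\Gamma_A \to C_2$ defining $K/A$. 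This is exactly the $\Gamma_k$-set associated to the cocycle $\Gamma_k \to \Res_{A/k}C_2 \cong M_{\varphi_A} \subset S_4$, where the outer twist swapping $(1,3)$ and $(2,4)$ in $M_{\varphi_A}$ precisely encodes the Galois swap of the two embeddings $A \hookrightarrow \bar k$.

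With this identification in hand, by Example \ref{ex:disc} the discriminant height satisfies $H_A(K) = |\Norm_{k/\Q}\Delta_{K/k}|$, where $\Delta_{K/k}$ is the relative discriminant of $K/k$. I then apply the tower formula for relative discriminants in the number-field tower $k\subset A\subset K$, namely
\[
\Delta_{K/k} = \Norm_{A/k}(\Delta_{K/A}) \cdot \Delta_{A/k}^{[K:A]},
\]
an identity of ideals of $\mathcal{O}_k$. With $[K:A]=2$, taking absolute norms down to $\Q$ and using transitivity $\Norm_{k/\Q}\circ\Norm_{A/k} = \Norm_{A/\Q}$, I obtain
\[
|\Norm_{k/\Q}\Delta_{K/k}| = |\Norm_A(\Delta_{K/A})| \cdot |\Norm_k(\Delta_{A/k})|^2,
\]
which is the desired formula.

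The discriminant computation is standard; the main content of the argument is the functorial identification of the quartic $k$-algebra from the $D_4$-torsor with $K$ itself. The only subtle point there is ensuring that the outer twist of $M_A$ by $\varphi_A$ (which swaps the two distinguished order-two elements $(1,3)$ and $(2,4)$) matches the Galois action swapping the two $k$-embeddings of $A$; once that compatibility is in place, the rest follows essentially by unwinding definitions.
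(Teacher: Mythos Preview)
Your proposal is correct and follows the same route as the paper. The paper's proof is a two-line application of the tower formula $\Delta_{K/k} = \Delta_{A/k}^2 \Norm_{A/k}(\Delta_{K/A})$ followed by taking absolute norms; you do exactly this, but additionally spell out the identification of the quartic $k$-algebra coming from the composite $B\Res_{A/k}C_2 \to BD_4 \to BS_4$ with $K$ viewed as a $k$-algebra via the tower $k\subset A\subset K$, which the paper leaves implicit in the surrounding discussion.
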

\begin{proof}
	The discriminant formula for an extension of \'etale algebras gives that $\Delta_{K/k} = \Delta_{A/k}^2 \Norm_{A/k}(\Delta_{K/A})$. The lemma follows by taking absolute values of the norms.
\end{proof}

Conjecture \ref{conj:non_balanced} then predicts that
\[
	\frac{1}{2} \# \{[K: \Q] = 4, |\Delta_{K}| \leq B, \Gal(\tilde{K}/\Q) \cong A_4\} \sim \frac{1}{2} \sum_{[A: \Q] = 2} c(\Q, \Res_{A/\Q} C_2, H_A) B.
\]

Let $A=\Q(\sqrt{D})$ for a fundamental discriminant $D$. We have the equality $B\Res_{\Q(\sqrt{D})/\Q} C_2(\Q) = BC_2(\Q(\sqrt{D}))$ hence an equality  $c(\Q, \Res_{\Q(\sqrt{D})/\Q} C_2, H_{\Q(\sqrt{D})/\Q}) = c(\Q(\sqrt{D}), C_2, D^{-2} \Delta)$ of leading constants. It is simple to verify here that the leading constant agrees with Conjecture \ref{conj:balanced} (we expect a more general result about Weil restrictions, c.f.~Remark \ref{rem:Weil}). But in any case the leading constant for $C_2$ is well-known, which gives us
\[
	\begin{split}
		c(\Q, \Res_{\Q(\sqrt{D})/\Q} C_2, H_{\Q(\sqrt{D})/\Q}) &= \frac{1}{2} \frac{1}{D^2} 2^{-i(D)} \zeta^*_{\Q(\sqrt{D})}(1) 
		\prod_{\mathfrak{p}}(1 - \Norm(\mathfrak{p})^{-1})(1 + \Norm(\mathfrak{p})^{-1}) \\ &= \frac{{2^{-i(D)}}}{2D^2} \zeta^*_{\Q(\sqrt{D})}(1)  \zeta_{\Q(\sqrt{D})}(2)^{-1}.
	\end{split}
\]
Here $2^{-i(D)}$ is the archimedean density where $i(D) = 0$ if $D> 0$ and $i(D) = 1$ if $D < 0$, and  $1/2$ is the effective cone constant. The Euler product is over primes of $\Q(\sqrt{D})$ and the factors $(1 - \Norm(\mathfrak{p})^{-1})$ are the convergence factors. Thus
\[
\sum_{[A: \Q] = 2} c(\Q, \Res_{A/\Q} C_2, H_A) B =  \frac{1}{2} \sum_{D} \frac{2^{-i(D)}}{D^2}\zeta^*_{\Q(\sqrt{D})}(1)  \zeta_{\Q(\sqrt{D})}(2)^{-1}.
\] 
This is exactly the same sum as in \cite[Thm~1.3]{CDO02}.

\subsubsection{An Artin conductor}\label{sec:Artin_D4}
A different height to the discriminant is considered in the paper \cite{ASVW21}. This is given by the conductor of the irreducible $2$-dimensional representation of $D_4$. Let $\chi$ be the character of this representation.

We put this conductor into the height framework on $BD_4$ from \S \ref{sec:heights}. Note that it follows directly from the definition of the Artin conductor that it is a height given by the weight function $C$ such that for $c \in \mathcal{C}_{D_4}$ we have $C(c) = \chi(1) - \chi(\langle c\rangle)$, where $\chi(\langle c\rangle)$ denotes the average value of $\chi$ on the subgroup $\langle c \rangle$ generated by an element of $c$. A computation of these values is given in the following table
$$
\begin{tabular}{c|cccc}
	c& (1,3) & (1,2)(3,4) & (1,3)(2,4) & (1,2,3,4) \\ \hline
	$C(c)$ & 1 & 1 & 2 & 2
\end{tabular}
$$
with each conjugacy class being Galois invariant, as the character table is rational.
(Note that this weight function is $\hat{\Z}^\times$-invariant, as in Remark \ref{rem:special}, hence can be defined on $A_4$ rather than $A_4(-1)$.)
Contrary to the discriminant, we have the following.

\begin{lemma}
	We have $a(C) = 1, b(C) = 2$. The orbifold line bundle $C$ is balanced.
\end{lemma}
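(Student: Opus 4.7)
The proof will be a direct computation from the weight table displayed just before the statement. I would proceed as follows.

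First, I would read off $a(C)$ from Lemma \ref{lem:Fujita_minimal}. Inspecting the table, the four values of $C$ on the non-identity conjugacy classes of $D_4$ are $1, 1, 2, 2$, so $\min_{c \in \mathcal{C}_{D_4}^*} C(c) = 1$, giving $a(C) = 1$.

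Next, again by Lemma \ref{lem:Fujita_minimal}, the set $\mathcal{M}(C)$ of minimal-weight conjugacy classes consists of the two classes on which $C$ takes the value $1$, namely the class of the transposition $(1,3)$ and the class of the double transposition $(1,2)(3,4)$. Since $D_4$ has rational character table, the $\Gamma_k$-action on $\mathcal{C}_{D_4}$ is trivial (as noted in \S\ref{sec:Galois_action_conjugacy_classes}), hence every orbit is a singleton and $b(k,C) = \#\mathcal{M}(C)/\Gamma_k = 2$.

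Finally, to verify that $C$ is fair in the sense of Definition \ref{def:rigid}, I must show that $\mathcal{M}(C)$ generates $D_4$. This is a finite group-theoretic check: the product $(1,3)\cdot(1,2)(3,4)$ equals the $4$-cycle $(1,2,3,4)$, and together with the reflection $(1,3)$ this already generates all of $D_4$. Hence $\langle \mathcal{M}(C)\rangle = D_4$, so $C$ is fair. No step presents a real obstacle; the only thing to be careful about is confirming that the $\Gamma_k$-action really is trivial so that the two minimal classes contribute separately to $b(k,C)$, but this is immediate from the rationality of the character table of $D_4$.
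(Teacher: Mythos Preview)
Your proof is correct and follows essentially the same approach as the paper: read off $a(C)$ and $\mathcal{M}(C)$ from the table, note the Galois action is trivial so $b(C)=2$, and verify that the two minimal classes generate $D_4$. The only difference is that you spell out the generation check (via the product $(1,3)(1,2)(3,4)=(1,2,3,4)$) where the paper simply asserts it.
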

\begin{proof}
	The minimal value of $C$ is $a(C) = 1$ and it is achieved by the $b(C) = 2$ conjugacy classes $(1,3)$ and $(1,2)(3,4)$.
	These generate $D_4$.
\end{proof}

We can now compare our leading constant from Conjecture \ref{conj:balanced} with the leading constant of \cite[Thm.~3]{ASVW21}. First the effective cone constant and Brauer group.

\begin{lemma}
	We have $\dual{D}_4(\Q) \cong \Z/2\Z \times \Z/2\Z$ and $\Br_{C} BD_4 = \Br \Q$.
\end{lemma}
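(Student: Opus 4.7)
The plan is to handle the two statements separately, using the machinery developed earlier in the paper.

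For the character group, I would begin by noting that since $\dual{D}_4 = \Hom(D_4, \Gm)$ factors through the abelianisation $D_4^{\mathrm{ab}} \cong \Z/2\Z \times \Z/2\Z$, and since every character of a group of exponent $2$ takes values in $\{\pm 1\} \subset \Q^\times$, every geometric character is already defined over $\Q$. Thus $\dual{D}_4(\Q) = \Hom(D_4^{\mathrm{ab}}, \Q^\times) \cong \Z/2\Z \times \Z/2\Z$.

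For the Brauer group, I would use the sequence \eqref{seq:Br_extension}, which reduces the problem to the algebraic part $\Br_{C,1} BD_4$ and the geometric part $(\Br_C BD_4_{\overline\Q})^{\Gamma_\Q}$. For the algebraic part, the minimal weight conjugacy classes are $\mathcal{M}(C) = \{(1,3), (1,2)(3,4)\}$ (a reflection through a vertex and through an edge midpoint); these two elements generate $D_4$. Since the character table of $D_4$ is rational, the Galois action on $\mathcal{C}_{D_4}$ is trivial, and therefore so is the Galois action on $\mathcal{M}(C)$. Lemma \ref{lem:Br_1_trivial} then gives $\Br_{C,1} BD_4 = \Br \Q$.

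For the geometric (transcendental) part, the observation is that $\mathcal{M}(C)$ contains a reflection, namely $(1,3)$ (or equivalently $(1,2)(3,4)$). Lemma \ref{lem:unramified_geometric_Brauer_group_computation}(3) directly implies $\Br_{\mathcal{M}(C)} BD_4_{\overline\Q} = 0$, hence $(\Br_C BD_4_{\overline\Q})^{\Gamma_\Q} = 0$. Combining this with the algebraic computation via the exact sequence \eqref{seq:Br_extension} yields $\Br_C BD_4 = \Br \Q$.

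There is no real obstacle here: both parts are a direct application of results established in earlier sections of the paper, together with the observation that the elements of $\mathcal{M}(C)$ generate $D_4$ (already used in the preceding lemma on fairness) and that they include reflections. The only thing one must double-check is the matching between the permutation-theoretic description of $\mathcal{M}(C)$ coming from the index table and the reflection/rotation decomposition of $D_4$ used in Lemma \ref{lem:unramified_geometric_Brauer_group_computation}(3).
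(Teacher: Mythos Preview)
Your proposal is correct and follows essentially the same approach as the paper: both use Lemma~\ref{lem:unramified_geometric_Brauer_group_computation}(3) (a reflection lies in $\mathcal{M}(C)$) to kill the transcendental part and Lemma~\ref{lem:Br_1_trivial} (trivial Galois action on $\mathcal{M}(C)$) to handle the algebraic part. You are just slightly more explicit in invoking the exact sequence \eqref{seq:Br_extension}, whereas the paper compresses this into the single line $\Br_{C} BD_4 = \Br_{C,1} BD_4$.
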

\begin{proof}
	There are four $1$-dimensional representations and they are all defined over $\Q$. The structure
	is easily verified.
	We have $\Br_{C} BD_4 = \Br_{C,1} BD_4$ by 
	Lemma~\ref{lem:unramified_geometric_Brauer_group_computation}(3) and 
	$\Br_{C,1} B D_4 = \Br \Q$ by Lemma~\ref{lem:Br_1_trivial}.
\end{proof}

Recalling from \S\ref{sec:D4-groupoid-counts} that one should multiply results in the literature by $1/2$ to compare with Conjecture \ref{conj:balanced}, we obtain from  \cite[Thm.~3]{ASVW21} the correct factor of $1/4$. It remains to compare the products of local densities. Our convergence factors come from $\zeta(s)^2$, which agrees with the convergence factors from  \cite[Thm.~3]{ASVW21}. 
The local densities in their Euler products involve a sum over pairs $(L_p,K_p)$ where $L_p$ is a quartic \'etale algebra and $K_p \subset L_p$ us a quadratic subfield. Let $C_p(L_p, K_p) := \text{Disc}_p(L_p)/\text{Disc}_p(K_p)$ as in \cite[p.~2735]{ASVW21}.
The following lemma shows that these local terms agree with our prediction.
\begin{lemma}
	$$\tau_{p,C_p}(BD_4(\Q_p))  =  \sum_{(L_p,K_p)} \frac{1}{|\Aut(L_p,K_p)| \cdot C_p(L_p,K_p)}.$$
\end{lemma}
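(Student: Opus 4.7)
The plan is to unfold Definition~\ref{def:local_Tamagawa_measure}, which expresses
$$\tau_{p,C_p}(BD_4(\Q_p)) = \sum_{\varphi \in BD_4[\Q_p]} \frac{1}{|\Aut(\varphi)| \cdot H_v(\varphi)^{a(C)}}$$
with $a(C) = 1$, and then match this term-by-term with the right-hand sum via a bijection $BD_4[\Q_p] \leftrightarrow \{(L_p, K_p)\}/\sim$ where $(L_p, K_p)$ ranges over pairs of quartic \'etale $\Q_p$-algebras together with a quadratic subalgebra $K_p \subset L_p$.

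First, I would set up the bijection. Since $D_4 \subset S_4$ is the stabiliser of the partition $\{\{1,3\},\{2,4\}\}$, the functor $BD_4(\Q_p) \to BS_4(\Q_p)$ from Lemma~\ref{lem:S_n} sends a cocycle $\varphi$ to the quartic \'etale algebra $L_p$ with Galois action factoring through $D_4$. The Iitaka fibration $BD_4 \to BC_2$, whose kernel on $D_4$ is $M = \{e, (1,3), (2,4), (1,3)(2,4)\}$, gives the quadratic \'etale algebra $K_p$, which naturally embeds in $L_p$ as the subalgebra fixed by the action on pairs. Conversely, given $(L_p, K_p)$, the induced $\Gamma_{\Q_p}$-action on the four geometric points of $L_p$ preserves the partition into pairs determined by $K_p$, hence factors through $D_4 \subset S_4$. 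This establishes the bijection of isomorphism classes.

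Next, by Lemma~\ref{lem:automorphism_group_is_inner_twist}, $\Aut(\varphi)$ is the centraliser in $D_4$ of the image of $\varphi$, which under the bijection translates precisely to the group of $\Q_p$-algebra automorphisms of $L_p$ preserving $K_p$, i.e.~$\Aut(L_p, K_p)$.

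The main step is identifying $H_v(\varphi)$ with $C_p(L_p,K_p) = \mathrm{Disc}_p(L_p)/\mathrm{Disc}_p(K_p)$. By Lemma~\ref{lem:special_height} and the tameness discussion preceding it, the height is
$$H_v(\varphi) = q_p^{C(\rho_{D_4,v}(\varphi))} = q_p^{\chi(1) - \chi(\langle \sigma_v \rangle)},$$
where $\chi$ is the character of the $2$-dimensional irreducible representation of $D_4$ and $\sigma_v$ is a generator of tame inertia for $\varphi$. By standard properties of the Artin conductor (for tame representations, the conductor exponent of a character $\psi$ is $\psi(1) - \psi(\langle \sigma_v \rangle)$), this is precisely the local Artin conductor $\mathfrak{f}_p(\chi \circ \varphi)$. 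A direct character computation shows that the permutation representation $\mathrm{Ind}_{\mathrm{Stab}(1)}^{D_4} \mathbf{1}$ defining $L_p$ decomposes as $\mathbf{1} \oplus \psi_{D_4/M} \oplus \chi$, where $\psi_{D_4/M}$ is the nontrivial character factoring through $D_4/M$, i.e.~the character defining $K_p$. Additivity of the Artin conductor then gives
$$\mathrm{Disc}_p(L_p) = \mathfrak{f}_p(\mathbf{1}) \cdot \mathfrak{f}_p(\psi_{D_4/M} \circ \varphi) \cdot \mathfrak{f}_p(\chi \circ \varphi) = \mathrm{Disc}_p(K_p) \cdot H_v(\varphi),$$
which is the desired identity. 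The main obstacle is this last step, as it requires a careful bookkeeping of the character decomposition together with the conductor-discriminant formula, but this is a standard and purely representation-theoretic verification.
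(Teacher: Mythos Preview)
Your approach is correct and follows the same structure as the paper's: establish an equivalence between $BD_4(\Q_p)$ and the groupoid of pairs $(L_p,K_p)$, then match the height with $C_p(L_p,K_p)$. A few remarks.

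First, the paper streamlines your bijection-plus-automorphism-matching into a single step: it observes that both $BD_4$ and the stack $\mathcal{Q}$ classifying pairs $(L,K)$ are neutral gerbes, so it suffices to check that the automorphism group of the trivial object in each is $D_4$ (for $\mathcal{Q}$, this is the subgroup of $S_4$ preserving the partition $\{\{1,3\},\{2,4\}\}$). This immediately yields an equivalence of groupoids, matching isomorphism classes and automorphisms simultaneously, and avoids the separate centraliser computation.

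Second, your invocation of Lemma~\ref{lem:special_height} only applies at tame primes and is in any case a detour: the height $H_v$ here is \emph{by definition} the local Artin conductor of $\chi \circ \varphi$ at every prime (including $p=2$), so $H_v(\varphi) = \mathfrak{f}_p(\chi \circ \varphi)$ needs no justification. Your conductor--discriminant argument via the decomposition $\mathrm{Ind}_{\mathrm{Stab}(1)}^{D_4}\mathbf{1} \cong \mathbf{1} \oplus \psi_{D_4/M} \oplus \chi$ then works uniformly at all primes; the paper instead cites \cite[Prop.~2.4]{ASVW21} for this identification.

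Minor terminological point: the map $D_4 \to C_2$ you use is not the Iitaka fibration for $C$ (which is trivial, since $C$ is fair); it is simply the quotient by $M$.
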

\begin{proof}
	Let $k$ be a field. We define $\mathcal{Q}$ to be
	stack over $k$ which classifies pairs $(L,K)$ where $K$ is a degree $4$ \'etale algebra 
	and $K \subset L$ is a quadratic subalgebra. There is a functor
	$BD_4(k) \to \mathcal{Q}(k)$ given by associating to a homomorphism $\Gamma_k \to D_4$
	the corresponding quartic \'etale algebra and its quadratic subalgebra coming from the
	composition with $D_4 \to C_2$. We claim that this is an equivalence of groupoids.
	
	To see this, we note that they are both neutral gerbes. Therefore it suffices to show
	that the automorphism group of the trivial point is $D_4$ in both cases. For $BD_4$
	this is clear. For $\mathcal{Q}$, the trivial point is given by
	$k \subset k^2 \subset k^4$ embedded diagonally,
	and the automorphism group corresponds to the subgroup
	of $S_4$ which preserves a partition of $\{1,2,3,4\}$ into subsets each of size $2$;
	such a group is isomorphic to $D_4$. To finish it suffices to note that 
	the local height function $C_p$ on $\mathcal{Q}$ corresponds to the
	conductor on $BD_4$ under this isomorphism by \cite[Prop.~2.4]{ASVW21}.
\end{proof}


	We have explained how \cite[Thm.~3]{ASVW21} is a special case of Conjecture~\ref{conj:balanced}, which contains the correct factor $1/2$. The reader should beware that the asymptotic in \cite[Thm.~1]{ASVW21} is missing this factor of $1/2$.

\begin{remark}
	The effective cone constant factor $\frac{1}{2} = 2 \alpha^*(B D_4, C)$ is considered in 
	\cite[Assumption 3.2]{ASVW21}, and a heuristic is given to justify it coming from multiplying $\# \Aut(L) = 2$ for a $D_4$-quartic $L$ by $\frac{1}{4}$ which comes from two global obstruction related to Stickelberger's theorem.
	
	We argue that this is not a reasonable explanation since the Brauer element which causes Stickelberger's theorem is ramified. For example Stickelberger's theorem does not cause a global obstruction if $k = \Q(i)$ since the norm of any ideal of $\Q(i)$ is $0, 1 \bmod 4$, but the $\frac{1}{4}$ factor is still expected in this case from the effective cone constant. This can also be seen from the fact that the Brauer element $b$ in the proof of Theorem \ref{thm:Stickelberger} becomes trivial after base change to $\Q(i)$.
\end{remark}

\subsection{Kl\"{u}ners's counter-example} \label{sec:Kluners}
Let $G := C_3 \wr C_2$ be the wreath product of $C_3$ and $C_2$, i.e.~$G$ is the semi-direct product $(C_3 \times C_3) \rtimes C_2$ where $C_2$ acts on $C_3 \times C_3$ by permuting the two factors of $C_3$. This embeds into $S_6$ via $C_3 \times C_3 \subset S_3 \times S_3 \subset S_6$ and $C_2 = \langle (1,4)(2,5)(3,6) \rangle$. Let $\Delta$ be the orbifold line bundle corresponding to the discriminant height.

The minimal index $\ind(g)$ of $1 \neq g \in G$ is $2$ and there are two conjugacy classes $c$ with $\ind(c) = 2$, namely $(1,2,3)$ and $(1,3,2)$. These are Galois conjugate, so $b(D) = 1$. Thus Malle's conjecture predicts order of magnitude $B^{\frac{1}{2}}$.

The group generated by these two conjugacy classes is $C_3 \times C_3$ so the discriminant is not balanced and the Iitaka fibration is $BG \to B C_2$.

Kl\"{u}ners's counter-example comes from the fibre of the Iitaka fibration of the point $\Q(\zeta_3)/\Q \in B C_2(\Q)$. We can lift $C_2$ to $G$ and conjugating by this lift acts by permuting the two factors of $C_3$, so a computation using Lemma \ref{lem:fibration_normal_quotient} shows that this fibre is given by the Weil restriction $R_{\Q(\zeta_3)/\Q} B C_3 = B R_{\Q(\zeta_3)/\Q} C_3$. The restriction of $\Delta$ to this Weil restriction has $b(\Q, \Delta) = 2 > 1$.  So the number of points in this fibre has order of magnitude $B^{\frac{1}{2}} \log B$. These come from breaking cocycles, and by Theorem~\ref{thm:breaking} they are removed when counting, since the corresponding fields are not linearly disjoint with $\Q(\mu_3)$. Thus Kl\"{u}ners's counter-example to Malle's conjecture is compatible with Conjecture \ref{conj:non_balanced}.

\subsection{A new dihedral counter-example} \label{sec:D_n}
In \cite{KP23}, Koymans and Pagano consider the expected generalisation of Malle's conjecture to counting by radical discriminant, as first put forward by Ellenberg and Venkatesh \cite[Ques.~4.3]{EV05}. They obtain in \cite[Thm.~1.3]{KP23} a counter-example to the log-factor for certain nilpotent groups.

We obtain a new counter-example coming instead from the dihedral group over $\Q$.
Let $n \geq 3$ be odd and consider the radical discriminant height $H$ on $D_n \subset S_n$.

The non-identity conjugacy classes of $D_n$ are as follows: there is one corresponding to reflections and there are $\frac{n-1}{2}$ corresponding to rotations. Two rotations lie in the same $\Gal(\Q(\mu_n)/\Q) \cong (\Z/n \Z)^{\times}$ orbit via Definition \ref{def:G(cycl)} if one is equal to the other after multiplication by an element of $(\Z/n \Z)^{\times}$.

There is thus one $\Q$-conjugacy class of rotations for each divisor $1 \neq d \mid n$ and $b(\Q,D_n) = \tau(n)$ is the number of divisors of $n$.

Let $d \mid n$ be square-free. Let $K_d := \Q(\sqrt{(-1)^{d}d}$ be the quadratic subfield of $\Q(\mu_n)$ corresponding to the Dirichlet character $\left(\frac{\cdot}{d}\right): (\Z/n \Z)^{\times} \to \{-1,1\}$; we identify this with the corresponding Galois character $\chi_d: \Gamma_{\Q} \to \{-1,1\}$.

The collection of $D_n$-extensions which contain $K_d$ is given by the fibre of $\chi_d$ for the map $BD_n(\Q) \to BC_2(\Q)$. The character $\chi_d$ lifts to $D_n$ by sending $-1$ to a reflection. Conjugation by a reflection sends a rotation to its inverse so Lemma~\ref{lem:fibration_normal_quotient} shows that the fibre is equivalent to $B G_d(\Q)$ where $G_d$ is the group scheme with underlying group $C_n$ and on which $\sigma \in \Gamma_{\Q}$ acts via the formula $\sigma(g) = g^{\chi_d(\sigma)}$.

\begin{lemma}
	Let $d \mid n$ be such that $d \equiv 3 \bmod{4}$. Then  $$\#\{ \varphi \in BD_d(\Q) : \varphi \in \im(BG_d(\Q) \to BD_n(\Q)), H(\varphi) \leq B\}
	\gg B (\log B)^{\tau(n) + \tau(n/d) - 2}.$$
\end{lemma}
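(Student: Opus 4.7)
The plan is to compute the Fujita invariants of the pull-back of the radical discriminant $H$ along the finite \'etale morphism $BG_d \to BD_n$, and then invoke a known case of abelian Malle to produce the lower bound. By the functorial construction of heights in \S\ref{sec:pull_back_heights}, the pull-back of $H$ to $BG_d$ has weight function $\mathbf{1}$ on every non-identity conjugacy class (just as on $BD_n$), so it is a fair orbifold line bundle with $a$-invariant equal to $1$. It therefore suffices to compute $b(\Q, H|_{BG_d})$, i.e., the number of Galois orbits $\#\mathcal{C}_{G_d}^{*}/\Gamma_{\Q}$.

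Since $G_d$ is abelian, conjugacy classes of $G_d(-1)$ are singletons and, by Lemma~\ref{lem:Galois_action_on_G(-1)}, after fixing a compatible system of roots of unity the Galois action on $G_d(-1)(\overline{\Q}) \cong C_n$ takes the form $\sigma \cdot g = g^{\psi(\sigma)}$, where $\psi := \chi_d \cdot \cycl^{-1}$. An element $g$ of exact order $e \mid n$ then has orbit $\{g^a : a \in \im(\psi \bmod e)\}$, so the number of orbits on order-$e$ elements equals $\phi(e)/|\im(\psi \bmod e)|$. For $d \nmid e$, I would use that $\chi_d$ has conductor $d$ (by squarefreeness) together with the Chinese remainder theorem to produce $\sigma \in \Gamma_{\Q}$ with $\cycl(\sigma) \equiv 1 \pmod{e}$ and $\chi_d(\sigma) = -1$: concretely, pick $a \in (\Z/d\Z)^{\times}$ congruent to $1$ modulo $\gcd(d,e)$ and a quadratic non-residue modulo some prime dividing $d/\gcd(d,e)$, which is possible precisely because $d \nmid e$. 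Hence $\im(\psi \bmod e) = (\Z/e\Z)^{\times}$ and there is a unique orbit.

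For $d \mid e$ the character $\chi_d$ descends to $\widetilde{\chi}_d$ on $(\Z/e\Z)^{\times}$, and $\im(\psi \bmod e) = \{\widetilde{\chi}_d(a)a^{-1} : a \in (\Z/e\Z)^{\times}\}$. A direct split according to whether $a$ lies in $K := \ker \widetilde{\chi}_d$ shows that the image on $K$ is $K^{-1} = K$, while on the non-trivial coset the image is the coset $-(\mathrm{coset})^{-1}$, which coincides with $K$ exactly when $\widetilde{\chi}_d(-1) = -1$. The hypothesis $d \equiv 3 \pmod{4}$ yields $\chi_d(-1) = \bigl(\tfrac{-1}{d}\bigr) = (-1)^{(d-1)/2} = -1$, so $\im(\psi \bmod e) = K$ has index $2$ and there are two orbits on order-$e$ elements. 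Summing over divisors $e > 1$ of $n$, of which exactly $\tau(n/d)$ satisfy $d \mid e$, gives
\[
b(\Q, H|_{BG_d}) \;=\; \bigl(\tau(n) - 1 - \tau(n/d)\bigr) + 2\tau(n/d) \;=\; \tau(n) + \tau(n/d) - 1.
\]

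To conclude, I would invoke an abelian Malle counting result to pass from the invariants $(a,b) = (1,\tau(n)+\tau(n/d)-1)$ to a lower bound on $\#\{\varphi \in BG_d(\Q) : H(\varphi) \leq B\}$. Since $G_d$ becomes the constant cyclic group $C_n$ after base change to $K_d$, elements of $BG_d(\Q)$ correspond to Galois anti-invariant $C_n$-extensions of $K_d$, and the count by radical discriminant is governed by the abelian case of Malle's conjecture proved by Wright~\cite{Wri89} for constant abelian groups over number fields and extended to the relevant non-constant abelian setting in \cite{AOWW24}; either yields the expected lower bound $B^{a}(\log B)^{b-1} = B(\log B)^{\tau(n)+\tau(n/d)-2}$. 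The main obstacle is the orbit analysis in the case $d \mid e$: the congruence $d \equiv 3 \pmod{4}$ is used precisely to force $\widetilde{\chi}_d(-1) = -1$, and hence to force the image of $\psi \bmod e$ to have index $2$ rather than being full; without this condition the two cases would coincide, $b$ would collapse to $\tau(n) - 1$, and the log-power anomaly would disappear.
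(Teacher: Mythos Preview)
Your argument is correct and follows essentially the same route as the paper: analyse the Galois orbits on $G_d(-1)(\overline{\Q})$ according to the order $e$ of each element, use $d \equiv 3 \pmod 4$ to force $\chi_d(-1)=-1$ and hence index-$2$ image when $d \mid e$, sum to obtain $b(\Q,G_d)=\tau(n)+\tau(n/d)-1$, and then invoke an abelian counting result. The only point to flag is your final citation: Wright's theorem in \cite{Wri89} treats \emph{constant} abelian groups, whereas $G_d$ is genuinely non-constant over $\Q$; the paper instead appeals to Darda--Yasuda \cite[Thm.~1.3.2]{DYTor}, which applies directly to finite \'etale abelian group schemes and gives the required lower bound without passing through $K_d$.
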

\begin{proof}
An element $r \in \Gal(\Q(\mu_n)/\Q) \cong (\Z/n \Z)^{\times}$ acts on $\gamma \in G_d(-1)(\overline{\Q})$ by $\gamma \mapsto \gamma^{\chi_d(r) r}$. 
Consider the map $(\Z/n \Z)^{\times} \to (\Z/n \Z)^{\times}:r \mapsto \chi_d(r) r$. As $d \equiv 3 \bmod 4$ we have $\chi_d(-1) = -1$, so the kernel has order $2$, hence the image has index $2$. 
Thus there are two Galois orbits in $G_d(-1)(\overline{\Q})$ of elements of order $n$.
 
More generally, let $d \mid m \mid n$ and consider the map $(\Z/n \Z)^{\times} \to (\Z/m \Z)^{\times}:r \mapsto \chi_d(r) r \bmod{m}$. The class $\chi_d(r) \bmod{m}$ only depends on $r \bmod{m}$ as $d \mid m$ and the image again has index $2$. 
Thus there are two  Galois orbits in $G_d(-1)(\overline{\Q})$ of elements of order $m$.

The map $(\Z/n \Z)^{\times} \to (\Z/m \Z)^{\times}:r \mapsto \chi_d(r) r \bmod{m}$ is surjective for $d \nmid m \mid n$ because the function $\chi_d(r)$ attains both $1$ and $-1$ on each fibre. This implies that $G_d(-1)$ only has one Galois orbit of elements of order $m$ in this case.

The group scheme $G_d$ is abelian so we have 
$$b(\Q,G_d) = \sum_{\substack{m \mid n \\ d \mid m}} 2 + \sum_{\substack{1 \neq m \mid n \\ d \nmid m}} 1 = \sum_{\substack{m \mid n \\ d \mid m}} 1 + \sum_{1 \neq m \mid n} 1 = \tau(n/d) + (\tau(n)  - 1).$$ 
To count the number of such cocycles we apply Darda--Yasuda \cite[Thm.~1.3.2]{DYTor}.
\end{proof}

In the case $n = 3$ we obtain $S_3$, and one is counting non-cyclic cubic fields of bounded radical discriminant. The collection of points in the image of $BG_3$ is weakly accumulating: it needs to be removed before equidistribution is obtained. These correspond to exactly the pure cubic extensions; cf.~Example~\ref{ex:S_3_breaking}. In fact this has been proven by Shankar and Thorne \cite[Thm.~4]{ShTh22}.

For $n$ composite with a prime divisor $p \equiv 3 \bmod{4}$, the points in the image of $BG_p$ are strongly accumulating, and need to be removed to obtain the correct exponent of $\log B$ in Conjecture \ref{conj:balanced}.

\subsection{$A_4$-quartics of bounded discriminant} \label{sec:A_4-quartics}

In the following sections we perform a detailed study of the problem of counting $A_4$-quartic fields of bounded height. This is to make clear that, despite the abstract nature of Conjecture \ref{conj:balanced}, in practice the terms which appear can be made completely explicit and lead to explicit and elementary predictions for the counting problem.

Counting $A_4$-quartics of bounded discriminant is a notorious open problem in the Malle's conjecture literature. We make completely explicit what our conjecture says in this case as a challenge to researchers in the community. Here we recall Conjecture \ref{conj:A_4_intro} from the introduction.

\begin{conjecture} \label{conj:A_4}
	$$2\#\left\{ [K:\Q] = 4 : 
	|\Delta_K|  \leq B, \Gal(\widetilde{K}/\Q) \cong A_4 \right\} \sim c(\Q,A_4,\Delta) B^{1/2} \log B,$$
	where $\widetilde{K}$ denotes the Galois closure of $K$ and
	$$c(\Q,A_4,\Delta) = \frac{35}{648}\prod_{p > 3} \left(1 - \frac{1}{p}\right)^2\left(1 + \frac{2 + \left(\frac{-3}{p}\right)}{p}\right).$$
\end{conjecture}

There is in fact already a conjecture regarding this problem in \cite[\S2.7]{CDO02b}, but the expression is more complicated than ours and involves a limit process and sorting $A_4$-quartics by their cubic resolvent. It is not immediate how to compare the two conjectures directly. Nonetheless, in \cite[\S2.7]{CDO02b} the numerical value $0.074\ldots$ is given for the leading constant, and a simple computation shows that our constant satisfies $2c(\Q,A_4,\Delta) = 0.0729\ldots$ after considering the product over $3 < p < 100,000$. (In private communication, Henri Cohen has confirmed that this minor discrepancy should not a problem and an artifact of using a least squares approximation in their calculation). The convergence is very slow, as one would expect, since it involves a conditionally convergent product; a faster converging product can be obtained by using the convergence factors from Theorem \ref{thm:Tamagawa_products}. We count isomorphism classes of fields, whereas \cite{CDO02b} counts subfields of $\bar{\Q}$. To compare the two counts, we note that there are $4$ distinct embeddings of any $A_4$-quartic into $\bar{\Q}$, which matches with our factor $2$ in front and additional factor $1/2$ in the leading constant.

\subsubsection{Deduction of Conjecture \ref{conj:A_4}}
Let us explain how Conjecture \ref{conj:A_4} follows from Conjecture \ref{conj:discriminant}. Firstly the normaliser of $A_4$ in $S_4$ is $S_4$ and the centraliser of $A_4$ in $S_4$ is trivial. This gives the factor of $|S_4|/|A_4| = 2$ outside the front of the counting function.

The non-identity conjugacy classes of $A_4$ have the following indices:
$$
\begin{tabular}{c|cccc}
	c& (1,2)(3,4) & (1,2,3) & (1,3,2) \\ \hline
	$\ind(c)$ & 2 & 2 & 2 
\end{tabular}
$$
with the latter two classes being swapped by $\Gal(\Q(\zeta_3)/\Q)$. 
It follows that $a(\Delta) = 1/2$ and $b(\Q,\Delta) = 2$ (taking into account the Galois action). Since all conjugacy classes have minimal index, it follows that together they generate the group and hence the Iitaka fibration is trivial. Moreover the discriminant is the square of an anticanonical height; this may explain why this problem is so difficult, since the anticanonical height is generally the most difficult height in Manin's conjecture.

We have $\dual{A}_4 = \mu_3$, so there are no non-trivial $1$-dimensional characters defined over $\Q$. The effective cone constant is thus $a(\Delta)^{b(\Q,\Delta)-1} = 1/2$. Since our height is a power of an anticanonical height, the relevant Brauer group is the unramified Brauer group.

\begin{lemma}
	$\Brun B A_4 = \Br \Q$.
\end{lemma}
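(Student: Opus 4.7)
The plan is to split the computation into the geometric and algebraic parts of $\Brun BA_4$. For the geometric (transcendental) part, I observe that $\mathcal{C}^*_{A_4}$ contains the class of double transpositions, which has order $2$, so Lemma \ref{lem:unramified_geometric_Brauer_group_computation}(4) immediately gives $\Brun (BA_4)_{\bar\Q} = 0$. Hence $\Brun BA_4 \subseteq \Br_1 BA_4$, and Theorem \ref{thm:Br_BG} (applied with $\mathcal{C} = \mathcal{C}^*_{A_4}$, which generates $A_4$) identifies
\[
\Brun BA_4/\Br\Q \;\cong\; \H^1\bigl(\Q,\, \PicOrb (BA_4)_{\bar\Q}\bigr),
\]
so it remains to show the right-hand side vanishes.

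For this I would use the exact sequence of Lemma \ref{lem:orb_Pic_exact},
\[
0 \to \Hom(\mathcal{C}^*_{A_4}, \Z) \to \PicOrb (BA_4)_{\bar\Q} \to \dual{A_4}(\bar\Q) \to 0,
\]
together with the identifications $\dual{A_4} = \mu_3$ (coming from the abelianisation $A_4 \twoheadrightarrow A_4/V_4 \cong C_3$) and
\[
\Hom(\mathcal{C}^*_{A_4}, \Z) \;\cong\; \Z \,\oplus\, \mathrm{Ind}_{\Gamma_{\Q(\zeta_3)}}^{\Gamma_\Q} \Z,
\]
since the class of double transpositions is $\Gamma_\Q$-invariant while the two classes of $3$-cycles are swapped by $\Gal(\Q(\zeta_3)/\Q)$. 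By Shapiro's lemma the permutation module has vanishing $\H^1$ over $\Q$, so the long exact sequence gives an injection
\[
\H^1(\Q, \PicOrb (BA_4)_{\bar\Q}) \;\hookrightarrow\; \H^1(\Q, \mu_3) \xrightarrow{\,\partial\,} \H^2(\Q, \Hom(\mathcal{C}^*_{A_4}, \Z)),
\]
and by Lemma \ref{lem:residue_maps_equal} the map $\partial$ is the age residue.

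The main point is then to show $\partial$ is injective. Using Lemma \ref{lem:age_residue_restriction}, the component of $\partial$ corresponding to the $3$-cycle classes factors as the restriction $\H^1(\Q, \mu_3) \to \H^1(\Q(\zeta_3), \mu_3)$ followed by the isomorphism $\mu_3 \cong \Z/3\Z$ given by $\age(\cdot, (1,2,3))$ over $\Q(\zeta_3)$. Thus it suffices to check that $\Q^\times/\Q^{\times 3} \to \Q(\zeta_3)^\times/\Q(\zeta_3)^{\times 3}$ is injective, and this is the standard consequence of taking norms, since $[\Q(\zeta_3):\Q] = 2$ is coprime to $3$. No step here should pose a serious obstacle; the only subtlety is bookkeeping the Galois action on $\PicOrb$ correctly so that the residue map lands in the right Shapiro-induced summand.
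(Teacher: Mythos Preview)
Your argument is correct and complete. The transcendental vanishing step is identical to the paper's, but from there your route diverges: you compute $\H^1(\Q,\PicOrb (BA_4)_{\bar\Q})$ directly via the exact sequence of Lemma~\ref{lem:orb_Pic_exact} and show the boundary map (the age residue) is injective by a Kummer-theoretic norm argument on the $3$-cycle component. The paper instead invokes Lemma~\ref{lem:Brun_ab}, using that $A_4 \to C_3$ splits, to reduce $\Br_{\mathrm{un},1} BA_4$ to $\Br_{\mathrm{un},1} BC_3$, and then applies the abelian theory (Lemma~\ref{lem:BrBG-Abelian}) together with $\Sha^1_\omega(\Q,\mu_3)=0$. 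The paper's approach is shorter and showcases the general reduction machinery; yours is more self-contained and makes the vanishing explicit without appealing to Grunwald--Wang or the section of the abelianisation. Note also that your approach implicitly uses that the age pairing of $\mu_3 = \dual{A_4}$ with the order-$2$ double-transposition class is trivial, which is why only the $3$-cycle component of $\partial$ carries information---this is consistent but worth stating.
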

\begin{proof}
	We have $\Brun BA_4 = \Br_{\mathrm{un},1} BA_4$ by 
	Lemma~\ref{lem:unramified_geometric_Brauer_group_computation}(4).
	The abelianisation map $A_4 \to C_3$ admits a section
	given by the subgroup generated by $(1,2,3)$, hence 
	$\Br_{\mathrm{un},1} B A_4 = \Br_{\mathrm{un},1} B C_3$ by 
	Lemma \ref{lem:Brun_ab}. However $\Br_{\mathrm{un},1} B C_3 = \Br \Q$
	by Lemma~\ref{lem:BrBG-Abelian} since $\Sha^1_\omega(\Q,\mu_3) = 0$ (Grunwald--Wang).
\end{proof}

It thus suffices to calculate the Tamagawa numbers. We use the convergence factors from Lemma \ref{lem:convergence_factors_alternative}, which come from the $\L$-function $\zeta(s)^2$.

\begin{lemma}\label{lem:A4_local_densities}
	$$\tau_p(A_4(\Q_p)) = 
	\begin{cases}
		1/3, & p = \infty, \\
		15/8, & p = 2, \\
		14/9, & p = 3, \\
		1 + \frac{3}{p}, & p \equiv 1 \bmod 3, \\
		1 + \frac{1}{p}, & p \equiv 2 \bmod 3.
	\end{cases} $$
\end{lemma}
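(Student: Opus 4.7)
The plan is to treat the places in three groups according to the available machinery. For the archimedean place, Lemma~\ref{lem:archimedean_mass_formula} applies directly: since $\R$ is real and the discriminant height at $\infty$ is trivially $1$, we have $\tau_{H,\infty}(BA_4(\R)) = |A_4[2]|/|A_4|$. The involutions of $A_4$ are the identity and the three double transpositions, so $|A_4[2]| = 4$ and $\tau_{H,\infty}(BA_4(\R)) = 4/12 = 1/3$.

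For a good place $p\nmid 12$, Corollary~\ref{cor:mass_formula} gives $\tau_{H,p}(BA_4(\Q_p))=\sum_{c\in\mathcal{C}_{A_4}^{\Gamma_{\Q_p}}} p^{-\ind(c)a(L)}$ with $a(L)=1/2$ and $\ind(c)=2$ for every non-identity class. So I need only determine the Galois-invariance of each class. The single class of double transpositions is rational; the two classes of $3$-cycles are permuted by the non-trivial element of $\Gal(\Q(\mu_3)/\Q)$ acting through the anticyclotomic character (Definition~\ref{def:G(cycl)}, Lemma~\ref{lem:Galois_action_on_G(-1)}). Hence they are $\Gamma_{\Q_p}$-invariant exactly when $p\equiv 1\pmod 3$. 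Summing gives $1+3/p$ in that case and $1+1/p$ otherwise.

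For $p\in\{2,3\}$ the place is bad for $A_4$ and the general mass formula no longer applies, so I revert to the alternative expression \eqref{eqn:tau_alternative}:
\[
\tau_{H,p}(BA_4(\Q_p))=\frac{1}{12}\sum_{\varphi\in Z^1(\Q_p,A_4)} p^{-v_p(\Delta(\varphi))/2},
\]
and stratify the cocycles using the semidirect decomposition $A_4=V_4\rtimes C_3$ with the Iitaka-type fibration $BA_4\to BC_3$ from Lemma~\ref{lem:fibration_normal_quotient}. At $p=3$, the subgroup $V_4$ is tame, so the only cubic characters that arise from wild ramification live in the $C_3$-quotient, and local class field theory enumerates them explicitly; over each such character one classifies the $V_4$-fibre, which is étale tame and hence computable by the good-place mass formula applied to the inner twist of $V_4$. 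At $p=2$ the situation is reversed: every non-trivial cubic character of $\Gamma_{\Q_2}$ is unramified (pulled back from $\Gal(\F_8/\F_2)$, since $\Z_2^\times/\Z_2^{\times 3}=0$), giving only the trivial character and the two unramified ones to consider, while all wild ramification is carried by the $V_4$-fibre. In each fibre I would list the relevant $V_4$-torsors (of the trivial base, or twisted by the non-trivial $C_3$-action in the ramified cases) using $\Q_2^\times/\Q_2^{\times 2}$ and the conductor-discriminant formula, then sum $2^{-v_2(\Delta)/2}$ and divide by $12$.

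The main obstacle is the wild case $p=2$: $\Q_2^\times/\Q_2^{\times 2}$ has order $8$, so there are $64$ potential $V_4$-quotients and one must both identify which carry a compatible $C_3$-action (equivalently, which are stable under the unique $C_3$ acting faithfully on $V_4$) and compute the quartic discriminant valuation of each. I would carry this out by parametrising $V_4$-torsors of $\Q_2$ (or of $\Q_2(\zeta_7)$) in terms of Kummer pairs $(a,b)\in(\Q_p^\times/\Q_p^{\times 2})^2$, reading off the valuation of the conductor from the standard table for quadratic extensions of $\Q_2$, and then summing — this is a finite but somewhat involved bookkeeping task, which can be cross-checked against the Jones–Roberts database of degree $4$ local fields to obtain the claimed value $15/8$. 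The analogous but much shorter enumeration at $p=3$ yields $14/9$.
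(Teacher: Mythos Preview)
Your treatment of $p=\infty$ and of the good primes $p\nmid 6$ matches the paper's proof exactly. For the bad primes $p\in\{2,3\}$ the paper takes a more direct route than yours: it pushes the groupoid sum forward to $BS_4$ via Lemma~\ref{lem:outer_automorphism} (using that $A_4\lhd S_4$), obtaining
\[
\tau_p(BA_4(\Q_p)) = 2\sum_{\substack{[K:\Q_p]=4\\ \Gal(\widetilde{K}/\Q_p)\subseteq A_4}} \frac{1}{|\Aut(K)|\,p^{v(\Delta(K))/2}},
\]
lists the five possible shapes of such \'etale algebras ($\Q_p^4$, $L\times L$ with $L$ quadratic, $\Q_p\times L$ with $L$ cyclic cubic, biquadratic, genuine $A_4$-quartic), and reads off the answer from the LMFDB tables of local fields. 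Your fibration approach via $BA_4\to BC_3$ is more structural and database-free in principle, but it carries more bookkeeping and you eventually appeal to the database anyway as a cross-check.

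There is also a genuine gap in your justification at $p=3$. When the cubic character $\psi:\Gamma_{\Q_3}\to C_3$ is ramified (six of the nine characters, including all the wild ones), the twist $(V_4)_\psi$ does \emph{not} have good reduction over $\Z_3$, since the inertia acts non-trivially on $V_4(k^{\sep})$. So $p=3$ is not a good place for $B(V_4)_\psi$ and Corollary~\ref{cor:mass_formula} does not apply to the fibre as you claim. Moreover the pulled-back height on that fibre is not of the tame shape \eqref{eqn:tame_height} at $p=3$: the lifts $\varphi$ of a wild $\psi$ are themselves wildly ramified, and the quartic discriminant mixes the wild $C_3$-contribution with the $V_4$-part rather than factoring cleanly. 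Your approach can be repaired by directly enumerating $Z^1(\Q_3,(V_4)_\psi)$ for each ramified $\psi$ and computing each quartic discriminant by hand (all such torsors are at least tame since $\gcd(3,4)=1$), but this is exactly the sort of case-by-case computation the paper's \'etale-algebra enumeration handles in one stroke.
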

\begin{proof}
	The cases $p\neq 2,3$ are tame so the result follows from our mass formula
	Corollary \ref{cor:mass_formula}. 
	For $p = \infty$, we use \eqref{eqn:real_mass_formula} and the fact that $\#A_4[2] = 4$.
	
	For $p=2,3$ we take a computational approach. We have by definition
	$$\tau_p(BA_4(\Q_p)) = \sum_{\varphi_p \in BA_4[\Q_p]} 
	\frac{1}{|\Aut(\varphi_p)| H_v(\varphi_p)^{1/2}}.$$
	We pushforward the groupoid count to $BS_4$.
	By Lemma~\ref{lem:S_n}, this allows us to write the count in terms of quartic \'etale 
	algebras. We then use Lemma \ref{lem:outer_automorphism} and the fact that $A_4 \subset S_4$
	is normal to keep track of the correct normalisations for the groupoid cardinalities.
	This yields
	$$\tau_p(BA_4(\Q_p)) = 2\sum_{\substack{ [K:\Q_p] = 4 \\ \Gal(\widetilde{K}/\Q_p) \subset A_4}}
	\frac{1}{|\Aut(K)| p^{v(\Delta(K))/2}}$$
	where the sum is over isomorphism classes of quartic \'etale algebras $K$ over $\Q_p$
	whose Galois closure has Galois group a subgroup of $A_4$.
	We have the following possibilities with stated order of the automorphism
	group.
	\begin{center}
	\begin{tabular}{|l|l|} \hline
		\'etale algebra & $\# \Aut$ \\ \hline
		$\Q_p^4$ & $24$ \\
		$L \times L$, $L/ \Q_p$ quadratic & 8 \\ 
		$\Q_p \times L$, $L/\Q_p$ cyclic cubic & 3 \\
		$L$, $L/\Q_p$ biquadratic & 4 \\
		$L$, $A_4$-quartic field & 1 \\ \hline
	\end{tabular}
	\end{center}
	Using the LMFDB and Sage, we enumerate all such \'etale algebras over 
	$\Q_2$ and $\Q_3$ weighted by the 
	reciprocal discriminant and the automorphism group.
	This gives the stated formulas.
\end{proof}

To obtain the formula stated in Conjecture \ref{conj:A_4} we use the local densities at $p =\infty,2,3$ as well the convergence factors $(1-1/2)^2(1 - 1/3)^{2} = 1/9$, and the effective cone constant $1/2$. This gives $(1/2)\times(1/3)\times(15/8)\times(14/9)\times(1/9)=35/648$.

Finally Conjecture \ref{conj:discriminant} states that we should remove $\widetilde{K}$ for which $\widetilde{K} \cap \Q(\mu_{6}) \neq \Q$. However there are no such fields; indeed we have $\Q(\mu_{6}) = \Q(\sqrt{-3})$, but $\widetilde{K}$ cannot contain a quadratic subfield as $A_4$ has no subgroup of index $2$.

\subsection{$A_4$-quartics with a transcendental Brauer group} \label{sec:A_4-transcendental}
We now consider a different height function which exhibits new behaviour. Namely, there exists a balanced height function on $A_4$ for which the partially unramified transcendental Brauer group is non-trivial.

Recall that $A_4$ has $4$ irreducible representations. The trivial one, two non-trivial $1$-dimensional representations which factor through $A_4^{\text{ab}} = C_3$, and a $3$-dimensional representation. Let $1, \chi, \overline{\chi}, \psi$ be the respective characters of these representations.
\begin{lemma}
	Let $K/\Q$ be an $A_4$-quartic, $\tilde{K}/\Q$ its Galois closure and $L/\Q$ the cubic resolvent. Let $H(K)$ be the Artin conductor of $\tilde{K}/\Q$ with respect to the virtual character $\psi - (\chi + \overline{\chi})/2$. We then have 
	\[
	H(K) = |\Delta_{K}|\cdot |\Delta_{L}|^{-\frac{1}{2}} = |\Delta_{K}|^{\frac{5}{2}} \cdot|\Delta_{\tilde{K}}|^{-\frac{1}{2}}.
	\]
\end{lemma}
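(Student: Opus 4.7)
The plan is to use the conductor-discriminant formula together with the decomposition of the relevant permutation representations of $A_4$ into irreducible characters. I would first observe that $A_4$ has exactly four irreducible characters, namely $1$, $\chi$, $\bar\chi$ (the two non-trivial $1$-dimensional characters, which factor through $A_4^{\mathrm{ab}} = C_3$), and $\psi$ (the $3$-dimensional character). The key representation-theoretic computations are: the permutation representation of $A_4$ on the four cosets of a subgroup of order $3$ (giving the quartic $K$) decomposes as $1 + \psi$, and the permutation representation of $A_4/V_4 \cong C_3$ on itself (giving the cubic resolvent $L$) decomposes as $1 + \chi + \bar\chi$. The regular representation of $A_4$ itself decomposes as $1 + \chi + \bar\chi + 3\psi$.

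Next I would invoke the conductor-discriminant formula (see, e.g., Neukirch): for any number field $F/\Q$ with Galois closure inside $\tilde K$, the absolute value of the discriminant $|\Delta_F|$ equals the Artin conductor of the permutation character of $\Gal(\tilde K/\Q)$ on the cosets of $\Gal(\tilde K/F)$ minus the trivial character. Combined with the above decompositions, this yields $|\Delta_K| = \operatorname{cond}(\psi)$, $|\Delta_L| = \operatorname{cond}(\chi + \bar\chi)$, and $|\Delta_{\tilde K}| = \operatorname{cond}(\chi + \bar\chi + 3\psi) = |\Delta_L| \cdot |\Delta_K|^3$, using multiplicativity of the Artin conductor on (virtual) characters.

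Finally, since the Artin conductor extends multiplicatively (in fact additively on exponents) to virtual characters with rational coefficients, I would compute
\[
H(K) = \operatorname{cond}\bigl(\psi - (\chi + \bar\chi)/2\bigr) = \operatorname{cond}(\psi) \cdot \operatorname{cond}(\chi + \bar\chi)^{-1/2} = |\Delta_K| \cdot |\Delta_L|^{-1/2},
\]
which gives the first equality. Substituting $|\Delta_L| = |\Delta_{\tilde K}|/|\Delta_K|^3$ into this expression gives $|\Delta_K| \cdot (|\Delta_{\tilde K}|/|\Delta_K|^3)^{-1/2} = |\Delta_K|^{5/2} \cdot |\Delta_{\tilde K}|^{-1/2}$, yielding the second equality.

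The only mildly subtle point is the use of half-integral exponents in the Artin conductor of the virtual character $\psi - (\chi + \bar\chi)/2$: this must be interpreted as a formal square root of the integer conductor $\operatorname{cond}(2\psi - \chi - \bar\chi) = |\Delta_K|^2/|\Delta_L|$, which is well-defined (as a positive real number) provided this integer is a square; this does hold, and one could either check it prime-by-prime using the local decomposition groups of $A_4$, or simply take this equality as the definition of $H(K)$ and verify that our two claimed expressions for it agree as real numbers, which is a purely formal manipulation once the three identities $|\Delta_K| = \operatorname{cond}(\psi)$, $|\Delta_L| = \operatorname{cond}(\chi + \bar\chi)$, and $|\Delta_{\tilde K}| = |\Delta_L| \cdot |\Delta_K|^3$ are established.
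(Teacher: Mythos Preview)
Your proposal is correct and follows essentially the same approach as the paper: both proofs identify the decompositions of the three relevant permutation characters ($1+\psi$ for $K$, $1+\chi+\bar\chi$ for $L$, and $1+\chi+\bar\chi+3\psi$ for $\tilde K$), apply the conductor-discriminant formula, and then use multiplicativity of the Artin conductor on virtual characters. Your version is slightly more explicit in carrying out the final algebraic substitution and in flagging the interpretation of half-integral exponents, but the argument is the same.
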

\begin{proof}
	The Artin conductor of the trivial character $1$ is trivial. 	The Artin conductor of a sum is the product of Artin characters. The lemma then follows from the following, which all follow from the conductor-discriminant formula.
	\begin{enumerate}
		\item $|\Delta_{K}|$ equals the Artin conductor of the permutation representation corresponding to $A_4 \subset S_4$. The character of this representation is $1 + \psi$.
		\item $|\Delta_{L}|$ equals the Artin conductor of the permutation representation given by  $A_4 \to C_3 \subset S_3$. The character of this representation is $1 + \chi + \overline{\chi}$.
		\item $|\Delta_{\tilde{K}}|$ equals the Artin conductor of the representation $\C[A_4]$. The character of this representation is $1 + \chi + \overline{\chi} + 3\psi$. \qedhere
	\end{enumerate}
\end{proof}

Let $H$ be the height on $BA_4$ given by one of the above equivalent formulas. We recall Conjecture \ref{conj:A_4_conductor_intro}.
\begin{conjecture} \label{conj:A_4_conductor}
	For an $A_4$-quartic field $K$, let $H(K) := |\Delta_{K}|^{\frac{5}{2}}|\Delta_{\tilde{K}}|^{-\frac{1}{2}}.$ Then
	\[
	\begin{split}
		&2\#\left\{ [K:\Q] = 4 :\Gal(\widetilde{K}/\Q) \cong A_4, H(K) \leq B, K \otimes_{\Q} \R \cong \R^4 \right\} \sim c_{\R^4}(\Q,A_4,H) B, \\
		&2\#\left\{ [K:\Q] = 4 :\Gal(\widetilde{K}/\Q) \cong A_4, H(K) \leq B, K \otimes_{\Q} \R \cong \C^2 \right\} \sim c_{\C^2}(\Q,A_4,H) B,
	\end{split}
	\]
	where
	\[
	\begin{split}
	c_{\R^4}(\Q,A_4,H) = &\frac{145}{3456}\prod_{p > 3} \left(1 - \frac{1}{p}\right)\left(1 + \frac{1 + \left(\frac{-3}{p}\right)}{p} + \frac{1}{p^2}\right) \\ + \,\, &\frac{319}{10368}\prod_{p > 3} \left(1 - \frac{1}{p}\right)\left(1 + \frac{1 + \left(\frac{-3}{p}\right)}{p}\right) =  0.0594...\\
	c_{\C^2}(\Q,A_4,H) = &\frac{145}{1152}\prod_{p > 3} \left(1 - \frac{1}{p}\right)\left(1 + \frac{1 + \left(\frac{-3}{p}\right)}{p} + \frac{1}{p^2}\right) \\ - \,\, &\frac{319}{3456}\prod_{p > 3} \left(1 - \frac{1}{p}\right)\left(1 + \frac{1 + \left(\frac{-3}{p}\right)}{p}\right) = 0.0347....
	\end{split}
	\]
\end{conjecture}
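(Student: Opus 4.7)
The plan is to derive Conjecture \ref{conj:A_4_conductor} from Conjecture \ref{conj:fair} combined with the archimedean-refined equidistribution statement of Conjecture \ref{conj:equi} (equivalently, its explicit form in Proposition \ref{prop:Malle-Bhargava}). First I would identify the orbifold line bundle $L$ underlying $H$. Since $H$ is the Artin conductor of $\psi - \tfrac{1}{2}(\chi + \bar\chi)$, the associated weight function $w : \mathcal{C}_{A_4}^* \to \Q$ assigns $w([(12)(34)]) = 2$ and $w([(123)]) = w([(132)]) = 1$. Hence $a(L) = 1$, the minimal weight conjugacy classes are $\mathcal{M}(L) = \{[(123)],[(132)]\}$, and these form a single Galois orbit under $\Gal(\Q(\zeta_3)/\Q)$, giving $b(\Q,L) = 1$. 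Since the two classes of $3$-cycles generate $A_4$, the line bundle $L$ is fair (Definition \ref{def:rigid}), so Conjecture \ref{conj:fair} applies; the absence of a logarithmic factor and the leading power $B^1$ are both immediate.

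Next I would determine $\Br_{\mathcal{M}(L)} BA_4 / \Br\Q$. The algebraic part is controlled by Theorem \ref{thm:Br_BG}: together with the exact sequence of Lemma \ref{lem:Picorb(BG,C)} and the identification $\dual{A_4} = \mu_3$, a direct computation of $\H^1(\Q, \PicOrb_{\mathcal{M}(L)}(BA_4)_{\bar\Q})$ gives no contribution, so $\Br_{\mathcal{M}(L),1}BA_4 = \Br\Q$. For the transcendental part, the Schur cover is $2\cdot A_4 \cong \SL_2(\F_3)$, so $\Br(BA_4)_{\bar\Q} \cong \Z/2\Z$. Proposition \ref{prop:partially_unramified_Brauer_groups_alg_clos} and Lemma \ref{lem:Schur_covering_group} show this class survives in $\Br_{\mathcal{M}(L)}(BA_4)_{\bar\Q}$: the preimage of a $3$-cycle in $\SL_2(\F_3)$ has order $6$, and its centraliser is cyclic, so the commutator obstruction is trivial (note this is why Lemma \ref{lem:unramified_geometric_Brauer_group_computation}(4) no longer forces vanishing — our $\mathcal{M}(L)$ excludes the class of order $2$). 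The class is Galois-invariant, and I would verify descent to $\Q$ by writing down a marked central $\mu_2$-extension of $A_4$ representing it (in the style of \S\ref{sec:marked_central_extensions}), concluding $\Br_{\mathcal{M}(L)} BA_4 / \Br\Q \cong \Z/2\Z$, generated by some $b_1$.

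With this in place, Lemma \ref{lem:sum_Euler_products} expresses $\tau_H(BA_4(\Adele_\Q)_{\mathcal{M}(L)}^{\Br})$ as a sum of two Euler products, one for each $b \in \{0, b_1\}$. Imposing the archimedean signature condition through Proposition \ref{prop:Malle-Bhargava} splits the count by $K\otimes_\Q \R$, giving
\begin{equation*}
c_\sigma(\Q,A_4,H) = \tfrac{1}{2} \sum_{b \in \{0,b_1\}} \L^*(\mathcal{M}(L),1)\,\lambda_\infty^{-1} \hat\tau_{H,\infty}(b;\sigma) \prod_p \lambda_p^{-1}\hat\tau_{H,p}(b),
\end{equation*}
where the front factor $\tfrac{1}{2}$ comes from passing from homomorphism-counts to field-counts via $|N|/(|C|\cdot|A_4|) = 2$ for $A_4 \subset S_4$ (Lemma \ref{lem:G_S_n}), combined with $\alpha^*(BA_4, L) = a(L)^{b-1}/|\dual{A_4}(\Q)| = 1$ and $|Z(A_4)(\Q)| = 1$. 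I would then evaluate the local integrals: at tame primes $p > 3$, Theorem \ref{thm:local_invariant_integral} gives $\hat\tau_{H,p}(0) = 1 + (1 + (\tfrac{-3}{p}))/p + 1/p^2$ and $\hat\tau_{H,p}(b_1) = 1 + (1 + (\tfrac{-3}{p}))/p$ (the double-transposition contribution disappears because the residue of $b_1$ along the order-$2$ sector is trivial after pairing with the local cup product, while the $3$-cycle contribution is unaffected because its residue is the quadratic character cut out by $\Q(\zeta_3)$). Combined with the convergence factor $\lambda_p^{-1} = 1 - 1/p$ from Lemma \ref{lem:convergence_factors_alternative} applied to $\L(\mathcal{M}(L),s) = \zeta_\Q(s) L(\chi_{-3}, s)$, these produce the two tame Euler products in the statement. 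The primes $p = 2, 3$ and $v = \infty$ are then computed by hand as in Lemma \ref{lem:A4_local_densities} — enumerating \'etale algebras contributing to $BA_4(\Q_p)$ and pairing with $b_1$ via its representation as a marked central extension (\S\ref{sec:BM_central_extension}) — yielding the rational prefactors $145/3456$ and $319/10368$ and, crucially, fixing the relative signs by which signature $\sigma$ contributes positively or negatively to $\hat\tau_{H,\infty}(b_1;\sigma)$.

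The main obstacle will be pinning down the transcendental class $b_1$ explicitly enough to evaluate it at the non-tame places $p = 2, 3$ and at $\infty$. A clean computation of the local Brauer evaluations requires choosing a concrete marked central extension representing $b_1$, and then solving the resulting embedding problem into $\SL_2(\F_3)$ locally; this is finite but delicate and is what separates the quartic constant $145/3456$ from the cubic-resolvent constant $319/10368$, as well as producing the $+/-$ discrepancy between the $\R^4$ and $\C^2$ leading constants. Once this explicit representative is in hand, the remainder of the derivation is a direct application of Theorem \ref{thm:local_invariant_integral} at the tame primes and a finite enumeration at the remaining places.
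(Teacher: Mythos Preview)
Your overall strategy matches the paper's derivation closely: identify the weight function, verify fairness with $a(L)=1$ and $b(\Q,L)=1$, compute $\Br_{\mathcal{M}(L)}BA_4/\Br\Q \cong \Z/2\Z$ via the Schur cover $\SL_2(\F_3)\to A_4$, and then express the constant as a sum of two Euler products using Lemma~\ref{lem:sum_Euler_products} and Proposition~\ref{prop:Malle-Bhargava}, with the bad primes handled via the embedding problem of \S\ref{sec:BM_central_extension}. That is exactly what the paper does.

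However, two points of your execution are off. First, your displayed formula for $c_\sigma$ carries a spurious factor $\tfrac12$. The ratio $|N|/(|C|\cdot|A_4|)=2$ is the factor already sitting on the \emph{left} of the asymptotic; on the right, Conjecture~\ref{conj:fair} gives $c(\Q,A_4,H)=|\Br_{\mathcal{M}(L)}/\Br\Q|\cdot\tau_H(\cdots^{\Br})$, and Lemma~\ref{lem:sum_Euler_products} rewrites this as $\sum_b \hat\tau_H(b)$ with no leftover $\tfrac12$. If you carry your $\tfrac12$ through you will not recover $145/3456$.

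Second, your explanation of the tame Euler factors has the residue mechanism inverted. The $1/p^2$ term (from the double-transposition sector) drops out of $\hat\tau_{H,p}(b_1)$ precisely because the residue of $b_1$ along that order-$2$ sector is \emph{geometrically non-trivial}: this is the transcendental-residue case of Theorem~\ref{thm:local_invariant_integral}, giving $\chi_v(c)=0$. Conversely, the $3$-cycle contribution survives because the residue of $b_1$ along $\mathcal{M}(L)$ is \emph{zero} (indeed $b_1\in\Br_{\mathcal{M}(L)}BA_4$; the paper observes this for free since $b_1$ is $2$-torsion while that residue is $3$-torsion). The Legendre symbol $(\tfrac{-3}{p})$ comes not from any residue of $b_1$ but from the Galois action on the two $3$-cycle classes via Corollary~\ref{cor:mass_formula}, and appears in both Euler products for that reason.
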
 

The numerical values were computed by using the primes $p < 100,000$.
An interesting prediction of the conjecture is that $63\%$ of $A_4$-quartics are totally real when ordered by $H$. On the other hand, when ordering by discriminant Conjecture~\ref{conj:equi} (and experimental verification) predicts that only $25 \%$ of $A_4$-quartics are totally real. This difference in the local behaviour when using a different height function is explained by the existence of a partially unramified Brauer element.

\subsubsection{Numerical experiments}
We test Conjecture \ref{conj:A_4_conductor} experimentally by counting the number of $A_4$-quartics $K$ in the LMFDB such that $H(K) \leq 100,000$ and dividing this number by $100,000$. We also consider the value of the first Euler product, these are the values predicted by applying the Malle--Bhargava heuristics to our setting, equivalently the value predicted by Conjecture \ref{conj:equi} if the Brauer group were trivial. The results of this test are as follows.
\begin{center}
\begin{tabular}{c|c|c|c}
				& $c_{\R^4}(\Q,A_4,H)$	& $c_{\C^2}(\Q,A_4,H)$  &$\%$ totally real \\
	Conjecture \ref{conj:A_4_conductor}  & 0.0595... 			& 0.0348...				&$63\%$			\\
	LMFDB		& 0.0463... 			& 0.0298...				&$61\%$			\\
	Malle--Bhargava& 0.0355...			& 0.1066...         	&$25\%$
\end{tabular}
\end{center}
We make two remarks about this numerical test. Firstly, if Conjecture \ref{conj:A_4_conductor} had logarithmic factors then we would expect the relative error term to be $O(1/\log B)$ and so it would not be surprising if the error term dominated the main term for $B = 100,000$. But Conjecture \ref{conj:A_4_conductor} contains no logarithmic factors so we should expect a power-saving error term and the numerical value should be relatively close to the true value. Assuming a strong error term of the shape $O(\sqrt{B})$, we would therefore expect the data to be correct to within $10^{-5/2} \approx 0.003.$

A second remark is that the LMFDB does not necessarily contain all quartic $A_4$ fields $K$ with $H(K) \leq 100,000$ (it contains $3,808$ such fields). One should therefore expect that the numerical value computed from the LMFDB is smaller than the actual proportion. Moreover, one should expect the LMFDB to be biased against totally real extensions since only $27 \%$ of the $A_4$-quartic fields in the LMFDB are totally real. 

Taking these two remarks into account we see that the numerical data agrees quite well with our conjecture.
On the other hand, the prediction from the Malle--Bhargava heuristics is rather different to the numerical data. This provides strong evidence that the transcendental Brauer group plays a role in the leading constant.

\subsubsection{Deduction of Conjecture \ref{conj:A_4_conductor}}
Let us now explain how Conjecture \ref{conj:A_4_conductor} follows from Conjecture \ref{conj:equi}. The same reasoning as in \S\ref{sec:Artin_D4} shows that the height $H$ is given by the weight function $C$ where
\[
	\begin{tabular}{c|cccc}
		c& (1,2)(3,4) & (1,2,3) & (1,3,2) \\ \hline
		C(c) & 2 & 1 & 1 
	\end{tabular}
\]
Let $\mathcal{C}$ consist of the minimal conjugacy classes $(1,2,3), (1,3,2)$. Note that we have $a(C) = 1$ and $b(\Q, C) = 1$. The height is balanced since $(1,2,3)$ and its conjugate $(1,2,4)$ generate $A_4$. 

We now construct the relevant Brauer group element using the construction from \S \ref{sec:central_extension}.
Consider the double cover $\SL(2,\F_3) \to A_4$. This is a central extension and thus defines a Brauer element which we will denote by $\beta \in \Br BA_4$.
\begin{lemma}\label{lem:A4_conductor_Brauer_class}
	We have $\beta \in \Br_{\mathcal{C}} BA_4$ and $\beta_{\overline{\Q}} \neq 0$.
\end{lemma}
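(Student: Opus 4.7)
The plan is to verify the two assertions separately: for $\beta \in \Br_{\mathcal{C}} BA_4$ apply Lemma \ref{lem:central_extension_unramified} by exhibiting an explicit marking, and for $\beta_{\overline{\Q}} \neq 0$ apply the Kummer sequence from Lemma \ref{lem:boundary_map_Kummer_theory} to reduce to showing that the extension $\SL(2,\F_3) \to A_4$ does not split.

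First I would show that $\beta \in \Br_{\mathcal{C}} BA_4$. The kernel $\mu_2 = \{\pm I\}$ of $\SL(2,\F_3) \to A_4$ has order $2$, so for any element $g \in A_4$ of order $3$, one of the two preimages in $\SL(2,\F_3)$ has order $3$ and the other (its product with $-I$) has order $6$. Let $\mathcal{E} \subset \SL(2,\F_3)(-1)$ be the subset of elements of order $3$; this set is clearly conjugacy invariant, and it is Galois invariant because the action of $\Gamma_\Q$ on $\SL(2,\F_3)(-1)$ factors through the cyclotomic character $\Gamma_\Q \to \widehat{\Z}^\times$ via $\gamma \mapsto \gamma^{\chi_{\mathrm{cycl}}(\sigma)}$, which preserves orders. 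The restriction of $\SL(2,\F_3)(-1) \to A_4(-1)$ to $\mathcal{E}$ is then a Galois-equivariant bijection onto the set of elements of $A_4(-1)$ whose conjugacy class lies in $\mathcal{C} = \{(1,2,3),(1,3,2)\}$. Lemma \ref{lem:central_extension_unramified} then yields $\beta \in \Br_{\mathcal{C}} BA_4$.

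For the non-vanishing of $\beta_{\overline{\Q}}$, I would apply Lemma \ref{lem:boundary_map_Kummer_theory} with $n=2$ over $\overline{\Q}$, which gives the exact sequence
$$0 \to \Pic BA_{4,\overline{\Q}}/2\Pic BA_{4,\overline{\Q}} \to \H^2(BA_{4,\overline{\Q}},\mu_2) \to \Br BA_{4,\overline{\Q}}[2] \to 0.$$
By Lemma \ref{lem:coh_sep_closed} we have $\Pic BA_{4,\overline{\Q}} = \Hom(A_4,\overline{\Q}^\times) \cong C_3$ since $A_4^{\mathrm{ab}} = C_3$; multiplication by $2$ is invertible on this group, so $\Pic BA_{4,\overline{\Q}}/2\Pic BA_{4,\overline{\Q}} = 0$ and the map $\H^2(BA_{4,\overline{\Q}},\mu_2) \to \Br BA_{4,\overline{\Q}}$ is injective. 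It therefore suffices to show that the class in $\H^2(A_4,\mu_2)$ corresponding to the central extension $1 \to \mu_2 \to \SL(2,\F_3) \to A_4 \to 1$ is non-zero, i.e.~that the extension does not split. This is immediate: $-I$ is the unique element of order $2$ in $\SL(2,\F_3)$, whereas any splitting would produce non-central elements of order $2$ lifting the three double transpositions of $A_4$.

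The main (minor) technical point in this plan is the verification that the marking $\mathcal{E}$ is Galois invariant, which is handled by the observation that the Galois action on $G(-1)$ always preserves the order of an element. Everything else is formal once the central extension and marking are identified.
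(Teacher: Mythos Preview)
Your proof is correct. For the second assertion ($\beta_{\overline{\Q}} \neq 0$) you follow essentially the same route as the paper: Kummer sequence plus the observation that $\Pic (BA_4)_{\overline{\Q}} \cong C_3$ is $2$-divisible, reducing to non-splitting of $\SL(2,\F_3) \to A_4$. Your justification for non-splitting via the uniqueness of $-I$ as an order-$2$ element is a nice extra detail.

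For the first assertion you take a genuinely different path. You apply Lemma \ref{lem:central_extension_unramified} directly by exhibiting the marking $\mathcal{E} = \{\text{order-}3\text{ elements of }\SL(2,\F_3)(-1)\}$, checking it is conjugacy invariant, Galois invariant (since the cyclotomic/anticyclotomic action preserves orders), and bijects onto the order-$3$ elements of $A_4(-1)$. This is correct and constructive. The paper instead argues more abstractly: the sector $\mathcal{S}_{\mathcal{C}}$ has order $3$, so by Definition \ref{def:residue_along_sector} the residue $\res_{\mathcal{S}_{\mathcal{C}}}(\beta)$ lies in $\H^1(\mathcal{S}_{\mathcal{C}}, \Z/3\Z)$ and is hence $3$-torsion; but $\beta$ is $2$-torsion (coming from a $\mu_2$-extension), so the residue vanishes. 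The paper's argument is shorter and avoids any computation in $\SL(2,\F_3)$, while yours has the advantage of producing the explicit marking, which is the data one would need anyway if one wanted to work with $\beta$ as a marked central extension in the sense of \S\ref{sec:marked_central_extensions}.
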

\begin{proof}
For the first statement note that the sector $\mathcal{S}_{\mathcal{C}}$ corresponding to $\mathcal{C}$ has order $3$ so $\res_{\mathcal{S}_{\mathcal{C}}}(\beta) \in \H^1(\mathcal{S}_{\mathcal{C}}, \Z/3\Z)$ is $3$-torsion. But $\beta$ is $2$-torsion so $\res_{\mathcal{S}_{\mathcal{C}}}(\beta) = 0$.

For the second part note that $\Pic (BA_4)_{\overline{\Q}} = \dual{A}_4 \cong \mu_3$ so the long exact sequence in cohomology induced by the exact sequence $0 \to \mu_2 \to \Gm \to \Gm \to 0$ shows that $\H^2((BA_4)_{\overline{\Q}}, \Z/2\Z) \cong \Br (BA_4)_{\overline{\Q}}[2]$.
It thus suffices to show that the $\Z/2\Z$-gerbe $(B\SL(2,\F_3))_{\overline{\Q}} \to (BA_4)_{\overline{\Q}}$ is not neutral. This is true because the map $\SL(2,\F_3) \to A_4$ of groups has no section.
\end{proof}

\begin{lemma}
	We have $\Br_{\mathcal{C}} BA_4/ \Br \Q \cong \Z/2 \Z$  generated by $\beta$.
\end{lemma}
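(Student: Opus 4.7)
The plan is to analyze the algebraic and transcendental parts of $\Br_{\mathcal{C}} BA_4$ separately via the exact sequence \eqref{seq:Br_extension}, which reduces the problem to computing $(\Br_{\mathcal{C}}(BA_4)_{\overline{\Q}})^{\Gamma_{\Q}}$ and $\Br_{\mathcal{C},1} BA_4/\Br \Q$ independently.

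For the geometric part, I would first show $\Br_{\mathcal{C}}(BA_4)_{\overline{\Q}} = \Z/2\Z$. By Lemma~\ref{lem:coh_sep_closed}, $\Br(BA_4)_{\overline{\Q}} \cong \H^2(A_4, \Q/\Z)$ is the Schur multiplier of $A_4$, which is $\Z/2\Z$. Lemma~\ref{lem:A4_conductor_Brauer_class} then identifies $\beta_{\overline{\Q}}$ as a nonzero element of $\Br_{\mathcal{C}}(BA_4)_{\overline{\Q}}$, forcing equality. Since $\beta$ is defined over $\Q$, this generator is Galois invariant, giving $(\Br_{\mathcal{C}}(BA_4)_{\overline{\Q}})^{\Gamma_{\Q}} = \Z/2\Z$.

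For the algebraic part, Theorem~\ref{thm:Br_BG} identifies $\Br_{\mathcal{C},1} BA_4/\Br \Q$ with $\H^1(\Q, \PicOrb_{\mathcal{C}}(BA_4)_{\overline{\Q}})$, and I would show the latter vanishes. The Galois action factors through $G := \Gal(\Q(\zeta_3)/\Q) = \langle \sigma \rangle \cong \Z/2\Z$. Writing $\mathcal{C} = \{c, c^{-1}\}$ and fixing a generator $\chi_0$ of $\dual{A_4}(\overline{\Q}) = \mu_3$, one has an explicit $\Z$-basis $e_c, e_{c^{-1}}, \omega$ of the orbifold Picard group, where $e_c, e_{c^{-1}}$ are the standard generators of $\Hom(\mathcal{C}, \Z)$ and $\omega = (\chi_0, w_0)$ with $w_0(c) = 1/3,\ w_0(c^{-1}) = 2/3$. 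Using $\sigma \chi_0 = \chi_0^{-1}$ and the compatibility of weights with the age pairing, a short computation yields $\sigma e_c = e_{c^{-1}}$ and $\sigma \omega = 2\omega - e_{c^{-1}}$. Since $G$ is cyclic, $\H^1$ equals $\ker(1+\sigma)/\im(\sigma-1)$: the norm $N = 1+\sigma$ sends every basis vector to $e_c + e_{c^{-1}}$ (using the identity $3\omega = e_c + 2e_{c^{-1}}$), so $\ker N$ is the rank-$2$ sublattice defined by $a + b + d = 0$ in coordinates $(a,b,d)$, and the elements $e_{c^{-1}} - e_c$ and $\omega - e_{c^{-1}}$ already span this sublattice. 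Hence $\H^1(G, \PicOrb_{\mathcal{C}}(BA_4)_{\overline{\Q}}) = 0$.

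Combining via \eqref{seq:Br_extension} gives an injection $\Br_{\mathcal{C}} BA_4/\Br \Q \hookrightarrow \Z/2\Z$ in which $\beta$ maps to the nonzero element by Lemma~\ref{lem:A4_conductor_Brauer_class}; this injection is therefore an isomorphism and $\beta$ generates. The main obstacle I anticipate is the explicit description of the $G$-action on the lift $\omega$: the extension $0 \to \Hom(\mathcal{C}, \Z) \to \PicOrb_{\mathcal{C}}(BA_4)_{\overline{\Q}} \to \dual{A_4} \to 0$ does not split as a $G$-module, and the precise twist on $\omega$ is dictated by the age pairing. This twist is exactly what causes $\im(\sigma-1)$ to fill out all of $\ker N$, so getting the formula right is essential.
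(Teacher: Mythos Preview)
Your strategy matches the paper's exactly: split into algebraic and transcendental parts via \eqref{seq:Br_extension}, handle the transcendental side with Lemma~\ref{lem:A4_conductor_Brauer_class} together with the known Schur multiplier $\H^2(A_4,\Q/\Z)\cong\Z/2\Z$, and reduce the algebraic side to $\H^1(\Q,\PicOrb_{\mathcal{C}}(BA_4)_{\overline{\Q}})$ via Theorem~\ref{thm:Br_BG}.

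The issue is in your $\H^1$ computation. The elements $e_c,e_{c^{-1}},\omega$ are \emph{not} a $\Z$-basis: by Lemma~\ref{lem:Picorb_Galois_action} the lattice embeds in $\Hom(\mathcal{C},\Q)\cong\Q^2$ and hence has rank $|\mathcal{C}|=2$, and your own identity $3\omega=e_c+2e_{c^{-1}}$ is exactly the relation among these three generators. Consequently the computation ``in coordinates $(a,b,d)$'' is not taking place in the module you intend; indeed the $\sigma$ you wrote down does not square to the identity on $\Z^3$ (compute $\sigma^2\omega=4\omega-2e_{c^{-1}}-e_c$, which equals $\omega$ only after imposing the relation), so there is no $G$-action on $\Z^3$ at all. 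In the actual lattice $\ker N$ has rank $1$, not $2$. Your conclusion $\H^1=0$ happens to be correct, but the argument as written is not.

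The paper's route is cleaner: the elements $w_1:=\omega=(1/3,2/3)$ and $w_2:=\sigma\omega=(2/3,1/3)$ form a genuine $\Z$-basis of $\PicOrb_{\mathcal{C}}(BA_4)_{\overline{\Q}}$, and $\Gal(\Q(\zeta_3)/\Q)$ acts by swapping them, so $\H^1$ vanishes by Shapiro's lemma. Once this is in place, your argument for the transcendental part goes through unchanged.
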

\begin{proof}
	We have $\dual{A}_4 = \mu_3$. The Galois module $\PicOrb_{\mathcal{C}} BA_4$ is by definition the submodule of elements $w \in \Hom(\mathcal{C}, \Q)$ such that $w((1,2,3)) + w((1,3,2)) \in \Z$. Consider the two elements $w_1, w_2 \in \PicOrb_{\mathcal{C}} BA_4$ defined as
	\begin{align*}
		&w_1((1,2,3)) = \frac{1}{3} & w_1((1,3,2)) = \frac{2}{3}; & \quad w_2((1,2,3)) = \frac{2}{3} & w_2((1,3,2)) = \frac{1}{3}.
	\end{align*}
	The two elements $w_1,w_2$ form a basis for $\PicOrb_{\mathcal{C}} BA_4$ and $\Gamma_{\Q}$ acts by permuting them through the quotient $\Gal(\Q(\zeta_3)/\Q)$. It thus follows from Shapiro's lemma that $\H^1(\Q, \PicOrb_{\mathcal{C}} BA_4) = 0$. We deduce from Theorem \ref{thm:Br_BG} that $\Br_{\mathcal{C}, 1} BA_4/ \Br \Q = 0$.
	
	It follows that $\Br_{\mathcal{C}} BA_4/ \Br \Q$ embeds into $\Br_{\mathcal{C}} (BA_4)_{\overline{\Q}}$. This group is a subgroup of $\Br (BA_4)_{\overline{\Q}} \cong \H^2(A_4, \Q/\Z) \cong \Z/2 \Z$ by Lemma \ref{lem:coh_sep_closed} and \cite[Thm.~2.12.5]{Kar87}. Lemma~\ref{lem:A4_conductor_Brauer_class} shows that the embedding has to be surjective.
\end{proof}

We can now compute local densities. We use the notation from Lemma \ref{lem:sum_Euler_products} as well as the fact $BA_4[\R] = \{\R^4, \C^2\}$.
\begin{lemma}\label{lem:A4_local_densities_conductor}
	At $\infty$ we have the local densities 
	$$\tau_{H, \infty}(\R^4) = \frac{1}{12},\,\, \tau_{H, \infty}(\C^2) = \frac{1}{4}, \,\,\hat{\tau}_{H, \infty}(\beta)(\R^4) = \frac{1}{12}, \,\, \hat{\tau}_{H, \infty}(\beta)(\C^2) = -\frac{1}{4}.$$	
	At the primes we have the local densities
	\begin{align*}
		\tau_{H, p}(BA_4(\Q_p)) &= 
		\begin{cases}
			145/128 & p = 2, \\
			4/3 & p = 3, \\
			1 + \frac{2}{p} + \frac{1}{p^2}, & p \equiv 1 \bmod 3, \\
			1 + \frac{1}{p^2} , & p \equiv 2 \bmod 3.
		\end{cases} \\
		\hat{\tau}_{H, p}(\beta)(BA_4(\Q_p)) &= 
		\begin{cases}
			29/32 & p = 2, \\
			11/9 & p = 3, \\
			1 + \frac{2}{p}, & p \equiv 1 \bmod 3, \\
			1, & p \equiv 2 \bmod 3.
		\end{cases}
	\end{align*}
\end{lemma}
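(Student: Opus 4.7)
The plan is to treat the three cases (archimedean, tame, wild) separately, paralleling the proof of Lemma \ref{lem:A4_local_densities}.

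For the archimedean place, I would use directly that $BA_4[\R]=\{\R^4,\C^2\}$ and compute the automorphism groups via Lemma \ref{lem:automorphism_group_is_inner_twist}: the trivial cocycle has automorphism group the centre of $A_4$'s centraliser, i.e.\ all of $A_4$ (order $12$), while the $\C^2$-cocycle sends complex conjugation to a double transposition and so has automorphism group equal to its $A_4$-centraliser, the Klein four-group $V_4$ (order $4$). Since $H_\infty \equiv 1$, this yields $\tau_{H,\infty}(\R^4)=1/12$ and $\tau_{H,\infty}(\C^2)=1/4$. For $\hat\tau_{H,\infty}(\beta)$ I would use the interpretation from \S\ref{sec:BM_central_extension}: $\inv_\infty(\beta(\varphi_\infty))$ is $0$ or $1/2$ according to whether $\varphi_\infty$ lifts along $\SL(2,\F_3)\to A_4$. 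The trivial cocycle lifts trivially, while for $\C^2$ the preimage in $\SL(2,\F_3)$ of any double transposition generates an order-$4$ subgroup (the quaternion group $Q_8$ sits over $V_4$), so complex conjugation cannot lift, yielding the sign $-1$.

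For the tame primes $p>3$, the first formula is immediate from Corollary \ref{cor:mass_formula} applied with $w(e)=0$, $w((1,2)(3,4))=2$, $w((1,2,3))=w((1,3,2))=1$, together with the fact that the order-$3$ classes are Galois-conjugate over $\Q(\zeta_3)/\Q$ and thus become individually Galois-invariant precisely when $p\equiv1\bmod 3$. For the twisted density I would apply Theorem \ref{thm:local_invariant_integral}. Since $\beta$ is $2$-torsion and the sectors for $(1,2,3)$ and $(1,3,2)$ have order $3$, the residues $\partial_c(\beta)\in\H^1(\mathcal{S}_c,\Z/3\Z)$ are automatically trivial, giving $\chi_v(c)=1$ on those classes. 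The crucial step is to show $\chi_v((1,2)(3,4))=0$ for all tame $p$, which by Theorem \ref{thm:local_invariant_integral} reduces to showing that $\partial_{(1,2)(3,4)}(\beta)$ is geometrically non-trivial. Here I would invoke Lemma \ref{lem:residue_central_extension}: the residue is the $\Z/2\Z$-torsor $[\widetilde{\mathcal{C}}_c/A_4]\to[\mathcal{C}_c/A_4]$ where $\widetilde{\mathcal{C}}_c\subset \SL(2,\F_3)(-1)$ is the preimage of $\mathcal{C}_c$. Geometrically the base is $BV_4$, and the resulting character $V_4\to\Z/2\Z$ is the obstruction to lifting $V_4\hookrightarrow A_4$ to $\SL(2,\F_3)$. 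Since this lift lands in $Q_8$, which is a non-split extension of $V_4$ by $\Z/2\Z$, the character is non-trivial, as required. Combining gives the claimed formulas $1+2\chi_v(c_3)/p=1+2/p$ resp.\ $1$ at $p\equiv 1,2\bmod 3$.

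For the wild primes $p=2,3$ I would perform a direct enumeration, exactly as in the proof of Lemma \ref{lem:A4_local_densities}. Using Lemma \ref{lem:S_n} and Lemma \ref{lem:outer_automorphism}(2), I would pushforward to $BS_4$ and sum over isomorphism classes of quartic étale $\Q_p$-algebras $K$ whose Galois closure $\widetilde K$ has $\Gal(\widetilde K/\Q_p)\subseteq A_4$, weighted by $|N_{S_4}(A_4)/C_{S_4}(\varphi)|\cdot |\Aut(K)|^{-1}\cdot H_p(\varphi)^{-1}$. For each such $K$ I would read off $|\Delta_K|$ and $|\Delta_{\widetilde K}|$ from a table (e.g.\ the LMFDB) to compute $H_p(\varphi)=|\Delta_K|^{5/2}|\Delta_{\widetilde K}|^{-1/2}$. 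For $\hat\tau_{H,p}(\beta)$, I would additionally compute the sign $e^{2\pi i\inv_p\beta(\varphi)}$: by \S\ref{sec:BM_central_extension} this is $+1$ precisely when the $A_4$-cocycle lifts to an $\SL(2,\F_3)$-cocycle over $\Q_p$, i.e.\ when the corresponding embedding problem is solvable, which again can be determined from the local Galois-theoretic data.

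The main obstacle is twofold. First, justifying $\chi_v((1,2)(3,4))=0$ rigorously requires making the $Q_8\to V_4$ obstruction computation compatible with the Galois action inherent to Lemma \ref{lem:residue_central_extension}, and checking that this is detected by the $\Frob_v$-action at every $p>3$ (not just geometrically). Second, the case $p=2$ is the most delicate: both $|\Delta_K|$ and $|\Delta_{\widetilde K}|$ are sensitive to wild ramification, and the classification of quartic $A_4$-étale $\Q_2$-algebras, their automorphism groups, and the solvability of the $\SL(2,\F_3)$-embedding problem over $\Q_2$ will require the most substantial computer assistance, with cross-checking against the computed numerical value $145/128$.
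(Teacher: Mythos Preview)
Your approach matches the paper's. One simplification is available at the tame primes: the paper shows $\res_{(1,2)(3,4)}(\beta)_{\overline{\Q}} \neq 0$ without any torsor computation, by observing that $\Br_{\text{un}}(BA_4)_{\overline{\Q}} = 0$ (Lemma~\ref{lem:unramified_geometric_Brauer_group_computation}(4)) while $\beta_{\overline{\Q}} \neq 0$, so the residue along the one remaining sector must be geometrically non-trivial. This also dissolves your first stated obstacle: in Theorem~\ref{thm:local_invariant_integral}, geometric non-triviality of $\partial_c(b)$ is exactly the ``transcendental residue'' case and gives $\chi_v(c)=0$ outright, with no $\Frob_v$-compatibility check required. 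Two minor corrections: for the pushforward to $BS_4$ you should invoke Lemma~\ref{lem:outer_automorphism}(1) rather than~(2), since $A_4\trianglelefteq S_4$ is normal and not all local cocycles are surjective (the weighting is a uniform factor of $|S_4/A_4|=2$, as in the proof of Lemma~\ref{lem:A4_local_densities}); and your description of the residue as ``the obstruction to lifting $V_4\hookrightarrow A_4$ to $\SL(2,\F_3)$'' conflates an $\H^1$-class with an $\H^2$-class, though the underlying mechanism (lifts of commuting elements of $V_4$ fail to commute in $Q_8$) and the conclusion are correct.
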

\begin{proof}
	We first deal with the large primes. The fact that $\beta \in \Br_{\mathcal{C}} BA_4$ implies that $\res_{\mathcal{C}}(\beta) = 0$. Let $\mathcal{S}$ be the sector corresponding to the conjugacy class of $(1,2)(3,4)$. In Lemma~\ref{lem:unramified_geometric_Brauer_group_computation}(4) it has been shown that $\Br_{\text{un}} \Br (BA_4)_{\overline{\Q}} = 0$. Since $\beta_{\overline{\Q}} \neq 0$ this implies that $\res_{\mathcal{S}}(\beta)_{\overline{\Q}} \neq 0$. The class is defined over $\Z[\frac{1}{6}]$ and the only bad places for $H$ are $2,3$ so the cases $p \neq 2,3, \infty$ follow from Corollary \ref{cor:mass_formula} and Theorem \ref{thm:local_invariant_integral}.
	
	The computation of $\tau_{H, \infty}(\R^4)$ and $\tau_{H, \infty}(\C^2)$ follows from Definition \ref{def:Tamagawa}, recalling that we are considering them in $BA_4$ and not $BS_4$. For  $\tau_{H, p}(BA_4(\Q_p))$ and $p=2,3$ the calculation is exactly as in Lemma \ref{lem:A4_local_densities}, namely from the LMFDB.

	It suffices to calculate $\hat{\tau}_{H, p}(\beta)(BA_4(\Q_p))$ for $p \in \{2,3,\infty\}$. Let $\varphi_p \in BA_4(\Q_p)$. We will describe in the rest of this proof how one determines the value $\text{inv}_p(\beta(\varphi_p))$ in terms of the image of $\varphi_p$ in $BS_4(\Q_p)$. Given this description one can compute $\hat{\tau}_{H, \infty}(\beta)(\C^2), \hat{\tau}_{H, \infty}(\beta)(\R^4)$ and $\hat{\tau}_{H, p}(\beta)(BA_4(\Q_p))$ in a completely analogous way to Lemma \ref{lem:A4_local_densities}.
	
	The local invariant $\text{inv}_p(\beta(\varphi_p)) \in \Z/2\Z \subset \Q/\Z$ is either trivial or equal to $\frac{1}{2}$. It thus suffices to determine when it is trivial. Since $\text{inv}_p$ is injective it is trivial if and only if $\beta(\varphi_p)$ is trivial.  We achieve this using the procedure described in \S \ref{sec:BM_central_extension}. We use the map  $\pi:\SL(2,\F_3) \to A_4$ defining $\beta$.
	
	The class $\beta(\varphi_p) \in \Br \Q_p[2] \cong \H^2(\Q_p, \Z/2\Z)$ is represented by the $\Z/2\Z$-gerbe given by the fibre product of the diagram $\Spec \Q_p \xrightarrow{\varphi_p} BA_4 \leftarrow B\SL(2, \F_3)$. The class $\beta(\varphi_p)$ is thus trivial if and only if this gerbe is neutral, i.e.~if $\varphi_p: \Gamma_{\Q_p} \to A_4$ lifts to a morphism $\tilde{\varphi}_p: \Gamma_{\Q_p} \to \SL(2, \F_3)$.
	
	Consider the image $H := \im(\varphi_p) \subset A_4$ and $\tilde{H} := \pi^{-1}(H)$. The image $\im(\tilde{\varphi}_p)$ has to be contained in $\tilde{H}$. We now consider two cases: first $\tilde{H} \cong H \times \Z/2 \Z$, where a lift $\tilde{\varphi}_p$ clearly exists, or secondly the central extension $1 \to \Z/2 \Z \to \tilde{H} \to H \to 1$ is non-split, where $\im(\tilde{\varphi}_p) = \tilde{H}$. In the latter case we can use the LMFDB to enumerate all Galois $\tilde{H}$ extensions of $\Q_p$ and check explicitly if one of them is a lift of $\varphi_p$ (the LMFDB contains all $\tilde{H}$ extensions as $\SL(2, \F_3) \subset S_8$).
	
	All possibilities are enumerated in the following table.
	\begin{center}
		\begin{tabular}{|l|l|l|l|} \hline
			\'etale algebra & $H$ & $\tilde{H}$ & $\tilde{H} \cong H \times \Z/2 \Z$? \\ \hline
			$\Q_p^4$ & $1$ & $\Z/2\Z$ & Yes\\
			$L \times L$, $L/ \Q_p$ quadratic & $\Z/2\Z$ & $\Z/4\Z$ & No \\ 
			$\Q_p \times L$, $L/\Q_p$ cyclic cubic & $\Z/3\Z$ & $\Z/6\Z$ & Yes \\
			$L$, $L/\Q_p$ biquadratic & $(\Z/2\Z)^2$ & $Q_8$ & No  \\
			$L$, $A_4$-quartic field & $A_4$ & $\SL(2, \F_3)$ & No \\ \hline
		\end{tabular}
	\end{center}
	For example, we find that $\text{inv}_{\infty}\beta(\R^4) = 0$ and $\text{inv}_{\infty} \beta(\C^2) = \frac{1}{2}$.
\end{proof}
To obtain the formula stated in Conjecture \ref{conj:A_4_conductor} we put these local densities into the sum of Euler products using Proposition \ref{prop:Malle-Bhargava}(1). The convergence factors at $2,3$ are $(1 - 1/2)(1 - 1/3) = 1/3$ and the effective cone constant is $1$. As explained at the end of \S \ref{sec:A_4-quartics} there are no accumulating fields to remove.

\subsection{Abelian groups}\label{sec:abelian}

Let now $G$ be a tame finite \'etale abelian group scheme. For $G$ constant, counting by discriminant was obtained in \cite{Mak85,Wri89} (for $k = \Q$ and $k$ general, respectively), and by conductor in \cite{Mak93,Woo10} (for $k = \Q$ and $k$ general, respectively). Moreover Wood's work \cite{Woo10} obtained asymptotics for a suitable class of balanced height functions. Non-constant $G$ have been considered in \cite{Alb21, OA21, DYTor}.

\subsubsection{Brauer group of $BG$}

We begin with $\Br BG$ with $k$ a general field. We recall the following notation. Let $K/k$ be a choice of splitting field for $G$. Then we define
\[
\Sha^1_\omega(k,G) = \ker\left(\H^1(\Gal(K/k),G) \to \prod_{g \in \Gal(K/k)} \H^1(\langle g \rangle, G)\right).
\]
This group is finite and is seen to be independent of the choice of $K$, since $\Sha_\omega^1(K,G) = 0$. Moreover if $k$ is a global field, then the Chebotarev density theorem implies that
$$\Sha^1_\omega(k,G) =\left\{ c \in \H^1(\Gal(K/k),G) :
\begin{array}{l} c_v = 0 \in \H^1(k_v, G) \\
 \text{for all but finitely many } v
 \end{array} \right\}.$$
For these facts see \cite[Lem.~1.1, 1.2]{San81}. We use the map from Lemma \ref{lem:H1_Pic}.

\begin{lemma} \label{lem:BrBG-Abelian}
	Let $\mathcal{C} \subseteq \mathcal{C}_G^*$ generate $G$. Then 
	$\Br_{\mathcal{C}} BG \subseteq \Br_1 BG$. Moreover the image of
	$\Brun BG$ in $\H^1(k,\dual{G})$ equals $\Sha^1_\omega(k,\dual{G})$.
\end{lemma}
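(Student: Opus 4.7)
The plan is to handle the two statements separately, using the structural results already developed.

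For the first assertion, I would invoke the geometric vanishing theorem. By Lemma~\ref{lem:unramified_geometric_Brauer_group_computation}(1), whenever $G$ is abelian and $\mathcal{C}$ generates $G$ we have $\Br_{\mathcal{C}} BG_{k^{\mathrm{sep}}} = 0$. The pullback map along $BG_{k^{\mathrm{sep}}} \to BG$ restricts to a homomorphism $\Br_{\mathcal{C}} BG \to \Br_{\mathcal{C}} BG_{k^{\mathrm{sep}}} = 0$ by the functoriality recorded in \eqref{eq:Br_C_functorial}. Hence $\Br_{\mathcal{C}} BG$ lies in the kernel of $\Br BG \to \Br BG_{k^{\mathrm{sep}}}$, which is by definition $\Br_1 BG$.

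For the second assertion I would specialise to $\mathcal{C} = \mathcal{C}_G^*$, so $\Br_{\mathcal{C}} BG = \Brun BG$. By Theorem~\ref{thm:Br_BG} combined with Lemma~\ref{lem:age_residue_restriction}, the image of $\Brun BG$ in $\H^1(k,\dual{G})$ is exactly the intersection, over all $\gamma \in G(-1)(k^{\mathrm{sep}})\setminus\{e\}$, of the kernels of the age residues
\[
\H^1(k,\dual{G}) \xrightarrow{\mathrm{res}} \H^1(k(\gamma),\dual{G}) \xrightarrow{\age(\cdot,\gamma)} \H^1(k(\gamma),\Q/\Z).
\]
Fixing a splitting field $K/k$ for $G$ and identifying $\H^1(k,\dual{G}) = \H^1(\Gal(K/k),\dual{G})$ via inflation-restriction (using that $\dual{G}(K) = \dual{G}(k^{\mathrm{sep}})$), I would reformulate both conditions in terms of finite Galois cohomology.

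The main technical step is to match ``trivial age residue at every $\gamma$'' with ``trivial restriction to every cyclic subgroup''. For one direction, given $\alpha \in \Sha^1_\omega(k,\dual{G})$ and $\sigma \in \Gamma_{k(\gamma)}$, I would use that $\alpha|_{\langle\sigma\rangle}$ is a coboundary to write $\alpha(\sigma) = (\sigma - 1)\psi$ for some $\psi \in \dual{G}(K)$; since $\sigma$ fixes $\gamma$ and the Galois action on $\Q/\Z$ is trivial, the age value $\age((\sigma-1)\psi,\gamma) = \age(\psi,\sigma^{-1}\gamma) - \age(\psi,\gamma)$ vanishes, giving the desired containment. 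The reverse direction is the harder part, and where I expect the only real obstacle to lie: I would use the perfect Tate-cohomology duality pairing
\[
\H^1(\langle\sigma\rangle,\dual{G}) \times G^{\langle\sigma\rangle}/N_\sigma G \longrightarrow \tfrac{1}{n}\Z/\Z
\]
induced by the age pairing, where $n$ is the order of $\sigma$. Concretely, for every $g \in G^{\langle\sigma\rangle}$ we have $k(g) \subseteq K^{\langle\sigma\rangle}$, so the triviality of the age residue at $g$ forces $\age(\alpha(\sigma),g) = 0$; this vanishing for all such $g$ identifies $\alpha|_{\langle\sigma\rangle}$ as trivial in $\H^1(\langle\sigma\rangle,\dual{G})$ by the duality, and hence $\alpha \in \Sha^1_\omega(k,\dual{G})$. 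The main care needed is checking that the duality pairing arising abstractly from Tate cohomology of cyclic groups genuinely coincides with evaluation via the age pairing, and that Galois equivariance is correctly tracked when restricting cocycles; the finiteness of $\Sha^1_\omega$ and the independence of the choice of splitting field (\cite[Lem.~1.1, 1.2]{San81}) then ensure the statement is independent of auxiliary choices.
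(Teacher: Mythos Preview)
Your proof is correct and follows essentially the same route as the paper. Both invoke Lemma~\ref{lem:unramified_geometric_Brauer_group_computation}(1) for the first assertion, and for the second both use Theorem~\ref{thm:Br_BG} via the age residue (Lemma~\ref{lem:age_residue_restriction}) to reduce to the pointwise condition ``$\psi(\sigma)(\gamma)=0$ whenever $\sigma(\gamma)=\gamma$'' and then identify this with the $\Sha^1_\omega$ condition through cyclic-group cohomology.

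The only difference is packaging. Where you split into two inclusions (an explicit coboundary computation for one direction, a Tate-duality argument for the other), the paper handles both at once by noting that $\psi|_{\langle\sigma\rangle}$ vanishes in $\H^1(\langle\sigma\rangle,\dual{G})$ iff $\psi(\sigma)$ vanishes in the coinvariants $\dual{G}_{\langle\sigma\rangle}$, and that coinvariants of $\dual{G}=\Hom(G(-1),\Q/\Z)$ are canonically dual to invariants $G(-1)^{\langle\sigma\rangle}$. This is exactly the content of your duality step, but avoids the Tate-cohomology bookkeeping; incidentally, the perfect pairing you want is $\hat\H^1(\langle\sigma\rangle,\dual{G})\times\hat\H^{-1}(\langle\sigma\rangle,G(-1))\to\Q/\Z$ rather than a pairing with $\hat\H^0$, though your actual argument (vanishing on all $\sigma$-fixed $\gamma$ forces triviality) is the coinvariants statement and is correct as written.
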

\begin{proof}
	The first statement follows from Lemma \ref{lem:unramified_geometric_Brauer_group_computation}(1).
	For the second statement, observe that $\mathcal{C}_G = G(-1)$ as $G$ is abelian and that $\dual{G} = \Hom(G(-1), \Q/\Z)$. Let $\psi: \Gamma_k \to \dual{G} = \Hom(G(-1), \Q/\Z)$ be a cocycle. We will use Theorem \ref{thm:Br_BG} in terms of the age residue to determine when $\psi$ is unramified.
	
	Let $\gamma \in  G(-1)(k^{\sep})$ and $k(\gamma)$ its field of definition, i.e.~$\Gamma_{k(\gamma)} \subset \Gamma_k$ consists of all $\sigma$ such that $\sigma(\gamma) = \gamma$. By Lemma \ref{lem:age_residue_restriction} we have that $\res_{\gamma}(\psi) \in \H^1(\Gamma_{k(\gamma)}, \Q/\Z)$ is given by the cocycle $\sigma \to \psi(\sigma)(\gamma)$.
	
	Applying this description shows that $\psi$ is unramified if and only if for all $\sigma \in \Gamma_k$ and $\gamma \in  G(-1)(k^{\sep})$ with $\sigma(\gamma) = \gamma$ we have $\psi(\sigma)(\gamma) = 0$.
	
	Let $K/k$ be an extension which splits $G(-1)$ and such that $\psi$ factors through $\Gal(K/k)$. The group cohomology of cyclic groups is well-known, in particular the restriction $\psi|_{\langle \sigma \rangle} \in H^1(\langle \sigma \rangle, \dual{G})$ is trivial if and only if it is trivial in the coinvariant group $\dual{G}(K)_{{\langle \sigma \rangle}}$. As coinvariants are dual to invariants this is equivalent to $\psi$ being trivial in $\Hom(G(-1)(K)^{\langle \sigma \rangle}, \Q/\Z)$. More explicitly, the restriction of $\psi$ to $\langle \sigma \rangle$ is trivial if and only if for all $\gamma \in G(-1)(K)$ with $\sigma(\gamma) = \gamma$ we have $\psi(\sigma)(\gamma) = 0$. 
	
	Ranging over $\sigma \in \Gal(K/k)$ we see $\psi \in \Brun BG$ if and only if $\psi \in \Sha^1_\omega(k,\dual{G})$.
\end{proof}

\subsubsection{Conductor} \label{sec:conductor}
We now  assume that $G$ is constant and that $k$ is a number field. The conductor is an orbifold anticanonical height function. It agrees with the radical discriminant at the tame places. We use the expression for the leading constant obtained in \cite[Thm.~3.22]{FLN22}. This reads:
\begin{align*}
	c_{k,G} =& \frac{(\mathrm{Res}_{s=1} \zeta_k(s))^{\varpi(k,G)}}{(\varpi(k,G)! -1)\cdot|\O_k^* \otimes G|\cdot|G|^{|S_{\mathrm{f}}|}}
	\prod_{v \notin S} \left(\sum_{\substack{
	\chi_v \in \Hom(\O_v^*, G)}} \frac{1}{\Phi_v(\chi_v)}\right) 
	\frac{1}{\zeta_{k,v}(1)^{\varpi(k,G)}}	 \\
	 & \,\,\,  \times 
 	 \left(\sum_{ \substack{\chi \in \Hom\bigl(\prod_{v \in S}k_v^*,G\bigr)}} \frac{1}{\prod_{v \in S}\Phi_v(\chi_v)\zeta_{k,v}(1)^{\varpi(k,G)}}\sum_{x \in \mathcal{X}(k,G)} 
\prod_{v \in S} \langle \chi_v, x_v \rangle\right).
\end{align*}
Here $S$ is a large finite set of places and $S_{\mathrm{f}}$ is the non-archimedean places in $S$, we have $\varpi(k,G) = \sum_{\substack{g \in G \\ g \neq 0}} 1/[k(\zeta_{\ord g}) :k]$, and
$$\mathcal{X}(k,G) = \{x \in k^\times \otimes G^\sim : x_v = 1 \in k^\times_v \otimes G^\sim \text{ for all but finitely many places } v\}.$$
Here $G^\sim$ denotes the $\Q/\Z$-dual of $G$ (this canonically isomorphic to the Pontryagin dual via the map $\Q/\Z \to S^1, t \mapsto \exp(2 \pi i t).)$ Here $\langle \cdot, \cdot \rangle$ denotes the Pontryagin duality pairing (see \cite[\S 3.1]{FLN18} for details). We first identify the effective cone constant. 

\begin{lemma}
	$$\frac{1}{|\O_k^* \otimes G|\cdot|G|^{|S_{\mathrm{f}}|}} = 
	\frac{|G|}{|\dual{G}(k)|\cdot|G|^{|S|}}.$$
\end{lemma}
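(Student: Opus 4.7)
The strategy is to compare the two sides place by place, using Dirichlet's unit theorem to describe $\mathcal{O}_k^*$ and then reducing the remaining torsion comparison to a standard duality.

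First I would rewrite the right-hand side. Since $S = S_{\mathrm{f}} \sqcup S_\infty$ we have $|G|^{|S|} = |G|^{|S_{\mathrm{f}}|} \cdot |G|^{|S_\infty|}$, so
\[
\frac{|G|}{|\dual{G}(k)|\cdot|G|^{|S|}} = \frac{1}{|\dual{G}(k)|\cdot|G|^{|S_\infty|-1}\cdot|G|^{|S_{\mathrm{f}}|}}.
\]
Cancelling the factor $|G|^{|S_{\mathrm{f}}|}$ on both sides, the identity to be proved reduces to
\[
|\mathcal{O}_k^* \otimes G| \;=\; |\dual{G}(k)|\cdot|G|^{|S_\infty|-1}.
\]

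Next I would invoke Dirichlet's unit theorem: $\mathcal{O}_k^* \cong \mu(k)\oplus \mathbb{Z}^{r_1+r_2-1}$, where $\mu(k)$ is the (finite cyclic) group of roots of unity in $k$. Since $|S_\infty| = r_1+r_2$, tensoring with the finite abelian group $G$ gives
\[
\mathcal{O}_k^* \otimes G \;\cong\; (\mu(k)\otimes G)\oplus G^{|S_\infty|-1},
\]
so $|\mathcal{O}_k^* \otimes G| = |\mu(k)\otimes G|\cdot|G|^{|S_\infty|-1}$. What remains is the identity $|\mu(k)\otimes G| = |\dual{G}(k)|$.

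For this last step I would decompose $G = \prod_i \mathbb{Z}/n_i\mathbb{Z}$. Writing $m = |\mu(k)|$, we have $\mu(k)\otimes \mathbb{Z}/n_i\mathbb{Z} \cong \mu(k)/n_i\mu(k) \cong \mathbb{Z}/\gcd(m,n_i)\mathbb{Z}$. On the other hand, since $G$ is finite one has $\dual{G}(k) = \Hom(G, k^\times) = \Hom(G,\mu(k)) = \prod_i \mu_{n_i}(k)$, and $|\mu_{n_i}(k)| = \gcd(m,n_i)$. Taking the product over $i$ yields $|\mu(k)\otimes G| = \prod_i \gcd(m,n_i) = |\dual{G}(k)|$, completing the proof. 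There is no real obstacle here: the only subtlety is keeping careful track of the archimedean factor $|S_\infty| - 1$ coming from the free rank of the unit group, which is what forces the extra $|G|$ in the numerator of the right-hand side.
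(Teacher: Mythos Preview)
Your proof is correct and follows essentially the same approach as the paper: reduce via Dirichlet's unit theorem to $|\mu(k)\otimes G|\cdot|G|^{|S_\infty|-1}$ and then identify $|\mu(k)\otimes G|=|\dual{G}(k)|$. You have simply filled in the details of what the paper calls ``a simple calculation.''
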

\begin{proof}
	We use Dirichlet's unit theorem to write
	$$|\O_k^* \otimes G| = |\mu_\infty(k) \otimes G| \cdot |G|^{|S_\infty| - 1}.$$
	However a simple calculation shows that $|\mu_\infty(k) \otimes G| = |\dual{G}(k)|.$
\end{proof}

To identify the Brauer group, we use the following version of Kummer theory.

\begin{lemma} \label{lem:Kummer}
	There is a canonical isomorphism 
	$k^\times \otimes G^\sim \cong \H^1(k, \dual{G})$.
\end{lemma}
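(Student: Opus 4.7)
\textbf{Proof plan for Lemma \ref{lem:Kummer}.} The plan is to construct the canonical isomorphism via a natural bilinear pairing, then verify it is an isomorphism by decomposing $G$ into cyclic factors and invoking classical Kummer theory on each.

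First I would build a natural pairing $k^\times \times G^\sim \to \H^1(k,\dual{G})$. Let $n$ be the exponent of $G$, so that both $G$ and $G^\sim$ are killed by $n$. Any $\chi \in G^\sim = \Hom(G, \tfrac{1}{n}\Z/\Z)$ is then a morphism of finite \'etale group schemes $G \to \Z/n\Z$ over $k$, and Cartier dualising (using the canonical identification $\dual{(\Z/n\Z)} = \mu_n$) yields a morphism $\chi^\vee\colon \mu_n \to \dual{G}$. For $a \in k^\times$, write $[a]_n \in \H^1(k,\mu_n)$ for its image under the Kummer isomorphism $k^\times/k^{\times n} \cong \H^1(k,\mu_n)$. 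Define the pairing by $(a,\chi) \mapsto \chi^\vee_*[a]_n$.

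Second I would verify formal properties. Bilinearity in $a$ comes from the Kummer isomorphism. Bilinearity in $\chi$ is clear because $\chi \mapsto \chi^\vee$ is a group homomorphism $G^\sim \to \Hom_{k\text{-}\mathrm{gp}}(\mu_n, \dual{G})$ and pushforward on $\H^1$ is additive. The pairing kills $k^{\times n} \times G^\sim$ by definition of $[a]_n$, so it factors through $(k^\times/k^{\times n}) \otimes G^\sim$; since $G^\sim$ is already annihilated by $n$, this in turn equals $k^\times \otimes_\Z G^\sim$. We therefore obtain a canonical homomorphism
\[
\Phi \colon k^\times \otimes_\Z G^\sim \longrightarrow \H^1(k,\dual{G}).
\]
A minor check is that $\Phi$ is independent of the choice of $n$ annihilating $\chi$: if $m$ also annihilates $\chi$ then both constructions factor through $\mu_{\gcd(n,m)} \to \dual{G}$ compatibly, using functoriality of Kummer theory under the inclusions of roots of unity.

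Third I would verify that $\Phi$ is an isomorphism. Choose a (non-canonical) decomposition $G \cong \bigoplus_i \Z/n_i\Z$. This induces compatible decompositions $G^\sim \cong \bigoplus_i \Z/n_i\Z$ and $\dual{G} \cong \prod_i \mu_{n_i}$, and hence
\[
k^\times \otimes G^\sim \cong \bigoplus_i k^\times/k^{\times n_i}, \qquad \H^1(k,\dual{G}) \cong \bigoplus_i \H^1(k,\mu_{n_i}) \cong \bigoplus_i k^\times/k^{\times n_i},
\]
the latter by classical Kummer theory. By construction $\Phi$ respects the direct sum decompositions and restricts to the classical Kummer isomorphism on each cyclic summand, so $\Phi$ is an isomorphism. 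Since $\Phi$ itself was defined without any choices, the resulting isomorphism $k^\times \otimes G^\sim \cong \H^1(k,\dual{G})$ is canonical.

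This proof is essentially a direct application of Kummer theory packaged functorially; there is no serious obstacle. The only point requiring a moment's care is the well-definedness of the pairing independently of $n$, which is routine once one tracks how Cartier duality interacts with the maps $\mu_m \hookrightarrow \mu_n$ for $m \mid n$.
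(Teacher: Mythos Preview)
Your proof is correct and follows the same overall strategy as the paper: construct a canonical map $k^\times \otimes G^\sim \to \H^1(k,\dual{G})$ and then verify bijectivity by reducing to the cyclic case via classical Kummer theory.

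The only real difference is in how the canonical map is packaged. You fix $n = \exp(G)$, view $\chi \in G^\sim$ as a homomorphism $G \to \Z/n\Z$, Cartier dualise to $\mu_n \to \dual{G}$, and push forward the Kummer class; this obliges you to check independence of the auxiliary $n$. The paper instead identifies $G^\sim \cong \Ext^1(G,\Z)$ via the short exact sequence $0 \to \Z \to \Q \to \Q/\Z \to 0$, so that each $\chi$ yields an extension $0 \to \Z \to E \to G \to 0$, whose Cartier dual $0 \to \dual{G} \to \dual{E} \to \Gm \to 0$ gives a connecting map $k^\times \to \H^1(k,\dual{G})$. This avoids any choice of $n$ from the outset. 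Both constructions produce the same map, and both verify the isomorphism by reducing to $G = \Z/n\Z$; the paper's route is slightly slicker in that it sidesteps the well-definedness check, while yours is more explicit and closer to how one would compute in practice.
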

\begin{proof}
	Firstly we claim that $G^\sim \cong \Ext^1(G,\Z)$. To see this, apply
	$\Hom(G,\cdot)$ to the exact sequence
	$$0 \to \Z \to \Q \to \Q/\Z \to 0$$
	and use $\Hom(G,\Q) = \Ext^1(G,\Q) = 0$. 
	We deduce that each element of 
	$G^\sim$ gives rise to an exact sequence
	\begin{equation} \label{eqn:E}
		0 \to \Z \to E \to G \to 0
	\end{equation}
	for some finitely generated abelian group $E$. We now take Cartier duals to obtain
	$$0 \to \dual{G} \to \dual{E} \to \Gm \to 0.$$
	By Galois cohomology we have a map
	$k^\times \to \H^1(k,\dual{G}).$
	Applying this to each element of $G^\sim$ yields a  map 
	$k^\times \otimes G^\sim \to \H^1(k, \dual{G})$.
	To prove that this is an isomorphism we may choose a presentation of $G$,
	so that  $G = \Z/n\Z$ for some $n \in \N$.	
	In which case, the  sequence \eqref{eqn:E} corresponding to the element 
	$1 \in (\Z/n\Z)^\sim = \Z/n\Z$ is simply
	$$0 \to \Z \to \Z \to \Z/n\Z \to 0.$$
	Unravelling our construction, the corresponding map 
	$k^\times \otimes \Z/n\Z = k^\times/k^{\times n} \to \H^1(k,\mu_n)$ is just the usual
	isomorphism from Kummer theory.	
\end{proof}

Next the Tamagawa measure.
\begin{lemma}
	\begin{align*}
	|\Brun BG/\Br k|& \cdot \tau(\prod_v BG(k_v)^{\Brun}) = \\
	& \frac{(\mathrm{Res}_{s=1} \zeta_k(s))^{\varpi(k,G)}}{|G|^{|S|}}
	\prod_{v \notin S} \left(\sum_{\substack{
	\chi_v \in \Hom(\O_v^*, G)}} \frac{1}{\Phi_v(\chi_v)}\right) \zeta_{k,v}(1)^{-\varpi(k,G)}	 \\
	 & \times 
 	 \left(\sum_{ \substack{\chi \in \Hom\bigl(\prod_{v \in S}k_v^*,G\bigr)}} \frac{1}{\prod_{v \in S}\Phi_v(\chi_v)\zeta_{k,v}(1)^{\varpi(k,G)}}\sum_{x \in \mathcal{X}(k,G)} 
\prod_{v \in S} \langle \chi_v, x_v \rangle\right).
\end{align*}
\end{lemma}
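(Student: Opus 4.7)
The plan is to apply Lemma \ref{lem:sum_Euler_products} in the form coming from Lemma \ref{lem:convergence_factors_alternative}, writing
\[
|\Brun BG/\Br k|\cdot \tau_H( BG(\Adele_k)^{\Brun}) = (\mathrm{Res}_{s=1}\zeta_k(s))^{\varpi(k,G)}\!\!\sum_{b\in\Brun BG/\Br k}\!\!\prod_v J_v(b),
\]
where $J_v(b):=(1-1/q_v)^{\varpi(k,G)}\int_{BG(k_v)}e^{2\pi i\,\inv_v b(\varphi_v)}\,d\tau_{H,v}$ at non-archimedean places and the obvious analogue at infinity. Here we have used that $b(k,L)=\varpi(k,G)$ for the conductor, since the minimal weight conjugacy classes are all non-identity classes and the Galois action on $G(-1)$ factors through the cyclotomic character, so orbit-stabiliser gives $\#(\mathcal{C}_G^*/\Gamma_k)=\sum_{g\neq 0}[k(\zeta_{\ord g}):k]^{-1}$.

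Next I would identify the indexing sets of both sides. By Lemma \ref{lem:BrBG-Abelian}, the map $r\colon\Br_1 BG/\Br k\to \H^1(k,\dual{G})$ from Lemma \ref{lem:H1_Pic} restricts to an isomorphism $\Brun BG/\Br k\cong \Sha^1_\omega(k,\dual{G})$. Composing with the Kummer-theoretic isomorphism of Lemma \ref{lem:Kummer} gives $\H^1(k,\dual{G})\cong k^\times\otimes G^\sim$, and this isomorphism is compatible with the analogous local isomorphism $\H^1(k_v,\dual{G})\cong k_v^\times\otimes G^\sim$ by functoriality; since $\Sha^1_\omega$ is by definition the subgroup of classes vanishing in all but finitely many local $\H^1(k_v,\dual{G})$, it maps isomorphically onto $\mathcal{X}(k,G)$. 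Thus the sum over $b$ becomes a sum over $x\in \mathcal{X}(k,G)$.

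Now I would compute each $J_v(b)$ in terms of $x$. By Lemma \ref{lem:cup_products} the local evaluation $\inv_v b(\varphi_v)$ equals the cup product $\alpha_v\cup\varphi_v\in\Br k_v$ where $\alpha_v=r(b)_v$; passing through Kummer theory on one side and local class field theory on the other (using that $G$ is abelian, so $BG[k_v]=\H^1(k_v,G)\cong\Hom(k_v^\times,G)$ for non-archimedean $v$ and $\H^1(\R,G)=G[2]=\Hom(\R^\times,G)$), this cup product becomes the Pontryagin pairing $\langle\chi_v,x_v\rangle$ where $\chi_v$ is the character corresponding to $\varphi_v$. Applying Lemma \ref{lem:groupoid_count} to replace the groupoid sum by a sum of cocycles (which for abelian $G$ is the same after dividing by $|G|$), together with the fact that the conductor height satisfies $H_v(\varphi_v)=\Phi_v(\chi_v)$, gives
\[
J_v(b)=\frac{(1-1/q_v)^{\varpi(k,G)}}{|G|}\sum_{\chi_v\in\Hom(k_v^\times,G)}\frac{\langle\chi_v,x_v\rangle}{\Phi_v(\chi_v)}.
\]
For $v\notin S$ we have $x_v=1$ and $k_v^\times\cong\Z\times\O_v^*$ via a uniformiser; since $\Phi_v$ depends only on the restriction to $\O_v^*$, the unramified direction contributes the factor $|G|$ which cancels the $1/|G|$, leaving exactly the FLN factor $(1-1/q_v)^{\varpi(k,G)}\sum_{\psi_v\in\Hom(\O_v^*,G)}\Phi_v(\psi_v)^{-1}$. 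For $v\in S$ the product over $S$ combines the $1/|G|^{|S|}$ factor with a single sum over $\chi\in\Hom(\prod_{v\in S}k_v^\times,G)$, matching the remaining term in the FLN formula.

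The main technical obstacle is verifying that all identifications are compatible: namely that the composition of the Kummer isomorphism in Lemma \ref{lem:Kummer}, the Brauer--Manin pairing from Lemma \ref{lem:cup_products}, and local class field theory reproduces the Pontryagin pairing $\langle\chi_v,x_v\rangle$ appearing in the FLN formula, with the correct sign convention. This amounts to chasing an exact sequence $0\to\Z\to E\to G\to 0$ through Cartier duality and comparing the resulting boundary maps with the local reciprocity pairing; once this compatibility is established, the identity follows by matching Euler factors place-by-place and collecting the resulting sums.
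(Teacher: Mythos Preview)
Your proposal is correct and follows essentially the same route as the paper: both apply Lemma \ref{lem:sum_Euler_products} with the convergence factors of Lemma \ref{lem:convergence_factors_alternative}, identify $\Brun BG/\Br k$ with $\mathcal{X}(k,G)$ via Lemmas \ref{lem:BrBG-Abelian}, \ref{lem:H1_Pic} and \ref{lem:Kummer}, and use Lemma \ref{lem:cup_products} together with local Tate duality to match the Brauer--Manin pairing with the Pontryagin pairing $\langle\chi_v,x_v\rangle$. The only cosmetic difference is that the paper works from the FLN expression toward the Tamagawa measure while you work in the opposite direction, and the paper invokes local Tate duality \cite[Thm~7.2.6]{NSW08} explicitly where you phrase the same step as compatibility of Kummer theory with local reciprocity.
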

\begin{proof}
For the local Euler factors we have
$$\sum_{\substack{
\chi_v \in \Hom(\O_v^*, G)}} \frac{1}{\Phi_v(\chi_v)} = 
\frac{1}{|G|}\sum_{\substack{
\chi_v \in \Hom(k_v^*, G)}} \frac{1}{\Phi_v(\chi_v)} = 
\frac{1}{|G|}\sum_{\substack{	\chi_v \in \Hom(\Gamma_{k_v}, G)}} \frac{1}{\Phi_v(\chi_v)}.$$
Indeed, the first equality follows from the fact that multiplication by an unramified character does not change the conductor, and the second equality is local class field theory. We deduce that the right hand side in the statement equals
$$(\mathrm{Res}_{s=1} \zeta_k(s))^{\varpi(k,G)}
\sum_{x \in \mathcal{X}(k,G)} 
\prod_v \frac{1}{|G|}\sum_{\substack{\chi_v \in \Hom(\Gamma_{k_v},G)}} \frac{\langle \chi_v, x_v \rangle}{\Phi_v(\chi_v)\zeta_{k,v}(1)^{\varpi(k,G)}}.$$
We relate this to the stated Tamagawa measure with Lemma \ref{lem:sum_Euler_products}, using the convergence factors from Lemma \ref{lem:convergence_factors_alternative}. By Lemmas~\ref{lem:Kummer} and \ref{lem:H1_Pic} we have $k^\times \otimes \dual{G} = \H^1(k,\dual{G}) = \Br_e BG$, and similarly over each $k_v$. Thus via Lemma \ref{lem:BrBG-Abelian}, we can canonically identify $\mathcal{X}(k,G)$ with $\Br_{\mathrm{un},e} BG$. Moreover Lemma~\ref{lem:cup_products} identifies the Brauer--Manin pairing with the cup product, which identifies with the Pontryagin pairing $\langle \cdot, \cdot \rangle$ by local Tate duality \cite[Thm~7.2.6]{NSW08}. The result now easily follows.
\end{proof}

Combining the above lemmas we deduce that $c_{k,G} = |G| c(k,G,H)$. The additional factor of $|G|$ matches with the groupoid cardinality factor $Z(G) = G$ from Conjecture \ref{conj:balanced}, hence the leading constant agrees with Conjecture \ref{conj:balanced}.

\subsubsection{Discriminant} 
Let $G$ be a finite abelian group  and consider $G \subset S_{|G|}$ the obvious embedding.
Let $\Delta$ be as in Example \ref{ex:disc}. For $g \in G$ we have
\begin{equation*}
	\ind(g) = |G|(1 - 1/\ord(g)).
\end{equation*}
Thus the index function is minimised when $g$ has minimal order, in other words when the order is the minimal prime $Q$ dividing $|G|$.

\begin{lemma}
	We have $a(\Delta) = 1/|G|(1-Q^{-1}), b(D) = (|G[Q]|-1)/[k(\mu_Q):k]$. Moreover $\Delta$ is balanced
	if and only if $G = G[Q]$, i.e.~$G$ is an elementary abelian group.
\end{lemma}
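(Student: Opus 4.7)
The plan is to simply unpack the definitions of $a(\Delta)$, $b(k,\Delta)$, and fairness from Lemma \ref{lem:Fujita_minimal} and Definition \ref{def:rigid}, using the explicit formula for the index in the regular representation.

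First I would verify the formula $\ind(g) = |G|(1 - 1/\ord(g))$ stated just above: under $G \hookrightarrow S_{|G|}$ by left multiplication, the orbits of $g$ on $G$ are the left cosets of $\langle g \rangle$, of which there are $|G|/\ord(g)$, giving the stated index. Minimising $1 - 1/\ord(g)$ over $g \neq e$ amounts to minimising $\ord(g)$, and the smallest order of a non-identity element of $G$ is precisely the smallest prime $Q \mid |G|$. So $\min_{c \in \mathcal{C}_G^*} \ind(c) = |G|(1 - Q^{-1})$, yielding $a(\Delta) = 1/(|G|(1-Q^{-1}))$ by Lemma \ref{lem:Fujita_minimal}. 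The same lemma gives $\mathcal{M}(\Delta) = \{c \in \mathcal{C}_G^* : w(c) = a(\Delta)^{-1}\}$, which is the set of elements of $G(-1)$ corresponding to order-$Q$ elements of $G$ (note $Q$ is prime, so an element has order $Q$ iff it is a nonzero element of $G[Q]$).

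Next I would compute $b(k,\Delta) = \#\mathcal{M}(\Delta)/\Gamma_k$. Via Lemma \ref{lem:Galois_action_on_G(-1)}, identify $G(-1)(k^{\sep})$ with $G(\cycl^{-1})(k^{\sep})$, on which $\sigma \in \Gamma_k$ acts by $g \mapsto g^{\cycl(\sigma)^{-1}}$ (since $G$ is constant abelian). On elements of order $Q$ this factors through the image of $\Gamma_k$ in $(\Z/Q)^\times$, i.e. through $\Gal(k(\mu_Q)/k)$. The nonzero elements of $G[Q]$ partition into the $(|G[Q]|-1)/(Q-1)$ cyclic subgroups of order $Q$; on the $Q-1$ nonzero elements of each such subgroup, $\Gal(k(\mu_Q)/k) \subseteq (\Z/Q)^\times$ acts freely, so the orbit count is $(Q-1)/[k(\mu_Q):k]$. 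Multiplying gives $b(k,\Delta) = (|G[Q]|-1)/[k(\mu_Q):k]$.

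For fairness, by Definition \ref{def:rigid}, $\Delta$ is fair iff the image of $\mathcal{M}(\Delta)$ in $G$ generates $G$. That image is $G[Q] \setminus \{0\}$, so $\langle \mathcal{M}(\Delta) \rangle = G[Q]$. Thus fairness is equivalent to $G[Q] = G$, i.e.\ every non-identity element of $G$ has order $Q$; since $Q$ is the smallest prime dividing $|G|$, this forces $G$ to be an elementary abelian $Q$-group. No step here looks like a serious obstacle — the entire argument is bookkeeping with the age pairing and the cyclotomic Galois action on roots of unity.
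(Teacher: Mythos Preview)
Your proof is correct and follows the same approach as the paper: both compute via the index formula $\ind(g)=|G|(1-1/\ord(g))$ and the definitions in Lemma~\ref{lem:Fujita_minimal} and Definition~\ref{def:rigid}. The paper's proof is in fact a single sentence (``The group generated by the minimal index elements is exactly $G(-1)[Q]$. This equals $G(-1)$ if and only if $G = G[Q]$''), so your version simply spells out the orbit count for $b(k,\Delta)$ that the paper leaves implicit.
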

\begin{proof}
	The group generated by the minimal index elements is exactly $G(-1)[Q]$. This equals
	$G(-1)$ if and only if $G = G[Q]$.
\end{proof}

Wright \cite{Wri89} was the first to obtain an asymptotic formula in this case, and his method can yield an explicit formula for the leading constant. This is given by a finite sum of Euler products (see \cite[(4.13)]{FLN18} for such an expression). However this is different to the sum that we have in Lemma \ref{lem:sum_Euler_products}, since Wright's sum arises from a M\"{o}bius inversion argument.

It is possible by Poisson summation on the base of the Iitaka fibration and a M\"{o}bius inversion argument to rewrite Wright's constant in the form given in Conjecture \ref{conj:non_balanced}. However this argument is slightly convoluted, and it seems simpler to give a more direct argument. We give a sketch of the details; a full proof, including for general height functions, will appear in forthcoming work of Tavernier \cite{Tav24}. 

Firstly as Conjecture \ref{conj:non_balanced} predicts, we should sort $G$-extensions according to the Iitaka fibration $BG \to B(G/G[Q])$. By Lemma \ref{lem:fibration_normal_quotient} the fibres over rational points are isomorphic to the classifying stack of an inner twist of $G[Q]$. However since $G$ is abelian, any such inner twist is trivial. The discriminant becomes balanced when restricted to a fibre, and the rational points in each fibre can be counted using the harmonic analysis techniques from \cite{FLN18, FLN22}. Conjecture \ref{conj:non_balanced} then predicts that the sum over all the fibres is absolutely convergent. This can be shown using the dominated convergence argument from Step 1 of the proof of \cite[Thm.~1.1]{KR24}. This gives a formula which is in agreement with Conjecture \ref{conj:non_balanced}.

\subsubsection{Other balanced height functions} \label{sec:fair_wood}
For $G$ abelian, in \cite[\S 2.1]{Woo10} Wood calls a height function $H$ \textit{fair} if $\mathcal{M}(H) \cap G(-1)[r]$ generates $G[r]$ for all $r \in \N$. This is strictly stronger than our condition that $H$ be balanced, which states only that $\mathcal{M}(H)$ generates $G$. Wood's condition turns out to have strong consequences for the Brauer group.

\begin{lemma} \label{lem:fair_Wood}
	Assume that $\mathcal{M}(H) \cap G(-1)[r]$ generates $G(-1)[r]$ for all $r \in \N$. Then
	$\Br_{\mathcal{M}(L)} BG =\Brun BG$.
\end{lemma}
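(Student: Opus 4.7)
The plan is to reduce both the partial and the full unramified conditions to descriptions in terms of age residues of algebraic Brauer classes, and then exploit Wood's hypothesis at the level of Galois modules. Since $\mathcal{M}(L)$ generates $G$, Lemma \ref{lem:unramified_geometric_Brauer_group_computation}(1) gives $\Br_{\mathcal{M}(L)} BG \subseteq \Br_1 BG$ and similarly $\Brun BG \subseteq \Br_1 BG$, so only the algebraic parts need be compared. By Theorem \ref{thm:Br_BG} combined with Lemma \ref{lem:age_residue_restriction}, both $\Brun BG/\Br k$ and $\Br_{\mathcal{M}(L),1} BG/\Br k$ are identified with subgroups of $\H^1(k, \dual G)$ cut out by vanishing of age residues at $\mathcal{C}_G^*$ and $\mathcal{M}(L)$ respectively. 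Unfolding the age pairing as in the proof of Lemma \ref{lem:BrBG-Abelian}, these conditions become: a cocycle $\psi: \Gamma_k \to \dual G$ lies in the corresponding subgroup if and only if $\psi(\sigma)(\gamma) = 0$ for every $\sigma \in \Gamma_k$ and every $\sigma$-fixed $\gamma$, where $\gamma$ ranges over $G(-1)(k^{\mathrm{sep}})$ in the full case and over $\mathcal{M}(L)$ in the partial case.

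The containment $\Brun BG \subseteq \Br_{\mathcal{M}(L)} BG$ is automatic; for the reverse I would establish the key claim that for every $\sigma \in \Gamma_k$ the $\sigma$-invariant subgroup $G(-1)(k^{\mathrm{sep}})^{\langle\sigma\rangle}$ is generated as an abelian group by $\mathcal{M}(L) \cap G(-1)(k^{\mathrm{sep}})^{\langle\sigma\rangle}$. Granted this, any $\sigma$-fixed $\gamma$ admits a decomposition $\gamma = \sum_i n_i \gamma_i$ with $\gamma_i \in \mathcal{M}(L)$ all $\sigma$-fixed, and $\Z$-linearity of the homomorphism $\psi(\sigma): G(-1)(k^{\mathrm{sep}}) \to \Q/\Z$ together with partial unramifiedness forces $\psi(\sigma)(\gamma) = \sum_i n_i \psi(\sigma)(\gamma_i) = 0$, giving full unramifiedness.

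To prove the key claim when $G$ is constant, one observes that the Galois action on $G(-1)$ is scalar multiplication by the inverse cyclotomic character, so $\sigma$ acts as multiplication by some $u \in (\Z/\exp(G)\Z)^\times$ and the $\sigma$-invariants coincide with the annihilator subgroup $G(-1)(k^{\mathrm{sep}})[u-1]$; every element of this subgroup is tautologically $\sigma$-fixed. Under any identification $G(-1) \cong G$ induced by a primitive root of unity, this annihilator corresponds to $G[u-1]$, and Wood's hypothesis, read intrinsically as the statement that $\mathcal{M}(L) \cap G(-1)[r]$ generates $G(-1)[r]$ for each $r$ (a formulation independent of the chosen identification, as two choices differ by multiplication by a unit), applied with $r = u-1$ supplies exactly the required generation. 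The main obstacle will be handling non-constant abelian $G$, where the Galois action on $G(-1)$ combines an anticyclotomic twist with the action on $G$ itself, so $G(-1)^{\langle\sigma\rangle}$ is not of the form $G(-1)[m]$ for any single $m$; the natural route is to pass to a splitting field $K/k$, apply the constant-group argument over $K$, and descend using naturality of age residues in the exact sequence of Lemma \ref{lem:Picorb(BG,C)}.
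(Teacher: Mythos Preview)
Your proof is correct and follows essentially the same route as the paper's. Both arguments reduce to algebraic Brauer classes via Lemma~\ref{lem:unramified_geometric_Brauer_group_computation}(1), then use the age-residue characterisation from Theorem~\ref{thm:Br_BG} together with the description in the proof of Lemma~\ref{lem:BrBG-Abelian}, and the core step in both is the observation that for constant $G$ the $\sigma$-fixed elements of $G(-1)$ form a torsion subgroup $G(-1)[r]$ to which Wood's hypothesis applies. The only organisational difference is that the paper fixes $\gamma$ first (taking $r = \ord\gamma$, so that $\Gamma_{k(\gamma)} = \Gamma_{k(\mu_r)}$ fixes all of $G(-1)[r]$), whereas you fix $\sigma$ first (taking $r = u-1$ where $u$ is the scalar by which $\sigma$ acts); these are dual viewpoints on the same computation.

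One remark: your closing paragraph on non-constant $G$ is unnecessary. The lemma sits in the subsection on Wood's fairness condition, which is only formulated for constant abelian groups, so there is no non-constant case to address.
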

\begin{proof}
	Lemma \ref{lem:unramified_geometric_Brauer_group_computation} implies that 
	$\Br_{\mathcal{M}(L)} BG  = \Br_{\mathcal{M}(L), 1} BG$. 	
	Let $\varphi \in \Br_{\mathcal{M}(L), e} BG \subset \H^1(k, \hat{G})$.
	We will show that $\varphi \in \Brun_{,e} BG$.
	To do this we will use Theorem \ref{thm:Br_BG} in terms of the age residue.
	
	We use a similar approach to the proof of Lemma \ref{lem:BrBG-Abelian}.
	Represent $\psi$ by a cocycle $\Gamma_k \to \Hom(G(-1)(k^{\sep}), \Q/\Z)$. Let $\gamma \in G(-1)(k^{\sep})$ and identify the age residue at $\gamma$ with the morphism $\Gamma_{k(\gamma)} \to \Q/\Z: \sigma \to \psi(\sigma)(\gamma)$. 
	
	We now use that $G$ is constant to see that $k(\gamma) = k(\mu_r)$ where $r$ is the order of $\gamma$. The assumption of the lemma implies that $\gamma = \sum_{m \in \mathcal{M}(H) \cap G(-1)[r]} a_m m$ for some $a_m \in \Z$. For all $m \in G(-1)[r]$ we have $k(m) \subset k(\mu_r)$.
	
	As $\psi \in \Br_{\mathcal{M}(L), e} BG$ we have $\psi(\sigma)(m) = 0$ for all $m \in \mathcal{M}(H) \cap G(-1)[r]$ and all $\sigma \in \Gamma_{k(\mu_r)} \subset \Gamma_{k(m)}$. As $\psi(\sigma) \in \Hom(G(-1)(k^{\sep}), \Q/\Z)$ we have $\psi(\sigma)(\gamma) = \sum_{m \in \mathcal{M}(H) \cap G(-1)[r]} a_m \psi(\sigma)(m) = 0$ for all $\sigma \in \Gamma_{k(\mu_r)}$. This implies that $\psi$ is unramified at $\gamma$, and we are done as $\gamma$ was arbitrary.
\end{proof}

Lemma \ref{lem:fair_Wood} explains the fact that Wood was able in \cite[Thm.~3.1]{Woo10} to state and prove a much simpler version of the equidistribution conjecture (Conjecture~\ref{conj:equi}). Namely her conditions imply that she lay in case (2) of Proposition \ref{prop:Malle-Bhargava} and Remark~\ref{rem:hierarchy}. For general balanced heights the equidistribution property is more complicated as the corresponding Brauer group can be larger; the conjecture in this case will be proven in forthcoming work of Tavernier \cite{Tav24}.

\begin{example}
 An example of a height function which is balanced, but not fair in the sense of Wood, would be a height on $B \Z / 4 \Z$ with weight function $1 \mapsto 1, 2 \mapsto 2, 3 \mapsto 1$. This explicit example was provided by Wood and presented in Alberts's paper \cite[\S 7.6]{Alb23} as pathological to cast doubt on whether the definition of fair is correct, since the leading constant is a sum of two Euler products even though there is no Grunwald--Wang phenomenon. But the presence of this second Euler product occurs as the relevant partially unramified Brauer group has two elements (modulo constants): the non-trivial element is given by $-4 \in \Br_1 B\Z/4\Z \cong H^1(k, \mu_4) \cong k^{\times}/k^{\times 4}$ (this can be shown using Theorem \ref{thm:Br_BG}(3)). This leads to a sum of Euler products as explained in Lemma \ref{lem:sum_Euler_products}.
\end{example}
More details and further examples of related phenomena can be found in the work of Tavernier \cite{Tav24}.

\end{document}